\numberwithin{equation}{section}
\numberwithin{figure}{section}
\DeclareMathAlphabet{\pazocal}{OMS}{zplm}{m}{n}
\newcommand{\leqnomode}{\tagsleft@true\let\veqno\@@leqno}
\newcommand{\reqnomode}{\tagsleft@false\let\veqno\@@eqno}
\newtheorem {theorem}    {Theorem}[section]
\newtheorem*{main-theorem}{Theorem A}
\newtheorem*{main-corollary}{Corollary B}
\theoremstyle{definition}
\newtheorem {lemma}      [theorem]    {Lemma}
\theoremstyle{definition}
\theoremstyle{definition}
\newcommand{\gen}[1]{\langle #1 \rangle}
\def\TM{{\rm TM}}
\def\a{\alpha}                
\def\b{\beta}
\def\eps{\varepsilon}
\def\a{\alpha}
\def\b{\beta}
\def\N{\mathbb{N}}     
\def\lab{{\text{Lab}}}
\def\vertexradius{.1}
\def\vertex(#1){\fill (#1) circle (\vertexradius)}
\begin{document}

\title{\bf Malnormal subgroups of finitely presented groups}
\maketitle
\begin{center}

Francis Wagner

\end{center}

\bigskip

\begin{center}

\textbf{Abstract}

\end{center}

\begin{addmargin}[2em]{2em}



The following refinement of the Higman embedding theorem is proved: A finitely generated group $R$ is recursively presented if and only if there exists a quasi-isometric malnormal embedding of $R$ into a finitely presented group $H$ such that the image of the embedding enjoys the congruence extension property.  Moreover, it is shown that the finitely presented group $H$ can be constructed to have decidable Word Problem if and only if the Word Problem for $R$ is decidable, yielding a refinement of a theorem of Clapham. Finally, given a countable group $G$ and a computable function $\ell:G\to\N$ satisfying some necessary requirements, it is proved that there exists a malnormal embedding of $G$ into a finitely presented group $H$ such that the restriction of $|\cdot|_H$ to $G$ is equivalent to $\ell$, producing a refinement of a theorem of Ol'shanskii.

\end{addmargin}

\bigskip


\section{Introduction}

The renowned Higman embedding theorem gives a remarkable connection between an algebraic property of groups (being isomorphic to a subgroup of a finitely presented group) and an intrinsically algorithmic one (possessing a recursive presentation).  The main result of this article is the following refinement of this seminal result:

\begin{main-theorem} \label{thm-main}

A finitely generated group $G$ is recursively presented if and only if there exists an embedding $G\hookrightarrow H$ satisfying all of the following properties:

\begin{enumerate}

\item $H$ is finitely presented

\item $G$ is a malnormal subgroup of $H$

\item $G$ satisfies the congruence extension property

\item The restriction of $|\cdot|_H$ to $G$ is equivalent to $|\cdot|_G$.

\item The Word Problem for $H$ is decidable if and only if the Word Problem for $G$ is decidable.
%
%
%
%
%

\end{enumerate}

\end{main-theorem}

An interesting feature of the above statement is the disparity between the nature of the conditions: (1)-(3) can be interpreted in purely algebraic terms, (4) has a geometric essence relating to subgroup distortion, and (5) is inherently algorithmic.  

Theorem A may be applied to a wider class of groups if one omits the finitely generated hypothesis:

\begin{main-corollary}

Given a countable group $G$ and a computable length function $\ell:G\to\N$, there exists an embedding $G\hookrightarrow H$ such that:

\begin{enumerate}

\item $H$ is finitely presented

\item $G$ is a malnormal subgroup of $H$

\item $G$ satisfies the congruence extension property

\item The restriction of $|\cdot|_H$ to $G$ is equivalent to $\ell$.

\end{enumerate}

\end{main-corollary}

The proof of these statements proceed with the introduction and development of a generalization of the computational model of $S$-machines.  $S$-machines were invented by Sapir in the 1990s as a means to construct finitely presented groups with desirable geometric properties.  The generalized model introduced here is called `noisy $S$-machines', hinting at the way a computation introduces `noise' in the tape words.  This property is the key to producing a malnormal embedding, but with care can be tamed to contain the manner in which it alters the geometry of a group constructed through $S$-machines.  As such, noisy $S$-machines can be viewed as a model that combines the benefits of Aanderaa's groups associated to Turing machines \cite{Aanderaa} with Sapir's $S$-machines.



\subsection{Background} \

A countable group is said to be \textit{recursively presented} if it admits a presentation $\gen{X\mid\pazocal{R}}$ such that there exists an `effective algorithm' to list the elements of $\pazocal{R}$, {\frenchspacing i.e. such that} $\pazocal{R}$ is a recursively enumerable subset of the set of words $(X\cup X^{-1})^*$.  It is clear from this definition that finitely presented groups are recursively presented, and it is not difficult to see that this property descends to the finitely generated subgroups of such groups.  

In his celebrated embedding theorem, G. Higman \cite{Higman} showed the converse, exhibiting an embedding of an arbitrary finitely generated recursively presented group into a finitely presented group.  As constructions of Higman, Neumann, and Neumann \cite{HNN} show that an arbitrary recursively presented group can be embedded into a two-generated recursively presented group, it thus follows that all recursively presented groups embed in a finitely presented group.

Higman's embedding theorem has inspired many works investigating what properties of the group may be preserved under such an embedding.  For just a few examples (chosen for immediate relevance to this article):

\begin{itemize}

\item Clapham demonstrated \cite{Clapham} a refinement of the embedding which preserves the decidability of the Word Problem.

\item Ol'shanskii demonstrated \cite{O97} a refinement of the embedding which is bi-Lipschitz (and so quasi-isometric).

\item Birget, Rips, Ol'shanskii, and Sapir demonstrated \cite{BORS} a refinement of the embedding where the (non-deterministic) complexity of the Word Problem of the embedded group is polynomially equivalent to the Dehn function of the finitely presented group.

\item Sapir demonstrated \cite{S14} a refinement of the embedding that preserves the asphericity of the group.

\item Ol'shanskii and Sapir demonstrated \cite{OSconj} a refinement of the embedding which preserves the decidability of the Conjugacy problem.

\end{itemize}

Other notable refinements can be found in \cite{Baumslag}, \cite{Birget}, \cite{Boone-Higman}, \cite{O20}, and \cite{Valiev}.  

In this article, we investigate a refinement along similar lines to those above, adding the condition of `malnormality'.

A subgroup $G\leq H$ is called \textit{malnormal}, denoted $G\leq_{mal}H$, if for all $h\in H$ with $h\notin G$, $(h^{-1}Gh)\cap G=\{1\}$. Naturally, an embedding $\iota:G\hookrightarrow H$ is called \textit{malnormal} if $\iota(G)$ is a malnormal subgroup of $H$.

The notion of malnormality has origins with B. Baumslag \cite{BaumslagMal} as a condition on an amalgam so that any two-generated subgroup of the amalgamated free product is free.  Since this introduction, malnormal subgroups have been found to arise naturally in several important settings, {\frenchspacing e.g. Fuchsian groups, Kleinian groups, one-relator groups, and groups possessing forms of negative curvature}  (see \cite{dlHMal} for detailed discussions).  Malnormal subgroups of hyperbolic groups have been studied deeply (for example in \cite{BMR}, \cite{BridsonMal}), and whether malnormality implies quasiconvexity in such groups is a well-known open problem \cite{BMS}.

An example that is important for the purposes of this article arises in the setting of relatively hyperbolic groups, where a peripheral subgroup of a torsion free relatively hyperbolic group is malnormal (see \cite{OsinRelHyp} and the Appendix of \cite{dlHMal}).  Malnormality can thus be seen as a generalization of the property of being hyperbolic relative to a subgroup, leading naturally to the question of what groups can be realized as malnormal subgroups of others.

Sapir posed a version of this question in \cite{S11} (see Remark 5.23), attributed to D. Osin, asking whether there is a malnormal refinement of the Higman embedding theorem.  While it is suggested in this Remark that it is `quite possible' that the methods employed in that setting may produce a positive answer, the question was left open.  Further details are discussed in \cite{Overflow}.  

Theorem A and Corollary B resolve this question of Osin, with conditions (1) and (2) indicating an analogue of the Higman embedding theorem which adds the condition of malnormality.  As suggested by the rest of these statements, though, much more can be said about the embeddings constructed for this purpose.  For example, condition (5) allows one to view Theorem A as a malnormal refinement of the theorem of Clapham mentioned above.

%
%
%
%



Recall that for a group $G$ with a finite generating set $X$, the \textit{word length} of an element $g\in G$ with respect to $X$, denoted $|g|_X$, is the minimal number of letters from $X\cup X^{-1}$ needed to produce a word that represents $g$. This defines a function $|\cdot|_X:G\to\N$ given by $g\mapsto|g|_X$.

In general, for a countable group $G$, two functions $\ell_1,\ell_2:G\to\N$ are said to be \textit{equivalent}, denoted $\ell_1\simeq\ell_2$, if there exist positive constants $c_1,c_2$ such that for all $g\in G$, $$c_1\ell_1(g)\leq \ell_2(g)\leq c_2\ell_1(g)$$
It is easy to see that the relation $\simeq$ is indeed an equivalence relation on functions $G\to\N$.  What's more, it is immediate that $|\cdot|_X\simeq|\cdot|_Y$ for any finite generating sets $X$ and $Y$ of a group.  As a result, if $G$ is finitely generated, then the notation $|\cdot|_G$ may be used to denote any function in the equivalence class of $|\cdot|_X$ for some fixed finite generating set $X$.

With this terminology in hand, condition (4) of Theorem A indicates that the malnormal Higman embedding is bi-Lipschitz, and so a quasi-isometric embedding.  Hence, Theorem A can be seen as a refinement of the result of Ol'shanskii in \cite{O97} mentioned above.

%
%
%

However, more care is needed when considering countable groups which are not finitely generated.  To this end, for any countable group $G$, a function $\ell:G\to\N$ is called a \textit{length function} if the following conditions hold:

\begin{enumerate}[label=(D{\arabic*})]

\item $\ell(g)=0$ if and only if $g=1$;

\item $\ell(g)=\ell(g^{-1})$ for all $g\in G$;

\item $\ell(gh)\leq\ell(g)+\ell(h)$ for all $g,h\in G$;

\item There exists $a>0$ such that $\#\{g\in G\mid\ell(g)\leq r\}\leq a^r$ for all $r\in\N$.

\end{enumerate}

Note that if $G$ is a subgroup of a finitely generated group $H$, then the restriction of $|\cdot|_H$ to $G$ is indeed a length function.

Conversely, for any countable group $G$ and any length function $\ell$, Ol'shanskii exhibited in \cite{O97.2} an embedding of $G$ into a finitely generated group $S$ such that the restriction of $|\cdot|_S$ to $G$ is equivalent to $\ell$, with $S$ recursively presented if $\ell$ is a computable function.  Hence, Corollary B can be viewed as a refinement of this result.


While this statement applies to any countable recursively presented group, its power can be underscored by considering what it says for finitely generated groups: If $G$ is a finitely generated group, then any `reasonable' function can be realized as the distortion of $G$ as a malnormal subgroup of a finitely presented group.  As an example, for any computable $\a\in[1,\infty)$, there exists a finitely presented group $G_\a$ containing a malnormal cyclic subgroup with distortion $n^\a$.



Finally, we study the congruence extension property, a characteristic of subgroups first introduced by Ol'shanskii in \cite{OSQ}.  A subgroup $G$ of a group $H$ satisfies the \textit{congruence extension property} (CEP) if for any epimorphism $\eps:G\to G_1$, there exists an epimorphism $\bar{\eps}:H\to H_1$ for some group $H_1$ containing $G_1$ as a subgroup and such that the restriction of $\bar{\eps}$ to $G$ is $\eps$. In this case, we write $G\leq_{CEP}H$ and say that $G$ is a \textit{CEP-subgroup} of $H$ or that $G$ is \textit{CEP-embedded} in $H$.

With this terminology, Theorem A and Corollary B can also be seen as refinements of the Higman embedding theorem which are CEP-embeddings.  Note that this is an innovation of this article, as this is the first such refinement of the Higman embedding theorem.

\medskip

\subsection{Proof of Corollary B} \

As its name indicates, Corollary B follows immediately from Theorem A and observations related to previous constructions, namely that of Ol'shanskii in \cite{O97.2}.

As discussed above, if $\ell:G\to\N$ is a computable length function on the countable group $G$, then Ol'shanskii constructs in \cite{O97.2} an embedding $\rho_{G,\ell}:G\hookrightarrow S$ such that $S$ is a finitely generated (indeed $2$-generated) recursively presented group and the restriction of $|\cdot|_S$ to $G$ is equivalent to $\ell$.  Let $\tau_S:S\hookrightarrow H$ be the embedding of $S$ into a finitely presented group $H$ as given by Theorem A.  The composition $\gamma=\tau_S\circ\rho_{G,\ell}$ thus produces an embedding of $G$ into a finitely presented group.

Now, by construction, the restriction of $|\cdot|_H$ to (the image of) $G$ is equivalent to the restriction of $|\cdot|_S$ to $G$, and so is equivalent to $\ell$. 

Further, note that it is shown in \cite{O16} (see Corollary 1.7) that $\rho_{G,\ell}$ is itself a malnormal embedding.  Hence, since $\leq_{mal}$ is evidently a transitive relation, $\gamma$ is itself a malnormal embedding.

Finally, note the following two reformulations of the definition of the congruence extension property which may be convenient for understanding and checking this condition:

\begin{enumerate}[label=(C\arabic*)]

\item $G$ is a CEP-subgroup of $H$ if and only if for any normal subgroup $N\triangleleft G$, there exists a normal subgroup $M\triangleleft H$ such that $M\cap G=N$

\item $G$ is a CEP-subgroup of $H$ if and only if for any subset $S\subseteq G$, $G\cap\gen{\gen{S}}^H=\gen{\gen{S}}^G$ (where $\gen{\gen{T}}^K$ denotes the normal closure of a subset $T$ of a group $K$).

\end{enumerate}

It is clear from (C1) that any retract of a group is a CEP-subgroup and that $\leq_{CEP}$ is a transitive relation.  But $\rho_{G,\ell}$ is itself a CEP-embedding (see Section 2.5 of \cite{OS01} for further discussion), and so $\gamma$ is sufficient for Corollary B.

\medskip

\subsection{Approach} \

The construction of the finitely presented groups of interest is through \textit{$S$-machines} (see Section 4.1 for a full definition of $S$-machine).

The $S$-machine was first defined by Sapir, Birget, and Rips in \cite{SBR} as a computational model carefully tailored to produce finitely presented groups with desired algebraic and geometric properties.  These groups arise from their associated $S$-machine in a canonical way, with defining relations that yield a group structure that `simulates' the computational structure of the machine (see \Cref{sec-the-groups} or \cite{W} for further discussion).

Using a computational model to construct a group satisfying desired properties is a fundamental technique for many seminal results in algorithmic group theory.  Indeed, this is the general approach to several classical solutions to the Higman embedding theorem (see for example the construction of Aanderaa in \cite{Aanderaa} or the exposition of Rotman in \cite{Rotman}).  

In a very rough sense (see \cite{OS01} for a detailed discussion), $S$-machines are novel in their ability to produce groups whose geometric and algorithmic properties are informed by the machine, while also crucially providing a robust computational structure (see \cite{SBR} for full details or \Cref{sec-M_2} for a cursory discussion).

%
%
%

For a concrete example of this point, the groups associated to non-deterministic Turing machines given in \cite{Aanderaa} are defined by `Baumslag-Solitar-type' relations, necessitating the groups to have at least exponential Dehn function (see Section 14); on the other hand, the commutator relations inherent to the presentations associated to an $S$-machine allow for these groups to have Dehn functions as low as quadratic, a point exploited in \cite{W}, \cite{O18}, \cite{OS19}, \cite{OS22}, and others.


However, these commutator relations seem to naturally preclude the use of $S$-machines for the constructions of the main theorems of this article, as they necessitate group elements that provide a counterexample to the malnormality of any embedded subgroup.  

Indeed, this obstacle necessitates a generalization of the computational structure, defined here in \Cref{sec-generalized-S-machine}.  This adaptation, simply termed \textit{generalized $S$-machines}, allows a computation to apply a predetermined automorphism of a free group to the words written on the tapes.  When these automorphisms take a particular form, this essentially combines the theory of $S$-machines with the `Baumslag-Solitar-type' relations found in sources like \cite{Aanderaa}.  In this case, the machine is termed a \textit{noisy $S$-machine}.  The result is an associated group whose structure is not very different from that of \cite{S11}, but that is fully defined in terms of a computational model, allowing more effective study through the associated machine.


This generalization is employed in one particular `step' of the main machine (see \Cref{sec-M_1}), particularly the only one that involves the letters which correspond to the image of the embedding.  Introducing this `noise' into the relational structure is enough to ensure the malnormality of the given embeddings (see \Cref{sec-annular-diagrams}).

As indicated above, the introduction of these `Baumslag-Solitar-type' relations means a `loss of control' on the Dehn function of the associated groups.  Accordingly, much less care is taken in this article in finding upper bounds on the area of circular diagrams over these presentations when compared to the detailed arguments made in previous sources (e.g in \cite{W}, \cite{O18}, and \cite{OS19}); some computable upper bound is necessary, though, for the proof of condition (5) in Theorem A.

With that said, the resemblance to the setting of $S$-machines allows for a similar treatment here.  For example, we again use the notion of \textit{$a$-cells}, first introduced by the author in \cite{W}, to study the embedding.  Hence, despite the loss of control on the Dehn function, other algebraic and geometric properties can be proved 
through similar means to those of previous settings, particularly \cite{W}.  Of course, the new types of relations in this generalization also introduce several new obstacles to just about every argument; for example, compare \Cref{sec-transposition} and \Cref{sec-distortion-diagrams} to their analogues in \cite{W}.

\medskip


\subsection{Organization of the article} \

What follows is a brief outline of the contents of this article.

\Cref{preliminaries} functions mainly to recall the definition of a diagram over the presentation of a group, the fundamental tool for the arguments of Section 3 and Sections 7-12.

In Section 3, we construct an initial embedding of a finitely generated recursively presented group into another such group which possesses some convenient combinatorial qualities (see for example \Cref{SQ lengths}).  This embedding is also shown to satisfy several key properties which reduce the proof of Theorem A to demonstrating an embedding of a group with these qualities.

Sections 4-6 serve to study the main computational structures of this construction.  Section 4 recalls the definition of $S$-machines and introduces the notion of generalized $S$-machines.  Several auxiliary generalized $S$-machines are then constructed and studied in Section 5, culminating with the construction and study of the noisy $S$-machine $\textbf{M}^\pazocal{L}$ in Section 6.

Several group presentations associated to a generalized $S$-machine are introduced in Section 7, with these relational structures arising in an analogous manner to that employed in \cite{W}.  The section culminates with an investigation of diagrams over these presentations, demonstrating properties shared by the presentations associated to any generalized $S$-machine.

In Sections 8-12, we study the group presentations associated to the noisy $S$-machine $\textbf{M}^\pazocal{L}$, using the properties established in Section 6 to verify sufficient conditions to ensure that the corresponding groups are suitable for the proof of Theorem A.

The final sections provide the proofs of the conditions comprising Theorem A, pulling together the group properties verified in Sections 7-12 and the initial embedding of Section 3.  Note, however, that the proofs presented in these sections seem to require different setups, an observation which may seem peculiar since our goal appears to be to prove the existence of a single embedding.  A major reason for this variation is the decidability of the Word Problem for the initial group $G$: If it is undecidable, then an obvious construction suffices, as (5) is immediately satisfied; if it is decidable, on the other hand, we must make some minor alterations to ensure that the finitely presented group it embeds into shares this property.  However, even the proof of condition (3) benefits from flexibility in the setup.  Because of this, many of the constructions outlined in Sections 5-12 are done so generally, allowing for the necessary pivots in the proofs that follow.

\medskip

\textbf{Acknowledgements.} The author expresses his deep gratitude to Alexander Ol'shanskii for his suggestions on this work.  The author is also thankful for the comments and advice of Mark Sapir and Denis Osin.  Finally, the author would like to thank Jingying Huang, Arman Darbinyan, and Bogdan Chornomaz for their helpful discussions.

\bigskip

\section{Preliminaries} \label{preliminaries}

\subsection{Diagrams over presentations} \label{sec-Diagrams} \

A vital tool for many of the arguments to come is the concept of \textit{van Kampen diagrams} over group presentations, a notion introduced by its namesake in 1933 \cite{v-K}. It is assumed that the reader is intimately acquainted with this concept, but some of the most important definitions are summarized below; for further reference, see \cite{O}, \cite{Lyndon-Schupp}, and \cite{S14}.

Let $G$ be a group with presentation $\gen{\pazocal{A}\mid\pazocal{R}}$. Suppose $\Delta$ is an oriented 2-complex homeomorphic to a disk equipped with a \textit{labelling function}, i.e a function $\lab:E(\Delta)\to\pazocal{A}\cup\pazocal{A}^{-1}\cup\{1\}$ which satisfies $\lab(\textbf{e}^{-1})=\lab(\textbf{e})^{-1}$ for any edge $\textbf{e}\in E(\Delta)$ (with, of course, $1^{-1}\equiv1$). The label of a path in $\Delta$ is defined in the obvious way, i.e $\lab(\textbf{e}_1\dots\textbf{e}_n)\equiv\lab(\textbf{e}_1)\dots\lab(\textbf{e}_n)$ (where `$\equiv$' denotes `visual' letter-for-letter equality). For any edge $\textbf{e}$ in $\Delta$, $\textbf{e}$ is called a $0$-edge if $\lab(\textbf{e})=1$; otherwise, $\textbf{e}$ is called a \textit{positive edge}.

Suppose that for each cell $\Pi$ of $\Delta$, one of the following is true:

\begin{enumerate}[label=({\arabic*})]

\item omitting the label of any zero edges, $\lab(\partial\Pi)$ is visually equal to a cyclic permutation of $R^{\pm1}$ for some $R\in\pazocal{R}$

\item $\partial\Pi$ consists of $0$-edges and exactly two positive edges $\textbf{e}$ and $\textbf{f}$, with $\lab(\textbf{e})\equiv\lab(\textbf{f})^{-1}$

\item $\partial\Pi$ consists only of $0$-edges. 

\end{enumerate}

\begin{figure}
\centering
\subcaptionbox{Positive cell corresponding to the relator $R=aba^{-1}b^{-1}$.}[0.33\textwidth]{\includegraphics[height=1.9in]{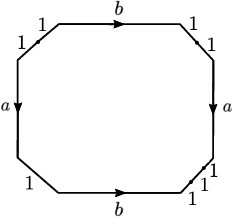}}\hfill
\subcaptionbox{$0$-cell of type (2), $a\in\pazocal{A}$.}[0.33\textwidth]{\includegraphics[height=1.9in]{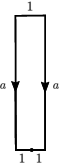}}\hfill
\subcaptionbox{$0$-cell of type (3).}[0.33\textwidth]{\includegraphics[height=1.9in]{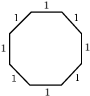}}\hfill
\caption{Cells in van Kampen diagrams}
\end{figure}

Then $\Delta$ is called a \textit{van Kampen diagram} (or simply a \textit{circular diagram}) over the presentation $\gen{\pazocal{A}\mid\pazocal{R}}$. The cells satisfying condition (1) above are called \textit{positive cells}, while the others are called \textit{0-cells}.

For any 0-cell of type (2), the positive edges $\textbf{e}$ and $\textbf{f}$ are called \textit{immediately adjacent}. In any diagram, two positive edges $\textbf{e}$ and $\textbf{f}$ are said to be \textit{adjacent} if there exists a sequence of edges $\textbf{e}=\textbf{e}_1,\textbf{e}_2,\dots,\textbf{e}_{k+1}=\textbf{f}$ such that $\textbf{e}_i$ and $\textbf{e}_{i+1}$ are immediately adjacent for $i=1,\dots,k$.

It is easy to see that the contour, $\partial\Delta$, of a circular diagram $\Delta$ has label equal to the identity in $G$. Conversely, van Kampen's Lemma (Lemma 11.1 of \cite{O}) ensures that a word $W$ over $\pazocal{A}^{\pm1}$ represents the identity of $G$ if and only if there exists a circular diagram $\Delta$ over the presentation $\gen{\pazocal{A}\mid\pazocal{R}}$ with $\lab(\partial\Delta)\equiv W$.

The \textit{area} of a diagram $\Delta$, denoted $\text{Area}(\Delta)$, is the number of positive cells it contains.  Further, the area of a word $W$ satisfying $W=1$ in $G$ is the minimal area of a circular diagram $\Delta$ satisfying $\lab(\partial\Delta)\equiv W$.

A \textit{0-refinement} of a diagram $\Delta$ is a diagram $\Delta'$ with homeomorphic underlying map obtained from $\Delta$ by the insertion/deletion of 0-edges and/or 0-cells. Note that a 0-refinement has the same area as the diagram from which it arises.

Let $\Delta$ be a circular diagram over $\gen{\pazocal{A}\mid\pazocal{R}}$ and $\Pi_1$, $\Pi_2$ be two positive cells in $\Delta$. Suppose there exists a simple path $\textbf{t}$ in $\Delta$ between the vertices $O_1,O_2$ of $\partial\Pi_1,\partial\Pi_2$, respectively, such that:

\begin{itemize}

\item $\lab(\textbf{t})=1$ in $F(\pazocal{A})$ (that is, the free group with basis $\pazocal{A}$), and 

\item $\lab(\partial\Pi_1)$ read starting at $O_1$ and $\lab(\partial\Pi_2)$ read starting at $O_2$ are mutually inverse 

\end{itemize}

Then $\Pi_1$ and $\Pi_2$ are called \textit{cancellable} in $\Delta$.

\begin{figure}[H]
\centering
\includegraphics[scale=1.25]{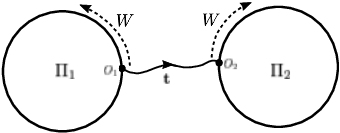}
\caption{Cancellable cells}
\label{cancellable}
\end{figure}

This terminology is justified by the ability to `remove' $\Pi_1$ and $\Pi_2$ from $\Delta$ through $0$-refinement, yielding a circular diagram $\Delta'$ satisfying $\lab(\partial\Delta')\equiv\lab(\partial\Delta)$ and $\text{Area}(\Delta')<\text{Area}(\Delta)$.


Naturally, a circular diagram is called \textit{reduced} if it has no pair of cancellable cells. By simply removing pairs of cancellable cells, any circular diagram over a presentation can be made reduced without affecting its contour label. This immediately implies a strengthened version of van Kampen's lemma: A word $W$ over $\pazocal{A}$ represents the identity in $G$ if and only if there exists a reduced circular diagram $\Delta$ over the presentation with $\lab(\partial\Delta)\equiv W$.

A \textit{Schupp diagram} (or simply \textit{annular diagram}) over the presentation $\gen{\pazocal{A}\mid\pazocal{R}}$ is defined in the analogous way, changing only that the underlying map is homeomorphic to an annulus rather than a disk.  Pairs of \textit{cancellable cells} in an annular diagram are defined in exactly the same way as for circular diagrams, again justified by the ability to use $0$-refinement to remove them without affecting the contour labels.

It is then an immediate consequence of van Kampen's lemma that two words $W$ and $V$ are conjugate in $G$ if and only if there exists a reduced annular diagram $\Delta$ with contour components $\textbf{p}$ and $\textbf{q}$ satisfying $\lab(\textbf{p})\equiv W$ and $\lab(\textbf{q})\equiv V^{-1}$.

A subdiagram of a diagram over a presentation is defined in the natural way, inheriting the labelling function from that of the diagram. However, it is convenient to restrict the terminology by assuming that subdiagrams are always circular, even if the original diagram is annular.

\begin{figure}[H]
\centering
\includegraphics[scale=0.9]{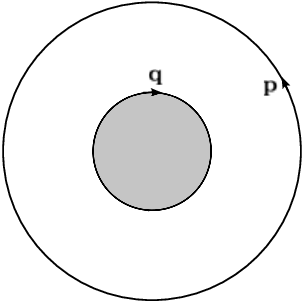}
\caption{Annular diagram}
\end{figure}

\medskip


\subsection{Parameters} \label{sec-parameters} \

The arguments spanning the rest of this paper are reliant on the \textit{highest parameter principle}, the dual to the \textit{lowest parameter principle} introduced in \cite{O}. For this, we introduce the relation $<<$ on parameters defined as follows.

If $\a_1,\a_2,\dots,\a_n$ are parameters with $\a_1<<\a_2<<\dots<<\a_n$, then for all $2\leq i\leq n$, it is understood that $\a_1,\dots,\a_{i-1}$ are assigned prior to the assignment of $\a_i$ and that the assignment of $\a_i$ is dependent on the assignment of its predecessors. The resulting inequalities are then understood as `$\a_i\geq$ (any expression used henceforth involving $\a_1,\dots,\a_{i-1}$)'.

Specifically, the assignment of parameters we use here is:
\begin{align*}
N<<C<<c_0<<L<<c_1<<\delta^{-1}<<K
\end{align*}

\medskip


\section{Initial Embedding} \label{sec-initial-embedding}

The first step toward addressing Theorem A is to `expand' a general recursive presentation with finite generating set, producing another such presentation into which the original presentation embeds.  Crucially, this new presentation will be shown (using diagrammatic arguments that resemble those of \cite{OSQ}) to satisfy key properties that are vital to later combinatorial calculations.



Let $Q$ be a finitely generated recursively presented group.  Let $\gen{Y\mid\pazocal{S}}$ be a presentation of $Q$ with $|Y|<\infty$ and assume that $\pazocal{S}$ satisfies the following three conditions:


\begin{enumerate}[label=(R\arabic*)]

\item $\pazocal{S}\subseteq Y^*$, i.e $\pazocal{S}$ is a set of \textit{positive} words in $Y$ in that each word is comprised entirely of letters from $Y$ (and not $Y^{-1}$)

\item $\pazocal{S}$ is a recursive subset of $Y^*$

\item The trivial word is not an element of $\pazocal{S}$

\end{enumerate}



As will be discussed further in Sections 13 and 14, these three conditions are not restrictive.

Setting $Y=\{y_1,\dots,y_m\}$, for all $i\in\{1,\dots,m\}$ let $Y_{C,i}=\{a_{1,i},\dots,a_{C,i}\}$, where $C$ is the parameter listed in Section 2.3, and let $Y_C=\cup Y_{C,i}$.

Further, for all $i$ let $A_i=a_{1,i}\dots a_{C,i}\in F(Y_{C,i})$. Then, for all $r=r(y_1,\dots,y_m)\in\pazocal{S}$, define the word $r_C=r(A_1,\dots,A_m)\in F(Y_C)$. For example, if $r=y_1y_m$, then $r_C=a_{1,1}\dots a_{C,1}a_{1,m}\dots a_{C,m}$. 

Letting $\pazocal{S}_C=\{r_C:r\in\pazocal{S}\}$, it follows that $\pazocal{S}_C$ is a set of positive words over $Y_C$ which is evidently recursive. Hence, $\gen{Y_C\mid\pazocal{S}_C}$ is a recursive presentation of a group $Q_C$ with $|Y_C|<\infty$.

Let $F$ be the subgroup of $F(Y_C)$ generated by $\pazocal{D}=\{A_1,\dots,A_m\}$. Since every letter of $Y_C$ appears once and only once in an element of $\pazocal{D}$, no cancellation occurs when forming products over $\pazocal{D}^{\pm1}$. Hence, $\pazocal{D}$ is a basis for the free subgroup $F$ of $F(Y_C)$.

For any normal subgroup $N\triangleleft F$, let $T_N$ be the set of non-trivial cyclically reduced words over $\pazocal{D}\cup\pazocal{D}^{-1}$ which are elements of $N$.  Note that since these words are cyclically reduced as words over $\pazocal{D}\cup\pazocal{D}^{-1}$, they are cyclically reduced as words over $Y_C\cup Y_C^{-1}$.  Further, note that $T_N$ is closed under taking inverses.  A diagram $\Delta$ over the presentation $\gen{Y_C\mid T_N}$ is then called a \textit{$T_N$-diagram}.

Let $L_N=\gen{\gen{N}}^{F(Y_C)}$, i.e the normal closure of $N$ in $F(Y_C)$. Then for words $W$ and $V$ over $Y_C^{\pm1}$, van Kampen's lemma implies the following statements:

\begin{itemize}

\item $W\in L_N$ if and only if there exists a circular $T_N$-diagram satisfying $\lab(\partial\Delta)\equiv W$.

\item $W$ and $V$ are conjugate in $F(Y_C)/L_N$ if and only if there exists an annular $T_N$-diagram with contour components $\textbf{p}$ and $\textbf{q}$ satisfying $\lab(\textbf{p})\equiv W$ and $\lab(\textbf{q})\equiv V^{-1}$.

\end{itemize}

For any positive cell $\Pi$ in a $T_N$-diagram $\Delta$, there exists a decomposition $\partial\Pi=\textbf{p}_1\dots \textbf{p}_s$ such that $\textbf{p}_i$ is labelled by $A_{j(i)}^{\eps_i}$ for some $j(i)\in\{1,\dots,m\}$ and $\eps_i\in\{\pm1\}$. In this case, $\textbf{p}_i$ is called an \textit{$F$-subpath} of $\partial\Pi$ and the vertices $(\textbf{p}_i)_-$ and $(\textbf{p}_i)_+$ are called \textit{entire vertices} of $\partial\Pi$.

If a subpath $\textbf{q}$ of a boundary component of $\Delta$ is labelled by an element of $F$, then the \textit{$F$-subpaths} and the \textit{entire vertices} of $\textbf{q}$ are defined analogously.

Suppose there exist positive cells $\Pi_1$ and $\Pi_2$ (perhaps $\Pi_1=\Pi_2$) in a $T_N$-diagram $\Delta$ and a path $\textbf{t}$ such that $\textbf{t}_-$ is an entire vertex of $\partial\Pi_1$, $\textbf{t}_+$ is an entire vertex of $\partial\Pi_2$, and $\lab(\textbf{t})$ is trivial in $F(Y_C)$. Then $\Pi_1$ and $\Pi_2$ are said to be \textit{compatible}. 

In this case, viewing $\lab(\partial\Pi_1)$ as starting at $\textbf{t}_-$ and $\lab(\partial\Pi_2)$ as starting at $\textbf{t}_+$, the label of the loop $\textbf{t}(\partial\Pi_2)^{-1}\textbf{t}^{-1}(\partial\Pi_1)^{-1}$ is freely conjugate to an element of $T_N$. So, if $\Pi_1\neq\Pi_2$, then after $0$-refinement one may excise from $\Delta$ a (circular) subdiagram containing the cells $\Pi_1$ and $\Pi_2$ and paste in its place a circular $T_N$-diagram consisting of exactly one positive cell, yielding a $T_N$-diagram with the same contour labels and area less by one.

If a subpath $\textbf{q}$ of a component of $\partial\Delta$ is labelled by an element of $F$, then the compatibility of $\textbf{q}$ and a positive cell $\Pi$ is defined analogously. In this case, one may use $0$-refinement to remove $\Pi$ from $\Delta$, obtaining a $T_N$-diagram $\Delta'$ with area less by one. However, for $\textbf{q}'$ the subpath of the component of $\partial\Delta'$ arising from $\textbf{q}$, $\lab(\textbf{q})$ and $\lab(\textbf{q}')$ are not equal in $F$. As $\lab(\partial\Pi)\in T_N$, though, $\lab(\textbf{q})$ and $\lab(\textbf{q}')$ do represent the same element of $F/N$.

A $T_N$-diagram $\Delta$ containing no pair of compatible cells is called \textit{$T_N$-reduced}. Note that any $T_N$-diagram can be made $T_N$-reduced by simply iterating the process of replacing pairs of compatible cells with single positive cells. Hence, the statements above can be refined in the following ways:

\begin{itemize}

\item $W\in L_N$ if and only if there exists a circular $T_N$-reduced diagram satisfying $\lab(\partial\Delta)\equiv W$.

\item $W$ and $V$ are conjugate in $F(Y_C)/L_N$ if and only if there exists an annular $T_N$-reduced diagram with contour components $\textbf{p}$ and $\textbf{q}$ satisfying $\lab(\textbf{p})\equiv W$ and $\lab(\textbf{q})\equiv V^{-1}$.

\end{itemize}

Suppose there exist positive cells $\Pi_1$ and $\Pi_2$ in a $T_N$-diagram $\Delta$ and edges $\textbf{e}_1\in\partial\Pi_1$ and $\textbf{e}_2\in\partial\Pi_2$ such that the labels of $\textbf{e}_1$ and $\textbf{e}_2$ are mutually inverse. Further, suppose there exists a path $\textbf{s}$ in $\Delta$ with $\textbf{s}_-=(\textbf{e}_1)_-$ and $\textbf{s}_+=(\textbf{e}_2)_+$ and so that $\lab(\textbf{s})$ is freely trivial. Then since any letter of $Y_C$ appears once and only once in any element of $\pazocal{D}$, the $F$-subpaths $\textbf{p}_1,\textbf{p}_2$ of $\partial\Pi_1,\partial\Pi_2$ containing $\textbf{e}_1,\textbf{e}_2$, respectively, must have inverse labels. Letting $\textbf{p}_1'$ be the initial subpath of $\textbf{p}_1$ with $(\textbf{p}_1')_+=(\textbf{e}_1)_-$ and $\textbf{p}_2'$ be the terminal subpath of $\textbf{p}_2$ with $(\textbf{p}_2')_-=(\textbf{e}_2)_+$, it follows that $\textbf{p}_1'\textbf{s}\textbf{p}_2'$ is a path between entire vertices of these cells with freely trivial label. Hence, $\Pi_1$ and $\Pi_2$ are compatible.

Thus, a $T_N$-reduced diagram is necessarily reduced. Moreover, for any $T_N$-reduced diagram $\Delta$ and any pair of distinct positive cells $\Pi_1$ and $\Pi_2$ in $\Delta$, if $\textbf{e}_1\in\partial\Pi_1$ and $\textbf{e}_2\in\partial\Pi_2$, then $\textbf{e}_1$ cannot be adjacent $\textbf{e}_2^{-1}$.

Similarly, adjacency implies the compatibility between a positive cell $\Pi$ and an appropriate subpath $\textbf{q}$ of a contour component of a $T_N$-reduced diagram $\Delta$.

\begin{lemma}[Compare to Theorem 2 of \cite{OSQ}] \label{SQ}

For any normal subgroup $N\triangleleft F$, $L_N\triangleleft F(Y_C)$ and satisfies $L_N\cap F=N$.

\end{lemma}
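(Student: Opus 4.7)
The normality of $L_N$ in $F(Y_C)$ is immediate from the definition as a normal closure, so the content of the lemma is the identity $L_N\cap F = N$. The inclusion $N\subseteq L_N\cap F$ is trivial, so only the reverse inclusion $L_N\cap F\subseteq N$ requires argument.

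My plan is to bypass the $T_N$-diagram machinery developed in the surrounding text and instead construct an explicit retraction $\phi:F(Y_C)\to F$ with the additional property that $\phi(L_N)\subseteq N$. Granting this, for any $w\in L_N\cap F$ one has $w = \phi(w)\in\phi(L_N)\subseteq N$, which closes the argument.

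To build $\phi$, I would exploit the fact that each letter of $Y_C$ appears exactly once in exactly one word of $\pazocal{D}$. Concretely, define $\phi$ on generators by $\phi(a_{1,i}) = A_i$ for each $i\in\{1,\dots,m\}$ and $\phi(a_{j,i}) = 1$ for $2\leq j\leq C$, and extend to a homomorphism using the freeness of $F(Y_C)$. A one-line computation gives $\phi(A_i)=A_i$, so $\phi$ restricts to the identity on $F = \gen{\pazocal{D}}$; in particular $\phi$ is a retraction onto $F$. To verify $\phi(L_N)\subseteq N$, observe that $L_N$ is generated as a group by the conjugates $gng^{-1}$ with $g\in F(Y_C)$ and $n\in N$; since $\phi(g)\in F$, $\phi(n) = n\in N$, and $N$ is normal in $F$, we get $\phi(gng^{-1}) = \phi(g)\, n\, \phi(g)^{-1}\in N$, and $N$ is closed under the products that form the elements of $L_N$.

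I do not expect a genuine obstacle here; the one point worth highlighting is that the retraction exists precisely because the defining words $A_i$ have pairwise disjoint letter-supports in $Y_C$, a feature deliberately arranged in the construction of $\pazocal{D}$. Any analogous diagrammatic proof (in the spirit of Theorem~2 of \cite{OSQ}) would have to encode essentially the same observation, namely that cells of a $T_N$-reduced diagram inherit a compatible $F$-structure from their boundary labels.
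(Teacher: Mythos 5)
Your retraction argument is correct: the map $\phi:F(Y_C)\to F$ sending $a_{1,i}\mapsto A_i$ and $a_{j,i}\mapsto 1$ for $j\geq2$ is a well-defined homomorphism with image in $F$, it fixes each $A_i$ (hence all of $F$) pointwise, and since $L_N$ is generated by conjugates $gn^{\pm1}g^{-1}$ with $g\in F(Y_C)$, $n\in N$, one gets $\phi(L_N)\subseteq N$ from normality of $N$ in $F$; then $w\in L_N\cap F$ gives $w=\phi(w)\in N$. This is genuinely different from the paper's proof, which works diagrammatically: it takes a circular $T_N$-reduced diagram of minimal area with contour label in $(L_N\cap F)\setminus N$, rules out self-compatible cells, and then shows every positive cell is compatible with the boundary so that cells can be removed, contradicting minimality — essentially a small-cancellation/van Kampen argument in the spirit of Theorem~2 of \cite{OSQ}. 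Your route is shorter and in effect establishes that $F$ is a retract of $F(Y_C)$, which (as the paper itself notes in the CEP discussion) immediately yields the congruence extension property and hence this lemma. What the paper's heavier proof buys is the $T_N$-diagram machinery (compatibility, adjacency of $F$-subpaths, the structure of $T_N$-reduced diagrams), which is indispensable for the neighboring results — \Cref{SQ lengths 0}, \Cref{SQ quasi}, and especially the malnormality statement \Cref{SQ malnormal} — none of which follow from the mere existence of a retraction. So your proof is a valid and more elementary substitute for this particular lemma, but not for the surrounding development; your closing observation that the disjoint letter-supports of the $A_i$ are what powers both arguments is accurate.
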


\begin{proof}

By definition, $L_N\triangleleft F(Y_C)$ and contains $N$.

Supposing $L_N\cap F\neq N$, there exists a circular $T_N$-reduced diagram $\Delta$ of minimal area satisfying $\lab(\partial\Delta)\in(L_N\cap F)\setminus N$.

As the label of the contour of any circular diagram with zero area is freely trivial, $\Delta$ must contain at least one positive cell.

Suppose there exists a positive cell $\Pi$ in $\Delta$ that is self-compatible. Then the entire vertices $o,o'$ of $\partial\Pi$ defining this compatibility partition $\partial\Pi=\textbf{q}_1\textbf{q}_2$ such that $\lab(\textbf{q}_i)\in F$. Letting $\textbf{t}$ be a path from $o$ to $o'$ such that $\lab(\textbf{t})=1$ in $F(Y_C)$, after 0-refinement the loop $\textbf{t}\textbf{q}_1$ can be assumed to bound a (circular) subdiagram $\Delta_1$ not containing $\Pi$ satisfying $\lab(\partial\Delta_1)=\lab(\textbf{q}_1)$ in $F(Y_C)$. As $\lab(\textbf{q}_1)$ cannot be trivial in $F(Y_C)$, $\Delta_1$ must contain at least one positive cell; further, $\lab(\partial\Delta_1)\in L_N$ by van Kampen's Lemma, so that the inductive hypothesis implies $\lab(\textbf{q}_1)\in N$. Since $\lab(\partial\Pi)\in N$, this also means that $\lab(\textbf{q}_2)\in N$. Letting $\Delta_2$ be the subdiagram bounded by the loop $\textbf{t}\textbf{q}_2^{-1}$, it follows that $\lab(\partial\Delta_2)=_{F(Y_C)}\lab(\textbf{q}_2)^{-1}\in N$. As a result, $\lab(\partial\Delta_2)$ is freely conjugate to an element of $T_N$, so that one may excise $\Delta_2$ from $\Delta$ and paste in its place a diagram containing exactly one positive cell. But this produces a circular $T_N$-diagram with the same contour label as $\Delta$ and strictly lesser area, contradicting the minimality of $\Delta$.

Hence, for any positive cell $\Pi$ of $\Delta$, every edge of $\partial\Pi$ is adjacent a boundary edge of $\Delta$. As a result, any positive cell is compatible with $\partial\Delta$, so that we may remove such a cell to produce a diagram $\Delta'$ with area less by one and such that $\lab(\partial\Delta)=\lab(\partial\Delta')$ in $F/N$. But then $\lab(\partial\Delta')\in(L_N\cap F)\setminus N$, so that the minimality of $\Delta$ is again contradicted.

\end{proof}

\begin{lemma} \label{SQ lengths 0}

Let $N\triangleleft F$.  If $W\in L_N$ is a non-trivial cyclically reduced word over $Y_C\cup Y_C^{-1}$, then there exists a subword of a cyclic permutation of $W$ which is a cyclic permutation of an element of $T_N$.  In particular, $|W|_{Y_C}\geq C$.


\end{lemma}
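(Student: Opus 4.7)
The plan is to take $\Delta$ to be a circular $T_N$-reduced diagram with $\lab(\partial\Delta)\equiv W$ of minimum area. Since $W\not\equiv 1$ and is cyclically reduced over $Y_C$, $\Delta$ must contain at least one positive cell. The goal is to exhibit a positive cell $\Pi$ of $\Delta$ whose entire boundary is traced by a contiguous subpath of $\partial\Delta$; the conclusion then follows immediately, since every element of $T_N$ has $Y_C$-length at least $C$ (each $\pazocal{D}^{\pm1}$-letter contributing exactly $C$ letters of $Y_C$).

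First I would eliminate self-compatible cells in $\Delta$, mimicking the proof of \Cref{SQ}. If some positive cell $\Pi$ were self-compatible via a trivial-labeled path $\textbf{t}$ between two distinct entire vertices of $\partial\Pi$, inducing a split $\partial\Pi=\textbf{q}_1\textbf{q}_2$ with each $\lab(\textbf{q}_i)\in F$, then the loop $\textbf{t}\textbf{q}_1^{-1}$ would bound a subdiagram on the side not containing $\Pi$ whose contour label lies in $L_N\cap F=N$ by \Cref{SQ}. Symmetrically $\lab(\textbf{q}_2)\in N$, so the subdiagram containing $\Pi$ could be replaced by a single-cell $T_N$-diagram, yielding a $T_N$-reduced diagram with the same contour label but strictly smaller area---contradicting minimality of $\Delta$.

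Combined with the adjacency analysis preceding \Cref{SQ}---which forbids distinct positive cells in a $T_N$-reduced diagram from admitting adjacent inversely-labelled edges---the above ensures that no positive edge of any $\partial\Pi$ is adjacent to another positive edge with an inverse label, whether in the same cell or in a different cell. Conceptually, contracting the $0$-cells of $\Delta$ and identifying positive edges across $0$-cells of type $(2)$ then yields a disk $\bar{\Delta}$ in which the positive cells appear as pairwise disjoint polygons (no edge is shared between distinct cells and no edge of $\partial\Pi$ is self-identified), so every edge of every positive cell's boundary lies on $\partial\bar{\Delta}$, which is labelled by $W$. Reading around any such cell $\Pi$ produces a contiguous subpath $\textbf{r}$ of $\partial\bar\Delta$ with $\lab(\textbf{r})$ equal up to cyclic rotation to $\lab(\partial\Pi)^{\pm1}\in T_N^{\pm1}$; since $W$ is cyclically reduced, $\lab(\textbf{r})$ is literally a subword of a cyclic permutation of $W$, completing the proof of the first assertion, and hence of the length bound $|W|_{Y_C}\geq C$.

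The main obstacle will be executing this contraction step rigorously: one must track the quotient of $\Delta$ carefully and verify that the combined adjacency constraints imposed by Steps~1 and~2 indeed insulate the positive cells from one another in the contracted complex, so that each cell's boundary lines up along $\partial\Delta$ as claimed. If the direct topological argument proves delicate, an alternative is an outermost-cell analysis: select a positive cell $\Pi$ of $\Delta$ minimizing the number of its boundary edges \emph{not} adjacent to $\partial\Delta$, and argue by local surgery---invoking \Cref{SQ} on subdiagrams with $F$-labelled boundaries and exploiting the area-minimality of $\Delta$---that this minimum must be zero.
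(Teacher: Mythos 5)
Your first two steps follow the paper exactly: take a minimal-area $T_N$-reduced diagram, rule out self-compatible cells by the surgery argument from \Cref{SQ}, and combine this with the adjacency analysis preceding \Cref{SQ} to conclude that every positive edge of every cell is adjacent to an edge of $\partial\Delta$. The gap is in the final step, where you claim that \emph{any} positive cell then reads a contiguous subpath of $\partial\bar\Delta$. That does not follow from the established constraints. After contracting the $0$-cells the complex need not be a disk: it can have cut vertices, and a whole subdiagram containing other positive cells can be attached at a vertex lying in the interior of the boundary arc of a given cell $\Pi$. In that situation every edge of $\partial\Pi$ still lies on the outer boundary, but the boundary walk makes an excursion between two of them, so $\lab(\partial\Pi)$ appears in $W$ interleaved with the labels of the attached part rather than as a subword of a cyclic permutation. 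Such configurations are compatible with $T_N$-reducedness and with $W$ being cyclically reduced (e.g.\ two cells wedged at a non-entire vertex, with positive labels so no free cancellation occurs at the junctions), so your assertion about ``any such cell'' is false in general; only the \emph{existence} of some cell with contiguous boundary can be salvaged, and that requires an extremal choice.

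This is precisely where the paper's proof differs: having shown that no cell edge is adjacent to another cell edge, it observes that the diagram, viewed as a map, satisfies the $C'(0)$ condition and invokes Grindlinger's Lemma to produce \emph{one} cell $\Pi$ whose edges adjacent to $\partial\Delta$ form a subpath $\textbf{q}$ of $\partial\Delta$; the subword $\lab(\textbf{q})$ is then a cyclic permutation of an element of $T_N$ and the bound $|W|_{Y_C}\geq C$ follows. Your proposed fallback does not repair the gap: minimizing ``the number of boundary edges of $\Pi$ not adjacent to $\partial\Delta$'' and showing this minimum is zero only re-proves the adjacency fact you already have, and says nothing about contiguity. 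To fix the argument, replace the ``any cell'' claim by an outermost-cell selection in the tree structure of the cut-vertex decomposition (or simply cite Grindlinger's Lemma as the paper does), which yields a single cell whose boundary arc is uninterrupted along $\partial\Delta$.
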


\begin{proof}

As noted above, $W\in L_N$ if and only if there exists a circular $T_N$-reduced diagram $\Delta$ such that $\lab(\partial\Delta)\equiv W$. Choose such a diagram $\Delta$ with minimal area.

As $W\neq1$ in $F(Y_C)$, the area of $\Delta$ must be at least 1.  Further, as in the proof of Lemma \ref{SQ}, no positive cell of $\Delta$ can be self-compatible.  Hence, for any positive cell $\Pi$ of $\Delta$, every edge of $\partial\Pi$ is adjacent to a boundary edge of $\Delta$.  

Thus, the diagram $\Delta$ as a map satisfies the small-cancellation condition $C'(0)$ (see Chapter 5 of \cite{Lyndon-Schupp}), so that Grindlinger's Lemma implies $\Delta$ contains a cell $\Pi$ such that the edges adjacent to $\partial\Pi$ form a subpath $\textbf{q}$ of $\partial\Delta$. 

Let $W'$ be the cyclic permutation of $W$ obtained from reading $\lab(\partial\Delta)$ starting at $\textbf{q}_-$.  Further, let $V\in T_N$ such that $\lab(\partial\Pi)\equiv V$.  Then, the subword $\lab(\textbf{q})$ of $W'$ is a cyclic permutation of $V$.  In particular, $|W|_{Y_C}\geq|\lab(\textbf{q})|_{Y_C}=C|V|_\pazocal{D}\geq C$.


\end{proof}

Note that if $N=\gen{\gen{\pazocal{S}_C}}^F$, then $L_N=\gen{\gen{\pazocal{S}_C}}^{F(Y_C)}$. Hence, Lemma \ref{SQ} implies: $$\gen{\gen{\pazocal{S}_C}}^{F(Y_C)}\cap F=\gen{\gen{\pazocal{S}_C}}^F$$
Letting $K=F\gen{\gen{\pazocal{S}_C}}^{F(Y_C)}\leq F(Y_C)$, it then follows that $$K/\gen{\gen{\pazocal{S}_C}}^{F(Y_C)}\cong F/\gen{\gen{\pazocal{S}_C}}^F\cong\gen{\pazocal{D}\mid\pazocal{S}_C}$$
By the theorem of von Dyck (Theorem 4.5 of \cite{O}), the map $Y\to\gen{\pazocal{D}\mid\pazocal{S}_C}$ defined by $y_i\mapsto A_i$ extends to an isomorphism $Q\to\gen{\pazocal{D}\mid\pazocal{S}_C}$. Hence, for $\tilde{Q}=K/\gen{\gen{\pazocal{S}_C}}^{F(Y_C)}\leq Q_C$, there exists an isomorphism $\varphi:Q\to \tilde{Q}$ given by $\varphi(y_i)=A_i$ for all $i$.


The following is then an immediate consequence of \Cref{SQ lengths 0}:

\begin{lemma} \label{SQ lengths}

Let $W$ be a non-trivial cyclically reduced word over $Y_C\cup Y_C^{-1}$ which represents the identity in $Q_C$.  Then there exists a non-trivial subword of a cyclic permutation of $W$ which is a cyclic permutation of an element of $\gen{\gen{\pazocal{S}_C}}^F$.  In particular, $|W|_{Y_C}\geq C$.

\end{lemma}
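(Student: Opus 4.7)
The plan is to deduce the statement directly from \Cref{SQ lengths 0} applied to the particular normal subgroup $N = \gen{\gen{\pazocal{S}_C}}^F\triangleleft F$. Normality of $N$ in $F$ is immediate from its definition as a normal closure, so the hypotheses of \Cref{SQ lengths 0} are satisfied. The key translation step is the observation that, for this specific choice of $N$, one has $L_N = \gen{\gen{\pazocal{S}_C}}^{F(Y_C)}$, exactly the identity computed in the paragraph preceding the statement of \Cref{SQ lengths} as a consequence of \Cref{SQ}. Consequently, a word $W$ over $Y_C\cup Y_C^{-1}$ represents the identity in $Q_C = \gen{Y_C\mid\pazocal{S}_C}$ if and only if $W\in L_N$.

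With this identification in hand, I would apply \Cref{SQ lengths 0} to the given non-trivial cyclically reduced $W\in L_N$. This produces a subword of a cyclic permutation of $W$ which equals a cyclic permutation of some $V\in T_N$, together with the length estimate $|W|_{Y_C}\geq C$. By definition of $T_N$, $V$ is a non-trivial cyclically reduced word over $\pazocal{D}\cup\pazocal{D}^{-1}$ lying in $N = \gen{\gen{\pazocal{S}_C}}^F$; in particular $V$ itself belongs to $\gen{\gen{\pazocal{S}_C}}^F$, so the subword in question is a cyclic permutation of an element of $\gen{\gen{\pazocal{S}_C}}^F$, as claimed. Non-triviality of this subword is automatic, since $V$ is non-trivial in $F(Y_C)$ and no cyclic permutation of a non-trivial reduced word is empty.

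There is essentially no obstacle to this argument: the lemma is a reformulation of \Cref{SQ lengths 0} for the single normal subgroup $N$ of interest, and the only content is the translation between "$W$ represents the identity in $Q_C$" and "$W\in L_N$", which is itself an immediate consequence of \Cref{SQ}. Indeed, the author announces the statement precisely as an immediate consequence of \Cref{SQ lengths 0}, and the plan above simply makes this explicit.
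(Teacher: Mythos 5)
Your proposal is correct and follows essentially the same route as the paper: set $N=\gen{\gen{\pazocal{S}_C}}^F$, use the elementary identity $L_N=\gen{\gen{\pazocal{S}_C}}^{F(Y_C)}$ to translate "$W=1$ in $Q_C$" into "$W\in L_N$", and then invoke \Cref{SQ lengths 0}. Your extra remarks (that the produced subword is a cyclic permutation of some $V\in T_N\subseteq\gen{\gen{\pazocal{S}_C}}^F$ and is automatically non-trivial) just make explicit what the paper leaves implicit, so there is nothing to correct.
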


\begin{proof}

Set $N=\gen{\gen{\pazocal{S}_C}}^{F}$.  Then, since $L_N=\gen{\gen{\pazocal{S}_C}}^{F(Y_C)}$, $W$ represents the identity in $Q_C$ if and only if $W\in L_N$.  Hence, the statement follows from \Cref{SQ lengths 0}.

\end{proof}

\begin{lemma} \label{SQ quasi}

For all $w\in Q$, $|\varphi(w)|_{Y_C}=C|w|_Y$.

\end{lemma}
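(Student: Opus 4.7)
The upper bound $|\varphi(w)|_{Y_C}\leq C|w|_Y$ is immediate: substituting $A_i$ for $y_i$ in a word of length $|w|_Y$ over $Y^{\pm1}$ representing $w$ yields a word of length $C|w|_Y$ over $Y_C^{\pm1}$ representing $\varphi(w)$ in $Q_C$.

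For the matching lower bound, my plan is to construct $C$ retraction-like homomorphisms $\bar\sigma_1,\ldots,\bar\sigma_C:Q_C\to Q$, each a left inverse to $\varphi$, by projecting out one ``row'' of the generating set $Y_C$ at a time. For each $j\in\{1,\ldots,C\}$ I would define $\sigma_j:F(Y_C)\to F(Y)$ on generators by $a_{j,i}\mapsto y_i$ for all $i$ and $a_{j',i}\mapsto 1$ whenever $j'\neq j$. Since each $A_i=a_{1,i}\dots a_{C,i}$ contains exactly one letter from the $j$-th row, namely $a_{j,i}$, we have $\sigma_j(A_i)=y_i$, and hence $\sigma_j(r_C)=r$ for every $r\in\pazocal{S}$. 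Thus $\sigma_j(\pazocal{S}_C)=\pazocal{S}\subseteq\Ker(F(Y)\to Q)$, and since this kernel is normal in $F(Y)$, $\sigma_j$ sends $L_N=\gen{\gen{\pazocal{S}_C}}^{F(Y_C)}$ into it and therefore descends to a homomorphism $\bar\sigma_j:Q_C\to Q$ satisfying $\bar\sigma_j\circ\varphi=\mathrm{id}_Q$.

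Next, I would take any word $V$ over $Y_C^{\pm1}$ representing $\varphi(w)$ in $Q_C$ and, for each $j$, let $k_j$ denote the number of letters of $V$ drawn from the $j$-th row $\{a_{j,i}^{\pm1}:1\leq i\leq m\}$. Then $\sigma_j(V)$ is a word over $Y^{\pm1}$ of length $k_j$, obtained by deleting the letters of $V$ outside row $j$ and relabelling each surviving $a_{j,i}^{\pm1}$ as $y_i^{\pm1}$, and it represents $\bar\sigma_j(\varphi(w))=w$ in $Q$. Consequently $|w|_Y\leq k_j$ for every $j$. Since the rows partition the generating set $Y_C$, the $k_j$'s sum to $|V|_{Y_C}$, so summing over $j=1,\ldots,C$ yields $C|w|_Y\leq |V|_{Y_C}$; taking $V$ of minimal length gives $C|w|_Y\leq|\varphi(w)|_{Y_C}$.

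The only point requiring verification is the well-definedness of each $\bar\sigma_j$, which reduces to the check $\sigma_j(\pazocal{S}_C)\subseteq\Ker(F(Y)\to Q)$ sketched above; the block structure of the substitution $y_i\mapsto A_i$ defining $\pazocal{S}_C$ makes this transparent. I do not anticipate a genuine obstacle: none of the $T_N$-diagram machinery developed earlier in Section 3 appears necessary here, and the lemma is essentially an observation exploiting the special form of the words $A_i$, namely that each letter of $Y_C$ occurs in exactly one $A_i$ and at exactly one position.
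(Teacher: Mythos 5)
Your proposal is correct, but it follows a genuinely different route from the paper. The paper proves the lower bound $|\varphi(w)|_{Y_C}\geq C|w|_Y$ with the $T_N$-diagram machinery of Section 3: it first notes that any $f\in F$ equal to $\varphi(w)$ in $Q_C$ satisfies $|f|_{Y_C}\geq C|w|_Y$ (since a $\pazocal{D}$-word of length $\ell$ representing $\varphi(w)$ forces $\ell\geq|w|_Y$ and spells a reduced $Y_C$-word of length $C\ell$), and then takes a minimal-area $T_N$-reduced diagram with boundary $v^{-1}f$ and shows, via the compatibility/adjacency analysis, that every edge of the $f$-side is adjacent to an edge of the $v$-side, whence $|v|_{Y_C}\geq|f|_{Y_C}$ for an arbitrary word $v$ representing $\varphi(w)$. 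You instead build, for each row $j\in\{1,\dots,C\}$, the retraction $\sigma_j:F(Y_C)\to F(Y)$ killing all rows but the $j$-th; since $\sigma_j(A_i)=y_i$ and hence $\sigma_j(r_C)=r$, each $\sigma_j$ descends to $\bar\sigma_j:Q_C\to Q$ with $\bar\sigma_j\circ\varphi=\mathrm{id}_Q$, and counting letters row by row in any word $V$ representing $\varphi(w)$ gives $|w|_Y\leq k_j$ for each $j$ and so $C|w|_Y\leq\sum_j k_j=\|V\|$. Your well-definedness check is the right one (the preimage under $\sigma_j$ of $\Ker(F(Y)\to Q)$ is normal and contains $\pazocal{S}_C$, hence contains its normal closure), and the upper bound is the same trivial substitution as in the paper. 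What each approach buys: yours is shorter, purely algebraic, and makes the exact constant $C$ transparent as a sum over rows, relying only on the block structure of the $A_i$; the paper's diagrammatic argument is of a piece with the surrounding lemmas (\ref{SQ}, \ref{SQ lengths 0}, \ref{SQ malnormal}) and reuses the adjacency analysis that is needed there anyway, so within the paper it comes essentially for free, whereas your retractions would not help with, say, the malnormality statement.
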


\begin{proof}

Let $k=|w|_Y$ and set $N=\gen{\gen{\pazocal{R}_C}}^F$.

Then, there exist $i_1,\dots,i_k\in\{1,\dots,m\}$ and $\eps_1,\dots,\eps_k\in\{\pm1\}$ such that $w=y_{i_1}^{\eps_1}\dots y_{i_k}^{\eps_k}$ in $Q$. So, $A_{i_1}^{\eps_1}\dots A_{i_k}^{\eps_k}=\varphi(w)$ in $Q_C$. Hence, $|\varphi(w)|_{Y_C}\leq Ck$.

Conversely, for any $f\in F$ such that $\varphi(w)=f$ in $\tilde{Q}\leq Q_C$, then for $j_1,\dots,j_\ell\in\{1,\dots,m\}$ and $\delta_1,\dots,\delta_\ell\in\{\pm1\}$ such that $f\equiv A_{j_1}^{\delta_1}\dots A_{j_\ell}^{\delta_\ell}$, then $y_{j_1}^{\delta_1}\dots y_{j_\ell}^{\delta_\ell}=w$. As a result, $\ell\geq k$ and so $|f|_{Y_C}\geq Ck$.

Now let $v$ be an arbitrary reduced word over $Y_C^{\pm1}$ such that $v=\varphi(w)$ in $Q_C$. Then for $f\in F$, $f=\varphi(w)$ in $Q_C$ if and only if $v^{-1}f\in L_N$, i.e if and only if there exists a circular $T_N$-reduced diagram $\Delta$ satisfying $\lab(\partial\Delta)\equiv v^{-1}f$. Choose such an $f\in F$ and corresponding diagram $\Delta$ such that $\Delta$ has minimal area. Then, partition $\partial\Delta=\textbf{p}\textbf{q}$ such that $\lab(\textbf{p})\equiv v^{-1}$ and $\lab(\textbf{q})\equiv f$.

If any positive cell of $\Delta$ is compatible with $\textbf{q}$, then $0$-refinement allows us to remove this cell to yield a circular $T_N$-reduced diagram $\Delta'$ satisfying $\lab(\partial\Delta')\equiv v^{-1}f'$ with $f'\in F$. But then $\text{Area}(\Delta')=\text{Area}(\Delta)-1$, contradicting the minimality of $\Delta$.

As a result, every edge of $\textbf{q}$ must be adjacent another boundary edge of $\Delta$. 

Suppose there exists a subpath $\textbf{e}_1\textbf{q}'\textbf{e}_2$ of $\textbf{q}$ such that $\textbf{e}_1$ and $\textbf{e}_2^{-1}$ are adjacent edges.  Then, let $\textbf{t}$ be a path consisting entirely of $0$-edges such that $\textbf{t}_-=(\textbf{e}_1)_-$ and $\textbf{t}_+=(\textbf{e}_2)_+$.  Using $0$-refinement, we may then assume that $\textbf{e}_1\textbf{q}'\textbf{e}_2$ and $\textbf{t}$ bound a subdiagram $\Gamma$.  As no edge on the boundary of a positive cell can be adjacent an edge of $\partial\Gamma$, $\Gamma$ must be a circular diagram over the free group $F(Y_C)$, and so $\lab(\partial\Gamma)$ is freely trivial.  But then $\lab(\textbf{e}_1\textbf{q}'\textbf{e}_2)$ is freely trivial, contradicting the assumption that $\lab(\textbf{q})$ is reduced.

Hence, every edge of $\textbf{q}$ is adjacent an edge of $\textbf{p}^{-1}$.  In particular, $|v|_{Y_C}\geq|f|_{Y_C}\geq Ck$.

\end{proof}

\begin{lemma} \label{SQ malnormal}

$\tilde{Q}\leq_{mal}Q_C$.

\end{lemma}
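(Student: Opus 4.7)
The plan is to argue malnormality by analyzing annular $T_N$-reduced diagrams, paralleling the method of \Cref{SQ}. Specifically, I would assume, for contradiction, that non-trivial $g_1, g_2 \in \tilde Q$ and $h \in Q_C \setminus \tilde Q$ satisfy $h^{-1} g_1 h = g_2$ in $Q_C$, and derive the existence of a slit in a suitable diagram whose label lies in $F$, forcing $h \in FL_N/L_N = \tilde Q$, a contradiction.

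First I would choose representatives $u, v \in F \setminus N$ of $g_1, g_2$ cyclically reduced over $\pazocal{D}$ (equivalently, over $Y_C$, since the $A_i$ use pairwise disjoint letter sets), where $N = \gen{\gen{\pazocal{S}_C}}^F$. As $u$ and $v$ are conjugate in $Q_C = F(Y_C)/L_N$, the annular form of van Kampen's lemma yields a $T_N$-reduced annular diagram $\Delta$ with contour components $\textbf{p}, \textbf{q}$ satisfying $\lab(\textbf{p}) \equiv u$ and $\lab(\textbf{q}) \equiv v^{-1}$. I would select $\Delta$ of minimal area, letting $u, v$ vary over cyclically reduced representatives of $g_1, g_2$ in $F \setminus N$.

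The central step is to show that this minimal $\Delta$ has area zero. $T_N$-reducedness excludes compatibility between distinct positive cells, and minimality excludes contractible self-compatibility (as in the first paragraph of the proof of \Cref{SQ}). Moreover, if a positive cell $\Pi$ were compatible with a subpath of $\textbf{p}$ or $\textbf{q}$, a $0$-refinement would absorb $\Pi$ to yield a strictly smaller diagram whose new boundary label still lies in $F$ and represents the same element of $\tilde Q$ (after cyclic reduction in $F$), contradicting minimality. Consequently, any positive edge on $\partial \Pi$ must adjoin another positive edge on $\partial \Pi$, and the adjacency analysis of the paragraphs preceding \Cref{SQ} forces $\Pi$ to be self-compatible via a path between two entire vertices. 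Contractible instances are excluded above; the non-contractible case, where the witnessing path winds around the annular hole, is handled by cutting $\Delta$ along this path to produce a disk $T_N$-reduced diagram on which the same edge-adjacency analysis (now planar) yields a contradiction.

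Finally, with $\Delta$ of area zero, $u$ and $v^{-1}$ are freely conjugate cyclically reduced words, hence cyclic permutations over $Y_C$. I would then choose a slit $\textbf{t}$ between entire vertices $o_1 \in \textbf{p}$ and $o_2 \in \textbf{q}$, so that the cyclic rotations of the boundary labels from these vertices give $u', v' \in F$ cyclically reduced over $\pazocal{D}$; the area-zero disk obtained by cutting $\Delta$ along $\textbf{t}$ gives $\lab(\textbf{t})^{-1} u' \lab(\textbf{t}) = v'$ in $F(Y_C)$. Now the map $r : F(Y_C) \to F$ defined by $a_{1,i} \mapsto A_i$ and $a_{j,i} \mapsto 1$ for $j \geq 2$ is a retraction, so $F$ is isolated in $F(Y_C)$ and the centralizer of $v'$ in $F(Y_C)$ lies in $F$. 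Writing the slit label as $\lab(\textbf{t}) = r(\lab(\textbf{t})) \cdot c$ with $c \in C_{F(Y_C)}(v') \leq F$, both factors lie in $F$, so $\lab(\textbf{t}) \in F$, and consequently $h \in \tilde Q$, the desired contradiction. The principal difficulty is the non-contractible self-adjacency surgery, a situation with no planar analog; it relies crucially on the $\pazocal{D}$-block rigidity of $F$ and its retract structure within $F(Y_C)$.
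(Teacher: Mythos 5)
There is a genuine gap at the final step, and it is exactly the point where your setup diverges from the paper's. You minimize over annular $T_N$-reduced diagrams that witness only the conjugacy of the chosen representatives $u,v$; the hypothesized conjugator $h\in Q_C\setminus\tilde Q$ appears nowhere in the diagrammatic data. What your argument (granting the area-zero reduction) actually shows is that the slit label of that particular minimal diagram lies in $F$, i.e.\ that \emph{some} element of $\tilde Q$ conjugates $u'$ to $v'$. That does not contradict the existence of $h$: the set of elements of $Q_C$ conjugating $g_1$ to $g_2$ is a coset $C_{Q_C}(g_1)\,t_0$ of the centralizer, and nothing in your proof shows $C_{Q_C}(g_1)\leq\tilde Q$ — that containment is itself part of what malnormality asserts, so "consequently $h\in\tilde Q$" is a non sequitur. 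The paper avoids this by building the offending conjugator into the object being minimized: the counterexample is an annular $T_N$-reduced diagram \emph{together with} a path $\textbf{t}$ between entire vertices of the two contours satisfying $\lab(\textbf{t})L_N\notin\tilde Q$, minimality is taken over such pairs (and the cell-removal surgeries are performed via $0$-refinement so that $\textbf{t}$ is undisturbed), and at the end Lemma 11.4 of \cite{O} converts the constructed path $\textbf{s}$ with $\lab(\textbf{s})\in F$ into the identity $\lab(\textbf{t})=\lab(\textbf{p})^\ell\lab(\textbf{s})$ in $Q_C$, forcing $\lab(\textbf{t})L_N\in\tilde Q$ — the contradiction. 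Without carrying $h$ (or a path representing it) through the argument, your conclusion about the slit says nothing about $h$; to repair the proof you would need either to track such a path, as the paper does, or to separately prove $C_{Q_C}(g)\leq\tilde Q$ for all nontrivial $g\in\tilde Q$, which is not easier than the lemma itself.

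Two secondary remarks. First, the area-zero reduction is both incomplete and unnecessary: your treatment of a self-compatible cell whose compatibility path is non-contractible is only asserted (you flag it yourself), whereas the paper never reduces to zero area — it only rules out compatibility of a positive cell with either contour, which by the adjacency-implies-compatibility discussion preceding \Cref{SQ} already forces every edge of $\textbf{p}$ to be adjacent to an edge of $\textbf{q}^{-1}$ and vice versa, and from this the path $\textbf{s}=\textbf{s}_1\textbf{s}_2$ from $\textbf{t}_-$ to $\textbf{t}_+$ with $\lab(\textbf{s})\in F$ is produced directly, irrespective of any remaining cells. Second, your closing free-group computation (the retraction $r$ fixes $F$ pointwise, so $F$ is isolated, the centralizer of $v'$ in $F(Y_C)$ is generated by a root lying in $F$, hence the slit label lies in $F$) is correct as algebra, but it establishes the wrong statement for the reason above.
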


\begin{proof}

Let $N=\gen{\gen{\pazocal{S}_C}}^F$.

Supposing the statement is false, there exists an annular $T_N$-reduced diagram $\Delta$ with contour components $\textbf{p}$ and $\textbf{q}$ such that $\lab(\textbf{p}),\lab(\textbf{q})\in F\setminus N$ and there exists a path $\textbf{t}$ in $\Delta$ such that $\textbf{t}_-$ is an entire vertex of $\textbf{p}$, $\textbf{t}_+$ is an entire vertex of $\textbf{q}$, and $\lab(\textbf{t})L_N\notin\tilde{Q}$. Choose such a diagram $\Delta$ with minimal area.

If any positive cell $\Pi$ of $\Delta$ is compatible with $\textbf{p}$, then $\Pi$ may be removed to yield an annular $T_N$-reduced diagram $\Delta'$ with contour components $\textbf{p}'$ and $\textbf{q}$ satisfying $\lab(\textbf{p}')=\lab(\textbf{p})$ in $\tilde{Q}$. Further, using $0$-refinement, the path $\textbf{t}$ can be assumed to be undisturbed by this procedure, so that there exists a path $\textbf{t}'$ in $\Delta'$  between entire vertices of $\textbf{p}'$ and $\textbf{q}$ satisfying $\lab(\textbf{t}')\equiv\lab(\textbf{t})$. But then the existence of $\Delta'$ contradicts the minimality of $\Delta$.

Similarly, no positive cell of $\Delta$ can be compatible with $\textbf{q}$. 

In particular, since $\lab(\textbf{p})$ and $\lab(\textbf{q})$ are reduced words, then as in the proof of \Cref{SQ quasi} every edge of $\textbf{p}$ must be adjacent an edge of $\textbf{q}^{-1}$ and vice versa. As a result, there exists a path $\textbf{s}_1$ in $\Delta$ such that $(\textbf{s}_1)_-=\textbf{t}_-$, $(\textbf{s}_1)_+$ is an entire vertex of $\textbf{q}$, and $\lab(\textbf{s}_1)$ is freely trivial. 

Let $\textbf{s}_2$ be the subpath of $\textbf{q}$ such that $(\textbf{s}_2)_-=(\textbf{s}_1)_+$ and $(\textbf{s}_2)_+=\textbf{t}_+$. Then since the initial and terminal vertices of $\textbf{s}_2$ are entire, $\lab(\textbf{s}_2)\in F$.  

So, setting $\textbf{s}=\textbf{s}_1\textbf{s}_2$, $\lab(\textbf{s})\in F$ with $\textbf{s}_-=\textbf{t}_-$ and $\textbf{s}_+=\textbf{t}_+$.  Hence, there exists an integer $\ell$ such that $\lab(\textbf{t})=\lab(\textbf{p})^\ell\lab(\textbf{s})$ in $Q_C$ (see Lemma 11.4 of \cite{O}).

But $\lab(\textbf{p})^\ell\lab(\textbf{s})\in F\leq K$, contradicting the assumption that $\lab(\textbf{t})L_N\notin\tilde{Q}$.

\end{proof}

\bigskip


\section{Rewriting Systems}

\subsection{$S$-Machines} \label{sec-S machines} \

There are many equivalent interpretations of $S$-machines (for example, see \cite{S06} and \cite{CW}). Following the conventions of \cite{BORS}, \cite{O18}, \cite{OS01}, \cite{OSconj}, \cite{OS06}, \cite{OS19}, \cite{SBR}, \cite{W}, and others, we describe them here as rewriting systems for words over group alphabets. 

Let $(Y,Q)$ be a pair of finite sets with $Q=\sqcup_{i=0}^N Q_i$ and $Y=\sqcup_{i=1}^N Y_i$ for some positive integer $N$. For convenience of notation, set $Y_0=Y_{N+1}=\emptyset$ in this setting.

The elements of $Q\cup Q^{-1}$ are called \textit{state letters} or \textit{$q$-letters}, while those of $Y\cup Y^{-1}$ are \textit{tape letters} or \textit{$a$-letters}. The sets $Q_i$ and $Y_i$ are called the \textit{parts} of $Q$ and $Y$, respectively. Note that the parts of the state letters are typically represented by capital letters, while their elements are represented by lowercase.

For any reduced word $W\in F(Y\cup Q)$, define its \textit{$a$-length} $|W|_a$ as the number of $a$-letters that comprise it. The $q$-length of $W$ is defined similarly and is denoted $|W|_q$.

The \textit{language of admissible words} of $(Y,Q)$ is the collection of reduced words which are of the form $W\equiv q_0^{\eps_0}u_1q_1^{\eps_1}\dots u_kq_k^{\eps_k}$ where $q_i\in Q$, $\eps_i\in\{\pm1\}$, and each subword $q_{i-1}^{\eps_{i-1}}u_iq_i^{\eps_i}$ either:

\begin{enumerate}[label=({\arabic*})]

\item belongs to $(Q_{j(i)-1}F(Y_{j(i)})Q_{j(i)})^{\pm1}$;

\item has the form $quq^{-1}$ for $q\in Q_{j(i)}$ and $u\in F(Y_{j(i)+1})$; or

\item has the form $q^{-1}uq$ for $q\in Q_{j(i)}$ and $u\in F(Y_{j(i)})$

\end{enumerate}

In this case, the \textit{base} of $W$ is $\text{base}(W)\equiv Q_{j(0)}^{\eps_0}Q_{j(1)}^{\eps_1}\dots Q_{j(k)}^{\eps_k}$, where these letters are merely representatives of their corresponding parts, and $u_i$ is called the $Q_{j(i)}^{\eps_i}Q_{j(i+1)}^{\eps_{i+1}}$-sector of $W$. Note that the base of an admissible word $W$ need not be a reduced word over the corresponding symbols and that $W$ is permitted to have many sectors of the same name (for example, $W$ may contain many $Q_0Q_1$-sectors). 

The base $Q_0Q_1\dots Q_N$ is called the \textit{standard base} of $(Y,Q)$. An admissible word with the standard base is called a \textit{configuration}. 

Now, let $Q(\theta)$ be a subset of $Q$ such that $Q(\theta)\cap Q_i$ is a singleton for each $i$. If $Q(\theta)\cap Q_i=\{q_i\}$, then to $q_i$ there is an associated word $u_iq_i'v_{i+1}$ where $q_i'\in Q_i$, $u_i\in F(Y_i)$, and $v_{i+1}\in F(Y_{i+1})$.

Further, let $Y(\theta)=\sqcup Y_j(\theta)$ be some subset of $Y$ such that $Y_j(\theta)\subseteq Y_j$. For each $j$, $Y_j(\theta)$ is called the \textit{domain} of $\theta$ in the corresponding sector of the standard base. 

In this case, $\theta$ is called an \textit{$S$-rule} of $(Y,Q)$ and is denoted 
$$\theta=[q_0\to u_0q_0'v_1, \ q_1\to u_1q_1'v_2, \dots, q_N\to u_Nq_N'v_{N+1}]$$ 
Note that this notation does not fully specify the rule, as the domain $Y(\theta)$ is not included. 

Suppose $W$ is an admissible word with all its state letters contained in $Q(\theta)\cup Q(\theta)^{-1}$ and all its tape letters contained in $Y(\theta)\cup Y(\theta)^{-1}$. Then, $W$ is said to be \textit{$\theta$-admissible} and $W\cdot\theta$ is defined to be the admissible word resulting from the simultaneously: 

\begin{itemize}

\item for all $q_i\in Q(\theta)$, replacing every occurrence of $q_i^{\pm1}$ in $W$ with $(u_iq_i'v_{i+1})^{\pm1}$, and

\item reducing/trimming the resulting word so that it is again admissible.

\end{itemize}

An important note to stress is that the application of an $S$-rule results in a reduced word, i.e reduction is not a separate step in the process.

If the $i$-th part of $\theta$ is $U_i\to V_i$ and $Y_{i+1}(\theta)=\emptyset$, then this part of the rule is denoted $U_i\xrightarrow{\ell}V_i$ and $\theta$ is said to \textit{lock} the $Q_{i}Q_{i+1}$-sector of the standard base. 


Note that every $S$-rule $\theta$ has a natural inverse, namely the $S$-rule $\theta^{-1}$ with $Y(\theta^{-1})=Y(\theta)$ and $\theta^{-1}=[q_0'\to u_0^{-1}q_0v_1^{-1},\dots,q_N'\to u_N^{-1}q_Nv_{N+1}^{-1}]$. Observe that $W$ is $\theta$-admissible, then $W\cdot\theta$ is $\theta^{-1}$-admissible with $(W\cdot\theta)\cdot\theta^{-1}\equiv W$.

An \textit{$S$-machine} $\textbf{S}$ with hardware $(Y,Q)$ is defined to be a rewriting system whose software is a finite symmetric set of $S$-rules $\Theta(\textbf{S})=\Theta$, i.e so that $\theta\in\Theta$ if and only if $\theta^{-1}\in\Theta$.  It is convenient to then partition $\Theta$ into two disjoint sets, $\Theta^+$ and $\Theta^-$, such that $\theta\in\Theta^+$ if and only if $\theta^{-1}\in\Theta^-$. The elements of $\Theta^+$ are called the \textit{positive rules} and those of $\Theta^-$ the \textit{negative rules}.

For $t\geq0$, suppose $W_0,\dots,W_t$ are admissible words with the same base such that there exist $\theta_1,\dots,\theta_t\in\Theta$ satisfying $W_{i-1}\cdot\theta_i\equiv W_i$ for all $1\leq i\leq t$. Then the sequence of applications of rules $\pazocal{C}:W_0\to\dots\to W_t$ is called a \textit{computation} of \textit{length} or \textit{time} $t$ of $\textbf{S}$. The word $H=\theta_1\dots\theta_t$ is called the \textit{history} of $\pazocal{C}$ and the notation $W_t\equiv W_0\cdot H$ is used to represent the computation.

A computation is called \textit{reduced} if its history is a reduced word over $\Theta$. Note that every computation can be made reduced without changing the initial and final admissible words of the computation by simply removing consecutive mutually inverse rules.

Typically, it is assumed that each part of the state letters contains two (perhaps the same) fixed elements, called the \textit{start} and \textit{end} state letters. A configuration is called a \textit{start} (or \textit{end}) configuration if all its state letters are start (or end) letters.

A \textit{recognizing} $S$-machine is one with specified sectors called the \textit{input sectors}. If a start configuration has all sectors empty except for the input sectors, then it is called an \textit{input configuration} and its projection onto $Y\cup Y^{-1}$ (i.e its image under the map that sends each state letter to $1$ and each tape letter to itself) is called its \textit{input}. The end configuration with every sector empty is called the \textit{accept configuration}.

A configuration $W$ is \textit{accepted} by a recognizing $S$-machine if there is an \textit{accepting computation}, i.e a computation with initial configuration $W$ and final configuration the accept configuration. If $W$ is an accepted input configuration with input $u$, then $u$ is also said to be \textit{accepted}.

If the configuration $W$ is accepted by the $S$-machine $\textbf{S}$, then $T(W)$ is the minimal time of its accepting computations. For a recognizing $S$-machine \textbf{S}, its \textit{time function} is $$T_{\textbf{S}}(n)=\max\{T(W): W\text{ is an accepted input configuration of } \textbf{S}, \ |W|_a\leq n\}$$



If two recognizing $S$-machines have the same language of accepted words and $\Theta$-equivalent time functions, then they are said to be \textit{equivalent}.

The following simplifies how one approaches the rules of a recognizing $S$-machine.

\begin{lemma}[Lemma 2.1 of \cite{O18}] \label{simplify rules}

Every recognizing $S$-machine $\textbf{S}$ is equivalent to a recognizing $S$-machine such that for every part $q_i\to u_iq_i'v_{i+1}$ of every rule, $\|u_i\|+\|v_{i+1}\|\leq1$.

%
%
%
%

\end{lemma}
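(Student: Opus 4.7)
The plan is to replace each rule $\theta$ of $\textbf{S}$ whose $i$-th part satisfies $\|u_i\|+\|v_{i+1}\|\geq 2$ for some $i$ with a sequence of new rules, each of which inserts at most one tape letter in each part, linked together by freshly introduced intermediate state letters.

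Fix a rule $\theta=[q_i\to u_iq_i'v_{i+1}]_{i=0}^N$ of $\textbf{S}$ and set $m=\max_i(\|u_i\|+\|v_{i+1}\|)$; if $m\leq1$, leave $\theta$ alone. Otherwise introduce new state letters $q_i^{\theta,j}\in Q_i$ for $i=0,\dots,N$ and $j=1,\dots,m-1$, chosen fresh across different rules $\theta$ and stages $j$, and adopt the convention $q_i^{\theta,0}=q_i$ and $q_i^{\theta,m}=q_i'$. Writing $u_i=a_{i,1}\cdots a_{i,k_i}$ and $v_{i+1}=b_{i,1}\cdots b_{i,\ell_i}$, define new rules $\theta_1,\dots,\theta_m$ whose $i$-th part is $q_i^{\theta,j-1}\to c_{i,j}\,q_i^{\theta,j}\,d_{i,j}$, where $c_{i,j}=a_{i,j}$ for $j\leq k_i$ and is empty otherwise, while $d_{i,j}=b_{i,j-k_i}$ for $k_i<j\leq k_i+\ell_i$ and is empty otherwise. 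Declare the intermediate letters to be neither start nor end letters, set $Y_i(\theta_1)=Y_i(\theta)$ and $Y_i(\theta_j)=Y_i(\theta)\cup\{\text{letters appearing in }u_i\text{ or }v_i\}$ for $j>1$, and preserve every locking condition by setting $Y_{i+1}(\theta_j)=\emptyset$ whenever $Y_{i+1}(\theta)=\emptyset$. Call the resulting $S$-machine $\textbf{S}'$; by construction, every part of every rule of $\textbf{S}'$ inserts at most one tape letter.

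One direction of equivalence is immediate: every application of $\theta$ in $\textbf{S}$ is simulated by the consecutive applications $\theta_1,\theta_2,\dots,\theta_m$ in $\textbf{S}'$, so an accepting computation of $\textbf{S}$ of length $t$ lifts to one in $\textbf{S}'$ of length at most $m_{\max}\cdot t$, where $m_{\max}$ is the maximum of $m$ over all original rules. For the converse, let $\pazocal{C}$ be a reduced accepting computation of $\textbf{S}'$. The crucial observation is that for each original $\theta$ and each $0<j<m$, the letter $q_i^{\theta,j}$ lies in $Q(\theta')\cup Q(\theta')^{-1}$ only when $\theta'\in\{\theta_j,\theta_{j+1}\}^{\pm1}$. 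Hence once an application of $\theta_j$ has been made in $\pazocal{C}$, the following rule is forced to be $\theta_{j+1}$ (since $\theta_j^{-1}$ is forbidden by reducedness), and inductively the stages $\theta_{j+1},\dots,\theta_m$ must be traversed in order before $\pazocal{C}$ can return to configurations with only original state letters. Since both the initial and final (accept) configurations of $\pazocal{C}$ use only original state letters, $\pazocal{C}$ decomposes as a concatenation of complete blocks $\theta_1\theta_2\cdots\theta_m$ or $\theta_m^{-1}\cdots\theta_1^{-1}$, and projecting each block to $\theta$ or $\theta^{-1}$ produces an accepting computation of $\textbf{S}$ on the same input.

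The accepted input languages therefore coincide, and the time functions satisfy $T_{\textbf{S}}(n)\leq T_{\textbf{S}'}(n)\leq m_{\max}\cdot T_{\textbf{S}}(n)$, so they are equivalent up to a constant multiplicative factor. The main obstacle is the reverse-simulation step — ruling out the possibility that $\pazocal{C}$ abandons a partially completed simulation of $\theta$ to begin simulating some unrelated rule. This is precisely what is secured by choosing the intermediate state letters fresh across different rules and stages: the set $Q(\theta'_{j'})$ for any unrelated new rule avoids $q_i^{\theta,j}$, so the intermediate letters are ``captured'' by $\theta$'s simulation, and the reducedness of $\pazocal{C}$ forces its completion. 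The remaining bookkeeping — verifying admissibility at each intermediate step, handling the finitely many locked sectors, and accommodating the ``no-op'' stages $k_i+\ell_i<j\leq m$ in parts whose schedule finishes early — is routine.
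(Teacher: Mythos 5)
The paper does not actually prove this lemma — it is quoted from Lemma 2.1 of the cited work of Ol'shanskii — and your proposal reconstructs precisely the standard argument used there: subdivide each rule into a chain of rules with fresh intermediate state letters, use reducedness plus the freshness of those letters to force any reduced computation of the new machine to traverse each chain completely, and then project blocks back to the original rules to get coincidence of languages and a linear (hence $\Theta$-) relation between time functions. That skeleton is correct.

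Two details of your explicit construction, however, would fail as written. First, the right-hand insertions come out in the wrong order: applying $q^{j-1}\to c_j q^j d_j$ places each new $d_j$ immediately to the right of the state letter, so after all $m$ stages the sector to the right reads $d_m d_{m-1}\cdots d_1$, not $d_1\cdots d_m$; with your choice $d_{i,j}=b_{i,j-k_i}$ the block therefore simulates the rule $q_i\to u_i q_i'\,b_{i,\ell_i}\cdots b_{i,1}$, i.e.\ the reversal of $v_{i+1}$. You need to feed the letters of $v_{i+1}$ in reverse order (equivalently, require $d_m\cdots d_1\equiv v_{i+1}$), while your left-hand indexing $c_{i,j}=a_{i,j}$ is fine since $c_1\cdots c_m\equiv u_i$. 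Second, your two prescriptions for the domains conflict: if the original rule $\theta$ locks the $Q_iQ_{i+1}$-sector but nevertheless inserts letters into it (which the definition of an $S$-rule permits), then keeping $Y_{i+1}(\theta_j)=\emptyset$ at every stage makes the intermediate configurations inadmissible — after the first insertion the sector is nonempty, so the next stage, whose domain is empty, cannot apply, and the forward simulation breaks. For such sectors only the first stage may lock; the later stages must have as domain exactly the already-inserted letters (which also keeps the computation forced). Both are local repairs and do not disturb the forced-completion argument or the equivalence of languages and time functions.
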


Through the rest of our discussion of computational models, we will often use copies of words over disjoint alphabets. To be precise, let $A$ and $B$ be disjoint alphabets, $W\equiv a_1^{\eps_1}\dots a_k^{\eps_k}$ with $a_i\in A$ and $\eps_i\in\{\pm1\}$, and $\varphi:\{a_1,\dots,a_k\}\to B$ be an injection. Then the \textit{copy} of $W$ over the alphabet $B$ formed by $\varphi$ is the word $W'\equiv\varphi(a_1)^{\eps_1}\dots\varphi(a_k)^{\eps_k}$. Typically, the injection defining the copy will be contextually clear.

Alternatively, a copy of an alphabet $A$ is a disjoint alphabet $A'$ which is in one-to-one correspondence with $A$. For a word over $A$, its copy over $A'$ is defined by the (fixed) correspondence between the alphabets.

\medskip

\subsection{Generalized $S$-machines} \label{sec-generalized-S-machine} \

We now introduce a modification to the definition of $S$-machines, permitting the rewriting to also take place within the tapes. The motivation of this alteration will be made clear by the definitions of the associated groups.

Let $(Y,Q)$ be a pair of finite sets with $Y=\sqcup_{i=1}^N Y_i$ and $Q=\sqcup_{i=0}^N Q_i$.  As in the definition of $S$-rule, let $Q(\theta)$ be a subset of $Q$ with $Q(\theta)\cap Q_i=\{q_i\}$ for each $i$. In this environment, however, for each $i$, in place of a domain $Y_i(\theta)$ we assign two finite subsets $X_i(\theta),Z_i(\theta)\subseteq F(Y_i)$ which form bases of free subgroups of $F(Y_i)$ and such that there exists a bijection $f_{\theta,i}:X_i(\theta)\to Z_i(\theta)$ extending to an isomorphism $\widetilde{f}_{\theta,i}:\gen{X_i(\theta)}\to\gen{Z_i(\theta)}$.  Finally, let $u_i,v_i\in \gen{Z_i(\theta)}$ for all $i$.

Then the \textit{generalized $S$-rule} $\theta$ is defined as the rewriting rule denoted
$$\theta=[q_0\to q_0'v_1, \ q_1\to u_1q_1'v_2, \ \dots, \  q_N\to u_Nq_N']$$
Note that, similar to how the notation for $S$-rules does not specify the domain of a rule, this notation does not capture the subsets $X_i(\theta)$ and $Z_i(\theta)$ or the bijection $f_{\theta,i}$.

Let $w\in\gen{X_i(\theta)}$ for some $i\in\{1,\dots,N\}$.  Since $X_i(\theta)$ forms a basis for the corresponding subgroup of $F(Y_i)$, there must exist $x_1,\dots,x_l\in X_i(\theta)$ and $\delta_1,\dots,\delta_l\in\{\pm1\}$ such that $w=_{F(Y_i)} x_1^{\delta_1}\dots x_k^{\delta_l}$.  In this case, we define the \textit{$\theta$-length of $w$}, $l_\theta(w)$, to be the value $l$, i.e $l_\theta(w)=|w|_{X_i(\theta)}$.  Note that if $X_i(\theta)\subseteq Y_i$, then $l_\theta(w)=\|w\|$.

Now suppose $W\equiv p_0w_1p_1\dots p_{k-1}w_kp_k$ is an admissible word where for each $i$, $p_i\in Q(\theta)\cup Q(\theta)^{-1}$ and $w_i\in\gen{X_{j(i)}(\theta)}$ for some $j(i)\in\{1,\dots,N\}$. Then $W$ is said to be \textit{$\theta$-admissible}, with the application $W\cdot\theta$ taken as the admissible word resulting from simultaneously doing the following:

\begin{itemize}

\item for all $i$, replace $w_i$ with $\widetilde{f}_{\theta,j(i)}(w_i)$,

\item for all $i$, replace $p_i$ with $(u_jq_j'v_{j+1})^{\pm1}$ where $p_i=q_j^{\pm1}$, and

\item reduce/trim the resulting word so that it is again admissible.

\end{itemize}

The \textit{$\theta$-length} of the $\theta$-admissible word $W$ is taken to be $l_\theta(W)=(k+1)+\sum_{i=1}^k l_\theta(w_i)$.  As above, note that if $X_{j(i)}(\theta)\subseteq Y_{j(i)}$ for all $i$, then $l_\theta(W)$ is simply $\|W\|$.

As with $S$-rules, it is important to stress that the application of a generalized $S$-rule immediately results in an admissible word. 

Also, note that if $X_i(\theta)=Z_i(\theta)\subseteq Y_i$ and $f_{\theta,i}$ is the identity map for each $i$, then $\theta$ can be regarded as an $S$-rule with domain $Y_i(\theta)=X_i(\theta)$.  Thus, every $S$-rule can be viewed as a generalized $S$-rule.

We then extend the definition of locked sector to say that the generalized $S$-rule $\theta$ \textit{locks} the $Q_iQ_{i+1}$-sector of the standard base if $X_i(\theta)=Z_i(\theta)=\emptyset$ (and so $\widetilde{f}_{\theta,i}$ is the identity map on the trivial group). As with $S$-rules, this is denoted by $q_i\xrightarrow{\ell}q_i'$ in the definition of $\theta$.

Next, we define the inverse $\theta^{-1}$ of the generalized $S$-rule $\theta$. For this, let  $Q(\theta^{-1})$ be the subset of $Q$ with $Q(\theta^{-1})\cap Q_i=\{q_i'\}$ for all $i$, let $X_i(\theta^{-1})=Z_i(\theta)$ and $Z_i(\theta^{-1})=X_i(\theta)$ for all $i$, and let $f_{\theta^{-1},i}=f_{\theta,i}^{-1}$ for all $i$.  Note then that for any $z_i\in\gen{Z_i(\theta)}$, $\widetilde{f}_{\theta^{-1},i}(z_i)\in\gen{X_i(\theta)}=\gen{Z_i(\theta^{-1})}$.  Thus, we may define:
$$\theta^{-1}=[q_0'\to q_0\widetilde{f}_{\theta^{-1},1}(v_1^{-1}), \ q_1'\to \widetilde{f}_{\theta^{-1},1}(u_1^{-1})q_1\widetilde{f}_{\theta^{-1},2}(v_2^{-1}), \dots, \ q_N'\to\widetilde{f}_{\theta^{-1},N}(u_N^{-1})q_N]$$
Note that inversion is indeed an involutional operation on generalized $S$-rules, i.e $(\theta^{-1})^{-1}=\theta$.

Following the definition of $S$-machine, a \textit{generalized $S$-machine} is then a triple $(Y,Q,\Theta)$ where $\Theta=\Theta^+\sqcup\Theta^-$ is a symmetric set of generalized $S$-rules.

The following statement is critical to our study of computations of generalized $S$-machines.

\begin{lemma} \label{inverse rules}

Suppose $\theta=[q_0\to q_0'v_1, \ q_1\to u_1q_1'v_2, \ \dots, \  q_N\to u_Nq_N']$ is a generalized $S$-rule and $W$ is $\theta$-admissible. Then $W\cdot\theta$ is $\theta^{-1}$-admissible with $(W\cdot\theta)\cdot\theta^{-1}\equiv W$.

\end{lemma}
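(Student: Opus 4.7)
The plan is direct verification: write $W$ in its standard decomposition, compute $W\cdot\theta$ explicitly, confirm the result is $\theta^{-1}$-admissible, and then apply $\theta^{-1}$ to recover $W$. The entire argument will turn on the observation that the definition of $\theta^{-1}$ has been engineered precisely so that the two substitutions cancel in each sector.

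First I would write $W\equiv p_0w_1p_1\dots w_kp_k$ with $p_i\equiv q_{j(i)}^{\varepsilon_i}$ for $q_{j(i)}\in Q(\theta)$ and $\varepsilon_i\in\{\pm1\}$, and with each $w_i\in\gen{X_{\sigma(i)}(\theta)}$, where $\sigma(i)$ is the index of the sector containing $w_i$ (determined by which of the three cases in the definition of admissible word the subword $p_{i-1}w_ip_i$ belongs to). A direct substitution then shows that $W\cdot\theta$ has state letters $(q_{j(i)}')^{\varepsilon_i}\in Q(\theta^{-1})^{\pm1}$, while the tape word in each sector is, after free reduction, a product of $\widetilde{f}_{\theta,\sigma(i)}(w_i)$ with suitable $u_{j(\cdot)}^{\pm1}$ and $v_{j(\cdot)+1}^{\pm1}$ factors contributed by the surrounding state-letter substitutions. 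Since all of $u_j$, $v_j$, and $\widetilde{f}_{\theta,\sigma(i)}(w_i)$ lie in $\gen{Z_{\sigma(i)}(\theta)}=\gen{X_{\sigma(i)}(\theta^{-1})}$, each such tape word lies in the correct subgroup to certify $\theta^{-1}$-admissibility. A small remark is needed here to rule out pathological cancellations: two primed state letters could only become adjacent and cancel after substitution if the corresponding $p_{i-1},p_i$ were mutually inverse and $\widetilde{f}_{\theta,\sigma(i)}(w_i)=1$, which, since $\widetilde{f}_{\theta,\sigma(i)}$ is an isomorphism of free subgroups, forces $w_i=1$ and contradicts the reducedness of $W$.

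Finally I would compute $(W\cdot\theta)\cdot\theta^{-1}$ sector by sector. In a typical case-1 sector with $\varepsilon_{i-1}=\varepsilon_i=1$, the tape word $v_j\widetilde{f}_{\theta,j}(w_i)u_j$ of $W\cdot\theta$ is sent by $\widetilde{f}_{\theta^{-1},j}=\widetilde{f}_{\theta,j}^{-1}$ to $\widetilde{f}_{\theta,j}^{-1}(v_j)\cdot w_i\cdot\widetilde{f}_{\theta,j}^{-1}(u_j)$, while the flanking state-letter substitutions $q_{j-1}'\mapsto\dots\widetilde{f}_{\theta,j}^{-1}(v_j^{-1})$ and $q_j'\mapsto\widetilde{f}_{\theta,j}^{-1}(u_j^{-1})q_j\dots$ contribute exactly the inverse factors needed to telescope away, leaving $w_i$ between the restored letters $q_{j-1}$ and $q_j$. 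The remaining sector types (cases 2 and 3, and orientations with $\varepsilon_i=-1$) are handled in the same way after swapping the roles of $u_j$ and $v_j$ and taking inverses where appropriate. The main obstacle will be purely bookkeeping: keeping track of the handful of sector-type and orientation cases correctly. No new idea is required beyond the identity $\widetilde{f}_{\theta^{-1},j}\circ\widetilde{f}_{\theta,j}=\mathrm{id}$ and the symmetry built into the definition of $\theta^{-1}$.
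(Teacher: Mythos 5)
Your proposal is correct and takes essentially the same route as the paper: a direct sector-by-sector computation using $\widetilde{f}_{\theta^{-1},i}=\widetilde{f}_{\theta,i}^{-1}$ and the symmetric definition of $\theta^{-1}$, so the flanking $u$- and $v$-factors telescope and restore $W$ (the paper organizes this by reducing to two-letter bases, which is the same bookkeeping). Your extra remark ruling out cancellation of primed state letters is a sound way of handling the unreduced-base/trimming cases that the paper dispatches with "proved in a similar manner."
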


\begin{proof}

It suffices to prove this for admissible words with two-letter base.

Say the base of $W$ is $Q_{i-1}Q_i$, so that $W\equiv q_{i-1}w_iq_i$ for some $w_i\in \gen{X_i(\theta)}$. Then it follows that $W\cdot\theta=q_{i-1}' v_i \widetilde{f}_{\theta,i}(w_i) u_iq_i'$. 

Since $v_i\widetilde{f}_{\theta,i}(w_i)u_i\in \gen{Z_i(\theta)}=\gen{X_i(\theta^{-1})}$, $W\cdot\theta$ is $\theta^{-1}$-admissible with 
\begin{align*}
(W\cdot\theta)\cdot\theta^{-1}&=(q_{i-1}\widetilde{f}_{\theta^{-1},i}(v_i^{-1}))\cdot \widetilde{f}_{\theta^{-1},i}(v_i\widetilde{f}_{\theta,i}(w_i)u_i)\cdot (\widetilde{f}_{\theta^{-1},i}(u_i^{-1})q_i) \\
&=q_{i-1}\widetilde{f}_{\theta^{-1},i}(v_i)^{-1}\cdot\widetilde{f}_{\theta^{-1},i}(v_i)w\widetilde{f}_{\theta^{-1},i}(u_i)\cdot\widetilde{f}_{\theta^{-1},i}(u_i)^{-1}q_i \\
&= q_{i-1}wq_i\equiv W
\end{align*}
The other cases, i.e where the base of $W$ is unreduced, are proved in a similar manner.

\end{proof}

It will prove useful in the sequel to consider a weakened version of computation in regards to generalized $S$-machines, called \textit{semi-computations}.

Given a reduced word $w\in F(Y_i)$, $w$ is said to be \textit{$\theta$-applicable} for $\theta\in\Theta$ if $w\in\gen{X_i(\theta)}$. Then, the application of $\theta$ to $w$ is taken to be $w\cdot\theta=\widetilde{f}_{\theta,i}(w)$.  

Note that an analogue of \Cref{inverse rules} in this setting is immediate by construction, i.e $w\cdot\theta$ is $\theta^{-1}$-applicable with $(w\cdot\theta)\cdot\theta^{-1}\equiv w$.  As $\widetilde{f}_{\theta,i}$ is an isomorphism, $l_{\theta^{-1}}(w\cdot\theta)=l_\theta(w)$.  

As with the concept of computation, this generalizes naturally to the notion of semi-computation: If $w_0,w_1,\dots,w_t\in F(Y_i)$ and $\theta_1,\dots,\theta_t\in\Theta$ such that $w_{i-1}\cdot\theta_i=w_i$ for $i=1,\dots,t$, then there is a corresponding \textit{semi-computation in the $Q_{i-1}Q_i$-sector}, denoted $\pazocal{S}:w_0\to w_1\to\dots\to w_t$. The history of $\pazocal{S}$ is defined to be the word $H\equiv\theta_1\dots\theta_t$ and $\pazocal{S}$ is called reduced if $H$ is a reduced word.

Note that semi-computations can be defined in the same way for $S$-machines; however, in that setting, semi-computations are merely constant sequences.

Moreover, for the trivial element $1\in F(Y_i)$ and $\theta\in\Theta$, $1\cdot\theta=1$.  So, for any semi-computation $\pazocal{S}:w_0\to\dots\to w_t$ in the $Q_{i-1}Q_i$-sector, $w_j=1$ for some $j$ if and only if $w_j=1$ for all $j$, in which case $\pazocal{S}$ is called a \textit{trivial semi-computation}.

Hence, the next statement is an immediate consequence of the definition of a locked sector:

\begin{lemma} \label{semi locked sectors}

Let $\pazocal{S}:w_0\to\dots\to w_t$ be a semi-computation in the $Q_{i-1}Q_i$-sector with history $H\equiv\theta_1\dots\theta_t$.  If there exists $j\in\{1,\dots,t\}$ such that $\theta_j$ locks the $Q_{i-1}Q_i$-sector, then $\pazocal{S}$ is a trivial semi-computation.

\end{lemma}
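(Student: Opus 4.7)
The plan is to unpack the definitions of locking and $\theta$-applicability, and then invoke the observation stated immediately before the lemma: for any $\theta\in\Theta$, $1\cdot\theta=1$, so triviality at one step of a semi-computation forces triviality at every step.

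First I would recall that, by the definition extended above, a generalized $S$-rule $\theta$ that locks the $Q_{i-1}Q_i$-sector has the associated bases on that sector empty, i.e. $X_i(\theta)=Z_i(\theta)=\emptyset$. Consequently, $\langle X_i(\theta)\rangle=\{1\}\leq F(Y_i)$, so the only word in $F(Y_i)$ which is $\theta$-applicable on this sector is the trivial word.

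Next, since $\pazocal{S}:w_0\to w_1\to\dots\to w_t$ is a semi-computation in the $Q_{i-1}Q_i$-sector with history $\theta_1\dots\theta_t$, by definition each $w_{k-1}$ must be $\theta_k$-applicable. Applying this at $k=j$ gives $w_{j-1}\in\langle X_i(\theta_j)\rangle=\{1\}$, hence $w_{j-1}=1$ in $F(Y_i)$.

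Finally, I would invoke the observation recorded just before the statement: since $w_{j-1}=1$ occurs at one index, and $1\cdot\theta=1$ for every $\theta\in\Theta$ (with the reverse direction along the computation handled by \Cref{inverse rules}, which guarantees that $\widetilde{f}_{\theta,i}$ is an isomorphism and so sends a nontrivial element to a nontrivial element), we conclude $w_k=1$ for every $k\in\{0,1,\dots,t\}$. By definition, $\pazocal{S}$ is then a trivial semi-computation, completing the proof. There is no serious obstacle here; the entire content is in correctly reading off the meaning of the locking condition at the level of the subgroup $\langle X_i(\theta_j)\rangle$ and then quoting the preceding remark.
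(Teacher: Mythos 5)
Your proof is correct and is exactly the argument the paper intends: the paper gives no separate proof, calling the lemma an immediate consequence of the locking definition together with the preceding remark that $1\cdot\theta=1$ (with injectivity of $\widetilde{f}_{\theta,i}$ handling the backward propagation), which is precisely what you spell out.
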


\medskip


\subsection{Noisy $S$-machines} \label{sec-noisy} \

We now introduce a specific class of generalized $S$-machines, termed \emph{noisy $S$-machines}, by restricting the possible non-identity bijections that can comprise the set of generalized $S$-rules.  As will be illustrated in \Cref{sec-bands and annuli}, this will facilitate the study of the diagrams over the associated groups, assuring the existence of a band-like structure resembling that used to study the groups constructed by classical $S$-machines.  Indeed, in practice all of our generalized $S$-machines will be noisy $S$-machines.

Consider a generalized $S$-machine $\pazocal{S}$ with hardware $(Y,Q)$, where $Q=\sqcup_{i=0}^N Q_i$ and $Y=\sqcup_{i=1}^N Y_i$, and software $\Theta=\Theta^+\sqcup\Theta^-$.  For each $1\leq i\leq N$, fix disjoint subsets $\pazocal{K}_i$, $\pazocal{M}_i$, $\pazocal{N}_i\subseteq Y_i$ along with a bijection $\varphi_i:\pazocal{K}_i\to\pazocal{M}_i$.

For each $1\leq i\leq N$, suppose that for each $\theta\in\Theta^+$ one of the following conditions hold:

\begin{enumerate}

\item $f_{\theta,i}={\rm{id}}_{Y_i(\theta)}$ for some subset $Y_i(\theta)\subseteq Y_i$

\item $f_{\theta,i}=\varphi_i$

\item The domain of $f_{\theta,i}$ is $\pazocal{M}_i\sqcup\pazocal{N}_i$ with:
\begin{itemize}

\item $f_{\theta,i}|_{\pazocal{N}_i}={\rm{id}}_{\pazocal{N}_i}$

\item for all $m_i\in\pazocal{M}_i$, there exists $v(\theta,m_i)\in F(\pazocal{N}_i)$ such that $f_{\theta,i}(m_i)=v(\theta,m_i)\cdot m_i$.

\end{itemize}

\end{enumerate}

Let $\Theta_i^+$ be the subset of $\Theta^+$ consisting of all generalized rules for which (3) holds and define the set $S_i=\{v(\theta,m_i)\mid\theta\in\Theta_i^+,~m_i\in\pazocal{M}_i\}$.  Then, $\pazocal{S}$ is called a \textit{noisy $S$-machine} if each $S_i$ forms a basis for a free subgroup of $F(\pazocal{N}_i)$.


Note that for $\theta\in\Theta_i^+$, letting $\pazocal{M}_{i,\theta}=\{v(\theta,m_i)\cdot m_i:m_i\in\pazocal{M}_i\}$, then $Z_i(\theta)=\pazocal{M}_{i,\theta}\sqcup\pazocal{N}_i$.  But then one can see that $Z_i(\theta)$ forms a basis for $F(\pazocal{M}_i\sqcup\pazocal{N}_i)$ via sequence of Tietze transformations, and so in particular $\gen{X_i(\theta)}=\gen{Z_i(\theta)}$.  Hence, if $\theta=[q_0\to q_0'v_1, \ q_1\to u_1q_1'v_2, \ \dots, \  q_N\to u_Nq_N']$, then $u_i$ and $v_i$ are permitted to be any reduced words over $\pazocal{M}_i\sqcup\pazocal{N}_i$.

Informally, the letters from the set $\pazocal{M}_i$ have some meaning while those of $\pazocal{N}_i$ are auxiliary letters introduced to control the relations of the associated group.  At each transition, the words $v(\theta,m_i)$ introduce a layer of `noise' next to the letters of $\pazocal{M}_i$, encoded with a marker of the rule that is being applied.  In the associated groups, this noise necessitates that single rules do not conjugate a word over $\pazocal{M}_i$ to another such word, while the condition that $S_i$ forms a basis for a free group enforces that no non-empty reduced string of rules does this.

Finally, observe that the rules $\theta$ which satisfy (1) in each sector correspond to $S$-rules in the traditional sense.  Hence, any $S$-machine can be viewed as a noisy $S$-machine.

Thus, as in the previous section, we adopt much of the terminology of $S$-machines for noisy $S$-machines.  For example, it is clear what is meant for such a rule to \textit{lock} a particular sector.

\bigskip


\section{Auxiliary Machines}

In this section, several machines are constructed with respect to some fixed finite non-empty alphabet $\pazocal{A}$ and recursive set $\pazocal{L}\subseteq\pazocal{A}^*$ of positive words over $\pazocal{A}$.  These sets are treated generally until Sections 13-14, as the proofs therein require different setups.

However, it is critical to note that the relevant contexts call for $\pazocal{A}$ to be taken to be an alphabet whose cardinality is bounded above by a linear function of $C$, justifying the parameter assignments in the sections that follow. 



\medskip

\subsection{The machine $\textbf{M}_1^\pazocal{A}$} \label{sec-M_1} \

The first machine in this construction is the noisy $S$-machine $\textbf{M}_1^\pazocal{A}$ that will assure the malnormality of the embedding of Theorem A.  Note the naming of the machine indicates that its makeup only depends on the alphabet $\pazocal{A}$ and not the specific language $\pazocal{L}$.

Let $\pazocal{A}_1$ and $\pazocal{A}_2$ be copies of the alphabet $\pazocal{A}$ given by the bijections $\varphi_i:\pazocal{A}\to\pazocal{A}_i$.  For simplicity, denote $\varphi_i(a)=a_i$ for each $a\in\pazocal{A}$.  Let $\widetilde{\varphi}_i:F(\pazocal{A})\to F(\pazocal{A}_i)$ be the isomorphism induced by $\varphi_i$.

Further, let $\pazocal{B}=\{b_1,b_2\}$ be a set of auxiliary letters.

Then, the hardware of $\textbf{M}_1^\pazocal{A}$ is $(Y_1^{\pazocal{A}}\sqcup Y_2^{\pazocal{A}},Q_0^{\pazocal{A}}\sqcup Q_1^{\pazocal{A}}\sqcup Q_2^{\pazocal{A}})$, where:

\begin{itemize}

\item $Q_i^{\pazocal{A}}=\{q_i\}$ for $i=0,1,2$

\item $Y_1^\pazocal{A}=\pazocal{A}\sqcup\pazocal{A}_1\sqcup\pazocal{B}$ and $Y_2^\pazocal{A}=\pazocal{A}_2$.

\end{itemize}

In terms of the terminology defining noisy $S$-machines in \Cref{sec-noisy}, the disjoint subsets $\pazocal{A}$, $\pazocal{A}_1$, and $\pazocal{B}$ function as $\pazocal{K}_1$, $\pazocal{M}_1$, and $\pazocal{N}_1$, respectively.  In the $Q_1^\pazocal{A}Q_2^\pazocal{A}$-sector, the defining disjoint subsets are taken to be empty, {\frenchspacing i.e. $\pazocal{K}_2=\pazocal{M}_2=\pazocal{N}_2=\emptyset$}.


Let $D_\pazocal{A}=4|(\pazocal{A}\sqcup\pazocal{B})\times\pazocal{A}|=4|\pazocal{A}|(|\pazocal{A}|+2)$ and fix a bijection $\eta_\pazocal{A}:(\pazocal{A}\sqcup\pazocal{B})\times\pazocal{A}\to\{1,\dots,D_\pazocal{A}/4\}$. Then, for $y\in\pazocal{A}\sqcup\pazocal{B}$ and $a\in\pazocal{A}$, define $v(y,a)=b_1^k(b_2b_1)^{D_\pazocal{A}-2k}b_2^k\in F(\pazocal{B})$ for $k=\eta_\pazocal{A}(y,a)$ . Note that for all $(y,a)\in(\pazocal{A}\sqcup\pazocal{B})\times\pazocal{A}$, $\|v(y,a)\|=D_\pazocal{A}$.

Let $S_\pazocal{A}=\{v(y,a):(y,a)\in(\pazocal{A}\sqcup\pazocal{B})\times\pazocal{A}\}$.  The next statement is clear by construction.


\begin{lemma} \label{free subgroup}

For $(y_1,a_1),(y_2,a_2)\in(\pazocal{A}\sqcup\pazocal{B})\times\pazocal{A}$ and $\eps_1,\eps_2\in\{\pm1\}$, either $(y_1,a_1)=(y_2,a_2)$ and $\eps_1=-\eps_2$ or less than $\frac{1}{4}D_\pazocal{A}$ letters of $v(y_j,a_j)$ are cancelled in the product $v(y_1,a_1)^{\eps_1}\cdot v(y_2,a_2)^{\eps_2}$. In particular, $S_\pazocal{A}$ is a basis for a (free) subgroup of $F(\pazocal{B})$.

\end{lemma}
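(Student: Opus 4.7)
The plan is to carry out a direct cancellation analysis on the product $v(y_1, a_1)^{\eps_1} v(y_2, a_2)^{\eps_2}$, treating cases according to the signs $\eps_1$ and $\eps_2$. The key structural feature of $v(y, a) = b_1^k (b_2 b_1)^{D_\pazocal{A} - 2k} b_2^k$ (with $k = \eta_\pazocal{A}(y, a)$) is that it begins with a $b_1$-block of length $k$, transitions through an alternating middle $(b_2 b_1)^{\cdots}$, and terminates with a $b_2$-block of length $k$; the inverse $v(y, a)^{-1}$ swaps the roles of $b_1$ and $b_2$ while reversing signs. The two letters meeting at the junction of the product are mutually inverse only when the sign configuration aligns.

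When $\eps_1 = \eps_2$, the end of the left factor involves $b_2^{\pm}$ while the start of the right factor involves $b_1^{\pm}$, so no cancellation occurs. When $\eps_1 = 1$ and $\eps_2 = -1$, the junction reads $b_2^{k_1} \cdot b_2^{-k_2}$ and the $b_2^{\pm}$ blocks collapse. If $k_1 = k_2$, bijectivity of $\eta_\pazocal{A}$ forces $(y_1, a_1) = (y_2, a_2)$, so the entire product collapses and we are in the first alternative of the lemma. Otherwise $k_1 \neq k_2$: once $\min(k_1, k_2)$ letters from each side have cancelled, the next letter on the longer side is the remaining $b_2^{\pm}$, while on the shorter side the next letter (from the alternating interior) is $b_1^{\pm}$, so cancellation terminates. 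The case $\eps_1 = -1$, $\eps_2 = 1$ is symmetric, involving the $b_1^{\pm}$ blocks. The estimate then follows from $\min(k_1, k_2) \leq \max(k_1, k_2) - 1 \leq D_\pazocal{A}/4 - 1 < D_\pazocal{A}/4$.

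For the ``in particular'' statement, a standard Nielsen-reduction-style argument applies: given a freely reduced word $W \equiv v(y_{i_1}, a_{i_1})^{\delta_1} \cdots v(y_{i_n}, a_{i_n})^{\delta_n}$ over $S_\pazocal{A}$ (one in which no factor is the inverse of its neighbor), the preceding bound guarantees that at every interior junction less than $D_\pazocal{A}/4$ letters of each adjacent factor are cancelled. Since each factor has length exceeding $D_\pazocal{A}/2$, a strictly positive portion of every factor survives free reduction in $F(\pazocal{B})$, so $W$ is non-trivial in $F(\pazocal{B})$. Hence $S_\pazocal{A}$ freely generates the subgroup it determines.

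The main obstacle is establishing that the mixed-sign cancellation halts exactly at $\min(k_1, k_2)$ letters rather than cascading through the alternating middle; the argument relies on the observation that once an extremal $b_i^k$ block is fully exhausted, the alternating central block $(b_2 b_1)^{\cdots}$ presents a letter of the ``wrong'' type relative to what still survives on the opposite side, which is precisely a matter of the parity of the pattern.
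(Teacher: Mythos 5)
Your proof is correct and follows essentially the same route as the paper: the same case split on $\eps_1,\eps_2$, with the cancellation in the mixed-sign case stopping after exactly $\min\{\eta_\pazocal{A}(y_1,a_1),\eta_\pazocal{A}(y_2,a_2)\}<\tfrac{1}{4}D_\pazocal{A}$ letters (the paper simply asserts this count, while you spell out the block/parity reason, and you also make explicit the standard Nielsen-type argument for the ``in particular'' clause that the paper leaves implicit).
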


In light of \Cref{free subgroup}, we may now define the software of $\textbf{M}_1^\pazocal{A}$ as follows:

The positive generalized $S$-rules of $\textbf{M}_1^\pazocal{A}$ correspond to the elements of $\pazocal{A}\sqcup\pazocal{B}$, with the rule corresponding to $y\in\pazocal{A}\sqcup\pazocal{B}$ denoted $\theta_y$ and given by:
\begin{itemize}

%

\item 
$\theta_{b_i}=[q_0\to q_0, \ q_1\to b_i^{-1}q_1, \ q_2\to q_2]$ for $i=1,2$.

\medskip

\item 
$\theta_a=[q_0\to q_0, \ q_1\to a_1^{-1}q_1a_2, \ q_2\to q_2]$ for $a\in\pazocal{A}$.

\end{itemize}

As is custom, though, this notation does not describe these rules' domains or how they act on the tape words.  Recall that per the definition of the positive generalized $S$-rules in a noisy $S$-machine outlined in \Cref{sec-noisy}, each rule takes one of three forms in each sector; for this machine, for every $y\in\pazocal{A}\sqcup\pazocal{B}$ this is given by:

\begin{itemize}

\item In the $Q_0^\pazocal{A}Q_1^\pazocal{A}$-sector, $\theta_y$ is of form (3), with $v(\theta_y,a_1)=v(y,a)$ for all $a\in\pazocal{A}$.

\item In the $Q_1^\pazocal{A}Q_2^\pazocal{A}$-sector, $\theta_y$ is of form (1), with $Y_2(\theta_y)=\pazocal{A}_2=Y_2^\pazocal{A}$.

\end{itemize}

Note that this first condition relating to the $Q_0^\pazocal{A}Q_1^\pazocal{A}$-sector seems to indicate that the rules have no interaction with the subset $\pazocal{A}$ of the tape alphabet $Y_1^\pazocal{A}$.  Indeed, this subset is superfluous to the construction of this machine and is included here simply to match the definition in \Cref{sec-noisy}.  However, $\pazocal{A}$ will come into play crucially in the next section through the construction of the main machines, and will indeed be the image of the generators in the embeddings constructed for the proof of Theorem A.

By construction, any admissible word of $\textbf{M}_1^\pazocal{A}$ is $\theta_y^{\pm1}$-admissible for every $y$ as long as it contains no tape letter from the subset $\pazocal{A}\subseteq Y_1^\pazocal{A}$ (in which case it is not $\theta_y^{\pm1}$-admissible for any $y$).  For simplicity, such an admissible word is called \emph{apt}.

For the sake of being explicit, observe that $$\theta_a^{-1}=[q_0\to q_0, \ q_1\to v(a,a)^{-1}a_1q_1a_2^{-1}, \ q_2\to q_2]$$



For any word $w\in F(\pazocal{A}_1\sqcup\pazocal{B})$, the \textit{$\pazocal{A}$-projection} of $w$, $\delta(w)$, is defined to be the (unreduced) word over $\pazocal{A}\cup\pazocal{A}^{-1}$ obtained from $w$ by removing any occurrence of $b_i^{\pm1}$ and applying $\varphi_1^{-1}$ to each letter in the remaining word.  The \textit{$\pazocal{A}$-length} of $w$ is then taken to be $|w|_\pazocal{A}=\|\delta(w)\|$.  Similarly, the \textit{$b$-length} is defined as $|w|_b=\|w\|-|w|_\pazocal{A}$.

For any configuration $W\equiv q_0w_1q_1w_2q_2$ of $\textbf{M}_1^\pazocal{A}$, the \textit{$\pazocal{A}$-projection} of $W$ is defined to be the freely reduced word $\eps(W)$ over $\pazocal{A}$ freely equal to $w_1'w_2'$, where $w_1'=\delta(w_1)$ and $w_2'=\widetilde{\varphi}_2^{-1}(w_2)$.

The following statement is an immediate consequence of the construction of the software of $\textbf{M}_1^\pazocal{A}$:

\begin{lemma} \label{projection argument}

For any $y\in\pazocal{A}\sqcup\pazocal{B}$ and any configuration $W$ of $\textbf{M}_1^\pazocal{A}$, $\eps(W)\equiv\eps(W\cdot\theta_y^{\pm1})$.

\end{lemma}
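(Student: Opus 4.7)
The plan is to reduce the statement to a homomorphism-level identity. First I would observe that $\delta$ naturally extends to a group homomorphism $F(\pazocal{A}_1\sqcup\pazocal{B})\to F(\pazocal{A})$ (the composition of $\widetilde{\varphi}_1^{-1}$ with the retraction of $F(\pazocal{A}_1\sqcup\pazocal{B})$ onto $F(\pazocal{A}_1)$ that kills $\pazocal{B}$). Thinking of $\delta$ this way, the image $\eps(W)=\delta(w_1)\cdot\widetilde{\varphi}_2^{-1}(w_2)\in F(\pazocal{A})$ is insensitive to free reduction, so I can freely manipulate the admissible word coming out of the application of a rule.

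The key algebraic point I would establish next is that $\delta\circ\widetilde{f}_{\theta_y,1}=\delta$ as homomorphisms $F(\pazocal{A}_1\sqcup\pazocal{B})\to F(\pazocal{A})$, and this only needs to be checked on the basis $X_1(\theta_y)=\pazocal{A}_1\sqcup\pazocal{B}$. By construction, $f_{\theta_y,1}$ maps $b_j\mapsto b_j$ and $a_1\mapsto v(y,a)\cdot a_1$; since $v(y,a)\in F(\pazocal{B})$, we have $\delta(v(y,a))=1$ and hence $\delta(f_{\theta_y,1}(a_1))=\delta(a_1)=a=\delta(a_1)$ and $\delta(f_{\theta_y,1}(b_j))=1=\delta(b_j)$. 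This works uniformly for $y\in\pazocal{A}\sqcup\pazocal{B}$.

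With that in hand, the positive case is a direct unwinding. For $y\in\pazocal{A}$, the application of $\theta_y$ produces the admissible word $q_0\bigl(\widetilde{f}_{\theta_y,1}(w_1)y_1^{-1}\bigr)q_1\bigl(y_2 w_2\bigr)q_2$, so
\begin{align*}
\eps(W\cdot\theta_y)
&=\delta\bigl(\widetilde{f}_{\theta_y,1}(w_1)\bigr)\cdot\delta(y_1^{-1})\cdot\widetilde{\varphi}_2^{-1}(y_2)\cdot\widetilde{\varphi}_2^{-1}(w_2)\\
&=\delta(w_1)\cdot y^{-1}\cdot y\cdot\widetilde{\varphi}_2^{-1}(w_2)=\eps(W).
\end{align*}
For $y=b_i$ the rule has no letter on the right of $q_1$, and $\delta(b_i^{-1})=1$, so the same computation yields $\eps(W\cdot\theta_{b_i})=\delta(w_1)\cdot\widetilde{\varphi}_2^{-1}(w_2)=\eps(W)$.

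The negative case $\theta_y^{-1}$ I would not redo by hand; instead I would invoke \Cref{inverse rules}. Since $\gen{X_1(\theta_y)}=\gen{Z_1(\theta_y)}=F(Y_1^\pazocal{A})$ by \Cref{main free subgroup}, every configuration is both $\theta_y$- and $\theta_y^{-1}$-admissible, and given any configuration $V$, setting $W=V\cdot\theta_y^{-1}$ gives $V=W\cdot\theta_y$ by \Cref{inverse rules}. Applying the positive case to this $W$ yields $\eps(V\cdot\theta_y^{-1})=\eps(W)=\eps(W\cdot\theta_y)=\eps(V)$, completing the proof. The only substantive step is the identity $\delta\circ\widetilde{f}_{\theta_y,1}=\delta$; everything else is bookkeeping, and no obstacle is anticipated since the construction of $v(y,a)$ was explicitly arranged to sit inside $F(\pazocal{B})$.
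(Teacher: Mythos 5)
Your proposal is correct: the identity $\delta\circ\widetilde{f}_{\theta_y,1}=\delta$ (checked on the basis $\pazocal{A}_{1}\sqcup\pazocal{B}$, using $v(y,a)\in F(\pazocal{B})$), the homomorphism-level computation of $\eps$ for the positive rules, and the reduction of the negative case via \Cref{inverse rules} together give exactly the statement. The paper offers no written proof — it declares the lemma an immediate consequence of the construction — and your argument is precisely that routine verification, so there is nothing to reconcile.
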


\begin{lemma} \label{M_1 start to end 1}

For $u,v\in F(\pazocal{A})$, suppose there exists a reduced computation of $\textbf{M}_1^\pazocal{A}$ in the standard base $\pazocal{C}:W_0\to\dots\to W_t$ such that $W_0\equiv q_0\widetilde{\varphi}_1(u)q_1q_2$ and $W_t\equiv q_0q_1\widetilde{\varphi}_2(v)q_2$. Then $u\equiv v$.

\end{lemma}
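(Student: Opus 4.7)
The plan is to reduce this to a straightforward application of the previous Lemma (\Cref{projection argument}), exploiting the fact that the $\pazocal{A}$-projection $\eps$ is an invariant of any computation in the standard base.

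First I would observe that the positive rules of $\textbf{M}_1^\pazocal{A}$ are precisely the family $\{\theta_y : y\in\pazocal{A}\sqcup\pazocal{B}\}$, so that any rule applied along $\pazocal{C}$ is of the form $\theta_y^{\pm 1}$ for some such $y$. Consequently, iterated use of \Cref{projection argument} yields, by induction on $t$, the equality $\eps(W_0)\equiv\eps(W_t)$ as reduced words over $\pazocal{A}$.

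Next I would compute both projections directly. For the initial configuration $W_0\equiv q_0\widetilde{\varphi}_1(u)q_1q_2$, the $Q_0Q_1$-sector is $\widetilde{\varphi}_1(u)$, a word in $F(\pazocal{A}_1)$ containing no letters from $\pazocal{B}$; hence $\delta(\widetilde{\varphi}_1(u))\equiv u$. The $Q_1Q_2$-sector is empty, contributing the trivial word. Thus $\eps(W_0)\equiv u$. Analogously, for $W_t\equiv q_0q_1\widetilde{\varphi}_2(v)q_2$ the $Q_0Q_1$-sector is empty while the $Q_1Q_2$-sector gives $\widetilde{\varphi}_2^{-1}(\widetilde{\varphi}_2(v))\equiv v$, so that $\eps(W_t)\equiv v$. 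Combining with the invariance above forces $u\equiv v$.

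There is really no obstacle here: the substantive content has already been absorbed into the design of the rule set (where $f_{\theta_y,1}$ sends each $a_1$ to $v(y,a)\cdot a_1$ with $v(y,a)\in F(\pazocal{B})$, so that $\delta$ annihilates the new $b$-letters and the $y_1^{-1}$ at the state cancels against the $y_2$ produced on the other side of $q_1$) and packaged in \Cref{projection argument}. The present lemma is the direct corollary obtained by reading off what that invariant says about the specific start and end configurations.
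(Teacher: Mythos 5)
Your proposal is correct and follows exactly the paper's argument: the paper also proves this lemma by noting $\eps(W_0)\equiv u$ and $\eps(W_t)\equiv v$ and then invoking \Cref{projection argument}, which you have merely spelled out in slightly more detail (the induction along the history and the explicit computation of the two projections). No issues.
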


\begin{proof}

Noting that $\eps(W_0)\equiv u$ and $\eps(W_t)\equiv v$, the statement follows from Lemma \ref{projection argument}.

\end{proof}

Similarly, the next statement is a corollary to Lemma \ref{projection argument}:

\begin{lemma} \label{M_1 return to start}

For $u,v\in F(\pazocal{A})$, suppose there exists a reduced computation of $\textbf{M}_1^\pazocal{A}$ in the standard base $\pazocal{C}:W_0\to\dots\to W_t$ such that $W_0\equiv q_0\widetilde{\varphi}_1(u)q_1q_2$ and $W_t\equiv q_0\widetilde{\varphi}_1(v)q_1q_2$. Then $u\equiv v$.

\end{lemma}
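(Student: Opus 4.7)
The plan is to mimic the proof of Lemma \ref{M_1 start to end 1}, observing that the hypothesis is essentially set up so that Lemma \ref{projection argument} applies immediately once we compute $\eps(W_0)$ and $\eps(W_t)$.

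First, I would apply Lemma \ref{projection argument} inductively along the computation $\pazocal{C}: W_0\to W_1\to\dots\to W_t$. Since each step $W_{i-1}\to W_i$ is given by the application of some positive or negative rule $\theta_y^{\pm 1}$ with $y\in\pazocal{A}\sqcup\pazocal{B}$, that lemma yields $\eps(W_{i-1})\equiv\eps(W_i)$ for each $1\leq i\leq t$, and so $\eps(W_0)\equiv\eps(W_t)$.

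Next, I would compute the two projections directly. For $W_0\equiv q_0\widetilde{\varphi}_1(u)q_1q_2$, the $Q_0Q_1$-sector is $\widetilde{\varphi}_1(u)\in F(\pazocal{A}_1)$, which contains no letter from $\pazocal{B}$, so $\delta(\widetilde{\varphi}_1(u))\equiv u$; the $Q_1Q_2$-sector is empty, contributing the trivial word. Hence $\eps(W_0)\equiv u$. An entirely analogous computation gives $\eps(W_t)\equiv v$. Combining with the first step, $u\equiv v$.

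I do not anticipate any serious obstacle: the entire argument is a direct corollary of Lemma \ref{projection argument}, and the only point requiring any care is the (trivial) verification that the $\pazocal{A}$-projection of $\widetilde{\varphi}_1(w)$ is $w$ itself for any $w\in F(\pazocal{A})$, which is immediate from the definitions of $\delta$ and $\widetilde{\varphi}_1$. The reducedness hypothesis on $\pazocal{C}$ plays no role here, as it did not in Lemma \ref{M_1 start to end 1}.
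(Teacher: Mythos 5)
Your proof is correct and is exactly the paper's argument: the paper states this lemma as an immediate corollary of Lemma \ref{projection argument}, in the same way Lemma \ref{M_1 start to end 1} was proved, and your computation $\eps(W_0)\equiv u$, $\eps(W_t)\equiv v$ is the only verification needed. Your remark that the reducedness hypothesis is not used here is also accurate.
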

%
%
%

Similarly, the following statement is an immediate consequence of the definition of the rules:

\begin{lemma} \label{M_1 difference}

Let $W$ be an apt admissible word of $\textbf{M}_1^\pazocal{A}$ with base $Q_0^{\pazocal{A}}Q_1^{\pazocal{A}}$.  For any $y\in\pazocal{A}\sqcup\pazocal{B}$ and $\eps\in\{\pm1\}$, $|W\cdot\theta_y^\eps|_a\leq c_0(|W|_a+1)$.

\end{lemma}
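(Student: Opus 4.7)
The plan is to compute directly. Write $W \equiv q_0 w q_1$ with $w \in F(Y_1^{\pazocal{A}}) = F(\pazocal{A}_1 \sqcup \pazocal{B})$, so that $|W|_a = \|w\|$. I will check each of the four rules $\theta_y^{\pm 1}$ individually, using the common observation that the associated automorphism $\widetilde{f}_{\theta_y^{\pm 1}, 1}$ of $F(Y_1^{\pazocal{A}})$ substitutes each basis letter $a_1 \in \pazocal{A}_1$ by a word of length $D_\pazocal{A} + 1$ (namely $v(y, a)^{\pm 1} a_1$) and fixes each $b_j \in \pazocal{B}$; consequently $\|\widetilde{f}_{\theta_y^{\pm 1}, 1}(w)\| \le (D_\pazocal{A} + 1)\|w\|$ both before and after free reduction.

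Applying the rule yields $W \cdot \theta_y^{\pm 1} = q_0 \, \widetilde{f}_{\theta_y^{\pm 1}, 1}(w) \cdot u_1 \cdot q_1$, since the $v_1$-part on the left of $w$ is trivial and the $v_2$-part on the right of $q_1$ is trimmed away (there is no $Q_2^\pazocal{A}$-letter in the base). For the positive rules $\theta_{b_i}$ and $\theta_y$ with $y \in \pazocal{A}$, the factor $u_1$ is a single letter ($b_i^{-1}$ or $y_1^{-1}$); the same holds for $\theta_{b_i}^{-1}$, where $u_1 = b_i$. In each of these three cases, $|W \cdot \theta_y^{\pm 1}|_a \le (D_\pazocal{A} + 1)\|w\| + 1$. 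The one subtle case is $\theta_y^{-1}$ for $y \in \pazocal{A}$, where $u_1 = \widetilde{f}_{\theta_y^{-1}, 1}(y_1)$ is computed by expressing $y_1 = v(y, y)^{-1} \cdot (v(y, y) y_1)$ in the basis $\pazocal{A}_{1, y} \sqcup \pazocal{B}$ (which is permissible by \Cref{main free subgroup}) and applying $\widetilde{f}_{\theta_y^{-1}, 1}$ factor-by-factor to obtain $v(y, y)^{-1} y_1$, a word of length $D_\pazocal{A} + 1$. Even so, the total bound becomes $(D_\pazocal{A} + 1)\|w\| + (D_\pazocal{A} + 1) = (D_\pazocal{A} + 1)(|W|_a + 1)$.

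Taking $c_0 \ge D_\pazocal{A} + 1 = 4|\pazocal{A}|(|\pazocal{A}| + 2) + 1$ then gives the lemma; this is permitted by the parameter hierarchy $C << c_0$ together with the standing convention that $|\pazocal{A}|$ is bounded by a linear function of $C$. The only real obstacle is the $u_1$-computation in the last case, which prevents a cleaner bound with additive constant $1$ but fits uniformly into the multiplicative form of the stated inequality. The case in which a state-letter of $W$ has negative exponent is handled symmetrically by invoking \Cref{inverse rules}.
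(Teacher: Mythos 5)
Your proposal is correct and follows essentially the same route as the paper: write $W\equiv q_0wq_1$, compute $W\cdot\theta_y^{\eps}$ explicitly, bound the image of $w$ under $\widetilde{f}_{\theta_y^{\eps},1}$ (each $\pazocal{A}_1$-letter absorbing at most $D_\pazocal{A}$ extra $b$-letters) together with the word of length at most $D_\pazocal{A}+1$ inserted next to $q_1$, and conclude via $c_0>>C$ (hence $c_0>>D_\pazocal{A}$). The only cosmetic difference is that the paper bounds $\|\widetilde{f}_{\theta_y^{\eps},1}(w)\|\leq\|w\|+D_\pazocal{A}|w|_\pazocal{A}$ rather than your cruder $(D_\pazocal{A}+1)\|w\|$, but both yield the same final inequality $(D_\pazocal{A}+1)(|W|_a+1)$, and your closing remark about negative state-letter exponents is vacuous for base $Q_0^{\pazocal{A}}Q_1^{\pazocal{A}}$ but harmless.
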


\begin{proof}

Let $W\equiv q_0wq_1$ with $w\equiv u_0x_1^{\delta_1}u_1x_2^{\delta_2}\dots u_{k-1}x_k^{\delta_k}u_k$ for $u_i\in F(\pazocal{B})$, $x_i\in\pazocal{A}_1$, and $\delta_i\in\{\pm1\}$.  Then, $W\cdot\theta_y^\eps\equiv q_0w'vq_1$ where:

\begin{itemize}

\item $w'=u_0(v(y,\widetilde{x}_1)x_1)^{\delta_1}u_1(v(y,\widetilde{x}_2)x_k)^{\delta_2}\dots u_{k-1}(v(y,\widetilde{x}_k)x_k)^{\delta_k}u_k$, where $\widetilde{x}_i=\varphi_1^{-1}(x_i)$

\item $v=y^{-\eps}$ if $y\in\pazocal{B}$

\item $v=y_1^{-1}$ if $y\in\pazocal{A}$ and $\eps=1$

\item $v=v(y,y)y_1$ if $y\in\pazocal{A}$ and $\eps=-1$

\end{itemize}

As a result,
\begin{align*}
|W\cdot\theta_y^\eps|_a&=\|w'v\|\leq\|w'\|+\|v\|\leq\|w\|+D_\pazocal{A}k+(D_\pazocal{A}+1) \\
&=(\|w\|+1)+D_\pazocal{A}(|w|_\pazocal{A}+1) \\
&\leq(D_\pazocal{A}+1)(|W|_a+1)
\end{align*}

As $|\pazocal{A}|$ is bounded above by a linear function of $C$, $D_\pazocal{A}$ is bounded above by a quadratic function of $C$.  As a result, the parameter assignment $c_0>>C$ given in \Cref{sec-parameters} can be interpreted as $c_0>>D_\pazocal{A}$, implying the statement.

\end{proof}

The next statement is immediate from the definition of $\pazocal{A}$-projection and the bijections defining the software of a noisy $S$-machine.

\begin{lemma} \label{semi-computation deltas}

For any $w\in F(\pazocal{A}_1\sqcup\pazocal{B})$ and $y\in\pazocal{A}\sqcup\pazocal{B}$, $\delta(w\cdot\theta_y^{\pm1})\equiv\delta(w)$.

\end{lemma}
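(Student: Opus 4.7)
The plan is to peel off the $b$-letters and show that applying $\theta_y^{\pm 1}$ preserves the sequence of $\pazocal{A}_1$-letters of the reduced word $w$ letter-for-letter. First, I would decompose
\[
w \equiv u_0 x_1^{\delta_1} u_1 x_2^{\delta_2} u_2 \cdots x_k^{\delta_k} u_k,
\]
where $u_i \in F(\pazocal{B})$ are reduced $b$-blocks (possibly empty), $x_i \in \pazocal{A}_1$, and $\delta_i \in \{\pm 1\}$. Reducedness of $w$ forces $u_i \neq 1$ whenever $1 \leq i \leq k-1$ and $x_i^{\delta_i}, x_{i+1}^{\delta_{i+1}}$ are mutual inverses in $\pazocal{A}_1^{\pm1}$.

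Next, I would apply $\widetilde{f}_{\theta_y,1}$ letter-by-letter to $w$. Because $\widetilde{f}_{\theta_y,1}$ fixes each $b_j$ and sends $a_1 \mapsto v(y,a) a_1$, the substitution produces an unreduced word of the shape
\[
w^* \equiv u'_0 \, x_1^{\delta_1} \, u'_1 \, x_2^{\delta_2} \cdots x_k^{\delta_k} \, u'_k,
\]
where each $u'_i \in F(\pazocal{B})$ is obtained from $u_i$ by adjoining on the appropriate sides a factor $v(y, \varphi_1^{-1}(x_j))^{\pm 1}$, with its presence or absence governed by the signs $\delta_i, \delta_{i+1}$. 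The crucial feature of $w^*$ is that its ordered sequence of $a_1$-letters, with signs, is exactly $x_1^{\delta_1}, \ldots, x_k^{\delta_k}$.

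The main obstacle is to verify that the free reduction of $w^*$ to its reduced form (which by definition equals $w \cdot \theta_y$) never cancels an $a_1$-letter. Since $\pazocal{A}_1$ and $\pazocal{B}$ are disjoint alphabets, any such cancellation would require some $u'_i$ with $1 \leq i \leq k-1$ to freely reduce to the identity of $F(\pazocal{B})$ while the flanking $a_1$-letters $x_i^{\delta_i}$ and $x_{i+1}^{\delta_{i+1}}$ are mutual inverses. I would then split into the two possible sign patterns $(\delta_i,\delta_{i+1})\in\{(+,-),(-,+)\}$ and observe that, under the inverse-pair hypothesis $\varphi_1^{-1}(x_i) = \varphi_1^{-1}(x_{i+1})$, the block $u'_i$ is either $u_i$ itself or the conjugate $v(y,\varphi_1^{-1}(x_i))^{-1} u_i v(y,\varphi_1^{-1}(x_i))$. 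In either subcase, triviality of $u'_i$ in $F(\pazocal{B})$ forces $u_i = 1$, which contradicts the reducedness of $w$ recorded in the first step.

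With this obstacle cleared, the reduced form of $w^*$ is $\overline{u}'_0 \, x_1^{\delta_1} \, \overline{u}'_1 \cdots x_k^{\delta_k} \, \overline{u}'_k$, obtained simply by freely reducing each $u'_i$ inside $F(\pazocal{B})$; its $a_1$-skeleton coincides with that of $w$, so erasing $b$-letters and applying $\widetilde{\varphi}_1^{-1}$ gives $\delta(w \cdot \theta_y) \equiv \delta(w)$. The $\theta_y^{-1}$ case follows by setting $w' = w \cdot \theta_y^{-1}$: \Cref{inverse rules} gives $w' \cdot \theta_y = w$, and applying the $\theta_y$ identity to $w'$ yields $\delta(w) \equiv \delta(w')$, i.e.\ $\delta(w) \equiv \delta(w \cdot \theta_y^{-1})$.
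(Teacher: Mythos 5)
Your proposal is correct and follows essentially the same route as the paper: the same block decomposition $u_0x_1^{\delta_1}u_1\cdots x_k^{\delta_k}u_k$, the same observation that a potential $\pazocal{A}_1$-cancellation would force the intervening $b$-block (which is $u_i$ or a conjugate of $u_i$) to be trivial, contradicting reducedness of $w$. The only cosmetic difference is that the paper treats $\theta_y^{\eps}$ for both $\eps=\pm1$ uniformly via the substitution $x_j\mapsto v(y,\widetilde{x}_j)^{\eps}x_j$, whereas you deduce the negative case from the positive one using the involution $(w\cdot\theta_y^{-1})\cdot\theta_y\equiv w$; both are fine.
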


%
%
%
%

Recall that an application of $\theta_y^{\pm1}$ multiplies the $Q_0^\pazocal{A}Q_1^\pazocal{A}$-sector on the right by a specified word.  Given the makeup of these words, the next statement is an immediate corollary of \Cref{semi-computation deltas}.

\begin{lemma} \label{one-rule delta}

Let $w\in F(\pazocal{A}_1\sqcup\pazocal{B})$, $W\equiv q_0wq_1$, $y\in\pazocal{A}\sqcup\pazocal{B}$, and $\eps\in\{\pm1\}$.  Set $w'\in F(\pazocal{A}_1\sqcup\pazocal{B})$ such that $W\cdot\theta_y^\eps\equiv q_0w'q_1$.

\begin{enumerate}[label=(\alph*)]

\item If $y\in\pazocal{B}$, then $\delta(w')\equiv\delta(w)$.

\item If $y\in\pazocal{A}$, then either $\delta(w')\equiv\delta(w)y^{-\eps}$ or $\delta(w')y^\eps\equiv\delta(w)$.

\end{enumerate}

\end{lemma}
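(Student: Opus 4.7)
The plan is to determine $w'$ explicitly by unwinding the rule application in each of the four combinations (based on whether $y\in\pazocal{B}$ or $y\in\pazocal{A}$, and the sign of $\eps$) and then to track how the free reduction at the boundary between $\widetilde{f}_{\theta_y,1}^{\eps}(w)$ and the inserted suffix affects $\delta(w')$. Since $W\equiv q_0wq_1$ has base $Q_0^{\pazocal{A}}Q_1^{\pazocal{A}}$, only the $u_1$-component of $\theta_y^\eps$ is appended between $\widetilde{f}_{\theta_y,1}^{\eps}(w)$ and $q_1$; a short calculation then gives $w'$ as the free reduction of $\widetilde{f}_{\theta_y,1}^{\eps}(w)\cdot v$, with $v=b_i^{-\eps}$ when $y=b_i\in\pazocal{B}$, $v=y_1^{-1}$ when $y\in\pazocal{A}$ and $\eps=1$, and $v=v(y,y)^{-1}y_1$ when $y\in\pazocal{A}$ and $\eps=-1$. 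In every case, \Cref{semi-computation deltas} supplies $\delta(\widetilde{f}_{\theta_y,1}^{\eps}(w))\equiv\delta(w)$.

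For Part~(a), $v$ is a single $\pazocal{B}$-letter, so free reduction can cancel at most one $\pazocal{B}$-letter at the boundary. Since $\delta$ ignores $\pazocal{B}$-letters, this yields $\delta(w')\equiv\delta(\widetilde{f}_{\theta_y,1}^{\eps}(w))\equiv\delta(w)$.

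For Part~(b) with $\eps=1$, $v=y_1^{-1}$ is a single $\pazocal{A}_1$-letter, so at most one cancellation occurs. If none, then $\delta(w')\equiv\delta(w)y^{-1}$, matching the first alternative. If one cancellation occurs, the last letter of $\widetilde{f}_{\theta_y,1}(w)$ must be $y_1$, so $\delta(w)$ ends in $y$; removing this letter gives $\delta(w')y\equiv\delta(w)$, matching the second.

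For Part~(b) with $\eps=-1$, the suffix $v(y,y)^{-1}y_1$ has length $D_\pazocal{A}+1$, with $y_1$ as its unique $\pazocal{A}_1$-letter. Let $k$ denote the number of boundary cancellations when reducing $\widetilde{f}_{\theta_y^{-1},1}(w)\cdot v(y,y)^{-1}y_1$, so $0\leq k\leq D_\pazocal{A}+1$. If $k\leq D_\pazocal{A}$, then only $\pazocal{B}$-letters cancel on both sides (the first $D_\pazocal{A}$ letters of the suffix lie in $\pazocal{B}^{\pm1}$), the terminal $y_1$ survives, and $\delta(w')\equiv\delta(w)y\equiv\delta(w)y^{-\eps}$. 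If $k=D_\pazocal{A}+1$, the suffix is entirely consumed, forcing the last $D_\pazocal{A}+1$ letters of the reduced word $\widetilde{f}_{\theta_y^{-1},1}(w)$ to spell $y_1^{-1}v(y,y)$; then $\delta(w)$ ends in $y^{-1}$ and removing that letter gives $\delta(w')y^{-1}\equiv\delta(w)$. The main subtlety lies precisely in this $\eps=-1$ case: the long suffix admits many boundary cancellations, but because $y_1$ is its only $\pazocal{A}_1$-letter, at most one $\pazocal{A}_1$-letter can be added or removed between $\widetilde{f}_{\theta_y^{-1},1}(w)$ and $w'$, producing exactly the dichotomy stated.
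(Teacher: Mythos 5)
Your proposal is correct and follows essentially the same route as the paper: compute the appended suffix $v$ explicitly in each of the three cases, invoke \Cref{semi-computation deltas} to get $\delta(w\cdot\theta_y^\eps)\equiv\delta(w)$, and then case-split on whether the unique $\pazocal{A}_1$-letter of the suffix survives or cancels in the free reduction. Your more detailed tracking of the boundary cancellations in the $\eps=-1$ case is just an expansion of the paper's ``same argument as above.''
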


%
%
%
%

\begin{lemma} \label{M_1 A-growth}

Let $\pazocal{C}:W_0\to\dots\to W_t$ be a reduced computation of $\textbf{M}_1^\pazocal{A}$ with base $Q_0^{\pazocal{A}}Q_1^{\pazocal{A}}$.  Set $W_i\equiv q_0w_iq_1$ for all $i$.  If $|w_0|_\pazocal{A}<|w_1|_\pazocal{A}$, then $|w_{i-1}|_\pazocal{A}\leq|w_i|_\pazocal{A}$ for all $1\leq i\leq t$.

\end{lemma}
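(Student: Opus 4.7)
The plan is a proof by contradiction.  Suppose there exists some $j\geq 2$ with $|w_{j-1}|_\pazocal{A}>|w_j|_\pazocal{A}$, and let $j$ be the smallest such index.  By minimality and \Cref{one-rule delta}, every step $i<j$ either applies a rule with $y_i\in\pazocal{B}$ (preserving $\delta$) or is an ``append'' rule with $y_i\in\pazocal{A}$ satisfying $\delta(w_i)\equiv\delta(w_{i-1})y_i^{-\eps_i}$.  I then let $k<j$ be the largest index at which an append step occurs; since step $1$ is an append by hypothesis, $k\geq 1$ and every step strictly between $k$ and $j$ applies a $b$-rule.

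First I would unpack the ``delete'' alternative of \Cref{one-rule delta} at step $j$ using the explicit form of the rules of $\textbf{M}_1^\pazocal{A}$.  Tracking how $\widetilde{f}_{\theta_{y_j}^{\eps_j},1}$ acts on $w_{j-1}$ together with the letter appended to the right of the $Q_0^\pazocal{A}Q_1^\pazocal{A}$-sector content by the rule, a direct computation shows that a strict decrease at step $j$ forces the last $a$-letter of $w_{j-1}$ to be exactly $\varphi_1(y_j)^{\eps_j}$ and, crucially, forces the $b$-word of $w_{j-1}$ trailing after that last $a$-letter to be empty.  Since $b$-rules preserve the last $a$-letter and step $k$ leaves the last $a$-letter equal to $\varphi_1(y_k)^{-\eps_k}$ with empty trailing $b$-word, matching these data forces $y_j=y_k$ and $\eps_j=-\eps_k$.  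In particular, when $j=k+1$, the rules $\theta_{y_j}^{\eps_j}$ and $\theta_{y_k}^{\eps_k}$ are mutually inverse, contradicting the reducedness of $\pazocal{C}$.

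It remains to handle the case $j>k+1$ by showing the trailing $b$-word $T_{j-1}$ is non-empty, contradicting the requirement derived above.  I would split according to the sign of the fixed last $a$-letter $L=\varphi_1(y_k)^{-\eps_k}$.  When $L$ is positive, direct computation shows each intermediate $b$-rule merely appends one letter of $\pazocal{B}\cup\pazocal{B}^{-1}$ to the current trail, so $T_{j-1}$ is a concatenation of $j-1-k>0$ such letters, and any cancellation between two consecutive ones would make two consecutive $b$-rules mutually inverse, violating reducedness.  When $L$ is negative, the action of a $b$-rule on the trail is $T_l\mapsto v(b_i,y_k)^{-\gamma}\,T_l\,b_i^{-\gamma}$ for the parameters $i,\gamma$ of the applied rule; reducedness again prevents any cancellation at the right junction, while at the left junction \Cref{free subgroup} together with reducedness bounds the cancellation by strictly less than $D_\pazocal{A}/4$ letters from each of the two adjacent $v$-blocks.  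Consequently $|T_{l+1}|\geq|T_l|+D_\pazocal{A}/2+1$, and so $|T_{j-1}|>0$.

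The main obstacle is the negative-$L$ subcase, where both prepending and appending of $v$-blocks occur at every intermediate $b$-step: \Cref{free subgroup} is precisely the tool that prevents these $v$-blocks from collapsing telescopically, and the reducedness of $\pazocal{C}$ is invoked to exclude exactly the single exception allowed by that lemma.
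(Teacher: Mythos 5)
Your proposal is correct and follows essentially the same route as the paper: you pick the minimal decrease step and the last preceding append step, use \Cref{one-rule delta} and reducedness to force the two rules to be mutually inverse on $\pazocal{A}$ and to rule out adjacency, and then split on the sign of the last $\pazocal{A}$-letter, with \Cref{free subgroup} preventing the $v$-blocks from cancelling the appended $b$-letters. Your inductive bound $|T_{l+1}|\geq|T_l|+D_\pazocal{A}/2+1$ is just a step-by-step version of the paper's one-shot estimate $\|z\|\geq\tfrac{1}{2}D_\pazocal{A}\|v\|$, so the two arguments coincide in substance.
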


\begin{proof}

Let $H\equiv\theta_1\dots\theta_t$ be the history of $\pazocal{C}$.

Assuming to the contrary, let $m\in\{2,\dots,t\}$ be the minimal index such that $|w_{m-1}|_\pazocal{A}>|w_m|_\pazocal{A}$.  Then, let $\ell\in\{1,\dots,m-1\}$ be the maximal index such that $|w_{\ell-1}|_\pazocal{A}<|w_\ell|_\pazocal{A}$.

By \Cref{one-rule delta}, there exists $a,a'\in\pazocal{A}$ and $\eps,\eps'\in\{\pm1\}$ such that $\theta_\ell=\theta_a^\eps$ and $\theta_m=\theta_{a'}^{\eps'}$ so that $\delta(w_\ell)\equiv\delta(w_{\ell-1})a^{-\eps}$ and $\delta(w_m)(a')^{\eps'}\equiv\delta(w_{m-1})$.

Further, the minimality of $m$ and the maximality of $\ell$ imply that $|w_\ell|_\pazocal{A}=\dots=|w_{m-1}|_\pazocal{A}$, so that \Cref{one-rule delta} implies that $\theta_i\in\{\theta_{b_1}^{\pm1},\theta_{b_2}^{\pm1}\}$ for all $\ell+1\leq i\leq m-1$.  So, $\delta(w_\ell)\equiv\dots\equiv\delta(w_{m-1})$.

As a result, $a=a'$ and $\eps=-\eps'$.  Hence, as $H$ is reduced, $m>\ell+1$.  

Let $y_{\ell+1},\dots,y_{m-1}\in\pazocal{B}$ and $\delta_{\ell+1},\dots,\delta_{m-1}\in\{\pm1\}$ such that $\theta_i=\theta_{y_i}^{\delta_i}$ for all $\ell+1\leq i\leq m-1$.  Letting $v\equiv y_{\ell+1}^{-\delta_{\ell+1}}\dots y_{m-1}^{-\delta_{m-1}}$, it follows that $v$ must be reduced.

Suppose $\eps=-1$.  Then, $w_\ell\equiv u_\ell a_1$ for some $u_\ell\in F(\pazocal{A}_1\sqcup\pazocal{B})$.  So, $w_{m-1}\equiv u_{m-1}a_1v$ for some $u_{m-1}\in F(\pazocal{A}_1\sqcup\pazocal{B})$.  But then $w_m=(u_{m-1}a_1\cdot\theta_a)va_1^{-1}$, so that $\delta(w_m)\equiv\delta(w_{m-1})a^{-1}$ since $v$ is non-trivial, contradicting the hypothesis for $m$.

Conversely, suppose $\eps=1$.  As above, this implies that $w_\ell\equiv u_\ell' a_1^{-1}$ for some $u_\ell'\in F(\pazocal{A}_1\sqcup\pazocal{B})$.  So, letting $z=\prod\limits_{i=\ell+1}^{m-1}v(y_i,a)$, it follows that $w_{m-1}= u_{m-1}'a_1^{-1}z^{-1}v$ for some $u_{m-1}'\in F(\pazocal{A}_1\sqcup\pazocal{B})$.

Since the product defining $z$ is reduced as an element of $\gen{S_\pazocal{A}}$, it follows from \Cref{free subgroup} that $\|z\|\geq\frac{1}{2}D_\pazocal{A}\|v\|$.  So, since $D_\pazocal{A}\geq4$, $\|z^{-1}v\|\geq\|z\|-\|v\|\geq\|v\|\geq1$.  In particular $z^{-1}v$ is a non-trivial element of $F(\pazocal{B})$.  

Hence, $w_m=(u_{m-1}'\cdot\theta_a^{-1})a_1^{-1}v(a,a)(z^{-1}v)v(a,a)^{-1}a_1$, so that $\delta(w_m)\equiv\delta(w_{m-1})a$, again yielding a contradiction.

\end{proof}

Suppose there exists a reduced computation $\pazocal{C}:W_0\to\dots\to W_t$ of $\textbf{M}_1^\pazocal{A}$ with base $Q_0^{\pazocal{A}}Q_1^{\pazocal{A}}$ such that for $W_i\equiv q_0w_iq_1$ for all $i$,
$$|w_0|_\pazocal{A}=\dots=|w_{t-1}|_\pazocal{A}=|w_t|_\pazocal{A}+1$$
Then, $w_0$ is called \textit{rear shiftable} and $\pazocal{C}$ is called a \textit{rear shift of $w_0$}.


Note that $|w_0|_\pazocal{A}\geq1$ for any rear shiftable word $w\in F(\pazocal{A}_1\sqcup\pazocal{B})$, and so there exist $x_1,\dots,x_k\in\pazocal{A}_1$, $\delta_1,\dots,\delta_k\in\{\pm1\}$, and $u_0,u_1,\dots,u_k\in F(\pazocal{B})$ such that $$w_0\equiv u_0x_1^{\delta_1}u_1x_2^{\delta_2}\dots u_{k-1}x_k^{\delta_k}u_k$$

\begin{lemma} \label{rear shift positive}


Let $w\in F(\pazocal{A}_1\sqcup\pazocal{B})$ and suppose $w\equiv u_0x_1^{\delta_1}u_1x_2^{\delta_2}\dots x_{k-1}^{\delta_{k-1}}u_{k-1}x_k^{\delta_k}u_k$ with $\delta_k=1$.  Then there exists a unique rear shift $\pazocal{C}_w:W_0\to\dots\to W_t$ of $w$.  In this case:


\begin{enumerate}[label=(\alph*)]

\item $t=\|u_k\|+1$

\item For all $i=1,\dots,t-1$, $W_i\equiv q_0w_iq_1$ where $w_i\equiv u_0^{(i)}x_1^{\delta_1}u_1^{(i)}x_2^{\delta_2}\dots x_{k-1}^{\delta_{k-1}}u_{k-1}^{(i)}x_ku_k^{(i)}$ such that $\|u_j^{(i)}\|\leq\|u_j\|+2D_\pazocal{A}i$ for $0\leq j\leq k-1$ and $\|u_k^{(i)}\|=\|u_k\|-i$

\item $W_t\equiv q_0w'q_1$ where $w'\equiv u_0'x_1^{\delta_1}u_1'x_2^{\delta_2}\dots x_{k-1}^{\delta_{k-1}}u_{k-1}'$ such that $\|u_j'\|\leq\|u_j\|+2D_\pazocal{A}t$ for all $0\leq j\leq k-1$.

\end{enumerate}

\end{lemma}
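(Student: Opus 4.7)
The approach is to construct the rear shift explicitly by iteratively applying the $b$-rule $\theta_{b_\alpha}^\mu$ whose appended suffix cancels the terminal letter of the $u_k$-segment, and then conclude with a single application of $\theta_a$.  Setting $w_0\equiv w$, I would show by induction that for $j=1,\dots,\|u_k\|$, the word $w_{j-1}$ has the form $u_0^{(j-1)}x_1^{\delta_1}\cdots x_k u_k^{(j-1)}$ with $\|u_k^{(j-1)}\|=\|u_k\|-(j-1)\geq 1$, so its terminal letter is some $b_\alpha^\mu$.  Choosing $\theta_j=\theta_{b_\alpha}^\mu$, the application replaces each $x_\ell^{\delta_\ell}$ by $(v(b_\alpha,\tilde{x}_\ell)^\mu x_\ell)^{\delta_\ell}$, fixes the $b$-letters (so the $\pazocal{A}_1$-skeleton is preserved), and appends $b_\alpha^{-\mu}$ at the end; the only nontrivial reduction is between this suffix and the terminal letter of $u_k^{(j-1)}$.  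At step $t=\|u_k\|+1$, $w_{t-1}$ ends in $x_k=a_1$, and applying $\theta_a$ inserts $v(a,a)$ before $x_k$ and appends $a_1^{-1}$, whose cancellation yields the form prescribed in (c).  Since $\widetilde{f}_{\theta_j,1}$ adjoins at most one factor of length $D_{\pazocal{A}}$ to each side of each $u_\ell$-segment for $\ell<k$, bounds (b) and (c) follow by induction on $j$.  Reducedness of the history is immediate: the reducedness of $u_k$ prevents consecutive $b$-rules from being mutually inverse, and $\theta_a$ is distinct from every $\theta_{b_\alpha}^{\pm 1}$.

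For uniqueness, suppose $\pazocal{C}':W_0\to W_1'\to\cdots\to W_{t'}'$ is any rear shift of $w$.  By \Cref{one-rule delta}, a rule preserving $|\cdot|_{\pazocal{A}}$ must be of the form $\theta_{b_\alpha}^{\pm 1}$, while a rule decreasing $|\cdot|_{\pazocal{A}}$ must be some $\theta_y^{\eps}$ with $y\in\pazocal{A}$ falling into the cancellation clause of (b).  Hence the first $t'-1$ rules are $b$-rules and the last is such a $\theta_y^{\eps}$.  By \Cref{semi-computation deltas}, $\delta(w_{t'-1}')\equiv\delta(w)\equiv\tilde{x}_1^{\delta_1}\cdots\tilde{x}_k^{\delta_k}$, and the cancellation clause of \Cref{one-rule delta}(b) yields $\delta(w_{t'-1}')\equiv\delta(w_{t'}')y^{\eps}$, which forces $y^{\eps}=\tilde{x}_k^{\delta_k}=a^{1}$; so the final rule is $\theta_a$.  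Since $b$-rules preserve the $\pazocal{A}_1$-skeleton, every $w_j'$ decomposes as $u_0^{(j)}x_1^{\delta_1}\cdots x_k u_k^{(j)}$, and tracking the trailing letter of $\widetilde{f}_{\theta_a,1}(w_{t'-1}')\cdot a_1^{-1}$ in the final step forces $u_k^{(t'-1)}\equiv 1$.

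The main obstacle is the monotonicity argument that fixes $t'$ and the rule at each intermediate step.  At each $b$-step, $\|u_k^{(j)}\|$ decreases by $1$ precisely when the applied rule $\theta_{b_\alpha}^\mu$ is such that $b_\alpha^\mu$ is the terminal letter of $u_k^{(j-1)}$, and otherwise increases by $1$ with new terminal letter $b_\alpha^{-\mu}$.  In this latter ``growth'' case, any subsequent rule that would shrink $u_k^{(j+1)}$ must equal $\theta_{b_\alpha}^{-\mu}$, which is forbidden by the reducedness of the history; inductively, once $\|u_k^{(j)}\|$ strictly grows it can never decrease again.  Consequently, to reach $\|u_k^{(t'-1)}\|=0$ from $\|u_k\|$, every $b$-step must be a shrinking step, forcing $t'-1=\|u_k\|$ and uniquely identifying the rule at each step as the one that cancels the terminal letter of $u_k^{(j-1)}$.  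This yields $\pazocal{C}'=\pazocal{C}$.
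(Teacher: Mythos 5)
Your proposal is correct and follows essentially the same route as the paper: the existence part is the identical explicit construction (cancel the terminal letter of the $u_k$-segment with the matching $b$-rule at each step, finish with $\theta_a$), and the uniqueness part likewise reduces an arbitrary rear shift, via \Cref{one-rule delta}, \Cref{semi-computation deltas} and reducedness of the history, to a $b$-rule prefix followed by $\theta_a$. The only difference is cosmetic: where the paper writes the final configuration as $q_0(w_0\cdot H')\bar{v}v(a,a)a_1u_kza_1^{-1}q_1$ and concludes $u_kz$ is freely trivial, hence $z\equiv u_k^{-1}$ (pinning down the prefix in one stroke), you pin it down step by step with a monotonicity induction on the trailing $b$-segment, which is an equivalent, slightly more local way of making the same free-reduction argument.
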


\begin{proof}


Let $a\in\pazocal{A}$ such that $x_k=a_1$.
Setting $m=\|u_k\|$, let $y_1,\dots,y_m\in\pazocal{B}$ and $\eps_1,\dots,\eps_m\in\{\pm1\}$ such that $u_k\equiv y_m^{\eps_m}\dots y_1^{\eps_1}$.  Then, for $i=1,\dots,m$, let $H_i=\theta_{y_1}^{\eps_1}\dots\theta_{y_i}^{\eps_i}$.  For $1\leq j\leq k$ and $1\leq i\leq m$, define $v_j^{(i)}=\prod_{\ell=1}^i v(y_\ell,\varphi_1^{-1}(x_j))^{\eps_\ell}$.

Then, for all $i=1,\dots,m$, $W_0\cdot H_i\equiv q_0w_iq_1$ where $$w_i=u_0(v_1^{(i)}x_1)^{\delta_1}u_1(v_2^{(i)}x_2)^{\delta_2}\dots(v_{k-1}^{(i)}x_{k-1})^{\delta_{k-1}}u_{k-1}v_k^{(i)}a_1u_k^{(i)}$$ such that $u_k^{(i)}=y_m^{\eps_m}\dots y_{i+1}^{\eps_{i+1}}$ (with $u_k^{(m)}=1$).  Hence, since $\|v_j^{(m)}\|\leq D_\pazocal{A}m$, the computation with history $H_m\theta_a$ is a rear shift of $w$ satisfying the given bounds.

Now, suppose $\pazocal{C}':W_0\equiv W_0'\to\dots\to W_{s+1}'$ is another rear shift of $w$.  By Lemmas \ref{one-rule delta} and \ref{M_1 A-growth}, there exists $z_1,\dots,z_s\in\pazocal{B}$ and $\nu_1,\dots,\nu_s\in\{\pm1\}$ such that the history $H'$ of $\pazocal{C}'$ satisfies $H'\equiv \theta_{z_1}^{\nu_1}\dots\theta_{z_s}^{\nu_s}\theta_a$.  Letting $H_0'$ be the prefix $\theta_{z_1}^{\nu_1}\dots\theta_{z_s}^{\nu_s}$ of $H'$, $w_0\equiv u_0x_1^{\delta_1}u_1x_2^{\delta_2}\dots u_{k-1}$ the prefix of $w$, $\bar{v}=\prod_{i=1}^s v(y_i',a)$, and $z\equiv z_1^{-\nu_1}\dots z_s^{-\nu_s}$, it follows that $W_s'\equiv W_0'\cdot H_0'=q_0(w_0\cdot H_0')\bar{v}a_1u_kzq_1$.

Then, $W_{s+1}'=q_0(w_0\cdot H')\bar{v}v(a,a)a_1u_kza_1^{-1}q_1$, so that $u_kz$ must be freely trivial since $a_1$ and $a_1^{-1}$ cancel by hypothesis.  Since $H'$ is reduced, $z$ is also reduced, and so $u_k\equiv z^{-1}$.  But then $H'\equiv H$.

\end{proof}

\begin{lemma} \label{rear shift negative}

Let $w\equiv u_0x_1^{\delta_1}u_1x_2^{\delta_2}\dots u_{k-1}x_k^{\delta_k}u_k$ be a rear shiftable word.  Then for any rear shift $\pazocal{C}:W_0\to\dots\to W_t$ of $w$:

\begin{enumerate}[label=(\alph*)]

    \item $t\leq\|u_k\|+1$

    \item For all $i=1,\dots,t-1$, $W_i\equiv q_0w_iq_1$ where $w_i\equiv u_0^{(i)}x_1^{\delta_1}u_1^{(i)}x_2^{\delta_2}\dots u_{k-1}^{(i)}x_k^{\delta_k}u_k^{(i)}$ such that $\|u_j^{(i)}\|\leq\|u_j\|+2D_\pazocal{A}i$ for $0\leq j\leq k$
    
    \item $W_t\equiv q_0w'q_1$ where $w'\equiv u_0'x_1^{\delta_1}u_1'x_2^{\delta_2}\dots x_{k-1}^{\delta_{k-1}}u_{k-1}'$ such that $\|u_j'\|\leq\|u_j\|+2D_\pazocal{A}t$ for all $0\leq j\leq k-1$

\end{enumerate}

%
%
%
%
%
%

\end{lemma}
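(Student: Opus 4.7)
The strategy parallels the uniqueness argument in the proof of \Cref{rear shift positive}, while handling the additional complication of the $\delta_k=-1$ case. First, I will use \Cref{one-rule delta}(b) together with the hypothesis $|w_0|_\pazocal{A}=\cdots=|w_{t-1}|_\pazocal{A}=|w_t|_\pazocal{A}+1$ to force the history into the form $H\equiv\theta_{z_1}^{\nu_1}\cdots\theta_{z_{t-1}}^{\nu_{t-1}}\theta_a^{\delta_k}$: only $b$-rules preserve $|\cdot|_\pazocal{A}$, so $z_\ell\in\pazocal{B}$ for $\ell<t$; and since $\delta(w_{t-1})\equiv\delta(w)$ by \Cref{semi-computation deltas}, the final $\pazocal{A}$-rule is uniquely determined by the last letter of $\delta(w)$, forcing $a=\varphi_1^{-1}(x_k)$ and the sign $\delta_k$. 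Reducedness of $H$ also passes to the prefix $H_0\equiv\theta_{z_1}^{\nu_1}\cdots\theta_{z_{t-1}}^{\nu_{t-1}}$.

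Next I will track the evolution of $w_i$ explicitly through the $b$-prefix. Each $\theta_{z_\ell}^{\nu_\ell}$ acts via the isomorphism $\widetilde{f}_{\theta_{z_\ell},1}^{\nu_\ell}$, which fixes $\pazocal{B}$ and sends $c_1\mapsto v(z_\ell,c)^{\nu_\ell}c_1$ for $c\in\pazocal{A}$, and then appends $z_\ell^{-\nu_\ell}$. Because every $v$-factor lies in $F(\pazocal{B})$ and is therefore fixed by later rules, iterated composition yields the clean form $w_i\equiv u_0^{(i)}x_1^{\delta_1}u_1^{(i)}\cdots x_k^{\delta_k}u_k^{(i)}$ in which each $u_j^{(i)}$ differs from $u_j$ only by the products $P_j^{(i)}\equiv\prod_{\ell=1}^i v(z_\ell,\varphi_1^{-1}(x_j))^{\nu_\ell}$ inserted on the side of $x_j^{\delta_j}$ determined by $\delta_j$, while $u_k^{(i)}$ additionally carries the suffix $z^{(i)}\equiv z_1^{-\nu_1}\cdots z_i^{-\nu_i}$. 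Each slot receives at most two such $P$-contributions of length at most $D_\pazocal{A}i$, immediately giving bound (b).

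The crux is the final step $W_{t-1}\to W_t$. When $\delta_k=1$, the argument in the uniqueness half of \Cref{rear shift positive} applies verbatim: applying $\theta_a$ produces $\cdots v(a,a)\,a_1\,u_k\,z^{(t-1)}\,a_1^{-1}$ as the tail of $w_t$, and the required cancellation forces $u_k z^{(t-1)}=1$, whence $t-1=\|z^{(t-1)}\|=\|u_k\|$. When $\delta_k=-1$, a symmetric computation using $\theta_a^{-1}$ (in its expanded form $q_1\to v(a,a)^{-1}a_1q_1a_2^{-1}$) yields the tail $\cdots a_1^{-1}\,v(a,a)\,(P_k^{(t-1)})^{-1}\,u_k\,z^{(t-1)}\,v(a,a)^{-1}\,a_1$, and the cancellation requirement now becomes the identity $u_k z^{(t-1)}=P_k^{(t-1)}$ in $F(\pazocal{B})$. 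Here \Cref{free subgroup} supplies the decisive quantitative input: since $H_0$ is reduced, consecutive $v$-factors in $P_k^{(t-1)}$ cannot fully cancel, giving $\|P_k^{(t-1)}\|\geq\tfrac{D_\pazocal{A}}{2}t$ for $t\geq2$, so the triangle inequality combined with $\|z^{(t-1)}\|=t-1$ and $D_\pazocal{A}\geq4$ forces $t\leq\|u_k\|$ (the degenerate case $t=1$ is handled directly by noting that $u_k=1$ is then forced). Claim (c) then follows by extending the bookkeeping through the final rule $\theta_a^{\delta_k}$, which adds at most $2D_\pazocal{A}$ to each surviving slot's length. The principal obstacle is the careful cancellation analysis in the $\delta_k=-1$ case: the absence of a clean triviality identity like $u_k z^{(t-1)}=1$ means we must rely on the sharp linear lower bound on $\|P_k^{(t-1)}\|$ from \Cref{free subgroup} to recover a bound of the form $t\leq\|u_k\|+1$.
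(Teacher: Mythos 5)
Your proposal is correct and follows essentially the same route as the paper's proof: reduce the issue to the case $\delta_k=-1$ via \Cref{rear shift positive}, use \Cref{one-rule delta} to force the history to be a reduced $b$-prefix followed by $\theta_a^{\delta_k}$, derive from the cancellation of $x_k$ the identity $u_kz=\bar v$ in $F(\pazocal{B})$, and then apply \Cref{free subgroup} to the reduced product $\bar v$ to bound $t$, with the slot bounds (b) and (c) following from the same bookkeeping. The only differences are cosmetic (you spell out the bookkeeping and the $\delta_k=1$ and $t=1$ cases that the paper delegates to \Cref{rear shift positive}), and your minor constant slack in the lower bound on $\|P_k^{(t-1)}\|$ matches the paper's own and does not affect the conclusion $t\leq\|u_k\|+1$.
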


\begin{proof}

By \Cref{rear shift positive}, it may be assumed that $\delta_k=-1$.  Let $w_0$ be the prefix of $w$ given by $w_0\equiv u_0x_1^{\delta_1}u_1x_2^{\delta_2}\dots x_{k-1}^{\delta_{k-1}}u_{k-1}$.  Further, let $a\in\pazocal{A}$ such that $x_k=a_1$.  So, $w\equiv w_0a_1^{-1}u_k$.


Let $\pazocal{C}:W_0\to\dots\to W_t$ be a rear shift of $w$ with history $H\equiv\theta_1\dots\theta_t$.  By Lemmas \ref{one-rule delta} and \ref{M_1 A-growth}, $\theta_t=\theta_a^{-1}$ and, for all $1\leq i\leq t-1$, there exists $y_i\in\pazocal{B}$ and $\eps_i\in\{\pm1\}$ such that $\theta_i=\theta_{y_i}^{\eps_i}$.

Then, letting $\bar{v}=\prod_{i=1}^{t-1}v(y_i,a)^{\eps_i}$ and $z=y_1^{-\eps_1}\dots y_{t-1}^{-\eps_{t-1}}$, it follows that 
$$W_t=q_0(w_0\cdot H)a_1^{-1}v(a,a)\bar{v}^{-1}u_kzv(a,a)^{-1}a_1q_1$$
So, since $a_1^{-1}$ and $a_1$ cancel by hypothesis, the word $v(a,a)\bar{v}^{-1}u_kzv(a,a)^{-1}$ must be freely trivial.

In particular, this implies that $u_kz$ must be freely equal to $\bar{v}$.  Since the product defining $\bar{v}$ is reduced as an element of $\gen{S_\pazocal{A}}$, \Cref{free subgroup} implies $\|u_kz\|=\|\bar{v}\|\geq\frac{1}{2}D_\pazocal{A}t$.  Hence, since $D_\pazocal{A}\geq4$, $\|u_k\|\geq\|u_kz\|-\|z\|\geq(\frac{1}{2}D_\pazocal{A}-1)t\geq t$.  The bound on $\|u_j^{(i)}\|$ and $\|u_j'\|$ follow from this bound on $t$ in much the same way as in the proof of \Cref{rear shift positive}.

\end{proof}


%
%
%
%
%
%
%
%
%
%

Similar to the previous definition, a word $w\in F(\pazocal{A}_1\sqcup\pazocal{B})$ is called \textit{shiftable} if there exists a reduced computation $\pazocal{C}:W_0\to\dots\to W_t$ of $\textbf{M}_1^\pazocal{A}$ with base $Q_0^{\pazocal{A}}Q_1^{\pazocal{A}}$ such that $W_0\equiv q_0wq_1$ and $W_t\equiv q_0q_1$.  Accordingly, the computation $\pazocal{C}$ in this case is called a \textit{shift of $w$}.

\begin{lemma} \label{shiftable zero}

Any word $w\in F(\pazocal{B})$ is shiftable.  Moreover, in this case there exists a unique shift $\pazocal{C}_w:W_0\to\dots\to W_t$ of $w$, which satisfies $t=\|w\|$.

\end{lemma}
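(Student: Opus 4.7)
The plan is to construct the proposed shift $\pazocal{C}_w$ directly from the reduced factorization of $w$, and to use the monotonicity of the $\pazocal{A}$-length from Lemma~\ref{M_1 A-growth} to rule out every alternative.

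Write $w\equiv y_m^{\eps_m}\cdots y_1^{\eps_1}$ in reduced form with $y_j\in\pazocal{B}$, so that $m=\|w\|$, and define the history $H\equiv\theta_{y_1}^{\eps_1}\theta_{y_2}^{\eps_2}\cdots\theta_{y_m}^{\eps_m}$. Let $\pazocal{C}_w$ be the computation starting at $W_0\equiv q_0wq_1$ with history $H$. By Lemma~\ref{main free subgroup} every $\theta_{y_j}^{\eps_j}$ is applicable. A direct unpacking of the rule formula (as in the proof of Lemma~\ref{M_1 difference}) together with $\widetilde{f}_{\theta_{b_i},1}|_{\pazocal{B}}=\mathrm{id}$ shows inductively that $W_j\equiv q_0(y_m^{\eps_m}\cdots y_{j+1}^{\eps_{j+1}})q_1$: at each step the appended letter $y_j^{-\eps_j}$ cancels the terminal letter $y_j^{\eps_j}$ of the current sector word. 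In particular $W_m\equiv q_0q_1$, so $\pazocal{C}_w$ is a shift of $w$ of length $m$, and $H$ is reduced since $\theta_{y_j}^{\eps_j}$ and $\theta_{y_{j+1}}^{\eps_{j+1}}$ being mutual inverses would require $y_j=y_{j+1}$ and $\eps_j=-\eps_{j+1}$, contradicting the reducedness of $w$.

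For uniqueness, let $\pazocal{C}':W_0\to\cdots\to W_s$ with $W_j\equiv q_0w_jq_1$ be any reduced shift, and first I will show every rule is some $\theta_{b_i}^{\pm1}$. Suppose not, and let $j$ be the smallest index at which a rule $\theta_a^{\nu}$ with $a\in\pazocal{A}$ is applied. Lemma~\ref{one-rule delta}(a) gives $|w_{j-1}|_{\pazocal{A}}=|w_0|_{\pazocal{A}}=0$, and then Lemma~\ref{one-rule delta}(b) forces $|w_j|_{\pazocal{A}}=1$ (the cancelling alternative is impossible). Applying Lemma~\ref{M_1 A-growth} to the tail computation starting at index $j-1$ yields $|w_i|_{\pazocal{A}}\geq 1$ for every $i\geq j$, contradicting $|w_s|_{\pazocal{A}}=0$. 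With this in hand, Lemma~\ref{M_1 difference} reduces to $w_i\equiv w_{i-1}b_{k_i}^{-\nu_i}$, where $\theta_{b_{k_i}}^{\nu_i}$ are the rules of $\pazocal{C}'$; iterating gives $w_s\equiv w\cdot b_{k_1}^{-\nu_1}\cdots b_{k_s}^{-\nu_s}$, so $b_{k_s}^{\nu_s}\cdots b_{k_1}^{\nu_1}\equiv w$ in $F(\pazocal{B})$. Reducedness of the history is equivalent to reducedness of this product as a word over $\pazocal{B}^{\pm 1}$, so it must coincide with the reduced $w$ letter-by-letter, forcing $s=m$ and $(k_i,\nu_i)=(y_i,\eps_i)$; that is, $\pazocal{C}'=\pazocal{C}_w$.

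The main obstacle is the intermediate step. Without the monotonicity of Lemma~\ref{M_1 A-growth}, a putative shift could a priori stray into $|w|_{\pazocal{A}}\geq 1$ by inserting a phantom $a$-letter from $\pazocal{A}$ and later cancelling it. The interplay between the one-step description from Lemma~\ref{one-rule delta} and the global non-decrease of $|w|_{\pazocal{A}}$ after any strict increase is exactly what confines the computation to $F(\pazocal{B})$ and pins down $t=\|w\|$.
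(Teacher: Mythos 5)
Your proof is correct and takes essentially the same approach as the paper: the same explicit construction of the shift from the reduced spelling of $w$ for existence, and for uniqueness the same combination of Lemmas~\ref{one-rule delta} and~\ref{M_1 A-growth} to confine the history to rules $\theta_{b_1}^{\pm1},\theta_{b_2}^{\pm1}$, followed by the free-group argument identifying the appended word with $w^{-1}$. Your minimal-index argument simply spells out what the paper compresses into its citation of those two lemmas.
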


\begin{proof}

By hypothesis, there exists $y_1,\dots,y_t\in\pazocal{B}$ and $\eps_1,\dots,\eps_t\in\{\pm1\}$ such that $w\equiv y_t^{\eps_t}\dots y_1^{\eps_1}$.

Then, letting $W_0\equiv q_0wq_1$, the computation $\pazocal{C}_w:W_0\to\dots\to W_t$ with history $\theta_{y_1}^{\eps_1}\dots\theta_{y_t}^{\eps_t}$ is a shift of $w$ with $W_i\equiv q_0y_t^{\eps_t}\dots y_{i+1}^{\eps_{i+1}}q_1$ for all $i$.

Now, suppose $\pazocal{C}':W_0\equiv W_0'\to\dots\to W_s'\equiv W_t$ is a shift of $w$.  Let $H'$ be the history of $\pazocal{C}'$.  By Lemmas \ref{one-rule delta} and \ref{M_1 A-growth}, there exist $z_1,\dots,z_s\in\pazocal{B}$ and $\nu_1,\dots,\nu_s\in\{\pm1\}$ such that $H'\equiv\theta_{z_1}^{\nu_1}\dots\theta_{z_s}^{\nu_s}$.  

Then, letting $z\equiv z_1^{-\nu_1}\dots z_s^{-\nu_s}$, it follows that $W_s'\equiv W_0\cdot H'=q_0wzq_1$.  Since $H'$ is reduced, $z$ must also be reduced.  So, $w\equiv z^{-1}$.  But then $H'=H$, and so $\pazocal{C}'=\pazocal{C}_w$.

\end{proof}

The following is an immediate consequence of Lemmas \ref{rear shift positive} and \ref{shiftable zero}:

\begin{lemma} \label{shiftable positive}

For any $w\in F(\pazocal{A}_1\sqcup\pazocal{B})$ such that $\delta(w)\in\pazocal{A}^*$, $w$ is shiftable.  Moreover, there exists a unique shift of $w$.

\end{lemma}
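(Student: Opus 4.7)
The plan is to prove the lemma by induction on $k = |w|_\pazocal{A}$, handling existence and uniqueness simultaneously. The base case $k = 0$ has $w \in F(\pazocal{B})$ and is immediate from \Cref{shiftable zero}.

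For the inductive step with $k \geq 1$, write $w \equiv u_0 x_1 u_1 \dots x_k u_k$ with $x_j \in \pazocal{A}_1$ and $u_j \in F(\pazocal{B})$; the hypothesis $\delta(w) \in \pazocal{A}^*$ forces every exponent $\delta_j$ to equal $+1$, so in particular $\delta_k = +1$. Existence follows by concatenating the rear shift of $w$ produced by \Cref{rear shift positive}, which terminates at $q_0 w' q_1$ with $w' \equiv u_0' x_1 u_1' \dots x_{k-1} u_{k-1}'$, with a shift of $w'$ supplied by the inductive hypothesis, which applies because $\delta(w') \in \pazocal{A}^*$ and $|w'|_\pazocal{A} = k - 1$. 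A short case analysis confirms the concatenation is reduced: the rear shift ends with $\theta_a$ where $a = \varphi_1^{-1}(x_k)$, while any shift of $w'$ begins with either some $\theta_{b_j}^{\pm 1}$ (if $u_{k-1}' \neq 1$) or with the positive rule $\theta_{a'}$ corresponding to $x_{k-1}$ (if $u_{k-1}' = 1$), and neither option equals $\theta_a^{-1}$.

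For uniqueness, let $\pazocal{C} : W_0 \to \dots \to W_t$ be any shift of $w$, set $W_i \equiv q_0 w_i q_1$, and note $|w_0|_\pazocal{A} = k > 0 = |w_t|_\pazocal{A}$. The contrapositive of \Cref{M_1 A-growth} forces the sequence $(|w_i|_\pazocal{A})$ to be non-increasing, while \Cref{one-rule delta} shows that a single rule changes this quantity by at most one. Let $j_1$ be the smallest index at which $|w_{j_1}|_\pazocal{A} < |w_{j_1-1}|_\pazocal{A}$; then $|w_{j_1}|_\pazocal{A} = k - 1$, so the prefix $W_0 \to \dots \to W_{j_1}$ satisfies the defining equalities of a rear shift of $w$. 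The uniqueness clause of \Cref{rear shift positive} then pins down both this prefix and the intermediate word $w'$, and the remaining suffix $W_{j_1} \to \dots \to W_t$ is a shift of $w'$, whose uniqueness is supplied by induction.

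The principal point requiring care, and the main obstacle I anticipate, is identifying the prefix up to the first decrease as genuinely a rear shift in the sense of the earlier definition, rather than merely a reduced computation whose $\pazocal{A}$-length happens to drop by one at the final step. This rests on extracting from \Cref{one-rule delta}, together with the positivity of $\delta(w_i)$ preserved throughout the computation, that the $j_1 - 1$ steps prior to $j_1$ use only $b$-rules, and that the step at time $j_1$ uses a positive rule $\theta_a$ rather than $\theta_a^{-1}$. Once this identification is secured, the uniqueness machinery of \Cref{rear shift positive} applies directly and the induction closes.
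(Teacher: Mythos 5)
Your argument is correct and is essentially the paper's intended one: the paper obtains this lemma directly as a consequence of \Cref{rear shift positive} and \Cref{shiftable zero}, which is exactly the induction (rear shift off the last $\pazocal{A}_1$-letter, then induct, with uniqueness via \Cref{M_1 A-growth} and \Cref{one-rule delta}) that you carry out. One small remark: the step you flag as the main obstacle is in fact immediate, since a rear shift is defined solely by the $\pazocal{A}$-length profile $|w_0|_\pazocal{A}=\dots=|w_{j_1-1}|_\pazocal{A}=|w_{j_1}|_\pazocal{A}+1$, so the prefix up to the first decrease is a rear shift by definition, and the structural facts about which rules it uses are consequences of \Cref{rear shift positive} rather than prerequisites for applying it.
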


In particular, taking $w=1$ the following statement is an corollary:

\begin{lemma} \label{M_1 shift of 1}

Let $\pazocal{C}:W_0\to\dots\to W_t$ be a reduced computation of $\textbf{M}_1^\pazocal{A}$ with base $Q_0^\pazocal{A}Q_1^\pazocal{A}$.  If $W_0\equiv q_0q_1\equiv W_t$, then $t=0$.

\end{lemma}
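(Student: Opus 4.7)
The plan is to observe that the hypothesized computation $\pazocal{C}$ is, by definition, a shift of the trivial word $1 \in F(\pazocal{B}) \subseteq F(\pazocal{A}_1 \sqcup \pazocal{B})$: the initial admissible word $W_0 \equiv q_0 q_1$ can be read as $q_0 \cdot 1 \cdot q_1$, and the final admissible word is $q_0 q_1$, matching the definition of a shift verbatim. \Cref{shiftable zero} then supplies a unique shift of any $w \in F(\pazocal{B})$ and pins its length to $\|w\|$; specializing to $w = 1$ forces $t = \|1\| = 0$. This is essentially the whole argument.

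If one prefers to bypass the uniqueness clause of \Cref{shiftable zero}, a parallel direct line is available through \Cref{M_1 A-growth} and \Cref{one-rule delta}. Assume for contradiction that $t \geq 1$, and first rule out any index $i$ with $|w_{i-1}|_\pazocal{A} < |w_i|_\pazocal{A}$: applying \Cref{M_1 A-growth} to the reduced prefix-truncated subcomputation $W_{i-1} \to \cdots \to W_t$ would force $|w_t|_\pazocal{A} \geq |w_i|_\pazocal{A} > 0$, contradicting $W_t \equiv q_0 q_1$. Hence the $\pazocal{A}$-length never increases along $\pazocal{C}$; since $|w_0|_\pazocal{A} = 0$, it remains $0$ throughout, and \Cref{one-rule delta} then forces each $\theta_j \in \{\theta_{b_1}^{\pm 1}, \theta_{b_2}^{\pm 1}\}$, since an $\pazocal{A}$-letter rule applied at a word of $\pazocal{A}$-length $0$ would strictly increase that length.

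A direct calculation using that $\widetilde{f}_{\theta_{b_i}, 1}$ fixes $F(\pazocal{B})$ pointwise shows that the tape content of $W_t$ is exactly $y_1^{-\eps_1} y_2^{-\eps_2} \cdots y_t^{-\eps_t}$ (where $\theta_j = \theta_{y_j}^{\eps_j}$), which is reduced precisely because the history of $\pazocal{C}$ is reduced — two consecutive letters cancel iff the corresponding consecutive rules are mutually inverse. This word is therefore non-empty whenever $t \geq 1$, contradicting $W_t \equiv q_0 q_1$. The only bookkeeping required is checking that a prefix-truncation of a reduced computation is again reduced (immediate) and tracking that the $b$-letter rules act as a right-append on $F(\pazocal{B})$-valued tapes; I anticipate no serious obstacle.
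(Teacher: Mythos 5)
Your main argument is essentially the paper's: the paper likewise observes that $\pazocal{C}$ is by definition a shift of the trivial word and concludes from the uniqueness of shifts (it cites \Cref{shiftable positive}, whose content for the trivial word is exactly the clause of \Cref{shiftable zero} you invoke, giving $t=\|1\|=0$). Your alternative second route is also correct but is just an unfolding of the uniqueness argument inside the proof of \Cref{shiftable zero}, so both versions match the paper's reasoning.
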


%
%

\begin{lemma} \label{M_1 start to end 2}

For any $w\in\pazocal{A}^*$, there exists a (unique) reduced computation $\pazocal{C}:W_0\to\dots\to W_t$ of $\textbf{M}_1^\pazocal{A}$ in the standard base such that $W_0\equiv q_0\widetilde{\varphi}_1(w)q_1q_2$ and $W_t\equiv q_0q_1\widetilde{\varphi}_2(w)q_2$.

\end{lemma}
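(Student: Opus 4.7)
The plan is to reduce the two-sector problem to the one-sector shift supplied by \Cref{shiftable positive}, and then to read off the $Q_1^\pazocal{A} Q_2^\pazocal{A}$-sector of the final configuration from a structural analysis of the history.

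First, since $\delta(\widetilde{\varphi}_1(w)) \equiv w \in \pazocal{A}^*$, \Cref{shiftable positive} produces a unique reduced shift $\pazocal{S}: V_0 \to \dots \to V_t$ of $\widetilde{\varphi}_1(w)$ in the $Q_0^\pazocal{A} Q_1^\pazocal{A}$-sector, with some history $H$. I would apply $H$ verbatim to the standard-base configuration $W_0 \equiv q_0 \widetilde{\varphi}_1(w) q_1 q_2$. Every rule remains applicable in the $Q_1^\pazocal{A} Q_2^\pazocal{A}$-sector because each $\theta \in \Theta(\textbf{M}_1^\pazocal{A})$ has $X_2(\theta) = Z_2(\theta) = \pazocal{A}_2$, so $\gen{X_2(\theta)} = F(Y_2^\pazocal{A})$; the $Q_1^\pazocal{A} Q_2^\pazocal{A}$-sector begins empty, and every rule either leaves it alone or prepends a single letter of $\pazocal{A}_2^{\pm1}$, so it stays inside $F(Y_2^\pazocal{A})$ throughout. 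This produces a reduced computation $\pazocal{C}: W_0 \to \dots \to W_t$ whose restriction to the $Q_0^\pazocal{A} Q_1^\pazocal{A}$-sector is $\pazocal{S}$; in particular that sector of $W_t$ is empty.

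The bulk of the work is to identify the $Q_1^\pazocal{A} Q_2^\pazocal{A}$-sector of $W_t$. Inspecting the rules, a step $\theta_y^\eps$ prepends $y_2^\eps$ to this sector when $y \in \pazocal{A}$ and contributes nothing when $y \in \pazocal{B}$, so the final sector is the right-to-left concatenation of the $\pazocal{A}$-rule contributions in $H$. To pin these down, write $w \equiv a^{(1)} \dots a^{(n)}$, so $|V_0|_\pazocal{A} = n$ and $|V_t|_\pazocal{A} = 0$. Applying \Cref{M_1 A-growth} to an arbitrary suffix of $\pazocal{S}$: any suffix whose first step strictly increases $|\cdot|_\pazocal{A}$ would force the sequence $|V_i|_\pazocal{A}$ to stay at or above that increased value, contradicting $|V_t|_\pazocal{A} = 0$; hence $|V_i|_\pazocal{A}$ is monotonically non-increasing along $\pazocal{S}$. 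By \Cref{one-rule delta}(a), $b$-rules preserve $\delta(V_i)$; by \Cref{one-rule delta}(b) together with monotonicity, each $\pazocal{A}$-rule $\theta_y^\eps$ in $H$ satisfies $\delta(V_i) y^\eps \equiv \delta(V_{i-1})$, i.e.\ it deletes the current last letter of $\delta$. Since $\delta(V_0) \equiv w$, the $\pazocal{A}$-rules of $H$ must therefore be, in the order they appear, $\theta_{a^{(n)}}, \theta_{a^{(n-1)}}, \dots, \theta_{a^{(1)}}$, all positive. Reading their contributions right-to-left yields the final $Q_1^\pazocal{A} Q_2^\pazocal{A}$-sector $a^{(1)}_2 \dots a^{(n)}_2 \equiv \widetilde{\varphi}_2(w)$, already reduced as a product of positive letters.

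Uniqueness is then immediate: any reduced computation in the standard base with the prescribed endpoints restricts to a reduced shift of $\widetilde{\varphi}_1(w)$ in the $Q_0^\pazocal{A} Q_1^\pazocal{A}$-sector, which by \Cref{shiftable positive} must coincide with $\pazocal{S}$, so the two computations share their history and agree. I expect the only mildly delicate step to be the iterated use of \Cref{M_1 A-growth} to upgrade its one-sided statement to full monotonicity of $|V_i|_\pazocal{A}$; once that is in hand, pinning down the $\pazocal{A}$-rules and computing the $Q_1^\pazocal{A} Q_2^\pazocal{A}$-sector is essentially bookkeeping.
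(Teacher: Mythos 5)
Your proof is correct, and its skeleton coincides with the paper's: both arguments take the unique shift of $\widetilde{\varphi}_1(w)$ supplied by \Cref{shiftable positive}, run its history on the standard-base configuration (legitimate because every configuration of $\textbf{M}_1^\pazocal{A}$ is admissible for every rule), and obtain uniqueness by restricting an arbitrary such computation to the base $Q_0^\pazocal{A}Q_1^\pazocal{A}$ and invoking the uniqueness in \Cref{shiftable positive}. Where you diverge is in identifying the terminal $Q_1^\pazocal{A}Q_2^\pazocal{A}$-sector: the paper simply observes that, since the restriction ends at $q_0q_1$, the final configuration has the form $q_0q_1\widetilde{\varphi}_2(v)q_2$, and then \Cref{M_1 start to end 1} (i.e.\ invariance of the $\pazocal{A}$-projection from \Cref{projection argument}) forces $v\equiv w$ in one line. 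You instead reconstruct the history explicitly: upgrading \Cref{M_1 A-growth} to full monotonicity of $|V_i|_\pazocal{A}$ by applying it to suffixes, then using \Cref{one-rule delta} to conclude that every $\pazocal{A}$-rule deletes the last letter of $\delta(V_i)$, so the $\pazocal{A}$-rules occurring in $H$ are exactly $\theta_{a^{(n)}},\dots,\theta_{a^{(1)}}$, all positive, whence the sector fills in as $\widetilde{\varphi}_2(w)$ with no cancellation. Both routes are sound; yours costs extra bookkeeping but yields strictly more information (the exact positions and signs of the $\pazocal{A}$-rules in the history), while the paper's projection argument is shorter and is the tool it reuses elsewhere (e.g.\ in \Cref{M_3 start to sigma 1}).
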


\begin{proof}

By \Cref{shiftable positive}, there exists a (unique) shift $\pazocal{D}:V_0\to\dots\to V_t$ of $\widetilde{\varphi}_1(w)$.  Let $H$ be the history of $\pazocal{D}$.

Since every apt configuration is $\theta$-admissible for any rule $\theta$ of $\textbf{M}_1^\pazocal{A}$, there exists a reduced computation $\pazocal{C}:W_0\to\dots\to W_t$ in the standard base with history $H$ such that $W_0\equiv q_0\widetilde{\varphi}_1(w)q_1q_2$.  Hence, the restriction of $\pazocal{C}$ to the base $Q_0^\pazocal{A}Q_1^\pazocal{A}$ is $\pazocal{D}$.  But then $\pazocal{C}$ must satisfy the hypotheses of \Cref{M_1 start to end 1}, so that $W_t\equiv q_0q_1\widetilde{\varphi}_2(w)q_2$.

The uniqueness of $\pazocal{C}$ is given by applying \Cref{shiftable positive} to the restriction of any such computation to the base $Q_0^\pazocal{A}Q_1^\pazocal{A}$.

\end{proof}

%
%
%
%
%
%
%

\begin{lemma} \label{M_1 shift}

Suppose $w\in F(\pazocal{A}_1\sqcup\pazocal{B})$ is shiftable.  Then there exists a unique shift of $w$ $\pazocal{C}_w:W_0\to\dots\to W_t$.  Moreover, $\pazocal{C}_w$ satisfies
$t\leq \|w\|+\|w\|c_0^{\|w\|}$.



\end{lemma}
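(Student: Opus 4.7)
The plan is to prove existence of the unique shift and the length bound simultaneously by induction on $|w|_\pazocal{A}$. In the base case $|w|_\pazocal{A}=0$, we have $w\in F(\pazocal{B})$, and \Cref{shiftable zero} produces a unique shift of length exactly $\|w\|$, which is well under $\|w\|+\|w\|c_0^{\|w\|}$.

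For the inductive step, I would suppose $|w|_\pazocal{A}=k\geq 1$ and take any shift $\pazocal{C}:W_0\to\dots\to W_t$ of $w$. First, I would use \Cref{M_1 A-growth} to show that the sequence $|w_0|_\pazocal{A},\dots,|w_t|_\pazocal{A}$ is non-increasing: if the $\pazocal{A}$-length were to strictly increase at some step, then applying \Cref{M_1 A-growth} to the tail starting at that step would force the $\pazocal{A}$-length to remain non-decreasing thereafter, contradicting $|w_t|_\pazocal{A}=0<k$. Letting $s$ be the least index with $|w_s|_\pazocal{A}=k-1$, the prefix $W_0\to\dots\to W_s$ is then a rear shift of $w$ while the suffix $W_s\to\dots\to W_t$ is a shift of $w_s$, a word with $|w_s|_\pazocal{A}=k-1$, to which the induction hypothesis applies.

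For uniqueness, if $\delta_k=1$ the rear shift prefix is determined by \Cref{rear shift positive}. If $\delta_k=-1$ I would recover uniqueness of the rear shift by sharpening the analysis in the proof of \Cref{rear shift negative}: the history has the form $\theta_{y_1}^{\eps_1}\dots\theta_{y_{t-1}}^{\eps_{t-1}}\theta_a^{-1}$ with $y_i\in\pazocal{B}$, and the reducedness of the history together with the identity $u_kz=\bar v$ inside the free subgroup $\gen{S_\pazocal{A}}$ (via \Cref{free subgroup}) forces a unique reduced factorization of $\bar v$ over $S_\pazocal{A}^{\pm 1}$, which in turn pins down the $(y_i,\eps_i)$. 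Combined with uniqueness of the shift of $w_s$ from the inductive hypothesis, this gives uniqueness of $\pazocal{C}_w$ in both cases.

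For the length bound, from \Cref{rear shift positive}(a),(c) and \Cref{rear shift negative}(a),(c) we have $s\leq\|u_k\|+1\leq\|w\|$ and $\|w_s\|\leq\|w\|+2D_\pazocal{A}ks$. Setting $n_j=\|w_{s_j}\|$ for the intermediate words produced after $j$ rear shifts, the recursion $n_{j+1}+1\leq (n_j+1)(1+2D_\pazocal{A}k)$ together with the final shift in $F(\pazocal{B})$ (bounded by \Cref{shiftable zero}) yields a total length bounded by a computable expression in $\|w\|$ and $D_\pazocal{A}$. Since the parameter hierarchy $N\ll C\ll c_0$ of \Cref{sec-parameters} allows $c_0$ to be chosen larger than any desired expression in $C$, and $D_\pazocal{A}$ is bounded by a polynomial in $C$, this expression fits comfortably inside $\|w\|+\|w\|c_0^{\|w\|}$.

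The main obstacle I expect is the uniqueness argument in the $\delta_k=-1$ case, where \Cref{rear shift positive} does not apply directly and one must extract uniqueness from the free basis property of $S_\pazocal{A}$ rather than from a constructive description of the rear shift; the reducedness of the history and the membership $u_kz\in\gen{S_\pazocal{A}}$ must jointly determine $z$, and this is the one step where careful bookkeeping of cancellations in $F(\pazocal{B})$ is genuinely needed.
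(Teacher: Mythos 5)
Your structural decomposition (non-increasing $\pazocal{A}$-length via \Cref{M_1 A-growth}, then successive rear shifts followed by a shift in $F(\pazocal{B})$) is exactly the paper's skeleton, but your derivation of the quantitative bound has a genuine gap. The recursion $n_{j+1}+1\leq(n_j+1)(1+2D_\pazocal{A}k)$ tracks only the total length, so iterating it and bounding each rear-shift time by $n_j+1$ gives $t\lesssim\|w\|(1+2D_\pazocal{A}k)^{k}$, an exponential whose \emph{base} depends on $k=|w|_\pazocal{A}$. Since $c_0$ is a fixed parameter chosen once (larger than any expression in $C$, hence in $D_\pazocal{A}$, but not depending on $w$), no parameter choice absorbs this: for $w\in F(\pazocal{A}_1)$ with $\|w\|=k$ and $2D_\pazocal{A}k+1>c_0$, your expression exceeds $\|w\|+\|w\|c_0^{\|w\|}$. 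The paper avoids this by using the sharper content of \Cref{rear shift positive}(a)/\Cref{rear shift negative}(a): the time of the $j$-th rear shift is at most $\|u^{(j-1)}_{k-j+1}\|+1$, i.e.\ it is governed by the \emph{final $b$-block} alone, not the whole word. Tracking the blocks $u_i^{(j)}$ individually then yields the triangular estimate $\sum_{\ell\leq j}\|u^{(\ell)}_{k-\ell}\|\leq\sum_{\ell}(2D_\pazocal{A}+1)^{j-\ell}\|u^{(0)}_{k-\ell}\|+j(2D_\pazocal{A}+1)^{j}$, hence $t\leq\|w\|+\|w\|(2D_\pazocal{A}+1)^{\|w\|}$ with base independent of $k$, and only then does $c_0>>D_\pazocal{A}$ finish. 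You need this finer bookkeeping; the coarse recursion cannot be repaired by a parameter choice.

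On uniqueness, your route differs from the paper's and is more laborious than necessary. The paper concatenates two shifts of $w$ into a computation with history $H_2^{-1}H_1$ from $q_0q_1$ to $q_0q_1$ and applies \Cref{M_1 shift of 1}, which immediately forces $H_1\equiv H_2$ and completely sidesteps the $\delta_k=-1$ case. Your plan, via uniqueness of the rear shift when $\delta_k=-1$, can be completed, but not quite as you phrase it: the point is not that $\bar v$ has a unique reduced factorization over $S_\pazocal{A}^{\pm1}$ (true but unusable, since $\bar v$ is unknown), but that $u_k$ determines the sequence $(y_i,\eps_i)$; one must compare two candidate reduced histories, observe that $(\bar v')^{-1}\bar v=(z')^{-1}z$, and use \Cref{free subgroup} to see that a nontrivial reduced product of $\ell$ elements of $S_\pazocal{A}^{\pm1}$ has length at least $\ell D_\pazocal{A}/2$ while $(z')^{-1}z$ has length at most $\ell$, forcing $\ell=0$. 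So that step is workable, but the concatenation trick is both shorter and already available from \Cref{shiftable positive}.
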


\begin{proof}

First, let $\pazocal{C}_1:W_0\to\dots\to W_t$ and $\pazocal{C}_2:V_0\to\dots\to V_s$ be two shifts of $w$.  Letting $H_1$ and $H_2$ be the histories of these computations, respectively, there exists a (possibly unreduced) computation $\pazocal{D}:V_s\to\dots\to V_0\equiv W_0\to\dots\to W_t$ with history $H_2^{-1}H_1$.  Since $V_s\equiv q_0q_1\equiv W_t$, \Cref{M_1 shift of 1} implies the reduced version of $\pazocal{D}$ must be empty.  Thus, $H_1=H_2$, {\frenchspacing i.e. $\pazocal{C}_1=\pazocal{C}_2$}.

Now, let $\pazocal{C}_w:W_0\to\dots\to W_t$ be the shift of $w$.  By \Cref{shiftable zero}, it suffices to assume that $|w|_\pazocal{A}\geq1$.  So, $w\equiv u_0x_1^{\delta_1}u_1x_2^{\delta_2}\dots u_{k-1}x_k^{\delta_k}u_k$ for some $x_1,\dots,x_k\in\pazocal{A}_1$, $\delta_1,\dots,\delta_k\in\{\pm1\}$, and $u_0,u_1,\dots,u_k\in F(\pazocal{B})$.

Let $\pazocal{C}:W_0\to\dots\to W_t$ be a shift of $w$ and set $W_i\equiv q_0w_iq_1$ for all $i$.  By \Cref{M_1 A-growth}, there exist $1\leq t_1<\dots<t_k\leq t$ such that $|w_{t_j}|_\pazocal{A}=k-j$ while $|w_{t_j-1}|_\pazocal{A}=k-j+1$.  For completeness, set $t_0=0$ and $t_{k+1}=t$.

Then, for $j=1,\dots,k+1$, let $\pazocal{C}_j:W_{t_{j-1}}\to\dots\to W_{t_j}$ be the corresponding subcomputation of $\pazocal{C}$.

For each $1\leq j\leq k$, \Cref{one-rule delta} implies $w_{t_j}\equiv u_0^{(j)}x_1^{\delta_1}u_1^{(j)}x_2^{\delta_2}\dots u_{k-j-1}x_{k-j}^{\delta_{k-j}}u_{k-j}^{(j)}$ for some words $u_i^{(j)}\in F(\pazocal{B})$.  As above, set $u_i^{(0)}\equiv u_i$.  By construction, $w_{t_k}\equiv u_0^{(k)}\in F(\pazocal{B})$, so that \Cref{shiftable zero} yields the inequalities $t_{k+1}-t_k=\|u_0^{(k)}\|$ and $|W_i|_a\leq\|u_0^{(k)}\|$ for all $t_k\leq i\leq t$.

Further, for $1\leq j\leq k$, $\pazocal{C}_{j}$ is a rear shift of $w_{t_{j-1}}$.  \Cref{rear shift negative} then implies $t_{j}-t_{j-1}\leq\|u_{k-j+1}^{(j-1)}\|+1$ and $\|u_i^{(j)}\|\leq\|u_i^{(j-1)}\|+2D_\pazocal{A}(t_j-t_{j-1})\leq\|u_i^{(j-1)}\|+2D_\pazocal{A}(\|u_{k-j+1}^{(j-1)}\|+1)$ for all $0\leq i\leq k-j$.  Iterating, this second inequality yields:
\begin{equation} \label{u_i^j bound 1}
    \|u_i^{(j)}\|\leq\|u_i^{(0)}\|+2D_\pazocal{A}\sum_{\ell=0}^{j-1}\|u_{k-\ell}^{(\ell)}\|+2D_\pazocal{A}j
\end{equation}
for any $0\leq i\leq k-j$.  In particular, $\|u_{k-1}^{(1)}\|\leq\|u_{k-1}^{(0)}\|+2D_\pazocal{A}\|u_k^{(0)}\|+2D_\pazocal{A}$, so that:
\begin{equation} \label{u_i^j bound 2}
\sum_{\ell=0}^1\|u_{k-\ell}^{(\ell)}\|\leq\sum_{\ell=0}^1(2D_\pazocal{A}+1)^{1-\ell} \|u_{k-\ell}^{(0)}\|+(2D_\pazocal{A}+1)
\end{equation}
Now suppose $\sum\limits_{\ell=0}^{j-1}\|u_{k-\ell}^{(\ell)}\|\leq\sum\limits_{\ell=0}^{j-1}(2D_\pazocal{A}+1)^{j-1-\ell}\|u_{k-\ell}^{(0)}\|+(j-1)(2D_\pazocal{A}+1)^{j-1}$ for some $2\leq j\leq k$.  Then, using (\ref{u_i^j bound 1}) and (\ref{u_i^j bound 2}) and noting that $(2D_\pazocal{A}+1)^j\geq 2D_\pazocal{A}j$ for $D_\pazocal{A}\geq4$ and $j\geq2$:
\begin{align*}
    \sum_{\ell=0}^j\|u_{k-\ell}^{(\ell)}\|&\leq\|u_{k-j}^{(j)}\|+\sum_{\ell=0}^{j-1}\|u_{k-\ell}^{(\ell)}\|\leq\|u_{k-j}^{(0)}\|+(2D_\pazocal{A}+1)\sum_{\ell=0}^{j-1}\|u_{k-\ell}^{(\ell)}\|+2D_\pazocal{A}j \\
    &\leq\|u_{k-j}^{(0)}\|+\sum_{\ell=0}^{j-1}(2D_\pazocal{A}+1)^{j-\ell}\|u_{k-\ell}^{(0)}\|+(j-1)(2D_\pazocal{A}+1)^j+2D_\pazocal{A}j \\
    &\leq\sum_{\ell=0}^j (2D_\pazocal{A}+1)^{j-\ell}\|u_{k-\ell}^{(0)}\|+j(2D_\pazocal{A}+1)^j
\end{align*}
\allowdisplaybreaks
As a result, 
\begin{align*}
    t&=t_0+\sum_{j=1}^{k+1}(t_j-t_{j-1})\leq\sum_{j=1}^{k}(\|u_{k-j+1}^{(j-1)}\|+1)+\|u_0^{(k)}\|\leq k+\sum_{\ell=0}^k\|u_{k-\ell}^{(\ell)}\| \\
    &\leq k+\sum_{\ell=0}^{k}(2D_\pazocal{A}+1)^{k-\ell}\|u_{k-\ell}^{(0)}\|+k(2D_\pazocal{A}+1)^{k} \\
    &\leq k+(2D_\pazocal{A}+1)^{k}\left(\sum_{i=0}^k\|u_i^{(0)}\|+k\right) \\
    &\leq |w|_\pazocal{A}+(2D_\pazocal{A}+1)^{|w|_\pazocal{A}}(|w|_b+|w|_\pazocal{A})\leq \|w\|+\|w\|(2D_\pazocal{A}+1)^{\|w\|}
\end{align*}
Thus, the bound follows by the parameter choice $c_0>>D_\pazocal{A}$ arising from $c_0>>C$.

%

\end{proof}

Note that the upper bound on the length of the reduced computation given in Lemma \ref{M_1 shift} is not sharp.  For example, in the setting of \Cref{shiftable positive}, the factor of $\|w\|$ in the product $\|w\|(2D_\pazocal{A}+1)^{\|w\|}$ may be removed.  However, such improvements will prove moot for the purposes of this article.

Now, we study semi-computations of $\textbf{M}_1^\pazocal{A}$ in the $Q_0^\pazocal{A}Q_1^\pazocal{A}$-sector.  Many of the concepts and statements that follow can be adopted for any noisy $S$-machine, but for simplicity we restrict our attention here to $\textbf{M}_1^\pazocal{A}$, our prototype of a noisy $S$-machine.

The following statement is a vital first step in this and will be crucial to proving the malnormality of the embedding of Theorem A:

\begin{lemma} \label{M_1 semi-computation three A}

Let $\pazocal{S}:w_0\to\dots\to w_t$ be a reduced semi-computation of $\textbf{M}_1^\pazocal{A}$ in the $Q_0^{\pazocal{A}}Q_1^{\pazocal{A}}$-sector.  Suppose $w_0\equiv x_1^{\delta_1}x_2^{\delta_2}x_3^{\delta_3}$ for some $x_i\in\pazocal{A}_1$ and $\delta_i\in\{\pm1\}$.  Then, there exist $u_0,u_1,u_2,u_3\in F(\pazocal{B})$ such that:

\begin{enumerate}

\item $w_t\equiv u_0x_1^{\delta_1}u_1x_2^{\delta_2}u_2x_3^{\delta_3}u_3$

\item $\|u_0\|,\|u_3\|\leq D_\pazocal{A}t$

\item $\frac{1}{2}D_\pazocal{A}t\leq\|u_1\|+\|u_2\|\leq 3D_\pazocal{A}t$

\item $(u_1,u_2)$ uniquely determines the history of $\pazocal{S}$.

\end{enumerate}
    
\end{lemma}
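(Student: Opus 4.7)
The plan is to convert the semi-computation into an algebraic statement about an induced isomorphism of $F(\pazocal{A}_1\sqcup\pazocal{B})$. Let $H\equiv\theta_{y_1}^{\eps_1}\cdots\theta_{y_t}^{\eps_t}$ be the history of $\pazocal{S}$ and set $\Phi=\widetilde{f}_{\theta_{y_t}^{\eps_t},1}\circ\cdots\circ\widetilde{f}_{\theta_{y_1}^{\eps_1},1}$, so that $w_t=\Phi(w_0)$. Each factor fixes $\pazocal{B}$ and sends $a_1\mapsto v(y_j,a)^{\eps_j}\,a_1$; iterating yields $\Phi(a_1)=U_a\cdot a_1$, where $U_a\in F(\pazocal{B})$ is the reduced form of $\prod_{j=1}^{t}v(y_j,a)^{\eps_j}$. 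Since $H$ is reduced, no two consecutive factors in this product are mutually inverse basis elements of $\gen{S_\pazocal{A}}$, so by \Cref{free subgroup} the product is reduced in $S_\pazocal{A}^{\pm 1}$ and satisfies $\tfrac12 D_\pazocal{A} t\leq\|U_a\|\leq D_\pazocal{A} t$.

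Write $x_i=\varphi_1(\tilde{x}_i)$ and $V_i=U_{\tilde{x}_i}x_i$, so that $w_t$ is the reduced form of $V_1^{\delta_1}V_2^{\delta_2}V_3^{\delta_3}$. Since each $U_a$ lies in $F(\pazocal{B})$, no cancellation can occur at a boundary between a $U$-block and an adjacent $x_j^{\pm 1}\in\pazocal{A}_1^{\pm 1}$. The junctions $V_j^{\delta_j}V_{j+1}^{\delta_{j+1}}$ therefore reduce nontrivially only when two $x_j^{\pm 1}$ letters meet (impossible by reducedness of $w_0$) or when $\delta_j=-1$ and $\delta_{j+1}=1$, in which case $U_{\tilde{x}_j}^{-1}$ abuts $U_{\tilde{x}_{j+1}}$. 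In the latter case, $\tilde{x}_j=\tilde{x}_{j+1}$ would force $x_j=x_{j+1}$ and contradict reducedness of $w_0$; hence $\tilde{x}_j\neq\tilde{x}_{j+1}$, and the concatenated product
$$\prod_{k=t}^{1} v(y_k,\tilde{x}_j)^{-\eps_k}\cdot\prod_{k=1}^{t} v(y_k,\tilde{x}_{j+1})^{\eps_k}$$
is reduced in $S_\pazocal{A}^{\pm 1}$ (the central junction uses distinct second coordinates), so by \Cref{free subgroup} its reduced length lies between $tD_\pazocal{A}$ and $2tD_\pazocal{A}$.

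A brief analysis over $(\delta_1,\delta_2,\delta_3)\in\{\pm 1\}^3$ now reads off each $u_i$: in every case $u_0\in\{1,U_{\tilde{x}_1}\}$ and $u_3\in\{1,U_{\tilde{x}_3}^{-1}\}$, giving part (2), while part (1) follows from the explicit factorization. The three $U$-blocks $U_{\tilde{x}_1}^{\pm 1},U_{\tilde{x}_2}^{\pm 1},U_{\tilde{x}_3}^{\pm 1}$ each land in exactly one of $u_0,u_1,u_2,u_3$, yielding $\|u_1\|+\|u_2\|\leq 3D_\pazocal{A} t$ by the upper bound of the first paragraph. The $U_{\tilde{x}_2}^{\pm 1}$-contribution coming from $V_2$ always appears in $u_1\cup u_2$, either alone (giving length at least $\tfrac12 D_\pazocal{A} t$ directly) or concatenated with an adjacent $U$-block (giving length at least $tD_\pazocal{A}$ by the preceding paragraph), so part (3) holds.

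For part (4), in each sign-case at least one of $u_1,u_2$ equals a reduced $S_\pazocal{A}^{\pm 1}$-word of the form $\prod_k v(y_k,\tilde{x})^{\eps_k}$, or the concatenation of two such blocks for two distinct $\tilde{x}$'s; the two blocks are cleanly separable because the $v$-factors are distinguishable by their second coordinate under $\eta_\pazocal{A}$. Since $S_\pazocal{A}$ is a free basis (\Cref{free subgroup}) and $\eta_\pazocal{A}$ is a bijection, this factorization uniquely recovers each $(y_k,\eps_k)$, hence $H$. The principal obstacle is the bookkeeping across all eight sign-cases: one must verify that reducedness of $w_0$, combined with the asymmetric structure of $v(y,a)$, rules out any hidden collapse that would either violate the claimed length bounds or erase information about the history.
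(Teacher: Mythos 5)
Your proposal is correct and follows essentially the same route as the paper's proof: both compute $w_t$ as the reduced form of $(v_1x_1)^{\delta_1}(v_2x_2)^{\delta_2}(v_3x_3)^{\delta_3}$ with $v_j=\prod_k v(y_k,\tilde{x}_j)^{\eps_k}$ reduced over the free basis $S_\pazocal{A}$, then use \Cref{free subgroup} together with the observation that cancellation of $F(\pazocal{B})$-blocks occurs only at $\delta_j=-1,\ \delta_{j+1}=1$ junctions (where $\tilde{x}_j\neq\tilde{x}_{j+1}$ keeps the concatenated $S_\pazocal{A}$-product reduced) to obtain the length bounds and the recovery of $H$ from $u_1$ or $u_2$. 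The sign-case bookkeeping you flag at the end is exactly the $\delta_2=\pm1$ (with $\delta_1$, resp.\ $\delta_3$, subcases) analysis the paper carries out, and your argument already contains the needed verifications.
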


\begin{proof}

Fix $y_i\in\pazocal{A}\sqcup\pazocal{B}$ and $\eps_i\in\{\pm1\}$ such that $H\equiv\theta_{y_1}^{\eps_1}\dots\theta_{y_t}^{\eps_t}$ is the history of $\pazocal{S}$.


Let $\tilde{x}_j=\varphi_1^{-1}(x_j)$ and set $v_j=\prod_{i=1}^t v(y_i,\tilde{x}_j)^{\eps_i}$.  Then, $w_t\equiv w_0\cdot H=(v_1x_1)^{\delta_1}(v_2x_2)^{\delta_2}(v_3x_3)^{\delta_3}$.

As $H$ is reduced, the product defining each $v_j$ is reduced as an element of $\gen{S_\pazocal{A}}$.  So, \Cref{free subgroup} implies that each $v_j$ uniquely determines the history of $H$ and $\frac{1}{2}D_\pazocal{A}t\leq\|v_j\|\leq D_\pazocal{A}t$.

In particular, $\|u_1\|+\|u_2\|\leq|w_t|_b\leq\|v_1\|+\|v_2\|+\|v_3\|=3D_\pazocal{A}t$, $\|u_0\|\leq\|v_1\|\leq D_\pazocal{A}t$, and $\|u_3\|\leq\|v_3\|\leq D_\pazocal{A}t$.

Suppose $\delta_2=1$.  If $\delta_1=1$, then $u_1=v_2$, and so the statement follows.  Conversely, if $\delta_1=-1$, then $u_1=v_1^{-1}v_2$.  But $x_1\neq x_2$, so that the product $\left( \prod_{i=1}^t v(y_i,\tilde{x}_1)^{\eps_i} \right)^{-1}
\left(\prod_{i=1}^t v(y_i,\tilde{x}_2)^{\eps_i}\right)$ defining $v_1^{-1}v_2$ is reduced as an element of $\gen{S_\pazocal{A}}$.  Hence, \Cref{free subgroup} implies that $\|u_1\|\geq D_\pazocal{A}t$ and $u_1$ uniquely determines $H$.

If $\delta_2=-1$, then the same arguments imply that $\|u_2\|\geq\frac{1}{2}D_\pazocal{A}t$ and $u_2$ uniquely determines $H$.
    
\end{proof}

The next statement is similar in nature to \Cref{M_1 semi-computation three A} and is proved in an analogous manner:

\begin{lemma} \label{M_1 semi-computation two A}

Let $\pazocal{S}:w_0\to\dots\to w_t$ be a reduced semi-computation of $\textbf{M}_1^\pazocal{A}$ in the $Q_0^\pazocal{A}Q_1^\pazocal{A}$-sector.  Suppose $w_0\equiv x_1^{\delta_1}x_2^{\delta_2}$ for some $x_i\in\pazocal{A}_1$ and $\delta_i\in\{\pm1\}$ such that $\delta_1\neq1$ or $\delta_2\neq-1$.  Then, there exist $u_0,u_1,u_2\in F(\pazocal{B})$ such that:

\begin{enumerate}

\item $w_t\equiv u_0x_1^{\delta_1}u_1x_2^{\delta_2}u_2$

\item $\|u_0\|,\|u_2\|\leq D_\pazocal{A}t$

\item $\frac{1}{2}D_\pazocal{A}t\leq\|u_1\|\leq2D_\pazocal{A}t$ 

\item $u_1$ uniquely determines the history of $\pazocal{S}$

\end{enumerate}

\end{lemma}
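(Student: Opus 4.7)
The plan is to follow the same template as the proof of \Cref{M_1 semi-computation three A}. Write the history as $H\equiv\theta_1\dots\theta_t$ with $\theta_i=\theta_{y_i}^{\eps_i}$, set $\tilde{x}_j=\varphi_1^{-1}(x_j)$, and define $v_j=\prod_{i=1}^t v(y_i,\tilde{x}_j)^{\eps_i}$. Then by the construction of the rules of $\textbf{M}_1^\pazocal{A}$, one has $w_t\equiv w_0\cdot H=(v_1x_1)^{\delta_1}(v_2x_2)^{\delta_2}$. Since $H$ is reduced, the product defining each $v_j$ is reduced as an element of $\gen{S_\pazocal{A}}$, so \Cref{free subgroup} gives $\tfrac{1}{2}D_\pazocal{A}t\leq\|v_j\|\leq D_\pazocal{A}t$ and ensures that $v_j$ as an element of $F(\pazocal{B})$ uniquely determines the ordered sequence of basis letters, hence the history $H$.

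Next, I would split into the three admissible sign cases. If $(\delta_1,\delta_2)=(1,1)$, then $w_t\equiv v_1x_1v_2x_2$, so $u_0=v_1$, $u_1=v_2$, $u_2=1$; if $(\delta_1,\delta_2)=(-1,-1)$, then $w_t\equiv x_1^{-1}v_1^{-1}x_2^{-1}v_2^{-1}$, so $u_0=1$, $u_1=v_1^{-1}$, $u_2=v_2^{-1}$. In both of these sub-cases, $u_1$ is one of $v_2$ or $v_1^{-1}$, and the required bounds and uniqueness follow directly from the preceding paragraph.

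The remaining case $(\delta_1,\delta_2)=(-1,1)$ is the main point of the argument. Here $w_0\equiv x_1^{-1}x_2$, and reducedness of $w_0$ forces $x_1\neq x_2$, hence $\tilde{x}_1\neq\tilde{x}_2$. One computes $w_t\equiv x_1^{-1}(v_1^{-1}v_2)x_2$, so $u_0=u_2=1$ and $u_1=v_1^{-1}v_2$. The key step is to show that the $2t$-fold product
\[
v(y_t,\tilde{x}_1)^{-\eps_t}\cdots v(y_1,\tilde{x}_1)^{-\eps_1}\, v(y_1,\tilde{x}_2)^{\eps_1}\cdots v(y_t,\tilde{x}_2)^{\eps_t}
\]
representing $v_1^{-1}v_2$ is reduced as an element of $\gen{S_\pazocal{A}}$. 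Within each half this follows from $H$ being reduced (so $(y_i,\eps_i)\neq(y_{i+1},-\eps_{i+1})$); the only remaining junction is the middle one $v(y_1,\tilde{x}_1)^{-\eps_1}v(y_1,\tilde{x}_2)^{\eps_1}$, which fails to cancel precisely because $\tilde{x}_1\neq\tilde{x}_2$. Applying \Cref{free subgroup} to this length-$2t$ reduced product then yields $D_\pazocal{A}t\leq\|u_1\|\leq 2D_\pazocal{A}t$ and shows that $u_1$ determines the full sequence of basis letters, hence $H$.

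The main obstacle is this last case: one must carefully check that the middle junction of the $2t$-fold product does not cancel, which is where the hypothesis $(\delta_1,\delta_2)\neq(1,-1)$ enters together with the reducedness of $w_0$. The excluded case $(\delta_1,\delta_2)=(1,-1)$ illustrates why this is needed: there $w_t\equiv v_1x_1x_2^{-1}v_2^{-1}$ forces $u_1=1$ regardless of $H$, so $u_1$ cannot determine the history.
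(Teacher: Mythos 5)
Your proof is correct and follows essentially the same route as the paper: the same products $v_j=\prod_i v(y_i,\tilde{x}_j)^{\eps_i}$, the same appeal to \Cref{free subgroup} for the length bounds and the recovery of the history, and a case split on the signs $(\delta_1,\delta_2)$. Your explicit treatment of the case $(\delta_1,\delta_2)=(-1,1)$ — reducedness of the $2t$-fold product in $\gen{S_\pazocal{A}}$, with the middle junction surviving because $x_1\neq x_2$ — is precisely the argument the paper's proof imports by reference from the subcase $\delta_1=-1$ of \Cref{M_1 semi-computation three A}, so there is no genuine difference in method.
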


%
%
%
%
%
%

A word in $w\in F(\pazocal{A}_1\sqcup\pazocal{B})$ whose first and last letter is an element of $\pazocal{A}_1^{\pm1}$ is called \textit{compressed}.  For any word $w'\in F(\pazocal{A}_1\sqcup\pazocal{B})$ with $|w'|_\pazocal{A}\geq1$, the \textit{compression} $\mathscr{C}(w')$ is the maximal compressed subword of $w'$.



Let $\theta$ be a rule of $\textbf{M}_1^\pazocal{A}$ and $w\equiv x_1^{\delta_1}u_1x_2^{\delta_2}u_2\dots u_{k-1}x_k^{\delta_k}$ be a compressed word, {\frenchspacing i.e. with} $x_i\in\pazocal{A}_1$, $\delta_i\in\{\pm1\}$, and $u_i\in F(\pazocal{B})$.  \Cref{semi-computation deltas} then implies that $w\cdot\theta\equiv u_0'x_1^{\delta_1}u_1'x_2^{\delta_2}u_2'\dots u_{k-1}'x_k^{\delta_k}u_k'$ for some $u_i'\in F(\pazocal{B})$.  The \textit{compressed application} of $\theta$ to $w$ is then taken to be reduced word $$w*\theta\equiv\mathscr{C}(w\cdot\theta)\equiv x_1^{\delta_1}u_1'x_2^{\delta_2}u_2'\dots u_{k-1}'x_k^{\delta_k}$$
Note the resemblance between a compressed application of a rule and the standard setup of an application of a rule to an admissible word: The `compression' mimics the `trimming' that occurs in the latter to make the resulting word again admissible.

Accordingly, a \textit{compressed semi-computation} of $\textbf{M}_1^\pazocal{A}$ in the $Q_0^{\pazocal{A}}Q_1^{\pazocal{A}}$-sector is defined to be a sequence $\pazocal{S}_\mathscr{C}:w_0\to\dots\to w_t$ such that $w_0$ is compressed and $w_i\equiv w_{i-1}*\theta_i$ for some rule $\theta_i$.

Note that any semi-computation $\pazocal{S}:w_0\to\dots\to w_t$ of $\textbf{M}_1^\pazocal{A}$ in the $Q_0^{\pazocal{A}}Q_1^{\pazocal{A}}$-sector such that $|w_0|_\pazocal{A}\geq1$ can be associated to the compressed semi-computation $\pazocal{S}_\mathscr{C}:\mathscr{C}(w_0)\to\dots\to\mathscr{C}(w_t)$ whose history is the same as that of $\pazocal{S}$.  All terminology relating to semi-computations is carried over to compressed semi-computations.  

The following statement is then an immediate consequence of \Cref{M_1 semi-computation three A}:

\begin{lemma} \label{M_1 compressed semi-computation three A}

Let $\pazocal{S}_\mathscr{C}:w_0\to\dots\to w_t$ be a reduced compressed semi-computation of $\textbf{M}_1^\pazocal{A}$ in the $Q_0^{\pazocal{A}}Q_1^{\pazocal{A}}$-sector.  Suppose $w_0\equiv x_1^{\delta_1}x_2^{\delta_2}x_3^{\delta_3}$ for some $x_i\in\pazocal{A}_1$ and $\delta_i\in\{\pm1\}$.  Then, there exist $u_1,u_2\in F(\pazocal{B})$ such that:

\begin{enumerate}

\item $w_t\equiv x_1^{\delta_1}u_1x_2^{\delta_2}u_2x_3^{\delta_3}$

\item $\frac{1}{2}D_\pazocal{A}t\leq\|u_1\|+\|u_2\|\leq 3D_\pazocal{A}t$

\item the pair $(u_1,u_2)$ uniquely determines the history of $\pazocal{S}_\mathscr{C}$

\end{enumerate}

\end{lemma}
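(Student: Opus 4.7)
The plan is to reduce the statement directly to \Cref{M_1 semi-computation three A} by lifting the compressed semi-computation to a genuine semi-computation with the same history. More precisely, let $H\equiv\theta_1\dots\theta_t$ be the history of $\pazocal{S}_\mathscr{C}$, set $v_0\equiv w_0$, and recursively define $v_i\equiv v_{i-1}\cdot\theta_i$. Since $\widetilde{f}_{\theta,1}$ is an isomorphism of $F(Y_1^{\pazocal{A}})$, each $v_i$ is $\theta_{i+1}$-applicable, so $\pazocal{S}:v_0\to\dots\to v_t$ is a reduced semi-computation in the $Q_0^{\pazocal{A}}Q_1^{\pazocal{A}}$-sector with history $H$.

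The key observation is that $w_i\equiv\mathscr{C}(v_i)$ for all $i$. This is proved by induction: writing $v_{i-1}\equiv s\,\mathscr{C}(v_{i-1})\,p$ with $s,p\in F(\pazocal{B})$, the fact that $\widetilde{f}_{\theta_i,1}$ fixes $\pazocal{B}$ pointwise gives $v_i\equiv s\,\widetilde{f}_{\theta_i,1}(\mathscr{C}(v_{i-1}))\,p$, so that stripping the outer $F(\pazocal{B})$-parts of $v_i$ yields exactly $\mathscr{C}(\mathscr{C}(v_{i-1})\cdot\theta_i)\equiv\mathscr{C}(v_{i-1})*\theta_i\equiv w_i$, using \Cref{semi-computation deltas} to see that no new $\pazocal{A}_1$-letters appear or disappear when forming the compression.

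With this identification in hand, apply \Cref{M_1 semi-computation three A} to $\pazocal{S}$: there exist $u_0,u_1,u_2,u_3\in F(\pazocal{B})$ with $v_t\equiv u_0x_1^{\delta_1}u_1x_2^{\delta_2}u_2x_3^{\delta_3}u_3$, satisfying the bounds $\frac{1}{2}D_\pazocal{A}t\leq\|u_1\|+\|u_2\|\leq 3D_\pazocal{A}t$, and such that $(u_1,u_2)$ determines $H$. Compressing gives $w_t\equiv\mathscr{C}(v_t)\equiv x_1^{\delta_1}u_1x_2^{\delta_2}u_2x_3^{\delta_3}$, yielding items (1) and (2); since the history of $\pazocal{S}_\mathscr{C}$ coincides with that of $\pazocal{S}$, the uniqueness in (3) follows from the uniqueness clause of \Cref{M_1 semi-computation three A}.

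I do not anticipate a genuine obstacle here; the only point requiring care is the inductive verification that compression commutes with the one-step application of a rule, which hinges on the rules acting trivially on $\pazocal{B}$-letters. Once that is in place, the bounds and uniqueness are inherited verbatim from the non-compressed version, exactly as the excerpt advertises.
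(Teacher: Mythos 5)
Your proposal is correct and follows essentially the same route as the paper, which derives this lemma as an immediate consequence of \Cref{M_1 semi-computation three A} via the observation (made just before the statement) that a semi-computation and its compression share the same history; your inductive verification that compression commutes with one-step applications — resting on the rules fixing $\pazocal{B}$ pointwise and on \Cref{semi-computation deltas} — is precisely the detail the paper leaves implicit. No gaps to report.
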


Similarly, the following statement is an immediate consequence of \Cref{M_1 semi-computation two A}:

\begin{lemma} \label{M_1 compressed semi-computation two A}

Let $\pazocal{S}_\mathscr{C}:w_0\to\dots\to w_t$ be a reduced compressed semi-computation of $\textbf{M}_1^\pazocal{A}$ in the $Q_0^{\pazocal{A}}Q_1^{\pazocal{A}}$-sector.  Suppose $w_0\equiv x_1^{\delta_1}x_2^{\delta_2}$ for some $x_i\in\pazocal{A}_1$ and $\delta_i\in\{\pm1\}$ such that $\delta_1\neq1$ or $\delta_2\neq-1$.  Then, there exist $u_1\in F(\pazocal{B})$ such that:

\begin{enumerate}

\item $w_t\equiv x_1^{\delta_1}u_1x_2^{\delta_2}$

\item $\frac{1}{2}D_\pazocal{A}t\leq\|u_1\|\leq 2D_\pazocal{A}$

\item $u_1$ uniquely determines the history of $\pazocal{S}_\mathscr{C}$

\end{enumerate}

\end{lemma}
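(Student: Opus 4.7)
The plan is to reduce the statement to Lemma \ref{M_1 semi-computation two A} by lifting the compressed semi-computation $\pazocal{S}_\mathscr{C}: w_0 \to \dots \to w_t$, with history $H \equiv \theta_1 \dots \theta_t$, to an honest (uncompressed) reduced semi-computation sharing the same history. Specifically, I would define the sequence $w_0' \equiv w_0$ and $w_i' \equiv w_{i-1}' \cdot \theta_i$, so that $\pazocal{S}: w_0' \to w_1' \to \dots \to w_t'$ is a semi-computation in the $Q_0^\pazocal{A}Q_1^\pazocal{A}$-sector with history $H$. Since $H$ is reduced by assumption, $\pazocal{S}$ is a reduced semi-computation.

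The first step is to verify by induction on $i$ that $\mathscr{C}(w_i') \equiv w_i$. For the inductive step, suppose $\mathscr{C}(w_{i-1}') \equiv w_{i-1}$, so that $w_{i-1}' \equiv u\, w_{i-1}\, v$ for some $u,v \in F(\pazocal{B})$. Because $f_{\theta,1}$ fixes $\pazocal{B}$ pointwise for every rule $\theta$ of $\textbf{M}_1^\pazocal{A}$ (and hence so does its inverse), the isomorphism $\widetilde{f}_{\theta_i,1}$ is the identity on $F(\pazocal{B})$, so
\[
w_i' \equiv \widetilde{f}_{\theta_i,1}(u\, w_{i-1}\, v) = u\,\widetilde{f}_{\theta_i,1}(w_{i-1})\, v = u\,(w_{i-1} \cdot \theta_i)\, v.
\]
Writing $w_{i-1}\cdot\theta_i \equiv u' w_i v'$ with $u',v' \in F(\pazocal{B})$ (using $\mathscr{C}(w_{i-1}\cdot\theta_i) \equiv w_{i-1}*\theta_i \equiv w_i$, which is guaranteed by \Cref{semi-computation deltas} and the definition of $*$), we obtain $w_i' \equiv uu'\, w_i\, v'v$, so that $\mathscr{C}(w_i') \equiv w_i$ since $w_i$ is compressed.

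Having established the lift, the hypotheses of \Cref{M_1 semi-computation two A} hold for $\pazocal{S}$ (the initial word is $w_0' \equiv x_1^{\delta_1}x_2^{\delta_2}$ with the same sign constraint), and that lemma yields $u_0, u_1, u_2 \in F(\pazocal{B})$ with $w_t' \equiv u_0 x_1^{\delta_1} u_1 x_2^{\delta_2} u_2$, the length bounds $\|u_0\|,\|u_2\| \leq D_\pazocal{A}t$ and $\tfrac{1}{2}D_\pazocal{A}t \leq \|u_1\| \leq 2 D_\pazocal{A}t$, and uniqueness of the history given $u_1$. Compressing, $w_t \equiv \mathscr{C}(w_t') \equiv x_1^{\delta_1} u_1 x_2^{\delta_2}$, which gives item (1); item (2) follows directly from the bound on $\|u_1\|$; and item (3) follows because the history of $\pazocal{S}_\mathscr{C}$ is the history of $\pazocal{S}$ by construction.

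There is essentially no obstacle here beyond being careful with the induction verifying $\mathscr{C}(w_i') \equiv w_i$; once that identification is in place, the result is read off \Cref{M_1 semi-computation two A} together with the observation that compression strips only $\pazocal{B}$-prefix/suffix and therefore leaves the middle $\pazocal{B}$-block $u_1$ (and thus the information determining $H$) intact.
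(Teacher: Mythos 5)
Your proposal is correct and follows essentially the same route as the paper, which treats the lemma as an immediate consequence of Lemma \ref{M_1 semi-computation two A}: you simply make explicit the lift of the compressed semi-computation to an ordinary semi-computation with the same history (using that every rule fixes $F(\pazocal{B})$ pointwise, so compression only strips $\pazocal{B}$-prefixes and suffixes) and then read off the conclusions. Note that the bound $\|u_1\|\leq 2D_\pazocal{A}t$ you obtain is the intended one (the ``$2D_\pazocal{A}$'' in the statement is evidently a typo), and in your induction $w_i'\equiv uu'\,w_i\,v'v$ should be read up to free reduction inside the $\pazocal{B}$-blocks, which does not affect $\mathscr{C}(w_i')\equiv w_i$.
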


Let $\Lambda_1^\pazocal{A}$ be a subset of $(\pazocal{A}_1\cup\pazocal{A}_1^{-1})^*$ consisting of cyclically reduced words of length at least $C$.  Then, define $\pazocal{E}_1(\Lambda_1^\pazocal{A})$ to be the set of all reduced words $w$ over $(\pazocal{A}_1\sqcup\pazocal{B})^{\pm1}$ for which there exists a semi-computation $\pazocal{S}:w_0\to\dots\to w_t$ of $\textbf{M}_1^\pazocal{A}$ in the $Q_0^{\pazocal{A}}Q_1^{\pazocal{A}}$-sector such that $w_0\equiv w$ and $w_t\in\Lambda_1^\pazocal{A}$.  In this case, $\pazocal{S}$ is said to \textit{$\Lambda_1^\pazocal{A}$-accept} $w$.



For any $\Lambda_1^\pazocal{A}$-accepting semi-computation, \Cref{semi-computation deltas} implies $\delta(w_i)\equiv\delta(w_t)\equiv\widetilde{\varphi}_1^{-1}(w_t)$ for all $i$.  Hence, the terminal word $w_t$ of any such $\Lambda_1^\pazocal{A}$-accepting semi-computation is uniquely determined by the word $w$.  In particular, $|w|_\pazocal{A}\geq C$, and so $w\equiv u_0x_1^{\delta_1}u_1x_2^{\delta_2}\dots u_{k-1}x_k^{\delta_k}u_k$ for some $x_i\in\pazocal{A}_1$, $\delta_i\in\{\pm1\}$, and $u_i\in F(\pazocal{B})$.


\begin{lemma} \label{M_1 Lambda semi-computations}

Let $w\in\pazocal{E}_1(\Lambda_1^\pazocal{A})$ and set $w\equiv u_0x_1^{\delta_1}u_1x_2^{\delta_2}\dots x_k^{\delta_k}u_k$ as above.  Then there exists a unique reduced semi-computation of $\textbf{M}_1^\pazocal{A}$ in the $Q_0^{\pazocal{A}}Q_1^{\pazocal{A}}$-sector $\pazocal{S}_1(w):w\equiv w_0\to\dots\to w_t$ which $\Lambda_1^\pazocal{A}$-accepts $w$.  In this case:

\begin{enumerate}

\item $\frac{1}{2}D_\pazocal{A}t\leq \|u_{i-1}\|+\|u_i\|\leq 3D_\pazocal{A}t$ for any $i\in\{2,\dots,k-1\}$

\item $\frac{1}{2}D_\pazocal{A}t\leq \|u_ku_0\|+\|u_j\|\leq 3D_\pazocal{A}t$ for any $j\in\{1,k-1\}$

\item $\|u_0\|,\|u_k\|\leq D_\pazocal{A}t$

\end{enumerate}

\end{lemma}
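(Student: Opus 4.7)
The plan is to first note that the existence of \emph{some} accepting semi-computation follows from $w\in\pazocal{E}_1(\Lambda_1^\pazocal{A})$, and reducing its history (removing consecutive mutually inverse rules) preserves the initial and terminal words. By \Cref{semi-computation deltas}, the $\pazocal{A}$-projection is invariant under every rule, so the terminal word is forced to be $w_t\equiv\widetilde{\varphi}_1(\delta(w))$; in particular, $x_1^{\delta_1}\dots x_k^{\delta_k}$ is cyclically reduced and $k\geq C\geq 3$ makes every index range in the conclusion nonempty.

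The central technical observation is a localization principle. Let $H\equiv\theta_{y_1}^{\eps_1}\dots\theta_{y_t}^{\eps_t}$ be the history of the reduced accepting semi-computation, and consider the reverse history $H^{-1}$ applied to $w_t$, so that $w_t\cdot H^{-1}\equiv w$. Each rule $\theta_y^{\pm 1}$ acts on $F(\pazocal{A}_1\sqcup\pazocal{B})$ as a homomorphism that fixes every $\pazocal{B}$-letter and sends $a_1\mapsto v(y,a)^{\pm 1}a_1$. Because $w_t$ is cyclically reduced, no two distinct $\pazocal{A}_1$-letters of $w_t$ ever meet and cancel under any rule of $H^{-1}$; all free reduction is confined to $F(\pazocal{B})$. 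Hence the $\pazocal{B}$-word appearing between any two consecutive $\pazocal{A}_1$-letters of $w_t$ (linearly or cyclically) in $w$ is determined solely by that pair, its signs, and $H^{-1}$, and coincides with the $\pazocal{B}$-word produced by applying $H^{-1}$ to \emph{any} substring of $w_t$ containing those two letters as consecutive positions.

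For statement (1), fix $i\in\{2,\dots,k-1\}$ and apply $H^{-1}$ to the three-letter subword $x_{i-1}^{\delta_{i-1}}x_i^{\delta_i}x_{i+1}^{\delta_{i+1}}$ of $w_t$, itself reduced. Since $H^{-1}$ is reduced and every element of $F(Y_1^\pazocal{A})$ is $\theta^{\pm 1}$-applicable, \Cref{M_1 semi-computation three A} yields a terminal word $u_0'x_{i-1}^{\delta_{i-1}}u_1'x_i^{\delta_i}u_2'x_{i+1}^{\delta_{i+1}}u_3'$ with $\tfrac12 D_\pazocal{A}t\leq\|u_1'\|+\|u_2'\|\leq 3D_\pazocal{A}t$; localization identifies $u_1'=u_{i-1}$, $u_2'=u_i$. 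For (2), apply the same argument to the cyclic three-letter subwords $x_k^{\delta_k}x_1^{\delta_1}x_2^{\delta_2}$ (for $j=1$) and $x_{k-1}^{\delta_{k-1}}x_k^{\delta_k}x_1^{\delta_1}$ (for $j=k-1$), each reduced by cyclic reducedness of $w_t$; the $\pazocal{B}$-word between $x_k^{\delta_k}$ and $x_1^{\delta_1}$ in the cyclic reading of $w$ is precisely the reduced product $u_ku_0$, so the same lemma gives the desired sum bound. For (3), apply \Cref{M_1 semi-computation three A} to the first three letters of $w_t$ and use the bound $\|u_0'\|\leq D_\pazocal{A}t$, together with $u_0=u_0'$ from localization; the bound $\|u_k\|\leq D_\pazocal{A}t$ follows symmetrically from the last three letters.

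For uniqueness, suppose $H$ and $H'$ are the histories of two reduced accepting semi-computations of $w$. They share the terminal word $w_t$, and by localization both $H^{-1}$ and $(H')^{-1}$, when applied to $x_1^{\delta_1}x_2^{\delta_2}x_3^{\delta_3}$, produce the same middle pair $(u_1,u_2)$ — an invariant of $w$. The uniqueness clause of \Cref{M_1 semi-computation three A} then forces $H^{-1}\equiv(H')^{-1}$, hence $H\equiv H'$. The main technical content of the proof is the localization step: once it is verified rigorously that cyclic reducedness of $w_t$ confines all free cancellation to $F(\pazocal{B})$, each of statements (1)--(3) and uniqueness reduces to a direct invocation of \Cref{M_1 semi-computation three A}.
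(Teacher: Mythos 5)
Your proposal is correct and follows essentially the same route as the paper: reduce to three-letter (cyclic) subwords of $w_t$, invoke the three-$\pazocal{A}$ semi-computation lemma for the bounds and for uniqueness of the history, and identify the resulting middle $\pazocal{B}$-words with $u_{i-1},u_i$ (resp. $u_ku_0$) via the localization of the rules' action, which is exactly the paper's ``by construction'' identification. The only cosmetic difference is that the paper runs the interior and cyclic cases through the compressed version (\Cref{M_1 compressed semi-computation three A}) and reserves \Cref{M_1 semi-computation three A} for the bound (3) on $\|u_0\|,\|u_k\|$, whereas you use the uncompressed lemma throughout; since compression only trims the outer $\pazocal{B}$-words, this changes nothing.
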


\begin{proof}

Let $\pazocal{S}:w\equiv w_0\to\dots\to w_t$ be a reduced semi-computation which $\Lambda_1^\pazocal{A}$-accepts $w$.  This implies $w_t\equiv x_1^{\delta_1}\dots x_k^{\delta_k}\in \Lambda_1^\pazocal{A}$, and so the definition of $\Lambda_1^\pazocal{A}$ yields $k\geq C\geq3$.

Let $H$ be the history of $\pazocal{S}$ and, for $i\in\{2,\dots,k-1\}$, let $\pazocal{S}_\mathscr{C}^{(i)}:v_0^{(i)}\to\dots\to v_t^{(i)}$ be the reduced compressed semi-computation with history $H^{-1}$ such that $v_0^{(i)}\equiv x_{i-1}^{\delta_{i-1}}x_i^{\delta_i}x_{i+1}^{\delta_{i+1}}$.  

Similarly, for $j\in\{1,k\}$, let $\pazocal{S}_\mathscr{C}^{(j)}:v_0^{(j)}\to\dots\to v_t^{(j)}$ be the reduced compressed semi-computation with history $H^{-1}$ such that $v_0^{(1)}\equiv x_k^{\delta_k}x_1^{\delta_1}x_2^{\delta_2}$ and $v_0^{(k)}\equiv x_{k-1}^{\delta_{k-1}}x_k^{\delta_k}x_1^{\delta_1}$.

For any $i$, applying \Cref{M_1 compressed semi-computation three A} to $\pazocal{S}_\mathscr{C}^{(i)}$ implies $v_t^{(i)}\equiv x_{i-1}^{\delta_{i-1}}y_ix_i^{\delta_i}z_ix_{i+1}^{\delta_{i+1}}$ for some $y_i,z_i\in F(\pazocal{B})$ such that $\frac{1}{2}D_\pazocal{A}t\leq\|y_i\|+\|z_i\|\leq3D_\pazocal{A}t$ and the pair $(y_i,z_i)$ uniquely determines $H^{-1}$.

By construction, $y_i=u_{i-1}$ and $z_i=u_i$ for $2\leq i\leq k$.  Hence, (1) holds and the semi-computation $\Lambda_1^\pazocal{A}$-accepting $\pazocal{S}$ is uniquely determined by $w$.

Further, it follows from construction that:

\begin{itemize}

\item $y_1=u_ku_0$ and $z_1=u_1$

\item $y_k=u_{k-1}$ and $z_k=u_ku_0$

\end{itemize}

Hence, (2) is implied by the application of \Cref{M_1 compressed semi-computation three A} to $\pazocal{S}_\mathscr{C}^{(1)}$ and $\pazocal{S}_\mathscr{C}^{(k)}$.


Finally, let $\pazocal{S}^{(2)}:v_0'\to\dots\to v_t'$ and $\pazocal{S}^{(k-1)}:v_0''\to\dots\to v_t''$ be the reduced semi-computations with history $H^{-1}$ such that $v_0'\equiv x_1^{\delta_1}x_2^{\delta_2}x_3^{\delta_3}$ and $v_0''\equiv x_{k-2}^{\delta_{k-2}}x_{k-1}^{\delta_{k-1}}x_k^{\delta_k}$.  In other words, these are the `uncompressed' semi-computations corresponding to $\pazocal{S}_\mathscr{C}^{(2)}$ and $\pazocal{S}_\mathscr{C}^{(k-1)}$, respectively.

Then, by construction there exist words $u_3',u_{k-3}''\in F(\pazocal{B})$ such that $v_t'\equiv u_0x_1^{\delta_1}u_1x_2^{\delta_2}u_2x_3^{\delta_3}u_3'$ and $v_t''\equiv u_{k-3}''x_{k-2}^{\delta_{k-2}}u_{k-2}x_{k-1}^{\delta_{k-1}}u_{k-1}x_k^{\delta_k}u_k$.  Thus, (3) follows by applying \Cref{M_1 semi-computation three A} to $\pazocal{S}^{(2)}$ and $\pazocal{S}^{(k-1)}$.

\end{proof}

\medskip

\subsection{The machine $\textbf{M}_2^\pazocal{L}$} \label{sec-M_2} \

As it is assumed that $\pazocal{L}$ is a recursive subset of $\pazocal{A}^*$, there exists a non-deterministic Turing machine $\pazocal{T}_\pazocal{L}$ with alphabet $\pazocal{A}$ that enumerates $\pazocal{L}$ (and another such machine that enumerates the complement of $\pazocal{L}$).


Let $\TM_\pazocal{L}$ be the time function of $\pazocal{T}_\pazocal{L}$, i.e $\TM_\pazocal{L}:\N\to\N$ is the non-decreasing function satisfying the condition that $\TM_\pazocal{L}(n)$ is the smallest number such that for all $w\in\pazocal{L}$ with $\|w\|\leq n$, $\pazocal{T}_\pazocal{L}$ computes $w$ by a finite sequence of $\leq\TM_\pazocal{L}(n)$ basic moves.  Note that since $\pazocal{L}$ is recursive, it may be assumed that $\TM_\pazocal{L}$ is a computable function.

A seminal result of Sapir, Birget, and Rips \cite{SBR} then produces the following auxiliary machine:

\begin{lemma}[Proposition 4.1 of \cite{SBR}] \label{M_2 language} 

There exists an $S$-machine $\textbf{M}_2^\pazocal{L}$ satisfying Lemma \ref{simplify rules} that `simulates' the Turing machine $\pazocal{T}_\pazocal{L}$ in the following sense:

\begin{enumerate}

\item The hardware of $\textbf{M}_2^\pazocal{L}$ is $(\sqcup_{i=1}^N X_i^{\pazocal{L}},\sqcup_{i=0}^N P_i^{\pazocal{L}})$, where $X_1^\pazocal{L}=\emptyset$, $X_2^\pazocal{L}=\pazocal{A}$, and the $P_1^\pazocal{L}P_2^\pazocal{L}$-sector is the only input sector

\item The language of accepted inputs is $\pazocal{L}$

\item For any accepted configuration $W$ satisfying $|W|_a\leq n$, there exists a computation of $\textbf{M}_2^\pazocal{L}$ which accepts $W$ and has length $\leq c_0\TM_\pazocal{L}(c_0n)^3+c_0n+c_0$

\end{enumerate}

\end{lemma}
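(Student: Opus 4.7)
The plan is to appeal directly to the construction of Sapir--Birget--Rips, and sketch why the stated numerical bound falls out of their argument. The central idea is that an $S$-machine is essentially a group-theoretic generalization of a Turing machine in which each configuration is encoded as an admissible word in a group alphabet, and each basic move of the underlying Turing machine is simulated by a short sequence of $S$-rules. Since $\pazocal{T}_\pazocal{L}$ is non-deterministic and enumerates $\pazocal{L}$, a word $w\in\pazocal{A}^*$ lies in $\pazocal{L}$ iff $\pazocal{T}_\pazocal{L}$ accepts $w$, so it suffices to build an $S$-machine mimicking $\pazocal{T}_\pazocal{L}$ step-by-step while preserving the set of accepted inputs.

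First I would invoke the standard reduction from a non-deterministic Turing machine to a \emph{symmetric} Turing machine (a Turing machine whose set of rules is closed under inversion), as carried out in Section~3 of \cite{SBR}. This step introduces at most a quadratic blow-up in time: a computation of length $t$ of $\pazocal{T}_\pazocal{L}$ becomes a computation of length $O(t^2)$ of a symmetric Turing machine $\pazocal{T}_\pazocal{L}'$ accepting the same language. Next I would carry out the symmetric-TM to $S$-machine translation of Section~4 of \cite{SBR}. The hardware $(\sqcup X_i^\pazocal{L},\sqcup P_i^\pazocal{L})$ is set up so that one part of the state letters tracks the position of the Turing machine head, while the other parts of the state letters act as `markers' separating the tape content into sectors; the tape alphabet $\pazocal{A}$ is placed in the $P_1^\pazocal{L}P_2^\pazocal{L}$-sector, which will be the (unique) input sector, and the remaining sectors are reserved for intermediate computation (auxiliary tape symbols, check-sectors, etc.). Each rule of $\pazocal{T}_\pazocal{L}'$ is then simulated by a bounded number of $S$-rules, incurring a further linear overhead per step.

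Composing the two reductions yields an $S$-machine $\textbf{M}_2^\pazocal{L}$ whose set of accepted inputs is precisely $\pazocal{L}$, and for which an input $w$ with $\|w\|\leq n$ admits an accepting computation of length at most $O(\TM_\pazocal{L}(n)^2)\cdot O(1)+O(n)+O(1)$; with the constant $c_0$ absorbing all fixed multiplicative factors (and giving a little slack for the additional intermediate sectors), this is bounded by $c_0\TM_\pazocal{L}(c_0 n)^3+c_0 n+c_0$. Finally, I would apply Lemma~\ref{simplify rules} to $\textbf{M}_2^\pazocal{L}$ to obtain an equivalent $S$-machine in which each part $q_i\to u_iq_i'v_{i+1}$ of each rule satisfies $\|u_i\|+\|v_{i+1}\|\leq 1$; by construction this equivalence preserves the language of accepted inputs and alters the time function only by a $\Theta$-equivalent factor, so the time bound continues to hold (perhaps after re-absorbing constants into $c_0$).

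The main technical obstacle is not conceptual but bookkeeping: one must verify that the encoding of tape contents into the $P_1^\pazocal{L}P_2^\pazocal{L}$-sector really does make the input sector unique, that the auxiliary sectors $P_i^\pazocal{L}P_{i+1}^\pazocal{L}$ for $i\geq 2$ are genuinely empty in any input configuration, and that the stated cubic bound in $\TM_\pazocal{L}(c_0 n)$ survives the final invocation of Lemma~\ref{simplify rules}. All of this is carried out in detail in \cite{SBR}, so in practice the proof is a reference to Proposition~4.1 there, together with an observation that the constants can be re-absorbed into the single constant $c_0$ appearing in our parameter list.
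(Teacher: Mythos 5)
Your proposal matches the paper's treatment: the lemma is simply quoted as Proposition 4.1 of \cite{SBR} (an $S$-machine simulating the Turing machine with at worst a cubic time overhead, brought into the normal form of Lemma \ref{simplify rules}), with all simulation constants absorbed into $c_0$, and the paper gives no independent proof beyond this citation. Your intermediate accounting of the overhead (quadratic blow-up plus linear per-step cost) does not match the actual bookkeeping in \cite{SBR}, which is where the cubic exponent comes from, but since you only assert the cubic upper bound and defer the details to that source, your argument is essentially the same citation-based one the paper uses.
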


In the terminology of \cite{SBR} and other settings, condition (3) may be summarized by saying that the `\textit{generalized time function}' of $\textbf{M}_2^\pazocal{L}$ is asymptotically bounded above by $\TM_\pazocal{L}^3$.

Further, observe that the constants $c_0$ and $N$ are listed amongst the parameters in \Cref{sec-parameters}.  In particular, $N$ can be taken to be as large as desired by simply adding sectors with empty tape alphabets to the standard base of $\textbf{M}_2^\pazocal{L}$.






Finally, note that the bounds given in Lemma \ref{M_2 language} may be improved: The statement of Proposition 4.1 in \cite{SBR} also gives upper bounds on the `space' and `area' functions of $\textbf{M}_2^\pazocal{L}$.  In fact, for any $\eps>0$, the main machine of \cite{CW} can be used to construct a machine in which the cubic exponent of $\TM_\pazocal{L}$ in the statement can be reduced to $1+\eps$.  However, such improvements are moot for the purposes of this article.

\medskip


\subsection{The machine $\textbf{M}_3^\pazocal{L}$} \label{sec-M_3} \

The next auxiliary machine is a \textit{composition} of the machines $\textbf{M}_1^\pazocal{A}$ and $\textbf{M}_2^\pazocal{L}$ in the sense described below (and as in the constructions of \cite{O18}, \cite{OSconj}, \cite{OS19}, \cite{W}, etc).  Informally, this is done by viewing the $S$-machine $\textbf{M}_2^\pazocal{L}$ as a noisy $S$-machine itself, then combining the machines through a `transition rule' which switches from $\textbf{M}_1^\pazocal{A}$ to the start state of $\textbf{M}_2^\pazocal{L}$.

To begin, define the sets $Y_i^\pazocal{L}$ for all $1\leq i\leq N$ as follows:

\begin{itemize}

\item $Y_1^\pazocal{L}=Y_1^\pazocal{A}\sqcup X_1^\pazocal{L}$

\item $Y_2^\pazocal{L}=Y_2^\pazocal{A}=\pazocal{A}_2$

\item $Y_i^\pazocal{L}=X_i^\pazocal{L}$ for all $i\geq3$.

\end{itemize}

Further, let $Q_j^\pazocal{A}=\{q_j\}$ for all $3\leq j\leq N$ and define $Q_i^\pazocal{L}=Q_i^\pazocal{A}\sqcup P_i^\pazocal{L}$ for all $0\leq i\leq N$.

The hardware of $\textbf{M}_3^\pazocal{L}$ is then taken to be $(\sqcup_{i=1}^N Y_i^\pazocal{L},\sqcup_{i=0}^N Q_i^\pazocal{L})$.  

The positive rules of $\textbf{M}_3^\pazocal{L}$, $\Theta^+(\textbf{M}_3^\pazocal{L})$, are defined as follows:

\begin{enumerate}[label=(\alph*)]

\item For any positive rule of $\textbf{M}_1^\pazocal{A}$, there is a corresponding positive rule of $\textbf{M}_3^\pazocal{L}$ which operates in exactly the same way as $\theta$ on the subword $Q_0^\pazocal{L}Q_1^\pazocal{L}Q_2^\pazocal{L}$ of the standard base and has the part $q_i\xrightarrow{\ell}q_i$ for all $3\leq i\leq N$.


\item $\sigma=[q_0\xrightarrow{\ell} p_0, \ q_1\to p_1, \ q_2\xrightarrow{\ell} p_2, \ \dots, \ q_{N-1}\xrightarrow{\ell}p_{N-1}, \ q_N\to p_N]$ where $p_i$ is the start letter of the part $P_i^\pazocal{L}$ of the state letters of $\textbf{M}_2^\pazocal{L}$.  Note that $\sigma$ is defined as an $S$-rule, with the domain $Y_2(\sigma)$ taken to be $\pazocal{A}_2$.


\item For every positive rule of $\textbf{M}_2^\pazocal{L}$, there exists a corresponding positive rule of $\textbf{M}_3^\pazocal{L}$ which operates in the analogous way, identifying the tape alphabet $\pazocal{A}_2$ with the input alphabet $\pazocal{A}$ of $\textbf{M}_2^\pazocal{L}$.

\end{enumerate}

The noisy $S$-machine $\textbf{M}_3^\pazocal{L}$ is made to be recognizing as follows:

\begin{itemize}

\item The $Q_0^\pazocal{L}Q_1^\pazocal{L}$-sector is taken to be the only input sector of $\textbf{M}_3^\pazocal{L}$.  

\item The letters of $Q_i^\pazocal{A}$ are the start letters of their corresponding parts

\item The end letters correspond to the end letters of $\textbf{M}_2^\pazocal{L}$ in $P_i^\pazocal{L}$.

\end{itemize}

By its construction, $\textbf{M}_3^\pazocal{L}$ can be viewed as the \textit{composition} of two `submachines', which are denoted $\textbf{M}_3^\pazocal{L}(1)$ and $\textbf{M}_3^\pazocal{L}(2)$ and given as follows:

\begin{enumerate}

\item The hardware of $\textbf{M}_3^\pazocal{L}(1)$ is $(\sqcup_{i=1}^N Y_i^\pazocal{A},\sqcup_{i=0}^N Q_i^\pazocal{A})$ (with $Y_i^\pazocal{A}=\emptyset$ for $i\geq3$) and its set of positive rules $\Theta^+(\textbf{M}_3^\pazocal{L}(1))$ consists are of all rules of the form (a) above.


\item The hardware of $\textbf{M}_3^\pazocal{L}(2)$ is $(\sqcup_{i=1}^N X_i^\pazocal{L},\sqcup_{i=0}^N P_i^\pazocal{L})$ (with $X_2^\pazocal{L}$ identified with $\pazocal{A}_2$) and its set of positive rules $\Theta^+(\textbf{M}_3^\pazocal{L}(2))$ consists of all rules of the form (c) above.

\end{enumerate}

With these definitions, $\textbf{M}_3^\pazocal{L}(j)$ is a noisy $S$-machine for $j=1,2,$ (in fact, $\textbf{M}_3^\pazocal{L}(2)$  is an $S$-machine), while $\textbf{M}_3^\pazocal{L}$ concatenates these machines with the \textit{transition rule} $\sigma$. 

Note that $\textbf{M}_3^\pazocal{L}(1)$ and $\textbf{M}_3^\pazocal{L}(2)$ can be identified with the machines $\textbf{M}_1^\pazocal{A}$ and $\textbf{M}_2^\pazocal{L}$, respectively, with the only major distinction being that several locked sectors are added to $\textbf{M}_1^\pazocal{A}$ to make the standard bases the same size.

The following statements are immediate consequences of this definition and Lemmas \ref{M_1 start to end 1} and \ref{M_1 start to end 2}:

\begin{lemma} \label{M_3 start to sigma 1}

Let $W_0\equiv q_0\widetilde{\varphi}_1(w)q_1q_2q_3\dots q_N$ for some $w\in F(\pazocal{A})$.  Suppose there exists a reduced computation $\pazocal{C}:W_0\to\dots\to W_t$ of $\textbf{M}_3^\pazocal{L}(1)$ in the standard base such that $W_t$ is $\sigma$-admissible.  Then $W_t\equiv q_0q_1\widetilde{\varphi}_2(w)q_2q_3\dots q_N$.

\end{lemma}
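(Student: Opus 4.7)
The plan is to restrict the computation $\pazocal{C}$ to the $Q_0^\pazocal{L}Q_1^\pazocal{L}Q_2^\pazocal{L}$-subword of the standard base, recognize the restriction as a reduced computation of $\textbf{M}_1^\pazocal{A}$, and then quote \Cref{M_1 start to end 1} directly.

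First I would pin down the shape of $W_t$. The rule $\sigma$ has the locked parts $q_0 \xrightarrow{\ell} p_0$ and $q_i \xrightarrow{\ell} p_i$ for $2 \leq i \leq N-1$, so $\sigma$-admissibility forces the $Q_0Q_1$-sector and every sector $Q_iQ_{i+1}$ with $2 \leq i \leq N-1$ of $W_t$ to be empty, while the $Q_1Q_2$-sector carries a word in $F(Y_2^\pazocal{L}) = F(\pazocal{A}_2)$. Meanwhile, every rule of $\textbf{M}_3^\pazocal{L}(1)$ has its non-trivial action confined to the $Q_0Q_1Q_2$-subword: the parts for $q_i$ with $3 \leq i \leq N$ are $q_i \xrightarrow{\ell} q_i$, and the part for $q_2$ inherited from $\textbf{M}_1^\pazocal{A}$ is simply $q_2 \to q_2$ with no $v_3$-factor in the extended base. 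Hence no tape letter ever appears in a sector $Q_iQ_{i+1}$ with $i \geq 2$ along $\pazocal{C}$, and we may write $W_t \equiv q_0q_1w_2q_2q_3\dots q_N$ for some $w_2 \in F(\pazocal{A}_2)$.

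The restriction of $\pazocal{C}$ to the subword $Q_0Q_1Q_2$ is then a reduced computation of $\textbf{M}_1^\pazocal{A}$ in its standard base from $q_0\widetilde{\varphi}_1(w)q_1q_2$ to $q_0q_1w_2q_2$, since each rule of $\textbf{M}_3^\pazocal{L}(1)$ acts on this subword exactly as the corresponding rule of $\textbf{M}_1^\pazocal{A}$ and reducedness of the history is preserved under restriction. Setting $v = \widetilde{\varphi}_2^{-1}(w_2) \in F(\pazocal{A})$, so that $w_2 \equiv \widetilde{\varphi}_2(v)$, \Cref{M_1 start to end 1} yields $w \equiv v$, whence $W_t \equiv q_0q_1\widetilde{\varphi}_2(w)q_2q_3\dots q_N$.

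Since the author flags this as an immediate consequence of the earlier lemmas, I anticipate no real obstacle. The one subtle point is verifying that the (not explicitly locked) $Q_2Q_3$-sector indeed remains empty throughout $\pazocal{C}$, which reduces to the observation that the $q_2$-part of each rule of $\textbf{M}_3^\pazocal{L}(1)$ inherits no $v_3$-factor from $\textbf{M}_1^\pazocal{A}$ when the base is enlarged.
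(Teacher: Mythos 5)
Your proposal is correct and follows exactly the route the paper intends: the paper gives no written proof, stating the lemma is an immediate consequence of the definition of the rules of $\textbf{M}_3^\pazocal{L}(1)$ (which lock or leave untouched all sectors beyond $Q_2^\pazocal{L}$, and whose $\sigma$-admissibility forces the shape of $W_t$) together with \Cref{M_1 start to end 1} applied to the restriction to the base $Q_0^\pazocal{L}Q_1^\pazocal{L}Q_2^\pazocal{L}$, which is precisely your argument. Your care about the $Q_2Q_3$-sector staying empty and the restricted history remaining reduced just makes explicit what the paper leaves implicit.
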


\begin{lemma} \label{M_3 start to sigma 2}

For any $w\in\pazocal{A}^*$, there exists a (unique) reduced computation $\pazocal{C}:W_0\to\dots\to W_t$ of $\textbf{M}_3^\pazocal{L}(1)$ in the standard base such that $W_0\equiv q_0\widetilde{\varphi}_1(w)q_1q_2q_3\dots q_N$ and $W_t$ is $\sigma$-admissible.

\end{lemma}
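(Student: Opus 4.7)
The plan is to reduce the statement to Lemmas \ref{M_1 start to end 1} and \ref{M_1 start to end 2} by identifying reduced computations of $\textbf{M}_3^\pazocal{L}(1)$ in its standard base with reduced computations of $\textbf{M}_1^\pazocal{A}$ in its standard base, exploiting the fact that $\textbf{M}_3^\pazocal{L}(1)$ is built from $\textbf{M}_1^\pazocal{A}$ by pinning on locked sectors.

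First, I would invoke Lemma \ref{M_1 start to end 2} to obtain the unique reduced computation $\pazocal{D}:V_0\to\dots\to V_t$ of $\textbf{M}_1^\pazocal{A}$ with $V_0\equiv q_0\widetilde{\varphi}_1(w)q_1q_2$ and $V_t\equiv q_0q_1\widetilde{\varphi}_2(w)q_2$; call its history $H$. By construction the positive rules of $\textbf{M}_3^\pazocal{L}(1)$ are exactly the type (a) rules, which are in natural bijection with the rules of $\textbf{M}_1^\pazocal{A}$, act on $q_0,q_1,q_2$ as the latter do, and lock every sector $Q_i^\pazocal{L}Q_{i+1}^\pazocal{L}$ for $i\ge 2$. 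Since the sectors past position $2$ in $W_0$ are already empty, applying the lift of $H$ to $W_0$ yields a reduced computation $\pazocal{C}:W_0\to\dots\to W_t$ of $\textbf{M}_3^\pazocal{L}(1)$ whose tail sectors remain empty throughout and whose endpoint is $W_t\equiv q_0q_1\widetilde{\varphi}_2(w)q_2q_3\dots q_N$. (Reducedness of $H$ transfers because the bijection between rule sets preserves inverses.)

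Next, I would verify that $W_t$ is $\sigma$-admissible: every state letter lies in $Q^\pazocal{A}\subseteq Q(\sigma)$; the $Q_0^\pazocal{L}Q_1^\pazocal{L}$-sector and each sector $Q_i^\pazocal{L}Q_{i+1}^\pazocal{L}$ for $i\ge 2$ is empty, matching $Y_i(\sigma)=\emptyset$ for $i\ne 2$; and the $Q_1^\pazocal{L}Q_2^\pazocal{L}$-sector content $\widetilde{\varphi}_2(w)$ lies in $F(\pazocal{A}_2)=F(Y_2(\sigma))$.

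For uniqueness, I would take an arbitrary reduced computation $\pazocal{C}':W_0\to\dots\to W_s'$ of $\textbf{M}_3^\pazocal{L}(1)$ with $\sigma$-admissible endpoint. Because only type (a) rules appear, restricting each configuration of $\pazocal{C}'$ to its first three state letters produces a reduced computation $\pazocal{D}'$ of $\textbf{M}_1^\pazocal{A}$ starting at $V_0$. The $\sigma$-admissibility of $W_s'$ forces its $Q_0^\pazocal{L}Q_1^\pazocal{L}$-sector to be empty and its $Q_1^\pazocal{L}Q_2^\pazocal{L}$-sector to be some $\widetilde{\varphi}_2(u)$ with $u\in F(\pazocal{A})$; Lemma \ref{M_1 start to end 1} then gives $u\equiv w$, and the uniqueness clause of Lemma \ref{M_1 start to end 2} forces $\pazocal{D}'=\pazocal{D}$. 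Hence $\pazocal{C}'$ and $\pazocal{C}$ share the history $H$ and therefore coincide. I do not anticipate any serious obstacle: the content of the lemma is really just the translation of the existence and uniqueness statements for $\textbf{M}_1^\pazocal{A}$ through the padding of the base by locked sectors, and the only thing that demands attention is the bookkeeping that a reduced history lifts to a reduced history and that the restriction to the first three state letters genuinely commutes with rule application on the empty tail sectors.
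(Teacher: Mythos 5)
Your proposal is correct and follows exactly the route the paper intends: the paper states this lemma (together with Lemma \ref{M_3 start to sigma 1}) as an immediate consequence of the definition of the rules and Lemmas \ref{M_1 start to end 1} and \ref{M_1 start to end 2}, and your argument is precisely the expansion of that remark — lifting the unique $\textbf{M}_1^\pazocal{A}$ computation through the locked padding sectors for existence, and restricting to the base $Q_0^\pazocal{L}Q_1^\pazocal{L}Q_2^\pazocal{L}$ plus Lemmas \ref{M_1 start to end 1} and \ref{M_1 start to end 2} for uniqueness.
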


%
%
%

\begin{lemma} \label{return to sigma}

Suppose $\pazocal{C}:W_0\to\dots\to W_t$ is a reduced computation of $\textbf{M}_3^\pazocal{L}(1)$ in the standard base.  If $W_0$ and $W_t$ are both $\sigma$-admissible, then $t=0$.

\end{lemma}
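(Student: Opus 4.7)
The plan is to reduce this to Lemma~\ref{M_1 shift of 1} by restricting the computation $\pazocal{C}$ to the base $Q_0^\pazocal{A}Q_1^\pazocal{A}$. First I would unpack $\sigma$-admissibility: since $\sigma$ locks every sector of the standard base except the $Q_1Q_2$-sector, its tape-letter domain satisfies $Y_1(\sigma)=\emptyset$, $Y_2(\sigma)=\pazocal{A}_2$, and $Y_i(\sigma)=\emptyset$ for $i\geq 3$. Any $\sigma$-admissible configuration of $\textbf{M}_3^\pazocal{L}(1)$ must therefore take the form $q_0q_1 w q_2q_3\dots q_N$ with $w\in F(\pazocal{A}_2)$; in particular both $W_0$ and $W_t$ have empty $Q_0Q_1$-sector.

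Next I would observe that, by the definition of $\textbf{M}_3^\pazocal{L}(1)$ in \Cref{sec-M_3}, each of its positive rules is the extension to the standard base of a positive rule of $\textbf{M}_1^\pazocal{A}$, acting trivially via $q_i\xrightarrow{\ell}q_i$ on the parts $i\geq 3$. Such rules preserve each of the state letters $q_0,q_1,\dots,q_N$, and they act on the two-letter subword $q_0(\cdot)q_1$ exactly as the corresponding rule of $\textbf{M}_1^\pazocal{A}$ does on an admissible word with base $Q_0^\pazocal{A}Q_1^\pazocal{A}$. Writing $W_i\equiv q_0 u_i q_1(\cdots)$ and setting $V_i\equiv q_0 u_i q_1$, this yields a computation $\pazocal{C}':V_0\to\dots\to V_t$ of $\textbf{M}_1^\pazocal{A}$ with base $Q_0^\pazocal{A}Q_1^\pazocal{A}$ whose history coincides letter-for-letter with that of $\pazocal{C}$, and so is reduced.

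By the previous paragraph $V_0\equiv q_0q_1\equiv V_t$, so Lemma~\ref{M_1 shift of 1} applies to $\pazocal{C}'$ and immediately forces $t=0$. I expect no real obstacle here: all of the combinatorial work has already been carried out in Lemma~\ref{M_1 shift of 1}, and the restriction step is formally identical to the arguments behind Lemmas~\ref{M_3 start to sigma 1} and~\ref{M_3 start to sigma 2}.
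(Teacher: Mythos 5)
Your proposal is correct and follows essentially the same route as the paper: restrict $\pazocal{C}$ to the $Q_0^\pazocal{L}Q_1^\pazocal{L}$-part, identify it with a reduced computation of $\textbf{M}_1^\pazocal{A}$ in the base $Q_0^\pazocal{A}Q_1^\pazocal{A}$, note that $\sigma$-admissibility (since $\sigma$ locks the $Q_0Q_1$-sector) forces both endpoints to be $q_0q_1$, and invoke Lemma~\ref{M_1 shift of 1}. Your extra unpacking of $\sigma$-admissibility is just a more explicit version of the same argument.
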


\begin{proof}

The restriction $\pazocal{C}':W_0'\to\dots\to W_t'$ of $\pazocal{C}$ to the base $Q_0^\pazocal{L}Q_1^\pazocal{L}$ can be identified with a reduced computation of $\textbf{M}_1^\pazocal{A}$ in the base $Q_0^\pazocal{A}Q_1^\pazocal{A}$.  But if $W_0$ and $W_t$ are both $\sigma$-admissible, then $W_0'\equiv q_0q_1\equiv W_t'$, so that $\pazocal{C}'$ satisfies the hypotheses of \Cref{M_1 shift of 1}.

\end{proof}

The next statement follows as an immediate corollary to \Cref{return to sigma}:

\begin{lemma} \label{M_3(2) to M_3(2)}

Let $H$ be the history of a reduced computation $\pazocal{C}$ of $\textbf{M}_3^\pazocal{L}$.  Then, there exist $H_1,H_1'\in F(\Theta^+(\textbf{M}_3^\pazocal{L}(1)))$ and $H_2\in F(\Theta^+(\textbf{M}_3^\pazocal{L}(2)))$ such that $H$ is a subword of $H_1\sigma H_2\sigma^{-1}H_1'$.

\end{lemma}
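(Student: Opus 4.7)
The plan is to analyze the positions of the transition rule $\sigma^{\pm 1}$ within $H$, using the fact that $\sigma$ is the only bridge between configurations with state letters in $Q_i^\pazocal{A}$ (the submachine 1 alphabet) and those with state letters in $P_i^\pazocal{L}$ (the submachine 2 alphabet). First, if $H$ contains no occurrence of $\sigma^{\pm 1}$, then the state letters of every intermediate configuration remain of the same type as those of the initial configuration; since rules of $\textbf{M}_3^\pazocal{L}(j)$ only act on configurations whose state letters lie in the appropriate alphabet, $H$ lies entirely in $F(\Theta^+(\textbf{M}_3^\pazocal{L}(1)))$ or entirely in $F(\Theta^+(\textbf{M}_3^\pazocal{L}(2)))$, and the claim holds with three of the four non-$\sigma$ factors trivial.

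Next I would treat the case in which $\sigma^{\pm 1}$ does appear. Because $\sigma$ takes $Q$-state configurations to $P$-state configurations and $\sigma^{-1}$ does the reverse, the $\sigma^{\pm 1}$ letters in $H$ must alternate in sign. Moreover, in any maximal block of consecutive non-transition rules, the letter type of the state letters is constant, so such a block lies in $F(\Theta^+(\textbf{M}_3^\pazocal{L}(1)))$ when the states are in $Q_i^\pazocal{A}$ and in $F(\Theta^+(\textbf{M}_3^\pazocal{L}(2)))$ when they are in $P_i^\pazocal{L}$.

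The decisive step—and the only place where reducedness really bites—is to show that $H$ cannot contain a pattern $\sigma^{-1} K \sigma$ with $K \in F(\Theta^+(\textbf{M}_3^\pazocal{L}(1)))$. The subcomputation corresponding to $K$ would be a reduced $\textbf{M}_3^\pazocal{L}(1)$-computation whose initial configuration (right after $\sigma^{-1}$) and final configuration (right before $\sigma$) are both $\sigma$-admissible; by \Cref{return to sigma}, $K$ must be empty, so $\sigma^{-1}\sigma$ appears adjacently in $H$, contradicting the assumption that $H$ is reduced. This rules out any second occurrence of $\sigma$ once a $\sigma^{-1}$ has appeared, and similarly rules out any second occurrence of $\sigma^{-1}$ before the first $\sigma$.

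Combining these observations, the full list of possibilities for $H$ is: a word in $F(\Theta^+(\textbf{M}_3^\pazocal{L}(1)))$; a word in $F(\Theta^+(\textbf{M}_3^\pazocal{L}(2)))$; a word of the form $H_1\sigma H_2$; a word of the form $H_2\sigma^{-1}H_1'$; or a word of the form $H_1\sigma H_2\sigma^{-1}H_1'$, with $H_1,H_1'\in F(\Theta^+(\textbf{M}_3^\pazocal{L}(1)))$ and $H_2\in F(\Theta^+(\textbf{M}_3^\pazocal{L}(2)))$. In every case $H$ is literally a subword of $H_1\sigma H_2\sigma^{-1}H_1'$, proving the lemma. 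The main obstacle is purely bookkeeping: giving a clean enumeration of the alternating patterns of $\sigma^{\pm 1}$ and applying \Cref{return to sigma} at the one critical juncture; everything else is forced by the alphabet constraints and reducedness.
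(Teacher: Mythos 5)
Your proposal is correct and follows essentially the same route as the paper: the decisive step in both is to rule out a subword of the form $\sigma^{-1}H_1\sigma$ with $H_1\in F(\Theta^+(\textbf{M}_3^\pazocal{L}(1)))$ by applying \Cref{return to sigma} to the corresponding subcomputation, contradicting reducedness. Your additional case enumeration is just the bookkeeping the paper leaves implicit.
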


%
%

\begin{lemma} \label{M_3 return to start}

Let $u,v\in F(\pazocal{A})$.  Suppose $\pazocal{C}:W_0\to\dots\to W_t$ is a reduced computation of $\textbf{M}_3^\pazocal{L}$ such that $W_0$ and $W_t$ are the input configurations with inputs $\widetilde{\varphi}_1(u)$ and $\widetilde{\varphi}_1(v)$, respectively.  Then $u\in\pazocal{L}$ if and only if $v\in\pazocal{L}$.

\end{lemma}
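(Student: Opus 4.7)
The plan is to analyze the structure of the history $H$ of $\pazocal{C}$ via \Cref{M_3(2) to M_3(2)}, which guarantees that $H$ is a subword of some $H_1 \sigma H_2 \sigma^{-1} H_1'$ with $H_1, H_1' \in F(\Theta^+(\textbf{M}_3^\pazocal{L}(1)))$ and $H_2 \in F(\Theta^+(\textbf{M}_3^\pazocal{L}(2)))$. Since both $W_0$ and $W_t$ have state letters lying in $Q_i^\pazocal{A} \subseteq Q_i^\pazocal{L}$ (the start letters of the $\textbf{M}_3^\pazocal{L}(1)$-portion), while any rule of $\textbf{M}_3^\pazocal{L}(2)$ and the rule $\sigma^{-1}$ both require state letters from $P_i^\pazocal{L}$, the first rule of $H$ must lie in $F(\Theta^+(\textbf{M}_3^\pazocal{L}(1))) \cup \{\sigma\}$; symmetrically the last must lie in $F(\Theta^+(\textbf{M}_3^\pazocal{L}(1))) \cup \{\sigma^{-1}\}$. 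The only two surviving possibilities are therefore $H \in F(\Theta^+(\textbf{M}_3^\pazocal{L}(1)))$, or $H \equiv H_1 \sigma H_2 \sigma^{-1} H_1'$ exactly (with $H_2$ non-empty, by reducedness of $\pazocal{C}$).

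In the first case I would simply restrict $\pazocal{C}$ to the subword $Q_0^\pazocal{L} Q_1^\pazocal{L} Q_2^\pazocal{L}$ of the standard base; under the canonical identification of $\Theta^+(\textbf{M}_3^\pazocal{L}(1))$ with $\Theta^+(\textbf{M}_1^\pazocal{A})$, this restriction becomes a reduced computation of $\textbf{M}_1^\pazocal{A}$ in its standard base from $q_0 \widetilde{\varphi}_1(u) q_1 q_2$ to $q_0 \widetilde{\varphi}_1(v) q_1 q_2$, and \Cref{M_1 return to start} gives $u \equiv v$, so the conclusion is immediate. In the second case, let $W_r, W_{r+1}, W_s, W_{s+1}$ denote the configurations immediately before and after the $\sigma$- and $\sigma^{-1}$-applications. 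Applying \Cref{M_3 start to sigma 1} to the subcomputation with history $H_1$ yields $W_r \equiv q_0 q_1 \widetilde{\varphi}_2(u) q_2 \cdots q_N$, and applying it (after reversal) to the subcomputation with history $H_1'$ yields $W_{s+1} \equiv q_0 q_1 \widetilde{\varphi}_2(v) q_2 \cdots q_N$. Hence $W_{r+1}$ and $W_s$ are, respectively, the input configurations of $\textbf{M}_2^\pazocal{L}$ with inputs $u$ and $v$ (under the identification $\pazocal{A}_2 \cong \pazocal{A}$), and $H_2$ becomes a reduced computation of $\textbf{M}_2^\pazocal{L}$ connecting them.

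The argument then closes by combining reversibility of $S$-machine rules with \Cref{M_2 language}(2). If $u \in \pazocal{L}$, there is an accepting computation $\pazocal{D}_u$ of $\textbf{M}_2^\pazocal{L}$ starting at the input configuration with input $u$; the concatenation $H_2^{-1} \pazocal{D}_u$ then starts at the input configuration with input $v$ and terminates at the accept configuration, so $v$ is accepted by $\textbf{M}_2^\pazocal{L}$ and therefore $v \in \pazocal{L}$. The symmetric argument (using $H_2$ in place of $H_2^{-1}$) supplies the reverse implication. The main subtlety is the clean identification of $W_{r+1}$ and $W_s$ with genuine input configurations of $\textbf{M}_2^\pazocal{L}$; once this is secured, bi-directionality of $S$-machine computations forces membership in $\pazocal{L}$ to be preserved between $u$ and $v$.
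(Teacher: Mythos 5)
Your proposal is correct and follows essentially the same route as the paper: it reduces the one-machine case to \Cref{M_1 return to start} via restriction to the base $Q_0^\pazocal{L}Q_1^\pazocal{L}Q_2^\pazocal{L}$, and in the multi-machine case uses \Cref{M_3(2) to M_3(2)} to pin down $H\equiv H_1\sigma H_2\sigma^{-1}H_1'$, identifies the configurations flanking $H_2$ as input configurations of $\textbf{M}_2^\pazocal{L}$ with inputs $u$ and $v$ (the paper does this via \Cref{M_1 start to end 1}, you via \Cref{M_3 start to sigma 1}, which is the same content), and concludes by concatenating with an accepting computation and invoking \Cref{M_2 language}. The only cosmetic difference is your slightly more explicit case analysis of which subwords of $H_1\sigma H_2\sigma^{-1}H_1'$ are compatible with $W_0$ and $W_t$ being input configurations, which the paper handles implicitly.
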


\begin{proof}

If $\pazocal{C}$ is a computation of $\textbf{M}_3^\pazocal{L}(1)$ then the restriction of $\pazocal{C}$ to the base $Q_0^\pazocal{L}Q_1^\pazocal{L}Q_2^\pazocal{L}$ can be identified with a computation of $\textbf{M}_1^\pazocal{A}$ in the standard base satisfying the hypotheses of \Cref{M_1 return to start}, so that $u\equiv v$.  Hence, it suffices to assume that $\pazocal{C}$ is not a computation of $\textbf{M}_3^\pazocal{L}(1)$.  

There thus exists a factorization $H\equiv H_1\sigma H_2\sigma^{-1}H_1'$ of the history of $\pazocal{C}$ as in \Cref{M_3(2) to M_3(2)}.  Note that since $\pazocal{C}$ is reduced, $H_2$ must be non-empty.


Let $\pazocal{C}_1:W_0\to\dots\to W_r$ be the subcomputation of $\pazocal{C}$ with history $H_1$.  Then, the restriction of $\pazocal{C}_1$ to the base $Q_0^\pazocal{L}Q_1^\pazocal{L}Q_2^\pazocal{L}$ can be identified with a computation of $\textbf{M}_1^\pazocal{A}$ satisfying the hypotheses of \Cref{M_1 start to end 1}, so that $W_r\equiv q_0q_1\widetilde{\varphi}_2(u)q_2q_3\dots q_N$.

Similarly, letting $\pazocal{C}_1':W_s\to\dots\to W_t$ be the subcomputation of $\pazocal{C}$ with history $H_1'$, the same argument (applied to the inverse computation $W_t\to\dots\to W_s$) implies $W_s\equiv q_0q_1\widetilde{\varphi}_2(v)q_2q_3\dots q_N$.

Hence, the subcomputation $\pazocal{C}_2:W_{r+1}\to\dots\to W_{s-1}$ with history $H_2$ can be identified with a computation of $\textbf{M}_2^\pazocal{L}$ between the input configurations whose inputs are $u$ and $v$.  But then this computation (or its inverse) can be concatenated with a computation of $\textbf{M}_2^\pazocal{L}$ accepting one of these inputs to produce an accepting computation of the other input, so that the statement follows from \Cref{M_2 language}.

\end{proof}

\begin{lemma} \label{M_3 language}

For $w\in F(\pazocal{A})$, the input $\widetilde{\varphi}_1(w)$ is accepted by $\textbf{M}_3^\pazocal{L}$ if and only if $w\in\pazocal{L}$.

\end{lemma}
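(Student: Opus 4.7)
The plan is to prove both implications by decomposing any computation into its $\textbf{M}_3^\pazocal{L}(1)$- and $\textbf{M}_3^\pazocal{L}(2)$-parts, using the transition rule $\sigma$ as a bridge, and then invoking the analogous statements for each submachine (namely \Cref{M_1 start to end 2}/\Cref{M_3 start to sigma 2} for $\textbf{M}_3^\pazocal{L}(1)$, and the language property of $\textbf{M}_2^\pazocal{L}$ from \Cref{M_2 language} for $\textbf{M}_3^\pazocal{L}(2)$).

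For the ``if'' direction, assume $w \in \pazocal{L}$. I would first invoke \Cref{M_3 start to sigma 2} to obtain a reduced computation of $\textbf{M}_3^\pazocal{L}(1)$ in the standard base starting at the input configuration $q_0\widetilde{\varphi}_1(w)q_1q_2\dots q_N$ and ending at a $\sigma$-admissible configuration, which by \Cref{M_3 start to sigma 1} must be $q_0q_1\widetilde{\varphi}_2(w)q_2\dots q_N$. Applying $\sigma$ then yields the input configuration of $\textbf{M}_3^\pazocal{L}(2)$ corresponding (under the identification $\pazocal{A}_2 \leftrightarrow \pazocal{A}$) to the input $w$ of $\textbf{M}_2^\pazocal{L}$. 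Since $w \in \pazocal{L}$, \Cref{M_2 language} supplies an accepting computation of $\textbf{M}_2^\pazocal{L}$, which translates directly to a computation of $\textbf{M}_3^\pazocal{L}(2)$ taking us to the accept configuration of $\textbf{M}_3^\pazocal{L}$.

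For the ``only if'' direction, suppose $\pazocal{C}$ is an accepting reduced computation with history $H$. The state letters of the initial configuration lie in $\sqcup Q_i^\pazocal{A}$, while those of the accept configuration are the end letters in $\sqcup P_i^\pazocal{L}$; since $\sigma$ is the only rule whose parts change the alphabet from $Q_i^\pazocal{A}$ to $P_i^\pazocal{L}$, the word $H$ must contain exactly one occurrence of $\sigma$ and no occurrences of $\sigma^{-1}$. Combining this with \Cref{M_3(2) to M_3(2)}, which constrains $H$ to be a subword of $H_1\sigma H_2 \sigma^{-1} H_1'$ with $H_1,H_1' \in F(\Theta^+(\textbf{M}_3^\pazocal{L}(1)))$ and $H_2 \in F(\Theta^+(\textbf{M}_3^\pazocal{L}(2)))$, forces a factorization $H \equiv H_a \sigma H_b$ with $H_a \in F(\Theta^+(\textbf{M}_3^\pazocal{L}(1)))$ and $H_b \in F(\Theta^+(\textbf{M}_3^\pazocal{L}(2)))$.

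The subcomputation with history $H_a$ is then a reduced computation of $\textbf{M}_3^\pazocal{L}(1)$ from $q_0\widetilde{\varphi}_1(w)q_1q_2\dots q_N$ to a $\sigma$-admissible configuration, so \Cref{M_3 start to sigma 1} identifies this configuration as $q_0q_1\widetilde{\varphi}_2(w)q_2\dots q_N$. Applying $\sigma$ produces the input configuration of $\textbf{M}_3^\pazocal{L}(2)$ (equivalently, of $\textbf{M}_2^\pazocal{L}$) with input $w$, and the subcomputation with history $H_b$ is then an accepting computation of $\textbf{M}_2^\pazocal{L}$ on input $w$; by \Cref{M_2 language} this gives $w \in \pazocal{L}$. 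The only subtle step is the count of $\sigma^{\pm 1}$ in $H$, but this follows immediately from tracking which alphabet the state letters occupy before and after the computation, together with the fact that no rule besides $\sigma^{\pm 1}$ mixes the two alphabets.
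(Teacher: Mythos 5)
Your proposal is correct and follows essentially the same route as the paper: both directions are handled by combining \Cref{M_3(2) to M_3(2)} (to force a factorization $H\equiv H_1\sigma H_2$ of the history) with \Cref{M_3 start to sigma 1}/\Cref{M_3 start to sigma 2} for the $\textbf{M}_3^\pazocal{L}(1)$-part and \Cref{M_2 language} for the $\textbf{M}_3^\pazocal{L}(2)$-part. The only cosmetic difference is that you spell out the alphabet-tracking count of $\sigma^{\pm1}$ explicitly (and should note, as the paper does, that $\pazocal{L}\subseteq\pazocal{A}^*$ so \Cref{M_3 start to sigma 2} applies), which the paper leaves largely implicit.
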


\begin{proof}

First, suppose $\pazocal{C}:W_0\to\dots\to W_t$ is an accepting computation of the input configuration $W_0\equiv q_0\widetilde{\varphi}_1(w)q_1q_2q_3\dots q_N$.  As $W_0$ is a configuration of $\textbf{M}_3^\pazocal{L}(1)$, \Cref{M_3(2) to M_3(2)} implies there exists a factorization $H\equiv H_1\sigma H_2$ of the history of $\pazocal{C}$ such that:

\begin{itemize}

\item The subcomputation $\pazocal{C}_1:W_0\to\dots\to W_s$ with history $H_1$ is a computation of $\textbf{M}_3^\pazocal{L}(1)$

\item The subcomputation $\pazocal{C}_2:W_{s+1}\to\dots\to W_t$ with history $H_2$ is a computation of $\textbf{M}_3^\pazocal{L}(2)$

\end{itemize}

\Cref{M_3 start to sigma 1} then implies that $W_s\equiv q_0q_1\widetilde{\varphi}_2(w)q_2q_3\dots q_N$, so that $W_{s+1}\equiv W_s\cdot\sigma$ corresponds to the input configuration $\textbf{M}_2^\pazocal{L}$ with input $w$.  But then $\pazocal{C}_2$ can be identified with a reduced computation of $\textbf{M}_2^\pazocal{L}$ accepting this input, so that \Cref{M_2 language} implies $w\in\pazocal{L}$.

Conversely, suppose $w\in\pazocal{L}$.  

As $\pazocal{L}\subseteq\pazocal{A}^*$, there exists a reduced computation $\pazocal{D}_1:V_0\to\dots\to V_r$ given by \Cref{M_3 start to sigma 2} such that $V_0$ is the input configuration with input $\widetilde{\varphi}_1(w)$ and $V_r\cdot\sigma$ corresponds to the input configuration of $\textbf{M}_2^\pazocal{L}$ with input $w$.

Since \Cref{M_2 language} implies that $w$ is an accepted input of $\textbf{M}_2^\pazocal{L}$, identifying such an accepting computation with a computation of $\textbf{M}_3^\pazocal{L}(2)$ yields a reduced computation $\pazocal{D}_2$ accepting $V_r\cdot\sigma$.

Concatenating these computations thus implies the statement.

\end{proof}

\begin{lemma} \label{M_3 time-space}

For any accepted configuration $W$ of $\textbf{M}_3^\pazocal{L}$ with $|W|_a=n$, there exists an accepting computation $\pazocal{C}:W\equiv W_0\to\dots\to W_t$ satisfying
%
$t\leq c_0\TM_\pazocal{L}(c_0n)^3+nc_0^{n}+2c_0n+2c_0$.

%

\end{lemma}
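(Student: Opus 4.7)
The plan is to analyze a reduced accepting computation $\pazocal{C}$ of $W$ with history $H$ by exploiting the structural decomposition of \Cref{M_3(2) to M_3(2)}. Since the accept configuration is a configuration of $\textbf{M}_3^\pazocal{L}(2)$, while rules of $\textbf{M}_3^\pazocal{L}(j)$ preserve the partition of configurations into those of $\textbf{M}_3^\pazocal{L}(1)$ and those of $\textbf{M}_3^\pazocal{L}(2)$, and $\sigma^{\pm 1}$ swaps between the two, I would case-split on whether $W$ is a configuration of $\textbf{M}_3^\pazocal{L}(2)$ or of $\textbf{M}_3^\pazocal{L}(1)$. In the former case, \Cref{M_3(2) to M_3(2)} forces $H$ to be a subword of $H_2$ (any departure to $\textbf{M}_3^\pazocal{L}(1)$ via $\sigma^{-1}$ could not return, since at most one $\sigma$ appears in the canonical form and it precedes any $\sigma^{-1}$), so $\pazocal{C}$ identifies with a computation of $\textbf{M}_2^\pazocal{L}$ accepting $W$, and the length bound $c_0 \TM_\pazocal{L}(c_0 n)^3 + c_0 n + c_0$ from \Cref{M_2 language}(3) is dominated by the claimed bound.

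In the remaining case, $H$ has the form $H_1 \sigma H_2$, where $H_1$ is a history in $\textbf{M}_3^\pazocal{L}(1)$ driving $W$ to a $\sigma$-admissible configuration $W'$ and $H_2$ is a history in $\textbf{M}_3^\pazocal{L}(2)$ accepting $W' \cdot \sigma$. Because both the type (a) rules and $\sigma$ lock the sectors $Q_i^\pazocal{L} Q_{i+1}^\pazocal{L}$ for $i \geq 2$, the accepted $W$ must have the form $W \equiv q_0 u_1 q_1 u_2 q_2 q_3 \cdots q_N$ with $u_1 \in F(\pazocal{A}_1 \sqcup \pazocal{B})$, $u_2 \in F(\pazocal{A}_2)$, and $\|u_1\| + \|u_2\| = n$. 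To bound $\|H_1\|$, the key observation is that the restriction of the subcomputation with history $H_1$ to the base $Q_0^\pazocal{L} Q_1^\pazocal{L}$ identifies with a reduced computation of $\textbf{M}_1^\pazocal{A}$ in its base $Q_0^\pazocal{A} Q_1^\pazocal{A}$ starting at $q_0 u_1 q_1$ and ending at $q_0 q_1$ (the latter forced by the $q_0 \xrightarrow{\ell} p_0$ part of $\sigma$, which locks the $Q_0 Q_1$-sector). By definition this restriction is a shift of $u_1$, so \Cref{M_1 shift} applies and gives $\|H_1\| \leq \|u_1\| + \|u_1\| c_0^{\|u_1\|} \leq n + n c_0^n$.

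To bound $\|H_2\|$, I would use \Cref{projection argument} to observe that the $\pazocal{A}$-projection $\eps$ is preserved throughout the subcomputation with history $H_1$, so that the $Q_1^\pazocal{L} Q_2^\pazocal{L}$-sector content of $W'$ is reduced equal in $F(\pazocal{A}_2)$ to $\widetilde{\varphi}_2(\delta(u_1)) u_2$ and thus has length at most $|u_1|_\pazocal{A} + \|u_2\| \leq n$; since $\sigma$ introduces no new tape letters, $|W' \cdot \sigma|_a \leq n$. Identifying $H_2$ with a history of $\textbf{M}_2^\pazocal{L}$ accepting a configuration of $a$-length at most $n$, \Cref{M_2 language}(3) then yields $\|H_2\| \leq c_0 \TM_\pazocal{L}(c_0 n)^3 + c_0 n + c_0$. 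Summing $\|H_1\| + 1 + \|H_2\|$ and absorbing the small additive terms using the parameter choice $c_0 >> 1$ produces the claimed bound. I expect the principal obstacle to be the careful verification that the restricted computation in $\textbf{M}_3^\pazocal{L}(1)$ remains reduced and constitutes a shift in the precise sense required by \Cref{M_1 shift}, which rests on the faithful correspondence between type (a) rules of $\textbf{M}_3^\pazocal{L}$ and rules of $\textbf{M}_1^\pazocal{A}$ acting on the $Q_0 Q_1$-sector.
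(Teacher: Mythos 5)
Your proposal is correct and takes essentially the same route as the paper's proof: the same dichotomy coming from \Cref{M_3(2) to M_3(2)} (computation entirely inside $\textbf{M}_3^\pazocal{L}(2)$ versus a history of the form $H_1\sigma H_2$), the identification of the restriction of the $H_1$-part to the base $Q_0^\pazocal{L}Q_1^\pazocal{L}$ with a shift of $u_1$ bounded by \Cref{M_1 shift}, and the use of \Cref{projection argument} to bound $|W'\cdot\sigma|_a\leq n$ before invoking \Cref{M_2 language}(3). The only phrasing to tighten is that \Cref{M_2 language}(3) does not bound the given $H_2$ but supplies a short replacement accepting history $H_2'$, so the final accepting computation is the one with history $H_1\sigma H_2'$ --- exactly as in the paper.
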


\begin{proof}

Let $\pazocal{D}:W\equiv V_0\to\dots\to V_s$ be a reduced computation of $\textbf{M}_3^\pazocal{L}$ which accepts $W$.

If $\pazocal{D}$ is a computation of $\textbf{M}_3^\pazocal{L}(2)$, then it can be identified with a computation of $\textbf{M}_2^\pazocal{L}$.  But then \Cref{M_2 language} produces a computation $\pazocal{C}$ accepting $W$ with length $\leq c_0\TM_\pazocal{L}(c_0n)^3+c_0n+c_0$.

Hence, by \Cref{M_3(2) to M_3(2)}, it suffices to assume that there exists a factorization $H\equiv H_1\sigma H_2$ of the history $H$ of $\pazocal{D}$ such that:

\begin{itemize}

\item The subcomputation $\pazocal{D}_1:V_0\to\dots\to V_r$ with history $H_1$ is a computation of $\textbf{M}_3^\pazocal{L}(1)$

\item The subcomputation $\pazocal{D}_2:V_{r+1}\to\dots\to V_s$ with history $H_2$ is a computation of $\textbf{M}_3^\pazocal{L}(2)$

\end{itemize}

As above, $\pazocal{D}_2$ can be identified with a computation of $\textbf{M}_2^\pazocal{L}$ accepting $V_{r+1}$, so that \Cref{M_2 language} provides a computation $\pazocal{C}_2$ of $\textbf{M}_3^\pazocal{L}(2)$ accepting $V_{r+1}$ such that the history $H_2'$ of $\pazocal{C}_2$ satisfies: 
$$\|H_2'\|\leq c_0\TM_\pazocal{L}(c_0|V_{r+1}|_a)^3+c_0|V_{r+1}|_a+c_0$$
As $V_{r+1}$ is $\sigma^{-1}$-admissible, it corresponds to an input configuration of $\textbf{M}_2^\pazocal{L}$.  \Cref{M_2 language} then implies there exists $w\in\pazocal{L}$ such that $V_r\equiv q_0q_1\widetilde{\varphi}_2(w)q_2q_3\dots q_N$.

Let $\pazocal{D}_1':V_0'\to\dots\to V_r'$ be the restriction of $\pazocal{D}_1$ to the base $Q_0^\pazocal{L}Q_1^\pazocal{L}Q_2^\pazocal{L}$ and fix $w_1\in F(\pazocal{A}_1\sqcup\pazocal{B})$ and $w_2\in F(\pazocal{A}_2)$ such that $V_0'\equiv q_0w_1q_1w_2q_2$.  So, $W\equiv q_0w_1q_1w_2q_2\dots q_N$.

Then, $\pazocal{D}_1'$ can be identified with a reduced computation of $\textbf{M}_1^\pazocal{A}$ in the standard base, so that \Cref{projection argument} implies $|V_{r+1}|_a=\|w\|=\|\eps(V_t')\|=\|\eps(V_0')\|\leq\|w_1\|+\|w_2\|=|W|_a=n$.

Moreover, the restriction of $\pazocal{D}_1'$ to the base $Q_0^\pazocal{L}Q_1^\pazocal{L}$ can be identified with a computation of $\textbf{M}_1^\pazocal{A}$ with base $Q_0^\pazocal{A}Q_1^\pazocal{A}$ which is a shift of $w_1$.  \Cref{M_1 shift} then implies that $r\leq \|w_1\|+\|w_1\|c_0^{\|w_1\|}\leq n+nc_0^n$.

Thus, $H'\equiv H_1\sigma H_2'$ is the history of a reduced computation of $\textbf{M}_3^\pazocal{L}$ accepting $W$ such that:
$$\|H'\|\leq c_0\TM_\pazocal{L}(c_0n)^3+c_0n+c_0+1+n+nc_0^n$$
The statement then follows by taking $c_0\geq1$.

\end{proof}

As the only rules of $\textbf{M}_3^\pazocal{L}$ that do not lock the $Q_0^\pazocal{L}Q_1^\pazocal{L}$-sector are those of $\textbf{M}_3^\pazocal{L}(1)$, \Cref{semi locked sectors} implies that any non-trivial semi-computation of $\textbf{M}_3^\pazocal{L}$ in the $Q_0^\pazocal{L}Q_1^\pazocal{L}$-sector can be identified with a semi-computation of $\textbf{M}_1^\pazocal{A}$ in the $Q_0^{\pazocal{A}}Q_1^{\pazocal{A}}$-sector.  

\subsection{The machine $\textbf{M}_4^\pazocal{L}$} \label{sec-M_4} \

The noisy $S$-machine $\textbf{M}_4^\pazocal{L}$ is the composition of the machine $\textbf{M}_3^\pazocal{L}$ with a `reflected copy' of itself, introducing a level of symmetry to the model.  This composition is done in a manner similar to the methods employed in \cite{O18} and \cite{OS19}, and will be used explicitly in \Cref{sec-distortion-diagrams}.

Let $\pazocal{H}_3'=(\sqcup_{i=1}^N \pazocal{Y}_i^\pazocal{L},\sqcup_{i=0}^N R_i^\pazocal{L})$ be a copy of the hardware of $\textbf{M}_3^\pazocal{L}$.  The standard base of $\textbf{M}_4^\pazocal{L}$ is then: 
$$Q_0^\pazocal{L}Q_1^\pazocal{L}\dots Q_N^\pazocal{L}(R_N^\pazocal{L})^{-1}\dots(R_1^\pazocal{L})^{-1}(R_0^\pazocal{L})^{-1}$$
For each $i\in\{1,\dots,N\}$, the tape alphabet of the $Q_{i-1}^\pazocal{L}Q_i^\pazocal{L}$-sector is $Y_i^\pazocal{L}$, while that of the $(R_{i}^\pazocal{L})^{-1}(R_{i-1}^\pazocal{L})^{-1}$-sector is $\pazocal{Y}_i^\pazocal{L}$.  Finally, the tape alphabet of the $Q_N^\pazocal{L}(R_N^\pazocal{L})^{-1}$-sector is empty.

By construction, any configuration $W$ of $\textbf{M}_4^\pazocal{L}$ has an \textit{associated pair} of configurations of $\textbf{M}_3^\pazocal{L}$ $(W_1,W_2)$ such that $W\equiv W_1(W_2')^{-1}$ where $W_2'$ is the copy of $W_2$ over the hardware $\pazocal{H}_3'$.

The generalized rules of $\textbf{M}_4^\pazocal{L}$ correspond to those of $\textbf{M}_3^\pazocal{L}$, operating on admissible words whose base is a subword of either $Q_0^\pazocal{L}Q_1^\pazocal{L}\dots Q_N^\pazocal{L}$ or of $R_0^\pazocal{L}R_1^\pazocal{L}\dots R_N^\pazocal{L}$ as the corresponding rule operates on an analogous admissible word of $\textbf{M}_3^\pazocal{L}$.  

In particular, suppose the generalized rule $\theta$ of $\textbf{M}_3^\pazocal{L}$ has the part $q_i\to u_{i-1}q_i'v_i$.  Then, letting $r_i$ and $r_i'$ be the copies of $q_i$ and $q_i'$ in $R_i^\pazocal{L}$, respectively, then the corresponding rule $\bar{\theta}$ of $\textbf{M}_4^\pazocal{L}$ has the parts $q_i\to u_{i-1}q_i'v_i$ and $r_i^{-1}\to \bar{v}_i^{-1}(r_i')^{-1}\bar{u}_{i-1}^{-1}$, where $\bar{u}_{i-1}$ and $\bar{v}_i$ are the copies of $u_{i-1}$ and $v_i$ in $\pazocal{Y}_{i-1}^\pazocal{L}$ and $\pazocal{Y}_i^\pazocal{L}$, respectively.  

Further, for $\theta$ and $\bar{\theta}$ as above, $X_i(\bar{\theta})=X_i(\theta)$, $Z_i(\bar{\theta})=Z_i(\theta)$, and $f_{\bar{\theta},i}=f_{\theta,i}$ for all $i\in\{1,\dots,N\}$.  Similarly, $X_{2N+2-i}(\bar{\theta})$ and $Z_{2N+2-i}(\bar{\theta})$ are the natural copies of $X_i(\theta)$ and $Z_i(\theta)$ in $\pazocal{Y}_i^\pazocal{L}$, respectively, while the bijection $f_{\bar{\theta},2N+2-i}$ is the natural analogue of $f_{\theta,i}$.

Note that this construction may be applied to any generalized $S$-machine, and preserves the property of being a noisy $S$-machine (or of being an $S$-machine in the traditional sense).

As such, for any configuration $W$ with associated pair $(W_1,W_2)$, $W$ is $\bar{\theta}$-admissible if and only if both $W_1$ and $W_2$ are $\theta$-admissible, in which case $W\cdot\bar{\theta}$ is the configuration with associated pair $(W_1\cdot\theta,W_2\cdot\theta)$.  Hence, if $(W_1,W_2)$ is the associated pair of an accepted configuration of $\textbf{M}_4^\pazocal{L}$, then the parallel nature of the rules implies $W_1\equiv W_2$.  Consequently, any accepted configuration $W$ is essentially palindromic: $W^{-1}$ and $W$ are equivalent if $\pazocal{H}_3'$ is identified with the hardware of $\textbf{M}_3^\pazocal{L}$.  

This symmetry can be seen on another level: If $W$ is an admissible word whose base is a subword of $Q_0^\pazocal{L}Q_1^\pazocal{L}\dots Q_N^\pazocal{L}$, then define the \textit{reflection} of $W$ to be the admissible word which is the natural copy of $W^{-1}$ obtained over $\pazocal{H}_3'$.  For any rule $\bar{\theta}$, $W$ is $\bar{\theta}$-admissible if and only if its reflection is.

As the rules of $\textbf{M}_4^\pazocal{L}$ are in correspondence with the rules of $\textbf{M}_3^\pazocal{L}$ and operate similarly, the submachines $\textbf{M}_4^\pazocal{L}(1)$ and $\textbf{M}_4^\pazocal{L}(2)$ are defined as for $\textbf{M}_3^\pazocal{L}$.

Both the $Q_0^\pazocal{L}Q_1^\pazocal{L}$- and $(R_1^\pazocal{L})^{-1}(R_0^\pazocal{L})^{-1}$-sectors are the input sectors of $\textbf{M}_4^\pazocal{L}$, while the start and end letters are correspond to those of the machine $\textbf{M}_3^\pazocal{L}$.  In particular, letting $A_3$ be the accept configuration of $\textbf{M}_3^\pazocal{L}$, the accept configuration $A_4$ of $\textbf{M}_4^\pazocal{L}$ has associated pair $(A_3,A_3)$.

For any word $w\in F(\pazocal{A})$, let $I_3(w)$ be the input configuration of $\textbf{M}_3^\pazocal{L}$ with input $\widetilde{\varphi}_1(w)$.  Then, $I_4(w)$ is the input configuration of $\textbf{M}_4^\pazocal{L}$ whose associated pair is $(I_3(w),I_3(w))$.  

The following is thus a direct consequence of \Cref{M_3 language} and the mirror symmetry of the rules:

\begin{lemma} \label{M_4 language}


Suppose $W$ is an input configuration of $\textbf{M}_4^\pazocal{L}$ whose tape words are copies of words over $\pazocal{A}_1$.  Then, $W$ is accepted if and only if $W\equiv I_4(w)$ for some $w\in\pazocal{L}$.

\end{lemma}

%
%

Similarly, the next statement follows immediately from \Cref{M_3 time-space}:

\begin{lemma} \label{M_4 time-space}

For any accepted configuration $W$ of $\textbf{M}_4^\pazocal{L}$ with $|W|_a=n$, there exists an accepting computation $\pazocal{C}:W\equiv W\equiv W_0\to\dots\to W_t\equiv A_4$ satisfying $t\leq c_0\TM_\pazocal{L}(c_0n)^3+nc_0^{n}+c_0n+c_0$.

\end{lemma}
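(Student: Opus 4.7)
The plan is to exploit the symmetry built into $\textbf{M}_4^\pazocal{L}$ and reduce the problem to one application of \Cref{M_3 time-space}. Since every rule $\bar{\theta}$ of $\textbf{M}_4^\pazocal{L}$ is defined to operate on each half of the associated pair exactly as the corresponding rule $\theta$ of $\textbf{M}_3^\pazocal{L}$ would, a single computation of $\textbf{M}_3^\pazocal{L}$ naturally lifts to a computation of $\textbf{M}_4^\pazocal{L}$ that transforms both halves of the associated pair in parallel.

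First, I would pass from $W$ to its associated pair $(W_1,W_2)$, so $W\equiv W_1(W_2')^{-1}$ with $W_2'$ the copy of $W_2$ over $\pazocal{H}_3'$. Restricting an accepting computation of $W$ to the subword on the $Q_i^\pazocal{L}$-part (resp.\ the $R_i^\pazocal{L}$-part) of the standard base gives accepting computations in $\textbf{M}_3^\pazocal{L}$ for $W_1$ and $W_2$ (the middle $Q_N^\pazocal{L}(R_N^\pazocal{L})^{-1}$-sector has empty tape alphabet, so this restriction causes no interference). The parallel nature of the rules, together with the fact that both halves terminate at the single accept configuration $A_3$, forces $W_1\equiv W_2$. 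In particular, $|W_1|_a\leq|W|_a=n$.

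Next, I would apply \Cref{M_3 time-space} to $W_1$ to obtain a reduced accepting computation $\pazocal{C}_3:W_1\equiv V_0\to\dots\to V_t\equiv A_3$ in $\textbf{M}_3^\pazocal{L}$ with history $H\equiv\theta_1\dots\theta_t$ and $t\leq c_0\TM_\pazocal{L}(c_0n)^3+nc_0^n+2c_0n+2c_0$. For each $\theta_j$, let $\bar{\theta}_j$ be the corresponding rule of $\textbf{M}_4^\pazocal{L}$. Because $V_j\equiv V_j$ in both halves (by induction: initially $W_1\equiv W_2$, and if $V_{j-1}$ is $\theta_j$-admissible, so is its copy, and the two applications produce equal words), the rule $\bar{\theta}_j$ is applicable to the configuration with associated pair $(V_{j-1},V_{j-1})$ and produces the configuration with associated pair $(V_j,V_j)$. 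Iterating, $W\cdot\bar{\theta}_1\dots\bar{\theta}_t$ has associated pair $(A_3,A_3)$ and so equals $A_4$.

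The resulting accepting computation $\pazocal{C}$ has length equal to that of $\pazocal{C}_3$, which is bounded above by $c_0\TM_\pazocal{L}(c_0n)^3+nc_0^n+c_0n+c_0$ after absorbing the additive constants into the dominant exponential term $nc_0^n$ by an appropriate choice of the parameter $c_0$ (as in \Cref{sec-parameters}). The only mildly subtle point is verifying the applicability of each $\bar{\theta}_j$ at each step, but this is immediate from the definition of the rules of $\textbf{M}_4^\pazocal{L}$ together with the invariant $V_j\equiv V_j$ maintained by the parallel application.
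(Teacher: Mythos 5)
Your proposal is correct and matches the paper's (implicit) argument: the paper derives this lemma directly from \Cref{M_3 time-space}, using exactly the facts you cite — that an accepted configuration of $\textbf{M}_4^\pazocal{L}$ has associated pair $(W_1,W_1)$ and that the rules act in parallel on the two halves, so an accepting computation of $W_1$ in $\textbf{M}_3^\pazocal{L}$ lifts to one of $W$ in $\textbf{M}_4^\pazocal{L}$ of the same length. One could even sharpen your bookkeeping by noting $|W_1|_a=n/2$ (not merely $\leq n$), which makes the stated bound follow with room to spare since $\TM_\pazocal{L}$ is nondecreasing.
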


As all rules of $\textbf{M}_4^\pazocal{L}$ operate in the $Q_0^\pazocal{L}Q_1^\pazocal{L}$-sector in the same way as those of $\textbf{M}_3^\pazocal{L}$, semi-computations of $\textbf{M}_4^\pazocal{L}$ in this sector are the same as those in $\textbf{M}_3^\pazocal{L}$.  Hence, non-trivial semi-computations of $\textbf{M}_4^\pazocal{L}$ in the $Q_0^\pazocal{L}Q_1^\pazocal{L}$-sector can be identified with semi-computations of $\textbf{M}_1^\pazocal{A}$ in the $Q_0^\pazocal{A}Q_1^\pazocal{A}$-sector.

\medskip


\subsection{The machine $\textbf{M}_5^\pazocal{L}$.} \

The noisy $S$-machine $\textbf{M}_5^\pazocal{L}$ is the `circular' analogue of $\textbf{M}_4^\pazocal{L}$. It is defined in much the same way as the analogous machine in \cite{W}.

Letting $B_4^\pazocal{L}$ be the standard base of $\textbf{M}_4^\pazocal{L}$, the standard base of $\textbf{M}_5^\pazocal{L}$ is $\{t\}B_4^\pazocal{L}$, where $\{t\}$ is a singleton. The tape alphabet of the $\{t\}Q_0^\pazocal{L}$-sector is empty, while the tape alphabet of the other sectors are identified with the corresponding tape alphabets of $\textbf{M}_4^\pazocal{L}$.

However, there is a fundamental difference between $\textbf{M}_5^\pazocal{L}$ and the machines constructed in the previous sections: A tape alphabet is assigned to the space after $(R_0^\pazocal{L})^{-1}$, corresponding to the $(R_0^\pazocal{L})^{-1}\{t\}$-sector. As such, it is possible for an admissible word of $\textbf{M}_4^\pazocal{L}$ to have base $$(Q_1^\pazocal{L})^{-1}(Q_0^\pazocal{L})^{-1}\{t\}^{-1}R_0^\pazocal{L}(R_0^\pazocal{L})^{-1}\{t\}Q_0^\pazocal{L}Q_1^\pazocal{L}$$
{\frenchspacing i.e. so that it essentially} `wraps around' the standard base. A generalized $S$-machine with this property is called \textit{cyclic}, as the standard base can be visualized as being written on a circle.

In this machine, the tape alphabet of the $(R_0^\pazocal{L})^{-1}\{t\}$-sector is taken to be empty. The generalized rules of $\textbf{M}_5^\pazocal{L}$ correspond to those of $\textbf{M}_4^\pazocal{L}$, operating on the copy of the hardware of $\textbf{M}_4^\pazocal{L}$ in the same way and, as is compulsory by the definition of the tape alphabets, locking the new sectors.  Hence, $\textbf{M}_5^\pazocal{L}$ can indeed be viewed as a noisy $S$-machine, though with this new `circular' property.

As with the previous machine, the submachines $\textbf{M}_5^\pazocal{L}(1)$ and $\textbf{M}_5^\pazocal{L}(2)$ are adopted from the submachines of $\textbf{M}_3^\pazocal{L}$.  Similarly, any admissible word whose base is a subword of $Q_0^\pazocal{L}Q_1^\pazocal{L}\dots Q_N^\pazocal{L}$ has a \textit{reflection}, capturing the symmetry inherent to the machine.

The input sectors, start letters, and end letters of $\textbf{M}_5^\pazocal{L}$ are analogous to those of $\textbf{M}_4^\pazocal{L}$ (with the start and end letter of the part $\{t\}$ taken to be the only letter).  For any $w\in F(\pazocal{A})$, the configuration $tI_4(w)$ is thus an input configuration of $\textbf{M}_5^\pazocal{L}$, hereby denoted $I_5(w)$.

So, since the newly introduced sectors have empty tape alphabet, the following statements are direct consequences of Lemmas \ref{M_4 language} and \ref{M_4 time-space}:

%
%




%
%

\begin{lemma} \label{M_5 language}


Suppose $W$ is an input configuration of $\textbf{M}_5^\pazocal{L}$ whose tape words are copies of words over $\pazocal{A}_1$.  Then, $W$ is accepted if and only if $W\equiv I_5(w)$ for some $w\in\pazocal{L}$.

\end{lemma}

\begin{lemma} \label{M_5 time-space}

For any accepted configuration $W$ of $\textbf{M}_5^\pazocal{L}$ with $|W|_a=n$, there exists an accepting computation $\pazocal{C}:W\equiv W_0\to\dots\to W_t$ satisfying $t\leq c_0\TM_\pazocal{L}(c_0n)^3+nc_0^{n}+c_0n+c_0$.

\end{lemma}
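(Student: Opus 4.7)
The plan is to reduce the statement directly to \Cref{M_4 time-space} by exploiting the fact that $\textbf{M}_5^\pazocal{L}$ is obtained from $\textbf{M}_4^\pazocal{L}$ essentially by prefixing an ``inert'' part $\{t\}$ to the standard base. Since the $\{t\}Q_0^\pazocal{L}$- and $(R_0^\pazocal{L})^{-1}\{t\}$-sectors both carry empty tape alphabet, any configuration $W$ of $\textbf{M}_5^\pazocal{L}$ (i.e., any admissible word with the standard base $\{t\}B_4^\pazocal{L}$) has the form $W\equiv tW'$ where $W'$ is a configuration of $\textbf{M}_4^\pazocal{L}$, and $|W'|_a=|W|_a=n$.

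Next, I would use the fact that every generalized rule $\bar\theta$ of $\textbf{M}_5^\pazocal{L}$ is, by construction, obtained from the corresponding rule of $\textbf{M}_4^\pazocal{L}$ by adjoining the part $t\xrightarrow{\ell}t$, and operates on the copy of the hardware of $\textbf{M}_4^\pazocal{L}$ in exactly the same way. Consequently, $W\equiv tW'$ is $\bar\theta$-admissible if and only if $W'$ is $\theta$-admissible, and in that case $W\cdot\bar\theta\equiv t(W'\cdot\theta)$. Iterating, the natural identification of rules yields a length-preserving bijection between reduced computations of $\textbf{M}_5^\pazocal{L}$ starting at $W$ and reduced computations of $\textbf{M}_4^\pazocal{L}$ starting at $W'$; under this bijection accepting computations correspond to accepting computations, since the accept configuration of $\textbf{M}_5^\pazocal{L}$ is $tA_4$. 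In particular, $W$ is accepted by $\textbf{M}_5^\pazocal{L}$ if and only if $W'$ is accepted by $\textbf{M}_4^\pazocal{L}$.

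Applying \Cref{M_4 time-space} to $W'$ with $|W'|_a=n$ then produces an accepting computation of $W'$ in $\textbf{M}_4^\pazocal{L}$ of length at most $c_0\TM_\pazocal{L}(c_0n)^3+nc_0^n+c_0n+c_0$. Lifting this computation through the above bijection yields an accepting computation of $W$ in $\textbf{M}_5^\pazocal{L}$ of the same length, which is precisely the bound claimed.

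There is essentially no obstacle: the entire content of the lemma has already been absorbed into \Cref{M_4 time-space}, and the only thing to verify is the (immediate) bijection of computations induced by the trivial $\{t\}$-part. The one minor point to spell out cleanly is that the correspondence is well-defined at the level of \emph{accepted configurations} (as opposed to arbitrary admissible words with cyclically wrapped bases), which holds because by definition a configuration must have the standard base $\{t\}B_4^\pazocal{L}$, forcing the $tW'$ decomposition to exist throughout the computation.
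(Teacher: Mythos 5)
Your proposal is correct and follows essentially the same route as the paper: the paper states that, since the newly introduced $\{t\}Q_0^\pazocal{L}$- and $(R_0^\pazocal{L})^{-1}\{t\}$-sectors have empty tape alphabets, the lemma is a direct consequence of \Cref{M_4 time-space}, which is exactly the $W\equiv tW'$ reduction you spell out. You merely make explicit the length-preserving correspondence of computations that the paper leaves implicit.
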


Again, the rules of $\textbf{M}_5^\pazocal{L}$ are in correspondence with those of $\textbf{M}_3^\pazocal{L}$ and operate in the $Q_0^\pazocal{L}Q_1^\pazocal{L}$-sector analogously.  Hence, non-trivial semi-computations of $\textbf{M}_5^\pazocal{L}$ in the $Q_0^\pazocal{L}Q_1^\pazocal{L}$-sector can be identified with semi-computations of $\textbf{M}_1^\pazocal{A}$ in the $Q_0^\pazocal{A}Q_1^\pazocal{A}$-sector.

\medskip


\subsection{The machines $\textbf{M}_{6,1}^\pazocal{L}$ and $\textbf{M}_{6,2}^\pazocal{L}$.} \

The cyclic generalized $S$-machine $\textbf{M}_{6,1}^\pazocal{L}$ functions as the `parallel' composition of $\textbf{M}_5^\pazocal{L}$ with itself a (very large) number of times.

Letting $L$ be the parameter listed in \Cref{sec-parameters}, let $B_4^{\pazocal{L},1}(i)$ be a copy of the standard base $B_4^\pazocal{L}$ of $\textbf{M}_4^\pazocal{L}$ for all $1\leq i\leq L$, i.e with:
$$B_4^{\pazocal{L},1}(i)=Q_0^{\pazocal{L},1}(i)Q_1^{\pazocal{L},1}(i)\dots Q_N^{\pazocal{L},1}(i)(R_N^{\pazocal{L},1}(i))^{-1}\dots(R_1^{\pazocal{L},1}(i))^{-1}(R_0^{\pazocal{L},1}(i))^{-1}$$ 
Then the standard base of $\textbf{M}_{6,1}^\pazocal{L}$ is:
$$\{t(1)\}B_4^{\pazocal{L},1}(1)\{t(2)\}B_4^{\pazocal{L},1}(2)\dots\{t(L)\}B_4^{\pazocal{L},1}(L)$$
For any letter of $\{t(i)\}B_4^{\pazocal{L},1}(i)$ (or its inverse), the index $i$ is called its \textit{coordinate}.


The tape alphabet of any sector containing a singleton $\{t(i)\}$ (including the $(R_0^{\pazocal{L},1}(L))^{-1}\{t(1)\}$-sector) is taken to be empty, while the tape alphabet of any other sector is a copy of the tape alphabet of the corresponding sector of $\textbf{M}_5^\pazocal{L}$.

The generalized rules of $\textbf{M}_{6,1}^\pazocal{L}$ are in correspondence with those of $\textbf{M}_5^\pazocal{L}$, with each rule operating on every subword $\{t(i)\}B_4^{\pazocal{L},1}(i)$ of the standard base as the corresponding rule.  As such, $\textbf{M}_{6,1}^\pazocal{L}$ is a noisy $S$-machine which can be viewed as the composition of the submachines $\textbf{M}_{6,1}^\pazocal{L}(1)$ and $\textbf{M}_{6,1}^\pazocal{L}(2)$.

The input sectors of $\textbf{M}_{6,1}^\pazocal{L}$ are taken to be all the $Q_0^{\pazocal{L},1}(i)Q_1^{\pazocal{L},1}(i)$- and $(R_1^{\pazocal{L},1}(i))^{-1}(R_0^{\pazocal{L},1}(i))^{-1}$-sectors, while the start and end letters are taken to be the copies of those of $\textbf{M}_5^\pazocal{L}$.

Clearly, the statements of the previous section pertaining to the machine $\textbf{M}_5^\pazocal{L}$ have natural analogues to the machine $\textbf{M}_{6,1}^\pazocal{L}$. For example, for $w\in F(\pazocal{A})$, let $I_6(w)$ be the input configuration such that every admissible subword with base $\{t(i)\}B_4^{\pazocal{L},1}(i)$ is the natural copy of $I_5(w)$. 

The following statement is then the analogue of Lemma \ref{M_5 language}:

\begin{lemma} \label{M_{6,1} language}


Suppose $W$ is an input configuration of $\textbf{M}_{6,1}^\pazocal{L}$ whose tape words are copies of words over $\pazocal{A}_1$.  Then, $W$ is accepted if and only if $W\equiv I_6(w)$ for some $w\in\pazocal{L}$.

\end{lemma}

\begin{proof}

Given a reduced computation $\pazocal{C}$ accepting the input configuration $W$, the restriction of $\pazocal{C}$ to the base $\{t(2)\}B_4^{\pazocal{L},1}(2)$ can be identified with a reduced computation of $\textbf{M}_5^\pazocal{L}$.  \Cref{M_5 language} then implies the corresponding admissible subword of $W$ is the copy of $I_5(w)$ for some $w\in\pazocal{L}$.  But then the parallel nature of the rules yields $W\equiv I_6(w)$.

Conversely, given $w\in\pazocal{L}$, then \Cref{M_5 language} provides a reduced computation of $\textbf{M}_5^\pazocal{L}$ accepting $I_5(w)$, which then corresponds to a reduced computation of $\textbf{M}_{6,1}^\pazocal{L}$ accepting $I_6(w)$.

%

\end{proof}

The cyclic generalized $S$-machine $\textbf{M}_{6,2}^\pazocal{L}$ is constructed in much the same way as $\textbf{M}_{6,1}^\pazocal{L}$, but with one major alteration.

Letting $B_4^{\pazocal{L},2}(i)$ be a distinct copy of $B_4^\pazocal{L}$ for all $i\in\{1,\dots,L\}$, the standard base of $\textbf{M}_{6,2}^\pazocal{L}$ is
$$\{t(1)\}B_4^{\pazocal{L},2}(1)\{t(2)\}B_4^{\pazocal{L},2}(2)\dots\{t(L)\}B_4^{\pazocal{L},2}(L)$$
Similarly, the tape alphabets of $\textbf{M}_{6,2}^\pazocal{L}$ are defined in just the same way as those of $\textbf{M}_{6,1}^\pazocal{L}$.

However, there is one fundamental difference between $\textbf{M}_{6,2}^\pazocal{L}$ and its predecessor: While the positive rules of $\textbf{M}_{6,2}^\pazocal{L}$ are copies of those of $\textbf{M}_5^\pazocal{L}$, each locks the $Q_0^{\pazocal{L},2}(1)Q_1^{\pazocal{L},2}(1)$-sector. This sector is still called an input sector, though any configuration must have this sector empty for it to be $\theta$-admissible for any rule $\theta$ of $\textbf{M}_{6,2}^\pazocal{L}$.

Again, the statements from the previous section have analogues to the machine $\textbf{M}_{6,2}^\pazocal{L}$. For example, for any $w\in F(\pazocal{A})$, let $J_6(w)$ be the input configuration analogous to $I_6(w)$ except with empty $Q_0^{\pazocal{L},2}(1)Q_1^{\pazocal{L},2}(1)$-sector.  Then, the following statement is the analogue of Lemma \ref{M_5 language}, proved in much the same way as Lemma \ref{M_{6,1} language}:

\begin{lemma} \label{M_{6,2} language}


Suppose $W$ is an input configuration of $\textbf{M}_{6,2}^\pazocal{L}$ whose tape words are copies of words over $\pazocal{A}_1$.  Then, $W$ is accepted if and only if $W\equiv J_6(w)$ for some $w\in\pazocal{L}$.

\end{lemma}

\medskip


\section{The Main Machine}

\subsection{The machine $\textbf{M}^\pazocal{L}$} \

The main machine of this construction, the noisy $S$-machine $\textbf{M}^\pazocal{L}$, is the concatenation of the machines $\textbf{M}_{6,1}^\pazocal{L}$ and $\textbf{M}_{6,2}^\pazocal{L}$. However, unlike the compositions described in previous sections (but similar to the construction of the main machine of \cite{W}), the concatenation of these machines is done in a way so that they run `one or the other' instead of `one after another' or `in parallel'.

For every $j\in\{0,\dots,N\}$ and $i\in\{1,\dots,L\}$, define the sets:

\begin{itemize}

\item $Q_j^\pazocal{L}(i)=Q_j^{\pazocal{L},1}(i)\sqcup Q_j^{\pazocal{L},2}(i)\sqcup\{q_{j,s}(i),q_{j,a}(i)\}$ 

\item $R_j^\pazocal{L}(i)=R_j^{\pazocal{L},1}(i)\sqcup R_j^{\pazocal{L},2}(i)\sqcup\{r_{j,s}(i),r_{j,a}(i)\}$

\end{itemize}
Further, for all $i\in\{1,\dots,L\}$, denote $B_4^\pazocal{L}(i)=Q_0^\pazocal{L}(i)\dots Q_N^\pazocal{L}(i)(R_N^\pazocal{L}(i))^{-1}\dots(R_0^\pazocal{L}(i))^{-1}$.

Then, the standard base of $\textbf{M}^\pazocal{L}$ is:
$$\left(\{t(1)\}B_4^\pazocal{L}(1)\right)\left(\{t(2)\}B_4^\pazocal{L}(2)\right)\dots\left(\{t(L)\}B_4^\pazocal{L}(L)\right)$$


Similar to the setup of the machines $\textbf{M}_{6,1}^\pazocal{L}$ and $\textbf{M}_{6,2}^\pazocal{L}$, the input sectors of $\textbf{M}^\pazocal{L}$ are taken to be the $Q_0^\pazocal{L}(i)Q_1^\pazocal{L}(i)$- and $(R_1^\pazocal{L}(i))^{-1}(R_0^\pazocal{L}(i))^{-1}$-sectors.  For any $i$ and $j$, the letters $q_{j,s}(i)$ and $r_{j,s}(i)^{-1}$ are taken to be the start letters of $Q_j^\pazocal{L}(i)$ and $(R_j^\pazocal{L}(i))^{-1}$, respectively.  Similarly, $q_{j,a}(i)$ and $r_{j,a}(i)^{-1}$ are the end letters of $Q_j^\pazocal{L}(i)$ and $(R_j^\pazocal{L}(i))^{-1}$, respectively.

For any non-input sector, the associated tape alphabet is a copy of the corresponding tape alphabet of $\textbf{M}_{6,1}^\pazocal{L}$ (which is identified with the corresponding tape alphabet of $\textbf{M}_{6,2}^\pazocal{L}$). In particular, the tape alphabet of the $Q_0^\pazocal{L}(1)Q_1^\pazocal{L}(1)$-sector is identified with the alphabet $\pazocal{A}\sqcup\pazocal{A}_1\sqcup\pazocal{B}$.  As promised in \Cref{sec-M_1}, the reason for the inclusion of $\pazocal{A}$ in the input tape alphabets will finally become clear in this construction.

The set of generalized $S$-rules of $\textbf{M}^\pazocal{L}$, $\Theta$, is the disjoint union of two symmetric sets, denoted $\Theta_1$ and $\Theta_2$. Naturally, the positive (and negative) generalized rules are partitioned accordingly, i.e with $\Theta^+=\Theta_1^+\sqcup\Theta_2^+$ with $\Theta_i^+=\Theta^+\cap\Theta_i$ for $i=1,2$.

The rules of $\Theta_1^+$ are defined as follows:

\begin{itemize}

\item The transition rule $\theta(s)_1$ locks all sectors other than the input sectors and switches the state letters from the start letters of the machine to the copies of the start letters of $\textbf{M}_{6,1}^\pazocal{L}$. For each $i$ corresponding to an input sector, $X_i(\theta(s)_1)$ is the copy of $\pazocal{A}$, $Z_i(\theta(s)_1)$ is the copy of $\pazocal{A}_1$, and $f_{\theta(s)_1,i}$ operates as $\varphi_1$.  

\item The positive `working' rules of $\Theta_1$ correspond to the positive generalized $S$-rules of $\textbf{M}_{6,1}^\pazocal{L}$, with each rule operating on the copy of the hardware of $\textbf{M}_{6,1}^\pazocal{L}$ as its corresponding rule.

\item The transition rule $\theta(a)_1$ locks all sectors and switches the state letters from the copy of the end letters of $\textbf{M}_{6,1}^\pazocal{L}$ to the end letters of the machine.

\end{itemize}

The rules of $\Theta_2^+$ are defined as follows:

\begin{itemize}

\item The transition rule $\theta(s)_2$ operates in a similar manner to the rule $\theta(s)_1$, but with two exceptions: (i) The input $Q_0^{\pazocal{L}}(1)Q_1^{\pazocal{L}}(1)$-sector is locked, and (ii) The state letters are switched from the start letters of the machine to the copies of the start letters of $\textbf{M}_{6,2}^\pazocal{L}$.

\item The positive `working' rules of $\Theta_2$ correspond to the positive generalized $S$-rules of $\textbf{M}_{6,2}^\pazocal{L}$, with each rule operating on the copy of the hardware of $\textbf{M}_{6,2}^\pazocal{L}$ as its corresponding rule.

\item The transition rule $\theta(a)_2$ locks all sectors and switches the state letters from the copy of the end letters of $\textbf{M}_{6,2}^\pazocal{L}$ to the end letters of the machine.

\end{itemize}

The definition of the rules of $\textbf{M}^\pazocal{L}$ make it evident that the $Q_0^\pazocal{L}(1)Q_1^{\pazocal{L}}(1)$-sector stands out amongst the input sectors. Thus, it is henceforth fittingly referred to as the `special' input sector.

Observe that the operation of the positive rules $\theta(s)_j$ in the input sectors (other than the `special' input sector when $j=2$) satisfies condition (2) in the definition of noisy $S$-machines in \Cref{sec-noisy}, justifying the claiml that $\textbf{M}^\pazocal{L}$ is a noisy $S$-machine.



%

Note that for $w\in F(\pazocal{A})$, the natural copies of $I_6(w)$ and $J_6(w)$ in the hardware of $\textbf{M}^\pazocal{L}$ are configurations which are $\theta(s)_1^{-1}$-admissible and $\theta(s)_2^{-1}$-admissible, respectively. The configurations $I(w)$ and $J(w)$ are then defined to be the configurations resulting from applying these respective rules. Hence, $I(w)$ is the input configuration with the corresponding copy of $w$ written in each $Q_0^\pazocal{L}(i)Q_1^\pazocal{L}(i)$-sector and the copy of $w^{-1}$ written in each $(R_1^\pazocal{L}(i))^{-1}(R_0^\pazocal{L}(i))^{-1}$-sector, while $J(w)$ is the input configuration obtained from $I(w)$ by erasing the copy of $w$ in the `special' input sector.

\medskip

\subsection{Standard computations of $\textbf{M}^\pazocal{L}$} \label{sec-M-standard} \

As in \cite{W}, a reduced computation $\pazocal{C}$ is called a \textit{one-machine computation of the $i$-th machine} if every letter of the history of $\pazocal{C}$ corresponds to a rule of $\Theta_i$, i.e $H\in F(\Theta_i^+)$ for $H$ the history of $\pazocal{C}$. If $\pazocal{C}$ is not a one-machine computation, then it is called a \textit{multi-machine computation}.

The next two statements follow immediately by from the definition of the rules of $\textbf{M}^\pazocal{L}$.

\begin{lemma} \label{(s) (a) first or last}

Suppose $\pazocal{C}:W_0\to\dots\to W_t$ is a one-machine computation of the $i$-th machine in the standard base.  Then:

\begin{enumerate} [label=(\alph*)]

\item Any occurrence of $\theta(s)_i$ or of $\theta(a)_i^{-1}$ in the history of $\pazocal{C}$ is as the first letter.

\item Any occurrence of $\theta(s)_i^{-1}$ or of $\theta(a)_i$ in the history of $\pazocal{C}$ is as the last letter.

\end{enumerate}

\end{lemma}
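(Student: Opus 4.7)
The plan is to prove the statement by tracking the type of state letters appearing in each configuration along the computation, and exploiting the fact that the transition rules $\theta(s)_i^{\pm1}$ and $\theta(a)_i^{\pm1}$ are the only rules that alter the ``regime'' of state letters. Specifically, I would classify state letters into three disjoint types: the start letters $\{q_{j,s}(i), r_{j,s}(i), t(i)\}$ of $\textbf{M}^\pazocal{L}$, the end letters $\{q_{j,a}(i), r_{j,a}(i), t(i)\}$ of $\textbf{M}^\pazocal{L}$, and the (copies of) state letters of the submachine $\textbf{M}_{6,i}^\pazocal{L}$. Every configuration of the $i$-th one-machine computation has its state letters belonging to exactly one of these three regimes.

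To prove (a), I would first consider $\theta(s)_i$. For $\theta(s)_i$ to apply at position $k$ of the history, the configuration $W_{k-1}$ must be $\theta(s)_i$-admissible, which requires its state letters to lie in the start-letter regime of $\textbf{M}^\pazocal{L}$. Inspecting the rule set $\Theta_i^+$, the only rule whose application produces a configuration with state letters in this regime is $\theta(s)_i^{-1}$: the working rules of $\Theta_i$ preserve the submachine regime, and $\theta(a)_i^{\pm1}$ produces or consumes the end-letter regime. Hence if $k > 1$, the preceding rule $\theta_{k-1}$ must equal $\theta(s)_i^{-1}$, producing the subword $\theta(s)_i^{-1}\theta(s)_i$ in the history and contradicting reducedness. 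The argument for $\theta(a)_i^{-1}$ is symmetric: its applicability at position $k$ requires $W_{k-1}$ to have state letters in the end-letter regime of $\textbf{M}^\pazocal{L}$, which can only be produced by $\theta(a)_i$, again contradicting reducedness unless $k = 1$.

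Part (b) would then follow by applying part (a) to the inverse computation $\pazocal{C}^{-1}: W_t \to \dots \to W_0$, whose history is the reverse-inverse of the history of $\pazocal{C}$ and is again a reduced one-machine computation of the $i$-th machine. An occurrence of $\theta(s)_i^{-1}$ (resp.\ $\theta(a)_i$) as the $k$-th letter of the history of $\pazocal{C}$ corresponds to an occurrence of $\theta(s)_i$ (resp.\ $\theta(a)_i^{-1}$) as the $(t-k+1)$-th letter of the history of $\pazocal{C}^{-1}$, so by (a) the index $t-k+1$ must equal $1$, i.e.\ $k = t$.

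No step of this argument should be a genuine obstacle; the only point requiring any care is the case analysis verifying that, among all rules in $\Theta_i^+\cup\Theta_i^-$, only $\theta(s)_i^{-1}$ produces start letters of $\textbf{M}^\pazocal{L}$ and only $\theta(a)_i$ produces end letters of $\textbf{M}^\pazocal{L}$. This is immediate from the construction in \Cref{sec-M-standard}: the working rules are copies of rules of $\textbf{M}_{6,i}^\pazocal{L}$ and therefore act entirely within the submachine regime, so the transition rules are the unique gateways between regimes.
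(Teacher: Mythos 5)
Your proposal is correct and follows essentially the same argument as the paper: $\theta(s)_i$-admissibility (resp. $\theta(a)_i^{-1}$-admissibility) of $W_{k-1}$ forces it to be a start (resp. end) configuration, which among rules of $\Theta_i$ can only be reached by $\theta(s)_i^{-1}$ (resp. $\theta(a)_i$), contradicting reducedness; part (b) is obtained from (a) via the inverse computation, exactly as in the paper. The only cosmetic slip is calling your three regimes disjoint while placing $t(i)$ in both the start- and end-letter sets (it is indeed both), but this does not affect the argument since admissibility of the transition rules is decided by the $Q_j^\pazocal{L}(i)$- and $R_j^\pazocal{L}(i)$-letters.
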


%
%
%
%
%
%
%
%
%
%
%

\begin{lemma} \label{multi-machine step history}

Let $\pazocal{C}$ be a multi-machine computation of $\textbf{M}^\pazocal{L}$ in the standard base.  Suppose there exists a factorization $H\equiv H_1H_2$ of the history $H$ of $\pazocal{C}$ such that for $i=1,2$, the subcomputation $\pazocal{C}_i$ with history $H_i$ is a one-machine computation of the $i$-th machine.  Then either:

\begin{enumerate} [label=(\alph*)]

\item The last letter of $H_1$ is either $\theta(s)_1^{-1}$ and the first letter of $H_2$ is $\theta(s)_2$; or

\item The last letter of $H_1$ is $\theta(a)_1$ and the first letter of $H_2$ is $\theta(a)_2^{-1}$.

\end{enumerate}

\end{lemma}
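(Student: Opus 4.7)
The plan is to examine the transition configuration $W_r$ separating the two one-machine subcomputations, using the fact that its state letters must simultaneously be the output of the last rule of $H_1$ (an element of $\Theta_1$) and the input of the first rule of $H_2$ (an element of $\Theta_2$). Since $\pazocal{C}$ is a multi-machine computation by hypothesis, both $H_1$ and $H_2$ are nonempty, as otherwise $\pazocal{C}$ would itself be a one-machine computation, contradicting the hypothesis. Let $\theta$ denote the last letter of $H_1$ and $\theta'$ the first letter of $H_2$; I will pin down $\theta$ and $\theta'$ by case analysis on the state letters of $W_r$.

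A rule of $\Theta_2$ is admissible at $W_r$ only if the state letters of $W_r$ lie in one of three disjoint alphabets that it is convenient to enumerate: the start letters $\{q_{j,s}(i)\}$ (used by $\theta(s)_2$), the end letters $\{q_{j,a}(i)\}$ (used by $\theta(a)_2^{-1}$), or the copies $Q_j^{\pazocal{L},2}(i)$ of $\textbf{M}_{6,2}^\pazocal{L}$'s state letters (used by the working rules and by $\theta(s)_2^{-1}$, $\theta(a)_2$). By the construction of $\Theta_1$, I then enumerate the three possible outcomes for the state letters of $W_r$ that would rule out the existence of $\theta'$: (i) a working rule of $\Theta_1$ leaves $W_r$ with state letters in the copy $Q_j^{\pazocal{L},1}(i)$ of $\textbf{M}_{6,1}^\pazocal{L}$; (ii) $\theta(s)_1$ leaves them in the copies of the start letters of $\textbf{M}_{6,1}^\pazocal{L}$; (iii) $\theta(a)_1^{-1}$ leaves them in the copies of the end letters of $\textbf{M}_{6,1}^\pazocal{L}$. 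In each of these three subcases the state letters of $W_r$ lie in an alphabet disjoint from the three admissible alphabets for $\Theta_2$, so no rule of $\Theta_2$ applies to $W_r$, contradicting the existence of $\theta'$. The only remaining possibilities are therefore $\theta = \theta(s)_1^{-1}$ and $\theta = \theta(a)_1$.

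Finally, if $\theta = \theta(s)_1^{-1}$, then $W_r$ is a start configuration of $\textbf{M}^\pazocal{L}$, and the unique rule of $\Theta_2$ admissible at a start configuration is $\theta(s)_2$, yielding case (a). If $\theta = \theta(a)_1$, then $W_r$ is an end configuration, and the only rule of $\Theta_2$ admissible there is $\theta(a)_2^{-1}$, yielding case (b). The only real subtlety is the disjointness of the state letter alphabets invoked throughout the argument, but this is immediate from the disjoint-union decomposition $Q_j^\pazocal{L}(i) = Q_j^{\pazocal{L},1}(i) \sqcup Q_j^{\pazocal{L},2}(i) \sqcup \{q_{j,s}(i), q_{j,a}(i)\}$ (and the analogous decomposition for $R_j^\pazocal{L}(i)$) fixed in the construction of the standard base of $\textbf{M}^\pazocal{L}$.
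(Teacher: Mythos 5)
Your proposal is correct and follows essentially the same route as the paper: it analyzes the transition configuration $W_r$, which must be simultaneously compatible with the last rule of $H_1$ and admissible for the first rule of $H_2$, and uses the disjointness of the state-letter alphabets to force $W_r$ to be a start or end configuration, after which the two rules are uniquely determined. Your case analysis simply spells out the step the paper compresses into ``by the construction of the rules.''
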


%
%
%
%
%
%

\begin{lemma} \label{one-machine language}

For a start configuration $W$, there exists a one-machine computation of the first (respectively second) machine accepting $W$ if and only if there exists $w\in\pazocal{L}$ such that $W\equiv I(w)$ (respectively $W\equiv J(w)$). 

\end{lemma}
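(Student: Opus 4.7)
My plan is to reduce each direction to the corresponding acceptance lemma for $\textbf{M}_{6,1}^\pazocal{L}$ or $\textbf{M}_{6,2}^\pazocal{L}$, exploiting the fact that the transition rules $\theta(s)_i$ and $\theta(a)_i$ are the only bridges between the state alphabet of $\textbf{M}^\pazocal{L}$ and the state alphabets of $\textbf{M}_{6,i}^\pazocal{L}$ appearing in the working rules of $\Theta_i$. The two directions are symmetric in $i$, so I will describe the case $i=1$ in detail; the case $i=2$ is verbatim the same, with $\theta(s)_2$ forcing in addition that the special input sector of $W$ is empty, so that $W\cdot\theta(s)_2$ is precisely the natural copy of some $J_6(w)$.

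For the ``if'' direction, assume $W\equiv I(w)$ with $w\in\pazocal{L}$. By construction $W$ is $\theta(s)_1$-admissible and $W\cdot\theta(s)_1$ is the natural copy of $I_6(w)$ in the hardware of $\textbf{M}^\pazocal{L}$; I will lift the accepting computation of $I_6(w)$ provided by \Cref{M_{6,1} language} to a reduced computation of $\textbf{M}^\pazocal{L}$ whose history $H_0$ consists entirely of working rules of $\Theta_1^+$ and which terminates at the natural copy of the accept configuration of $\textbf{M}_{6,1}^\pazocal{L}$. Applying $\theta(a)_1$ at that terminal configuration (it is admissible there because all working sectors are empty) yields the accept configuration of $\textbf{M}^\pazocal{L}$, and the history $\theta(s)_1\cdot H_0\cdot\theta(a)_1$ is the desired one-machine accepting computation.

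For the ``only if'' direction, let $\pazocal{C}$ be a one-machine computation of the first machine accepting the start configuration $W$, with history $H\in F(\Theta_1^+)$. My key observation is that among the rules of $\Theta_1$, only $\theta(s)_1$ is admissible on a start configuration of $\textbf{M}^\pazocal{L}$: the working rules of $\Theta_1$ require state letters from the $\textbf{M}_{6,1}^\pazocal{L}$-copies (disjoint from the $q_{j,s}(i)$ and $r_{j,s}(i)^{-1}$), while $\theta(a)_1^{\pm1}$ requires end letters. The dual observation shows that the only way to land on the accept configuration via a $\Theta_1$-rule is to apply $\theta(a)_1$. Combined with \Cref{(s) (a) first or last}, this forces a factorization $H\equiv\theta(s)_1\cdot H_0\cdot\theta(a)_1$ in which $H_0$ contains no transition rules. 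The subcomputation carrying $W\cdot\theta(s)_1$ to the natural copy of the accept configuration of $\textbf{M}_{6,1}^\pazocal{L}$ via $H_0$ can then be identified letter-for-letter with an accepting computation of $\textbf{M}_{6,1}^\pazocal{L}$; since the $\theta(s)_1$-admissibility of $W$ forces all its tape letters to lie in the copy of $\pazocal{A}$, and since $f_{\theta(s)_1,j}=\varphi_1$ translates them to the copy of $\pazocal{A}_1$, the hypothesis of \Cref{M_{6,1} language} is met. That lemma then identifies $W\cdot\theta(s)_1$ with the natural copy of $I_6(w)$ for some $w\in\pazocal{L}$, whence $W\equiv I(w)$. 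No serious obstacle is anticipated here; the only delicate point is the bookkeeping that verifies admissibility and matches tape alphabets across $\theta(s)_i$, and this is immediate from the definition of the transition rule.
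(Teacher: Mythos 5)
Your proof is correct and follows essentially the same route as the paper: the history of a one-machine accepting computation is factored as $\theta(s)_iH'\theta(a)_i$ (via \Cref{(s) (a) first or last} together with your observation about which rules of $\Theta_i$ are admissible at start and end configurations), the middle subcomputation is identified with an accepting computation of $\textbf{M}_{6,i}^\pazocal{L}$ so that Lemmas \ref{M_{6,1} language} and \ref{M_{6,2} language} apply, and conversely those lemmas' accepting computations are lifted to $\textbf{M}^\pazocal{L}$ and sandwiched between $\theta(s)_i$ and $\theta(a)_i$. The only difference is that you spell out explicitly the admissibility facts that the paper leaves inside the proof of \Cref{(s) (a) first or last}.
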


\begin{proof}

Given $w\in\pazocal{L}$, $I(w)\cdot\theta(s)_1$ is the copy of $I_6(w)$ over the copy of the hardware of $\textbf{M}_{6,1}^\pazocal{L}$.  But then \Cref{M_{6,1} language} implies there exists a one-machine computation of the first history between $I(w)\cdot\theta(s)_1$ and a configuration which is $\theta(a)_1$-admissible.  Concatenating these rules thus yields a one-machine computation of the first machine accepting $I(w)$.

An analogous argument applying \Cref{M_{6,2} language} to $J(w)\cdot\theta(s)_2$ produces a one-machine computation of the second machine accepting $J(w)$.

%
%

Now, suppose on the other hand that $\pazocal{C}:W\equiv W_0\to\dots\to W_t\equiv W_{ac}$ is a one-machine computation of the $i$-th machine such that $W$ is a start configuration.  Then the makeup of $\Theta$ dictates $W_1\equiv W\cdot\theta(s)_i$ and $W_{t-1}\equiv W_{ac}\cdot\theta(a)_i^{-1}$.  Hence, the subcomputation $W_1\to\dots\to W_{t-1}$ can be identified with a reduced computation of $\textbf{M}_{6,i}^\pazocal{L}$ accepting an input configuration whose tape words are all copies of words over $\pazocal{A}_1$.  But then the statement follows from \Cref{M_{6,1} language} if $i=1$ and \Cref{M_{6,2} language} if $i=2$.
%


\end{proof}

\begin{lemma} \label{one-machine I to I}

Let $\pazocal{C}:W_0\to\dots\to W_t$ be a one-machine computation of the first machine in the standard base. Suppose $W_t$ is a start configuration and $W_0\equiv I(u)$ for some $u\in\pazocal{L}$.  Then there exists $v\in\pazocal{L}$ such that $W_t\equiv I(v)$.

\end{lemma}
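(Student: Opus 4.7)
The argument splits on whether $t=0$. If $t=0$, then $W_t\equiv W_0\equiv I(u)$ and $v:=u$ works. Assume $t\geq 1$. Since $W_0\equiv I(u)$ is a start configuration, the only rule of $\Theta_1$ applicable to it is $\theta(s)_1$: the working rules of $\Theta_1$ require copies of the state letters of $\textbf{M}_{6,1}^\pazocal{L}$, while $\theta(a)_1^{\pm 1}$ requires end letters. The analogous argument, applied to the reverse computation and to the start configuration $W_t$, forces the last rule of $\pazocal{C}$ to be $\theta(s)_1^{-1}$. A single application of $\theta(s)_1$ replaces start letters by copied-start letters, so $t\neq 1$; hence $t\geq 2$ and $H\equiv\theta(s)_1\,H'\,\theta(s)_1^{-1}$ for some reduced $H'\in F(\Theta_1^+)$. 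By \Cref{(s) (a) first or last}, $H'$ cannot contain any occurrence of $\theta(s)_1^{\pm 1}$ or $\theta(a)_1^{\pm 1}$, so $H'$ consists entirely of working rules of the first machine.

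The next step is to let $\pazocal{C}'$ be the subcomputation of $\pazocal{C}$ whose history is $H'$. Since every working rule of $\Theta_1$ is the natural copy of a rule of $\textbf{M}_{6,1}^\pazocal{L}$ and $W_1\equiv I(u)\cdot\theta(s)_1$ is precisely the copy of $I_6(u)$ in the hardware of $\textbf{M}^\pazocal{L}$, the computation $\pazocal{C}'$ can be identified with a reduced computation of $\textbf{M}_{6,1}^\pazocal{L}$ from $I_6(u)$ to a configuration $V_6$ corresponding to $W_{t-1}$. Because $W_{t-1}$ is $\theta(s)_1^{-1}$-admissible, $V_6$ must have copied-start state letters, empty non-input sectors (since $\theta(s)_1$ locks those sectors), and input-sector tape letters drawn from the copy of $\pazocal{A}_1$ (since $X_i(\theta(s)_1^{-1})=Z_i(\theta(s)_1)$ is the image of $\varphi_1$). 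Thus $V_6$ is an input configuration of $\textbf{M}_{6,1}^\pazocal{L}$ all of whose tape letters lie in copies of $\pazocal{A}_1$.

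From here the plan is to descend the tower $\textbf{M}_{6,1}^\pazocal{L}\to\textbf{M}_5^\pazocal{L}\to\textbf{M}_4^\pazocal{L}\to\textbf{M}_3^\pazocal{L}$. Restricting $\pazocal{C}'$ to a single block $\{t(i)\}B_4^{\pazocal{L},1}(i)$ yields, via the bijection between rules of $\textbf{M}_{6,1}^\pazocal{L}$ and rules of $\textbf{M}_5^\pazocal{L}$, a reduced computation of $\textbf{M}_5^\pazocal{L}$ from $I_5(u)$ to an input configuration $V_5$ with tape letters in $\pazocal{A}_1$; discarding the $\{t\}$-coordinate produces a reduced computation of $\textbf{M}_4^\pazocal{L}$ from $I_4(u)$ to some $V_4$. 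Since a rule $\bar{\theta}$ of $\textbf{M}_4^\pazocal{L}$ acts on associated pairs by $(W_1,W_2)\mapsto(W_1\cdot\theta,W_2\cdot\theta)$ and the associated pair of $I_4(u)$ is $(I_3(u),I_3(u))$, induction on the length of the computation shows that the associated pair of $V_4$ is of the form $(V_3,V_3)$ for some input configuration $V_3$ of $\textbf{M}_3^\pazocal{L}$ with tape letters in $\pazocal{A}_1$; hence $V_3\equiv I_3(v)$ for some $v\in F(\pazocal{A})$. Restricting finally to the base $Q_0^\pazocal{L}Q_1^\pazocal{L}\dots Q_N^\pazocal{L}$ gives a reduced computation of $\textbf{M}_3^\pazocal{L}$ from $I_3(u)$ to $I_3(v)$, to which \Cref{M_3 return to start} applies: since $u\in\pazocal{L}$, one obtains $v\in\pazocal{L}$. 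Reassembling up the tower yields $V_6\equiv I_6(v)$, whence $W_t\equiv W_{t-1}\cdot\theta(s)_1^{-1}\equiv I(v)$.

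The main delicate point will be keeping the parallel structure of $\textbf{M}_{6,1}^\pazocal{L}$ and the reflected structure of $\textbf{M}_4^\pazocal{L}$ aligned at each restriction step, and in particular verifying that each restriction preserves reducedness; in every case this will follow because the bijections between the rule sets at adjacent levels of the tower are compatible with inversion, so that distinct rules at the higher level restrict to distinct rules at the lower level.
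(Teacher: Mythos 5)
Your argument is correct, but it takes a genuinely different route from the paper. The paper's proof is a two-line composition argument: since $u\in\pazocal{L}$, Lemma \ref{one-machine language} supplies a one-machine computation $\pazocal{D}_1$ of the first machine accepting $I(u)$; concatenating the inverse of $\pazocal{C}$ with $\pazocal{D}_1$ (history $H^{-1}H_1$, reduced if necessary) gives a one-machine computation of the first machine accepting the start configuration $W_t$, and the ``only if'' direction of Lemma \ref{one-machine language} then forces $W_t\equiv I(v)$ with $v\in\pazocal{L}$. You instead factor $H\equiv\theta(s)_1H'\theta(s)_1^{-1}$ via \Cref{(s) (a) first or last} and descend the tower $\textbf{M}_{6,1}^\pazocal{L}\to\textbf{M}_5^\pazocal{L}\to\textbf{M}_4^\pazocal{L}\to\textbf{M}_3^\pazocal{L}$ by restriction and the associated-pair bookkeeping, finishing with \Cref{M_3 return to start}; this essentially re-proves, in this specific situation, the machinery that \Cref{one-machine language} (whose own proof runs through \Cref{M_{6,1} language} down to \Cref{M_3 language}) already packages. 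All the individual steps check out: only $\theta(s)_1$ is applicable to a start configuration within $\Theta_1$, $\theta(s)_1^{-1}$-admissibility does force empty non-input sectors and $\pazocal{A}_1$-letters in the input sectors, the parallel action of the rules keeps the components of the associated pair identical, and the rule-set bijections preserve reducedness; the trivial subcase $H'$ empty (excluded anyway since $H$ is reduced) would just give $v\equiv u$. What the paper's route buys is brevity and the reuse of the symmetric roles of accepting computations under inversion/concatenation; what your route buys is that it invokes only the ``return to start'' statement \Cref{M_3 return to start} rather than the accepted-language characterization, at the cost of repeating the restriction/extension arguments already abstracted elsewhere in the paper.
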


\begin{proof}

By \Cref{one-machine language}, there exists a one-machine computation of the first machine $\pazocal{D}_1$ accepting $I(u)$.  Letting $H$ and $H_1$ be the histories of $\pazocal{C}$ and $\pazocal{D}_1$, respectively, $H^{-1}H_1$ is freely equal to the history of a one-machine computation of the first machine accepting $W_t$.  The statement thus follows by \Cref{one-machine language}.

%
%
%
%

\end{proof}

The next statement is similarly implied by \Cref{one-machine language}:

\begin{lemma} \label{one-machine J to J}

Let $\pazocal{C}:W_0\to\dots\to W_t$ be a one-machine computation of the second machine in the standard base. Suppose $W_t$ is a start configuration and $W_0\equiv J(u)$ for some $u\in\pazocal{L}$.  Then there exists $v\in\pazocal{L}$ such that $W_t\equiv J(v)$.

\end{lemma}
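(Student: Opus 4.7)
The plan is to mimic exactly the argument given for \Cref{one-machine I to I}, since the only thing that changed is the index of the machine. By \Cref{one-machine language} applied in the ``if $w\in\pazocal{L}$'' direction to the hypothesis $W_0\equiv J(u)$ with $u\in\pazocal{L}$, I can produce a one-machine computation $\pazocal{D}_2$ of the second machine which accepts $W_0$. Writing $H$ for the history of $\pazocal{C}$ and $H_2$ for the history of $\pazocal{D}_2$, the word $H^{-1}H_2$ defines a (possibly unreduced) computation of $\textbf{M}^\pazocal{L}$ starting at $W_t$ and ending at the accept configuration.

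Next I would verify that this concatenated computation, after passing to its reduced form, remains a one-machine computation of the second machine. Since by hypothesis $H,H_2\in F(\Theta_2^+)$, the product $H^{-1}H_2$ lies in $F(\Theta_2^+)$, and free reduction of a word over $\Theta_2^{\pm1}$ yields another word over $\Theta_2^{\pm1}$. Thus the reduced history is an element of $F(\Theta_2^+)$, and the corresponding computation is a one-machine computation of the second machine accepting the start configuration $W_t$.

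Finally, applying \Cref{one-machine language} in the converse direction to the start configuration $W_t$ (which is now accepted by a one-machine computation of the second machine), we obtain $W_t\equiv J(v)$ for some $v\in\pazocal{L}$, which is what is to be proved.

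There is essentially no obstacle: the entire argument is the symmetric counterpart of the proof of \Cref{one-machine I to I}, and the only point requiring any attention is the (routine) observation that the concatenation of two histories over $\Theta_2^{\pm1}$ reduces to a word over $\Theta_2^{\pm1}$, so that the resulting accepting computation really is a one-machine computation of the second machine and \Cref{one-machine language} may indeed be invoked in reverse.
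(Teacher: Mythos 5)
Your proof is correct and is essentially the paper's own argument: the paper proves this lemma exactly as it proves \Cref{one-machine I to I}, by concatenating the inverse of the given history with the history of an accepting one-machine computation of $J(u)$ supplied by \Cref{one-machine language}, and then applying \Cref{one-machine language} again to the start configuration $W_t$. Your extra remark that the reduced concatenated history stays in $F(\Theta_2^+)$ is a routine point the paper leaves implicit.
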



%
%
%


\begin{lemma} \label{first machine J to J}

Let $\pazocal{C}:W_0\to\dots\to W_t$ be a one-machine computation of the first machine in the standard base. Suppose $W_t$ is a start configuration and $W_0\equiv J(u)$ for some $u\in\pazocal{L}$. Then $t=0$.

\end{lemma}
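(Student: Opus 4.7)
The plan is to assume for contradiction that $t\ge 1$ and derive an inconsistency. Since $W_0\equiv J(u)$ carries the start letters of $\textbf{M}^\pazocal{L}$ and $W_t$ is also a start configuration, a state-letter analysis shows that the only first-machine rule applicable to $W_0$ is $\theta(s)_1$ and the only first-machine rule landing on an $\textbf{M}^\pazocal{L}$-start configuration is $\theta(s)_1^{-1}$. Combined with \Cref{(s) (a) first or last}, the history of $\pazocal{C}$ must therefore take the form $\theta(s)_1\, H'\, \theta(s)_1^{-1}$, where $H'$ is a reduced word over the working rules of the first machine; the reducedness of $\pazocal{C}$ forces $H'$ to be non-empty.

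The heart of the argument is showing that $H'$ contains no $\sigma$-copy. For this, I compare the evolution of the content of the special input sector $Q_0^\pazocal{L}(1)Q_1^\pazocal{L}(1)$ (empty in $J(u)\cdot\theta(s)_1$) with that of a non-special one $Q_0^\pazocal{L}(2)Q_1^\pazocal{L}(2)$ (containing $\widetilde\varphi_1(u)$ in $J(u)\cdot\theta(s)_1$). All working rules of $\textbf{M}_{6,1}^\pazocal{L}$ act on the bases $\{t(i)\}B_4^\pazocal{L}(i)$ in parallel, and any rule preceding a potential first $\sigma$-copy must be of the $\textbf{M}_1^\pazocal{A}$ type, updating a $Q_0Q_1$-sector content $c_s$ by $c_{s+1}=\widetilde{f}_{\theta_s,1}(c_s)\cdot u_{1,\theta_s}$ with $u_{1,\theta_s}$ depending only on the rule. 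A direct induction on the quotient $c_s^{\text{non-sp}}(c_s^{\text{sp}})^{-1}$ then gives, as elements of $F(\pazocal{A}_1\sqcup\pazocal{B})$,
\[
c_s^{\text{non-sp}} \;=\; \widetilde{f}_s(\widetilde\varphi_1(u))\cdot c_s^{\text{sp}},\qquad \widetilde{f}_s \defeq \widetilde{f}_{\theta_s,1}\circ\cdots\circ\widetilde{f}_{\theta_1,1}.
\]
But a $\sigma$-copy locks every $Q_0^\pazocal{L}(i)Q_1^\pazocal{L}(i)$-sector, so would force $c_s^{\text{sp}}=c_s^{\text{non-sp}}=1$, whence $\widetilde{f}_s(\widetilde\varphi_1(u))=1$ and, by the injectivity of the isomorphisms $\widetilde{f}_s$, $u=1$. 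This contradicts $u\ne 1$, which is the operative case in the settings where the lemma is used.

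Consequently, $H'$ consists only of working rules corresponding to rules of $\textbf{M}_1^\pazocal{A}$. Restricting the working subcomputation to the base $\{t(1)\}Q_0^\pazocal{L}(1)Q_1^\pazocal{L}(1)Q_2^\pazocal{L}(1)$ and identifying it with a reduced $\textbf{M}_1^\pazocal{A}$-computation in the standard base, the start configuration is $q_0q_1q_2$ (the special sector is empty in $J(u)$ and $\theta(s)_1$ locks the non-input sectors) and the end configuration is $q_0 v q_1 q_2$ with $v\in F(\pazocal{A}_1)$ (required for $\theta(s)_1^{-1}$-admissibility). Then \Cref{M_1 return to start} applied with $u_{\mathrm{L}}=1$ and $v_{\mathrm{L}}=\widetilde\varphi_1^{-1}(v)$ yields $v=1$, and restricting once more to the sub-base $Q_0^\pazocal{A}Q_1^\pazocal{A}$ produces a reduced $\textbf{M}_1^\pazocal{A}$-computation from $q_0q_1$ to $q_0q_1$. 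By \Cref{M_1 shift of 1} this computation is trivial, so the history $H'$ is empty, contradicting its earlier non-emptiness. Hence $t=0$.

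The principal obstacle is the elimination of $\sigma$-copies from $H'$: the first machine does not lock the special input sector, so one must leverage the parallel action of the rules across distinct bases $\{t(i)\}B_4^\pazocal{L}(i)$ together with the injectivity of the generalized-rule maps $\widetilde{f}_{\theta,1}$ to convert the asymmetry between the empty special sector and the non-empty non-special sectors into a rigid algebraic obstruction.
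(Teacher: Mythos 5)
Your argument is correct, and its skeleton coincides with the paper's: both factor the history as $\theta(s)_1H'\theta(s)_1^{-1}$ via \Cref{(s) (a) first or last} (with reducedness forcing $H'$ non-empty and consisting of working rules), and the case in which $H'$ consists only of $\textbf{M}_1^\pazocal{A}$-type rules is handled identically — restrict to coordinate $1$, apply \Cref{M_1 return to start} and then \Cref{M_1 shift of 1} to make $H'$ empty. Where you genuinely diverge is in ruling out a $\sigma$-copy in $H'$. The paper takes the maximal prefix of $H'$ that is a computation of $\textbf{M}_{6,1}^\pazocal{L}(1)$, restricts to each base $Q_0^\pazocal{L}(i)\dots Q_N^\pazocal{L}(i)$, and invokes \Cref{M_3 start to sigma 1} (ultimately the $\pazocal{A}$-projection of \Cref{projection argument}) to conclude that at the $\sigma$-admissible moment the $Q_1^\pazocal{L}(1)Q_2^\pazocal{L}(1)$-sector is empty while each $Q_1^\pazocal{L}(j)Q_2^\pazocal{L}(j)$-sector carries $\widetilde{\varphi}_2(u)$, contradicting the parallel action on those non-special sectors. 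You instead track the input-sector contents directly: every working rule preceding the first $\sigma$-copy updates each $Q_0^\pazocal{L}(i)Q_1^\pazocal{L}(i)$-sector by the same map $c\mapsto\widetilde{f}_{\theta,1}(c)\,u_{1,\theta}$ (legitimate since $\gen{X_1(\theta)}$ is the whole free group for these rules, by \Cref{main free subgroup}), so the discrepancy between the special and a non-special coordinate is exactly $\widetilde{f}_s(\widetilde{\varphi}_1(u))$; since the $\sigma$-copy locks all input sectors, $\sigma$-admissibility forces this element to vanish, and injectivity of the $\widetilde{f}$'s gives $u=1$, the same contradiction with the standing assumption $1\notin\pazocal{L}$ (your hedge that $u\neq1$ is "the operative case" is in fact exactly what the paper assumes and uses at this same point). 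Your route is more self-contained for this step — it bypasses \Cref{M_3 start to sigma 1} and the projection machinery at the cost of a short explicit induction on sector contents — whereas the paper reuses lemmas it needs elsewhere anyway; both are sound.
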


\begin{proof}

Suppose to the contrary that $t>0$.  

\Cref{(s) (a) first or last} then implies that there exists a factorization $H\equiv\theta(s)_1H'\theta(s)_1^{-1}$ of the history of $\pazocal{C}$ such that $H'$ is a non-empty word consisting entirely of working rules of the first machine.  The subcomputation $\pazocal{C}':W_1\to\dots\to W_{t-1}$ with history $H'$ can then be identified with a reduced computation of $\textbf{M}_{6,1}^\pazocal{L}$.

Suppose this is a computation of $\textbf{M}_{6,1}^\pazocal{L}(1)$.  Then, the restriction of $\pazocal{C}'$ to the base $Q_0^\pazocal{L}(1)Q_1^\pazocal{L}(1)Q_2^\pazocal{L}(1)$ can be identified with a reduced computation $\pazocal{D}_1:V_1\to\dots\to V_{t-1}$ of $\textbf{M}_1^\pazocal{A}$ in the standard base.  Since $W_{t-1}$ is $\theta(s)_1^{-1}$-admissible, $V_{t-1}$ must be of the form $q_0\widetilde{\varphi}_1(v)q_1q_2$ for some $v\in F(\pazocal{A})$.  As a result, $\pazocal{D}_1$ satisfies the hypotheses of \Cref{M_1 return to start}, so that $v=1$.  But then the restriction of $\pazocal{D}_1$ to the base $Q_0^\pazocal{A}Q_1^\pazocal{A}$ satisfies the hypotheses of \Cref{M_1 shift of 1}, yielding the contradiction $H'=1$.

Hence, $H'$ has a maximal proper prefix $H_1'$ such that the subcomputation $\pazocal{C}_1':W_1\to\dots\to W_r$ with history $H_1'$ can be identified with a computation of $\textbf{M}_{6,1}^\pazocal{L}(1)$.

For any $i\in\{1,\dots,L\}$, the restriction of $\pazocal{C}_1'$ to the base $Q_0^\pazocal{L}(i)\dots Q_N^\pazocal{L}(i)$ can be identified with a reduced computation $\pazocal{D}_i:U_1^{(i)}\to\dots\to U_r^{(i)}$ of $\textbf{M}_3^\pazocal{L}$ in the standard base.  Then, as $H_1'$ is a proper prefix of $H'$, $U_r^{(i)}$ must be $\sigma$-admissible for each $i$.  By construction, $U_1^{(1)}\equiv q_0q_1q_2\dots q_N$ and $U_1^{(j)}\equiv q_0\widetilde{\varphi}_1(u)q_1q_2\dots q_N$ for each $j\geq2$.  So, \Cref{M_3 start to sigma 1} implies that $U_r^{(1)}\equiv U_1^{(1)}$ and $U_r^{(j)}\equiv q_0q_1\widetilde{\varphi}_2(u)q_2\dots q_N$ for $j\geq2$.

Hence, the configuration $W_r$ has empty $Q_1^\pazocal{L}(1)Q_2^\pazocal{L}(1)$-sector and the corresponding copy of $\widetilde{\varphi}_2(u)$ written in the $Q_1^\pazocal{L}(j)Q_2^\pazocal{L}(j)$-sector for each $2\leq j\leq L$.   But all rules operate in parallel on the $Q_1^\pazocal{L}(i)Q_2^\pazocal{L}(i)$-sectors, so that the condition $u\neq1$ necessitated by $1\notin\pazocal{L}$ produces a contradiction.

%
%

\end{proof}

For any non-empty reduced computation $\pazocal{C}$ of $\textbf{M}^\pazocal{L}$, define $\ell(\pazocal{C})$ to be the number of maximal non-empty one-machine subcomputations of $\pazocal{C}$.  

Further, for any accepted configuration $W$ of $\textbf{M}^\pazocal{L}$, let $A(W)$ be the set of accepting computations of $W$.  If $W\neq W_{ac}$, then define $\ell(W)=\min\{\ell(\pazocal{C})\mid\pazocal{C}\in A(W)\}$.  For completeness, also set $\ell(W_{ac})=0$.

\begin{lemma} \label{ell(W)}

For any accepted configuration $W$ of $\textbf{M}^\pazocal{L}$, $\ell(W)\leq2$.  
\newline 
Moreover, if $\ell(W)=2$, then $W$ is not a start configuration and for any $\pazocal{C}\in A(W)$ with $\ell(\pazocal{C})=2$, there exists a factorization $H\equiv H_1H_2$ of the history of $\pazocal{C}$ such that: 

\begin{enumerate} [label=(\alph*)]

\item $H_i$ is the history of a one-machine computation of the $i$-th machine.

\item $W\cdot H_1\equiv J(w)$ for some $w\in\pazocal{L}$.

\end{enumerate}

\end{lemma}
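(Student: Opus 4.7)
My plan is to take any $\pazocal{C}\in A(W)$ with minimal $\ell(\pazocal{C})$, decompose it as $\pazocal{C}_1\cdots\pazocal{C}_k$ via its maximal one-machine subcomputations (with $\pazocal{C}_j$ a computation of the $i_j$-th machine), and analyze the intermediate configurations $U_j$ reached after each prefix $\pazocal{C}_1\cdots\pazocal{C}_j$. Combining \Cref{multi-machine step history} with \Cref{(s) (a) first or last}, every transition between consecutive subcomputations must occur either at a start configuration or at $W_{ac}$; having $U_j=W_{ac}$ for some $j<k$ would let me truncate $\pazocal{C}$ to a strictly shorter accepting computation, violating the minimality of $\ell(\pazocal{C})$. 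Hence I may assume every intermediate $U_1,\ldots,U_{k-1}$ is a start configuration of $\textbf{M}^\pazocal{L}$.

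Assuming towards a contradiction that $k\geq 3$, I would apply \Cref{one-machine language} to the accepting one-machine computation $\pazocal{C}_k$ starting at the start configuration $U_{k-1}$ to deduce $U_{k-1}\equiv I(w)$ if $i_k=1$ and $U_{k-1}\equiv J(w)$ if $i_k=2$, for some $w\in\pazocal{L}$. The key observation is that $\pazocal{C}_{k-1}$ is then a non-trivial one-machine computation of the opposite machine running between the start configurations $U_{k-2}$ and $U_{k-1}$. When $i_k=1$, $\pazocal{C}_{k-1}$ is a second-machine computation, and every rule of the second machine locks the special input sector, so that sector is empty throughout $\pazocal{C}_{k-1}$; but then $U_{k-1}\equiv I(w)$ with $w\neq 1$ (using condition (R3) that $1\notin\pazocal{S}$ and hence $w\neq 1$) yields a contradiction. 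When $i_k=2$, I would apply \Cref{first machine J to J} to the inverse of $\pazocal{C}_{k-1}$ (a first-machine computation from $J(w)$ to the start configuration $U_{k-2}$) to force $\pazocal{C}_{k-1}$ to be trivial, again a contradiction. Hence $k\leq 2$ and $\ell(W)\leq 2$.

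For the moreover part, I would fix any $\pazocal{C}\in A(W)$ with $\ell(\pazocal{C})=2$ together with the induced factorization $H\equiv H_1H_2$; the assumption $\ell(W)=2$ excludes $U_1=W\cdot H_1$ from being $W_{ac}$ (which would give $\ell(W)\leq 1$), so $U_1$ is a start configuration, and \Cref{one-machine language} applied to $\pazocal{C}_2$ yields $U_1\equiv I(w)$ or $J(w)$. The same special-sector argument as above rules out $i_1=2$ (since a second-machine $\pazocal{C}_1$ cannot terminate at the start configuration $I(w)$ with non-empty special sector), forcing $i_1=1$, $i_2=2$, and $U_1\equiv J(w)$ for some $w\in\pazocal{L}$, establishing (a) and (b). Finally, if $W$ were itself a start configuration, then $\pazocal{C}_1$ would be a first-machine computation running between the start configurations $W$ and $J(w)$; applying \Cref{first machine J to J} to its inverse would force $W\equiv J(w)$, whence $\pazocal{C}_2$ alone would accept $W$, contradicting $\ell(W)=2$. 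The main obstacle throughout is tracking which transition rules are admissible at each boundary configuration and matching each case to the correct lemma from Sections 5--6; once the special-sector lock of the second machine and the rigidity of \Cref{first machine J to J} are in place, all cases close uniformly.
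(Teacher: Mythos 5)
Your proof is correct and follows essentially the same route as the paper: factor the accepting computation into maximal one-machine subcomputations, force every intermediate configuration to be a start configuration (truncating at $W_{ac}$ against minimality of $\ell(\pazocal{C})$), identify the last intermediate configuration as $I(w)$ or $J(w)$ via \Cref{one-machine language}, rule out $I(w)$ using the emptiness of the `special' input sector together with $1\notin\pazocal{L}$, and eliminate any earlier first-machine piece with \Cref{first machine J to J}, which also yields the ``moreover'' claims. The only cosmetic difference is that the paper gets the empty `special' input sector from the boundary configuration being both $\theta(s)_1$- and $\theta(s)_2$-admissible, whereas you get it from the fact that every rule of the second machine locks that sector along the adjacent second-machine subcomputation --- the same observation either way.
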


\begin{proof}

Suppose $\ell(W)\geq2$ and let $\pazocal{C}\in A(W)$ be an accepting computation such that $\ell(\pazocal{C})=\ell(W)$.  Factor the history $H\equiv H_1\dots H_\ell$ of $\pazocal{C}$ such that $\ell=\ell(W)$ and each $H_j$ is the history of a non-empty maximal one-machine subcomputation of $\pazocal{C}$.  

For each $1\leq j\leq\ell$ fix $i(j)\in\{1,2\}$ such that $H_j$ is the history of a one-machine computation of the $i(j)$-th machine.  Note that $i(j)\neq i(j+1)$ for any $1\leq j\leq\ell-1$.

Suppose the last letter of $H_j$ is $\theta(a)_{i(j)}$.  Then, the configuration $W\cdot(H_1\dots H_j)$ must be the accept configuration $W_{ac}$, so that $H_1\dots H_j$ is the history of a reduced computation $\pazocal{D}\in A(W)$ with $\ell(\pazocal{D})=j\leq\ell$.  By the definition of $\ell$, we must then have $j=\ell$.

Hence, \Cref{multi-machine step history} implies that for any $1\leq j<\ell$, the last letter of $H_j$ must be $\theta(s)_{i(j)}^{-1}$.

Let $V_j\equiv W\cdot(H_1\dots H_j)$ for all $1\leq j<\ell$.  Then, $V_j$ is $\theta(s)_{i(j)}$-admissible, and so must be a start configuration.  \Cref{multi-machine step history} then also implies that the first letter of $H_{j+1}$ is $\theta(s)_{i(j+1)}$, {\frenchspacing i.e. $V_j$ must also be} $\theta(s)_{i(j+1)}$-admissible.  As a result, $V_j$ is both $\theta(s)_1$- and $\theta(s)_2$-admissible, and so must have empty `special' input sector.

In particular, $H_\ell$ is the history of a one-machine computation accepting the start configuration $V_{\ell-1}$, and thus by \Cref{one-machine language} there exists $u\in\pazocal{L}$ such that either $V_{\ell-1}\equiv I(u)$ (if $i(\ell)=1$) or $V_{\ell-1}\equiv J(u)$ (if $i(\ell)=2$).  But $V_{\ell-1}$ has empty `special' input sector, and so the assumption $1\notin\pazocal{L}$ implies $V_{\ell-1}\equiv J(u)$ and $i(\ell)=2$.

Assuming $\ell>2$, let $\pazocal{C}_{\ell-1}:V_{\ell-1}\to\dots\to V_{\ell-2}$ be the reduced computation with history $H_{\ell-1}^{-1}$.  Then, $\pazocal{C}_{\ell-1}$ is a one-machine computation of the first machine and $V_{\ell-1}\equiv J(u)$. But then $\pazocal{C}_{\ell-1}$ satisfies the hypotheses of \Cref{first machine J to J}, contradicting the assumption that each $H_j$ is non-empty.

Hence, $\ell(W)=2$.  By analogy, if $W$ is a start configuration, then the reduced computation $V_1\to\dots\to W$ with history $H_1^{-1}$ satisfies the hypotheses of \Cref{first machine J to J}, again contradicting the assumption that $H_1$ is non-empty.  Thus, the statement follows by construction.

\end{proof}

The following is thus a corollary of Lemmas \ref{one-machine language} and \ref{ell(W)}:

\begin{lemma} \label{M language}

A start configuration $W$ is accepted by $\textbf{M}^\pazocal{L}$ if and only if there exists $w\in\pazocal{L}$ such that either $W\equiv I(w)$ or $W\equiv J(w)$.

\end{lemma}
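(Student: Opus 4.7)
The plan is to derive Lemma \ref{M language} almost immediately from the two preceding lemmas, \Cref{one-machine language} and \Cref{ell(W)}, which together already capture all the essential computational analysis.

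For the ``if'' direction, suppose $W \equiv I(w)$ or $W \equiv J(w)$ for some $w \in \pazocal{L}$. Then \Cref{one-machine language} directly supplies a one-machine computation of the first, respectively second, machine accepting $W$; in particular $W$ is accepted by $\textbf{M}^\pazocal{L}$. This direction is entirely routine.

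For the ``only if'' direction, suppose $W$ is a start configuration that is accepted by $\textbf{M}^\pazocal{L}$. First I would observe that $W \not\equiv W_{ac}$, because start letters and end letters belong to disjoint subsets of the parts $Q_j^\pazocal{L}(i)$ and $R_j^\pazocal{L}(i)$, so $W$ cannot simultaneously be a start and an end configuration. Consequently $\ell(W) \geq 1$. By \Cref{ell(W)}, we also have $\ell(W) \leq 2$, and if $\ell(W) = 2$ then $W$ is not a start configuration, contradicting our hypothesis. Therefore $\ell(W) = 1$, meaning there exists an accepting computation $\pazocal{C}$ of $W$ consisting of a single maximal one-machine subcomputation; that is, $\pazocal{C}$ is itself a one-machine computation of either the first or the second machine accepting $W$. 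Applying \Cref{one-machine language} to $\pazocal{C}$ then produces $w \in \pazocal{L}$ with $W \equiv I(w)$ or $W \equiv J(w)$, as required.

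There is no real obstacle here beyond bookkeeping; all the substantive work was done in establishing \Cref{ell(W)} (which rules out multi-machine accepting computations starting from a start configuration) and \Cref{one-machine language} (which classifies start configurations accepted by a single submachine). The only minor point to be careful about is the distinction between start and end configurations so that the case $\ell(W) = 0$ is correctly excluded. Thus the proof can be presented in just a few lines citing these two results.
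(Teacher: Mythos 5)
Your proof is correct and follows exactly the route the paper intends: the paper states Lemma \ref{M language} as a direct corollary of Lemmas \ref{one-machine language} and \ref{ell(W)}, and your argument (including the small check that a start configuration cannot be $W_{ac}$, so $\ell(W)=1$) is just the spelled-out version of that.
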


For any configuration $W$ of $\textbf{M}^\pazocal{L}$ and any $i\in\{1,\dots,L\}$, the \textit{$i$-th component of $W$}, denoted $W(i)$, is the admissible subword of $W$ with base $\{t(i)\}B_4^\pazocal{L}(i)$.

Since the tape alphabet of the $(R_0^\pazocal{L}(i))^{-1}\{t(i+1)\}$-sector is empty for each $i$ (taking $t(L+1)=t(1)$), any configuration is the concatenation of its components, i.e $W\equiv W(1)\dots W(L)$.

\begin{lemma} \label{M component size}

For any accepted configuration $W$ of $\textbf{M}^\pazocal{L}$ satisfying $\ell(W)=1$, $|W(1)|_a\leq|W(j)|_a$ for all $2\leq j\leq L$.

\end{lemma}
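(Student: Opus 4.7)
The plan is to work from an accepting computation of $W$ that is itself a one-machine computation, which exists by the hypothesis $\ell(W)=1$. Fix such a $\pazocal{C}:W\equiv W_0\to\dots\to W_t\equiv W_{ac}$ and let $i\in\{1,2\}$ be the index of the machine it belongs to. I will proceed by reverse induction along $\pazocal{C}$, starting from $W_{ac}$ and propagating an invariant on the relative structure of the components of each intermediate configuration until reaching $W$.

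The base case is $W_{ac}$, all of whose components $W_{ac}(j)$ are identical copies of the same accept admissible word with every sector empty. The invariant to maintain along $\pazocal{C}^{-1}$ is then the following: either \emph{Case $i=1$}, in which all $L$ components of the current configuration are identical copies of one another, or \emph{Case $i=2$}, in which the components $V(j)$ for $j\geq 2$ are identical copies and $V(1)$ agrees with them in every sector except that the special input sector $Q_0^\pazocal{L}(1)Q_1^\pazocal{L}(1)$ of $V(1)$ is empty.

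The inductive step requires unpacking how rules of $\Theta_i$ act on the standard base in light of the constructions in Sections 5.5, 5.6, and 6.1. In Case $i=1$, every rule of $\Theta_1$, whether a transition rule $\theta(s)_1^{\pm 1}$, $\theta(a)_1^{\pm 1}$ or a working rule pulled back from $\textbf{M}_{6,1}^\pazocal{L}$, operates identically on each of the $L$ copies of $\{t(j)\}B_4^\pazocal{L}(j)$ by construction; so if the components of $V$ are identical copies, so are those of $V\cdot\theta^{\pm 1}$. In Case $i=2$, the transition rules again act uniformly across components, while the working rules—copies of rules of $\textbf{M}_{6,2}^\pazocal{L}$—act identically on each component's copy of $\{t(j)\}B_4^\pazocal{L}(j)$ with the sole modification that the rule locks the special input sector of component $1$; crucially, the corresponding sectors $Q_0^\pazocal{L}(j)Q_1^\pazocal{L}(j)$ for $j\geq 2$ are unaffected by this modification and behave as in $\textbf{M}_5^\pazocal{L}$. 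This preserves the Case $i=2$ invariant.

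Applying the invariant to $W$ yields the inequality: in Case $i=1$ all components have equal $a$-length, so $|W(1)|_a=|W(j)|_a$, and in Case $i=2$ the difference $|W(j)|_a-|W(1)|_a$ equals the $a$-length of the tape content of the special input sector of $W(j)$, which is non-negative. Either way, $|W(1)|_a\leq|W(j)|_a$ for every $2\leq j\leq L$. The main subtlety I anticipate is the bookkeeping needed to confirm, in Case $i=2$, that locking the special input sector in the first component has no side effect on the analogous sectors of the other components: this requires tracing how $\textbf{M}_{6,2}^\pazocal{L}$ was built from $\textbf{M}_5^\pazocal{L}$ by parallel copying and then verifying that the modification described in Section 5.6 is entirely local to the copy indexed by $i=1$.
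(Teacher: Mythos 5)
Your proposal is correct and follows essentially the same route as the paper: both arguments fix a one-machine accepting computation (which exists since $\ell(W)=1$) and exploit the parallel action of the rules on the components, with the only asymmetry being that rules of the second machine lock the `special' input sector, so that $|W(1)|_a=|W(j)|_a$ in the first case and $|W(1)|_a=|W(j)|_a-\|u\|$ for the common tape word $u$ of the non-special input sectors in the second. Your explicit reverse induction from $W_{ac}$ is just a more formal phrasing of the paper's observation that this parallel structure propagates along the computation.
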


\begin{proof}

By hypothesis, there exists a non-empty computation $\pazocal{C}:W\equiv W_0\to\dots\to W_t\equiv W_{ac}$ with $\pazocal{C}\in A(W)$ and $\ell(\pazocal{C})=1$.
If $\pazocal{C}$ is a one-machine computation of the first machine, then the parallel nature of the rules implies $W(1)$ is a copy of $W(j)$ for each $2\leq j\leq L$.  Otherwise, $W(1)$ is is the copy of any $W(j)$ obtained by emptying the `special' input sector.

%
%

\end{proof}

\medskip


\subsection{Extending computations} \label{sec-almost-extendable} \

For simplicity, for any $w\in F(\pazocal{A})$ and $1\leq i\leq L$, we adopt the notation $I(w,i)\equiv(I(w))(i)$ and $J(w,i)\equiv(J(w))(i)$.  

Given an admissible word $V$ whose base consists entirely of letters with coordinate $i$, a \textit{coordinate shift} $V'$ of $V$ is an admissible word obtained from $V$ by changing each of the state letters' coordinates to some index $j\in\{1,\dots,L\}$ and taking the corresponding copies of the tape words. 

For example, for any $w\in F(\pazocal{A})\setminus\{1\}$, $J(w,i)$ and $J(w,j)$ are coordinate shifts of one another for $i,j\geq2$, but not of $J(w,1)$.

The next statement follows in just the same way as its analogue in \cite{W}:


\begin{lemma}[Lemma 5.9 in \cite{W}] \label{extend one-machine}

Let $V_0\to\dots\to V_t$ be a one-machine computation of the $i$-th machine with history $H$ and base $\{t(j)\}B_4^\pazocal{L}(j)$ for some $j\in\{2,\dots,L\}$. Then there exists a one-machine computation of the $i$-th machine $W_0\to\dots\to W_t$ in the standard base with history $H$ such that $W_\ell(j)\equiv V_\ell$ for all $\ell\in\{0,\dots,t\}$.  Moreover:

\begin{enumerate} [label=(\alph*)]

\item If $V_\ell\equiv W_{ac}(j)$, then $W_\ell\equiv W_{ac}$

\item If $V_\ell\equiv I(w,j)$ for some $w\in F(\pazocal{A})$, then

\begin{itemize}

\item $W_\ell\equiv I(w)$ if $i=1$, or

\item $W_\ell\equiv J(w)$ if $i=2$.

\end{itemize}

\end{enumerate}

\end{lemma}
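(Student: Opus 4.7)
The plan is to extend $V_\ell$ to $W_\ell$ component-by-component via coordinate shifts, exploiting the parallel nature of the rules of $\textbf{M}^\pazocal{L}$ across the $L$ subbases $\{t(k)\}B_4^\pazocal{L}(k)$. Specifically, for every $\ell\in\{0,\dots,t\}$ and every $k\in\{2,\dots,L\}$, I would define $W_\ell(k)$ to be the coordinate shift of $V_\ell$ from coordinate $j$ to coordinate $k$; in particular, $W_\ell(j)\equiv V_\ell$. For $k=1$, define $W_\ell(1)$ to be the coordinate shift of $V_\ell$ to coordinate $1$ if $i=1$, and to be the coordinate shift with its $Q_0^\pazocal{L}(1)Q_1^\pazocal{L}(1)$-sector emptied if $i=2$. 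Setting $W_\ell\equiv W_\ell(1)W_\ell(2)\dots W_\ell(L)$ then produces a candidate word in the standard base.

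The core verification is that each $W_\ell$ is genuinely admissible and that the $(\ell+1)$-st letter $\theta_{\ell+1}$ of $H$ carries $W_\ell$ to $W_{\ell+1}$. Admissibility follows sector-by-sector from the admissibility of $V_\ell$: each coordinate shift preserves the sector structure of $V_\ell$, and emptying the special input sector in the $i=2$ case is consistent with that sector being locked by every rule of $\Theta_2$. The rule application identity $W_\ell\cdot\theta_{\ell+1}\equiv W_{\ell+1}$ is then immediate from the construction of $\textbf{M}_{6,1}^\pazocal{L}$, $\textbf{M}_{6,2}^\pazocal{L}$, and the transition rules $\theta(s)_i^{\pm1},\theta(a)_i^{\pm1}$: each rule acts in parallel on the $L$ subbases, modifying state letters identically and modifying tape letters via the corresponding copies in each coordinate. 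Thus applying $\theta_{\ell+1}$ on the $k$-th component of $W_\ell$ produces the coordinate shift of $V_\ell\cdot\theta_{\ell+1}\equiv V_{\ell+1}$, while in the $i=2$ case the locked sector in $W_\ell(1)$ is simply preserved unchanged.

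Claims (a) and (b) then follow by unpacking the construction. For (a), if $V_\ell\equiv W_{ac}(j)$, then every coordinate shift equals $W_{ac}(k)$ for the appropriate $k$, so $W_\ell\equiv W_{ac}$. For (b), if $V_\ell\equiv I(w,j)$, then $W_\ell(k)\equiv I(w,k)$ for $k\geq 2$; when $i=1$ one also has $W_\ell(1)\equiv I(w,1)$, producing $W_\ell\equiv I(w)$, while when $i=2$ the component $W_\ell(1)$ is $I(w,1)$ with the special input sector emptied, which is precisely $J(w,1)$, producing $W_\ell\equiv J(w)$. The single delicate point---and the main obstacle---is the $i=2$ case: one must confirm that emptying the special input sector of the first component throughout the computation is consistent with the rules, which works precisely because every rule of $\Theta_2$ (working rule, transition rule $\theta(s)_2^{\pm1}$, and $\theta(a)_2^{\pm1}$) locks this sector and therefore neither requires nor alters its content.
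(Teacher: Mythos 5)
Your proposal is correct and follows essentially the same route as the paper: define each component $W_\ell(k)$ as the coordinate shift of $V_\ell$ (with the special input sector emptied in the first component when $i=2$), concatenate, and invoke the parallel action of the rules to get $W_{\ell-1}\cdot\theta_\ell\equiv W_\ell$, with (a) and (b) read off from the construction. The additional admissibility checks you spell out are exactly the points the paper leaves implicit.
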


%
%
%
%
%
%
%

The next three statements follow as corollaries of \Cref{extend one-machine}, using Lemmas \ref{one-machine language}, \ref{one-machine I to I}, and \ref{one-machine J to J}:

\begin{lemma} \label{one-machine projected end to end}

Let $\pazocal{C}:V_0\to\dots\to V_t$ be a one-machine computation of the $i$-th machine with history $H$ and base $\{t(j)\}B_4^\pazocal{L}(j)$ for some $j\in\{2,\dots,L\}$. Suppose $V_0\equiv V_t\equiv W_{ac}(j)$. Then $W_{ac}$ is $H$-admissible and $W_{ac}\cdot H\equiv W_{ac}$.

\end{lemma}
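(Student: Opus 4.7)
The plan is to derive this as an immediate corollary of \Cref{extend one-machine}, which was tailor-made for exactly this kind of lifting argument. The hypotheses of that lemma are satisfied by $\pazocal{C}$: it is a one-machine computation of the $i$-th machine with history $H$ and base $\{t(j)\}B_4^\pazocal{L}(j)$ for $j \geq 2$. So I would invoke \Cref{extend one-machine} to obtain a one-machine computation $W_0 \to \dots \to W_t$ of the $i$-th machine in the standard base, with the same history $H$, satisfying $W_\ell(j) \equiv V_\ell$ for every $\ell \in \{0,\dots,t\}$.

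The next step is to read off the extra structure guaranteed by clause (a) of \Cref{extend one-machine}. Since $V_0 \equiv W_{ac}(j)$, clause (a) forces $W_0 \equiv W_{ac}$. But the existence of the computation $W_0 \to \dots \to W_t$ with history $H$ is precisely the statement that $W_0$ is $H$-admissible with $W_0 \cdot H \equiv W_t$; substituting $W_0 \equiv W_{ac}$ yields that $W_{ac}$ is $H$-admissible with $W_{ac} \cdot H \equiv W_t$. Applying clause (a) a second time to the hypothesis $V_t \equiv W_{ac}(j)$ gives $W_t \equiv W_{ac}$, and combining these two identities delivers $W_{ac} \cdot H \equiv W_{ac}$, as required.

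In short, this is a one-step application of the preceding extension lemma, so there is no genuine obstacle to overcome; the only thing to verify is that the hypotheses of \Cref{extend one-machine} are met (which is immediate) and that clause (a) applies at both endpoints of the computation.
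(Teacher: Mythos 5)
Your proposal is correct and matches the paper's own proof: the paper likewise applies \Cref{extend one-machine} to lift $\pazocal{C}$ to a standard-base computation with $W_\ell(j)\equiv V_\ell$ and uses clause (a) at both endpoints to conclude $W_0\equiv W_t\equiv W_{ac}$, whence $W_{ac}$ is $H$-admissible with $W_{ac}\cdot H\equiv W_{ac}$. No gaps; this is the same one-step argument.
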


%
%

\begin{lemma} \label{one-machine projected start to end}

Let $\pazocal{C}:V_0\to\dots\to V_t$ be a one-machine computation of the $i$-th machine with history $H$ and base $\{t(j)\}B_4^\pazocal{L}(j)$ for some $j\in\{2,\dots,L\}$. Suppose $V_0$ is a start configuration and $V_t\equiv W_{ac}(j)$. Then there exists $u\in\pazocal{L}$ such that $V_0\equiv I(u,j)$.
\newline
Moreover, if $i=1$, then $I(u)$ is $H$-admissible with $I(u)\cdot H\equiv W_{ac}$; and if $i=2$, then $J(u)$ is $H$-admissible with $J(u)\cdot H\equiv W_{ac}$.

\end{lemma}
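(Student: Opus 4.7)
The plan is to reduce the statement directly to Lemma \ref{one-machine language} via the extension procedure of Lemma \ref{extend one-machine}. The setup is ideal for this: we have a one-machine computation living in a single non-special component (since $j\geq 2$), and we want to conclude that its initial configuration, as a start configuration locally, must be the projection of one of the globally characterized inputs $I(u)$ or $J(u)$.

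First, I would invoke Lemma \ref{extend one-machine} to produce a one-machine computation of the $i$-th machine $\pazocal{C}':W_0\to\dots\to W_t$ in the standard base with the same history $H$ such that $W_\ell(j)\equiv V_\ell$ for each $\ell\in\{0,\dots,t\}$. Since $V_t\equiv W_{ac}(j)$, part (a) of Lemma \ref{extend one-machine} gives $W_t\equiv W_{ac}$, so $\pazocal{C}'$ is an accepting one-machine computation of the $i$-th machine. Moreover, because $V_0$ is a start configuration (its state letters are the start letters of their respective parts), the construction in the proof of Lemma \ref{extend one-machine}, which simply takes coordinate shifts in each other component, forces $W_0$ to be a start configuration too.

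Next, I would apply Lemma \ref{one-machine language} to the accepting one-machine computation $\pazocal{C}'$ starting at the start configuration $W_0$. This yields some $u\in\pazocal{L}$ with $W_0\equiv I(u)$ if $i=1$ and $W_0\equiv J(u)$ if $i=2$. Projecting back to the $j$-th component and using $j\geq 2$ (so that $I(u,j)\equiv J(u,j)$, as these configurations only differ in the special input sector, which is the $1$-st component), we conclude $V_0\equiv W_0(j)\equiv I(u,j)$ in either case.

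The moreover clause is then immediate: by construction of $\pazocal{C}'$, if $i=1$ then $W_0\equiv I(u)$ is $H$-admissible with $I(u)\cdot H\equiv W_t\equiv W_{ac}$; and if $i=2$ then $W_0\equiv J(u)$ is $H$-admissible with $J(u)\cdot H\equiv W_t\equiv W_{ac}$. There is no real obstacle here — the entire content lies in the prior lemmas — the only subtlety to double-check is that $V_0$ being a start configuration lifts to $W_0$ being a start configuration across both cases $i=1,2$, and that the identification $I(u,j)\equiv J(u,j)$ for $j\geq 2$ is used to unify the two possible outputs of Lemma \ref{one-machine language} into the single statement $V_0\equiv I(u,j)$.
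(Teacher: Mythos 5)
Your proposal is correct and follows essentially the same route as the paper's proof: extend via Lemma \ref{extend one-machine} to a standard-base one-machine computation accepting $W_0$, note $W_0$ is a start configuration by construction, apply Lemma \ref{one-machine language}, and unify the two cases using $I(u,j)\equiv J(u,j)$ for $j\geq2$. No gaps.
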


%
%

\begin{lemma} \label{one-machine projected start to start}

Let $\pazocal{C}:V_0\to\dots\to V_t$ be a one-machine computation of the $i$-th machine with history $H$ and base $\{t(j)\}B_4^\pazocal{L}(j)$ for some $j\in\{2,\dots,L\}$. Suppose $V_0\equiv I(u,j)$ for some $u\in\pazocal{L}$ and $V_t$ is an admissible subword of a start configuration. Then there exists $v\in\pazocal{L}$ such that $V_t\equiv I(v,j)$.  
\newline
Moreover, if $i=1$, then $I(u)$ is $H$-admissible with $I(u)\cdot H\equiv I(v)$; and if $i=2$, then $J(u)$ is $H$-admissible with $J(u)\cdot H\equiv J(v)$.

\end{lemma}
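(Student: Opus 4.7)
The plan is to apply \Cref{extend one-machine} to lift the projected computation $\pazocal{C}$ to a one-machine computation $\pazocal{C}':W_0\to\dots\to W_t$ of the $i$-th machine in the standard base, with the same history $H$ and satisfying $W_\ell(j)\equiv V_\ell$ for each $\ell$. By clause (b) of that lemma applied to $V_0\equiv I(u,j)$, we obtain $W_0\equiv I(u)$ when $i=1$ and $W_0\equiv J(u)$ when $i=2$. In either case, $I(u)$ (respectively $J(u)$) is $H$-admissible and $I(u)\cdot H\equiv W_t$ (respectively $J(u)\cdot H\equiv W_t$), so that it only remains to identify $W_t$.

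The next step is to verify that $W_t$ is itself a start configuration. For this, recall that in the construction of $W_\ell$ given in \Cref{extend one-machine}, each component $W_\ell(k)$ with $k\neq j$ is obtained either directly as a coordinate shift of $V_\ell$ (when $i=1$, or when $i=2$ and $k\neq1$), or as the coordinate shift modified by emptying the special input sector (when $i=2$ and $k=1$). In particular, the state letters of each $W_\ell(k)$ are, part by part, the copies of the state letters of $V_\ell$. Thus, since $V_t$ is an admissible subword of a start configuration, every state letter of $V_t$ is a start letter, and so every state letter of $W_t$ is a start letter as well. Hence $W_t$ is a start configuration.

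Now $\pazocal{C}'$ is a one-machine computation of the $i$-th machine in the standard base whose first configuration is $I(u)$ or $J(u)$ (according to $i$) and whose last configuration is a start configuration. Applying \Cref{one-machine I to I} in the case $i=1$, or \Cref{one-machine J to J} in the case $i=2$, yields some $v\in\pazocal{L}$ such that $W_t\equiv I(v)$ when $i=1$ and $W_t\equiv J(v)$ when $i=2$. Projecting onto the $j$-th component and using that $I(v,j)\equiv J(v,j)$ for every $j\in\{2,\dots,L\}$, we conclude $V_t\equiv W_t(j)\equiv I(v,j)$, and the ``moreover'' clause follows from the $H$-admissibility established above.

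I do not expect a serious obstacle here: the argument is entirely parallel to the proofs of \Cref{one-machine projected end to end} and \Cref{one-machine projected start to end}, reducing a statement about projected computations to the corresponding statement in the standard base via the lifting provided by \Cref{extend one-machine}. The only mild subtlety is making sure that the lifted terminal configuration $W_t$ is actually a start configuration (rather than merely having an admissible start-like subword in the $j$-th coordinate), which is handled by the coordinate-shift description of $W_t$ in \Cref{extend one-machine}.
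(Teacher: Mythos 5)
Your proof is correct and follows the same route as the paper: lift via \Cref{extend one-machine}, note that by construction the lifted terminal configuration $W_t$ is a start configuration, and conclude with \Cref{one-machine I to I} (case $i=1$) or \Cref{one-machine J to J} (case $i=2$). The extra care you take in justifying that $W_t$ is a start configuration is exactly the point the paper dispatches with ``by construction.''
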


%
%
%

The following statement is thus a direct consequence of Lemmas \ref{one-machine projected end to end}, \ref{one-machine projected start to end}, and \ref{one-machine projected start to start}:

\begin{lemma} \label{projected end to end}

Let $j\in\{2,\dots,L\}$ and suppose $\pazocal{C}:W_{ac}(j)\to\dots\to W_{ac}(j)$ is a reduced computation. Let $H\equiv H_1\dots H_k$ be the factorization of the history of $\pazocal{C}$ such that for all $i\in\{1,\dots,k\}$, $H_i$ is the history of a maximal one-machine subcomputation of the $z_i$-th machine $\pazocal{C}_i:U_i\to\dots\to V_i$ of $\pazocal{C}$. Then for all $i$, either:

\begin{enumerate} [label=({\alph*})]

\item $V_i\equiv W_{ac}(j)$ or

\item $V_i\equiv I(w_i,j)$ for some $w_i\in\pazocal{L}$.

\end{enumerate}

In case (a), set $W_i^{(1)}\equiv W_i^{(2)}\equiv W_{ac}$; in case (b), set $W_i^{(1)}\equiv I(w_i)$ and $W_i^{(2)}\equiv J(w_i)$. Further, set $W_0^{(1)}\equiv W_0^{(2)}\equiv W_{ac}$.
\newline
Then for each $i\in\{1,\dots,k\}$, there exists a reduced computation $\pazocal{C}_i':W_{i-1}^{(z_i)}\to\dots\to W_i^{(z_i)}$ in the standard base with history $H_i$.

\end{lemma}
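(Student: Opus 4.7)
The plan is to separate the statement into two tasks: first, establish the dichotomy that each $V_i$ equals either $W_{ac}(j)$ or $I(w_i,j)$ for some $w_i\in\pazocal{L}$; second, for each $i$, lift the projected subcomputation $\pazocal{C}_i$ to a reduced computation in the standard base starting at $W_{i-1}^{(z_i)}$ and ending at $W_i^{(z_i)}$. Both tasks reduce directly to the extension Lemmas \ref{one-machine projected end to end}, \ref{one-machine projected start to end}, and \ref{one-machine projected start to start}.

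For the dichotomy, the case $i=k$ is the hypothesis $V_k\equiv W_{ac}(j)$. For $i<k$, since $\pazocal{C}_i$ and $\pazocal{C}_{i+1}$ are maximal one-machine subcomputations of opposite machines meeting at $V_i$, the same analysis that underlies Lemma \ref{multi-machine step history}, applied in the projected setting, forces $V_i$ to be either a projected end configuration---hence $W_{ac}(j)$, as $W_{ac}$ is the unique end configuration of $\textbf{M}^\pazocal{L}$---or a projected start configuration. In the latter case, depending on whether $U_i\equiv W_{ac}(j)$ or $U_i\equiv I(w_{i-1},j)$, Lemma \ref{one-machine projected start to end} applied to $\pazocal{C}_i^{-1}$ or Lemma \ref{one-machine projected start to start} applied to $\pazocal{C}_i$ pins down $V_i$ as $I(w_i,j)$ for some $w_i\in\pazocal{L}$.

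With the dichotomy in hand, I would construct each $\pazocal{C}_i'$ by a four-way case split on the pair $(U_i,V_i)$. When both equal $W_{ac}(j)$, Lemma \ref{one-machine projected end to end} lifts $\pazocal{C}_i$ to a standard-base computation $W_{ac}\to\dots\to W_{ac}$ with history $H_i$. When $U_i\equiv W_{ac}(j)$ and $V_i\equiv I(w_i,j)$, applying Lemma \ref{one-machine projected start to end} to the reverse computation $\pazocal{C}_i^{-1}$ produces a standard-base lift $W_i^{(z_i)}\to\dots\to W_{ac}$ with history $H_i^{-1}$, which inverts to the desired $\pazocal{C}_i'$. When $U_i\equiv I(w_{i-1},j)$ and $V_i\equiv W_{ac}(j)$, Lemma \ref{one-machine projected start to end} applied to $\pazocal{C}_i$ directly yields the lift $W_{i-1}^{(z_i)}\to\dots\to W_{ac}$. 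Finally, when both $U_i$ and $V_i$ have the form $I(\cdot,j)$, Lemma \ref{one-machine projected start to start} gives the lift $W_{i-1}^{(z_i)}\to\dots\to W_i^{(z_i)}$. In every case the constructed $\pazocal{C}_i'$ has history $H_i$, which is reduced as a factor of the history of the reduced computation $\pazocal{C}$, so $\pazocal{C}_i'$ is itself reduced.

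The main obstacle is verifying the dichotomy for $V_i$: the three extension lemmas presuppose that the projected one-machine computations land on very specific configurations, so the argument would stall if $V_i$ could be some other start configuration not of the form $I(w_i,j)$. What saves the argument is that the maximality of each one-machine piece $\pazocal{C}_i$ combined with the restricted form of the transition rules of $\textbf{M}^\pazocal{L}$---captured by the standard-base analogue of Lemma \ref{multi-machine step history}---guarantees that the intermediate $V_i$ can only be a start or end configuration, and in the projected picture these are exactly $I(w_i,j)$ or $W_{ac}(j)$.
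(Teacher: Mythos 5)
Your proof is correct and follows essentially the same route as the paper, which presents the lemma as a direct consequence of Lemmas \ref{one-machine projected end to end}, \ref{one-machine projected start to end}, and \ref{one-machine projected start to start}; your junction dichotomy plus the four-way case split is exactly the chaining the paper intends, merely made explicit. One small wording fix: $W_{ac}$ is not the unique end configuration of $\textbf{M}^\pazocal{L}$, but at a junction between the two machines the rules involved are $\theta(a)_{z_i}$ and $\theta(a)_{z_{i+1}}^{-1}$, which lock all sectors, so $\theta(a)_i^{-1}$-admissibility forces $V_i\equiv W_{ac}(j)$, as in the proof of Lemma \ref{(s) (a) first or last}.
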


In other words, Lemma \ref{projected end to end} says that $\pazocal{C}$ can be `\textit{almost-extended}' to a reduced computation $W_{ac}\to\dots\to W_{ac}$, in that such a computation exists if one were to allow the insertion/deletion of elements of $\pazocal{L}$ in the `special' input sector between maximal one-machine subcomputations.

\medskip

\subsection{Accepted configurations with $\theta$-admissible components} \

The goal of this section is to study the situation where $W$ is an accepted configuration such that $W(j)$ is $\theta$-admissible for some $\theta\in\Theta$ and $j\in\{2,\dots,L\}$.  This situation becomes particularly relevant when $\ell(W)\leq1$, we it will help with the process of `transposing' a disk about a $\theta$-band in the diagrams over the groups associated to $\textbf{M}^\pazocal{L}$ (see \Cref{sec-transposition}).

\begin{lemma} \label{one-machine equal configurations}

Let $W$ and $W'$ be configurations of $\textbf{M}^\pazocal{L}$ which are both accepted by one-machine computations of the $i$-th machine.  If $W(j)\equiv W'(j)$ for some $2\leq j \leq L$, then $W\equiv W'$.


\end{lemma}

\begin{proof}

Suppose $i=1$.  For each $1\leq k\leq L$, note that $W_{ac}(k)$ is a coordinate shift of $W_{ac}(j)$.  So, by the parallel nature of the rules of $\Theta_1$, $W(k)$ and $W'(k)$ are coordinate shifts of $W(j)$ and $W'(j)$, respectively.  But $W(j)\equiv W'(j)$ then implies $W(k)\equiv W'(k)$, so that $W\equiv W'$.

If $i=2$, then the same argument applies to all $2\leq k\leq L$, and even can be adapted for all sectors of $W(1)$ and $W'(1)$ besides the `special' input sector.  But every rule of $\Theta_2$ locks the `special' input sector, so that the tape word in $W(1)$ and $W'(1)$ in this sector must be empty.  Hence, $W(1)\equiv W'(1)$, meaning $W\equiv W'$.

\end{proof}

The next two statements show that for an accepted configuration $W$ with $\ell(W)\leq1$, if $W(j)$ is $\theta$-admissible for some $j\geq2$, then $W$ is $\theta$-admissible with $\ell(W\cdot\theta)\leq1$ except in two particular circumstances.  In the setting of \Cref{sec-transposition}, these conditions allow us to transpose $\theta$-bands about disks, as even in the two particular circumstances the impediment is a difference of an $a$-relation.

\begin{lemma} \label{transposition computation not applicable}

Let $W$ be an accepted configuration of $\textbf{M}^\pazocal{L}$ with $\ell(W)\leq 1$ and $\theta\in\Theta$.  Suppose $W(j)$ is $\theta$-admissible for some $j\in\{2,\dots,L\}$, but $W$ is not $\theta$-admissible.  Then $\theta=\theta(s)_2$ and $W\equiv I(w)$ for some $w\in\pazocal{L}$.

\end{lemma}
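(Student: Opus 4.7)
My plan is to identify the special input sector $Q_0^\pazocal{L}(1)Q_1^\pazocal{L}(1)$ as the only potential source of a discrepancy between the $\theta$-admissibility of $W$ and that of $W(j)$, and then to show that only $\theta=\theta(s)_2$ together with $W\equiv I(w)$ can realize such a discrepancy.

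The first step will be to describe the structure of accepted configurations with $\ell(W)\leq 1$ by exploiting the parallel nature of the rules. Since all rules of $\Theta_1$ operate identically across every block $\{t(k)\}B_4^\pazocal{L}(k)$ and $W_{ac}$ has its components as coordinate shifts of one another, a backward induction along a one-machine accepting computation of the first machine shows that every component $W(k)$ is a coordinate shift of $W(j)$. The rules of $\Theta_2$ behave analogously, except that each of them locks the special input sector; so if $W$ is accepted by a one-machine computation of the second machine, the components $W(k)$ for $k\geq 2$ remain coordinate shifts of $W(j)$, the non-special sectors of $W(1)$ are coordinate shifts of the corresponding sectors of $W(j)$, and the special input sector of $W(1)$ is empty. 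The case $\ell(W)=0$ (that is, $W\equiv W_{ac}$) is trivial and falls into either description.

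The second step will be to use this structural picture to localize the discrepancy. Since the state letters of $W$ are coordinate shifts of those of $W(j)$, and since the non-special sectors of $W(1)$, together with every sector of $W(k)$ for $k\geq 2$, are coordinate shifts of sectors of $W(j)$, the admissibility conditions in all of these sectors match those of $W(j)$ exactly. Hence the discrepancy can only live in the special input sector of $W(1)$. In the case that $W$ is accepted by a one-machine computation of the second machine (and in particular when $\ell(W)=0$), this sector is empty and therefore trivially admissible for every rule, ruling these cases out.

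Thus $W$ must be accepted by a one-machine computation of the first machine, and the content of the special input sector of $W(1)$ coincides, via coordinate shift, with the content of the $j$-th input sector of $W(j)$. A discrepancy then requires $\theta$ to treat these two sectors differently; by the construction of $\textbf{M}^\pazocal{L}$, the only such rules are $\theta(s)_2$ and the working rules of $\Theta_2$, each of which locks the special input sector while operating on the other input sectors. Working rules of $\Theta_2$ require state letters that are copies of intermediate letters of $\textbf{M}_{6,2}^\pazocal{L}$, which are incompatible with the state letters of $W$ in this case (namely, start or end letters of $\textbf{M}^\pazocal{L}$, or copies of intermediate letters of $\textbf{M}_{6,1}^\pazocal{L}$); this leaves $\theta=\theta(s)_2$. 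Admissibility of $\theta(s)_2$ on $W(j)$ then forces $W$ to be a start configuration, and \Cref{one-machine language} identifies the start configurations accepted by one-machine computations of the first machine as $\{I(w):w\in\pazocal{L}\}$; the assumption $1\notin\pazocal{L}$ guarantees that the content of the special input sector is genuinely non-empty, so that $W$ indeed fails to be $\theta(s)_2$-admissible. The principal obstacle is the first step---the careful justification that the parallel nature of the rules passes from $W_{ac}$ back to $W$ along any accepting history---together with the precise cataloging of which rules treat the special input sector differently from the other input sectors; the rest is a direct unwinding of the admissibility definitions.
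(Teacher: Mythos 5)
Your argument is correct, but it is organized differently from the paper's proof. The paper splits on whether $\theta$ belongs to the same set $\Theta_i$ as an accepting one-machine computation of $W$: when it does, it applies \Cref{extend one-machine} twice (to the one-rule computation $W(j)\to W(j)\cdot\theta$ and to the restriction of the accepting computation to the base $\{t(j)\}B_4^\pazocal{L}(j)$) and then \Cref{one-machine equal configurations} to conclude that $W$ itself would be $\theta$-admissible, contradicting the hypothesis; when $\theta$ lies in the other set, $W(j)$ is admissible for rules of both machines, forcing $W$ to be a start configuration, after which \Cref{M language} and \Cref{one-machine language} pin down $\theta=\theta(s)_2$ and $W\equiv I(w)$. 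You instead re-derive, by backward induction along the accepting history, the coordinate-shift structure of the components of $W$ (essentially the content of \Cref{one-machine equal configurations}), localize any possible admissibility discrepancy to the `special' input sector, and then catalog the rules that lock that sector while acting on the $j$-th input sector. This makes more transparent \emph{why} the only possible discrepancy is $\theta(s)_2$ applied to $I(w)$, at the cost of reproving parallelism facts the paper simply quotes as lemmas. One small patch: your catalog should also include the transition rule $\theta(s)_2^{-1}$, which is not a working rule of $\Theta_2$ but likewise locks the special input sector while acting on the other input sectors; it is excluded by exactly the state-letter incompatibility you invoke for the working rules, since it requires state letters from the parts $Q_j^{\pazocal{L},2}(i)$, which cannot occur in a configuration admissible for a rule of $\Theta_1$.
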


\begin{proof}

Note that $W_{ac}(j)$ is $\theta$-admissible only for $\theta=\theta(a)_i^{-1}$, in which case $W_{ac}$ is $\theta$-admissible.  Hence, it suffices to assume $\ell(W)=1$.

Let $\pazocal{C}\in A(W)$ such that $\ell(\pazocal{C})=1$ and fix $i\in\{1,2\}$ such that $\pazocal{C}$ is a one-machine computation of the $i$-th machine.  Let $H\equiv\theta_1\dots\theta_t\in F(\Theta_i^+)$ be the history of $\pazocal{C}$.  Then, $\theta^{-1}H$ is freely equal to the history $H_1$ of a reduced computation $\pazocal{D}$ between $W(j)\cdot\theta$ and $W_{ac}(j)$.  

If $\theta\in\Theta_i$, then $\pazocal{D}$ is a one-machine computation of the $i$-th machine, and so \Cref{extend one-machine} may be applied to $\pazocal{D}$, producing a one-machine computation $\pazocal{D}'$ of the $i$-th machine with history $H_1$ accepting a configuration $V$ which satisfies $V(j)\equiv W(j)\cdot\theta$.  In particular, $H$ is the history of a one-machine computation $\pazocal{C}'$ of the $i$-th machine which accepts the configuration $W'\equiv V\cdot\theta^{-1}$.  

But then $W'(j)\equiv W(j)$ and each of $W$ and $W'$ are accepted by one-machine computations of the $i$-th machine, so that \Cref{one-machine equal configurations} implies $W\equiv W'$.
%
%
%
Hence, it suffices to assume $\theta\notin\Theta_i$.

In this case, $W(j)$ must be both $\theta_1$-admissible and $\theta$-admissible, {\frenchspacing i.e. it is admissible} for rules of both machines.  Hence, $W$ must either be a start or an end configuration.  As the only accepted end configuration is $W_{ac}$, though, $W$ must be a start configuration.  By \Cref{M language}, there then exists $w\in\pazocal{L}$ such that $W\equiv I(w)$ or $W\equiv J(w)$.

In either case, $W(j)\equiv I(w,j)$, and so $\theta=\theta(s)_k$ for $k\neq i$.  But $J(w)$ is both $\theta(s)_1$- and $\theta(s)_2$-admissible, and thus $W\equiv I(w)$.  \Cref{one-machine language} then implies $i=1$, so that $\theta=\theta(s)_2$.

\end{proof}

\begin{lemma} \label{transposition computation not one-machine}

Let $W$ be an accepted configuration of $\textbf{M}^\pazocal{L}$ with $\ell(W)\leq1$ and $\theta\in\Theta$.  Suppose $W$ is $\theta$-admissible with $\ell(W\cdot\theta)>1$.  Then $\theta=\theta(s)_1$ and $W\equiv J(w)$ for some $w\in\pazocal{L}$.

\end{lemma}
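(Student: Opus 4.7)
The plan is to compare accepting computations of $W$ and $W\cdot\theta$ using the function $\ell$ and the structural description in \Cref{ell(W)}. First I would rule out $W\equiv W_{ac}$: in that case the only rules admissible at $W$ are of the form $\theta(a)_i^{-1}$, and then $\theta^{-1}=\theta(a)_i$ alone witnesses a one-machine accepting computation of $W\cdot\theta$, contradicting $\ell(W\cdot\theta)>1$. Thus $\ell(W)=1$, and I fix $\pazocal{D}\in A(W)$ with $\ell(\pazocal{D})=1$, say $\pazocal{D}$ is a one-machine computation of the $k$-th machine with history $H_\pazocal{D}\in F(\Theta_k^+)$.

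Next I show $\theta\in\Theta_{3-k}$. If instead $\theta\in\Theta_k$, then $\theta^{-1}H_\pazocal{D}\in F(\Theta_k^+)$ would be the (possibly unreduced) history of a computation $W\cdot\theta\to W\to\dots\to W_{ac}$; since reduction preserves membership in $F(\Theta_k^+)$, the reduced form would yield a one-machine accepting computation of $W\cdot\theta$, again contradicting $\ell(W\cdot\theta)>1$.

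With $\theta\in\Theta_{3-k}$, the history $\theta^{-1}H_\pazocal{D}$ is already reduced (the alphabets $\Theta_1,\Theta_2$ are disjoint and the first letter of $H_\pazocal{D}$ lies in $\Theta_k$), and by construction has exactly two maximal one-machine subcomputations. Applying \Cref{ell(W)} to this accepting computation of $W\cdot\theta$ gives a factorization $\theta^{-1}H_\pazocal{D}\equiv\hat{H}_1\hat{H}_2$ with $\hat{H}_1\in F(\Theta_1^+)$, $\hat{H}_2\in F(\Theta_2^+)$, and $(W\cdot\theta)\cdot\hat{H}_1\equiv J(w)$ for some $w\in\pazocal{L}$. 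The forced left-to-right order $\Theta_1$-then-$\Theta_2$ is what I expect to be the main obstacle, and it is precisely what rules out $k=1$: in that case $\theta^{-1}\in\Theta_2$ would sit at the beginning, which is incompatible with $\hat{H}_1\in F(\Theta_1^+)$. Hence $k=2$, forcing $\hat{H}_1=\theta^{-1}$ and $\hat{H}_2=H_\pazocal{D}$, and so $W\equiv (W\cdot\theta)\cdot\theta^{-1}\equiv J(w)$ with $\theta\in\Theta_1$.

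Finally, since $W\equiv J(w)$ is a start configuration, the only rule in $\Theta_1$ admissible at $W$ is $\theta(s)_1$: working rules of the first machine require copies of internal state letters of $\textbf{M}_{6,1}^\pazocal{L}$ (not the start letters of $\textbf{M}^\pazocal{L}$ present in $J(w)$), and $\theta(a)_1^{\pm1}$ require the end letters of $\textbf{M}^\pazocal{L}$ or their $\textbf{M}_{6,1}^\pazocal{L}$-copies. Thus $\theta=\theta(s)_1$, completing the proof. Aside from the ordering argument in \Cref{ell(W)}, the remaining steps are essentially bookkeeping about which rules can be admissible at start and end configurations and the disjointness of $\Theta_1$ and $\Theta_2$.
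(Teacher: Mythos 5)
Your proof is correct, and its first half (ruling out $W\equiv W_{ac}$, fixing a one-machine accepting computation of the $k$-th machine, and showing $\theta$ must belong to the other machine's set of rules) coincides with the paper's argument. Where you diverge is the decisive step: the paper observes that $W$ is then admissible for rules of both machines, hence is a start configuration (as $W\not\equiv W_{ac}$), so $\theta=\theta(s)_k$; it then invokes \Cref{M language} to get $W\equiv I(w)$ or $W\equiv J(w)$ and eliminates $I(w)$ via \Cref{one-machine language} together with the fact that $I(w)$ is not $\theta(s)_2$-admissible. You instead feed the concatenated accepting computation of $W\cdot\theta$ with history $\theta^{-1}H_\pazocal{D}$, which has exactly two maximal one-machine subcomputations, into the structure statement of \Cref{ell(W)}; the forced order (first machine, then second) pins down $k=2$ and $\theta\in\Theta_1$, and clause (b) of that lemma gives $(W\cdot\theta)\cdot\theta^{-1}\equiv W\equiv J(w)$ in one stroke, after which the start-configuration admissibility argument yields $\theta=\theta(s)_1$ exactly as in the paper. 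Your route outsources the case analysis to \Cref{ell(W)} (whose proof already contains the $I$-versus-$J$ dichotomy via \Cref{first machine J to J}), which is legitimate since that lemma precedes this one; the paper's route is marginally more self-contained, using only admissibility of start/end configurations plus \Cref{M language}, and runs in parallel with the companion \Cref{transposition computation not applicable}. Two tiny glosses in your write-up are harmless but worth noting: if $\theta\in\Theta_k$ and $\theta^{-1}H_\pazocal{D}$ reduces to the empty word, then $W\cdot\theta\equiv W_{ac}$ and $\ell(W\cdot\theta)=0$, which still contradicts the hypothesis; and in the factorization argument one should also dismiss the possibility of an empty $\hat{H}_1$, which fails anyway because $\hat{H}_2$ would then contain letters of $\Theta_1$.
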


\begin{proof}

By definition, it suffices to assume $\ell(W)=1$, {\frenchspacing i.e. $W\neq W_{ac}$}.  As in the previous proof, let $\pazocal{C}\in A(W)$ such that $\ell(\pazocal{C})=1$, fix $i\in\{1,2\}$ such that $\pazocal{C}$ is a one-machine computation of the $i$-th machine, and let $H\equiv\theta_1\dots\theta_t\in F(\Theta_i^+)$ be the history of $\pazocal{C}$.  

If $\theta\in\Theta_i$, then the word $\theta^{-1}H$ is freely equal to the history of a one-machine computation of the $i$-th machine accepting $W\cdot\theta$, contradicting the hypotheses.  Hence, it suffices to assume $\theta\notin\Theta_i$.

Again, this means $W$ is both $\theta_1$- and $\theta$-admissible, and so must be a start configuration.  By Lemmas \ref{M language} and \ref{one-machine language}, there then exists $w\in\pazocal{L}$ such that $W\equiv I(w)$ if $i=1$ or $W\equiv J(w)$ if $i=2$.  But $I(w)$ is not admissible for any rule of $\Theta_2$, so that $W\equiv J(w)$ and $\theta=\theta(s)_1$.

\end{proof}


\medskip

\subsection{Complexity} \

The goal of this section is to study the accepting computations of configurations of $\textbf{M}^\pazocal{L}$ satisfying $\ell(W)=1$.  Specifically, for each such configuration, a particular accepting configuration is constructed which satisfies established bounds on its `length' and `width' (or `time' and `space', respectively) in terms of its $a$-length.

\begin{lemma} \label{M difference}

Let $W$ be a configuration of $\textbf{M}^\pazocal{L}$ that is $\theta$-admissible for some $\theta\in\Theta$.  Then $|W\cdot\theta|_a\leq c_0(|W|_a+2LN)$.

\end{lemma}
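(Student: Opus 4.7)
The plan is to decompose $W$ into its $L$ components, use the parallel action of rules across components to reduce to a per-component estimate, then a per-sector estimate, and finally invoke \Cref{M_1 difference} and \Cref{simplify rules} to bound the growth within each sector. Because every rule $\theta\in\Theta$ acts in parallel on the subwords $\{t(i)\}B_4^\pazocal{L}(i)$ of the standard base, the decomposition $W\equiv W(1)\cdots W(L)$ gives $|W\cdot\theta|_a=\sum_{i=1}^L|(W\cdot\theta)(i)|_a$, so it suffices to bound $|(W\cdot\theta)(i)|_a$ uniformly by $c_0(|W(i)|_a+2N)$. Within each component, the base contains at most $2N$ sectors with non-empty tape alphabet: two ``input sectors'' ($Q_0^\pazocal{L}(i)Q_1^\pazocal{L}(i)$ and $(R_1^\pazocal{L}(i))^{-1}(R_0^\pazocal{L}(i))^{-1}$) together with $2N-2$ intermediate sectors whose tape alphabets are inherited from $\textbf{M}_2^\pazocal{L}$; the remaining sectors (those involving a singleton $\{t(i)\}$ or the central $Q_N^\pazocal{L}(i)(R_N^\pazocal{L}(i))^{-1}$) have empty tape alphabet and contribute nothing to $|W(i)|_a$.

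A case analysis on $\theta$ then completes the argument. If $\theta$ is a transition rule $\theta(s)_i^{\pm 1}$ or $\theta(a)_i^{\pm 1}$, then $\theta$ either locks every sector (so admissibility forces $|W|_a=0$) or acts on input sectors via the length-preserving bijection $\widetilde{\varphi}_1$ while locking the rest; either way $|W\cdot\theta|_a\leq|W|_a$. If $\theta$ corresponds to a working rule of $\textbf{M}_3^\pazocal{L}(2)\cong\textbf{M}_2^\pazocal{L}$ or to the transition rule $\sigma$, then \Cref{simplify rules} guarantees $\|u_j\|+\|v_{j+1}\|\leq 1$ in every part, so each sector gains at most two letters and $|(W\cdot\theta)(i)|_a\leq|W(i)|_a+4N$. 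Finally, if $\theta$ corresponds to a rule of $\textbf{M}_3^\pazocal{L}(1)\cong\textbf{M}_1^\pazocal{A}$, then $\theta$ acts as an $\textbf{M}_1^\pazocal{A}$-rule on each input sector, modifies only the adjacent $Q_1^\pazocal{L}(i)Q_2^\pazocal{L}(i)$-sector and its reflection by prepending or appending at most one letter, and locks every other sector; invoking \Cref{M_1 difference} on each input sector then yields $|(W\cdot\theta)(i)|_a\leq c_0(|W(i)|_a+2)+2\leq c_0(|W(i)|_a+4)$.

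Summing the resulting per-component bounds over $i=1,\ldots,L$ and using the parameter choices $c_0\geq 2$ and $N\geq 2$, every case is absorbed by $|W\cdot\theta|_a\leq c_0(|W|_a+2LN)$. The principal difficulty in this argument is the third case, where the generalized replacement map $\widetilde{f}_{\theta,1}$ associated to an $\textbf{M}_1^\pazocal{A}$-rule can enlarge the tape content of an input sector by a factor growing with $D_\pazocal{A}$; however, this phenomenon has already been tamed by \Cref{M_1 difference}, whose proof internalizes the parameter choice $c_0\gg D_\pazocal{A}$, so that the present argument reduces to routine bookkeeping once the decomposition into components and sectors is in place.
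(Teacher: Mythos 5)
Your proposal is correct and follows essentially the same route as the paper: decompose $W$ into its sectors, bound the input sectors via \Cref{M_1 difference} and the remaining non-locked sectors via \Cref{simplify rules}, and sum to get $c_0(|W|_a+2LN)$. The only difference is presentational — your case analysis on the type of rule is handled uniformly in the paper, since the per-sector bound $c_0(|W_{i,j}|_a+1)$ covers locked, $\varphi_1$-type, and $\textbf{M}_1^\pazocal{A}$-type actions alike.
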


\begin{proof}

Let $W_{i,j}$ be the admissible subword of $W$ with base $Q_{j-1}^\pazocal{L}(i)Q_j^\pazocal{L}(i)$ for $j\in\{1,\dots,N\}$ and $i\in\{1,\dots,L\}$.  If $j=1$, then \Cref{M_1 difference} implies that $|W_{i,j}\cdot\theta|_a\leq c_0(|W_{i,j}|_a+1)$.  Otherwise, \Cref{simplify rules} implies $|W_{i,j}\cdot\theta|_a\leq|W_{i,j}|_a+1\leq c_0(|W_{i,j}|_a+1)$ for $c_0\geq1$.

Similarly, let $V_{i,j}$ be the admissible subword of $W$ with base $(R_j^\pazocal{L}(i))^{-1}(R_{j-1}^\pazocal{L}(i))^{-1}$ for $j\in\{1,\dots,N\}$ and $i\in\{1,\dots,L\}$.  Again, \Cref{M_1 difference} implies $|V_{i,1}\cdot\theta|_a\leq c_0(|V_{i,1}|_a+1)$, while \Cref{simplify rules} implies $|V_{i,j}\cdot\theta|_a\leq|V_{i,j}|_a+1\leq c_0(|V_{i,j}|_a+1)$ for $j\geq2$.

As any other sector is locked by every rule, $|W|_a=\sum_{i,j}|W_{i,j}|_a$ and $|W\cdot\theta|_a=\sum_{i,j}|W_{i,j}\cdot\theta|_a$.  Hence, $|W\cdot\theta|_a=\sum_{i,j}|W_{i,j}\cdot\theta|_a\leq c_0\sum_{i,j}(|W_{i,j}|_a+1)=c_0(|W|_a+2LN)$.

\end{proof}

\begin{lemma} \label{M main difference}

Let $\pazocal{C}:W_0\to\dots\to W_t\equiv W_{ac}$ be a computation of $\textbf{M}^\pazocal{L}$ accepting the configuration $W_0$.  Then $|W_i|_a\leq 4c_0^tLN$ for all $0\leq i\leq t$.

\end{lemma}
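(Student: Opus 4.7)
The plan is to apply Lemma \ref{M difference} in reverse, exploiting the fact that the set of rules $\Theta$ of $\textbf{M}^\pazocal{L}$ is symmetric. Since the accept configuration $W_{ac}$ has every sector empty, $|W_t|_a=0$. For each $i\in\{1,\dots,t\}$, write $W_{i-1}\equiv W_i\cdot\theta_i^{-1}$, where $\theta_i^{-1}\in\Theta$ because $\Theta$ is closed under inversion; thus Lemma \ref{M difference} yields
\begin{equation*}
|W_{i-1}|_a\leq c_0(|W_i|_a+2LN).
\end{equation*}

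I will then induct on $k=t-i$ from $k=0$ downward. The base case $|W_t|_a=0\leq 4c_0^tLN$ is immediate. If $|W_{t-k+1}|_a\leq 2LN\sum_{j=1}^{k-1}c_0^j$, then the recursion above gives
\begin{equation*}
|W_{t-k}|_a\leq c_0\left(2LN\sum_{j=1}^{k-1}c_0^j+2LN\right)=2LN\sum_{j=1}^{k}c_0^j.
\end{equation*}
Using the parameter assignment $c_0>>C\geq2$ from \Cref{sec-parameters}, one has $c_0/(c_0-1)\leq 2$, so $\sum_{j=1}^k c_0^j\leq 2c_0^k$. Hence $|W_i|_a=|W_{t-(t-i)}|_a\leq 4LNc_0^{t-i}\leq 4c_0^tLN$ for every $0\leq i\leq t$, completing the induction.

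There is no real obstacle here beyond two routine bookkeeping points: verifying that the backward rule $\theta_i^{-1}$ indeed belongs to $\Theta$ (so that Lemma \ref{M difference} applies at every step), and verifying that $|W_{ac}|_a=0$ so that the induction can be seeded at the terminal configuration. The computation is entirely mechanical once the backward induction is set up, and the factor of $4$ in the bound simply absorbs the geometric sum $\sum c_0^j\leq 2c_0^k$ together with the factor of $2$ from the $2LN$ term.
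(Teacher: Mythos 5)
Your proof is correct and follows essentially the same route as the paper: apply \Cref{M difference} backwards along the computation (using the symmetry of $\Theta$, so that $W_{i-1}\equiv W_i\cdot\theta_i^{-1}$), seed the induction at $|W_{ac}|_a=0$, and absorb the geometric sum via $c_0\geq2$ to get the factor $4$. No gaps.
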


\begin{proof}

\Cref{M difference} immediately yields $|W_{i-1}|_a\leq c_0(|W_i|_a+2LN)$ for all $1\leq i\leq t$.  So, since $|W_t|_a=|W_{ac}|_a=0$, $|W_{t-1}|_a\leq 2c_0LN$. 

Assuming $|W_{t-i}|_a\leq\sum\limits_{j=1}^i 2c_0^jLN$, then: 
$$|W_{t-i-1}|_a\leq c_0\left(\sum\limits_{j=1}^i 2c_0^jLN+2LN\right)=2c_0\sum\limits_{j=0}^ic_0^jLN=\sum\limits_{j=1}^{i+1} 2c_0^jLN$$
Hence, by induction $|W_{t-i}|_a\leq\sum\limits_{j=1}^i 2c_0^jLN$ for all $i$.  Taking $c_0\geq2$, then $\sum\limits_{j=1}^{i-1} c_0^j\leq c_0^i$, and thus $|W_{t-i}|_a\leq4c_0^iLN\leq4c_0^tLN$.

%
%

\end{proof}

\begin{lemma} \label{M time-space}

Let $W$ be an accepted configuration of $\textbf{M}^\pazocal{L}$ with $\ell(W)=1$ and $|W(2)|_a=n$.  Then there exists an accepting computation $W\equiv W_0\to\dots\to W_t\equiv W_{ac}$ such that 
$$t\leq c_0\TM_\pazocal{L}(c_0n)^3+nc_0^{n}+c_0n+2c_0$$

\end{lemma}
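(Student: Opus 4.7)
The plan is to exploit the parallel structure of $\textbf{M}^\pazocal{L}$ to reduce the length bound to Lemma \ref{M_5 time-space}. If $W\equiv W_{ac}$, the empty computation trivially satisfies the bound, so assume $W\not\equiv W_{ac}$. Since $\ell(W)=1$, fix an accepting computation $\pazocal{C}\in A(W)$ that is a one-machine computation of the $i$-th machine for some $i\in\{1,2\}$. By Lemma \ref{(s) (a) first or last}, the history $H$ of $\pazocal{C}$ admits a factorization $H\equiv H_sH'H_a$ in which $H_s\in\{1,\theta(s)_i\}$, $H_a\in\{1,\theta(a)_i\}$, and $H'\in F(\Theta_i^+)$ consists entirely of working rules of the $i$-th machine. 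The subcomputation with history $H'$ is then identifiable with a reduced computation of $\textbf{M}_{6,i}^\pazocal{L}$ on the configuration $V\equiv W\cdot H_s$; moreover, since the transition rule $\theta(s)_i$ (when present) acts on input sectors via the $a$-length-preserving bijection $\varphi_1$ and locks the non-input sectors, it preserves the $a$-length of every component, yielding $|V(2)|_a=|W(2)|_a=n$.

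Next, since $\textbf{M}_{6,i}^\pazocal{L}$ consists of $L$ parallel copies of $\textbf{M}_5^\pazocal{L}$ (with the minor modification that the special input sector is locked when $i=2$), the restriction of the middle subcomputation to the component $\{t(2)\}B_4^\pazocal{L}(2)$ is identifiable with a reduced accepting computation of $\textbf{M}_5^\pazocal{L}$ on a configuration of $a$-length $n$. Invoking Lemma \ref{M_5 time-space}, there exists such an accepting computation of length at most $c_0\TM_\pazocal{L}(c_0n)^3+nc_0^n+c_0n+c_0$. Interpreting this short computation as a one-machine computation of the $i$-th machine whose base is $\{t(2)\}B_4^\pazocal{L}(2)$ and whose history consists of working rules, we then invoke Lemma \ref{extend one-machine} to lift it to a one-machine computation in the standard base accepting $V$ to $W_{ac}\cdot H_a^{-1}$; crucially, this extension preserves the length of the history because the rules of the $i$-th machine act in parallel on all components $j\in\{1,\dots,L\}$ (for $i=1$) or on components $j\geq 2$ with the special input sector locked (for $i=2$).

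Finally, concatenating $H_s$ at the front and $H_a$ at the end produces an accepting computation of $W$ in $\textbf{M}^\pazocal{L}$ of total length at most
$$c_0\TM_\pazocal{L}(c_0n)^3+nc_0^n+c_0n+c_0+2\leq c_0\TM_\pazocal{L}(c_0n)^3+nc_0^n+c_0n+2c_0,$$
where the last inequality uses the standing parameter choice $c_0\geq 2$. The main technical subtlety is the clean identification of the restricted component-$2$ computation with a computation of $\textbf{M}_5^\pazocal{L}$, together with the verification that Lemma \ref{extend one-machine} correctly reconstructs a \emph{one-machine} accepting computation of the $i$-th machine acting on $W$; this hinges on the parallel action of the working rules of $\Theta_i$ on the $L$ components of $\textbf{M}^\pazocal{L}$ and the fact that the transition rules $\theta(s)_i$ and $\theta(a)_i$ act uniformly in a length-preserving way.
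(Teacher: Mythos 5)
Your overall route is the same as the paper's: strip off the transition rules using \Cref{(s) (a) first or last}, identify the restriction of the working-rule subcomputation to the base $\{t(2)\}B_4^\pazocal{L}(2)$ with an accepting computation of $\textbf{M}_5^\pazocal{L}$, replace it by the short accepting computation supplied by \Cref{M_5 time-space}, lift back to the standard base with \Cref{extend one-machine}, reattach $\theta(s)_i$ and $\theta(a)_i$, and finish with the parameter choice $c_0\geq2$. (A minor imprecision: since the accept configuration can only be reached via $\theta(a)_i$, the suffix $H_a$ of a non-empty accepting one-machine computation is necessarily $\theta(a)_i$, not possibly empty; your final concatenation implicitly uses this.)

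There is, however, a gap at exactly the point you flag as ``the main technical subtlety'' but do not resolve. \Cref{extend one-machine} does not produce a computation ``accepting $V$'': it produces a one-machine computation $W_0\to\dots\to W_{z+1}\equiv W_{ac}$ in the standard base whose second component agrees with the given short computation, so a priori you only know $W_0(2)\equiv V(2)$. To conclude that the new, short history is applicable to $V$ itself (equivalently to $W\cdot H_s$), you must show $W_0\equiv V$, i.e., that a configuration accepted by a one-machine computation of the $i$-th machine is determined by its second component. This is not a consequence of ``parallel action'' alone: for $i=2$ the rules do not touch the `special' input sector at all, and one needs the additional observation that every rule of $\Theta_2$ locks that sector, forcing both $V$ and $W_0$ to have it empty, while the remaining sectors of coordinate $1$ are pinned down by parallelism with the coordinates $j\geq2$. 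The paper isolates precisely this uniqueness statement as \Cref{one-machine equal configurations} and invokes it at this step; your argument needs either that lemma or an equivalent in-line verification to be complete.
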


\begin{proof}

Let $\pazocal{C}\in A(W)$ such that $\ell(\pazocal{C})=1$.  Fix $i\in\{1,2\}$ such that $\pazocal{C}$ is a one-machine computation of the $i$-th machine.  Then, \Cref{(s) (a) first or last} implies that there exists a factorization $H\equiv H_sH_i\theta(a)_i$ of the history of $\pazocal{C}$ such that:
\begin{itemize} 

\item $H_s$ is either empty or $H_s\equiv\theta(s)_i$, and

\item $H_i$ consists only of working rules in $\Theta_i$.

\end{itemize}

Let $W'\equiv W\cdot H_s$.  Then, $|W'(j)|_a=|W(j)|_a$ for all $1\leq j\leq L$, and hence $|W'(2)|_a=|W(2)|_a=n$.

Now, let $\pazocal{C}_i$ be the subcomputation of $\pazocal{C}$ with history $H_i$.  Then, $\pazocal{C}_i$ can be identified with a reduced computation of $\textbf{M}_{6,i}^\pazocal{L}$ which accepts the configuration corresponding to $W'$.  In turn, the restriction of $\pazocal{C}_i$ to the base $\{t(2)\}B_4^\pazocal{L}(2)$ can be identified with a reduced computation of $\textbf{M}_5^\pazocal{L}$ accepting the configuration corresponding to $W'(2)$.  By \Cref{M_5 time-space}, there then exists a one-machine computation of the $i$-th machine $\pazocal{D}':V_0\to\dots\to V_z$ with base $\{t(2)\}B_4^\pazocal{L}(2)$ satisfying:

\begin{itemize}

\item $V_0\equiv W'(2)$

\item $V_z\equiv W_{ac}(2)\cdot\theta(a)_i^{-1}$ 

\item $z\leq c_0\TM_\pazocal{L}(c_0n)^3+nc_0^{n}+c_0n+c_0$

\end{itemize}

Let $H'$ be the history of $\pazocal{D}'$.  Then, there exists a one-machine computation of the $i$-th machine $\pazocal{D}:V_0\to\dots\to V_z\to W_{ac}(2)$ with base $\{t(2)\}B_4^\pazocal{L}(2)$ and history $H'\theta(a)_i$.

By applying \Cref{extend one-machine} to $\pazocal{D}$, there then exists a one-machine computation of the $i$-th machine $\pazocal{E}:W_0\to\dots\to W_z\to W_{z+1}$ in the standard base with history $H'\theta(a)_i$ such that $W_\ell(2)\equiv V_\ell$ for all $0\leq\ell\leq z$ and $W_{z+1}\equiv W_{ac}$.

Hence, $W_0$ and $W'$ are both configurations accepted by one-machine computations of the $i$-th machine with $W_0(2)\equiv V_0\equiv W'(2)$, so that \Cref{one-machine equal configurations} implies $W_0\equiv W'$.

Thus, $H_sH'\theta(a)_i$ is the history of an accepting computation of $W$ with
$$\|H_sH'\theta(a)_i\|\leq z+2\leq c_0\TM_\pazocal{L}(c_0n)^3+nc_0^{n}+c_0n+c_0+2$$
so that the statement follows by taking $c_0\geq2$.

\end{proof}


\medskip

\subsection{Semi-computations in the `special' input sector} \label{sec-M-semi} \

As the rules of $\Theta_2$ lock the `special' input sector, \Cref{semi locked sectors} implies that any non-trivial semi-computation of $\textbf{M}^\pazocal{L}$ in the `special' input sector must consist entirely of rules from the first machine.  

In particular, any rule of such a semi-computation is either $\theta(s)_1^{\pm1}$ or can be identified with the application (in the sense of semi-computations) of a rule of $\textbf{M}_1^\pazocal{A}$ to a tape word of the $Q_0^\pazocal{A}Q_1^\pazocal{A}$-sector.

The following statement is an immediate consequence of the definition of the rules of $\textbf{M}^\pazocal{L}$:

\begin{lemma} \label{semi applicable}

Let $w$ be a non-trivial word over the tape alphabet of the `special' input sector and $\theta\in\Theta$.  Then $w$ is $\theta$-applicable if and only if:

\begin{itemize}

\item $w\in F(\pazocal{A}_1\sqcup\pazocal{B})$ if $\theta\neq\theta(s)_1^{\pm1}$

\item $w\in F(\pazocal{A})$ if $\theta=\theta(s)_1$

\item $w\in F(\pazocal{A}_1)$ if $\theta=\theta(s)_1^{-1}$

\end{itemize}

\end{lemma}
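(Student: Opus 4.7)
The plan is a direct case analysis on $\theta$, using the defining equivalence that $w\in F(Y_i)$ is $\theta$-applicable if and only if $w\in\gen{X_i(\theta)}$, where $X_i(\theta)$ is the domain of $\theta$ on the sector in question. So the task is to identify $X(\theta)$ for each $\theta\in\Theta$ on the `special' input sector $Q_0^\pazocal{L}(1)Q_1^\pazocal{L}(1)$ (whose tape alphabet is $\pazocal{A}\sqcup\pazocal{A}_1\sqcup\pazocal{B}$) and compute the subgroup it generates.

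The transition rules $\theta(s)_1^{\pm 1}$ are handled immediately from the explicit construction in \Cref{sec-M-standard}. For $\theta(s)_1$, the definition prescribes $X(\theta(s)_1)=\pazocal{A}$ and $Z(\theta(s)_1)=\pazocal{A}_1$ on every input sector (with $f_{\theta(s)_1,i}$ acting as $\varphi_1$), so $\gen{X(\theta(s)_1)}=F(\pazocal{A})$. Applying the inverse-rule construction from \Cref{sec-generalized-S-machine}, $X(\theta(s)_1^{-1})=Z(\theta(s)_1)=\pazocal{A}_1$, giving $\gen{X(\theta(s)_1^{-1})}=F(\pazocal{A}_1)$.

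For any other rule $\theta\in\Theta$, one traces the compositional construction of $\textbf{M}^\pazocal{L}$ to locate its action on the `special' input sector. The rules that act non-trivially there are precisely the copies in $\Theta_1^+$ of the rules $\theta_y^{\pm 1}$ of $\textbf{M}_1^\pazocal{A}$ (for $y\in\pazocal{A}\sqcup\pazocal{B}$); all other rules (the working rules of $\Theta_2$, the transition rules $\theta(s)_2^{\pm 1}$ and $\theta(a)_i^{\pm 1}$ for $i=1,2$, as well as the $\sigma$-copies inherited from $\textbf{M}_3^\pazocal{L}$) lock the sector and thus have empty domain there, consistent with the observation in \Cref{sec-M-semi} that such rules do not appear in any non-trivial semi-computation in the `special' input sector. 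For the copies of $\theta_y$ themselves, the defining data from \Cref{sec-M_1} gives $X(\theta_y)=\pazocal{A}_1\sqcup\pazocal{B}$ directly, and hence $\gen{X(\theta_y)}=F(\pazocal{A}_1\sqcup\pazocal{B})$. For the inverses, $X(\theta_y^{-1})=Z(\theta_y)=\pazocal{A}_{1,y}\sqcup\pazocal{B}$, and it is exactly at this point that \Cref{main free subgroup} delivers $\gen{\pazocal{A}_{1,y}\sqcup\pazocal{B}}=F(\pazocal{A}_1\sqcup\pazocal{B})$. Combining both cases, $w$ is $\theta$-applicable if and only if $w\in F(\pazocal{A}_1\sqcup\pazocal{B})$.

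The main potential obstacle is simply the bookkeeping: confirming that every $\theta\in\Theta$ falls into one of the categories above on the `special' input sector, which requires peeling back the successive compositions $\textbf{M}^\pazocal{L}\to\textbf{M}_{6,1}^\pazocal{L}\to\textbf{M}_5^\pazocal{L}\to\textbf{M}_4^\pazocal{L}\to\textbf{M}_3^\pazocal{L}$ and checking each transition and working rule in turn. No new ideas are needed beyond the definitions and the invocation of \Cref{main free subgroup} to handle the negative rules, whose domain $\pazocal{A}_{1,y}\sqcup\pazocal{B}$ is not obviously a basis of $F(\pazocal{A}_1\sqcup\pazocal{B})$.
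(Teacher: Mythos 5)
Your computation of the domains is right, and for the rules that genuinely touch the `special' input sector your argument is exactly the paper's intended one: the lemma is asserted as immediate from the rule definitions, and the only non-obvious point --- that for the negative working rules the domain $\pazocal{A}_{1,y}\sqcup\pazocal{B}$ generates all of $F(\pazocal{A}_1\sqcup\pazocal{B})$ --- is precisely the remark the paper records right after \Cref{main free subgroup} (``any word $w\in F(Y_j^\pazocal{A})$ is $\theta_y^{\pm1}$-applicable''). Likewise $X(\theta(s)_1)=\pazocal{A}$ and $X(\theta(s)_1^{-1})=Z(\theta(s)_1)=\pazocal{A}_1$ give the second and third bullets.

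The one step that does not hold together as written is the closing sentence of your middle paragraph. You correctly observe that every $\theta$ other than $\theta(s)_1^{\pm1}$ and the $\Theta_1$-copies of $\theta_y^{\pm1}$ locks the `special' input sector, i.e.\ has empty domain there; but you then ``combine both cases'' and assert the biconditional ``$w$ is $\theta$-applicable iff $w\in F(\pazocal{A}_1\sqcup\pazocal{B})$'' for \emph{every} $\theta\neq\theta(s)_1^{\pm1}$. For a locking rule (any rule of $\Theta_2$, $\theta(a)_1^{\pm1}$, or a $\Theta_1$-copy of $\sigma^{\pm1}$ or of a rule of $\textbf{M}_2^\pazocal{L}$) the ``if'' direction is false: a non-trivial word over $\pazocal{B}^{\pm1}$ lies in $F(\pazocal{A}_1\sqcup\pazocal{B})$ but is not $\theta$-applicable, since $\gen{X(\theta)}=\{1\}$ (this is exactly your own ``empty domain'' observation, and \Cref{semi locked sectors}). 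This imprecision is inherited from the statement of the lemma itself: in the paper the sufficiency is only ever invoked (in \Cref{semi subword}, \Cref{Lambda not cyclically reduced}, and \Cref{semi extension}) once a non-trivial $\theta$-applicable word is already in hand, which forces $\theta$ to be $\theta(s)_1^{\pm1}$ or a copy of a rule of $\textbf{M}_1^\pazocal{A}$, and in that regime your argument is complete. So either restrict the first bullet to those rules, or add the explicit caveat that for the locking rules only the trivial word is applicable, so that only the ``only if'' direction (vacuously) survives for them; as currently phrased, your conclusion contradicts the sentence immediately preceding it.
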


Hence, the next statement is an immediate corollary of \Cref{semi applicable}:

\begin{lemma} \label{semi subword}

Let $w$ be a non-trivial word over the tape alphabet of the `special' input sector and $v$ be a subword of a cyclic permutation of $w^{\pm1}$.  If $w$ is $\theta$-applicable for some $\theta\in\Theta$, then $v$ is also $\theta$-applicable.

\end{lemma}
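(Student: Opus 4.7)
The plan is to invoke \Cref{semi applicable} in both directions and carry out a short subalphabet bookkeeping argument in between. Since $w$ is non-trivial and $\theta$-applicable, that lemma identifies a distinguished subalphabet $S\subseteq\pazocal{A}\sqcup\pazocal{A}_1\sqcup\pazocal{B}$ of the tape alphabet of the `special' input sector: one takes $S=\pazocal{A}_1\sqcup\pazocal{B}$ if $\theta\neq\theta(s)_1^{\pm1}$, $S=\pazocal{A}$ if $\theta=\theta(s)_1$, and $S=\pazocal{A}_1$ if $\theta=\theta(s)_1^{-1}$. In every case, $w$ is a reduced word over $S\cup S^{-1}$, i.e., an element of the free subgroup $F(S)\leq F(\pazocal{A}\sqcup\pazocal{A}_1\sqcup\pazocal{B})$.

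The main step is then the purely combinatorial observation that $F(S)$ is closed under taking subwords of cyclic permutations of its elements (and of their inverses). Since $w\in F(S)$, the word $w^{\pm1}$ uses only letters of $S\cup S^{-1}$; any cyclic permutation, read as a formal word, again uses only letters of $S\cup S^{-1}$, and free reduction of that cyclic permutation only deletes letters, so the resulting reduced word still lies in $F(S)$. A subword of this reduced cyclic permutation is therefore itself a reduced word over $S\cup S^{-1}$, and hence represents an element $v\in F(S)$ (possibly the trivial one, which is also in $F(S)$).

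Finally, applying \Cref{semi applicable} in the reverse direction yields that $v$ is $\theta$-applicable: the three characterizations of applicability match the three choices of $S$ by construction, so no additional casework is needed. I do not anticipate any real obstacle, as the nontrivial content of the statement sits entirely inside \Cref{semi applicable}, which itself is read off from the definitions of the rules of $\textbf{M}^\pazocal{L}$ collected in \Cref{sec-M-semi}. The only thing to be careful about is the trivial subword case, which is covered since the trivial element lies in every subgroup and is always $\theta$-applicable.
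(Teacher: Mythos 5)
Your proposal is correct and matches the paper, which states this lemma as an immediate corollary of \Cref{semi applicable}: in each of the three cases the applicability condition is just membership in a free subgroup on a subalphabet, which is preserved under passing to subwords of cyclic permutations of $w^{\pm1}$. Your extra remark about the trivial subword is a harmless refinement the paper leaves implicit.
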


%
%
%
%
%
%
%
%
%
%
%
%
%
%

Recall from \Cref{sec-M_1} that a reduced word over $(\pazocal{A}_1\sqcup\pazocal{B})^{\pm1}$ is defined to be compressed if it both begins and ends with a letter of $\pazocal{A}_1^{\pm1}$.  This is now extended to reduced words over $\pazocal{A}^{\pm1}$, which are all taken to be compressed.

Note that, by definition, a non-trivial word $w$ in the tape alphabet of the `special' input sector is $\theta(s)_1$-admissible if and only if $w\in F(\pazocal{A})$, in which case $w\cdot\theta(s)_1\in F(\pazocal{A}_1)$.  So, a non-trivial word $w$ which is $\theta(s)_1^{\pm1}$-admissible is necessarily compressed.  As such, the definition of the compressed application of a rule is extended to include applications of $\theta(s)_1^{\pm1}$.

The following statement is thus a consequence of \Cref{M_1 compressed semi-computation three A}:

\begin{lemma} \label{M compressed semi-computation three A}

Let $\pazocal{S}_\mathscr{C}:w_0\to\dots\to w_t$ be a non-empty reduced compressed semi-computation of $\textbf{M}^\pazocal{L}$ in the `special' input sector.  Suppose $w_0\equiv y_1^{\delta_1}y_2^{\delta_2}y_3^{\delta_3}\in F(\pazocal{A})$.  Then, setting $x_i=\varphi_1(y_i)$, there exist $u_1,u_2\in F(\pazocal{B})$ such that:

\begin{enumerate}

\item $w_t\equiv x_1^{\delta_1}u_1x_2^{\delta_2}u_2x_3^{\delta_3}$

\item $\frac{1}{2}D_\pazocal{A}(t-1)\leq\|u_1\|+\|u_2\|\leq 3D_\pazocal{A}(t-1)$  

\item The pair $(u_1,u_2)$ uniquely determine the history of $\pazocal{S}_\mathscr{C}$

\end{enumerate}
    
\end{lemma}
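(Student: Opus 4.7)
The plan is to reduce the statement to \Cref{M_1 compressed semi-computation three A} by peeling off a forced initial application of the transition rule $\theta(s)_1$. Since the `special' input sector is locked by every rule of $\Theta_2$, \Cref{semi locked sectors} confines the history of $\pazocal{S}_\mathscr{C}$ to $F(\Theta_1^+)$; then \Cref{semi applicable} leaves $\theta(s)_1$ as the only rule applicable to a non-trivial word in $F(\pazocal{A})$, forcing $\theta_1=\theta(s)_1$ and $w_1\equiv\widetilde{\varphi}_1(w_0)\equiv x_1^{\delta_1}x_2^{\delta_2}x_3^{\delta_3}\in F(\pazocal{A}_1)$. The case $t=1$ is then handled with $u_1=u_2=1$, matching the boundary of the quantitative bounds at $t-1=0$.

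For $t\geq 2$, I would next argue that no $\theta_i$ with $i\geq 2$ can equal $\theta(s)_1^{\pm 1}$. An intermediate occurrence of $\theta(s)_1^{-1}$ (so $2\leq i<t$) would place $w_i\in F(\pazocal{A})$, and then \Cref{semi applicable} forces $\theta_{i+1}=\theta(s)_1$, contradicting reducedness; the symmetric argument rules out intermediate $\theta(s)_1$. Consequently, any appearance of $\theta(s)_1^{-1}$ must be $\theta_t$, which requires $w_{t-1}\in F(\pazocal{A}_1)$. For $t=2$ this directly gives $\theta_2=\theta_1^{-1}$, a contradiction. For $t\geq 3$, all of $\theta_2,\dots,\theta_{t-1}$ are working rules, so the intermediate subcomputation $w_1\to\dots\to w_{t-1}$ is a reduced compressed semi-computation of $\textbf{M}_1^\pazocal{A}$ of length $t-2\geq 1$, and \Cref{M_1 compressed semi-computation three A} forces a non-trivial $F(\pazocal{B})$-content in $w_{t-1}$ (bounded below by $\tfrac{1}{2}D_\pazocal{A}(t-2)>0$), contradicting $w_{t-1}\in F(\pazocal{A}_1)$.

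With these cases excluded, every rule $\theta_2,\dots,\theta_t$ is a working rule of $\Theta_1$, and since such rules act on the `special' input sector exactly as the rules of $\textbf{M}_1^\pazocal{A}$ act on the $Q_0^\pazocal{A}Q_1^\pazocal{A}$-sector, the subsequence $w_1\to\dots\to w_t$ is identified with a reduced compressed semi-computation of $\textbf{M}_1^\pazocal{A}$ of length $t-1$ beginning at $x_1^{\delta_1}x_2^{\delta_2}x_3^{\delta_3}$. Applying \Cref{M_1 compressed semi-computation three A} then supplies $u_1,u_2\in F(\pazocal{B})$ satisfying (1) and the two-sided bound (2) with $t-1$ in place of the lemma's $t$, together with the uniqueness of $\theta_2\dots\theta_t$ from $(u_1,u_2)$; combined with the forced $\theta_1=\theta(s)_1$, this yields (3).

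The main obstacle is excluding the terminal case $\theta_t=\theta(s)_1^{-1}$: unlike the intermediate occurrences, it is not eliminated purely by reducedness and requires invoking the quantitative lower bound from \Cref{M_1 compressed semi-computation three A} to observe that the $F(\pazocal{B})$-content of $w_{t-1}$ cannot vanish. Everything else amounts to bookkeeping about admissibility and the straightforward identification of $\Theta_1$ working rules with the corresponding rules of $\textbf{M}_1^\pazocal{A}$.
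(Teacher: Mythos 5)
Your proposal is correct and takes essentially the same route as the paper's proof: peel off the forced initial $\theta(s)_1$, show the remaining history consists only of working rules of the first machine (using the locked `special' input sector, reducedness, and the positive $b$-content guaranteed by the lower bound of \Cref{M_1 compressed semi-computation three A} to block a terminal $\theta(s)_1^{-1}$), and then conclude by applying \Cref{M_1 compressed semi-computation three A}. The paper merely packages your position-by-position case analysis as a ``maximal non-empty prefix of working rules'' argument; the substance is identical.
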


\begin{proof}

As $w_0\in F(\pazocal{A})$, there exists a factorization $H\equiv\theta(s)_1H'$ of the history $H$ of $\pazocal{S}$.  In particular, $w_1\equiv w_0\cdot\theta(s)_1\equiv x_1^{\delta_1}x_2^{\delta_2}x_3^{\delta_3}$.

Suppose $H'$ is non-empty.  Then $H'$ must have a non-empty maximal prefix $H''$ consisting entirely of working rules of the first machine.  In this case, it follows that the reduced compressed semi-computation $\pazocal{S}'':w_1\to\dots\to w_s$ with history $H''$ can be identified with a reduced compressed semi-computation of $\textbf{M}_1^\pazocal{A}$ in the $Q_0^\pazocal{A}Q_1^\pazocal{A}$-sector satisfying the hypotheses of \Cref{M_1 compressed semi-computation three A}.  But then $w_s$ is not $\theta(s)_1^{\pm1}$-admissible, so that $H'=H''$.

The statement then follows from \Cref{M_1 compressed semi-computation three A}.

\end{proof}

By an identical argument, the following statement is a consequence of \Cref{M_1 compressed semi-computation two A}:

\begin{lemma} \label{M compressed semi-computation two A}

Let $\pazocal{S}_\mathscr{C}:w_0\to\dots\to w_t$ be a non-empty reduced compressed semi-computation of $\textbf{M}^\pazocal{L}$ in the `special' input sector.  Suppose $w_0\equiv y_1^{\delta_1}y_2^{\delta_2}\in F(\pazocal{A})$ such that $\delta_1\neq1$ or $\delta_2\neq-1$.  Then, setting $x_i=\varphi_1(y_i)$, there exists $u_1\in F(\pazocal{B})$ such that:

\begin{enumerate}

\item $w_t\equiv x_1^{\delta_1}u_1x_2^{\delta_2}$

\item $\frac{1}{2}D_\pazocal{A}(t-1)\leq\|u_1\|\leq 2D_\pazocal{A}(t-1)$ 

\item $u_1$ uniquely determines the history of $\pazocal{S}_\mathscr{C}$

\end{enumerate}
    
\end{lemma}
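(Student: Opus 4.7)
The plan is to mirror the argument used for \Cref{M compressed semi-computation three A}, reducing the statement to its direct analogue \Cref{M_1 compressed semi-computation two A} for the auxiliary machine $\textbf{M}_1^\pazocal{A}$. The overall strategy exploits the fact that \Cref{semi applicable} forces any rule applied to a word in $F(\pazocal{A})$ to be $\theta(s)_1$, while the rules of $\Theta_2$ lock the `special' input sector entirely, so any working step of $\pazocal{S}_\mathscr{C}$ after the initial one can only correspond to a semi-application of a rule of $\textbf{M}_1^\pazocal{A}$ in its $Q_0^{\pazocal{A}}Q_1^{\pazocal{A}}$-sector.

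First, I would observe that since $w_0 \in F(\pazocal{A})$, \Cref{semi applicable} forces the first rule of the history $H$ of $\pazocal{S}_\mathscr{C}$ to be $\theta(s)_1$, so $H$ admits a factorization $H \equiv \theta(s)_1 H'$ with $w_1 \equiv x_1^{\delta_1} x_2^{\delta_2}$. If $t=1$, the conclusion holds trivially with $u_1$ the empty word. Otherwise, since $H$ is reduced the first letter of $H'$ cannot be $\theta(s)_1^{-1}$, and since $w_1 \in F(\pazocal{A}_1) \subseteq F(\pazocal{A}_1 \sqcup \pazocal{B})$ it cannot be $\theta(s)_2$, so $H'$ must begin with a working rule of the first machine. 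Let $H''$ be the maximal prefix of $H'$ consisting entirely of working rules of $\Theta_1$, and let $\pazocal{S}'':w_1 \to \dots \to w_s$ be the corresponding sub-compressed-semi-computation; this object can be identified with a reduced compressed semi-computation of $\textbf{M}_1^\pazocal{A}$ in the $Q_0^{\pazocal{A}}Q_1^{\pazocal{A}}$-sector with initial word $x_1^{\delta_1}x_2^{\delta_2}$, to which \Cref{M_1 compressed semi-computation two A} applies.

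Applying that lemma produces the required $u_1 \in F(\pazocal{B})$ with $w_s \equiv x_1^{\delta_1}u_1x_2^{\delta_2}$, the bounds $\tfrac{1}{2}D_\pazocal{A}(t-1) \leq \|u_1\| \leq 2D_\pazocal{A}(t-1)$ with the exponent $t-1$ accounting for the leading $\theta(s)_1$, and the uniqueness of the history of $\pazocal{S}''$ from $u_1$. The main step left is to rule out that $H'$ is strictly longer than $H''$. By maximality of $H''$, if $H'' \neq H'$ then $w_s$ would have to be admissible for either $\theta(s)_1^{\pm 1}$ or a rule of $\Theta_2$; but the lower bound $\|u_1\| \geq \tfrac{1}{2}D_\pazocal{A}(t-1) \geq 1$ guarantees that $w_s \notin F(\pazocal{A}) \cup F(\pazocal{A}_1)$ (it genuinely contains a $\pazocal{B}$-letter), and $\Theta_2$ locks the sector, so no such rule is applicable. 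Therefore $H' \equiv H''$, giving $w_t \equiv w_s$ and establishing conclusions (1) and (2); conclusion (3) follows because the prefix $\theta(s)_1$ is fixed and the rest of $H$ is determined by $u_1$ via \Cref{M_1 compressed semi-computation two A}.

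The main obstacle, as in the three-letter version, is not any combinatorial difficulty but rather the bookkeeping needed to confirm that the passage from $\textbf{M}^\pazocal{L}$-rules to $\textbf{M}_1^\pazocal{A}$-rules is a faithful identification in the presence of the transition rule $\theta(s)_1$; once the factorization $H \equiv \theta(s)_1 H''$ is justified, the lemma is an immediate transcription of \Cref{M_1 compressed semi-computation two A}.
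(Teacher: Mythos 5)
Your proposal is correct and follows essentially the same route as the paper: the paper proves the three-letter analogue by factoring the history as $\theta(s)_1H'$, identifying the maximal working-rule prefix with a compressed semi-computation of $\textbf{M}_1^\pazocal{A}$, and using the resulting lower bound on $\|u_1\|$ to force $H'$ to consist entirely of working rules, then states that the present lemma follows ``by an identical argument'' from \Cref{M_1 compressed semi-computation two A} — which is exactly your reduction. The only cosmetic remark is that the lower bound should first be quoted as $\tfrac{1}{2}D_\pazocal{A}\|H''\|\geq 1$ (rather than in terms of $t-1$) before concluding $H'=H''$, but this does not affect the argument.
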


For any subset $\Lambda^\pazocal{A}$ of $(\pazocal{A}\cup\pazocal{A}^{-1})^*$ consisting of cyclically reduced words of length at least $C$, let $\pazocal{E}(\Lambda^\pazocal{A})$ be the set of reduced words $w$ over $(\pazocal{A}\sqcup\pazocal{A}_1\sqcup\pazocal{B})^{\pm1}$ for which there exists a semi-computation of $\textbf{M}^\pazocal{L}$ in the `special' input sector of the form $\pazocal{S}:w\equiv w_0\to\dots\to w_t$ such that $w_t\in\Lambda^\pazocal{A}$.  In this case, the semi-computation $\pazocal{S}$ is then said to \textit{$\Lambda^\pazocal{A}$-accept} $w$.

Let $\Lambda_1^\pazocal{A}=\{\widetilde{\varphi}_1(w): w\in\Lambda^\pazocal{A}\}=\{w\cdot\theta_1(s): w\in\Lambda^\pazocal{A}\}$.  Note that $\Lambda_1^\pazocal{A}$ is then subset of $(\pazocal{A}_1\cup\pazocal{A}_1^{-1})^*$ consisting of cyclically reduced words of length at least $C$.  Hence, the set $\Lambda_1^\pazocal{A}$ may be studied as in \Cref{sec-M_1}, allowing for the application \Cref{M_1 Lambda semi-computations}.


\begin{lemma} \label{M Lambda semi-computations} 

Let $\Lambda^\pazocal{A}$ be a subset of $(\pazocal{A}\cup\pazocal{A}^{-1})^*$ consisting of cyclically reduced words of length at least $C$.  Then:

\begin{enumerate}

\item $\pazocal{E}(\Lambda^\pazocal{A})=\Lambda^\pazocal{A}\sqcup\pazocal{E}_1(\Lambda_1^\pazocal{A})$.  

\item For any $w\in\pazocal{E}(\Lambda^\pazocal{A})$, there is a unique semi-computation  $\pazocal{S}(w):w_0\to\dots\to w_t$ of $\textbf{M}^\pazocal{L}$ in the `special' input sector which $\Lambda^\pazocal{A}$-accepts $w$.

\item Let $w\equiv u_0x_1^{\delta_1}u_1x_2^{\delta_2}\dots x_k^{\delta_k}u_k\in\pazocal{E}_1(\Lambda_1^\pazocal{A})$ for some $x_i\in\pazocal{A}_1$, $\delta_i\in\{\pm1\}$, and $u_i\in F(\pazocal{B})$.  Then the history of $\pazocal{S}(w)$ has the form $H\theta(s)_1^{-1}$ where:

\begin{itemize}

\item $\frac{1}{2}D_\pazocal{A}\|H\|\leq \|u_{i-1}\|+\|u_i\|\leq 3D_\pazocal{A}\|H\|$ for any $i\in\{2,\dots,k-1\}$

\item $\frac{1}{2}D_\pazocal{A}\|H\|\leq \|u_ku_0\|+\|u_j\|\leq 3D_\pazocal{A}\|H\|$ for any $j\in\{1,k-1\}$

\item $\|u_0\|,\|u_k\|\leq D_\pazocal{A}\|H\|$

\end{itemize}


\end{enumerate}


\end{lemma}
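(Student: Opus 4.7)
The plan is to reduce everything to \Cref{M_1 Lambda semi-computations} via a structural analysis of reduced semi-computations in the `special' input sector. The key observation is that, because rules of $\Theta_2$ lock the sector (\Cref{semi locked sectors}) and the applicability criteria of \Cref{semi applicable} distinguish the two disjoint alphabetic regimes $F(\pazocal{A})$ and $F(\pazocal{A}_1\sqcup\pazocal{B})$, every intermediate word of a semi-computation lies in exactly one of these regimes, with $\theta(s)_1^{\pm1}$ serving as the unique bridge between them. I would first show that any non-empty reduced semi-computation $\Lambda^\pazocal{A}$-accepting $w$ has history of the form $H\theta(s)_1^{-1}$, where $H$ consists solely of working rules of $\Theta_1$ and identifies canonically with a reduced semi-computation of $\textbf{M}_1^\pazocal{A}$ in the $Q_0^\pazocal{A}Q_1^\pazocal{A}$-sector. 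Indeed, since $w_t\in\Lambda^\pazocal{A}\subset F(\pazocal{A})$ can only be produced from a word in a different regime via $\theta(s)_1^{-1}$, this must be the final letter; and any earlier occurrence of $\theta(s)_1^{\pm1}$ would transition into $F(\pazocal{A})$ (either directly or as the target of its inverse), where the only applicable rule is $\theta(s)_1^{\mp1}$, creating an adjacent inverse pair forbidden by reducedness.

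Next I would handle the two cases according to the alphabet of $w$. If $w\in F(\pazocal{A}_1\sqcup\pazocal{B})$ (so trivially $w\notin\Lambda^\pazocal{A}$), then $H$ realises $w$ as the origin and some $w_{t-1}\in F(\pazocal{A}_1)$ as the terminus of an $\textbf{M}_1^\pazocal{A}$-semi-computation, with $\widetilde{\varphi}_1^{-1}(w_{t-1})=w_t\in\Lambda^\pazocal{A}$, i.e.\ $w_{t-1}\in\Lambda_1^\pazocal{A}$; this is precisely the condition $w\in\pazocal{E}_1(\Lambda_1^\pazocal{A})$, and both the existence-uniqueness of $H$ and the length estimates in (3) transfer directly from \Cref{M_1 Lambda semi-computations}. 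If instead $w\in F(\pazocal{A})$, then applicability forces the first rule to be $\theta(s)_1$, so any non-empty reduced semi-computation has shape $\theta(s)_1H'\theta(s)_1^{-1}$; \Cref{semi-computation deltas} preserves the $\delta$-projection across $H'$, forcing $w_t=w$ and $\widetilde{\varphi}_1(w)\in\Lambda_1^\pazocal{A}$. The uniqueness clause of \Cref{M_1 Lambda semi-computations} then identifies the only $\Lambda_1^\pazocal{A}$-accepting reduced semi-computation of $\widetilde{\varphi}_1(w)$ as the empty one, so $H'$ is empty and the history collapses to $\theta(s)_1\theta(s)_1^{-1}$ --- not reduced, a contradiction. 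Hence the only reduced $\Lambda^\pazocal{A}$-accepting semi-computation from $w\in F(\pazocal{A})$ is the empty one, forcing $w\in\Lambda^\pazocal{A}$.

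These two cases assemble directly into (1)--(3): the union in (1) is disjoint because $\Lambda^\pazocal{A}$ and $\pazocal{E}_1(\Lambda_1^\pazocal{A})$ live over disjoint alphabets; uniqueness in (2) follows case-by-case from the uniqueness of $H$ in \Cref{M_1 Lambda semi-computations} together with the vacuity of the $w\in\Lambda^\pazocal{A}$ case; and the numerical bounds in (3) are inherited verbatim. I expect the principal subtlety to lie in the structural step of paragraph one --- rigorously excluding intermediate occurrences of $\theta(s)_1^{\pm1}$ via the interplay between reducedness and the applicability restrictions of \Cref{semi applicable}, and verifying carefully that the working rules of $\Theta_1$ identify at the sector level with rules of $\textbf{M}_1^\pazocal{A}$ --- whereas the quantitative estimates in (3) require no further analysis beyond citing \Cref{M_1 Lambda semi-computations}.
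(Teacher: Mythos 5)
Your proposal is correct and follows essentially the same route as the paper: factor the history of any non-empty reduced $\Lambda^\pazocal{A}$-accepting semi-computation as $H\theta(s)_1^{-1}$ with $H$ consisting of working rules of the first machine, identify $H$ with a reduced $\Lambda_1^\pazocal{A}$-accepting semi-computation of $\textbf{M}_1^\pazocal{A}$, and then import uniqueness and the bounds of (3) directly from \Cref{M_1 Lambda semi-computations}. The only difference is tactical: where you exclude extra occurrences of $\theta(s)_1^{\pm1}$ by a local applicability/reducedness argument and settle the $w\in F(\pazocal{A})$ case through the uniqueness clause, the paper takes the maximal suffix $H_0$ of $H$ consisting of working rules and uses the bound $|w_r|_b>0$ from \Cref{M_1 Lambda semi-computations} to conclude the word at its start is not $\theta(s)_1^{\pm1}$-applicable, which handles both cases uniformly.
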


\begin{proof}

Suppose $\pazocal{S}:w_0\to\dots\to w_t$ is a non-empty reduced semi-computation of $\textbf{M}^\pazocal{L}$ in the `special' input sector such that $w_t\in \Lambda^\pazocal{A}$.  Then, as $w_t\in F(\pazocal{A})$, there exists a factorization $H_w\equiv H\theta(s)_1^{-1}$ of the history of $\pazocal{S}$.  In particular, $w_{t-1}\equiv w_t\cdot\theta(s)_1\equiv\widetilde{\varphi}_1(w_t)\in\Lambda_1^\pazocal{A}$.

If $H$ is nonempty, then it must contain a non-empty maximal suffix $H'$ consisting entirely of working rules of the first machine.  But then the semi-computation $w_r\to\dots\to w_{t-1}$ with history $H'$ can be identified with a reduced semi-computation of $\textbf{M}_1^\pazocal{A}$ in the $Q_0^\pazocal{A}Q_1^\pazocal{A}$-sector.  The makeup of $w_t$ implies this semi-computation $\Lambda_1^\pazocal{A}$-accepts $w_r$, so that \Cref{M_1 Lambda semi-computations} implies $|w_r|_b>0$.  But then $w_r$ is not $\theta(s)_1^{\pm1}$-applicable, {\frenchspacing i.e. $H'=H$ and $r=0$}.

%
%

The rest of the statement then follows from \Cref{M_1 Lambda semi-computations}.

%
%
%
%
%
%
%

\end{proof}

\begin{lemma} \label{Lambda not cyclically reduced}

Let $\Lambda^\pazocal{A}$ be a subset of $(\pazocal{A}\cup\pazocal{A}^{-1})^*$ consisting of cyclically reduced words of length at least $C$.  Let $w'\in\pazocal{E}(\Lambda^\pazocal{A})$ and let $w\in F(\pazocal{A}\cup\pazocal{A}_1\cup\pazocal{B})$ be a cyclically reduced word which is freely conjugate to $w'$.  Then for any rule $\theta\in\Theta$, $w$ is $\theta$-applicable if and only if $w'$ is $\theta$-applicable.

\end{lemma}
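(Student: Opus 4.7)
The plan is to leverage the decomposition $\pazocal{E}(\Lambda^\pazocal{A})=\Lambda^\pazocal{A}\sqcup\pazocal{E}_1(\Lambda_1^\pazocal{A})$ from \Cref{M Lambda semi-computations}(1) in combination with \Cref{semi applicable}: the latter asserts that the $\theta$-applicability of a non-trivial word in the `special' input sector depends only on which of the three pairwise-disjoint subgroups $F(\pazocal{A})$, $F(\pazocal{A}_1)$, $F(\pazocal{A}_1\sqcup\pazocal{B})$ of $F(\pazocal{A}\cup\pazocal{A}_1\cup\pazocal{B})$ the word belongs to.  Accordingly, the argument reduces to showing that the (unique) subgroup in which $w'$ sits is preserved upon passing to the cyclic reduction $w$ of $w'$.

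The easy portion of the argument covers the scenarios in which $w'$ is already cyclically reduced.  If $w'\in\Lambda^\pazocal{A}$, then $w'\in F(\pazocal{A})$ is cyclically reduced by hypothesis; if instead $w'\in\pazocal{E}_1(\Lambda_1^\pazocal{A})$ arises from the empty $\Lambda_1^\pazocal{A}$-accepting semi-computation provided by \Cref{M_1 Lambda semi-computations}, then $w'\in\Lambda_1^\pazocal{A}\subseteq F(\pazocal{A}_1)$ is again cyclically reduced.  In both situations $w$ is simply a cyclic permutation of $w'$, hence uses precisely the same letters, and applicability transfers immediately through \Cref{semi applicable}.

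The substantive case is $w'\in\pazocal{E}_1(\Lambda_1^\pazocal{A})$ with a non-empty $\Lambda_1^\pazocal{A}$-accepting semi-computation.  Here I would invoke the explicit structure $w'\equiv u_0x_1^{\delta_1}u_1\dots x_k^{\delta_k}u_k$ provided by \Cref{M_1 Lambda semi-computations}, with $u_i\in F(\pazocal{B})$, $x_i\in\pazocal{A}_1$, and $k\geq C\geq3$.  Since every $\pazocal{A}_1$-letter of $w'$ is insulated from every other $\pazocal{A}_1$-letter by material in the disjoint alphabet $\pazocal{B}$, the only cancellations possible during cyclic reduction occur between the suffix $u_k$ and the prefix $u_0$, so $w$ is a cyclic permutation of $x_1^{\delta_1}u_1\dots u_{k-1}x_k^{\delta_k}u'$ with $u'$ the reduction of $u_ku_0$ in $F(\pazocal{B})$.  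In particular $w\in F(\pazocal{A}_1\sqcup\pazocal{B})$ and $w$ retains all interior blocks $u_1,\dots,u_{k-1}$.  The main obstacle is to prevent $w$ from landing accidentally in $F(\pazocal{A}_1)$, which would fictitiously make $w$ $\theta(s)_1^{-1}$-applicable while $w'$ is not, and this is exactly where the non-emptiness of the semi-computation pays off: the lower bound $\|u_{i-1}\|+\|u_i\|\geq\tfrac{1}{2}D_\pazocal{A}\geq2$ from \Cref{M_1 Lambda semi-computations}(1), applied at any $i\in\{2,\dots,k-1\}$, forces some interior $u_j$ with $1\leq j\leq k-1$ to be non-trivial.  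That $\pazocal{B}$-letter survives cyclic reduction, so both $w$ and $w'$ share the common profile of lying in $F(\pazocal{A}_1\sqcup\pazocal{B})\setminus(F(\pazocal{A}_1)\cup F(\pazocal{A}))$, and a final appeal to \Cref{semi applicable} completes the verification.
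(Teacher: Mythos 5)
Your overall route is the paper's route: split via \Cref{M Lambda semi-computations}(1), use the structure of the unique accepting semi-computation, and finish with \Cref{semi applicable}; the two cyclically reduced cases are handled correctly. The gap is in the substantive case, at the step asserting that every $\pazocal{A}_1$-letter of $w'$ is insulated from every other $\pazocal{A}_1$-letter by $\pazocal{B}$-material, so that cyclic reduction can only cancel within $u_ku_0$. That insulation claim is false: the $\pazocal{B}$-blocks $u_i$ supplied by \Cref{M_1 Lambda semi-computations} may individually be trivial, and nothing in that lemma prevents $u_0$ and $u_k$ from both being trivial, or from cancelling against each other completely, even when the semi-computation is non-empty --- the lower bounds there only control sums such as $\|u_ku_0\|+\|u_1\|$. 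In that situation $x_k^{\delta_k}$ and $x_1^{\delta_1}$ become cyclically adjacent, and your argument gives no reason why they cannot cancel; if they did, the cancellation could cascade into the interior blocks, and the asserted form of $w$ (hence the survival of an interior $\pazocal{B}$-letter, hence the final comparison of applicability profiles) would collapse.

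What is missing is exactly the point where the hypothesis on $\Lambda^\pazocal{A}$ enters the hard case: the $\pazocal{A}$-projection of $w'$ is $x_1^{\delta_1}\dots x_k^{\delta_k}\equiv\widetilde{\varphi}_1(w_t)$ with $w_t\in\Lambda^\pazocal{A}$, and $w_t$ is cyclically reduced by hypothesis, so $x_k^{\delta_k}x_1^{\delta_1}$ is reduced and the wrap-around cancellation must stop inside $F(\pazocal{B})$. This is how the paper argues: taking $p$ to be the maximal suffix of $u_k$ whose inverse is a prefix of $u_0$, the word $pw'p^{-1}=u_0'x_1^{\delta_1}u_1\dots x_k^{\delta_k}u_k'$ is already cyclically reduced, so $w$ is a cyclic permutation of it, retains $u_1,\dots,u_{k-1}$, and has positive $b$-length because $k\geq C>3$. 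With that one observation inserted, your proof coincides with the paper's.
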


\begin{proof}

If $w'\in\Lambda^\pazocal{A}$, then by hypothesis $w$ and $w'$ are non-trivial cyclic permutations of one another.  Hence, the statement follows from \Cref{semi subword}.

So, by \Cref{M Lambda semi-computations}(1), it suffices to assume that $w'\in\pazocal{E}_1(\Lambda_1^\pazocal{A})$.

By \Cref{M Lambda semi-computations}(2), there then exists a unique semi-computation $\pazocal{S}(w'):w'\equiv w_0\to\dots\to w_t$ of $\textbf{M}^\pazocal{L}$ in the `special' input sector which $\Lambda^\pazocal{A}$-accepts $w'$.  Let $w_t\equiv y_1^{\delta_1}\dots y_k^{\delta_k}\in\Lambda^\pazocal{A}$.  

Then, $w'\equiv u_0x_1^{\delta_1}u_1x_2^{\delta_2}\dots x_k^{\delta_k}u_k\in\pazocal{E}_1(\Lambda_1^\pazocal{A})$ where $u_j\in F(\pazocal{B})$ and $x_j=\varphi_1(y_j)\in\pazocal{A}_1$ for all $j$.  As a result, \Cref{M Lambda semi-computations}(3) implies the history $H'$ of $\pazocal{S}(w')$ is of the form $H'\equiv H\theta(s)_1^{-1}$ with $\|u_1\|+\|u_2\|\geq\frac{1}{2}D_\pazocal{A}\|H\|$.

Suppose $\|H\|=0$.  Then, $w'\equiv w_t\cdot\theta(s)_1\equiv x_1^{\delta_1}\dots x_k^{\delta_k}$.  But then $w'$ is cyclically reduced, so that again \Cref{semi subword} implies the statement.  Hence, $|w'|_b\geq\|u_1\|+\|u_2\|>0$.

Now, let $p$ be the maximal suffix of $u_k$ such that $p^{-1}$ is a prefix of $u_0$.  Further, let $u_0'$ and $u_k'$ be the (perhaps trivial) words over $\pazocal{B}\cup\pazocal{B}^{-1}$ such that $u_0\equiv p^{-1}u_0'$ and $u_k\equiv u_k'p$.  

Then, the maximality of $p$ and the assumption that $w_t\in\Lambda^\pazocal{A}$ is cyclically reduced imply that the word $pw'p^{-1}=u_0'x_1^{\delta_1}u_1x_2^{\delta_2}\dots x_k^{\delta_k}u_k'$ is cyclically reduced.

By hypothesis, $w$ is then a cyclic permutation of this word.  As a result, $w\in F(\pazocal{A}_1\cup\pazocal{B})$ with $|w|_b\geq\|u_1\|+\|u_2\|>0$ by a parameter choice $k\geq C>3$.  Hence, the statement follows from \Cref{semi applicable}.

\end{proof}

\begin{lemma} \label{semi extension}

Let $\Lambda^\pazocal{A}$ be a subset of $(\pazocal{A}\cup\pazocal{A}^{-1})^*$ consisting of cyclically reduced words of length at least $C$.  Further, let $w\in\pazocal{E}(\Lambda^\pazocal{A})$ and $\theta\in\Theta$.  Suppose there exists a $\theta$-applicable subword $v$ of $w$ such that $|v|_\pazocal{A}\geq3$.  Then $w$ is also $\theta$-applicable.

\end{lemma}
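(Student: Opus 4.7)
My plan is to split on the dichotomy given by \Cref{M Lambda semi-computations}(1), namely $\pazocal{E}(\Lambda^\pazocal{A})=\Lambda^\pazocal{A}\sqcup\pazocal{E}_1(\Lambda_1^\pazocal{A})$, and then invoke \Cref{semi applicable} to restrict which rules $\theta$ can possibly apply to the non-trivial subword $v$.

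First I would dispose of $w\in\Lambda^\pazocal{A}$: here $w\in F(\pazocal{A})$, so $v\in F(\pazocal{A})$ is non-trivial (as $|v|_\pazocal{A}\geq 3$), which by \Cref{semi applicable} forces $\theta=\theta(s)_1$; and $w\in F(\pazocal{A})$ is itself $\theta(s)_1$-applicable by the same lemma. Next I would assume $w\in\pazocal{E}_1(\Lambda_1^\pazocal{A})$, so $w,v\in F(\pazocal{A}_1\sqcup\pazocal{B})$. The case $\theta=\theta(s)_1$ is immediately excluded, since it would require $v\in F(\pazocal{A})$, whereas $|v|_\pazocal{A}\geq 3$ forces $v$ to contain at least three letters from $\pazocal{A}_1^{\pm1}$. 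If $\theta\notin\{\theta(s)_1^{\pm1}\}$, then by \Cref{semi applicable} $\theta$-applicability requires only membership in $F(\pazocal{A}_1\sqcup\pazocal{B})$, and this holds automatically for $w$.

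The real case, and the main obstacle, is $\theta=\theta(s)_1^{-1}$. Write $w\equiv u_0x_1^{\delta_1}u_1x_2^{\delta_2}\dots x_k^{\delta_k}u_k$ with $x_i\in\pazocal{A}_1$, $\delta_i\in\{\pm 1\}$, $u_i\in F(\pazocal{B})$. Since $v$ is a contiguous subword with $v\in F(\pazocal{A}_1)$ and $|v|_\pazocal{A}\geq 3$, there must exist some $j\in\{1,\dots,k-2\}$ with $u_j=u_{j+1}=1$. The index set is non-empty because $k=|w|_\pazocal{A}$, which by \Cref{semi-computation deltas} is preserved under all rules of $\textbf{M}_1^\pazocal{A}$ and hence equals the length of the terminal word in $\Lambda_1^\pazocal{A}$, so $k\geq C\geq 3$. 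I would then apply \Cref{M Lambda semi-computations}(3) at the index $i=j+1\in\{2,\dots,k-1\}$ to obtain
\[
\tfrac{1}{2}D_\pazocal{A}\|H\|\leq\|u_j\|+\|u_{j+1}\|=0,
\]
where $H$ is the history of the semi-computation of $\textbf{M}_1^\pazocal{A}$ which $\Lambda_1^\pazocal{A}$-accepts $w$. Hence $\|H\|=0$, meaning the semi-computation is empty, so $w\in\Lambda_1^\pazocal{A}\subseteq F(\pazocal{A}_1)$. By \Cref{semi applicable}, $w$ is therefore $\theta(s)_1^{-1}$-applicable, completing the argument. The only substantive step is the last one: turning the combinatorial pigeonhole ``three consecutive $\pazocal{A}_1$-letters in $v$ means two neighboring $u_i$'s vanish'' into the collapse $\|H\|=0$ via the quantitative lower bound of \Cref{M Lambda semi-computations}(3); everything else reduces to checking which alphabet $w$ lives in.
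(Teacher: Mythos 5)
Your proof is correct and takes essentially the same route as the paper's: both arguments reduce everything to \Cref{semi applicable} plus the dichotomy and quantitative bounds of \Cref{M Lambda semi-computations}, with the only substantive case being $\theta=\theta(s)_1^{-1}$, where three consecutive $\pazocal{A}_1$-letters in $v$ force two adjacent $b$-blocks of $w$ to be trivial, hence $\|H\|=0$ and $w\in\Lambda_1^\pazocal{A}$. The paper merely organizes the cases by $\theta$ rather than by the location of $w$ in $\Lambda^\pazocal{A}\sqcup\pazocal{E}_1(\Lambda_1^\pazocal{A})$ and states the pigeonhole step more tersely; your write-up spells it out but is otherwise the same argument.
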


\begin{proof}

If $\theta\neq\theta(s)_1^{\pm1}$, then \Cref{semi applicable} implies that $v\in F(\pazocal{A}_1\sqcup\pazocal{B})$.  But then \Cref{M Lambda semi-computations}(1) then implies that $w\in\pazocal{E}_1(\Lambda_1^\pazocal{A})$, so that the statement follows from \Cref{semi applicable}.

Similarly, if $\theta=\theta(s)_1$, then \Cref{semi applicable} yields $v\in F(\pazocal{A})$, so that \Cref{M Lambda semi-computations}(1) implies $w\in\Lambda^\pazocal{A}$ so that the statement follows again by \Cref{semi applicable}.

Finally, suppose $\theta=\theta(s)_1^{-1}$.  As in the first case, \Cref{semi applicable} implies $v\in F(\pazocal{A}_1)$, so that \Cref{M Lambda semi-computations}(1) implies $w\in\pazocal{E}_1(\Lambda_1^\pazocal{A})$.  However, since $|v|_\pazocal{A}\geq3$ and $|v|_b=0$, \Cref{M Lambda semi-computations}(3) implies $w\in\Lambda_1^\pazocal{A}$, and thus the statement again follows from \Cref{semi applicable}.

\end{proof}

\bigskip


\section{Groups Associated to Generalized $S$-machines}

\subsection{The groups} \label{sec-the-groups} \

As in previous literature (for example \cite{O18}, \cite{OSnon-amen}, \cite{OS19}, \cite{W}), we now associate finitely presented groups to a cyclic generalized $S$-machine $\textbf{S}$. In the case $\textbf{S}=\textbf{M}^\pazocal{L}$ (or indeed any noisy $S$-machine), the groups `simulate' the work of $\textbf{M}^\pazocal{L}$ in the precise sense described in \Cref{sec-trapezia}.

Let $\textbf{S}$ be a cyclic recognizing generalized $S$-machine with hardware $(Y,Q)$, where $Q=\sqcup_{i=0}^s Q_i$ and $Y=\sqcup_{i=1}^{s+1} Y_i$, and software the set of rules $\Theta(\textbf{S})=\Theta^+(\textbf{S})\sqcup\Theta^-(\textbf{S})$. For notational purposes, set $Q_{s+1}=Q_0$, set $Y_0=Y_{s+1}$, and denote the accept word of $\textbf{S}$ by $W_{ac}$.

For $\theta\in\Theta^+(\textbf{S})$, let $\theta=[q_0\to u_{s+1}q_0'v_1, \ q_1\to u_1q_1'v_2, \ \dots, \ q_{s-1}\to u_{s-1}q_{s-1}'v_s, \ q_s\to u_sq_s'v_{s+1}]$ where some of the arrows may take the form $\xrightarrow{\ell}$. Further, for all $i$, let $X_i(\theta)$ and $Z_i(\theta)$ be the finite subsets of $F(Y_i)$ prescribed by $\theta$ and let $f_{\theta,i}:X_i(\theta)\to Z_i(\theta)$ be the associated bijection inducing the isomorphism $\widetilde{f}_{\theta,i}:\gen{X_i(\theta)}\to\gen{Z_i(\theta)}$.

Define $T=\{\theta_i: \theta\in\Theta^+(\textbf{S}),0\leq i\leq s\}$. For notational convenience, set $\theta_{s+1}=\theta_0$ for all $\theta\in\Theta^+(\textbf{S})$.

The group $M(\textbf{S})$ is then defined by taking the (finite) generating set $\pazocal{X}=Q\cup Y\cup T$ and imposing the (finite number of) relations:

\begin{itemize}

\item $q_i\theta_{i+1}=\theta_i v_iq_i'u_{i+1}$ for all $\theta\in\Theta^+(\textbf{S})$ and $0\leq i\leq s$,

\item $x\theta_i=\theta_i \cdot f_{\theta,i}(x)$ for all $0\leq i\leq s$ and $x\in X_i(\theta)$.

\end{itemize}

As in the language of computations of the machines, letters from $Q\cup Q^{-1}$ are called \textit{$q$-letters} and those from $Y\cup Y^{-1}$ are called \textit{$a$-letters}. Additionally, those from $T\cup T^{-1}$ are called \textit{$\theta$-letters}. 

The relations of the form $q_i\theta_{i+1}=\theta_iv_iq_i'u_{i+1}$ are called \textit{$(\theta,q)$-relations}, while those of the form $x\theta_i=\theta_i\cdot f_{\theta,i}(x)$ are called \textit{$(\theta,a)$-relations}; when specificity is required, this $(\theta,a)$-relation said to be a \textit{$(\theta,a)$-relation of the $Q_{i-1}Q_i$-sector}.

Note that if $\theta$ locks the $i$-th sector, then there is no relation between $\theta$ and the elements of $F(Y_i)$.

In the particular setting of $\textbf{S}=\textbf{M}^\pazocal{L}$, let $a$ be an $a$-letter from the tape alphabet of an input sector.  

\begin{enumerate} [label=(\alph*)]

\item If $a$ is the natural copy of a letter from $\pazocal{A}\sqcup\pazocal{A}_1$, then $a^{\pm1}$ is called an \textit{$\pazocal{A}$-letter}.

\item If $a$ is a copy of a letter from $\pazocal{B}$ then $a^{\pm1}$ is called a \textit{$b$-letter}.  

\end{enumerate}

Any other $a$-letter is called \textit{ordinary $a$-letter}.  Observe that this terminology can be generalized for the groups associated to any noisy $S$-machine.

Note that for any $\theta\in\Theta^+$, every domain $X_i(\theta)$ consists of letters from the corresponding tape alphabet.  Naturally, based on the type of $a$-letter of $x\in X_i(\theta)$, the $(\theta,a)$-relation $x\theta_i=\theta_i\cdot f_{\theta,i}(x)$ is called a \textit{$(\theta,\pazocal{A}$)-relation}, a \textit{$(\theta,b)$-relation}, or an \textit{ordinary $(\theta,a)$-relation}.

The \textit{coordinate} of a $(\theta,q)$-relation of $M(\textbf{M}^\pazocal{L})$ is the coordinate of either of its $q$-letters.  Accordingly, the \textit{coordinate} of a $(\theta,a)$-relation of $M(\textbf{M}^\pazocal{L})$ is taken to be $i$ if the tape letters are from $(Y_j^\pazocal{L}(i)\cup\pazocal{Y}_j^\pazocal{L}(i))^{\pm1}$ for some $j$.


However, the group $M(\textbf{S})$ evidently lacks any reference to the accept configuration. To amend this, the group $G(\textbf{S})$ is constructed by adding one more relation to those defining $M(\textbf{S})$, namely the \textit{hub-relation} $W_{ac}=1$. In other words, $G(\textbf{S})\cong M(\textbf{S})/\gen{\gen{W_{ac}}}$.

Moreover, it is useful for the purposes of this article to consider extra relations, called \textit{$a$-relations}, within the language of tape letters. If $\Omega$ is the set of relators defining these $a$-relations, then the groups arising from the addition of $a$-relations are denoted by $M_\Omega(\textbf{S})$ and $G_\Omega(\textbf{S})$. Hence, $M_\Omega(\textbf{S})\cong M(\textbf{S})/\gen{\gen{\Omega}}$ and $G_\Omega(\textbf{S})\cong G(\textbf{S})/\gen{\gen{\Omega}}$.

It is henceforth taken as an assumption that any $a$-relation adjoined to the groups associated to the machine $\textbf{M}^\pazocal{L}$ corresponds to a word over the alphabet $\pazocal{A}\cup\pazocal{A}_1\cup\pazocal{B}$ of the `special' input sector.  

In particular, it is assumed that $\Omega$ is the set of all cyclically reduced words over $(\pazocal{A}\cup\pazocal{A}_1\cup\pazocal{B})^{\pm1}$ which are freely conjugate to an element of $\pazocal{E}(\Lambda^\pazocal{A})$, where $\Lambda^\pazocal{A}$ is a subset of $(\pazocal{A}\cup\pazocal{A}^{-1})^*$ satisfying:

\begin{enumerate}[label=(L\arabic*)]

\item $\Lambda^\pazocal{A}$ consists entirely of cyclically reduced words of length at least $C$

\item $\Lambda^\pazocal{A}$ is closed under taking inverses

\item $\Lambda^\pazocal{A}$ is closed under taking cyclic permutations

\item For any $w_1,w_2\in\Lambda^\pazocal{A}$, either $w_1\equiv w_2^{-1}$ or $w_1w_2$ is freely conjugate to an element of $\Lambda^\pazocal{A}$

\item $\pazocal{L}\subseteq\Lambda^\pazocal{A}$

\end{enumerate}

The following is then a consequence of these conditions:

\begin{lemma} \label{Omega inverses}

The set of $a$-relators $\Omega$ is closed under taking inverses.

\end{lemma}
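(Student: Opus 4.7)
The plan is to reduce the statement to showing that $\pazocal{E}(\Lambda^\pazocal{A})$ itself is closed under inversion, and then invoke hypothesis (L2) saying that $\Lambda^\pazocal{A}$ is closed under taking inverses.

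First, take any $w\in\Omega$. By definition, $w$ is cyclically reduced over $(\pazocal{A}\cup\pazocal{A}_1\cup\pazocal{B})^{\pm1}$ and is freely conjugate to some $w'\in\pazocal{E}(\Lambda^\pazocal{A})$. The word $w^{-1}$ is immediately seen to be cyclically reduced and is freely conjugate to $(w')^{-1}$, so it suffices to verify that $(w')^{-1}\in\pazocal{E}(\Lambda^\pazocal{A})$.

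To do so, pick a $\Lambda^\pazocal{A}$-accepting semi-computation $\pazocal{S}:w'\equiv w_0\to w_1\to\dots\to w_t$ with $w_t\in\Lambda^\pazocal{A}$ and history $H\equiv\theta_1\cdots\theta_t$. As noted just before \Cref{M compressed semi-computation three A}, every such rule $\theta_i$ is either $\theta(s)_1^{\pm1}$ or identifies with a rule of $\textbf{M}_1^\pazocal{A}$ in the $Q_0^\pazocal{A}Q_1^\pazocal{A}$-sector; in either case, its action on tape words is realised by a group isomorphism (namely $\widetilde{\varphi}_1^{\pm1}$, respectively $\widetilde{f}_{\theta_i,1}$). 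Since each applicability domain $\gen{X_1(\theta_i)}$ is a subgroup, it contains $w_{i-1}^{-1}$ whenever it contains $w_{i-1}$; and the homomorphism property gives
\[
w_{i-1}^{-1}\cdot\theta_i \;=\; (w_{i-1}\cdot\theta_i)^{-1} \;=\; w_i^{-1}.
\]
Hence $(w')^{-1}\equiv w_0^{-1}\to w_1^{-1}\to\dots\to w_t^{-1}$ is a semi-computation of $\textbf{M}^\pazocal{L}$ in the `special' input sector with history $H$. Property (L2) ensures $w_t^{-1}\in\Lambda^\pazocal{A}$, so this semi-computation $\Lambda^\pazocal{A}$-accepts $(w')^{-1}$, and therefore $(w')^{-1}\in\pazocal{E}(\Lambda^\pazocal{A})$ as required.

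I do not expect any substantive obstacle: the statement is a transparent consequence of (i) the groupoid-like structure of the rules of $\textbf{M}^\pazocal{L}$ active in the `special' input sector (every such rule acts via a group isomorphism on the tape word), combined with (ii) hypothesis (L2). The only mild point to be careful about is that the semi-computation really takes place in the `special' input sector, so that the restricted list of applicable rules described in \Cref{semi applicable} is what governs the analysis; but this is already built into the definition of $\pazocal{E}(\Lambda^\pazocal{A})$.
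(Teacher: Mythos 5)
Your proof is correct and follows essentially the same route as the paper: pass to a word $w'\in\pazocal{E}(\Lambda^\pazocal{A})$ freely conjugate to $w$, observe that each rule of a semi-computation acts via an isomorphism so the inverse words $w_i^{-1}$ form a semi-computation with the same history, and then invoke condition (L2) to conclude that this semi-computation $\Lambda^\pazocal{A}$-accepts $(w')^{-1}$, whence $w^{-1}\in\Omega$. Your added remarks about the applicability domains being subgroups simply make explicit what the paper summarizes as "the application of each rule is the application of an isomorphism."
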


\begin{proof}

Let $w\in\Omega$.  Then, there exists a word $v\in\pazocal{E}(\Lambda^\pazocal{A})$ which is freely conjugate to $w$.

By definition, there then exists a (unique) semi-computation $\pazocal{S}(v):v\equiv v_0\to\dots\to v_t$ of $\textbf{M}^\pazocal{L}$ in the `special' input sector which $\Lambda^\pazocal{A}$-accepts $v$.  Let $H\equiv\theta_1\dots\theta_t$ be the history of $\pazocal{S}(v)$.

As the application of each rule of a semi-computation is the application of an isomorphism, it follows that for all $i\in\{1,\dots,t\}$, $v_{i-1}^{-1}$ is $\theta_i$-applicable with $v_{i-1}^{-1}\cdot\theta_i=(v_{i-1}\cdot\theta)^{-1}=v_i^{-1}$.  Hence, there exists a semi-computation $\overline{\pazocal{S}(v)}:v^{-1}\equiv v_0^{-1}\to\dots\to v_t^{-1}$ of $\textbf{M}^\pazocal{L}$ in the `special' input sector with history $H$.

But condition (L2) implies that $v_t^{-1}\in\Lambda^\pazocal{A}$, so that $\overline{\pazocal{S}(w)}$ $\Lambda^\pazocal{A}$-accepts $v^{-1}$.  Hence, $w^{-1}$ is a cyclically reduced word which is freely conjugate to $v^{-1}$, so that $w^{-1}\in\Omega$.

\end{proof}


Note that though the canonical presentations of $M_\Omega(\textbf{S})$ and $G_\Omega(\textbf{S})$ have finite generating sets, they may not be finite. In fact, in all relevant situations encountered in the sequel, these presentations necessarily have infinitely many relators.

\smallskip


\subsection{Bands and annuli} \label{sec-bands and annuli} \

The majority of the arguments presented in the forthcoming sections rely on van Kampen and Schupp diagrams (see \Cref{sec-Diagrams}) over the presentations of the groups introduced in \Cref{sec-the-groups}. To present these arguments efficiently, it is convenient to first differentiate between the types of edges and cells that abound in such diagrams, doing so in a way similar to that employed in \cite{O18}, \cite{OS19}, and \cite{W}. 

For simplicity, when possible the presence of $0$-edges and $0$-cells will be disregarded in these diagrams.  Hence, adjacent edges are generally identified in these settings.  However, even when ignored, the existence of 0-cells should be kept in mind, as $0$-refinement ensures that many of the diagrammatic operations performed in the sequel do not alter the desired topological properties of the diagram (for example, so that the process of removing a pair of cancellable cells in a circular diagram results in a circular diagram).

Additionally, it is henceforth taken as an assumption that the contour of any circular diagram, the contour of any subdiagram, the contour of any cell, and the outer contour of any annular diagram is traced in the counterclockwise direction.  Conversely, it is assumed that the inner contour of an annular diagram is traced in the clockwise direction.

For any diagram $\Delta$ over $G_\Omega(\textbf{S})$ (or any group associated to a generalized $S$-machine $\textbf{S}$), an edge labelled by a $q$-letter is called a \textit{$q$-edge}. Similarly, an edge labelled by a $\theta$-letter is called a \textit{$\theta$-edge} and one labelled by an $a$-letter is a \textit{$a$-edge}. 

For a path \textbf{p} in $\Delta$, the (combinatorial) length of $\textbf{p}$ is denoted $\|\textbf{p}\|$. Further, the path's \textit{$a$-length} $|\textbf{p}|_a$ is the number of $a$-edges in the path. The path's \textit{$\theta$-length} and \textit{$q$-length}, denoted $|\textbf{p}|_{\theta}$ and $|\textbf{p}|_q$, respectively, are defined similarly. 

A cell whose contour label corresponds to a $(\theta,q)$-relation is called a \textit{$(\theta,q)$-cell}. Similarly, there are \textit{$(\theta,a)$-cells}, \textit{$a$-cells}, and \textit{hubs}.  More specifically, a $(\theta,a)$-cell is called a \textit{$(\theta,a)$-cell of the $Q_{i-1}Q_i$-sector} if its contour label corresponds to such a $(\theta,a)$-relation, while the coordinate of a $(\theta,q)$-cell or $(\theta,a)$-cell is defined similarly.

In the particular setting where $\textbf{S}=\textbf{M}^\pazocal{L}$, an $a$-edge is called an \textit{$\pazocal{A}$-edge}, a \textit{$b$-edge}, or an \textit{ordinary $a$-edge} based on the type of $a$-letter labelling it.   

The \textit{$\pazocal{A}$-length}, \textit{$b$-length}, and \textit{ordinary $a$-length} of the path $\textbf{p}$, denoted $|\textbf{p}|_\pazocal{A}$, $|\textbf{p}|_b$, and $|\textbf{p}|_o$, respectively, are then defined in much the same way as above.  Note that $|\textbf{p}|_a=|\textbf{p}|_\pazocal{A}+|\textbf{p}|_b+|\textbf{p}|_o$ for any path $\textbf{p}$.  Moreover, if $\lab(\textbf{p})$ is a reduced word over the tape alphabet of an input sector, then $|\textbf{p}|_\pazocal{A}$ and $|\textbf{p}|_b$ agree with $|\lab(\textbf{p})|_\pazocal{A}$ and $|\lab(\textbf{p})|_b$, respectively.  Conversely, if $\lab(\textbf{p})$ is a reduced word over the tape alphabet of any other sector, then $|\textbf{p}|_o=|\textbf{p}|_a$.

A $(\theta,a)$-cell is called a \textit{$(\theta,\pazocal{A})$-cell}, a \textit{$(\theta,b)$-cell}, or an \textit{ordinary $(\theta,a)$-cell} based on the type of $(\theta,a)$-relation defining its boundary label. Note that it is a consequence of these definitions that $(\theta,b)$-cells and ordinary $(\theta,a)$-cells correspond to relators of the form $[\theta_i,y]$ for some index $i$ and some $a$-letter $y$.

Observe again that the definitions relating to $\textbf{M}^\pazocal{L}$ can be generalized to applying to the groups associated to all noisy $S$-machines.

In the general setting of a reduced diagram $\Delta$ over any presentation with generating set $X$, fix a subset $\pazocal{Z}\subseteq X$. For $m\geq1$, suppose $\pazocal{S}=(\pi_1,\dots,\pi_m)$ is a sequence of distinct cells in $\Delta$, $(\textbf{e}_0,\textbf{e}_1,\dots,\textbf{e}_m)$ is a sequence of edges of $\Delta$, and $\eps\in\{\pm1\}$ is a number such that the following conditions hold:

\begin{itemize}

\item $\textbf{e}_{i-1}^{-1}$ and $\textbf{e}_i$ are edges of $\partial\pi_i$

\item $\lab(\textbf{e}_i)\in\pazocal{Z}^\eps$

\item $\textbf{e}_{i-1}^{-1}$ and $\textbf{e}_i$ are the only edges of $\partial\pi_i$ labelled by a letter of $\pazocal{Z}\cup\pazocal{Z}^{-1}$

\end{itemize}

Then $\pazocal{S}$ is called a \textit{$\pazocal{Z}$-band of length $m$} with \textit{defining edge sequence} $(\textbf{e}_0,\textbf{e}_1,\dots,\textbf{e}_m)$ comprised of the \textit{defining edges} $\pazocal{I}_\pazocal{S}=\{\textbf{e}_0,\textbf{e}_1,\dots,\textbf{e}_m\}$.  In this case, $\pazocal{S}$ is called a \textit{positive} or \textit{negative} $\pazocal{Z}$-band depending on the value of $\eps$.

Using only edges from the boundaries of $\pi_1,\dots,\pi_m$, there exists a simple closed path $\textbf{e}_0^{-1}\textbf{q}_1\textbf{e}_m(\textbf{q}_2)^{-1}$ such that $\textbf{q}_1$ and $\textbf{q}_2$ are simple (perhaps closed) paths.  What's more, using $0$-refinement (or gluing), it may be assumed that $\textbf{q}_1$ and $\textbf{q}_2$ both have reduced label. In this case, $\textbf{q}_1$ is called the \text{bottom} of $\pazocal{S}$, denoted $\textbf{bot}(\pazocal{S})$, while $\textbf{q}_2$ is called the \textit{top} of $\pazocal{S}$ and denoted $\textbf{top}(\pazocal{S})$. When the top and bottom of the band need not be distinguished, they are called the \textit{sides} of the band.

If $\textbf{e}_0=\textbf{e}_m$ in a $\pazocal{Z}$-band $\pazocal{S}$ of length $m\geq1$, then $\pazocal{S}$ is called a \textit{$\pazocal{Z}$-annulus}. 

If $\pazocal{S}$ is a non-annular $\pazocal{Z}$-band, then identifying $\pazocal{S}$ with the subdiagram of $\Delta$ consisting of its cells, $\textbf{e}_0^{-1}\textbf{q}_1\textbf{e}_m\textbf{q}_2^{-1}$ is called the \textit{standard factorization} of $\partial\pazocal{S}$.

\begin{figure}[H]
\centering
\begin{subfigure}[b]{0.48\textwidth}
\centering
\raisebox{0.5in}{\includegraphics[scale=1.35]{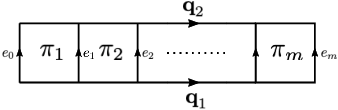}}
\caption{Non-annular $\pazocal{Z}$-band of length $m$}
\end{subfigure}\hfill
\begin{subfigure}[b]{0.48\textwidth}
\centering
\includegraphics[scale=1.35]{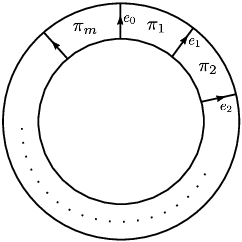}
\caption{Annular $\pazocal{Z}$-band of length $m$}
\end{subfigure}
\caption{ \ }
\end{figure}

Note that $\bar{\pazocal{S}}=(\pi_m,\dots,\pi_1)$ is a $\pazocal{Z}$-band of length $m$ with defining edge sequence $(\textbf{e}_m^{-1},\dots,\textbf{e}_1^{-1},\textbf{e}_0^{-1})$ (and so $\pazocal{I}_{\bar{\pazocal{S}}}=\pazocal{I}_\pazocal{S}^{-1}$), so that $\pazocal{S}$ is a positive $\pazocal{Z}$-band if and only if $\bar{\pazocal{S}}$ is a negative $\pazocal{Z}$-band.  Hence, a $\pazocal{Z}$-band of length $m\geq1$ can be identified with the collection of cells that comprise it along with a \textit{direction} determined by whether the band is positive or negative.

For completeness, the definition of $\pazocal{Z}$-band is extended by saying that any edge $\textbf{e}$ labelled by a letter of $\pazocal{Z}^{\pm1}$ is a \textit{$\pazocal{Z}$-band of length zero} with defining edge sequence $(\textbf{e})$.  Naturally, this band is positive or negative depending on whether $\lab(\textbf{e})$ is an element of $\pazocal{Z}$ or $\pazocal{Z}^{-1}$, respectively.

A $\pazocal{Z}$-band $\pazocal{S}_1$ is a \textit{(proper) subband} of a $\pazocal{Z}$-band $\pazocal{S}_2$ if the defining edge sequence of $\pazocal{S}_1$ is a (proper) subsequence of that of $\pazocal{S}_2$.  A $\pazocal{Z}$-band is said to be \textit{maximal} if it is not a proper subband of any other $\pazocal{Z}$-band.  Note that every edge labelled by a letter of $\pazocal{Z}$ (resp. $\pazocal{Z}^{-1}$) is a defining edge of a maximal positive (resp. negative) $\pazocal{Z}$-band; moreover, if it is non-annular, then this maximal $\pazocal{Z}$-band is unique.

If $\pazocal{S}$ is a non-annular $\pazocal{Z}$-band, then $\textbf{e}_0$ and $\textbf{e}_m$ are called the \textit{ends} of $\pazocal{S}$.  If $\textbf{e}_0$ (or $\textbf{e}_m^{-1}$) is an edge of $\partial\pi$ for some cell $\pi$ which is not a cell comprising $\pazocal{S}$, then $\pazocal{S}$ is said to \textit{have an end on $\pi$}.   Naturally, $\pazocal{S}$ can \textit{have two ends on $\pi$} if both $\textbf{e}_0$ and $\textbf{e}_m^{-1}$ are edges of $\partial\pi$.  Similarly, if $\textbf{e}_0^{-1}$ (or $\textbf{e}_m$) is an edge of a subpath $\textbf{t}$ of a boundary component of $\Delta$, then $\pazocal{S}$ is said to \textit{have an end on} $\textbf{t}$.

A $\pazocal{Z}_1$-band and a $\pazocal{Z}_2$-band \textit{cross} if they have a common cell and $\pazocal{Z}_1^{\pm1}\cap\pazocal{Z}_2^{\pm1}=\emptyset$.

In the particular setting of a reduced diagram $\Delta$ over a group associated to a generalized $S$-machine, there exist \textit{$q$-bands} corresponding to bands arising from taking $\pazocal{Z}$ to be some part of the state letters. Note that the makeup of the relations precludes the inclusion of a hub in a $q$-band, so that every cell of the band is a $(\theta,q)$-cell.  

The natural projection of the label of the top (or bottom) of a $q$-band onto $\Theta^+\sqcup\Theta^-$ is called the \textit{history} of the band.  Note that the structure of the relations implies that any reduction of adjacent $\theta$-edges in a side would necessitate a pair of cancellable $(\theta,q)$-cells in the band.  Hence, if $H$ is the history of a $q$-band $\pazocal{Q}$, then $H\in F(\Theta^+)$ and $\pazocal{Q}$ has length $\|H\|$.

Similarly, for a positive (generalized) rule $\theta$ of the machine, there exist \textit{$\theta$-bands} given by taking $\pazocal{Z}$ to be the set of all letters $\theta_i$.  The \textit{history} of a $\theta$-band $\pazocal{S}$ is taken to be $\theta$ if $\pazocal{S}$ is a positive $\theta$-band and $\theta^{-1}$ if it is negative.  The natural projection (without reduction) of the top (or bottom) of a $\theta$-band onto the alphabet given by the letters of the standard base is called the \textit{base} of the band.  As above, the length of the base of the band is equal to the number of $(\theta,q)$-cells in the band.

As opposed to the groups associated to typical $S$-machines (see \cite{W}), though, letters from the tape alphabet of an arbitrary generalized $S$-machine do not obviously define bands in the associated diagrams. However, in the particular setting of diagrams over the groups associated to the machine $\textbf{M}^\pazocal{L}$ (or to any noisy $S$-machine), these bands can be defined by restricting the types of cells which can be present.  Such bands are called \textit{$a$-bands} and are classified as follows:

\begin{enumerate}[label=({\arabic*})]

\item For any $a\in\pazocal{A}$ and any input tape alphabet, there exist $a$-bands given by $\pazocal{Z}=\{\tilde{a},\tilde{a}_1\}$, where $\tilde{a}$ and $\tilde{a}_1$ are the corresponding copies of $a$ and $\varphi_1(a)$, respectively, in this input tape alphabet.


\item For any $b\in\pazocal{B}$ and any input tape alphabet, there exist $a$-bands given by $\pazocal{Z}=\{\tilde{b}\}$, where $\tilde{b}$ is the corresponding copy of $b$ in this input tape alphabet.


\item For any tape letter $a$ of a non-input tape alphabet, there exist $a$-bands given by $\pazocal{Z}=\{a\}$

\end{enumerate}

The $a$-bands of type (1) are called \textit{$\pazocal{A}$-bands}.  Similarly, those of type (2) are called \textit{$b$-bands} and those of type (3) are called \textit{ordinary $a$-bands}.

In all cases, the inclusion of $(\theta,q)$- or $a$-cells in an $a$-band is forbidden, so that any such band must consist only of $(\theta,a)$-cells. Moreover, the inclusion of $(\theta,\pazocal{A})$-cells is forbidden in $b$-bands. Hence, each cell of any $\pazocal{A}$-band is a $(\theta,\pazocal{A})$-cell, each cell of any $b$-band is a $(\theta,b)$-cell, and each cell of any ordinary $a$-band is an ordinary $(\theta,a)$-cell.

Given a $b$-band or an ordinary $a$-band $\pazocal{S}$, the makeup of the groups' relations dictates that the defining edges are labelled identically. Similarly, the defining edges of a $\theta$-band correspond to the same rule, though the index of these edges may differ.

The \textit{history} of an $a$-band is defined in much the same way as it is for $q$-bands.  As in that setting, if $H$ is the history of an $a$-band $\pazocal{U}$, then $H\in F(\Theta^+)$ and $\pazocal{U}$ has length $\|H\|$.

Note that distinct maximal $q$-bands either consist of the same cells with opposite direction or do not intersect at all.  In particular, distinct maximal positive $q$-bands cannot intersect.  Analogous observations apply to distinct maximal $\theta$-bands and distinct maximal $a$-bands.

Given the makeup of the relations of the groups defined in \Cref{sec-the-groups}, a maximal band in a reduced diagram over the canonical presentation of $G_\Omega(\textbf{M}^\pazocal{L})$ can have ends in the following ways:

\begin{itemize}

\item a maximal $\pazocal{A}$-band can have an end on a $(\theta,q)$-cell, on an $a$-cell, or on the diagram's boundary;

\item a maximal $b$-band can have an end on a $(\theta,\pazocal{A})$-cell, on a $(\theta,q)$-cell, on an $a$-cell, or on the diagram's boundary;

\item a maximal ordinary $a$-band can have an end on a $(\theta,q)$-cell or on the diagram's boundary;

\item a maximal $q$-band can have an end on a hub or on the diagram's boundary; and

\item a maximal $\theta$-band can have an end only on the diagram's boundary.

\end{itemize}

Note that if one of these bands has an end as above in one part of the diagram, then it must also have another end in another part of the diagram as it cannot be an annulus.


Suppose the sequence of cells $(\pi_0,\pi_1,\dots,\pi_m)$ comprises a $\theta$-band and $(\gamma_0,\gamma_1,\dots,\gamma_\ell)$ a $q$-band such that $\pi_0=\gamma_0$, $\pi_m=\gamma_\ell$, and no other cells are shared. Suppose further that $\partial\pi_0$ and $\partial\pi_m$ both contain edges on the outer countour of the annulus bounded by the two bands. Then the union of these two bands is called a \textit{$(\theta,q)$-annulus} and $\pi_0$ and $\pi_m$ are called its \textit{corner} cells. 

A \textit{$(\theta,a)$-annulus} is defined similarly, with a $\theta$-band and an $a$-band intersecting twice. If the $a$-band defining this annulus is an $\pazocal{A}$-band, then the $(\theta,a)$-annulus is called a \textit{$(\theta,\pazocal{A})$-annulus}. A \textit{$(\theta,b)$-annulus} and an \textit{ordinary $(\theta,a)$-annulus} are defined similarly.

\begin{figure}[H]
\centering
\includegraphics[scale=0.8]{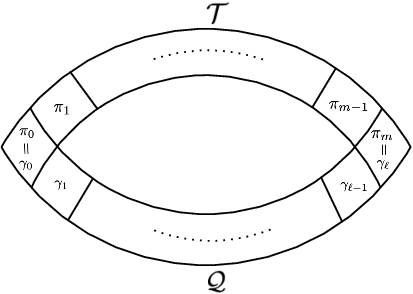}
\caption{$(\theta,q)$-annulus with defining $\theta$-band $\pazocal{T}$ and $q$-band $\pazocal{Q}$}
\end{figure}

Despite the adjustment to the relations, the next statement follows in just the same way as its analogue for diagrams over the groups associated to $S$-machines.

\begin{lemma}[Compare to Lemma 6.1 of \cite{O97}] \label{basic annuli 1}

For any generalized $S$-machine $\textbf{S}$, a reduced circular diagram $\Delta$ over $G_\Omega(\textbf{S})$ contains no:

\begin{enumerate}[label=({\arabic*})]

\item $(\theta,q)$-annuli

\item $q$-annuli

\end{enumerate}

\end{lemma}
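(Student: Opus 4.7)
My plan is to prove (1) first, and then deduce (2) from it.

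For (1), I would argue by contradiction: suppose $\Delta$ contains a $(\theta,q)$-annulus and choose one, $\mathcal{R}$, with the minimum number of cells strictly enclosed in its interior annular region $\Sigma$. Let $\pazocal{T}$ and $\pazocal{Q}$ be its bounding $\theta$- and $q$-bands, with corner cells $\pi_0 = \gamma_0$ and $\pi_m = \gamma_\ell$. I would first show that $\Sigma$ contains no cells: any maximal $\theta$- or $q$-band initiated at an edge on the inner contour of $\mathcal{R}$ and entering $\Sigma$ cannot terminate on $\partial\Delta$ (which lies outside $\Sigma$), and, when it is a $\theta$-band, cannot form a $\theta$-annulus enclosing strictly fewer cells (as adjusting the choice of $\mathcal{R}$ would then contradict minimality). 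Such a band must therefore return to the inner contour of $\mathcal{R}$, producing a $(\theta,q)$-annulus strictly inside $\Sigma$ and violating the minimal choice. The possibility that a $q$-band terminates at a hub inside $\Sigma$ is handled by iterating the argument on $q$-bands emanating from that hub, since each hub has multiple $q$-edges and every such band is likewise trapped in $\Sigma$.

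With $\Sigma$ empty, the inner arcs of $\pazocal{T}$ and $\pazocal{Q}$ bound a disk containing no cells, so their labels agree in the free group on the generators of $G_\Omega(\textbf{S})$. Both corner cells $\pi_0$ and $\pi_m$ are $(\theta,q)$-cells for the single rule $\theta$ of $\pazocal{T}$ and the single $q$-part of $\pazocal{Q}$, hence both are instances of the same $(\theta,q)$-relation. The annular symmetry combined with the freely trivial inner region then forces a direct comparison of the two boundary labels, and one sees that $\pi_0$ and $\pi_m$ form a pair of cancellable cells in the sense of \Cref{sec-Diagrams}, contradicting the assumption that $\Delta$ is reduced.

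For (2), suppose $\pazocal{Q}$ is a $q$-annulus in $\Delta$. Its inner contour bounds a subdiagram $\Delta'$ whose boundary consists precisely of the inner $\theta$-edges of the cells of $\pazocal{Q}$, since each such cell (being a $(\theta,q)$-cell) contributes exactly two $\theta$-edges, one on each contour of $\pazocal{Q}$. Choose any cell $\gamma$ of $\pazocal{Q}$ and follow the maximal $\theta$-band $\pazocal{T}$ through $\gamma$ into $\Delta'$. Since $\Delta'$ is strictly interior to $\Delta$ and maximal $\theta$-bands can terminate only on $\partial\Delta$, $\pazocal{T}$ must exit $\Delta'$ through $\partial\Delta'$, landing on a second cell $\gamma'$ of $\pazocal{Q}$; necessarily $\gamma' \neq \gamma$, since both $\theta$-edges of $\gamma$ are already accounted for by $\pazocal{T}$ and $\pazocal{Q}$. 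Then $\pazocal{T}$ together with the arc of $\pazocal{Q}$ from $\gamma$ to $\gamma'$ forms a $(\theta,q)$-annulus, contradicting the already-established (1).

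The principal obstacle is the cancellability analysis at the end of (1): showing rigorously that the emptiness of $\Sigma$ forces the corner cells to be mutually cancellable requires a careful bookkeeping of the $a$-letters appearing in the $(\theta,q)$-relation against those appearing on the inner arcs of $\pazocal{T}$ and $\pazocal{Q}$, and a $0$-refinement to exhibit the required connecting path of freely trivial label between vertices of $\partial\pi_0$ and $\partial\pi_m$.
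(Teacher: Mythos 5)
Your part (2) is fine and is essentially the paper's argument (minor slip: the inner contour of a $q$-annulus carries $a$-edges as well as $\theta$-edges, but this does not affect the reasoning). The problem is in part (1), where your route hinges on the claim that the interior region $\Sigma$ of a minimal $(\theta,q)$-annulus contains no cells, and that claim has a genuine gap. Your band argument only controls cells lying on $\theta$- or $q$-bands that meet the inner contour. It cannot exclude $a$-cells, whose boundaries consist entirely of $a$-edges and which therefore belong to no $\theta$- or $q$-band; it cannot exclude hubs whose $q$-edges are matched to $q$-edges on the inner side of the $\theta$-band of $\mathcal{R}$ or to $q$-edges of other hubs (your "iterate on $q$-bands emanating from the hub" remark breaks down precisely when those bands have length zero); and it cannot exclude $(\theta,a)$-cells forming $\theta$-annuli lying wholly inside $\Sigma$ without touching the inner contour. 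Your parenthetical appeal to minimality against such $\theta$-annuli is a non sequitur: the minimum was taken over $(\theta,q)$-annuli, and a $\theta$-annulus is not one; moreover, at this point of the paper $\theta$-annuli have \emph{not} been ruled out for diagrams over $G_\Omega(\textbf{S})$ — those results are proved later, only for $M(\textbf{M}^\pazocal{L})$ and for minimal diagrams, and their proofs rely on the present lemma. Since for a general $\Omega$ the region $\Sigma$ can legitimately contain $a$-cells and hubs, the emptiness claim on which your final free-group comparison rests is unjustified. (Even granting $\Sigma$ empty, "both corner cells are instances of the same relation" is not the cancellability criterion; you need mutually inverse boundary labels, which comes from the history of $\pazocal{Q}$ having the form $\theta w\theta^{-1}$.)

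The fix is to drop the attempt to empty the interior, which is exactly what the paper's proof does: work in the subdiagram $\Delta_S$ bounded by the \emph{outer} contour of the annulus, and observe that every $\theta$-edge of $\partial\Delta_S$ lies on a cell of the defining $q$-band $\pazocal{Q}$, one per cell. Since $\Delta$ is reduced, the history $\theta w\theta^{-1}$ of $\pazocal{Q}$ is reduced, so $w\neq1$ and some non-corner cell of $\pazocal{Q}$ contributes a $\theta$-edge to $\partial\Delta_S$; the maximal $\theta$-band of $\Delta_S$ through that edge must have its second end on another such $\theta$-edge (it can end nowhere else, and it is not an annulus because it has an end), producing a $(\theta,q)$-annulus whose $q$-subband has history a proper subword of $w$. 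Iterating this descent yields a $\theta$-band joining two adjacent cells of $\pazocal{Q}$, which are then cancellable — a contradiction that never requires any knowledge of the $a$-cells, hubs, or $\theta$-annuli that may sit inside the annulus.
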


For the diagrams over the groups associated to the generalized $S$-machine $\textbf{M}^\pazocal{L}$ (or more generally those associated to any noisy $S$-machine), the existence and makeup of $a$-bands allow for a similar study.  While the makeup of the group relations differ from those in previous settings, some particular subdiagrams can be ruled out in just the same way as in those settings.  For example, the next statement follows by an argument nearly identical to its analogue in \cite{W}.

\begin{lemma}[Lemma 8.1 of \cite{W}] \label{basic annuli 2}

A reduced circular diagram $\Delta$ over $G_\Omega(\textbf{M}^\pazocal{L})$ contains no:

\begin{enumerate}[label=({\arabic*})]

\item $(\theta,a)$-annuli

\item $a$-annuli

\end{enumerate}

\end{lemma}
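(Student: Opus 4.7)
The plan is to follow the proof of Lemma \ref{basic annuli 1} very closely, using maximal $a$-bands of $\Delta$ in the same role as the maximal $q$-bands played there. The only structural facts about $q$-bands that drove that argument were (i) each $(\theta,q)$-cell has exactly two $\theta$-edges, one on each side of its $q$-band, and (ii) a maximal $\theta$-band can only end on the ambient diagram's boundary. Both facts carry over verbatim to $(\theta,a)$-cells and $a$-bands (including $\pazocal{A}$-, $b$-, and ordinary $a$-bands), so the strategy transfers.

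For part (1), suppose $\Delta$ contains a $(\theta,a)$-annulus $S$ with defining $\theta$-band $\pazocal{T}$ and $a$-band $\pazocal{U}$, and let $\Delta_S$ be the subdiagram bounded by the outer component of $\partial S$. Since both corner cells of $S$ lie in $\pazocal{T}$, the history $H$ of $\pazocal{U}$ has the form $\theta w \theta^{-1}$ for some rule $\theta$ and $w \in F(\Theta^+(\textbf{M}^\pazocal{L}))$. If $H$ were unreduced, an adjacent cancelling pair in $H$ would yield a cancellable pair of $(\theta,a)$-cells in $\pazocal{U}$, contradicting reducedness; hence $H$ is reduced and $\pazocal{U}$ contains a cell $\pi$ strictly between the corner cells. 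Let $\textbf{e}$ be the $\theta$-edge of $\pi$ facing the inner hole of $S$, let $\pazocal{T}'$ be the maximal $\theta$-band of $\Delta$ containing $\textbf{e}$, and consider the segment of $\pazocal{T}'$ travelling through $\pi$ into the hole. Since all $\theta$-edges on the inner contour of $S$ come from cells of $\pazocal{U}$ (the remaining portion of this contour being a side of $\pazocal{T}$, which carries no $\theta$-letters) and $\pazocal{T}' \neq \pazocal{T}$ (two distinct maximal $\theta$-bands share no cells, but $\pi \in \pazocal{T}' \setminus \pazocal{T}$), this segment must exit the hole through a $\theta$-edge on some other cell $\pi'$ of $\pazocal{U}$. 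Together, this segment of $\pazocal{T}'$ and the subband of $\pazocal{U}$ from $\pi$ to $\pi'$ form a smaller $(\theta,a)$-annulus whose defining $a$-band has history a proper subword of $w$. Iterating this construction eventually produces a $\theta$-band of length zero whose two ends lie on the $\theta$-edges of two consecutive cells of an $a$-band; since those cells already share an $a$-edge, the additional shared $\theta$-edge makes them cancellable, a contradiction.

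For part (2), suppose $\Delta$ contains an $a$-annulus $S$ and let $\Delta_S$ be the subdiagram bounded by its outer contour. Each cell of $S$ is a $(\theta,a)$-cell contributing one $\theta$-edge to the inner contour of $S$. Pick such an edge $\textbf{e}$ on a cell $\pi \in S$ and let $\pazocal{T}$ be the maximal $\theta$-band of $\Delta$ containing $\textbf{e}$. The segment of $\pazocal{T}$ entering the hole at $\pi$ must terminate on the inner contour of $S$, and since $\theta$-edges of that contour all belong to cells of $S$, this segment reaches another cell $\pi'$ of $S$. The segment of $\pazocal{T}$ between $\pi$ and $\pi'$, together with the subband of $S$ between them, then forms a $(\theta,a)$-annulus in $\Delta$, contradicting part (1).

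The main technical subtlety lies in the uniform treatment of the three kinds of $a$-bands, particularly $\pazocal{A}$-bands, whose cells carry $b$-letters on their boundaries in addition to $\pazocal{A}$-letters; one must check that each $(\theta,\pazocal{A})$-cell nonetheless presents exactly two $\pazocal{A}$-edges and exactly two $\theta$-edges on its boundary, so that $\pazocal{A}$-bands behave structurally like the other $a$-bands and the $\theta$-edge counting argument above applies uniformly. This is immediate from the explicit form $a_1 \theta_1 = \theta_1 \cdot v(y,a) \cdot a_1$ of the relevant $(\theta,\pazocal{A})$-relations of $\textbf{M}_1^\pazocal{A}$ (and analogously for the transition rule $\theta(s)_1^{\pm 1}$), since $v(y,a) \in F(\pazocal{B})$ contributes no $\pazocal{A}$-letters to the cell's boundary.
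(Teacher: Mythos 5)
Your proof is correct and follows essentially the same route the paper takes: the paper proves this lemma "in exactly the same way as" Lemma \ref{basic annuli 1}, i.e.\ the descent on the history $\theta w\theta^{-1}$ of the defining band via successively smaller $(\theta,a)$-annuli for part (1), and the reduction of $a$-annuli to part (1) via a maximal $\theta$-band crossing the annulus twice for part (2), exactly as you do. Your added check that $(\theta,\pazocal{A})$-cells contribute exactly two $\pazocal{Z}$-edges and two $\theta$-edges (the $b$-letters of $v(y,a)$ being irrelevant to the band structure) is a sensible, correct verification that the $q$-band argument transfers to all three types of $a$-bands.
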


As a result, in a reduced circular diagram $\Delta$ over $G_\Omega(\textbf{M}^\pazocal{L})$, if a maximal $\theta$-band and a maximal $q$-band (resp. $a$-band) cross, then their intersection is exactly one $(\theta,q)$-cell (resp. one $(\theta,a)$-cell). 

Similarly, the following statement is proved in much the same way as its analogue in \cite{W}:

\begin{lemma}[Lemma 8.2 of \cite{W}] \label{a-bands on a-cell}

If $\Delta$ is a reduced circular diagram over $G_\Omega(\textbf{M}^\pazocal{L})$ and $\pi$ is an $a$-cell in $\Delta$, then no $a$-band of positive length has two ends on $\pi$.

\end{lemma}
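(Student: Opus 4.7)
The plan is to argue by contradiction, taking a minimal counterexample. Suppose $\pazocal{S} = (\pi_1, \ldots, \pi_m)$ is an $a$-band of positive length $m \geq 1$ with defining edge sequence $(\textbf{e}_0, \ldots, \textbf{e}_m)$ such that both $\textbf{e}_0$ and $\textbf{e}_m^{-1}$ lie on $\partial\pi$, and take such an $\pazocal{S}$ with $m$ minimal. Since $\pi$ is an $a$-cell, its boundary carries only $a$-edges, so $\textbf{e}_0$ and $\textbf{e}_m^{-1}$ partition $\partial\pi$ into two arcs $\textbf{r}_1, \textbf{r}_2$. Together with the two sides $\textbf{top}(\pazocal{S})$ and $\textbf{bot}(\pazocal{S})$, the complex $\pi \cup \pazocal{S}$ separates $\Delta$ into two complementary sub-regions. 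I would select $\Gamma$ to be the sub-region whose boundary is $\textbf{r} \cdot \textbf{q}^{\pm 1}$, where $\textbf{r}$ is one arc of $\partial\pi$ and $\textbf{q} = \textbf{f}_1 \cdots \textbf{f}_m$ is one side of $\pazocal{S}$; here $\textbf{f}_i$ denotes the $\theta$-edge of $\pi_i$ lying on that side.

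The key observation is that $\textbf{q}$ consists entirely of $\theta$-edges while $\textbf{r}$ consists entirely of $a$-edges. Consequently, any maximal $\theta$-band in $\Gamma$ emanating from an edge $\textbf{f}_i$ of $\textbf{q}$ has no $\theta$-edge available on $\textbf{r}$ to terminate at, so its other end must lie on $\textbf{q}$ itself, matching $\textbf{f}_i$ with some $\textbf{f}_{j(i)}$ with $j(i) \neq i$. Planarity of $\Gamma$ forces these pairings to be non-crossing, so among them one can find an innermost pair, which must be of the form $(\textbf{f}_i, \textbf{f}_{i+1})$ for consecutive indices, matched via a $\theta$-band $\pazocal{U}$ whose cells $\tau_1, \ldots, \tau_\ell$ lie in $\Gamma$ (hence outside $\pazocal{S}$).

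Extending $\pazocal{U}$ by prepending $\pi_i$ and appending $\pi_{i+1}$ yields a $\theta$-band $\pazocal{U}' = (\pi_i, \tau_1, \ldots, \tau_\ell, \pi_{i+1})$ whose first and last cells are $\pi_i$ and $\pi_{i+1}$. These coincide with the first and last cells of the sub-$a$-band $(\pi_i, \pi_{i+1})$ of $\pazocal{S}$, and the two bands share no further cells. Their union therefore forms a $(\theta, a)$-annulus in $\Delta$ with corner cells $\pi_i, \pi_{i+1}$, contradicting \Cref{basic annuli 2}(1) and completing the proof.

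The main obstacle I anticipate is that the matching argument implicitly assumes every $\theta$-band in $\Gamma$ emanating from an $\textbf{f}_i$ is non-annular; a genuinely annular $\theta$-band starting and returning to $\textbf{f}_i$ would leave that edge ``self-matched.'' To handle this I would choose $\Gamma$ to have minimal area among all valid choices: inside any hypothetical $\theta$-annulus nested in $\Gamma$, the inner contour again carries no $\theta$-edges, so a parallel analysis of $q$- and ordinary $a$-bands there would again force a $(\theta, q)$- or $(\theta, a)$-annulus, ruled out by \Cref{basic annuli 1} and \Cref{basic annuli 2}. A secondary subtlety is the degenerate case $m = 1$, where $\textbf{q}$ carries a single $\theta$-edge and the matching argument collapses; in that case the single $\textbf{f}_1$ must be self-matched through a $\theta$-annulus, which is disposed of by the same nested-contour argument.
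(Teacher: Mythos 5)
Your proposal is correct and is essentially the paper's own argument (the paper defers to Lemma 8.2 of \cite{W}; the identical reasoning is written out for disks in the proof of \Cref{a-bands on disk}): the subdiagram $\Gamma$ bounded by the band and the $a$-cell has all of its boundary $\theta$-edges on the one side of $\pazocal{S}$ facing $\Gamma$, so a maximal $\theta$-band starting at such an edge must end at another such edge and, together with the subband of $\pazocal{S}$ between the two cells carrying those edges, forms a $(\theta,a)$-annulus contradicting \Cref{basic annuli 2}(1); your non-crossing/innermost-pair matching is an unnecessary (but harmless) refinement of this. Two small corrections: the side $\textbf{q}$ does not consist entirely of $\theta$-edges, since $(\theta,\pazocal{A})$-cells contribute the $b$-edges of the words $v(y,a)$ to the sides of an $\pazocal{A}$-band -- all you actually need is that every $\theta$-edge of $\partial\Gamma$ lies on $\textbf{q}$ and that each cell of $\pazocal{S}$ contributes exactly one such edge, which is true. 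Also, the worry about annular $\theta$-bands is moot (a maximal $\theta$-band having a boundary edge of $\Gamma$ as a defining edge has that edge as an end, so it cannot close up, and for $m=1$ the lone $\theta$-edge of $\partial\Gamma$ can neither start a band with a second end nor be adjacent to an $a$-edge, which already gives the contradiction); this is fortunate, because your proposed fallback would not work as stated -- reduced diagrams over $G_\Omega(\textbf{M}^\pazocal{L})$ may contain $\theta$-annuli enclosing $a$-cells, and the paper only excludes them over $M(\textbf{M}^\pazocal{L})$ (\Cref{M(M) annuli}) or under minimality assumptions (\Cref{M-minimal theta-annuli}, \Cref{disk theta-annuli}).
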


%
%

\begin{figure}[H]
\centering
\includegraphics[scale=0.9]{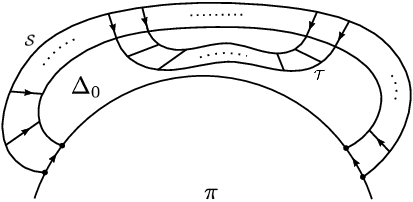}
\caption{$a$-band with two ends on an $a$-cell}
\label{aband1}
\end{figure}

Note that \Cref{a-bands on a-cell} does not rule out the possibility that an $a$-band of length $0$ has two ends on the $a$-cell $\pi$.  This is possible if there exists an edge $\textbf{e}$ of $\partial\pi$ such that $\textbf{e}^{-1}$ is also an edge of $\partial\pi$ (again, this is ignoring the existence of $0$-cells; for topological purposes, we may employ a $0$-refinement so that there exists an edge $\textbf{f}$ adjacent to $\textbf{e}$ such that $\textbf{f}^{-1}$, not $\textbf{e}^{-1}$, is an edge of $\partial\pi$).  In this case, $\pi$ is called a \textit{pinched $a$-cell} and $\textbf{e}^{\pm1}$ are called \textit{pinched edges} of $\pi$.

Given a pinched $a$-cell $\pi$, let $\textbf{s}$ be a maximal subpath of $\partial\pi$ consisting of pinched edges.  Then, there exists a decomposition $\partial\pi=\textbf{s}^{\pm1}\textbf{q}\textbf{s}^{\mp1}\textbf{p}$ such that $\textbf{p}^{-1}$ is the contour of a subdiagram $\Psi_{\pi,\textbf{s}}$ of $\Delta$ not containing $\pi$ (see \Cref{fig-pinched-a-cell}).  In this case, $\textbf{s}^{\pm1}\textbf{q}\textbf{s}^{\mp1}\textbf{p}$ is called the \textit{pinched factorization} of $\partial\pi$ with respect to the \textit{pinched subpath} $\textbf{s}$.

Observe that $\textbf{q}$ is the contour of a subdiagram $\Phi_{\pi,\textbf{s}}$ of $\Delta$ consisting of $\pi$ and $\Psi_{\pi,\textbf{s}}$.  What's more, since $\lab(\partial\pi)\in\Omega$ is cyclically reduced, $\textbf{p}$ and $\textbf{q}$ must be non-trivial subpaths of $\partial\pi$.

Note that $\textbf{s}$ need not be the only pinched subpath of $\partial\pi$.  Indeed, in the terminology above, some edges of $\textbf{p}$ may be pinched.  However, in this case we may use $0$-refinement to assume that $\Psi_{\pi,\textbf{s}}$ is indeed a (circular) subdiagram.

A reduced diagram $\Delta$ over $G_\Omega(\textbf{M}^\pazocal{L})$ is called \textit{smooth} if it contains no pinched $a$-cells.

\begin{figure}[H]
\centering
\includegraphics[scale=1.1]{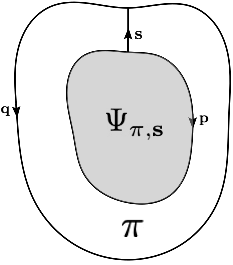}
\caption{The subdiagram $\Phi_{\pi,\textbf{s}}$ corresponding to a pinched $a$-cell $\pi$}
\label{fig-pinched-a-cell}
\end{figure}

\begin{lemma} \label{M(M) annuli}

A reduced circular diagram $\Delta$ over $M(\textbf{M}^\pazocal{L})$ contains no $\theta$-annuli.

\end{lemma}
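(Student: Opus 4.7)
The plan is to prove \Cref{M(M) annuli} by assuming a $\theta$-annulus exists, choosing an innermost one, and deriving a contradiction from the structure of the subdiagram it encloses. First, I would select a $\theta$-annulus $\pazocal{T}_0$ in $\Delta$ such that the disk subdiagram $\Delta_0$ bounded by its inner contour contains no $\theta$-annulus. Because $M(\textbf{M}^\pazocal{L})$ imposes neither the hub relation nor any $a$-relations, every positive cell in $\Delta_0$ is a $(\theta,q)$- or $(\theta,a)$-cell and so carries $\theta$-edges on its boundary; a maximal $\theta$-band in $\Delta_0$ can only terminate on the boundary of $\Delta$, but $\partial\Delta_0$ has no $\theta$-edges. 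Consequently every maximal $\theta$-band inside $\Delta_0$ is itself a $\theta$-annulus, forbidden by the innermost choice. Therefore $\Delta_0$ has no $\theta$-edges, no positive cells, and $\lab(\partial\Delta_0)$ is freely trivial.

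I would then contradict this freely-trivial conclusion by analyzing the inner contour label of $\pazocal{T}_0$ in two cases. First, suppose some cell $\pi_0\in\pazocal{T}_0$ is a $(\theta,q)$-cell. I would follow the maximal $q$-band $\pazocal{Q}$ through $\pi_0$ inward until it exits $\Delta_0$: because there are no hubs and no $a$-cells, the band cannot terminate inside $\Delta_0$, and so it must exit through the inner $q$-edge of some cell $\pi_0'\in\pazocal{T}_0$, with $\pi_0'\neq\pi_0$ by \Cref{basic annuli 1}(2). Taking $\pi_0'$ to be the first such cell encountered, the sub-$q$-band from $\pi_0$ to $\pi_0'$ shares only the corner cells $\pi_0,\pi_0'$ with the $\theta$-subband of $\pazocal{T}_0$ along either arc joining them. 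Together these two subbands form a $(\theta,q)$-annulus in $\Delta$, contradicting \Cref{basic annuli 1}(1).

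Otherwise, $\pazocal{T}_0$ consists only of $(\theta,a)$-cells, which all must share a common sector index $i$, since consecutive $(\theta,a)$-cells in a $\theta$-band share a $\theta_i$-edge with matching index and each $(\theta,a)$-cell has both $\theta$-edges of the same index. Each cell $\pi_k$ then has boundary a cyclic permutation of $(x_k\theta_if_{\theta,i}(x_k)^{-1}\theta_i^{-1})^{\delta_k}$ with $x_k\in X_i(\theta)$ and orientation $\delta_k\in\{\pm1\}$, and the inner contour reads a cyclic word $W\equiv x_1^{\delta_1}x_2^{\delta_2}\cdots x_m^{\delta_m}$ in the alphabet $X_i(\theta)^{\pm1}$. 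The standard cancellable-pair criterion applied to two $(\theta,a)$-cells $\pi_k,\pi_{k+1}$ sharing a $\theta_i$-edge shows that they are cancellable precisely when $x_k=x_{k+1}$ and $\delta_k=-\delta_{k+1}$, so in a reduced diagram this pattern never occurs. Combined with the basis property of $X_i(\theta)$ (no element is the inverse of another, and none is its own inverse, since $F(Y_i)$ is torsion-free), this rules out every possible cyclic cancellation of adjacent letters in $W$ interpreted over the basis. Hence $W$ is cyclically reduced and non-empty, so non-trivial in $F(Y_i)$, contradicting the freely-trivial label of $\partial\Delta_0$.

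The hard part will be the second case: the argument depends on pinning down the cancellability criterion for adjacent $(\theta,a)$-cells and then leveraging the basis property of $X_i(\theta)$ to conclude that the inner-contour word is cyclically reduced in this basis and therefore non-trivial in $F(Y_i)$. The first case is a comparatively direct reduction to \Cref{basic annuli 1}.
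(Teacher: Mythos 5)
Your proof is correct, but it takes a genuinely different route from the paper's in the decisive case. The paper works with the subdiagram bounded by the \emph{outer} contour of the annulus and argues entirely with bands: it first excludes $(\theta,q)$-cells exactly as you do (a $q$-band with two ends on the boundary crosses the annulus twice, giving a $(\theta,q)$-annulus against \Cref{basic annuli 1}), then excludes $(\theta,\pazocal{A})$-cells via $\pazocal{A}$-bands and \Cref{basic annuli 2}, and finally disposes of the remaining $(\theta,b)$/ordinary $(\theta,a)$-cells by running $a$-bands between boundary edges to manufacture a forbidden $(\theta,a)$-annulus; no innermost reduction is needed. You instead pass to an innermost $\theta$-annulus, observe that its interior then carries no positive cells (so the inner label is freely trivial), and in the pure $(\theta,a)$ case replace the $a$-band machinery by an algebraic argument: reducedness of the diagram forbids adjacent (cyclically, including the wrap-around pair) letters $x\,x^{-1}$ over the rule's domain, so the inner side reads a nonempty cyclically reduced word over a free basis and is therefore nontrivial in $F(Y_i)$ --- a contradiction. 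What your route buys is generality and economy of prerequisites: it never uses the $\textbf{M}^\pazocal{L}$-specific $\pazocal{A}$-, $b$- and ordinary $a$-bands or \Cref{basic annuli 2}, so it proves the statement for $M(\textbf{S})$ of an arbitrary generalized $S$-machine; what the paper's route buys is that it recycles band lemmas already in place and avoids the innermost-choice bookkeeping. Two small points to tidy: depending on the direction of the band the inner side may be labelled over $Z_i(\theta)$ rather than $X_i(\theta)$, but since $Z_i(\theta)=X_i(\theta^{-1})$ is also a basis (and $f_{\theta,i}$ is a bijection) the argument is unaffected; and you only need the easy direction of your ``precisely when'' cancellability criterion (equal letter, opposite exponents $\Rightarrow$ cancellable), the converse being irrelevant. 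Also note the slip ``terminate on the boundary of $\Delta$'' where you mean $\Delta_0$.
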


\begin{proof}

Suppose $\pazocal{S}$ is a $\theta$-annulus in $\Delta$ and let $\Delta_\pazocal{S}$ be the subdiagram bounded by the outer component of $\pazocal{S}$.

If $\Delta_\pazocal{S}$ contains a $(\theta,q)$-cell, then it contains a maximal $q$-band $\pazocal{Q}$.  By \Cref{basic annuli 1}(2), $\pazocal{Q}$ must end on $\partial\Delta_\pazocal{S}$ in two places.  But then $\pazocal{Q}$ and a subband of $\pazocal{S}$ form a $(\theta,q)$-annulus in $\Delta$, contradicting \Cref{basic annuli 1}(1).

Hence, we may assume $\Delta_\pazocal{S}$ consists entirely of $(\theta,a)$-cells.

A similar argument applies if $\Delta_\pazocal{S}$ contains a $(\theta,\pazocal{A})$-cell: In this case, $\Delta_\pazocal{S}$ contains a maximal $\pazocal{A}$-band $\pazocal{U}$.  As $\pazocal{A}$-bands can only end on $(\theta,q)$-cells or on the boundary, \Cref{basic annuli 2}(2) implies $\pazocal{U}$ ends twice on $\partial\Delta_\pazocal{S}$.  But then $\pazocal{U}$ and a subband of $\pazocal{S}$ form a $(\theta,\pazocal{A})$-annulus that contradicts \Cref{basic annuli 2}(1).

Hence, $\Delta_\pazocal{S}$ must consist entirely of $(\theta,b)$-cells and ordinary $(\theta,a)$-cells.  But then any maximal $a$-band in $\Delta_\pazocal{S}$ most form a $(\theta,a)$-annulus with a subband of $\pazocal{S}$, again contradicting \Cref{basic annuli 2}(1).

\end{proof}

As a result, in a reduced circular diagram $\Delta$ over $M(\textbf{M}^\pazocal{L})$, each maximal $\theta$-band and each maximal $q$-band has two ends on $\partial\Delta$.

\smallskip

\subsection{Semi-trapezia} \label{sec-semi-trapezia} \

We now introduce a new classification of reduced diagram over $M(\textbf{S})$ that is unique to this setting.  As trapezia correspond to reduced computations of an $S$-machine (see \Cref{sec-trapezia}), so these diagrams correspond to reduced semi-computations of a generalized $S$-machine.  

Denote the hardware of the generalized $S$-machine $\textbf{S}$ by $(Y,Q)$ with $Y=\sqcup_{i=1}^{s+1} Y_i$ and $Q=\sqcup_{i=0}^s Q_i$.  The next two statements combine to tell us that particular $\theta$-bands correspond to applications of the corresponding rule in the sense of semi-computations.

\begin{lemma} \label{theta-band is one-rule semi-computation}

Let $\pazocal{T}$ be a $\theta$-band of positive length in a reduced diagram $\Delta$ over $M(\textbf{S})$ consisting entirely of $(\theta,a)$-cells of the $Q_{i-1}Q_i$-sector. If the history of $\pazocal{T}$ is $\theta$, then $\lab(\textbf{bot}(\pazocal{T}))$ is $\theta$-applicable and $\lab(\textbf{bot}(\pazocal{T}))\cdot\theta\equiv\lab(\textbf{top}(\pazocal{T}))$.

\end{lemma}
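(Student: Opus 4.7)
The plan is to read off the contour of each cell in $\pazocal{T}$ and assemble the contributions using the fact that $\widetilde{f}_{\theta,i}$ is a homomorphism. Write $\pazocal{T} = (\pi_1,\dots,\pi_m)$ with defining edge sequence $(\textbf{e}_0,\dots,\textbf{e}_m)$. Because the history is $\theta$, the band is positive, so each $\textbf{e}_j$ is labeled by a letter of $T$ (not $T^{-1}$); and because every $\pi_j$ is a $(\theta,a)$-cell of the $Q_{i-1}Q_i$-sector, the shape of its defining $(\theta,a)$-relation forces $\lab(\textbf{e}_j)\equiv\theta_i$ for all $j$. For each $j$, let $\textbf{p}_j$ and $\textbf{s}_j$ be the $a$-subpaths of $\partial\pi_j$ sitting on $\textbf{bot}(\pazocal{T})$ and $\textbf{top}(\pazocal{T})$ respectively, so that $\partial\pi_j = \textbf{e}_{j-1}^{-1}\textbf{p}_j\textbf{e}_j\textbf{s}_j^{-1}$ up to cyclic permutation. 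Since $\partial\pi_j$ is (a cyclic permutation of) $(x_j\theta_if_{\theta,i}(x_j)^{-1}\theta_i^{-1})^{\pm 1}$ for some $x_j\in X_i(\theta)$, and the positions of its two $\theta$-edges are pinned down by the band, we read off $\lab(\textbf{p}_j)\equiv x_j^{\eps_j}$ and $\lab(\textbf{s}_j)\equiv f_{\theta,i}(x_j)^{\eps_j}$ for some $\eps_j\in\{\pm 1\}$.

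Next I would assemble the two sides: after the $0$-refinement ensuring reduced labels, $\textbf{bot}(\pazocal{T}) = \textbf{p}_1\textbf{p}_2\cdots\textbf{p}_m$ and $\textbf{top}(\pazocal{T}) = \textbf{s}_1\textbf{s}_2\cdots\textbf{s}_m$, so that as elements of $F(Y_i)$,
\[
\lab(\textbf{bot}(\pazocal{T}))\;=\;x_1^{\eps_1}x_2^{\eps_2}\cdots x_m^{\eps_m}\;\in\;\gen{X_i(\theta)}.
\]
This already shows $\lab(\textbf{bot}(\pazocal{T}))$ is $\theta$-applicable. Applying the isomorphism $\widetilde{f}_{\theta,i}$ to this element gives
\[
\lab(\textbf{bot}(\pazocal{T}))\cdot\theta \;=\; \widetilde{f}_{\theta,i}(x_1^{\eps_1}\cdots x_m^{\eps_m}) \;=\; f_{\theta,i}(x_1)^{\eps_1}\cdots f_{\theta,i}(x_m)^{\eps_m} \;=\; \lab(\textbf{top}(\pazocal{T}))
\]
as elements of $F(Y_i)$. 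Since both words are already reduced (by definition of $w\cdot\theta$ on the left, by the band conventions on the right), uniqueness of reduced representatives yields the visual identity $\equiv$.

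The only genuine subtlety is bookkeeping of orientations: one must check that an "upside-down" cell in the band contributes inverse labels to \emph{both} sides with the same exponent. This is immediate from the observation that inverting the relator $x\theta_if_{\theta,i}(x)^{-1}\theta_i^{-1}$ yields $\theta_if_{\theta,i}(x)\theta_i^{-1}x^{-1}$, so flipping the cell's orientation simultaneously replaces $x_j^{\eps_j}$ on the bottom by $x_j^{-\eps_j}$ and $f_{\theta,i}(x_j)^{\eps_j}$ on the top by $f_{\theta,i}(x_j)^{-\eps_j}$, preserving the pairing on which the computation above depends.
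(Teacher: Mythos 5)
Your positive-band computation is essentially the first half of the paper's argument, but there is a genuine gap at the very first step: the claim that \emph{because the history is $\theta$, the band is positive} is unjustified and false in general. In the paper's conventions a $\theta$-band is named after a positive rule but may itself be negative, in which case its defining edges are labelled by letters of $T^{-1}$ and its history is by definition the corresponding \emph{negative} rule. In the statement of the lemma the symbol $\theta$ simply names the history, which ranges over all of $\Theta(\textbf{S})=\Theta^+(\textbf{S})\sqcup\Theta^-(\textbf{S})$, and the lemma really is used later with negative histories (for instance inside \Cref{theta-bands are one-rule computations} and in the transposition of a $\theta$-band with an $a$-cell, where the history of the band is arbitrary). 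So the case $\theta\in\Theta^-(\textbf{S})$ cannot be dismissed, and your proof as written does not cover it.

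In that case your read-off of the cell boundaries breaks down: there is no relator $x\theta_i f_{\theta,i}(x)^{-1}\theta_i^{-1}$ attached to a negative rule, so each cell must be read via the positive rule $\theta^{-1}$, whose relators have the form $\theta_i^{-1}z\,\theta_i f_{\theta^{-1},i}(z)^{-1}$ with $z\in X_i(\theta^{-1})$. Since the band is traversed against the orientation of its $\theta$-edges, the two $a$-sides swap roles: the \emph{top} carries the letters $z_j^{\delta_j}$ with $z_j\in X_i(\theta^{-1})$, while the \emph{bottom} carries $f_{\theta^{-1},i}(z_j)^{\delta_j}\in Z_i(\theta^{-1})^{\pm1}$. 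One then needs the definitional identities $X_i(\theta)=Z_i(\theta^{-1})$ and $\widetilde{f}_{\theta,i}=\widetilde{f}_{\theta^{-1},i}^{-1}$ to conclude that $\lab(\textbf{bot}(\pazocal{T}))\in\gen{X_i(\theta)}$ and that $\widetilde{f}_{\theta,i}$ carries it to $\lab(\textbf{top}(\pazocal{T}))$; this is exactly the second case of the paper's proof. Your closing remark about ``upside-down'' cells only handles flipping an individual cell (the signs $\eps_j$), not reversing the direction of the whole band, so it does not repair this missing case.
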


\begin{proof}

Let $\pazocal{T}=(\pi_1,\dots,\pi_k)$.

First, suppose $\pazocal{T}$ is a positive $\theta$-band, i.e $\theta\in\Theta^+(\textbf{S})$.  By the makeup of the relations of $M(\textbf{S})$, for all $j\in\{1,\dots,k\}$, there exist $x_j\in X_i(\theta)$ and $\eps_j\in\{\pm1\}$ such that $\lab(\partial\pi_j)\equiv\theta_i^{-1}x_j^{\eps_j}\theta_i f_{\theta,i}(x_j)^{-\eps_j}$.

As a result, $\lab(\textbf{bot}(\pazocal{T}))\equiv x_1^{\eps_1}\dots x_k^{\eps_k}\in\gen{X_i(\theta)}$ and 
$$\lab(\textbf{top}(\pazocal{T}))=f_{\theta,i}(x_1)^{\eps_1}\dots f_{\theta,i}(x_k)^{\eps_k}=\widetilde{f}_{\theta,i}(x_1^{\eps_1}\dots x_k^{\eps_k})$$ 
Hence, $\lab(\textbf{bot}(\pazocal{T}))$ is $\theta$-applicable $\lab(\textbf{top}(\pazocal{T}))\equiv\lab(\textbf{bot}(\pazocal{T}))\cdot\theta$.

Conversely, suppose $\pazocal{T}$ is a negative $\theta$-band, i.e $\theta\in\Theta^-(\textbf{S})$.  Then, since $\theta^{-1}\in\Theta^+(\textbf{S})$, for all $j\in\{1,\dots,k\}$, there exist $z_j\in X_i(\theta^{-1})$ and $\delta_j\in\{\pm1\}$ such that $\lab(\partial\pi_j)\equiv\theta_i^{-1}f_{\theta^{-1},i}(z_j)^{\delta_j}\theta_i z_j^{-\delta_j}$.

As a result, $\lab(\textbf{top}(\pazocal{T}))\equiv z_1^{\delta_1}\dots z_k^{\delta_k}\in\gen{X_i(\theta^{-1})}$ and $$\lab(\textbf{bot}(\pazocal{T}))=f_{\theta^{-1},i}(z_1)^{\delta_1}\dots f_{\theta^{-1},i}(z_k)^{\delta_k}=\widetilde{f}_{\theta^{-1},i}(z_1^{\delta_1}\dots z_k^{\delta_k})$$
Since $Z_i(\theta^{-1})=X_i(\theta)$, it follows that $\lab(\textbf{bot}(\pazocal{T}))\in\gen{X_i(\theta)}$, i.e $\lab(\textbf{bot}(\pazocal{T}))$ is $\theta$-applicable.  But since $\widetilde{f}_{\theta^{-1},i}=\widetilde{f}_{\theta,i}^{-1}$ by definition, it immediately follows that $$\lab(\textbf{bot}(\pazocal{T}))\cdot\theta=\widetilde{f}_{\theta,i}(\lab(\textbf{bot}(\pazocal{T})))=\lab(\textbf{top}(\pazocal{T}))$$

\end{proof}

\begin{lemma} \label{one-rule semi-computations are theta-bands}

Let $u\to v$ be a semi-computation of $\textbf{S}$ in the $Q_{i-1}Q_i$-sector with history $H$ of length 1, so that $H=\theta\in\Theta(\textbf{S})$. Then there exists a $\theta$-band $\pazocal{T}$ of length $l_\theta(u)$ history $\theta$ consisting entirely of $(\theta,a)$-cells of the $Q_{i-1}Q_i$-sector such that $\lab(\textbf{bot}(\pazocal{T}))\equiv u$ and $\lab(\textbf{top}(\pazocal{T}))\equiv v$.

\end{lemma}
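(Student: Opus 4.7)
The plan is to construct $\pazocal{T}$ by gluing together one $(\theta,a)$-cell per letter in the (unique) reduced basis expression of $u$ over $X_i(\theta)$.

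First suppose $\theta \in \Theta^+(\textbf{S})$. Since $u \to v$ is a semi-computation, $u \in \langle X_i(\theta)\rangle$ and $v = \widetilde{f}_{\theta,i}(u)$. As $X_i(\theta)$ is a basis of $\langle X_i(\theta)\rangle$, there is a unique reduced expression $u =_{F(Y_i)} x_1^{\eps_1} \cdots x_k^{\eps_k}$ with $x_j \in X_i(\theta)$, $\eps_j \in \{\pm 1\}$, and $k = l_\theta(u)$. For each $j$, the defining $(\theta,a)$-relation $x_j \theta_i = \theta_i f_{\theta,i}(x_j)$ yields a $(\theta,a)$-cell $\pi_j$ of the $Q_{i-1}Q_i$-sector with contour label (up to cyclic permutation)
\[
\theta_i^{-1} x_j^{\eps_j} \theta_i f_{\theta,i}(x_j)^{-\eps_j}.
\]
Form $\pazocal{T}$ by gluing $\pi_j$ to $\pi_{j+1}$ along the $\theta_i$-edge shared between them (inserting $0$-cells/edges via $0$-refinement to correctly orient the gluings). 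The resulting sequence $(\pi_1,\dots,\pi_k)$ is then a positive $\theta$-band with history $\theta$ whose defining edge sequence consists of the $k+1$ successive $\theta_i$-edges.

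Reading along the sides, $\lab(\textbf{bot}(\pazocal{T}))\equiv x_1^{\eps_1} \cdots x_k^{\eps_k}$ and $\lab(\textbf{top}(\pazocal{T}))$ is freely equal to $f_{\theta,i}(x_1)^{\eps_1} \cdots f_{\theta,i}(x_k)^{\eps_k} = \widetilde{f}_{\theta,i}(u) = v$; any free cancellations needed to make $\lab(\textbf{top}(\pazocal{T}))$ literally equal to the reduced word $v$ are absorbed by $0$-refinement, per the convention for sides of bands. The same $0$-refinement convention lets us realize the prescribed reduced bottom $u$. Reducedness of the basis expression for $u$ guarantees that no two consecutive $\pi_j, \pi_{j+1}$ are cancellable (that would force $x_{j+1}^{\eps_{j+1}} \equiv x_j^{-\eps_j}$), and more generally that $\pazocal{T}$ contains no cancellable pair.

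For $\theta \in \Theta^-(\textbf{S})$, apply the same construction to $\theta^{-1} \in \Theta^+(\textbf{S})$ and the semi-computation $v \to u$ (which has history $\theta^{-1}$ by \Cref{inverse rules}), obtaining a positive $\theta^{-1}$-band $\pazocal{T}'$ of length $l_{\theta^{-1}}(v) = l_\theta(u)$ with bottom labelled by $v$ and top labelled by $u$. Reversing the direction of $\pazocal{T}'$ (i.e., relabelling so that the new bottom is the old top) produces a negative $\theta$-band $\pazocal{T}$ of the required form.

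The only subtlety is the exact length claim: one must use that $l_\theta(u)$ is defined as the length of the unique reduced expression of $u$ over the basis $X_i(\theta)$, so that the construction uses exactly this many cells and no more. Everything else is bookkeeping with $0$-refinement and the defining $(\theta,a)$-relations.
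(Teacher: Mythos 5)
Your positive case is exactly the paper's construction (one $(\theta,a)$-cell per letter of a basis expression of $u$ over $X_i(\theta)$, pasted along $\theta$-edges, with length $k=l_\theta(u)$), and that part is fine. The problem is the last step of your negative case. Under the paper's conventions, reversing the direction of a band does not simply "relabel" which side is the top and which is the bottom: if $\bar{\pazocal{S}}$ denotes the band $\pazocal{S}$ taken with the inverted defining edge sequence, then $\textbf{bot}(\bar{\pazocal{S}})=\textbf{top}(\pazocal{S})^{-1}$ and $\textbf{top}(\bar{\pazocal{S}})=\textbf{bot}(\pazocal{S})^{-1}$ — the sides are swapped \emph{and} inverted, since each side is oriented from the first to the last defining edge. (This is precisely how reversal is used in the proof of \Cref{minimal diskless is M-minimal}, where passing to $\overline{\pazocal{T}}$ converts "$\textbf{x}$ is a subpath of the top" into "$\textbf{x}^{-1}$ is a subpath of the bottom".) So reversing your positive $\theta^{-1}$-band $\pazocal{T}'$ (bottom $v$, top $u$) yields a negative band whose bottom is labelled $u^{-1}$ and whose top is labelled $v^{-1}$; it realizes the semi-computation $u^{-1}\to v^{-1}$, not $u\to v$, and does not satisfy the stated conclusion.

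The repair is small and is what the paper actually does: instead of reversing $\pazocal{T}'$, take its \emph{mirror} — replace each cell by the cell with inverse boundary label and reassemble. Concretely, writing $v=z_1^{\delta_1}\dots z_\ell^{\delta_\ell}$ with $z_j\in X_i(\theta^{-1})$, build the band directly from cells $\pi_j'$ with $\lab(\partial\pi_j')\equiv\theta_i^{-1}f_{\theta^{-1},i}(z_j)^{\delta_j}\theta_i z_j^{-\delta_j}$. Mirroring swaps the top and bottom labels \emph{without} inverting them and inverts the history (this is the device of \Cref{theta-bands are one-rule computations} and \Cref{one-rule computations are theta-bands}, pictured in \Cref{mirror}), so one obtains a $\theta$-band with history $\theta$, bottom $u$, top $v$, and length $\ell=l_{\theta^{-1}}(v)=l_\theta(u)$, as required. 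With that substitution your argument matches the paper's proof.
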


\begin{proof}

First, suppose $\theta\in\Theta^+(\textbf{S})$.  Note that $u\in\gen{X_i(\theta)}$, so that there exist $x_1,\dots,x_k\in X_i(\theta)$ and $\eps_1,\dots,\eps_k\in\{\pm1\}$ such that $u=x_1^{\eps_1}\dots x_k^{\eps_k}$.  By the makeup of the relations, for each $j=1,\dots,k$ one can construct a $(\theta,a)$-cell $\pi_j$ satisfying $\lab(\partial\pi_j)\equiv\theta_i^{-1}x_j^{\eps_j}\theta_i f_{\theta,i}(x_j)^{-\eps_j}$.  Pasting along the $\theta$-edges (and making any necessary cancellations through 0-refinement or gluing) then gives a $\theta$-band $\pazocal{T}^+=(\pi_1,\dots,\pi_k)$ with $\lab(\textbf{bot}(\pazocal{T}^+))\equiv u$ and $\lab(\textbf{top}(\pazocal{T}^+))\equiv\widetilde{f}_{\theta,i}(u)\equiv v$. Hence, since the length of $\pazocal{T}^+$ is $k=|u|_{X_i(\theta)}=l_\theta(u)$, the band $\pazocal{T}^+$ satisfies the statement.

Conversely, suppose $\theta\in\Theta^-(\textbf{S})$.  Then, $v\cdot\theta^{-1}\equiv u$ with $\theta^{-1}\in\Theta^+(\textbf{S})$.  Let $z_1,\dots,z_\ell\in X_i(\theta^{-1})$ and $\delta_1,\dots,\delta_\ell\in\{\pm1\}$ such that $v=z_1^{\delta_1}\dots z_\ell^{\delta_\ell}$.  As above, the makeup of the relations then allows one to construct $(\theta,a)$-cells $\pi_1',\dots,\pi_\ell'$ such that $\lab(\partial\pi_j')\equiv\theta_i^{-1}f_{\theta^{-1},i}(z_j)^{\delta_j}\theta_iz_j^{-\delta_j}$.  Pasting $\pi_1',\dots,\pi_\ell'$ along their $\theta$-edges and making any necessary cancellations then gives a $\theta$-band $\pazocal{T}^-=(\pi_1',\dots,\pi_\ell')$ with $\lab(\textbf{top}(\pazocal{T}^-))\equiv v$ and $\lab(\textbf{bot}(\pazocal{T}^-))\equiv\widetilde{f}_{\theta^{-1},i}(v)\equiv u$.  Thus, the statement follows as above by noting that $\pazocal{T}^-$ has length $\ell=|v|_{X_i(\theta^{-1})}=l_{\theta^{-1}}(v)=l_\theta(u)$.

\end{proof}

Fix $i\in\{1,\dots,s\}$ and suppose $\Delta$ is a reduced circular diagram over $M(\textbf{S})$ which can be decomposed into maximal $\theta$-bands $\pazocal{T}_1,\dots,\pazocal{T}_h$ such that: 

\begin{itemize}

\item $\textbf{top}(\pazocal{T}_j)=\textbf{bot}(\pazocal{T}_{j+1})$ for each $j\in\{1,\dots,h-1\}$

\item $\pazocal{T}_j$ consists entirely of $(\theta,a)$-cells in the $Q_{i-1}Q_i$-sector

\end{itemize}

Then $\Delta$ is called a \textit{semi-trapezium} with height $h$ over $M(\textbf{S})$ in the $Q_{i-1}Q_i$-sector.

In this case, the maximal $\theta$-bands $\pazocal{T}_1,\dots,\pazocal{T}_h$ are said to be enumerated \textit{from bottom to top}.  Further, the \textit{bottom} and \textit{top} of $\Delta$ are defined to be $\textbf{bot}(\Delta)=\textbf{bot}(\pazocal{T}_1)$ and $\textbf{top}(\Delta)=\textbf{top}(\pazocal{T}_h)$, respectively.  Finally, if $\theta_j$ is the history of $\pazocal{T}_j$, then the \textit{history} of $\Delta$ is $\theta_1\dots\theta_h$.

As a semi-trapezium consists entirely of $(\theta,a)$-cells, for each maximal $\theta$-band $\pazocal{T}_j$ the defining edges are labelled identically.  In particular, there exists a factorization $\partial\Delta=\textbf{p}_1^{-1}\textbf{q}_1\textbf{p}_2\textbf{q}_2^{-1}$ such that:

\begin{itemize}

\item $\textbf{q}_1=\textbf{bot}(\Delta)$ and $\textbf{q}_2=\textbf{top}(\Delta)$

\item $\lab(\textbf{p}_1)\equiv\lab(\textbf{p}_2)$, with each a copy of the history of $\Delta$

\end{itemize}  

In particular, $\textbf{bot}(\Delta)$ and $\textbf{top}(\Delta)$ are conjugate in $M(\textbf{S})$.

An iteration of applications of Lemmas \ref{theta-band is one-rule semi-computation} and \ref{one-rule semi-computations are theta-bands} then imply the following two statements, producing the desired correspondence between semi-trapezia and semi-computations:

\begin{lemma} \label{semi-trapezia are semi-computations}

Let $\Delta$ be a semi-trapezium over $M(\textbf{S})$ in the $Q_{i-1}Q_i$-sector with maximal $\theta$-bands $\pazocal{T}_1,\dots,\pazocal{T}_h$ enumerated from bottom to top.  Let $H\equiv\theta_1\dots\theta_h$ be the history of $\Delta$.  Then, letting $w_{j-1}=\lab(\textbf{bot}(\pazocal{T}_j))$ for $j=1,\dots,h$ and $w_h=\lab(\textbf{top}(\pazocal{T}_h))$, there exists a semi-computation $w_0\to\dots\to w_h$ of $\textbf{S}$ in the $Q_{i-1}Q_i$-sector with history $H$. 

\end{lemma}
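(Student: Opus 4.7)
The plan is a straightforward induction on the height $h$, using Lemma \ref{theta-band is one-rule semi-computation} as the per-step engine. By the definition of a semi-trapezium, the successive $\theta$-bands $\pazocal{T}_1, \ldots, \pazocal{T}_h$ fit together via $\textbf{top}(\pazocal{T}_j) = \textbf{bot}(\pazocal{T}_{j+1})$, so reading labels along these shared paths gives the unambiguous sequence of words $w_0, w_1, \ldots, w_h$, with $w_{j-1} \equiv \lab(\textbf{bot}(\pazocal{T}_j))$ and $w_j \equiv \lab(\textbf{top}(\pazocal{T}_j))$ for each $j \in \{1,\ldots,h\}$.

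For each $j$, I would apply Lemma \ref{theta-band is one-rule semi-computation} to the individual band $\pazocal{T}_j$: its hypotheses are satisfied (it is a $\theta$-band consisting entirely of $(\theta,a)$-cells of the $Q_{i-1}Q_i$-sector, with history the single rule $\theta_j$), and its conclusion asserts that $w_{j-1}$ is $\theta_j$-applicable with $w_{j-1} \cdot \theta_j \equiv w_j$. This is precisely a one-step semi-computation $w_{j-1} \to w_j$ with history $\theta_j$. Concatenating these $h$ single-step semi-computations yields the desired semi-computation $w_0 \to w_1 \to \cdots \to w_h$, whose history is $\theta_1\theta_2\cdots\theta_h \equiv H$.

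The only subtle point — and arguably the only thing demanding care — is whether a maximal $\theta$-band in the decomposition could have length zero, in which case Lemma \ref{theta-band is one-rule semi-computation} does not strictly apply. If such a degeneracy is permitted, then for that index $j$ the top and bottom of $\pazocal{T}_j$ both reduce to the trivial path (modulo $0$-refinement), so $w_{j-1}$ and $w_j$ are empty; the empty word lies in every $\gen{X_i(\theta)}$ and is fixed under every $\widetilde{f}_{\theta,i}$, hence is $\theta_j$-applicable with empty image, and the statement extends without additional work. Beyond verifying this degenerate case, the proof is purely an unfolding of the one-band-to-one-rule translation $h$ times, and I do not anticipate any genuine obstacle.
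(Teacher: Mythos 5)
Your proof is correct and is essentially the paper's argument: the paper derives this lemma precisely by iterating Lemma \ref{theta-band is one-rule semi-computation} band by band, using $\textbf{top}(\pazocal{T}_j)=\textbf{bot}(\pazocal{T}_{j+1})$ to chain the one-rule steps, and your handling of the degenerate case (where the trivial word satisfies $1\cdot\theta=1$) is consistent with the conventions already noted in the paper.
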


\begin{lemma} \label{semi-computations are semi-trapezia}

For any reduced semi-computation $w_0\to\dots\to w_t$ of $\textbf{S}$ in the $Q_{i-1}Q_i$-sector with history $H\equiv\theta_1\dots\theta_t$, there exists a semi-trapezium $\Delta$ over $M(\textbf{S})$ in the $Q_{i-1}Q_i$-sector satisfying:

\begin{enumerate} [label=(\alph*)]

\item $\lab(\textbf{bot}(\Delta))\equiv w_0$

\item $\lab(\textbf{top}(\Delta))\equiv w_t$

\item The history of $\Delta$ is $H$

\item $\text{Area}(\Delta)=\sum\limits_{j=1}^t l_{\theta_j}(w_{j-1})$

\end{enumerate}

\end{lemma}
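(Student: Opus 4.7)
The plan is to proceed by induction on $t$, the length of the semi-computation. For the base case $t=0$, we have $w_0$ and need a degenerate semi-trapezium with no $\theta$-bands, which is simply a diagram whose contour traces out a single path labelled $w_0$ and back; this trivially satisfies (a)--(d) with empty history and zero area. (If the definition of semi-trapezium is read to require $h\geq 1$, the statement for $t\geq 1$ is the substantive content, and the induction starts at $t=1$ directly from \Cref{one-rule semi-computations are theta-bands}.)

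For the inductive step, assume the result for semi-computations of length $t-1$. Given the reduced semi-computation $w_0\to\cdots\to w_t$ with history $H\equiv\theta_1\dots\theta_t$, the truncated semi-computation $w_0\to\cdots\to w_{t-1}$ is still reduced (as a subword of a reduced word) and has history $\theta_1\dots\theta_{t-1}$. By induction, there is a semi-trapezium $\Delta'$ in the $Q_{i-1}Q_i$-sector realizing it, with $\lab(\textbf{bot}(\Delta'))\equiv w_0$, $\lab(\textbf{top}(\Delta'))\equiv w_{t-1}$, history $\theta_1\dots\theta_{t-1}$, and area $\sum_{j=1}^{t-1}l_{\theta_j}(w_{j-1})$. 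Separately, apply \Cref{one-rule semi-computations are theta-bands} to the single step $w_{t-1}\to w_t$ to obtain a $\theta_t$-band $\pazocal{T}_t$ of length $l_{\theta_t}(w_{t-1})$, consisting of $(\theta,a)$-cells in the $Q_{i-1}Q_i$-sector, with $\lab(\textbf{bot}(\pazocal{T}_t))\equiv w_{t-1}$ and $\lab(\textbf{top}(\pazocal{T}_t))\equiv w_t$. Glue $\pazocal{T}_t$ on top of $\Delta'$ along the matching path labelled $w_{t-1}$ (using a $0$-refinement, if necessary, to identify these paths edge-for-edge), obtaining a diagram $\Delta$ whose maximal $\theta$-bands are precisely those of $\Delta'$ together with $\pazocal{T}_t$. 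Items (a)--(d) then hold by construction, provided $\Delta$ is reduced.

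The main obstacle is verifying that $\Delta$ is reduced. Since $\Delta'$ and $\pazocal{T}_t$ are individually reduced, any cancellable pair in $\Delta$ must consist of one cell from the topmost $\theta$-band $\pazocal{T}_{t-1}$ of $\Delta'$ and one cell from $\pazocal{T}_t$. Two $(\theta,a)$-cells can be cancellable only if their boundary labels correspond to mutually inverse relators, which in the structure of $M(\textbf{S})$ forces the two cells to be associated to the same positive rule of $\textbf{S}$ but to appear with opposite orientations; equivalently, a $\pazocal{T}_{t-1}$-cell and a $\pazocal{T}_t$-cell can be cancellable only if $\theta_{t-1}$ and $\theta_t$ are mutually inverse rules. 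The assumption that $H$ is a reduced word rules this out: $\theta_{t-1}\neq\theta_t^{-1}$, so the $\theta$-letters appearing on the top of $\Delta'$ and the bottom of $\pazocal{T}_t$ correspond to genuinely distinct rules, and no cancellable pair can arise at the gluing interface. Hence $\Delta$ is reduced, completing the induction. A final bookkeeping check confirms the contour factorization required by the definition of semi-trapezium (the two $\textbf{p}$-paths are the concatenation of the corresponding $\textbf{p}$-paths of $\Delta'$ with the defining edges of $\pazocal{T}_t$), and the area formula follows by additivity since $\text{Area}(\pazocal{T}_t)=l_{\theta_t}(w_{t-1})$.
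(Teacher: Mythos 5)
Your proposal is correct and matches the paper's approach: the paper obtains this lemma precisely by iterating \Cref{one-rule semi-computations are theta-bands} and stacking the resulting $\theta$-bands along their matching tops and bottoms, which is exactly your induction. Your extra check that reducedness of $H$ prevents cancellable pairs at the gluing interface is a detail the paper leaves implicit, and it is handled correctly.
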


%
%
%
%
%
%
%
%

\medskip


\subsection{Trapezia} \label{sec-trapezia} \

The goal of this section is to define the reduced diagrams over $M(\textbf{S})$ that `simulate' computations of the generalized machine $\textbf{S}$.  This is achieved much in the same way as the semi-trapezia of the last section `simulate' semi-computations (and in much the same way as has been studied in related previous literature).

Let $\pazocal{T}$ be a $\theta$-band over $M(\textbf{S})$ whose first and last cells are $(\theta,q)$-cells.  The maximal subpath of $\textbf{bot}(\pazocal{T})$ whose first and last edges are $q$-edges is called the \textit{trimmed bottom} of the band, denoted $\textbf{tbot}(\pazocal{T})$.  The \textit{trimmed top} $\textbf{ttop}(\pazocal{T})$ is defined similarly.

\begin{figure}[H]
\centering
\includegraphics[scale=1.75]{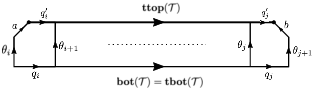}
\caption{$\theta$-band $\pazocal{T}$ with trimmed top}
\end{figure}

\begin{lemma}[Compare with Lemma 6.2 in \cite{W}] \label{theta-bands are one-rule computations}

Let $\textbf{S}$ be a generalized $S$-machine and $\pazocal{T}$ be a $\theta$-band in a reduced diagram $\Delta$ over $M(\textbf{S})$ whose first and last cells are $(\theta,q)$-cells. Suppose the history of $\pazocal{T}$ is $\theta\in\Theta(\textbf{S})$.  Then: 

\begin{enumerate} [label=(\alph*)]

\item $\lab(\textbf{tbot}(\pazocal{T}))$ and $\lab(\textbf{ttop}(\pazocal{T}))$ are admissible words 

\item $\lab(\textbf{tbot}(\pazocal{T}))$ is $\theta$-admissible

\item $\lab(\textbf{tbot}(\pazocal{T}))\cdot\theta\equiv\lab(\textbf{ttop}(\pazocal{T}))$

\end{enumerate}


\end{lemma}

\begin{proof}

If $\theta\in\Theta^+(\textbf{S})$, then the statement follows from an argument similar to that presented as the proof of Lemma 6.2 in \cite{W} for the analogous statement, employing \Cref{theta-band is one-rule semi-computation} for the portions of $\pazocal{T}$ consisting entirely of $(\theta,a)$-cells.  

If instead $\theta\in\Theta^-(\textbf{S})$, then note that:

\begin{itemize}

\item Any $(\theta,q)$-cell of $\pazocal{T}$ corresponds to a part $q_i\to u_iq_i'v_{i+1}$ of $\theta^{-1}$

\item Any $(\theta,a)$-cell of $\pazocal{T}$ corresponds to a relation $\theta_i x_i \theta_i^{-1}=f_{\theta^{-1},i}(x_i)$ for $x_i\in X_i(\theta^{-1})$

\end{itemize}

We may then construct the `mirror' $\theta$-band $\overline{\pazocal{T}}$ by reflecting the cells (see \Cref{mirror}).  The history of $\overline{\pazocal{T}}$ is $\theta^{-1}\in\Theta^+(\textbf{S})$, and so the statement follows by applying the argument of \cite{W} to $\overline{\pazocal{T}}$ instead.

\end{proof}

\begin{figure}[H]
\centering
\begin{subfigure}[b]{0.48\textwidth}
\centering
\includegraphics[scale=1.3]{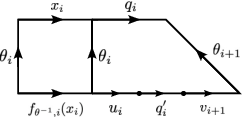}
\caption{A $\theta$-band $\pazocal{T}$ of length 2 with history $\theta$}
\end{subfigure}\hfill
\begin{subfigure}[b]{0.48\textwidth}
\centering
\includegraphics[scale=1.3]{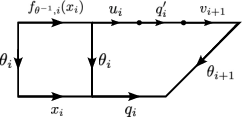}
\caption{The `mirror' $\theta$-band $\overline{\pazocal{T}}$ with history $\theta^{-1}$}
\end{subfigure}
\caption{ \ }
\label{mirror}
\end{figure}

\begin{lemma}[Compare with Lemma 6.3 of \cite{W}] \label{one-rule computations are theta-bands}

Let $U\to V$ be a computation of a generalized $S$-machine $\textbf{S}$ with history $\theta\in\Theta(\textbf{S})$. Then there exists a $\theta$-band $\pazocal{T}$ over $M(\textbf{S})$ with history $\theta$ whose first and last cells are $(\theta,q)$-cells and such that $\lab(\textbf{tbot}(\pazocal{T}))\equiv U$ and $\lab(\textbf{ttop}(\pazocal{T}))\equiv V$.  Moreover, the length of $\pazocal{T}$ is:

\begin{itemize}

\item $l_\theta(U)$ if $\theta\in\Theta^+(\textbf{S})$

\item $l_{\theta^{-1}}(V)$ if $\theta\in\Theta^-(\textbf{S})$

\end{itemize}

\end{lemma}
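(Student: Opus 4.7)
The plan is to construct the band $\pazocal{T}$ cell by cell, essentially as a converse to Lemma \ref{theta-bands are one-rule computations}, by combining the $(\theta,q)$-cells dictated by the $q$-parts of the rule $\theta$ with the semi-trapezium construction of Lemma \ref{one-rule semi-computations are theta-bands} applied in each sector. I will first dispose of the positive case $\theta\in\Theta^+(\textbf{S})$; the negative case will then follow by the ``mirror'' argument already used in the proof of Lemma \ref{theta-bands are one-rule computations}.

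Assume $\theta\in\Theta^+(\textbf{S})$ with $\theta=[q_0\to u_{s+1}q_0'v_1,\ q_1\to u_1q_1'v_2,\dots,\ q_s\to u_sq_s'v_{s+1}]$. Write $U\equiv p_0w_1p_1w_2\dots w_kp_k$, where each $p_j$ is a $q$-letter and each $w_j$ lies in the appropriate $\gen{X_{j(i)}(\theta)}$; $\theta$-admissibility of $U$ guarantees that each $p_j\in Q(\theta)\cup Q(\theta)^{-1}$ and each $w_j$ is $\theta$-applicable in its sector. For each $j$, construct a single $(\theta,q)$-cell $\pi_j$ whose contour label is the cyclic $(\theta,q)$-relator associated to $p_j$, choosing the orientation according to whether $p_j$ is in $Q(\theta)$ or $Q(\theta)^{-1}$; this is exactly the inverse bookkeeping done inside Cases 1 and 2 of the proof of Lemma \ref{positive theta-bands are one-rule computations}. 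For each sector word $w_j$, invoke Lemma \ref{one-rule semi-computations are theta-bands} on the one-rule semi-computation $w_j\to w_j\cdot\theta$ to obtain a $\theta$-band $\pazocal{T}_j$ of length $l_\theta(w_j)$ consisting entirely of $(\theta,a)$-cells in the appropriate sector, with bottom labelled $w_j$ and top labelled $w_j\cdot\theta$.

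Next I will paste these pieces together along their common $\theta$-edges in the cyclic order dictated by $U$, performing $0$-refinement where needed so that the concatenations produce a genuine band. The $(\theta,q)$-cell $\pi_{j-1}$ shares a $\theta$-edge (labelled $\theta_{j(i)}$ for the appropriate index) with the first cell of $\pazocal{T}_j$, and $\pazocal{T}_j$ in turn shares its other end $\theta$-edge with $\pi_j$; the required matching is exactly the compatibility of $w_j$ with the $u_i, v_i$ appearing on the right-hand sides of the corresponding parts of $\theta$, which is built into the $\theta$-admissibility of $U$. No cancellable pairs are introduced because each $\pazocal{T}_j$ is already reduced by Lemma \ref{one-rule semi-computations are theta-bands} and the $(\theta,q)$-cells $\pi_j$ are pairwise distinct. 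The trimmed bottom of the resulting band reads $p_0w_1p_1\dots w_kp_k\equiv U$, and by Lemma \ref{positive theta-bands are one-rule computations} its trimmed top must read $U\cdot\theta\equiv V$. The length count is then immediate: there are $k+1$ cells $\pi_j$ plus $\sum_{j=1}^k l_\theta(w_j)$ cells contributed by the $\pazocal{T}_j$'s, totalling $l_\theta(U)$.

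For the negative case $\theta\in\Theta^-(\textbf{S})$, I will apply the positive case just established to the one-rule computation $V\to V\cdot\theta^{-1}\equiv U$ (using Lemma \ref{inverse rules}) to produce a $\theta^{-1}$-band $\pazocal{T}'$ of length $l_{\theta^{-1}}(V)$ with $\lab(\textbf{tbot}(\pazocal{T}'))\equiv V$ and $\lab(\textbf{ttop}(\pazocal{T}'))\equiv U$. Reversing orientation on each of its cells exactly as in the ``mirror'' construction in the proof of Lemma \ref{theta-bands are one-rule computations} yields a $\theta$-band $\pazocal{T}$ of the same length whose trimmed bottom reads $U$ and whose trimmed top reads $V$, finishing the proof. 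The only subtlety to watch is the pasting step in the positive case: one must check that the labels on the $\theta$-edges being identified match (both in letter and in index), but this is forced by the parallel between the cyclic definition of the $(\theta,q)$- and $(\theta,a)$-relations and the definition of a $\theta$-admissible word, and it is precisely the identity unpacked in the two-cell analysis of Lemma \ref{positive theta-bands are one-rule computations}.
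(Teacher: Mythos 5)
Your proposal is correct and follows essentially the same route as the paper's proof: decompose $U$ into its $q$-letters and sector words, build one $(\theta,q)$-cell per $q$-letter with orientation given by its sign, attach the bands supplied by Lemma \ref{one-rule semi-computations are theta-bands} for the sector words, glue with $0$-refinement to get a band of length $l_\theta(U)$, and handle $\theta\in\Theta^-(\textbf{S})$ via Lemma \ref{inverse rules} together with the mirror construction. The only cosmetic difference is that you cite Lemma \ref{positive theta-bands are one-rule computations} to read off the trimmed top, where the paper simply notes it from the makeup of the band.
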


\begin{proof}

If $\theta\in\Theta^+(\textbf{S})$, then again the statement is proved in much the same way as in \cite{W}, using \Cref{one-rule semi-computations are theta-bands} to construct the subbands consisting of $(\theta,a)$-cells to transform the tape words.

If instead $\theta\in\Theta^-(\textbf{S})$, then there exists a computation $V\to U$ with history $\theta^{-1}\in\Theta^+(\textbf{S})$, and hence we may construct a $\theta$-band $\pazocal{T}'$ as above.  But then the statement follows by taking $\pazocal{T}$ to be the `mirror' $\theta$-band constructed from $\pazocal{T}'$ (see \Cref{mirror}).

\end{proof}

Now, let $\Delta$ be a reduced circular diagram over $M(\textbf{S})$ such that $\partial\Delta=\textbf{p}_1^{-1}\textbf{q}_1\textbf{p}_2\textbf{q}_2^{-1}$, where:
\begin{itemize}

\item $\textbf{p}_1$ and $\textbf{p}_2$ are sides of maximal $q$-bands

\item $\textbf{q}_1$ and $\textbf{q}_2$ are the trimmed sides of maximal $\theta$-bands 

\end{itemize}
Then $\Delta$ is called a \textit{trapezium} over $M(\textbf{S})$.

In this case, $\textbf{p}_1^{-1}\textbf{q}_1\textbf{p}_2\textbf{q}_2^{-1}$ is called the \textit{standard factorization} of the contour.  The paths $\textbf{q}_1$ and $\textbf{q}_2$ are called the \textit{trimmed bottom} and \textit{trimmed top} of the trapezium, respectively, denoted $\textbf{tbot}(\Delta)$ and $\textbf{ttop}(\Delta)$.  Further, $\textbf{p}_1$ and $\textbf{p}_2$ are the \textit{left} and \textit{right} sides of $\Delta$.

\begin{figure}[H]
\centering
\includegraphics[scale=1.25]{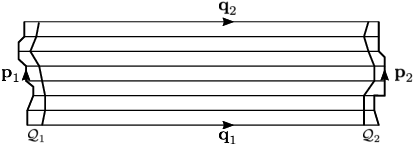}
\caption{Trapezium with side $q$-bands $\pazocal{Q}_1$ and $\pazocal{Q}_2$}
\label{trapezium}
\end{figure}

Let $\textbf{e}_1$ be the first and $\textbf{e}_2$ the last edge of $\textbf{q}_1$.  Then, by the definition of trapezium, there exist maximal $q$-bands $\pazocal{Q}_1$ and $\pazocal{Q}_2$ of $\Delta$ such that $\textbf{e}_j^{-1}$ is a defining edge of $\pazocal{Q}_j$.  As such, $\textbf{top}(\pazocal{Q}_1)=\textbf{p}_1$ and $\textbf{bot}(\pazocal{Q}_2)=\textbf{p}_2$.  The \textit{history} of the trapezium is the history of $\pazocal{Q}_2$ and the length of this history is the trapezium's \textit{height}. The base of $\text{Lab}(\textbf{q}_1)$ is called the \textit{base} of the trapezium.

It is evident from this definition that a non-annular $\theta$-band $\pazocal{T}$ whose first and last cells are $(\theta,q)$-cells can be viewed as a trapezium of height 1, with the standard factorization of $\partial\pazocal{T}$ giving the standard factorization of the trapezium.

However, note that for an arbitrary generalized $S$-machine $\textbf{S}$, a trapezium over $M(\textbf{S})$ need not have the desired structure resembling \Cref{trapezium}, as we have not ruled out the presence of $\theta$-annuli.  For $\textbf{M}^\pazocal{L}$ (or indeed any noisy $S$-machine), though, \Cref{M(M) annuli} takes care of this loose end, implying the following statement:

\begin{lemma} \label{trapezia decomposition}

Let $\Delta$ be a trapezium over $M(\textbf{M}^\pazocal{L})$ with height $h$ and standard factorization $\textbf{p}_1^{-1}\textbf{q}_1\textbf{p}_2\textbf{q}_2^{-1}$.  Then $\Delta$ can be decomposed into maximal $\theta$-bands $\pazocal{T}_1,\dots,\pazocal{T}_h$ such that:

\begin{enumerate}

\item For each $i\in\{1,\dots,h\}$ and $j\in\{1,2\}$, an edge of $\textbf{p}_j$ is a defining edge of $\pazocal{T}_i$

\item $\textbf{ttop}(\pazocal{T}_i)=\textbf{tbot}(\pazocal{T}_{i+1})$ for each $i\in\{1,\dots,h-1\}$

\item $\textbf{tbot}(\Delta)=\textbf{tbot}(\pazocal{T}_1)$ and $\textbf{ttop}(\Delta)=\textbf{ttop}(\pazocal{T}_h)$
 
\end{enumerate} 

\end{lemma}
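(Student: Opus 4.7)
The plan is to induct on the height $h$. In the base case $h = 1$, the side $q$-bands $\pazocal{Q}_1$ and $\pazocal{Q}_2$ each consist of a single $(\theta,q)$-cell. A short direct verification, starting from the unique $\theta$-edge of $\textbf{p}_1$ and applying Lemma \ref{M(M) annuli}, shows that $\Delta$ is a single maximal $\theta$-band whose defining edges exhaust the $\theta$-edges of $\textbf{p}_1$ and $\textbf{p}_2$, and that this band has $\textbf{tbot} = \textbf{q}_1$ and $\textbf{ttop} = \textbf{q}_2$.

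For the inductive step, I will peel off a ``bottom'' $\theta$-band and apply the hypothesis to what remains. Let $\pazocal{Q}_1, \pazocal{Q}_2$ be the maximal $q$-bands with $\textbf{p}_1, \textbf{p}_2$ as their outward sides; by Lemma \ref{M(M) annuli} both are non-annular with ends on $\partial\Delta$, and these ends must lie on $\textbf{q}_1, \textbf{q}_2$ (the only $q$-edges of $\partial\Delta$ outside the sides). Let $\textbf{e}_1$ be the first $\theta$-edge of $\textbf{p}_1$ (adjacent to $\textbf{q}_1$) and let $\pazocal{T}_1$ be the maximal $\theta$-band with $\textbf{e}_1^{-1} \in \pazocal{I}_{\pazocal{T}_1}$. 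Lemma \ref{M(M) annuli} gives that $\pazocal{T}_1$ has a second end on $\partial\Delta$; it cannot lie on $\textbf{q}_1$ or $\textbf{q}_2$ (which have no $\theta$-edges) nor on $\textbf{p}_1$ (such a configuration forms a $(\theta,q)$-annulus with a subband of $\pazocal{Q}_1$, contradicting Lemma \ref{basic annuli 1}(1)), and so it lies on $\textbf{p}_2$.

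Next, a planarity argument will show that this second defining edge is precisely the first $\theta$-edge of $\textbf{p}_2$ and that $\textbf{tbot}(\pazocal{T}_1) = \textbf{q}_1$. If the second end were a later $\theta$-edge of $\textbf{p}_2$, then $\pazocal{T}_1$ together with initial subbands of $\pazocal{Q}_1, \pazocal{Q}_2$ and the portion of $\textbf{p}_2$ below the end would enclose a subdiagram containing the bottommost $\theta$-edge of $\textbf{p}_2$; the maximal $\theta$-band starting at that edge could not terminate anywhere without either violating Lemma \ref{basic annuli 1}(1) or crossing $\pazocal{T}_1$, which is impossible since distinct maximal $\theta$-bands do not share cells. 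Once $\pazocal{T}_1$ is identified, I remove it from $\Delta$ (using $0$-refinement to handle adjacent cancellations), leaving a subdiagram $\Delta'$ whose contour decomposes as $(\textbf{p}_1')^{-1}\textbf{ttop}(\pazocal{T}_1)\,\textbf{p}_2'\,\textbf{q}_2^{-1}$, where $\textbf{p}_j'$ is the side of the subband of $\pazocal{Q}_j$ obtained by deleting its first $(\theta,q)$-cell. Each $\textbf{p}_j'$ remains a side of a maximal $q$-band in $\Delta'$, and $\textbf{ttop}(\pazocal{T}_1)$ begins and ends with $q$-edges, so $\Delta'$ is a trapezium of height $h-1$. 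By induction it decomposes as $\pazocal{T}_2, \dots, \pazocal{T}_h$ satisfying (1)--(3), and prepending $\pazocal{T}_1$ yields the desired decomposition of $\Delta$.

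The main obstacle is the planarity step that forces $\pazocal{T}_1$ to span $\Delta$ between the bottommost $\theta$-edges of $\textbf{p}_1$ and $\textbf{p}_2$, with no other $\theta$-band slipping beneath it. This argument hinges on the combined rigidity provided by Lemmas \ref{basic annuli 1} and \ref{M(M) annuli} together with the impossibility of two distinct maximal $\theta$-bands crossing in the plane. Once this step is secured, the rest is bookkeeping on contour factorizations, and the stacking relations $\textbf{ttop}(\pazocal{T}_i) = \textbf{tbot}(\pazocal{T}_{i+1})$ follow automatically from how $\Delta'$ inherits its trimmed bottom from $\textbf{ttop}(\pazocal{T}_1)$ at each inductive stage.
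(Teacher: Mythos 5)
Your proposal is correct and rests on the same core argument as the paper: \Cref{M(M) annuli} forces every maximal $\theta$-band of the trapezium to have two ends on $\partial\Delta$, and \Cref{basic annuli 1}(1) rules out two ends on the same side path, so each band crosses from $\textbf{p}_1$ to $\textbf{p}_2$. The paper states this once for all bands, indexes them by the $\theta$-edges of $\textbf{p}_2$, and asserts (1)--(3) ``by construction,'' whereas you repackage the same facts as an induction peeling off the bottommost band; the planarity/ordering step you spell out (and the observation that nothing but $0$-cells can lie below $\pazocal{T}_1$) is precisely the bookkeeping the paper leaves implicit.
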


In the setting of \Cref{trapezia decomposition}, the $\theta$-bands $\pazocal{T}_1,\dots,\pazocal{T}_h$ comprising the trapezium $\Delta$ are said to be \textit{enumerated from bottom to top}.  Hence, the next two statements follow from Lemmas \ref{theta-bands are one-rule computations} and \ref{one-rule computations are theta-bands} and exemplify how the group $M(\textbf{M}^\pazocal{L})$ `simulates' the machine's computational structure:

\begin{lemma} \label{trapezia are computations}

Let $\Delta$ be a trapezium over $M(\textbf{M}^\pazocal{L})$ with history $H\equiv\theta_1\dots\theta_h$ for $h\geq1$ and maximal $\theta$-bands $\pazocal{T}_1,\dots,\pazocal{T}_h$ enumerated from bottom to top. If $W_{j-1}\equiv\lab(\textbf{tbot}(\pazocal{T}_j))$ for $j=1,\dots,h$ and $W_h\equiv\lab(\textbf{ttop}(\pazocal{T}_h))$, then there exists a reduced computation $W_0\to\dots\to W_h$ of $\textbf{M}^\pazocal{L}$ with history $H$.

\end{lemma}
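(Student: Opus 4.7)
The plan is to assemble the computation step by step from the individual $\theta$-bands, using \Cref{trapezia decomposition} to control how they fit together and \Cref{theta-bands are one-rule computations} to read off one rule application from each band.

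First I would invoke \Cref{trapezia decomposition} to guarantee that the maximal $\theta$-bands $\pazocal{T}_1,\dots,\pazocal{T}_h$ enumerated from bottom to top satisfy $\textbf{ttop}(\pazocal{T}_j)=\textbf{tbot}(\pazocal{T}_{j+1})$ for $j=1,\dots,h-1$, together with $\textbf{tbot}(\Delta)=\textbf{tbot}(\pazocal{T}_1)$ and $\textbf{ttop}(\Delta)=\textbf{ttop}(\pazocal{T}_h)$. This ensures the chain of labels is consistent with the labeling convention $W_{j-1}\equiv\lab(\textbf{tbot}(\pazocal{T}_j))$ and $W_h\equiv\lab(\textbf{ttop}(\pazocal{T}_h))$. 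I would also need to check that the history of $\pazocal{T}_j$ is indeed $\theta_j$: this follows because the right-side $q$-band $\pazocal{Q}_2$ has history $H\equiv\theta_1\dots\theta_h$ by definition of the history of the trapezium, and its $\theta$-edges, listed along $\textbf{bot}(\pazocal{Q}_2)=\textbf{p}_2$ from bottom to top, are precisely the defining edges of $\pazocal{T}_1,\dots,\pazocal{T}_h$ in that order.

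Next, for each fixed $j\in\{1,\dots,h\}$ the $\theta$-band $\pazocal{T}_j$ has history $\theta_j$, and both its first and last cells are $(\theta,q)$-cells since $\pazocal{T}_j$ crosses the maximal $q$-bands $\pazocal{Q}_1$ and $\pazocal{Q}_2$. Applying \Cref{theta-bands are one-rule computations} to each $\pazocal{T}_j$ yields that $W_{j-1}$ and $W_j$ are admissible words, that $W_{j-1}$ is $\theta_j$-admissible, and that $W_{j-1}\cdot\theta_j\equiv W_j$. Chaining these $h$ one-rule applications together produces a computation $\pazocal{C}: W_0\to W_1\to\dots\to W_h$ of $\textbf{M}^\pazocal{L}$ whose history is $H$.

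Finally, I would verify that $\pazocal{C}$ is reduced, i.e., that $H$ is a reduced word over $\Theta$. This is immediate from the remarks preceding the statement: the history of a $q$-band in a reduced diagram belongs to $F(\Theta^+)$, because any cancellable pair of adjacent $\theta$-letters in the history would correspond to a cancellable pair of $(\theta,q)$-cells in $\pazocal{Q}_2$, contradicting the assumption that $\Delta$ is reduced. I do not anticipate a serious obstacle here; the work has already been done in \Cref{positive theta-bands are one-rule computations}, \Cref{theta-bands are one-rule computations}, and \Cref{trapezia decomposition}, and the only delicate point is confirming the order-preserving identification of the histories of $\pazocal{T}_1,\dots,\pazocal{T}_h$ with the letters $\theta_1,\dots,\theta_h$, which is essentially a bookkeeping matter.
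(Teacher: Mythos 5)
Your proposal is correct and follows essentially the same route as the paper, which derives this lemma directly from \Cref{trapezia decomposition} together with \Cref{theta-bands are one-rule computations} applied to each maximal $\theta$-band in turn. The extra details you supply (that the first and last cells of each $\pazocal{T}_j$ are $(\theta,q)$-cells from the crossings with $\pazocal{Q}_1,\pazocal{Q}_2$, and that $H$ is reduced because the history of a $q$-band in a reduced diagram is reduced) are exactly the bookkeeping the paper leaves implicit.
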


\begin{lemma} \label{computations are trapezia}

For any non-empty reduced computation $W_0\to\dots\to W_t$ of $\textbf{M}^\pazocal{L}$ with history $H$, there exists a trapezium $\Delta$ such that: 

\begin{enumerate} [label=(\alph*)]

\item $\lab(\textbf{tbot}(\Delta))\equiv W_0$

\item $\lab(\textbf{ttop}(\Delta))\equiv W_t$

\item The history of $\Delta$ is  $H$

\item $\text{Area}(\Delta)\leq t\max(\|W_0\|,\dots,\|W_t\|)$

\end{enumerate}

\end{lemma}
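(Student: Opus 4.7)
The plan is to proceed by induction on $t$, assembling the trapezium one $\theta$-band per time step via \Cref{one-rule computations are theta-bands}.

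For the base case $t=1$, the computation $W_0 \to W_1$ with history $H \equiv \theta_1$ is realized by a single $\theta$-band $\pazocal{T}$ given by \Cref{one-rule computations are theta-bands}, whose first and last cells are $(\theta,q)$-cells, with $\lab(\textbf{tbot}(\pazocal{T})) \equiv W_0$ and $\lab(\textbf{ttop}(\pazocal{T})) \equiv W_1$. Viewed as a diagram, $\pazocal{T}$ is itself a trapezium of height $1$, with the band's standard factorization supplying the trapezium's standard factorization. The length of $\pazocal{T}$ is $l_{\theta_1}(W_0)$ or $l_{\theta_1^{-1}}(W_1)$ depending on the sign of $\theta_1$, and each basis $X_i(\theta)$ occurring in the software of $\textbf{M}^\pazocal{L}$ satisfies $l_\theta(w)\leq\|w\|$ for $\theta$-applicable $w$ (immediate when $X_i(\theta)$ consists of single letters, and verified via \Cref{free subgroup} for the basis $\pazocal{A}_{1,y}\sqcup\pazocal{B}$ since every element of $\pazocal{A}_{1,y}$ contributes at least one surviving letter to any reduced product over $\pazocal{A}_1\sqcup\pazocal{B}$). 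Hence the area of $\pazocal{T}$ is at most $\max(\|W_0\|,\|W_1\|)$.

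For the inductive step, factor $H \equiv H'\theta_t$ and apply the inductive hypothesis to the prefix computation $W_0 \to \dots \to W_{t-1}$ to obtain a trapezium $\Delta'$ of height $t-1$ with $\lab(\textbf{tbot}(\Delta'))\equiv W_0$, $\lab(\textbf{ttop}(\Delta'))\equiv W_{t-1}$, history $H'$, and area at most $(t-1)\max_{0\leq j\leq t-1}\|W_j\|$. Use \Cref{one-rule computations are theta-bands} once more to produce a $\theta_t$-band $\pazocal{T}$ with $\lab(\textbf{tbot}(\pazocal{T}))\equiv W_{t-1}$ and $\lab(\textbf{ttop}(\pazocal{T}))\equiv W_t$. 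Glue $\pazocal{T}$ onto $\Delta'$ by identifying the common path labelled $W_{t-1}$, i.e. the trimmed top of $\Delta'$ with the trimmed bottom of $\pazocal{T}$, and extend the two side $q$-bands of $\Delta'$ by appending the boundary $q$-edges of the leftmost and rightmost $(\theta,q)$-cells of $\pazocal{T}$. The resulting diagram $\Delta$ is a trapezium whose maximal $\theta$-bands, from bottom to top, are those of $\Delta'$ followed by $\pazocal{T}$; this establishes (a), (b), (c). Its area satisfies $\text{Area}(\Delta)\leq(t-1)\max_j\|W_j\| + \max(\|W_{t-1}\|,\|W_t\|) \leq t\max_j\|W_j\|$, giving (d).

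The main obstacle is verifying that $\Delta$ is reduced. Any cancellable pair of cells lies either entirely in $\Delta'$, entirely in $\pazocal{T}$, or straddles the gluing seam. The first case is excluded by the inductive hypothesis, and the second by the fact that the $\theta$-band constructed in \Cref{one-rule computations are theta-bands} is reduced by design. In the third case, one cell of the pair would lie in the topmost $\theta$-band $\pazocal{T}_{t-1}$ of $\Delta'$ (with history $\theta_{t-1}$) and one in $\pazocal{T}$ (with history $\theta_t$); the requirement that their contour labels be mutually inverse along a trivial connecting path forces $\theta_{t-1}=\theta_t^{-1}$, contradicting the reducedness of $H$. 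Any $0$-cells introduced by the gluing are eliminated by $0$-refinement without altering the area or the contour label of $\Delta$.
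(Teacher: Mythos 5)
Your construction follows the paper's route: apply \Cref{one-rule computations are theta-bands} once per letter of $H$ and glue the resulting $\theta$-bands along their trimmed tops and bottoms; your check that a cancellable pair straddling a seam would force $\theta_{t-1}=\theta_t^{-1}$ is the right (if routine) way to see the glued diagram is reduced, which the paper leaves implicit.

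The one real flaw is the parenthetical claim that \emph{every} basis $X_i(\theta)$ in the software of $\textbf{M}^\pazocal{L}$ satisfies $l_\theta(w)\leq\|w\|$, ``verified via \Cref{free subgroup} for $\pazocal{A}_{1,y}\sqcup\pazocal{B}$.'' That claim is false: for $w\equiv a_1$ the unique reduced expression over the basis $\pazocal{A}_{1,y}\sqcup\pazocal{B}$ is $v(y,a)^{-1}\cdot\bigl(v(y,a)a_1\bigr)$, giving $\theta$-length $D_\pazocal{A}+1\gg\|a_1\|=1$; the surviving $\pazocal{A}_1$-letters bound $\|w\|$ below only by the number of $\pazocal{A}_{1,y}$-factors, since single $b$-letter factors can be absorbed by cancellation with the $v(y,\cdot)$ prefixes. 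Fortunately the claim is never needed: in \Cref{one-rule computations are theta-bands} the band length is $l_\theta(U)$ for $\theta\in\Theta^+$ and $l_{\theta^{-1}}(V)$ for $\theta\in\Theta^-$, so the length is always measured in the domain of a \emph{positive} rule, and for every positive rule of $\textbf{M}^\pazocal{L}$ one has $X_i(\theta)\subseteq Y_i$, whence $l_\theta(W)=\|W\|$. This is precisely the observation the paper's one-line proof rests on, and with it your bound $\text{Area}(\Delta)\leq t\max(\|W_0\|,\dots,\|W_t\|)$ goes through unchanged.
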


\begin{proof}

Note that for any $\theta\in\Theta^+$, $X_i(\theta)\subseteq Y_i$ for all $i$.  Hence, $l_\theta(W)=\|W\|$ for any $\theta$-admissible word $W$.  Thus, the statement follows from \Cref{one-rule computations are theta-bands}.

\end{proof}

\medskip


\section{Diagarams over the Groups Associated to $\textbf{M}^\pazocal{L}$}

\subsection{Compressed semi-trapezia} \

Recall that for noisy $S$-machines, one can introduce the notion of `compressed' semi-computations in a sector of the standard base (see \Cref{sec-M_1} and \Cref{sec-M-semi}).  In the particular setting of $\textbf{M}^\pazocal{L}$, this notion is relevant in the `special' input sector, and will be especially important for the proofs that follow.  

To aid in the study of these compressed semi-computations, we now introduce another class of reduced diagrams (novel to this article) which correspond to reduced compressed semi-computations of $\textbf{M}^\pazocal{L}$ in the `special' input sector in exactly the same way that (semi-)trapezia correspond to reduced (semi-)computations.  As in previous sections, this notion can be made precise for diagrams over $M(\textbf{S})$ for any noisy $S$-machine $\textbf{S}$, but for simplicity we restrict attention here to the setting of interest, $\textbf{M}^\pazocal{L}$.

Let $\pazocal{T}$ be a $\theta$-band over $M(\textbf{M}^\pazocal{L})$ consisting only of $(\theta,a)$-cells over the `special' input sector.  Suppose the first and last cells of $\pazocal{T}$ are $(\theta,\pazocal{A})$-cells.  

The maximal subpath of $\textbf{bot}(\pazocal{T})$ whose first and last edges are $\pazocal{A}$-edges is called the \textit{compressed bottom} of the band, denoted $\mathscr{C}\textbf{bot}(\pazocal{T})$.  The \textit{compressed top} $\mathscr{C}\textbf{top}(\pazocal{T})$ is defined analogously.  As with previous definitions, the compressed bottom and compressed top of $\pazocal{T}$ are collectively called the \textit{compressed sides} of the band.

Note that, as a consequence of its definition, $\mathscr{C}\textbf{bot}(\pazocal{T})$ is the subpath of $\textbf{bot}(\pazocal{T})$ satisfying $\lab(\mathscr{C}\textbf{bot}(\pazocal{T}))\equiv\mathscr{C}(\lab(\textbf{bot}(\pazocal{T})))$.  An analogous observation may be made about $\mathscr{C}\textbf{top}(\pazocal{T})$.

Since the $\theta$-band $\pazocal{T}$ consists only of $(\theta,a)$-cells of a particular sector, the following statement is an immediate consequence of \Cref{theta-band is one-rule semi-computation}:

\begin{lemma} \label{theta-band is one-rule compressed semi-computation}

Let $\pazocal{T}$ be a $\theta$-band with history $\theta$ in a reduced diagram $\Delta$ over $M(\textbf{M}^\pazocal{L})$ consisting entirely of $(\theta,a)$-cells of the `special' input sector.  Suppose the first and last cells of $\pazocal{T}$ are $(\theta,\pazocal{A})$-cells.  Then $\lab(\mathscr{C}\textbf{bot}(\pazocal{T}))*\theta\equiv\lab(\mathscr{C}\textbf{top}(\pazocal{T}))$.

\end{lemma}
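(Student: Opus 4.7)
The plan is to leverage Lemma \ref{theta-band is one-rule semi-computation}, which applied to $\pazocal{T}$ yields $\lab(\textbf{bot}(\pazocal{T}))\cdot\theta\equiv\lab(\textbf{top}(\pazocal{T}))$. Setting $w_0\equiv\lab(\textbf{bot}(\pazocal{T}))$ and $w_t\equiv\lab(\textbf{top}(\pazocal{T}))$, the equality $w_0\cdot\theta\equiv w_t$ is the starting point. The desired conclusion then reduces to showing that compression commutes appropriately with application of $\theta$, in the sense that $\mathscr{C}(w_0)*\theta\equiv\mathscr{C}(w_t)$, together with the identifications $\mathscr{C}(w_0)\equiv\lab(\mathscr{C}\textbf{bot}(\pazocal{T}))$ and $\mathscr{C}(w_t)\equiv\lab(\mathscr{C}\textbf{top}(\pazocal{T}))$.

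The first task is to justify these two identifications. The hypothesis that the first and last cells of $\pazocal{T}$ are $(\theta,\pazocal{A})$-cells guarantees that $w_0$ contains at least one $\pazocal{A}$-edge (coming from the bottom contribution of the first cell), so $\mathscr{C}\textbf{bot}(\pazocal{T})$ is a well-defined subpath and its label is precisely $\mathscr{C}(w_0)$. For the top, I would invoke \Cref{semi-computation deltas}, which gives $\delta(w_t)\equiv\delta(w_0\cdot\theta)\equiv\delta(w_0)$; thus $w_t$ contains $\pazocal{A}_1$-letters in matching positions, so $\lab(\mathscr{C}\textbf{top}(\pazocal{T}))\equiv\mathscr{C}(w_t)$.

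The main content is to show $\mathscr{C}(w_0)*\theta\equiv\mathscr{C}(w_t)$. Write $w_0\equiv u_0 w_c v_0$ with $w_c\equiv\mathscr{C}(w_0)$ and $u_0,v_0\in F(\pazocal{B})$. Since $\widetilde{f}_{\theta,1}$ acts as the identity on every element of $\pazocal{B}$, the unreduced image satisfies $w_0\cdot\theta=u_0\cdot(w_c\cdot\theta)\cdot v_0$. Because $\pazocal{A}_1$ and $\pazocal{B}$ are disjoint alphabets and $u_0,v_0$ are pure $\pazocal{B}$-words, no $\pazocal{A}_1$-letter occurring in $w_c\cdot\theta$ can be cancelled during free reduction by the outer factors. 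Consequently the reduced form of $w_0\cdot\theta$ has exactly the same sequence of $\pazocal{A}_1$-letters (with the same intervening $\pazocal{B}$-words) as the reduced form of $w_c\cdot\theta$, so $\mathscr{C}(w_0\cdot\theta)\equiv\mathscr{C}(w_c\cdot\theta)\equiv w_c*\theta$ by the very definition of the compressed application.

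Chaining the steps gives $\mathscr{C}(w_0)*\theta\equiv\mathscr{C}(w_0\cdot\theta)\equiv\mathscr{C}(w_t)\equiv\lab(\mathscr{C}\textbf{top}(\pazocal{T}))$, as required. The main obstacle is essentially bookkeeping: one must verify that the reduction carrying $w_0\cdot\theta$ to $w_t$ does not eliminate any $\pazocal{A}_1$-edges beyond those already absent from $w_c\cdot\theta$, but this is immediate from the alphabet-disjointness noted above. A minor preliminary check is that $\theta$ is a rule for which the compressed application has actually been defined (namely, a rule of $\Theta_1$ that does not lock the special input sector); this is automatic, since $\Theta_2$-rules lock that sector and would admit no $(\theta,a)$-cells there, so the existence of the band forces $\theta\in\Theta_1\setminus\{\theta(a)_1^{\pm1}\}$, exactly the case covered by \Cref{sec-M-semi}.
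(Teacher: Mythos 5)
Your proof is correct and follows the same route as the paper, which simply records this lemma as an immediate consequence of \Cref{theta-band is one-rule semi-computation} together with the observation (stated just before the lemma) that $\lab(\mathscr{C}\textbf{bot}(\pazocal{T}))\equiv\mathscr{C}(\lab(\textbf{bot}(\pazocal{T})))$ and likewise for the top. Your explicit bookkeeping — that the rules fix $\pazocal{B}$, that the outer $\pazocal{B}$-segments cannot cancel $\pazocal{A}_1$-letters, and that $\theta$ must be a rule for which compressed application is defined — just makes precise what the paper leaves implicit.
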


Similarly, the following statement is a consequence of \Cref{one-rule semi-computations are theta-bands}:

\begin{lemma} \label{one-rule compressed semi-computations are theta-bands}

Let $u\to v$ be a reduced compressed semi-computation of $\textbf{M}^\pazocal{L}$ in the `special' input sector with history $\theta\in\Theta$.  Then there exists a $\theta$-band $\pazocal{T}$ with history $\theta$ consisting entirely of $(\theta,a)$-cells of the `special' input sector whose first and last cells are $(\theta,\pazocal{A})$-cells and such that $\lab(\mathscr{C}\textbf{bot}(\pazocal{T}))\equiv u$ and $\lab(\mathscr{C}\textbf{top}(\pazocal{T}))\equiv v$.  Moreover, the length of $\pazocal{T}$ is $\|u\|$ if $\theta\in\Theta^+$ and $\|v\|$ if $\theta\in\Theta^-$.

\end{lemma}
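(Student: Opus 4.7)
My plan is to construct $\pazocal{T}$ by directly invoking \Cref{one-rule semi-computations are theta-bands} on the one-step standard semi-computation $u \to u\cdot\theta$ taken in the $Q_0^\pazocal{L}(1)Q_1^\pazocal{L}(1)$-sector (the `special' input sector), and then to observe that compression of the band's bottom and top labels recovers $u$ and $v=u*\theta$. That lemma produces a $\theta$-band $\pazocal{T}$ with history $\theta$, consisting entirely of $(\theta,a)$-cells of the special input sector, satisfying $\lab(\textbf{bot}(\pazocal{T}))\equiv u$, $\lab(\textbf{top}(\pazocal{T}))\equiv u\cdot\theta$, and having length $l_\theta(u)$.

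For the length claim I would inspect the domains of the rules of $\textbf{M}^\pazocal{L}$: every positive rule $\theta$ has each set $X_i(\theta)$ consisting of single tape letters of $Y_i^\pazocal{L}$ (the working rules of $\textbf{M}_1^\pazocal{A}$ have $X_1=\pazocal{A}_1\sqcup\pazocal{B}$, the rules inherited from $\textbf{M}_2^\pazocal{L}$ satisfy \Cref{simplify rules}, and the transition rules $\theta(s)_i,\theta(a)_i,\sigma$ are defined with single-letter domains). Consequently $l_\theta(w)=\|w\|$ for every $\theta$-applicable $w$, giving band length $l_\theta(u)=\|u\|$ when $\theta\in\Theta^+$; for $\theta\in\Theta^-$, the mirror step in the proof of \Cref{one-rule semi-computations are theta-bands} produces length $l_\theta(u)=l_{\theta^{-1}}(v)=\|v\|$ by applying the same observation to the positive companion $\theta^{-1}$. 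This exactly matches the asymmetric length claim.

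To handle the compressed sides, I would use that $u$ is compressed, so the first and last edges of $\textbf{bot}(\pazocal{T})$ are $\pazocal{A}$-edges. In the construction of \Cref{one-rule semi-computations are theta-bands} each cell corresponds to a basis letter of the bottom (or, in the negative case after the mirror, of the top), and an $\pazocal{A}$-letter yields a $(\theta,\pazocal{A})$-cell; hence the first and last cells of $\pazocal{T}$ are $(\theta,\pazocal{A})$-cells. In the negative case one uses here that $v=u*\theta$ is again compressed by the very definition of the compressed application. It follows that $\mathscr{C}\textbf{bot}(\pazocal{T})=\textbf{bot}(\pazocal{T})$, so $\lab(\mathscr{C}\textbf{bot}(\pazocal{T}))\equiv u$, and, by the definition of $\mathscr{C}\textbf{top}$, $\lab(\mathscr{C}\textbf{top}(\pazocal{T}))\equiv\mathscr{C}(\lab(\textbf{top}(\pazocal{T})))\equiv\mathscr{C}(u\cdot\theta)\equiv u*\theta\equiv v$.

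The only point requiring care is the asymmetric length statement $\|u\|$ versus $\|v\|$: because the domain of a negative rule in $\textbf{M}_1^\pazocal{A}$ is generally not a subset of the tape alphabet (for instance $X_1(\theta_y^{-1})=\pazocal{A}_{1,y}\sqcup\pazocal{B}$ contains the multi-letter words $v(y,a)a_1$), the identity $l_\theta(u)=\|u\|$ simply fails for negative $\theta$. Routing through the positive rule $\theta^{-1}$ applied to $v$, as above, is what delivers the correct length $\|v\|$ in that case.
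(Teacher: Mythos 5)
Your positive case is fine and is essentially the paper's argument: apply \Cref{one-rule semi-computations are theta-bands} to $u\to u\cdot\theta$, note $X_i(\theta)\subseteq Y_i^\pazocal{L}$ so the length is $l_\theta(u)=\|u\|$, observe that the cells correspond to the letters of the bottom $u$, which is compressed, so the end cells are $(\theta,\pazocal{A})$-cells, and compress the top to get $v$.

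The negative case, however, has a genuine gap. If you invoke \Cref{one-rule semi-computations are theta-bands} directly on $u\to u\cdot\theta$ for $\theta\in\Theta^-$, the band you obtain has top labelled by the \emph{uncompressed} word $u\cdot\theta$, its cells correspond to the basis letters of $u\cdot\theta$, and its length is $l_\theta(u)=l_{\theta^{-1}}(u\cdot\theta)=\|u\cdot\theta\|$ --- not $l_{\theta^{-1}}(v)=\|v\|$ as you assert. Your identity conflates the $v$ of the cited lemma (which is $u\cdot\theta$) with the $v$ of the statement being proved (which is $u*\theta\equiv\mathscr{C}(u\cdot\theta)$); these differ exactly when compression trims something, and then both conclusions fail. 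Concretely, take $u\equiv a_1$ and $\theta$ the copy of $\theta_a^{-1}$ in $\Theta_1$: then $u\cdot\theta\equiv v(a,a)^{-1}a_1$, so $v\equiv a_1$ and $\|v\|=1$, while your band has length $D_\pazocal{A}+1$ and all but one of its cells are $(\theta,b)$-cells; in particular one of its end cells is a $(\theta,b)$-cell, violating the requirement that the first and last cells be $(\theta,\pazocal{A})$-cells (and without that requirement the compressed sides are not even defined). The claim ``$v=u*\theta$ is compressed'' does not help, because in your construction the cells are indexed by the letters of $u\cdot\theta$, not of $v$. The correct route --- which is what the paper does --- is to first verify that $v*\theta^{-1}\equiv u$ (this uses that the rules fix the $b$-letters, so $u\cdot\theta\equiv p\,v\,q$ with $p,q\in F(\pazocal{B})$ forces the trimmed words to cancel when $\theta^{-1}$ is applied, since $u$ is compressed), then apply the already-established positive case to the compressed semi-computation $v\to u$ with history $\theta^{-1}\in\Theta^+$, obtaining a band of length $\|v\|$ with compressed bottom $v$, compressed top $u$, and $(\theta,\pazocal{A})$-cells at both ends, and finally take the mirror of this band to reverse the history back to $\theta$. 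Your closing sentence gestures at ``routing through $\theta^{-1}$ applied to $v$,'' but as written you never build the band from $v$; you build it from $u\cdot\theta$, and the mirror step of \Cref{one-rule semi-computations are theta-bands} does not remove the spurious $(\theta,b)$-cells or shorten the band.
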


%
%
%

Now, let $\Delta$ be a reduced circular diagram over $M(\textbf{M}^\pazocal{L})$ consisting entirely of $(\theta,a)$-cells of the `special' input sector such that $\partial\Delta=\textbf{p}_1^{-1}\textbf{q}_1\textbf{p}_2\textbf{q}_2^{-1}$ where:

\begin{itemize} 

\item $\textbf{p}_1$ and $\textbf{p}_2$ are sides of maximal $\pazocal{A}$-bands

\item $\textbf{q}_1$ and $\textbf{q}_2$ are compressed sides of maximal $\theta$-bands

\end{itemize}

Then $\Delta$ is called a \textit{compressed semi-trapezium} over $M(\textbf{M}^\pazocal{L})$ in the `special' input sector.

As in the setting trapezia, $\textbf{p}_1^{-1}\textbf{q}_1\textbf{p}_2\textbf{q}_2^{-1}$ is called the \textit{standard factorization} of $\Delta$.  Similarly, $\textbf{q}_1$ and $\textbf{q}_2$ are called the \textit{compressed bottom} and \textit{compressed top} of $\Delta$, respectively, and denoted $\mathscr{C}\textbf{bot}(\Delta)$ and $\mathscr{C}\textbf{top}(\Delta)$.  The paths $\textbf{p}_1$ and $\textbf{p}_2$ are called the \textit{left} and \textit{right sides} of $\Delta$.

Let $\textbf{e}_1$ and $\textbf{e}_2$ be the first edges of $\textbf{q}_1$.  Then, noting that $\textbf{e}_i$ is an $\pazocal{A}$-edge, let $\pazocal{U}_i$ be the maximal $\pazocal{A}$-band of $\Delta$ with $\textbf{e}_i^{-1}\in\pazocal{I}_{\pazocal{U}_i}$.  So, $\textbf{p}_1=\textbf{top}(\pazocal{U}_1)$ and $\textbf{p}_2=\textbf{bot}(\pazocal{U}_2)$.  The \textit{history} of $\Delta$ is the history of $\pazocal{U}_2$, while the length of this history is the compressed semi-trapezium's \textit{height}.

Noting the similarity between the definitions of this section and those of \Cref{sec-trapezia}, we have the following analogue of \Cref{trapezia decomposition}, which is proved in exactly the same way:

\begin{lemma} \label{compressed semi-trapezia decomposition}

Let $\Delta$ be a compressed semi-trapezium over $M(\textbf{M}^\pazocal{L})$ in the `special' input sector with height $h$ and standard factorization $\textbf{p}_1^{-1}\textbf{q}_1\textbf{p}_2\textbf{q}_2^{-1}$.  Then $\Delta$ can be decomposed into maximal $\theta$-bands $\pazocal{T}_1,\dots,\pazocal{T}_h$ such that:

\begin{enumerate}

\item For each $i\in\{1,\dots,h\}$ and $j\in\{1,2\}$, an edge of $\textbf{p}_j$ is a defining edge of $\pazocal{T}_i$

\item $\mathscr{C}\textbf{top}(\pazocal{T}_i)=\mathscr{C}\textbf{bot}(\pazocal{T}_{i+1})$ for each $i\in\{1,\dots,h-1\}$

\item $\mathscr{C}\textbf{bot}(\Delta)=\mathscr{C}\textbf{bot}(\pazocal{T}_1)$ and $\mathscr{C}\textbf{top}(\Delta)=\mathscr{C}\textbf{top}(\pazocal{T}_h)$

\end{enumerate} 

\end{lemma}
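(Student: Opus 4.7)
The plan is to mirror the proof of \Cref{trapezia decomposition} line by line, replacing the role of maximal $q$-bands at the sides by maximal $\pazocal{A}$-bands, and replacing the appeal to \Cref{basic annuli 1} with an appeal to \Cref{basic annuli 2}. First, I would let $\pazocal{T}$ be a maximal $\theta$-band in $\Delta$. Since $\Delta$ consists entirely of $(\theta,a)$-cells of the `special' input sector, I need to know that $\pazocal{T}$ is non-annular and has both ends on $\partial\Delta$. The analogue of \Cref{M(M) annuli} that I need is ``a reduced circular diagram consisting entirely of $(\theta,a)$-cells over the `special' input sector contains no $\theta$-annuli''; this is just the second half of the proof of \Cref{M(M) annuli} (the part that uses $\pazocal{A}$-bands and the remaining $a$-bands to produce $(\theta,a)$-annuli, contradicting \Cref{basic annuli 2}(1)), and it goes through unchanged in our setting.

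Next, I would locate the two ends of $\pazocal{T}$. Since $\textbf{q}_1=\mathscr{C}\textbf{bot}(\Delta)$ and $\textbf{q}_2=\mathscr{C}\textbf{top}(\Delta)$ contain no $\theta$-edges, both ends of $\pazocal{T}$ must lie on $\textbf{p}_1^{-1}$ or on $\textbf{p}_2$. If both ends lay on $\textbf{p}_1^{-1}=\textbf{top}(\pazocal{U}_1)^{-1}$, then $\pazocal{T}$ together with the appropriate subband of $\pazocal{U}_1$ would form a $(\theta,\pazocal{A})$-annulus, contradicting \Cref{basic annuli 2}(1). The analogous argument using $\pazocal{U}_2$ rules out both ends lying on $\textbf{p}_2$. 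Hence each maximal $\theta$-band in $\Delta$ has exactly one end on $\textbf{p}_1^{-1}$ and exactly one on $\textbf{p}_2$.

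Finally, I would use the $h$ $\theta$-edges of $\textbf{p}_2$ to enumerate the maximal $\theta$-bands from bottom to top: write $\textbf{p}_2=\textbf{u}_0\textbf{e}_1\textbf{u}_1\dots\textbf{e}_h\textbf{u}_h$ where each $\textbf{e}_i$ is a $\theta$-edge and each $\textbf{u}_i$ contains no $\theta$-edge, and define $\pazocal{T}_i$ to be the unique maximal $\theta$-band with $\textbf{e}_i\in\pazocal{I}_{\pazocal{T}_i}$. Every cell of $\Delta$ is a $(\theta,a)$-cell of the `special' input sector and thus lies in exactly one such band, so $\{\pazocal{T}_1,\dots,\pazocal{T}_h\}$ is a decomposition of $\Delta$. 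Conditions (1) and (2) are immediate from this construction: an edge of $\textbf{p}_2$ is a defining edge of $\pazocal{T}_i$ by design, an edge of $\textbf{p}_1$ is a defining edge because the other end of $\pazocal{T}_i$ sits there, and consecutive bands share their compressed top/bottom as the common boundary in the decomposition. Condition (3) is then read off the standard factorization, since $\mathscr{C}\textbf{bot}(\Delta)$ is the compressed bottom of the lowest band and $\mathscr{C}\textbf{top}(\Delta)$ that of the highest.

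The only mild obstacle I anticipate is the verification of the no-$\theta$-annuli statement inside $\Delta$; however, this is strictly easier than \Cref{M(M) annuli} because the $(\theta,q)$-case is vacuous here, so only the $(\theta,\pazocal{A})$-part (using \Cref{basic annuli 2}(1)) and the remaining $a$-band part (using \Cref{basic annuli 2}(1) again) are needed, both of which are direct.
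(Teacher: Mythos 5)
Your proposal is correct and matches the paper's intended argument: the paper proves this lemma verbatim as Lemma \ref{trapezia decomposition}, with \Cref{basic annuli 2} replacing \Cref{basic annuli 1} in the role you describe. Your only detour — re-deriving a specialized no-$\theta$-annuli statement — is unnecessary, since \Cref{M(M) annuli} already applies to any reduced circular diagram over $M(\textbf{M}^\pazocal{L})$, including $\Delta$ itself.
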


In this setting, the $\theta$-bands $\pazocal{T}_1,\dots,\pazocal{T}_h$ are again said to be \textit{enumerated from bottom to top}.

Hence, an iteration of applications of Lemmas \ref{theta-band is one-rule compressed semi-computation} and \ref{one-rule compressed semi-computations are theta-bands} imply the following statements:

%
%

\begin{lemma} \label{compressed semi-trapezia are compressed semi-computations}

Let $\Delta$ be a compressed semi-trapezium over $M(\textbf{M}^\pazocal{L})$ in the `special' input sector with history $H\equiv\theta_1\dots\theta_h$ for $h\geq1$ and maximal $\theta$-bands $\pazocal{T}_1,\dots,\pazocal{T}_h$ enumerated from bottom to top.  If $w_{j-1}=\lab(\mathscr{C}\textbf{bot}(\pazocal{T}_j))$ for $j=1,\dots,h$ and $w_h=\lab(\mathscr{C}\textbf{top}(\pazocal{T}_h))$, then there exists a reduced compressed semi-computation $w_0\to\dots\to w_h$ of $\textbf{M}^\pazocal{L}$ in the `special' input sector with history $H$.

\end{lemma}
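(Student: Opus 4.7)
The plan is to unpack the decomposition given by Lemma \ref{compressed semi-trapezia decomposition} and apply Lemma \ref{theta-band is one-rule compressed semi-computation} one band at a time. First I would invoke Lemma \ref{compressed semi-trapezia decomposition} to obtain the enumerated maximal $\theta$-bands $\pazocal{T}_1,\dots,\pazocal{T}_h$ with $\mathscr{C}\textbf{top}(\pazocal{T}_j)=\mathscr{C}\textbf{bot}(\pazocal{T}_{j+1})$ for all $j$. Since $\Delta$ is a compressed semi-trapezium, each $\pazocal{T}_j$ consists entirely of $(\theta,a)$-cells of the `special' input sector and its compressed sides are well-defined, so in particular the first and last cells of $\pazocal{T}_j$ are the $(\theta,\pazocal{A})$-cells abutting the two maximal $\pazocal{A}$-bands that form the left and right sides of $\Delta$ (or the corresponding endpoints on adjacent bands after the decomposition).

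Next, for each $j\in\{1,\dots,h\}$ I would apply Lemma \ref{theta-band is one-rule compressed semi-computation} to $\pazocal{T}_j$, whose history is $\theta_j$, to conclude that $w_{j-1}\equiv\lab(\mathscr{C}\textbf{bot}(\pazocal{T}_j))$ is $\theta_j$-applicable with compressed application $w_{j-1}*\theta_j\equiv\lab(\mathscr{C}\textbf{top}(\pazocal{T}_j))$. Concatenating via the compatibility $\mathscr{C}\textbf{top}(\pazocal{T}_j)=\mathscr{C}\textbf{bot}(\pazocal{T}_{j+1})$ from the decomposition then yields $w_j\equiv w_{j-1}*\theta_j$ for every $j$, which is precisely a compressed semi-computation $w_0\to w_1\to\dots\to w_h$ of $\textbf{M}^\pazocal{L}$ in the `special' input sector with history $H\equiv\theta_1\dots\theta_h$.

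Finally, I need to verify that the resulting compressed semi-computation is reduced, i.e.\ that $H$ is reduced as a word in $\Theta$. Suppose for contradiction that $\theta_{j+1}=\theta_j^{-1}$ for some $j$. Then the adjacent maximal $\theta$-bands $\pazocal{T}_j$ and $\pazocal{T}_{j+1}$ have mutually inverse histories and share the path $\mathscr{C}\textbf{top}(\pazocal{T}_j)=\mathscr{C}\textbf{bot}(\pazocal{T}_{j+1})$; tracing this shared boundary cell-by-cell would produce a pair of cancellable $(\theta,a)$-cells (one in each band at matching positions), contradicting the assumption that $\Delta$ is a reduced diagram. The main, though routine, obstacle is being careful at the outer edges of the compressed sides — ensuring that the $(\theta,\pazocal{A})$-cell hypothesis of Lemma \ref{theta-band is one-rule compressed semi-computation} is inherited by every $\pazocal{T}_j$ from the left and right $\pazocal{A}$-band sides of $\Delta$ (rather than only by the first band) — but this follows from the fact that condition (1) of Lemma \ref{compressed semi-trapezia decomposition} forces each $\pazocal{T}_j$ to have an $\pazocal{A}$-edge end on each of $\textbf{p}_1$ and $\textbf{p}_2$.
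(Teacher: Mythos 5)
Your proposal is correct and follows essentially the same route as the paper, which obtains this lemma precisely by combining the decomposition of Lemma \ref{compressed semi-trapezia decomposition} with an iterated application of Lemma \ref{theta-band is one-rule compressed semi-computation} (your reducedness check via cancellable cells along the side $\pazocal{A}$-band is the standard observation the paper already records for histories of bands in reduced diagrams).
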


%
%

\begin{lemma} \label{compressed semi-computations are compressed semi-trapezia}

For any non-empty reduced compressed semi-computation $w_0\to\dots\to w_t$ of $\textbf{M}^\pazocal{L}$ in the `special' input sector with history $H$, there exists a compressed semi-trapezium $\Delta$ over $M(\textbf{M}^\pazocal{L})$ in the `special' input sector such that:

\begin{enumerate} [label=(\alph*)]

\item $\lab(\mathscr{C}\textbf{bot}(\Delta))\equiv w_0$

\item $\lab(\mathscr{C}\textbf{top}(\Delta))\equiv w_t$

\item The history of $\Delta$ is $H$

\item $\text{Area}(\Delta)\leq t\max(\|w_0\|,\dots,\|w_t\|)$

\end{enumerate}

\end{lemma}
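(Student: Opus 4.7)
The plan is to mirror the proof of Lemma \ref{computations are trapezia}, constructing one $\theta$-band per single-step application in the reduced compressed semi-computation and then gluing these bands sequentially along their compressed sides. The construction follows the template set by the sequence Lemma \ref{theta-band is one-rule compressed semi-computation} / Lemma \ref{one-rule compressed semi-computations are theta-bands} / Lemma \ref{compressed semi-trapezia decomposition} / Lemma \ref{compressed semi-trapezia are compressed semi-computations}, which was designed precisely so that the compressed-semi-trapezium analogues of the trapezia results would hold.

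First I would factor $H\equiv\theta_1\dots\theta_t$ and apply Lemma \ref{one-rule compressed semi-computations are theta-bands} to each single-step compressed semi-computation $w_{i-1}\to w_i$ to produce a $\theta$-band $\pazocal{T}_i$ of length $\|w_{i-1}\|$ (if $\theta_i\in\Theta^+$) or $\|w_i\|$ (if $\theta_i\in\Theta^-$), consisting entirely of $(\theta,a)$-cells of the `special' input sector, whose first and last cells are $(\theta,\pazocal{A})$-cells, and such that $\lab(\mathscr{C}\textbf{bot}(\pazocal{T}_i))\equiv w_{i-1}$ and $\lab(\mathscr{C}\textbf{top}(\pazocal{T}_i))\equiv w_i$. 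In either sign case the length is at most $\max(\|w_0\|,\dots,\|w_t\|)$, which gives bound (d) after summing over $i$.

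Next I would glue the bands end-to-end: for each $i\in\{1,\dots,t-1\}$, identify $\mathscr{C}\textbf{top}(\pazocal{T}_i)$ with $\mathscr{C}\textbf{bot}(\pazocal{T}_{i+1})$ via the common label $w_i$, using $0$-refinement to absorb any mismatch in the $b$-edges that hang off the ends of the (non-compressed) sides. Since each $\pazocal{T}_i$ begins and ends with a $(\theta,\pazocal{A})$-cell, the leftmost (respectively rightmost) $\pazocal{A}$-edges of the compressed sides of consecutive bands share a common endpoint, so the leftmost $\pazocal{A}$-edges of all the $\pazocal{T}_i$ assemble into a single maximal $\pazocal{A}$-band $\pazocal{U}_1$ of length $t$ that forms the left side $\textbf{p}_1$ of the resulting diagram $\Delta$; symmetrically for the right side $\textbf{p}_2$. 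Thus $\partial\Delta=\textbf{p}_1^{-1}\textbf{q}_1\textbf{p}_2\textbf{q}_2^{-1}$ with $\textbf{q}_1=\mathscr{C}\textbf{bot}(\pazocal{T}_1)$ and $\textbf{q}_2=\mathscr{C}\textbf{top}(\pazocal{T}_t)$, giving (a) and (b), and the history of $\pazocal{U}_2$ equals $\theta_1\dots\theta_t=H$, giving (c).

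The main obstacle will be verifying that $\Delta$ is reduced, since the definition of compressed semi-trapezium requires it. Each individual $\pazocal{T}_i$ is reduced by Lemma \ref{one-rule compressed semi-computations are theta-bands}, so any cancellable pair in $\Delta$ must involve cells from distinct $\pazocal{T}_i$ and $\pazocal{T}_j$. Such a cancellable pair would produce, via the standard $\theta$-band argument already invoked in the proof of Lemma \ref{basic annuli 1}, a pair of cancellable cells on adjacent $\theta$-bands and hence a cancellation in the history word $H$, contradicting the hypothesis that the compressed semi-computation is reduced. (If a single adjacent pair $\pazocal{T}_i,\pazocal{T}_{i+1}$ produced such a cancellation, one would have $\theta_{i+1}=\theta_i^{-1}$ directly; the non-adjacent case reduces to this by induction on the distance between the bands, peeling off cancellable pairs and shortening $H$.) Once reducedness is established, Lemma \ref{compressed semi-trapezia decomposition} guarantees that the $\pazocal{T}_i$ are precisely the maximal $\theta$-bands of $\Delta$ enumerated from bottom to top, closing the loop and completing the proof.
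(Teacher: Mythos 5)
Your proposal is correct and follows exactly the route the paper intends: the paper states this lemma (like its semi-trapezium and trapezium analogues, Lemmas \ref{semi-computations are semi-trapezia} and \ref{computations are trapezia}) as an immediate iteration of Lemmas \ref{theta-band is one-rule compressed semi-computation} and \ref{one-rule compressed semi-computations are theta-bands}, i.e.\ one $\theta$-band per rule glued along compressed sides, which is precisely your construction. Your added verifications (that the extremal $\pazocal{A}$-edges assemble into the side $\pazocal{A}$-bands and that reducedness of $H$ rules out cancellable pairs) fill in details the paper leaves implicit, but do not change the approach.
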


\medskip

\subsection{Disks} \

%
%
%
%
%
%
%
%

Next, a new set of relations are added to the canonical presentations of the groups $G(\textbf{M}^\pazocal{L})$ and $G_\Omega(\textbf{M}^\pazocal{L})$ in much the same way as done in \cite{W}. The added relations, called \textit{disk relations}, are given by all relations of the form $W=1$ such that $W$ is a configuration accepted by $\textbf{M}^\pazocal{L}$ with $\ell(W)=1$, {\frenchspacing i.e. so that there exists a one-machine computation} of $\textbf{M}^\pazocal{L}$ accepting $W$ (see \Cref{sec-M-standard}).  For simplicity, the hub relation $W_{ac}=1$ is also called a disk relation in this setting, so that the disk relations are of the form $W=1$ for all accepted configurations $W$ such that $\ell(W)\leq1$.

\begin{lemma} \label{disk relations}

For any configuration $W$ accepted by $\textbf{M}^\pazocal{L}$, there exists a reduced circular diagram $\Gamma_W$ over $G(\textbf{M}^\pazocal{L})$ containing a single hub such that $\lab(\partial\Gamma_W)\equiv W$.

\end{lemma}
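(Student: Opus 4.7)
The plan is to convert an accepting computation of $W$ into a reduced circular diagram by means of the trapezium construction of \Cref{computations are trapezia}, then use the cyclicity of the standard base of $\textbf{M}^\pazocal{L}$ to close that trapezium into an annular diagram, and finally cap the resulting inner hole with a single hub cell. The trivial case is immediate: if $W\equiv W_{ac}$, one takes $\Gamma_W$ to be the diagram consisting of a single hub $\pi_0$ whose boundary label is $W_{ac}$.

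Otherwise, by the definition of `accepted', I fix a reduced accepting computation $\pazocal{C}:W\equiv W_0\to\dots\to W_t\equiv W_{ac}$ with history $H\equiv\theta_1\dots\theta_t$; applying \Cref{computations are trapezia} to $\pazocal{C}$ produces a trapezium $\Delta$ with history $H$ satisfying $\lab(\textbf{tbot}(\Delta))\equiv W$ and $\lab(\textbf{ttop}(\Delta))\equiv W_{ac}$, whose standard factorization $\textbf{p}_1^{-1}\textbf{q}_1\textbf{p}_2\textbf{q}_2^{-1}$ has $\lab(\textbf{p}_1)\equiv\lab(\textbf{p}_2)\equiv H$.

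Next, because $\textbf{M}^\pazocal{L}$ is a cyclic generalized $S$-machine, the paths $\textbf{p}_1$ and $\textbf{p}_2$ correspond to the two sides of the same `cut' maximal $q$-band of the cyclic base. Identifying $\textbf{p}_1$ with $\textbf{p}_2$ edge-by-edge, which is permissible since their labels agree letter-for-letter, I would transform $\Delta$ into an annular diagram $\Delta'$ whose outer contour reads $W$ and whose inner contour reads $W_{ac}$; each maximal $\theta$-band of $\Delta$ becomes an annular layer of $\Delta'$ wrapping once around the central hole. Filling that hole with the hub cell $\pi_0$, which is allowed precisely by the hub relation $W_{ac}=1$, then yields a circular diagram $\Gamma_W$ with $\lab(\partial\Gamma_W)\equiv W$ that contains exactly one hub.

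The remaining task is to confirm that $\Gamma_W$ is reduced. Since $\pi_0$ is the unique hub and its defining relator is of a different type from any $(\theta,q)$- or $(\theta,a)$-relator, no other cell of $\Gamma_W$ can be cancellable with $\pi_0$. The only other possible source of cancellation is a pair of cells that become newly adjacent across the glue line or after the insertion of $\pi_0$; but any such pair would correspond directly to either a reduction inside $\Delta$ or a reduction in the history $H$, contradicting the reducedness of $\pazocal{C}$ and of $\Delta$. If at any stage a cancellable pair of $(\theta,q)$- or $(\theta,a)$-cells should nevertheless appear, one removes such pairs one at a time, which preserves both $\lab(\partial\Gamma_W)$ and the unique hub $\pi_0$. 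The main obstacle in the argument will be this last verification, namely carefully tracking how the $q$- and $\theta$-band structure of $\Delta$ closes up in $\Delta'$ and abuts the hub, to rule out accidental cancellations; this is where the choice of $\pazocal{C}$ as reduced and the band lemmas of the previous subsection (in particular \Cref{basic annuli 1} and \Cref{basic annuli 2}, interpreted now in the presence of a hub) do the essential work.
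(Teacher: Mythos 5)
Your proposal is correct and follows essentially the same route as the paper: build the trapezium from an accepting computation via \Cref{computations are trapezia}, note that (since the $(R_0^\pazocal{L}(L))^{-1}\{t(1)\}$-sector is always locked/empty) the two sides are identical copies of the history, glue them to obtain an annular diagram with contours $W$ and $W_{ac}^{-1}$, and cap the hole with one hub. The paper handles the reducedness point more briefly, but your extra discussion (including the trivial case $W\equiv W_{ac}$ and the removal of any stray cancellable pairs) is consistent with it.
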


\begin{proof}

Let $\pazocal{C}$ be an accepting computation of $W$ and $H$ be its history. By Lemma \ref{computations are trapezia}, there exists a trapezium $\Delta$ corresponding to $\pazocal{C}$ with $\lab(\textbf{tbot}(\Delta))\equiv W$ and $\lab(\textbf{ttop}(\Delta))\equiv W_{ac}$.

As this is a computation of the standard base and the $(R_0^\pazocal{L}(L))^{-1}\{t(1)\}$-sector has empty tape alphabet, no trimming is necessary in $\Delta$. So, the left and right sides of $\Delta$ are labelled by the identical copies of $H$.  Hence, we may paste the sides of $\Delta$ together to produce a reduced annular diagram $\Delta'$ over $M(\textbf{M}^\pazocal{L})$ with outer contour label $W$ and inner contour label $W_{ac}^{-1}$.

But a single hub can now be pasted into the center of $\Delta'$ to produce a diagram $\Gamma_W$ satisfying the statement.

\end{proof}

As a result of Lemma \ref{disk relations}, the presentation given by adding the disk relations to the canonical presentation of $G(\textbf{M}^\pazocal{L})$ defines a group isomorphic to $G(\textbf{M}^\pazocal{L})$. Moreover, since $G_\Omega(\textbf{M}^\pazocal{L})$ is a quotient of $G(\textbf{M}^\pazocal{L})$, the same is true for the presentation given by adding disk relations to the canonical presentation of $G_\Omega(\textbf{M}^\pazocal{L})$. 

These new presentations are called the \textit{disk presentations} of the groups $G(\textbf{M}^\pazocal{L})$ and $G_\Omega(\textbf{M}^\pazocal{L})$. For a diagram over the disk presentation of one of these groups, a cell corresponding to a disk relation (or its inverse) is referred to simply as a \textit{disk}.  Note that in addition to the possibilities outlined in Section 5.2, a maximal $q$-band or maximal $a$-band (of any type) in a diagram over the disk presentation of $G_\Omega(\textbf{M}^\pazocal{L})$ may have an end on a disk.

Finally, note that since maximal $\theta$-bands cannot end on disks, Lemmas \ref{basic annuli 1}-\ref{a-bands on a-cell} have direct analogues for reduced circular diagrams over the disk presentation of $G_\Omega(\textbf{M}^\pazocal{L})$.

The following analogue of \Cref{a-bands on a-cell} for disks follows by a similar proof, using the fact that a counterexample would produce a $(\theta,a)$-annulus that contradicts \Cref{basic annuli 2}.

\begin{lemma} \label{a-bands on disk}

If $\Delta$ is a reduced circular diagram over the disk presentation of $G_\Omega(\textbf{M}^\pazocal{L})$ and $\Pi$ is a disk in $\Delta$, then no $a$-band of positive length has two ends on $\Pi$.

\end{lemma}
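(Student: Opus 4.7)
The plan is to run the same replacement technique used in the paragraph immediately preceding the statement. Suppose, for contradiction, that some reduced circular diagram $\Delta$ over the disk presentation of $G_\Omega(\textbf{M}^\pazocal{L})$ contains an $a$-band $\pazocal{S}$ of positive length whose two defining edges $\textbf{e}_0,\textbf{e}_m^{-1}$ both lie on the boundary of a disk $\Pi$. Amongst all such counterexamples, pick one whose number of non-hub disks is minimal.

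If $\Pi$ is a hub, then $\lab(\partial\Pi)\equiv W_{ac}$, which contains no $a$-letters, so $\pazocal{S}$ cannot have any end on $\Pi$---immediate contradiction. So $\Pi$ is a non-hub disk with $\lab(\partial\Pi)\equiv W$ for some configuration $W$ accepted by $\textbf{M}^\pazocal{L}$ with $\ell(W)\leq 1$. By \Cref{disk relations}, let $\Gamma_W$ be a reduced diagram over $G(\textbf{M}^\pazocal{L})$ with contour label $W$ containing a single hub $\Pi_h$ (with $\lab(\partial\Pi_h)\equiv W_{ac}$) surrounded by a trapezium $\Xi_W$ whose other cells are all $(\theta,q)$- or $(\theta,a)$-cells. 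Replacing $\Pi$ in $\Delta$ by $\Gamma_W$ and performing any forced cancellations yields a reduced circular diagram $\Delta''$ over the disk presentation preserving $\lab(\partial\Delta)$ and having strictly fewer non-hub disks than $\Delta$: only one hub is added while one non-hub disk is removed.

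Next I would trace the maximal $a$-band $\pazocal{S}^*$ of $\Delta''$ that extends $\pazocal{S}$ into $\Gamma_W$ at each end. Because $\partial\Pi_h$ contains no $a$-edges and the trimmed top of $\Xi_W$ is $W_{ac}$, the inward extensions $\pazocal{S}_0,\pazocal{S}_1$ from $\textbf{e}_0$ and $\textbf{e}_m^{-1}$ into $\Xi_W$ cannot reach the hub; each therefore either terminates on a $(\theta,q)$-cell of $\Xi_W$ or continues through $\Gamma_W$ to meet the other at the opposite boundary edge. In the latter case $\pazocal{S}_0$ and $\pazocal{S}_1$ coincide, and $\pazocal{S}^*$ closes up into an $a$-annulus in $\Delta''$, directly contradicting the analogue of \Cref{basic annuli 2}(2) for the disk presentation established in the preceding paragraph.

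The main obstacle will be the remaining case, in which $\pazocal{S}^*$ has two distinct endpoints on $(\theta,q)$-cells deep inside $\Gamma_W$. Here I plan to exploit the minimality of $\Delta$: since $\Delta''$ has strictly fewer non-hub disks than $\Delta$, no $a$-band of positive length in $\Delta''$ can have both ends on a disk. Then, analyzing the subdiagram carved off by $\pazocal{S}^*$ on the side opposite $\Pi_h$ (so as to avoid the hub entirely) and combining the already-established analogues of Lemmas \ref{basic annuli 1}--\ref{a-bands on a-cell} for the disk presentation of $\Delta''$, a careful combinatorial tracing---paralleling the proof of Lemma 8.2 of \cite{W}---forces $\pazocal{S}_0$ to coincide with $\pazocal{S}_1$, collapsing this case back onto the annular one already handled and completing the contradiction.
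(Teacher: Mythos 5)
There is a genuine gap, and it sits exactly where you flag the ``main obstacle.'' After you replace $\Pi$ by the diagram $\Gamma_W$ of \Cref{disk relations} and follow the two inward extensions $\pazocal{S}_0,\pazocal{S}_1$ of $\pazocal{S}$, the generic outcome is precisely the case you leave unresolved: in a trapezium simulating an accepting computation, every $a$-letter of $W$ is eventually erased at some $(\theta,q)$-cell, so each extension terminates on a $(\theta,q)$-cell, and there is no reason whatsoever for the two terminal cells to coincide --- they correspond to two different boundary $a$-edges of $\Pi$ and will in general be erased at different moments of the computation. Your appeal to minimality gives nothing here, because an $a$-band ending on two $(\theta,q)$-cells is not a counterexample configuration, so there is no statement for $\Delta''$ to violate; and the claim that ``careful combinatorial tracing forces $\pazocal{S}_0$ to coincide with $\pazocal{S}_1$'' is asserted, not proved, and is false in general. (A smaller issue: your dichotomy is also incomplete, since an extension entering $\Gamma_W$ can exit again through a different $a$-edge of $\partial\Gamma_W$ and wander back into $\Delta$.) So no contradiction is ever derived in the case that actually occurs.

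The mechanism that closes the argument is not the $a$-band but the $\theta$-bands, and it works directly in $\Delta$ without any disk replacement: since $\pazocal{S}$ has two ends on $\Pi$, it bounds together with a subpath of $\partial\Pi$ a subdiagram $\Delta_0$, and because $\lab(\partial\Pi)$ is a configuration it contains no $\theta$-letters, so every $\theta$-edge of $\partial\Delta_0$ lies on the side of $\pazocal{S}$. As $\pazocal{S}$ has positive length there is at least one such $\theta$-edge, and the maximal $\theta$-band through it cannot end on a disk or on the $\partial\Pi$-portion of $\partial\Delta_0$, so it must return to the side of $\pazocal{S}$ and hence cross $\pazocal{S}$ twice, producing a forbidden annulus; the prohibition of such annuli in the disk presentation is exactly what the remark preceding the statement (the replacement argument for the analogues of \Cref{basic annuli 1}--\Cref{a-bands on a-cell}) supplies. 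If you want to salvage your replacement strategy, you would have to run this same $\theta$-band argument inside $\Delta''$ on the region bounded by $\pazocal{S}$ and the former $\partial\Pi$, at which point the replacement and the induction on non-hub disks become unnecessary.
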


%
%

As with \Cref{a-bands on a-cell}, \Cref{a-bands on disk} does not rule out the possibility that an $a$-band of length $0$ has two ends on the disk $\Pi$.  In this case, $\Pi$ is called a \textit{pinched disk} and the corresponding $a$-edges are called \textit{pinched edges} of $\Pi$.  As in the setting of pinched $a$-cells, any maximal subpath $\textbf{s}$ of $\partial\Pi$ consisting of pinched edges induces a \textit{pinched decomposition} $\textbf{s}^{\pm1}\textbf{q}\textbf{s}^{\mp1}\textbf{p}$ of $\partial\Pi$ with respect to the \textit{pinched subpath} $\textbf{s}$, so that $\textbf{p}^{-1}$ bounds a subdiagram $\Psi_{\Pi,\textbf{s}}$ of $\Delta$ not containing $\Pi$ (see \Cref{fig-pinched-a-cell}).

Note that since disk relations are cyclically reduced by construction, $\textbf{p}$ and $\textbf{q}$ must be non-trivial subpaths of $\partial\Pi$.  Moreover, by the structure of configurations of $\textbf{M}^\pazocal{L}$, exactly one of $\textbf{p}$ or $\textbf{q}$ contains $q$-edges, while the other consists entirely of $a$-edges labelled by letters from the same tape alphabet as those of $\textbf{s}$.

\medskip


\subsection{Weights} \label{sec-weights} \

Next, in a way similar to that outlined in \cite{W}, the method with which one counts the area of a diagram over the disk presentation of $G_\Omega(\textbf{M}^\pazocal{L})$ is altered. This is done by introducing a \textit{weight function}, wt, on the cells of such diagrams.  Before doing so, we first define several auxiliary unary functions on the natural numbers:

\begin{itemize}

\item $\chi(n)=nc_0^n$

\item $h_\pazocal{L}(n)=c_0\TM_\pazocal{L}(c_0n)^3+nc_0^{n}+c_0n+L$

\item $f_\pazocal{L}(n)=c_1 \chi(h_\pazocal{L}(n))$

\item $g_\pazocal{L}(n)=c_0n^3+nf_\pazocal{L}(c_0n)$

\end{itemize}

It is easy to see that $\chi$ is non-decreasing, and so each function above is also non-decreasing.  Further, as the class of computable functions is closed under sums, products, and composition, each of the functions above is computable.  Finally, it is important to note that since $f_\pazocal{L}$ is non-decreasing, $g_\pazocal{L}$ is \textit{super-additive}; that is, for any $m,n\in\N$,
\begin{align*}
g_\pazocal{L}(m+n)&\geq
g_\pazocal{L}(m)+g_\pazocal{L}(n)
\end{align*}

Now, define the weight of a cell $\Pi$ of a diagram $\Delta$ over the disk presentation of $G_\Omega(\textbf{M}^\pazocal{L})$ as follows:

\begin{itemize}

\item If $\Pi$ is a $(\theta,q)$-cell or a $(\theta,a)$-cell (of any type), then $\text{wt}(\Pi)=1$.

\item If $\Pi$ is a disk, then letting $W$ be the configuration of $\textbf{M}^\pazocal{L}$ such that $\lab(\partial\Pi)\equiv W^{\pm1}$, $\text{wt}(\Pi)=f_\pazocal{L}(\|W(2)\|)$.

\item If $\Pi$ is an $a$-cell, then $\text{wt}(\Pi)=g_\pazocal{L}(\|\partial\Pi\|)$.

\end{itemize}



Naturally, this definition is extended to give the \textit{weight} $\text{wt}(\Delta)$ of a reduced diagram $\Delta$ over the disk presentation of $G_\Omega(\textbf{M}^\pazocal{L})$, so that it is given by the sum of the weights of the cells of $\Delta$.

\medskip


\section{Diagrams without disks} \label{sec-no-disks}

\subsection{$M$-minimal diagrams} \label{sec-M-minimal} \

The goal of this section is to study diagrams over $M_\Omega(\textbf{M}^\pazocal{L})$, yielding an upper bound on the weight of a reduced circular diagram in terms of its perimeter. However, this goal is not achieved for any possible reduced circular diagram over $M_\Omega(\textbf{M}^\pazocal{L})$, but rather for a specific class of such diagrams that will be shown to be `generic' in a particular sense.

For any $\pazocal{A}$-edge $\textbf{e}$ of a reduced circular diagram over $M_\Omega(\textbf{M}^\pazocal{L})$, the (unique) maximal $\pazocal{A}$-band for which $\textbf{e}$ is a defining edge is denoted $\pazocal{U}(\textbf{e})$.  Then, given an $a$-cell $\pi$ and a maximal $\theta$-band $\pazocal{T}$, $E(\pi,\pazocal{T})$ is defined to be the set of $\pazocal{A}$-edges $\textbf{e}$ of $\partial\pi$ such that $\pazocal{U}(\textbf{e})$ crosses $\pazocal{T}$.

Now, a reduced circular diagram $\Delta$ over $M_\Omega(\textbf{M}^\pazocal{L})$ is then called \textit{$M$-minimal} if the following conditions are satisfied:

\begin{addmargin}[1em]{0em}

\begin{enumerate}[label=(MM{\arabic*})]

\item For any $a$-cell $\pi$ and maximal $\theta$-band $\pazocal{T}$ in $\Delta$, $|E(\pi,\pazocal{T})|\leq\frac{1}{2}|\partial\pi|_\pazocal{A}$.


\item Let $\pi_1$ and $\pi_2$ be two $a$-cells in $\Delta$.  Suppose there exist three consecutive $\pazocal{A}$-edges $\textbf{e}_1,\textbf{e}_2,\textbf{e}_3$ of $\partial\pi_1$ such that $\pazocal{U}(\textbf{e}_j)$ has an end on $\pi_2$.  Let $\Psi$ be the subdiagram of $\Delta$ bounded by the $\pazocal{A}$-bands $\pazocal{U}(\textbf{e}_j)$ and the corresponding subpaths of $\partial\pi_1$ and $\partial\pi_2$ such that $\Psi$ does not contain $\pi_1$ or $\pi_2$ (see \Cref{MM2}). Then $\Psi$ contains an $a$-cell.

\end{enumerate}

\end{addmargin}

\begin{figure}[H]
\centering
\includegraphics[scale=1.25]{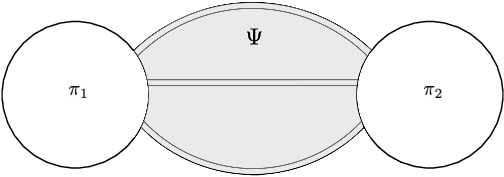}
\caption{Condition (MM2)}
\label{MM2}
\end{figure}


Note that it is a consequence of this definition that a subdiagram of a (smooth) $M$-minimal diagram is necessarily a (smooth) $M$-minimal diagram.

\medskip


\subsection{$\pazocal{A}$-bands and $\theta$-annuli in Smooth Diskless Diagrams} \label{sec-M-minimal-bands} \

The next goal is to study the makeup of smooth circular diagrams over $M_\Omega(\textbf{M}^\pazocal{L})$ to understand their makeup. 

Given a smooth circular diagram $\Delta$ over $M_\Omega(\textbf{M}^\pazocal{L}$), let $\pazocal{Q}=(\Pi_1,\dots,\Pi_m)$ be a maximal positive $q$-band of length $m\geq1$.  Suppose there exists an $a$-cell $\pi$ and an $\pazocal{A}$-edge $\textbf{e}$ of $\partial\pi$ such that $\pazocal{U}(\textbf{e})$ has an end on a $(\theta,q)$-cell of $\pazocal{Q}$.  Let $\pazocal{U}(\textbf{e})=(\pi_1,\dots,\pi_k)$ and $\Pi_\ell$ be the $(\theta,q)$-cell on which $\pazocal{U}(\textbf{e})$ has this end.  Then, define $\pazocal{V}(\textbf{e})$ to be the sequence of cells $$\pazocal{V}(\textbf{e})=(\pi_1,\dots,\pi_k,\Pi_\ell,\dots,\Pi_m)$$
By construction, $\pazocal{V}(\textbf{e})$ can be identified with a union of the $\pazocal{A}$-band $\pazocal{U}(\textbf{e})$ with a subband of the $q$-band $\pazocal{Q}$ (see \Cref{V(e)}).  So, since $\pi_k$ and $\Pi_\ell$ share a boundary edge, $\pazocal{V}(\textbf{e})$ is a subdiagram of $\Delta$.  While it is not itself a band, $\pazocal{V}(\textbf{e})$ does have a band-like structure, connecting $\textbf{e}$ to $\partial\Delta$ by a sequence of cells in which each consecutive pair of cells shares a boundary edge.
\begin{figure}[H]
\centering
\includegraphics[scale=1]{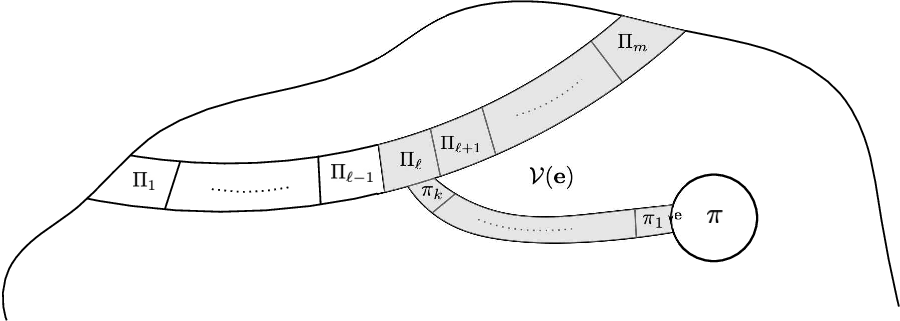}
\caption{Construction of $\pazocal{V}(\textbf{e})$ in the $M$-minimal diagram $\Delta$}
\label{V(e)}
\end{figure}





Let $\textbf{f}$ be the end of $\pazocal{U}(\textbf{e})$ which is on the boundary of $\Pi_\ell$.  Then, by the construction of the relations, $\pazocal{Q}$ is a positive $q$-band corresponding to the part $Q_1^\pazocal{L}(1)$ of the state letters of $\textbf{M}^\pazocal{L}$ and $\textbf{f}^{-1}$ is an edge of $\textbf{bot}(\pazocal{Q})$.

Now, to any smooth circular diagram $\Delta$ over $M_\Omega(\textbf{M}^\pazocal{L})$, construct the (unoriented) graph $\Gamma_a(\Delta)$ as follows:

\begin{enumerate}

\item The set of vertices is $\{v_0,v_1,\dots,v_\ell\}$, where each $v_i$ for $i\geq1$ corresponds to one of the $\ell$ $a$-cells of $\Delta$ and $v_0$ is a single exterior vertex.  

\item For $i,j\geq1$ and for any positive $\pazocal{A}$-band which has ends on the $a$-cells corresponding to $v_i$ and $v_j$, there is a corresponding edge $(v_i,v_j)$. Such an edge is called \textit{internal}.

\item For $i\geq1$ and any positive $\pazocal{A}$-band which has one end on the $a$-cell corresponding to $v_i$ and the other end on either a $(\theta,q)$-cell or on $\partial\Delta$, there is a corresponding edge $(v_0,v_i)$. Such an edge is called \textit{external}.

\end{enumerate}

\begin{lemma} \label{Gamma_a planar}

For any smooth circular diagram $\Delta$ over $M_\Omega(\textbf{M}^\pazocal{L})$, the graph $\Gamma_a(\Delta)$ can be constructed to be planar.

\end{lemma}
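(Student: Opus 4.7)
The plan is to construct an explicit planar embedding of $\Gamma_a(\Delta)$ by routing its vertices and edges through the planar structure of $\Delta$ itself. Place the exterior vertex $v_0$ at any point in the unbounded face of $\Delta$, and for each $a$-cell $\pi_i$ of $\Delta$ place the vertex $v_i$ at an interior point of $\pi_i$ (e.g.\ a barycenter). It then suffices to draw each edge of $\Gamma_a(\Delta)$ as a Jordan arc so that no two arcs cross.

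For each internal edge, corresponding to a maximal positive $\pazocal{A}$-band $\pazocal{U}$ connecting $a$-cells $\pi_i$ and $\pi_j$, I would route the arc from $v_i$ to $v_j$ so that it crosses $\partial\pi_i$ and $\partial\pi_j$ transversally at the $\pazocal{A}$-edges of $\pazocal{U}$, and traces the central axis of $\pazocal{U}$ in between. For each external edge of the first kind, corresponding to an $\pazocal{A}$-band $\pazocal{U}$ running from an $a$-cell $\pi_i$ directly to $\partial\Delta$, I would route the arc from $v_i$ through $\pazocal{U}$, across $\partial\Delta$, and then through the exterior to $v_0$. For each external edge of the second kind, corresponding to an $\pazocal{A}$-band $\pazocal{U}=\pazocal{U}(\textbf{e})$ from $\pi_i$ to a $(\theta,q)$-cell $\Pi_\ell$ of a $q$-band $\pazocal{Q}=(\Pi_1,\dots,\Pi_m)$, I would route the arc according to the subdiagram $\pazocal{V}(\textbf{e})$: first through $\pazocal{U}$ to $\Pi_\ell$, then along $\pazocal{Q}$ through $\Pi_\ell,\Pi_{\ell+1},\dots,\Pi_m$ and out across $\partial\Delta$, and finally through the exterior to $v_0$.

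To check planarity, observe that two arcs can meet only where their routes share a cell. Since distinct maximal positive $\pazocal{A}$-bands are disjoint and distinct maximal positive $q$-bands are disjoint, the only shared regions that can arise are $a$-cells, $(\theta,q)$-cells used as transition points, cells lying in a common subband of a $q$-band, or points in the exterior of $\Delta$. Inside an $a$-cell $\pi_i$ every incident arc terminates at $v_i$, and by ordering the arcs around $v_i$ according to the cyclic order of their entry edges on $\partial\pi_i$ they can be drawn as non-crossing radial curves (smoothness of $\Delta$ rules out pinched $a$-cells, so no two of these entry edges coincide up to orientation). In the exterior of $\Delta$ a parallel argument using the cyclic order of exit points on $\partial\Delta$ lets one connect the external arcs to $v_0$ without crossings.

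The main obstacle, therefore, is the case in which several second-kind external arcs share the same $q$-band $\pazocal{Q}$. By \Cref{basic annuli 1}, a $\theta$-band and $\pazocal{Q}$ cross in at most one $(\theta,q)$-cell, so distinct $\pazocal{A}$-bands $\pazocal{U}_1,\dots,\pazocal{U}_r$ ending on $\pazocal{Q}$ enter it at distinct cells $\Pi_{\ell_1},\dots,\Pi_{\ell_r}$; moreover, by the discussion preceding the lemma every such entry takes place on the side $\textbf{bot}(\pazocal{Q})$, and by the convention in the definition of $\pazocal{V}(\textbf{e})$ every corresponding arc leaves $\pazocal{Q}$ through the common end cell $\Pi_m$. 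Viewing $\pazocal{Q}$ as a planar rectangular strip, the combinatorial picture is that of $r$ arcs from distinct points on one horizontal side of a rectangle to a common short side, which admits the evident non-crossing realization obtained by ``stacking'' the arcs at $r$ distinct transverse levels within the strip. Implementing this recipe for every $q$-band receiving second-kind arcs, and assembling the internal, first-kind and second-kind arcs together with the vertex placement described above, produces the desired planar embedding of $\Gamma_a(\Delta)$.
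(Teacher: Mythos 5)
Your construction is essentially the paper's own proof: vertices at the centers of the $a$-cells plus one exterior vertex, arcs traced through the cells of the $\pazocal{A}$-bands, and, for bands ending on a $(\theta,q)$-cell, continuation along the $q$-band to $\partial\Delta$ at distinct transverse levels — your ``stacking'' is exactly the paper's device of auxiliary vertices on the defining edges of $\pazocal{Q}$. One small remark: the fact that distinct $\pazocal{A}$-bands enter $\pazocal{Q}$ at distinct $(\theta,q)$-cells follows from each such cell having at most one boundary $\pazocal{A}$-edge (by the form of the relations), not from \Cref{basic annuli 1}, which concerns $\theta$-bands rather than $\pazocal{A}$-bands; this does not affect the argument.
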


\begin{proof}

The graph $\Gamma_a(\Delta)$ is constructed as an `estimating graph' that is `auxiliary' to the planar graph underlying the diagram $\Delta$ (see Section 9.5 of \cite{O}).  Note the resemblance between this construction and that of the dual graph to $\Delta$.

Each interior vertex of $\Gamma_a(\Delta)$ is placed at the center of the corresponding $a$-cell in $\Delta$, while the exterior vertex is placed at some point in the unbounded component $X$ of the complement of $\partial\Delta$ in the plane.

To define the edges, we construct several arcs in the plane and implicitly appeal to the Jordan curve and Jordan-Sch{\"o}nflies theorems (see Section 9.1 of \cite{O}).  Viewing all arcs as images of the unit interval $[0,1]$, two arcs $\gamma_1$ and $\gamma_2$ are \textit{disjoint} if $\gamma_1(0,1)\cap\gamma_2(0,1)=\emptyset$.  Similarly, given a connected region $U$ of the plane, the arc $\gamma$ is \textit{contained in $U$} if $\gamma(0,1)\subseteq U$.

Note that for any finite set $F$ of points of $\partial\Delta$, one can construct a set of $|F|$ (pairwise) disjoint arcs contained in $X$ connecting $v_0$ to the points of $F$.  Hence, in place of an external edge of $\Gamma_a(\Delta)$, it suffices to construct the subpath which connects the corresponding interior vertex to a distinct point of $\partial\Delta$.

First, let $\pi_i$ be the $a$-cell corresponding to the vertex $v_i$.  Then, as above we construct $|\partial\pi_i|_\pazocal{A}$ disjoint arcs contained in the interior of $\pi_i$ connecting the vertex $v_i$ to the midpoints of the $\pazocal{A}$-edges of $\partial\pi_i$.  For an $\pazocal{A}$-edge $\textbf{e}$ of $\partial\pi_i$, denote the corresponding arc by $t_i(\textbf{e})$.

Next, let $\pazocal{U}$ be a positive $\pazocal{A}$-band which has an end on the $a$-cell $\pi_i$.  For every cell $\Pi$ comprising $\pazocal{U}$, construct an arc $t_\pazocal{U}(\Pi)$ contained in the interior of $\Pi$ connecting the midpoints of the corresponding defining edges of $\pazocal{U}$.  

Let $\textbf{e}$ be the $\pazocal{A}$-edge of $\partial\pi_i$ such that $\textbf{e}^{\pm1}$ is an end of $\pazocal{U}$.  So, $\pazocal{U}$ and $\pazocal{U}(\textbf{e})$ consist of the same cells, but perhaps have different directions.  Let $\textbf{f}$ be the $\pazocal{A}$-edge distinct from $\textbf{e}$ that is an end of $\pazocal{U}(\textbf{e})$.

Suppose $\pazocal{U}$ has an end on the $a$-cell $\pi_j$ for $j\neq i$.  Consequently, $\textbf{f}$ is an edge of $(\partial\pi_j)^{-1}$.  Then, the arcs $t_\pazocal{U}(\Pi)$, $t_i(\textbf{e})$, and $t_j(\textbf{f}^{-1})$ together form an arc $\gamma(\pazocal{U})$ connecting $v_i$ to $v_j$.  This arc is taken as the internal edge corresponding to $\pazocal{U}$ (see \Cref{a-graph-internal}).

\begin{figure}[H]
\centering
\includegraphics[scale=3.25]{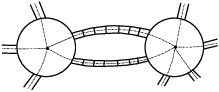}
\caption{The construction of internal edges of $\Gamma_a(\Delta)$}
\label{a-graph-internal}
\end{figure}

Hence, by \Cref{a-bands on a-cell} and the assumption that $\Delta$ is smooth, it suffices to assume that $\pazocal{U}$ has an end on either a $(\theta,q)$-cell or on $\partial\Delta$.  Then, as above, the arcs $t_\pazocal{U}(\Pi)$ and $t_i(\textbf{e})$ together form an arc $\gamma(\pazocal{U})$ connecting $v_i$ to the midpoint of $\textbf{f}$.


If $\textbf{f}$ is an edge of $\partial\Delta$, then $\gamma(\pazocal{U})$ is taken as the subpath of the external edge corresponding to $\pazocal{U}$.

Otherwise, $\textbf{f}$ is an edge of $\textbf{bot}(\pazocal{Q})^{-1}$ for some maximal positive $q$-band $\pazocal{Q}=(\Pi_1,\dots,\Pi_m)$.  In this case, fix $\ell\in\{1,\dots,m\}$ such that $\textbf{f}$ is an edge of $(\partial\Pi_\ell)^{-1}$.  Note that, by the definition of the rules of $\textbf{M}^\pazocal{L}$, $\textbf{f}^{-1}$ is the only $\pazocal{A}$-edge of $\partial\Pi_\ell$.

Letting $(\textbf{e}_0,\textbf{e}_1,\dots,\textbf{e}_m)$ be the defining edge sequence of $\pazocal{Q}$, add $m$ auxiliary vertices to the interior of each $\textbf{e}_j$, enumerated by their proximity to $\textbf{top}(\pazocal{Q})$.  Then, we construct an arc $t_\ell(\pazocal{U})$ contained in the interior of $\Pi_\ell$ connecting the midpoint of $\textbf{f}$ and the $\ell$-th auxiliary vertex of $\textbf{e}_\ell$.

Similarly, for each $j\in\{\ell+1,\dots,m\}$, construct the arc $t_j(\pazocal{U})$ contained in the interior of $\Pi_j$ connecting the  $\ell$-th auxiliary vertices of $\textbf{e}_{j-1}$ and $\textbf{e}_j$.

Then, the arcs $\gamma(\pazocal{U})$ and $t_j(\pazocal{U})$ for $\ell\leq j\leq m$ together form an arc $\rho(\pazocal{U})$ connecting $v_i$ with the $\ell$-th auxiliary vertex of $\textbf{e}_m$.

Note that, by construction, if two positive $\pazocal{A}$-bands $\pazocal{U}$ and $\pazocal{U}'$ both have ends on $(\theta,q)$-cells of $\pazocal{Q}$, then these ends are on distinct $(\theta,q)$-cells.  Hence, in this case the arcs $\rho(\pazocal{U})$ and $\rho(\pazocal{U}')$ can be constructed to be disjoint (see \Cref{a-graph-q-band}).

Hence, $\rho(\pazocal{U})$ can be taken as the subpath of the external edge corresponding to $\pazocal{U}$.

Thus, as distinct maximal $\pazocal{A}$-bands cannot intersect and $\pazocal{A}$-bands and $q$-bands cannot cross, these arcs together define $\Gamma_a(\Delta)$ as a planar graph.

\end{proof}

\begin{figure}[H]
\centering
\includegraphics[scale=3.25]{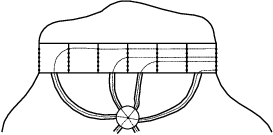}
\caption{The construction of external edges of $\Gamma_a(\Delta)$ for $\pazocal{A}$-bands with one end on an $a$-cell and the other on a $(\theta,q)$-cell}
\label{a-graph-q-band}
\end{figure}

Given a smooth circular diagram, Lemma \ref{a-bands on a-cell} implies that $\Gamma_a(\Delta)$ contains no loops. Further, letting $d(v)$ be the degree of the interior vertex $v$ in $\Gamma_a(\Delta)$, condition (L1) and \Cref{semi-computation deltas} imply $d(v)\geq C$.

For two interior vertices $v$ and $w$ of $\Gamma_a(\Delta)$, suppose there exist consecutive edges $e_1,\dots,e_\ell$ joining $v$ and $w$ such that $e_i$ and $e_{i+1}$ bound a 2-gon for all $i=1,\dots,\ell-1$. If $\Delta$ satisfies (MM2), then $\ell\leq2$. If in this case $\ell=2$, then the edges $e_1$ and $e_2$ are called a \textit{doubled pair}.

The planar graph $\Gamma_a'(\Delta)$ is then formed from $\Gamma_a(\Delta)$ by simply replacing any doubled pair of edges with a single edge. Note that the set of vertices of $\Gamma_a'(\Delta)$ can be identified with that of $\Gamma_a(\Delta)$.

By construction, $\Gamma_a'(\Delta)$ contains no loop and also contains no 2-gon on a pair of interior vertices. Further, letting $d'(v)$ be the degree of the interior vertex $v$ in $\Gamma_a'(\Delta)$, then $d'(v)\geq C/2$.

These properties and the parameter choice $C\geq12$ imply the following statement:

\begin{lemma}[Lemma 3.2 of \cite{O97}] \label{Gamma_a' special cell}

Suppose $\Delta$ is a smooth circular diagram over the canonical presentation of $M_\Omega(\textbf{M}^\pazocal{L})$ which satisfies condition (MM2).  If $\Delta$ contains at least one $a$-cell, then there exists an interior vertex $v$ of $\Gamma_a'(\Delta)$ such that at least $d'(v)-3$ consecutive edges join $v$ with the exterior vertex and there are no other vertices between these edges.

\end{lemma}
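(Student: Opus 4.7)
The plan is to derive the statement as a direct application of the cited planar graph estimate (Lemma 3.2 of \cite{O97}) to the graph $\Gamma_a'(\Delta)$ constructed above. The first step is to verify that $\Gamma_a'(\Delta)$ satisfies the structural hypotheses of that lemma: planarity (established in \Cref{Gamma_a planar}), absence of loops at interior vertices (a consequence of \Cref{a-bands on a-cell} together with the smoothness of $\Delta$, which together preclude an $\pazocal{A}$-band of length $0$ or of positive length having two ends on the same $a$-cell), absence of $2$-gons on pairs of interior vertices (which is precisely the reason for collapsing doubled pairs in the passage from $\Gamma_a(\Delta)$ to $\Gamma_a'(\Delta)$, combined with (MM2) to rule out triples of parallel internal edges), and the lower bound $d'(v) \geq C/2$ on every interior degree. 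This last inequality is justified by (L1) and \Cref{semi-computation deltas}, which give $d(v) \geq C$ in $\Gamma_a(\Delta)$, together with the observation that collapsing doubled pairs at most halves the degree of each interior vertex. The hypothesis that $\Delta$ contains at least one $a$-cell ensures that $\Gamma_a'(\Delta)$ has at least one interior vertex.

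For the combinatorial core, the strategy is a standard Euler-characteristic argument. Consider the planar embedding of $\Gamma_a'(\Delta)$ with $v_0$ placed on the unbounded face. I would argue by contradiction, supposing that at every interior vertex $v$ the cyclic sequence of edges incident to $v$ fails to contain an arc of $d'(v) - 3$ consecutive external edges. This would force the external edges at each interior vertex to be split into at least four cyclic blocks by internal edges, hence each interior vertex would carry at least four internal-edge incidences in its cyclic order. Since each face of the planar embedding has boundary length at least $3$ (because $\Gamma_a'(\Delta)$ has no loops and no $2$-gons on interior pairs, and $2$-gons involving $v_0$ appear only as faces whose opposite side bounds further interior structure), summing the degree bound over interior vertices, counting face contributions, and applying $V - E + F = 2$ produces a contradiction once $C$ is taken large enough; the parameter choice $C \gg 1$ is amply sufficient.

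The main obstacle, in my view, lies not in the Euler-characteristic estimate itself but in the careful bookkeeping needed to verify that condition (MM2) does translate into the absence of $2$-gons on pairs of interior vertices in $\Gamma_a'(\Delta)$. One must check that whenever three pairwise parallel $\pazocal{A}$-bands join two $a$-cells, the corresponding subdiagram $\Psi$ described in (MM2) contains a further $a$-cell, so that after collapsing doubled pairs no multi-edge between interior vertices survives. A secondary subtlety is ensuring that the collapse leaves the degree lower bound intact, which requires verifying that each doubled pair at a vertex $v$ contributes exactly one collapse and cannot cascade further. Once these points are handled, the invocation of the planar graph lemma is routine.
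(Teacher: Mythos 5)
Your first paragraph --- planarity via \Cref{Gamma_a planar}, no loops via \Cref{a-bands on a-cell} and smoothness, no interior $2$-gons via the collapse of doubled pairs plus (MM2), and $d'(v)\geq C/2$ --- reproduces exactly the preparation the paper carries out in the text preceding the statement. But note that the paper does not prove this lemma at all: it is quoted as Lemma 3.2 of \cite{O97} and simply applied to $\Gamma_a'(\Delta)$ once those hypotheses are in place. The genuine gap in your proposal lies in the attempt to supply the missing graph-theoretic proof.

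The contradiction you set up does not go through. First, your negation drops half of the conclusion: the lemma asks not only for $d'(v)-3$ consecutive edges to the exterior vertex but also that no vertices lie in the bigonal regions between them, and a vertex can fail the full conclusion while having no internal edges at all (other interior vertices nested inside the bigons between its external edges). Second, even for the weakened conclusion, failure at $v$ does not force four internal-edge incidences: two internal edges positioned so as to split the external edges into two cyclic blocks of size at least two already leave every run of consecutive external edges of length at most $d'(v)-4$, which is possible as soon as $d'(v)\geq 6$. Third, the Euler-characteristic count cannot be closed as you describe: faces of length $2$ at $v_0$ are not excluded (only empty $2$-gons between \emph{interior} vertices were removed in passing to $\Gamma_a'(\Delta)$ --- indeed the conclusion of the lemma is precisely that many such bigons at $v_0$ accumulate at a single vertex), and a uniform bound of two or even four internal incidences per interior vertex is perfectly compatible with planarity no matter how large $C$ is (consider a cycle of interior vertices each also joined to $v_0$ by many edges, or a $4$-regular planar internal graph), so "taking $C$ large enough" cannot produce the contradiction from the premise you derive. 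The actual argument in \cite{O97} is a more delicate counting/induction that tracks internal edges and nested vertices simultaneously; for the purposes of this paper it suffices to verify the hypotheses, as you did, and invoke the cited lemma, which is what the paper does.
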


The following is an immediate consequence of the construction of $\Gamma_a'(\Delta)$ from $\Gamma_a(\Delta)$:

\begin{lemma} \label{Gamma_a special cell}

Suppose $\Delta$ is a smooth circular diagram over the canonical presentation of $M_\Omega(\textbf{M}^\pazocal{L})$ which satisfies condition (MM2).  If $\Delta$ contains at least one $a$-cell, then there exists an interior vertex $v$ of $\Gamma_a(\Delta)$ such that at least $d(v)-6$ consecutive edges join $v$ with the exterior vertex and there are no other vertices between these edges.

\end{lemma}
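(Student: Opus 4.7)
The plan is to lift the conclusion of Lemma \ref{Gamma_a' special cell} from the auxiliary graph $\Gamma_a'(\Delta)$ back to $\Gamma_a(\Delta)$, losing at most $3$ additional edges in the process because each of the $3$ ``non-external'' edges at the special vertex in $\Gamma_a'(\Delta)$ can have arisen from the collapse of a doubled pair in $\Gamma_a(\Delta)$.

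First, I would apply Lemma \ref{Gamma_a' special cell} to $\Delta$ to extract an interior vertex $v$ of $\Gamma_a'(\Delta)$, which we identify with the corresponding interior vertex of $\Gamma_a(\Delta)$, with the property that at least $d'(v)-3$ consecutive edges (in the cyclic order at $v$ inherited from the planar embedding established in Lemma \ref{Gamma_a planar}) join $v$ to the exterior vertex $v_0$ and no other vertex lies between them. Call this set of edges $E_0$.

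Next, I would record the precise local relationship between $\Gamma_a(\Delta)$ and $\Gamma_a'(\Delta)$ at $v$: by the definition of $\Gamma_a'(\Delta)$, the latter is obtained from the former by replacing each doubled pair of edges (which, being edges bounding a $2$-gon, are consecutive in the cyclic order at each of their endpoints, and which by definition join two interior vertices) with a single edge. Consequently, the cyclic order at $v$ in $\Gamma_a'(\Delta)$ is exactly the cyclic order at $v$ in $\Gamma_a(\Delta)$ with each doubled pair contracted to one edge; in particular, every edge of $E_0$, being external, corresponds to a single external edge at $v$ in $\Gamma_a(\Delta)$, and between any two cyclically consecutive members of $E_0$ no vertex of $\Gamma_a(\Delta)$ is inserted (since the collapse only affects internal doubled pairs which are themselves consecutive and which would separately sit between other edges, not within a block of external edges). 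Hence $E_0$ is still a block of $|E_0| = d'(v)-3$ consecutive edges at $v$ in $\Gamma_a(\Delta)$ joining $v$ to $v_0$ with nothing between them.

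Finally, I would estimate $d(v)$ in terms of $d'(v)$. The $3$ edges at $v$ in $\Gamma_a'(\Delta)$ that are not in $E_0$ each lift to either one edge or two edges of $\Gamma_a(\Delta)$ (the latter occurring precisely when the edge arose from a doubled pair), so they account for at most $6$ edges at $v$ in $\Gamma_a(\Delta)$. Therefore $d(v) \leq (d'(v)-3) + 6$, i.e.\ $d'(v)-3 \geq d(v)-6$, and the block $E_0$ witnesses the desired conclusion. There is no serious obstacle; the only subtle point is checking that the collapse of doubled pairs preserves the ``no vertex between'' property for the block $E_0$, which is immediate since doubled pairs are internal and consecutive in $\Gamma_a(\Delta)$.
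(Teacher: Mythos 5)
Your proposal is correct and is exactly the argument the paper has in mind: the paper derives this lemma as an "immediate consequence of the construction of $\Gamma_a'(\Delta)$ from $\Gamma_a(\Delta)$," i.e.\ by applying Lemma \ref{Gamma_a' special cell} and observing that external edges are untouched by the collapse while each of the at most $3$ remaining edges at $v$ comes from at most a doubled pair, so $d(v)\leq |E_0|+6$. Your write-up just makes that bookkeeping explicit.
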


\begin{lemma} \label{M-minimal theta-annuli}

A smooth $M$-minimal diagram $\Delta$ contains no $\theta$-annuli.

\end{lemma}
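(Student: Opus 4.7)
The approach is to argue by contradiction and extend the proof of \Cref{M(M) annuli}, using $M$-minimality to handle the new obstacle posed by $a$-cells. Suppose $\Delta$ contains a $\theta$-annulus $\pazocal{S}$, and let $\Delta_\pazocal{S}$ denote the subdiagram bounded by the outer component of $\partial\pazocal{S}$; this subdiagram inherits smoothness and $M$-minimality. I would first reproduce the reasoning of \Cref{M(M) annuli} almost verbatim to rule out $(\theta,q)$-cells: any such cell in $\Delta_\pazocal{S}$ yields, via \Cref{basic annuli 1}(2) together with the fact that $M_\Omega(\textbf{M}^\pazocal{L})$ has no hubs, a maximal $q$-band with both ends on $\partial\Delta_\pazocal{S}$, producing together with a subband of $\pazocal{S}$ a $(\theta,q)$-annulus forbidden by \Cref{basic annuli 1}(1). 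Consequently $\pazocal{S}$ consists only of $(\theta,a)$-cells, and the hole of $\pazocal{S}$ likewise contains no $(\theta,q)$-cells. If additionally $\Delta_\pazocal{S}$ contains no $a$-cells, then the remaining steps of \Cref{M(M) annuli} apply unchanged, giving either a $(\theta,\pazocal{A})$-annulus contradicting \Cref{basic annuli 2}(1) or, absent $(\theta,\pazocal{A})$-cells, a $(\theta,a)$-annulus from a maximal $a$-band along the inner contour of $\pazocal{S}$.

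The novel case is when $\Delta_\pazocal{S}$ contains at least one $a$-cell. Here I would apply \Cref{Gamma_a special cell} to $\Delta_\pazocal{S}$ (legitimate since it is smooth and satisfies (MM2)), obtaining an $a$-cell $\pi$ and a block of at least $d(v_\pi)-6$ consecutive edges from $v_\pi$ to the exterior vertex of $\Gamma_a(\Delta_\pazocal{S})$. Each such edge corresponds to a positive maximal $\pazocal{A}$-band of $\Delta_\pazocal{S}$ starting on $\partial\pi$ and ending either on a $(\theta,q)$-cell or on $\partial\Delta_\pazocal{S}$; since $(\theta,q)$-cells have already been eliminated, each must end on the outer contour of $\pazocal{S}$, whose $\pazocal{A}$-edges all lie on $(\theta,\pazocal{A})$-cells of $\pazocal{S}$. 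Viewing the same band in the ambient diagram $\Delta$, the band passes through such a $(\theta,\pazocal{A})$-cell and therefore crosses $\pazocal{S}$; hence the starting edge of each such band lies in $E(\pi,\pazocal{S})$.

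Combining this identification with the observation that smoothness gives $d(v_\pi)=|\partial\pi|_\pazocal{A}$, condition (MM1) yields
\[
|\partial\pi|_\pazocal{A}-6 \;\leq\; |E(\pi,\pazocal{S})| \;\leq\; \tfrac{1}{2}|\partial\pi|_\pazocal{A},
\]
so $|\partial\pi|_\pazocal{A}\leq 12$. On the other hand, $\lab(\partial\pi)\in\Omega$ is freely conjugate to an element of $\pazocal{E}(\Lambda^\pazocal{A})$, so by \Cref{semi-computation deltas} combined with condition (L1) it has $\pazocal{A}$-length at least $C$. Since the parameter assignment $N<<C$ permits the choice $C>12$, this contradicts the preceding upper bound and completes the argument. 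The main obstacle is precisely the verification in the previous paragraph that each of the $d(v_\pi)-6$ external edges supplied by \Cref{Gamma_a special cell} yields a genuine $\pazocal{A}$-band crossing $\pazocal{S}$ in $\Delta$: one must track how a maximal $\pazocal{A}$-band in the subdiagram $\Delta_\pazocal{S}$ terminating on $\partial\Delta_\pazocal{S}$ extends, in $\Delta$, through the corresponding $(\theta,\pazocal{A})$-cell of $\pazocal{S}$, so that the crossing property lifts from the subdiagram to the full diagram.
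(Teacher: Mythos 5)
Your proposal is correct and follows essentially the same route as the paper: pass to the subdiagram $\Delta_{\pazocal{S}}$, rule out $(\theta,q)$-cells as in the diskless $\theta$-annulus lemma, use that lemma to force an $a$-cell, apply the planar-graph lemma to get $d(v)-6$ external $\pazocal{A}$-bands crossing $\pazocal{S}$, and contradict condition (MM1) via $d(v)\geq C>12$. The only differences are cosmetic (your explicit two-case split and deriving $|\partial\pi|_{\pazocal{A}}\leq 12$ rather than $|E(\pi,\pazocal{S})|>\tfrac{1}{2}|\partial\pi|_{\pazocal{A}}$ directly), and your careful check that the bands lift to crossings of $\pazocal{S}$ in the ambient diagram is exactly the step the paper asserts tersely.
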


\begin{proof}

Suppose to the contrary that $\Delta$ contains a $\theta$-annulus $\pazocal{S}$ and let $\Delta_\pazocal{S}$ be the subdiagram bounded by a side of $\pazocal{S}$ which contains $\pazocal{S}$.

As in the proof of \Cref{M(M) annuli}, $\Delta_\pazocal{S}$ cannot contain any $(\theta,q)$-cell, as such a cell would imply the existence of a $(\theta,q)$-annulus contradicting \Cref{basic annuli 1}(1).

Further, Lemma \ref{M(M) annuli} implies that $\Delta_\pazocal{S}$ must contain an $a$-cell. So, applying Lemma \ref{Gamma_a special cell}, there exists an interior vertex $v$ of $\Gamma_a(\Delta_\pazocal{S})$ such that at least $d(v)-6$ edges join $v$ to the exterior vertex.

Let $\pi$ be the $a$-cell of $\Delta_\pazocal{S}$ corresponding to the vertex $v$.  Then, an edge of $\Gamma_a(\Delta_\pazocal{S})$ corresponds to a maximal positive $\pazocal{A}$-band $\pazocal{U}$ which has ends on both $\pi$ and on $\partial\Delta$.  Letting $\textbf{e}$ be the edge of $\partial\pi$ such that $\textbf{e}^{\pm1}$ is an end of $\pazocal{U}$, this implies $\pazocal{U}(\textbf{e})$ must cross $\pazocal{S}$.  So, $\textbf{e}\in E(\pi,\pazocal{S})$.

Hence, $|E(\pi,\pazocal{S})|\geq d(v)-6$.  But then the parameter choice $C>12$ implies $d(v)-6>d(v)/2$, so that $|E(\pi,\pazocal{S})|>\frac{1}{2}|\partial\pi|_\pazocal{A}$, contradicting (MM1).

\end{proof}

\begin{lemma} \label{M-minimal is smooth}

Every $M$-minimal diagram is smooth.

\end{lemma}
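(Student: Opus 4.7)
I would argue by induction on the number of cells of $\Delta$; the base case (an empty diagram) is trivial. Assume every $M$-minimal diagram with fewer cells than $\Delta$ is smooth, and suppose for contradiction that $\Delta$ itself is $M$-minimal but contains a pinched $a$-cell $\pi$ with pinched subpath $\textbf{s}$. Set $\Psi = \Psi_{\pi,\textbf{s}}$. Since $\Psi$ is a subdiagram of $\Delta$ it is itself $M$-minimal, and since $\pi \notin \Psi$ it has strictly fewer cells, so the inductive hypothesis yields that $\Psi$ is smooth.

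Being smooth and $M$-minimal, $\Psi$ contains no $\theta$-annuli by \Cref{M-minimal theta-annuli}. Its contour $\textbf{p}^{-1}$ is a subpath of $\partial\pi$ and thus consists entirely of $a$-edges, so any maximal $q$-band in $\Psi$ would have both ends on $(\theta,q)$-cells---no hubs or disks are available in $M_\Omega$---producing a $q$-annulus forbidden by \Cref{basic annuli 1}(2). Consequently $\Psi$ contains no $(\theta,q)$-cells, and then any maximal $\theta$-band would itself be a $\theta$-annulus, which is ruled out. Hence $\Psi$ has no $(\theta,q)$-cells and no $(\theta,a)$-cells, so every cell of $\Psi$ is an $a$-cell. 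Moreover $\lab(\textbf{p})$ is a non-empty reduced word (a subword of the cyclically reduced $\lab(\partial\pi) \in \Omega$), forcing $\Psi$ to contain at least one $a$-cell.

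I then apply \Cref{Gamma_a special cell} to $\Psi$ to obtain an $a$-cell $\pi^*$ together with at least $|\partial\pi^*|_\pazocal{A} - 6 \geq C - 6$ consecutive $\pazocal{A}$-edges of $\partial\pi^*$ whose maximal $\pazocal{A}$-bands in $\Psi$ end on $\partial\Psi$. Since $\Psi$ has no $(\theta,a)$-cells, each such band has length zero, and hence each of these $\pazocal{A}$-edges is directly identified with an $\pazocal{A}$-edge of $\textbf{p}^{-1} \subset \partial\pi$. The distinct $a$-cells $\pi^*$ and $\pi$ therefore share at least $C-6 \geq 3$ consecutive $\pazocal{A}$-edges $\textbf{f}_1, \textbf{f}_2, \textbf{f}_3$. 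Condition (MM2) applied with $\pi_1 = \pi^*$ and $\pi_2 = \pi$ then produces an $a$-cell $\pi''$ inside the subdiagram $\Xi$ bounded by the three length-zero $\pazocal{A}$-bands and the corresponding subpaths of $\partial\pi^*$ and $\partial\pi$. A topological analysis places $\Xi$ on the $\Psi$-side of $\partial\pi$: its boundary along $\partial\pi$ separates $\pi$'s interior from the remainder of $\Delta$, and $\Xi$ cannot lie inside the $a$-cell $\pi$. Thus $\pi'' \in \Psi$ is distinct from $\pi^*$, and the triple $(\Xi, \pi'', \pi)$ constitutes a strictly smaller instance of the configuration $(\Psi, \pi^*, \pi)$. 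Re-applying \Cref{Gamma_a special cell} and (MM2) to this triple yields an infinite strictly descending chain of subdiagrams $\Psi \supsetneq \Xi \supsetneq \Xi' \supsetneq \cdots$, each containing an $a$-cell---contradicting the finiteness of $\Delta$.

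The chief technical obstacle is the topological placement of $\Xi$ in the degenerate case where all three $\pazocal{A}$-bands have length zero, so that $\pi^*$ and $\pi$ are glued along $\textbf{f}_1, \textbf{f}_2, \textbf{f}_3$. One must verify carefully that the interior of $\Xi$ genuinely inhabits the $\Psi$-side of the shared boundary rather than collapsing onto the pinched region near $\pi$, even when the subpaths of $\partial\pi^*$ and $\partial\pi$ between consecutive $\textbf{f}_j$ share some of their non-$\pazocal{A}$ edges. Once this placement is established, termination of the descent is immediate from the strict decrease in cell count at each step.
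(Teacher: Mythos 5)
Your setup coincides with the paper's: isolate a pinched $a$-cell $\pi$, show that the subdiagram $\Psi=\Psi_{\pi,\textbf{s}}$ is smooth (your induction on the number of cells does the same job as the paper's choice of a pinched cell whose $\Psi_{\pi,\textbf{s}}$ has minimal weight), deduce from \Cref{basic annuli 1}(2) and \Cref{M-minimal theta-annuli} that $\Psi$ consists only of $a$-cells and contains at least one, and then invoke \Cref{Gamma_a special cell}. The gap is in how you close: you use only half of \Cref{Gamma_a special cell}. That lemma gives not just $\ell\geq C-6$ consecutive $\pazocal{A}$-edges of $\partial\pi^*$ whose (length-zero) bands end on $\textbf{p}^{-1}$, i.e.\ on $\partial\pi$, but also that there are \emph{no other vertices between these edges} --- that is, no $a$-cells lie in the region bounded by consecutive such bands. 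Hence the subdiagram $\Xi$ appearing in condition (MM2), determined by $\pi^*$, $\pi$ and any three of these consecutive edges, contains no $a$-cell, while (MM2) --- which $\Delta$ satisfies by $M$-minimality --- asserts that it must contain one. That is the contradiction; it is immediate (the paper finishes exactly this way, using $C\geq9$), and no further construction is needed.

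Your substitute --- taking the $a$-cell $\pi''$ supplied by (MM2) at face value and running an infinite descent --- is both roundabout and incompletely justified as written. At the second and later stages the boundary of $\Xi_k$ runs along two different $a$-cells ($\pi$ and the previous special cell), so ``re-applying \Cref{Gamma_a special cell} and (MM2)'' needs an extra argument (non-crossing of $\pazocal{A}$-bands together with a pigeonhole on the $\geq C-6$ consecutive external edges) to extract three consecutive edges whose bands all end on a \emph{single} $a$-cell; you assert this step without proof. You also leave the ``topological placement'' of $\Xi$ as an acknowledged open point. Both difficulties evaporate once the ``no other vertices between these edges'' clause is used: the configuration you build is already contradictory at the first step, so the descent is never required.
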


\begin{proof}

Suppose to the contrary that the $M$-minimal diagram $\Delta$ contains a pinched $a$-cell.  Choose an $a$-cell $\pi$ and a pinched subpath $\textbf{s}$ of $\partial\pi$ such that the subdiagram $\Psi_{\pi,\textbf{s}}$ has minimal weight.

If $\Psi_{\pi,\textbf{s}}$ contains a pinched $a$-cell, then we can pass to the subdiagram corresponding to this pinched $a$-cell, contradicting the minimal weight hypothesis.
%
%
Hence, $\Psi_{\pi,\textbf{s}}$ is a smooth $M$-minimal diagram.

Let $\textbf{s}^{\pm1}\textbf{q}\textbf{s}^{\mp1}\textbf{p}$ be the pinched factorization of $\partial\pi$ with respect to $\textbf{s}$.  Since $\textbf{p}$ consists entirely of $a$-edges, Lemmas \ref{basic annuli 1}(2) and \ref{M-minimal theta-annuli} imply that any (positive) cell of $\Psi_{\pi,\textbf{s}}$ is an $a$-cell.  Moreover, since $\lab(\partial\pi)\in\Omega$ is cyclically reduced, $\lab(\textbf{p})$ must be a non-trivial reduced word, so that $\Psi_{\pi,\textbf{s}}$ contains at least one $a$-cell.

As a result, \Cref{Gamma_a special cell} implies $\Psi_{\pi,\textbf{s}}$ contains an $a$-cell $\pi_0$ and $\ell\geq|\partial\pi_0|_\pazocal{A}-6\geq C-6$ consecutive $\pazocal{A}$-edges $\textbf{e}_1,\dots,\textbf{e}_\ell$ of $\partial\pi_0$ such that $\pazocal{U}(\textbf{e}_j)$ has an end on $\textbf{p}^{-1}$ and such that no $a$-cell is between these $a$-bands.

But since $\textbf{p}$ is a subpath of $\partial\pi$, the parameter choice $C\geq9$ then implies $\pi$ and $\pi_0$ form a counterexample to condition (MM2).

\end{proof}

\medskip


\subsection{$a$-scopes} \label{sec-a-scopes} \

Before establishing the upper bound on the weight of $M$-minimal diagrams, we first study a consequence of \Cref{Gamma_a special cell} that will prove useful for future arguments.

Let $\pi$ be an $a$-cell and $\textbf{t}$ be a subpath of a boundary component of a reduced diagram $\Delta$ over the canonical presentation of $M_\Omega(\textbf{M}^\pazocal{L})$.  Let $\textbf{e}_1$ and $\textbf{e}_2$ be $\pazocal{A}$-edges of $\partial\pi$ such that $\pazocal{U}(\textbf{e}_i)$ has an end on $\textbf{t}$.  Suppose there exists a subpath $\textbf{s}$ of $\partial\pi$ such that $\textbf{s}$, a subpath of $\textbf{t}$, and the bands $\pazocal{U}(\textbf{e}_1),\pazocal{U}(\textbf{e}_2)$ bound a (circular) subdiagram $\Psi$ of $\Delta$ which contains neither $\pi$ nor any $(\theta,q)$-cell.

%
%
%
%
%

Then $\Psi$ is called an \textit{$a$-scope} on $\textbf{t}$ with \textit{associated $a$-cell} $\pi$, \textit{associated subpath} $\textbf{s}$, and \textit{size} $|\textbf{s}|_\pazocal{A}$. 

If in this case $|\textbf{s}|_\pazocal{A}>\frac{1}{2}|\partial\pi|_\pazocal{A}$, then $\Psi$ is called a \textit{big $a$-scope}.  If $\Psi$ contains no $a$-cell, then it is called a \textit{pure $a$-scope}.  Note that there exists a subdiagram $\tilde{\Psi}$ of $\Delta$ consisting of $\Psi$ and $\pi$.  In this case, $\tilde{\Psi}$ is called the \textit{completion} of $\Psi$.

\begin{lemma} \label{big a-scope}

Let $\textbf{t}$ be a subpath of a boundary component of a reduced diagram $\Delta$ over the canonical presentation of $M_\Omega(\textbf{M}^\pazocal{L})$.  Suppose $\Delta$ contains an $a$-scope $\Psi_0$ on $\textbf{t}$ such that the completion $\tilde{\Psi}_0$ is smooth and satisfies condition (MM2).  If $\Psi_0$ is not a pure $a$-scope, then there exists a big $a$-scope $\Psi_1$ on $\textbf{t}$ such that the completion $\tilde{\Psi}_1$ is a subdiagram of $\Psi_0$.

\end{lemma}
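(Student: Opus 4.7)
The plan is to apply \Cref{Gamma_a special cell} to the smooth diagram $\tilde{\Psi}_0$ (which satisfies condition (MM2) by hypothesis and contains at least two $a$-cells: $\pi$ and at least one $a$-cell in $\Psi_0$, since $\Psi_0$ is not pure). This will yield a special $a$-cell $\pi_0^{*} \in \tilde{\Psi}_0$ together with $\geq d(\pi_0^{*}) - 6$ consecutive $\pazocal{A}$-bands emanating from $\pi_0^{*}$ whose other ends lie on $\partial\tilde{\Psi}_0$, no interior edges of $\Gamma_a(\tilde{\Psi}_0)$ occurring between them.

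The boundary of $\tilde{\Psi}_0 = \Psi_0 \cup \pi$, obtained by gluing along $\textbf{s}$, decomposes cyclically as $\textbf{p}_1 \cdot \textbf{t}' \cdot \textbf{p}_2 \cdot \textbf{s}^c$, where each $\textbf{p}_i$ is the portion of $\pazocal{U}(\textbf{e}_i)$ on $\partial\Psi_0$, $\textbf{t}'$ is the relevant subpath of $\textbf{t}$, and $\textbf{s}^c$ is the arc of $\partial\pi$ complementary to $\textbf{s}$. The key observation is that each of the consecutive $\pazocal{A}$-bands provided by the lemma must terminate on $\textbf{t}'$: termination on $\textbf{p}_1$ or $\textbf{p}_2$ is impossible because distinct maximal $\pazocal{A}$-bands cannot share cells, and termination on $\textbf{s}^c$ is impossible because the last cell of such a band would have to be $\pi$ (the only cell of $\tilde{\Psi}_0$ adjacent to $\textbf{s}^c$ from inside), yet $\pi$ is an $a$-cell rather than a $(\theta,\pazocal{A})$-cell.

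Assume first that $\pi_0^{*} \in \Psi_0$, so that $\pi_0^{*} \neq \pi$. Labelling the consecutive $\pazocal{A}$-edges of $\partial\pi_0^{*}$ as $\textbf{f}_1, \dots, \textbf{f}_k$ with $k \geq d(\pi_0^{*}) - 6 = |\partial\pi_0^{*}|_\pazocal{A} - 6$, I would set $\textbf{e}_1' = \textbf{f}_1$, $\textbf{e}_2' = \textbf{f}_k$, and take $\textbf{s}'$ to be the arc of $\partial\pi_0^{*}$ from $\textbf{e}_1'$ to $\textbf{e}_2'$ containing the intermediate $\textbf{f}_i$. The subdiagram $\Psi_1$ bounded by $\pazocal{U}(\textbf{e}_1')$, $\pazocal{U}(\textbf{e}_2')$, a subpath of $\textbf{t}$, and $\textbf{s}'$ (on the side not containing $\pi_0^{*}$) is an $a$-scope on $\textbf{t}$ with associated $a$-cell $\pi_0^{*} \in \Psi_0$, and $|\textbf{s}'|_\pazocal{A} \geq |\partial\pi_0^{*}|_\pazocal{A} - 6 > |\partial\pi_0^{*}|_\pazocal{A}/2$ by the parameter choice $C > 12$; the bounding bands remain inside $\Psi_0$ by non-crossing of $\pazocal{A}$-bands, so $\tilde{\Psi}_1 \subseteq \Psi_0$, giving the desired big $a$-scope.

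The hard part will be the remaining case $\pi_0^{*} = \pi$, where the direct construction above produces a big $a$-scope whose associated $a$-cell is $\pi \notin \Psi_0$, violating the conclusion. To resolve this, I plan to argue by strong induction on the number of $a$-cells in $\Psi_0$; the consecutive exterior-edge condition forces $v(\pi)$ to have at most $6$ interior neighbours in $\Gamma_a(\tilde{\Psi}_0)$, meaning at most $6$ of the $a$-cells of $\Psi_0$ are joined to $\pi$ by $\pazocal{A}$-bands. Using this, one can extract from $\Psi_0$ a strictly smaller sub-$a$-scope $\Psi_0^{**}$ on $\textbf{t}$ (whose completion still lies in $\Psi_0$ and remains non-pure), to which the induction hypothesis applies, producing a big $a$-scope inside $\Psi_0^{**} \subseteq \Psi_0$. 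Constructing this sub-$a$-scope rigorously (and ensuring that it strictly decreases the inductive parameter) is the main technical obstacle of the argument.
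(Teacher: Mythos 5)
There is a genuine gap, and it is exactly the one you flag yourself: the case where the special $a$-cell produced by \Cref{Gamma_a special cell} applied to $\tilde{\Psi}_0$ is the associated $a$-cell $\pi$ of $\Psi_0$. Your proposed remedy (induction on the number of $a$-cells, using that $v(\pi)$ has at most $6$ interior neighbours to ``extract a strictly smaller sub-$a$-scope on $\textbf{t}$'') is not an argument: when the special vertex is $\pi$, the conclusion of \Cref{Gamma_a special cell} only says that almost all $\pazocal{A}$-bands of $\pi$ run to $\partial\tilde{\Psi}_0$, which tells you essentially nothing about how the $a$-cells inside $\Psi_0$ are connected to $\textbf{t}$, and it is not clear that any sub-$a$-scope on $\textbf{t}$ with non-pure interior and smaller complexity exists at all. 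So the ``main technical obstacle'' you defer is precisely the content that is missing.

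The fix is not an induction but a different choice of diagram to which the special-cell lemma is applied: apply \Cref{Gamma_a special cell} to $\Psi_0$ itself rather than to its completion. Since $\Psi_0$ is not pure it contains an $a$-cell, and as a subdiagram of $\tilde{\Psi}_0$ it inherits smoothness and condition (MM2), so the lemma applies and the special cell automatically lies in $\Psi_0$, eliminating your problematic case. The price is a new wrinkle that your case (1) does not face: the $\ell\geq|\partial\pi_0^*|_\pazocal{A}-6$ consecutive external (relative to $\Psi_0$) $\pazocal{A}$-bands may now end either on $\textbf{t}_0$ or on the arc $\textbf{s}_0^{-1}$ of $\partial\pi$ lying on $\partial\Psi_0$. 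This is where (MM2) for $\tilde{\Psi}_0$ is really used: combined with the ``no other vertices between these edges'' clause, no three consecutive of these bands can end on $\textbf{s}_0^{-1}$, so (with $|\partial\pi_0^*|_\pazocal{A}\geq C$ and $C\geq12$) at least two end on $\textbf{t}_0$; taking the first and last such ends along $\textbf{t}_0$ and the subpath $\textbf{s}$ of $\partial\pi_0^*$ between the corresponding edges yields an $a$-scope $\Psi_1$ on $\textbf{t}$ whose completion lies in $\Psi_0$, and the same (MM2) argument shows the complement of $\textbf{s}$ contains at most $4$ of the $\ell$ distinguished edges, so $|\textbf{s}|_\pazocal{A}\geq|\partial\pi_0^*|_\pazocal{A}-10>\tfrac12|\partial\pi_0^*|_\pazocal{A}$ for $C\geq21$, i.e.\ $\Psi_1$ is big. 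Also note that your justification for why no band can end on $\textbf{s}^c$ is off: such a band would simply have an end on the $a$-cell $\pi$ (its end edge lies on $\partial\pi$), hence would correspond to an internal edge of $\Gamma_a(\tilde{\Psi}_0)$ and is excluded for that reason, not because ``its last cell would have to be $\pi$.''
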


\begin{proof}

Let $\textbf{s}_0$ be the associated subpath of $\Psi_0$ and let $\textbf{t}_0$ be the subpath of $\textbf{t}$ which is shared with $\partial\Psi_0$.  As $\Psi_0$ is not pure, it contains at least one $a$-cell.  So, since $\Psi_0$ is itself smooth and satisfies condition (MM2), \Cref{Gamma_a special cell} implies the existence of an $a$-cell $\pi$ in $\Psi_0$ and $\ell\geq|\partial\pi|_\pazocal{A}-6$ consecutive $\pazocal{A}$-edges $\textbf{e}_1,\dots,\textbf{e}_\ell$ of $\partial\pi$ such that the maximal $\pazocal{A}$-bands $\pazocal{U}(\textbf{e}_1),\dots,\pazocal{U}(\textbf{e}_\ell)$ in $\Psi_0$ each correspond to external edges of the graph $\Gamma_a(\Psi_0)$.  In particular, since $\Psi_0$ contains no $(\theta,q)$-cell, each band $\pazocal{U}(\textbf{e}_i)$ ends on $\partial\Psi_0$.

As $\pazocal{A}$-bands cannot cross, each band $\pazocal{U}(\textbf{e}_i)$ must have an end on either $\textbf{s}_0^{-1}$ or on $\textbf{t}_0$.  Since $\tilde{\Psi}_0$ satisfies condition (MM2), though, no three consecutive such bands can end on $\textbf{s}_0^{-1}$.  So, because condition (L1) and \Cref{semi-computation deltas} imply that $|\partial\pi|_\pazocal{A}\geq C$, the parameter choice $C\geq12$ implies the existence of two indices $i_1,i_2\in\{1,\dots,\ell\}$ such that $\pazocal{U}(\textbf{e}_{i_j})$ has an end on $\textbf{t}_0$.

Now, let $\textbf{f}_1$ be the first edge of $\textbf{t}_0$ which is the end of an $\pazocal{A}$-band $\pazocal{U}(\textbf{e}_i)$.  Similarly, let $\textbf{f}_2$ be the last such edge of $\textbf{t}_0$.  Fix the indices $k_1,k_2\in\{1,\dots,\ell\}$ such that $\textbf{f}_i$ is an end of $\pazocal{U}(\textbf{e}_{k_i})$.

Let $\textbf{s}$ be the subpath of $\partial\pi$ with first edge $\textbf{e}_{k_1}$ and last edge $\textbf{e}_{k_2}$.  As distinct $\pazocal{A}$-bands cannot cross, if $\pazocal{U}(\textbf{e}_i)$ has an end on $\textbf{t}_0$, then $\textbf{e}_i$ is an edge of $\textbf{s}$.

Hence, $\textbf{s}$, $\textbf{t}_0$, and the bands $\pazocal{U}(\textbf{e}_{k_i})$ bound a subdiagram $\Psi_1$ that does not contain $\pi$.  Hence, $\Psi_1$ is an $a$-scope on $\textbf{t}$ with associated $a$-cell $\pi$, associated subpath $\textbf{s}$, and size $|\textbf{s}|_\pazocal{A}$.  Note that, by construction, the completion $\tilde{\Psi}_1$ is a subdiagram of $\Psi_0$.  Let $\textbf{s}'$ be the complement of $\textbf{s}$ in $\partial\pi$.  

Suppose there exist five indices $m_1,\dots,m_5\in\{1,\dots,\ell\}$ with $m_i<m_{i+1}$ such that $\textbf{e}_{m_i}$ is an edge of $\textbf{s}'$.  Since $\textbf{s}'$ is a subpath of $\partial\pi$ containing these edges, if it does not contain $\textbf{e}_i$ for all $m_1\leq i\leq m_3$, then it must contain $\textbf{e}_i$ for all $m_3\leq i\leq m_5$.  Either way, $\textbf{s}'$ must contain at least three consecutive $\pazocal{A}$-edges $\textbf{e}_i,\textbf{e}_{i+1},\textbf{e}_{i+2}$.  But then $\pazocal{U}(\textbf{e}_i)$, $\pazocal{U}(\textbf{e}_{i+1})$, and $\pazocal{U}(\textbf{e}_{i+2})$ each has an end on $\textbf{s}_0^{-1}$, producing a contradiction to condition (MM2).  Hence, $|\textbf{s}|_\pazocal{A}\geq\ell-4\geq|\partial\pi|_\pazocal{A}-10$.  Taking $C\geq21$ then implies that $\Psi_1$ is a big $a$-scope.

\end{proof}

\begin{lemma} \label{pure a-scope}

Let $\textbf{t}$ be a subpath of a boundary component of a reduced diagram $\Delta$ over the canonical presentation of $M_\Omega(\textbf{M}^\pazocal{L})$.  Suppose $\Delta$ contains an $a$-scope $\Psi_0$ on $\textbf{t}$ such that the completion $\tilde{\Psi}_0$ is smooth and satisfies condition (MM2).  If $\Psi_0$ is not a pure $a$-scope, then there exists a pure big $a$-scope $\Psi$ on $\textbf{t}$ such that the completion $\tilde{\Psi}$ is a subdiagram $\Psi_0$.

\end{lemma}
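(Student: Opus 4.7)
The plan is to iterate Lemma \ref{big a-scope}, inducting on the number of $a$-cells contained in $\Psi_0$. The base case, when $\Psi_0$ contains no $a$-cell, is vacuous, since then $\Psi_0$ would already be a pure $a$-scope, contradicting the hypothesis. For the inductive step, I will apply Lemma \ref{big a-scope} to $\Psi_0$ to obtain a big $a$-scope $\Psi_1$ on $\textbf{t}$ whose completion $\tilde\Psi_1$ is a subdiagram of $\Psi_0$. Letting $\pi_1$ denote the associated $a$-cell of $\Psi_1$, by definition of $a$-scope one has $\pi_1\in\tilde\Psi_1\setminus\Psi_1\subseteq\Psi_0$, while $\Psi_1\subseteq\Psi_0$; hence the set of $a$-cells in $\Psi_1$ is a proper subset of the set of $a$-cells in $\Psi_0$. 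If $\Psi_1$ is already pure, I set $\Psi:=\Psi_1$; otherwise I apply the inductive hypothesis to $\Psi_1$ to obtain a pure big $a$-scope $\Psi$ on $\textbf{t}$ whose completion $\tilde\Psi$ is a subdiagram of $\Psi_1$, and therefore of $\Psi_0$.

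To run the induction I must verify that $\tilde\Psi_1$ still satisfies the two hypotheses placed on $\tilde\Psi_0$: smoothness and condition (MM2). Smoothness is immediate, since an $a$-cell is pinched by virtue of its own boundary alone, so no cell of a subdiagram of the smooth diagram $\tilde\Psi_0$ can be pinched in the subdiagram without already being pinched in $\tilde\Psi_0$.

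The only genuinely technical point — which I expect to be the main, though minor, obstacle — is inheritance of (MM2) by $\tilde\Psi_1$. The subtlety is that a maximal $\pazocal{A}$-band in $\tilde\Psi_1$ is in general a subband of the corresponding maximal $\pazocal{A}$-band in $\tilde\Psi_0$, so I must rule out that a band which ends on an $a$-cell $\pi_2'$ in the smaller diagram fails to end on $\pi_2'$ in the larger one. However, an $a$-cell cannot occur as a cell of any $\pazocal{A}$-band, so the maximal $\pazocal{A}$-band in $\tilde\Psi_0$ cannot strictly extend past $\pi_2'$; consequently the two maximal bands coincide in a neighbourhood of $\pi_2'$, and in particular the maximal band in $\tilde\Psi_0$ also has an end on $\pi_2'$. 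The subdiagram bounded by the three $\pazocal{A}$-bands and the relevant arcs of $\partial\pi_1'$, $\partial\pi_2'$ is therefore the same whether computed in $\tilde\Psi_1$ or in $\tilde\Psi_0$, so applying (MM2) in $\tilde\Psi_0$ produces the required $a$-cell inside this subdiagram, establishing (MM2) for $\tilde\Psi_1$. With smoothness and (MM2) both inherited, the induction closes and yields the desired pure big $a$-scope.
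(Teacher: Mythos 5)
Your proposal is correct and is essentially the paper's argument: the paper likewise iterates Lemma \ref{big a-scope}, terminating because each application strictly decreases the size of the diagram (the paper tracks area, you track the number of $a$-cells — an inessential difference). Your explicit verification that smoothness and (MM2) pass to the subdiagram $\tilde{\Psi}_1$ is a point the paper simply asserts, and your band-blocking argument for (MM2) is a sound justification of it.
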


\begin{proof}

By \Cref{big a-scope}, there exists a big $a$-scope $\Psi_1$ on $\textbf{t}$ such that the completion $\tilde{\Psi}_1$ is a subdiagram of $\Psi_0$.  Note that this implies that $\text{Area}(\Psi_1)\leq\text{Area}(\Psi_0)-1$.

As a subdiagram of $\Psi_0$, $\tilde{\Psi}_1$ must also be smooth and satisfy condition (MM2).  So, if $\Psi_1$ is not a pure $a$-scope, we may again apply \Cref{big a-scope} to find a big $a$-scope $\Psi_2$ on $\textbf{t}$ such that the completion $\tilde{\Psi}_2$ is a subdiagram of $\Psi_1$.  Again, this implies $\text{Area}(\Psi_2)<\text{Area}(\Psi_1)-1$.

Iterating, this process must terminate with a big $a$-scope $\Psi$ on $\textbf{t}$ which is also pure.

\end{proof}

\medskip


\subsection{Upper bound on weights} \label{sec M-minimal upper bound} \

Let $\pi$ be an $a$-cell in an $M$-minimal diagram $\Delta$ and let $\pazocal{U}$ be a maximal positive $a$-band in $\Delta$ which has an end on $\pi$.  If the other end of $\pazocal{U}$ is on another $a$-cell, then $\pazocal{U}$ is called an \textit{internal} $a$-band in $\Delta$.  Otherwise, $\pazocal{U}$ is called an \textit{external} $a$-band.

Note that if $\pazocal{U}$ is a maximal positive $\pazocal{A}$-band, then Lemmas \ref{a-bands on a-cell} and \ref{M-minimal is smooth} imply that $\pazocal{U}$ is an internal $a$-band ({\frenchspacing i.e. an internal} $\pazocal{A}$-band) if and only if it corresponds to an internal edge of the auxiliary graph $\Gamma_a(\Delta)$.  However, this definition now extends this to include $b$-bands.

For any $M$-minimal diagram $\Delta$, define the values:

\begin{itemize}

\item $\a_i(\Delta)$ is the number of internal $\pazocal{A}$-bands in $\Delta$

\item $\a_e(\Delta)$ is the number of external $\pazocal{A}$-bands in $\Delta$

\item $\b_i(\Delta)$ is the number of internal $b$-bands in $\Delta$

\item $\b_e(\Delta)$ is the number of external $b$-bands in $\Delta$

\end{itemize}

\begin{lemma} \label{internal vs external bands}

For any $M$-minimal diagram $\Delta$:

\begin{enumerate}

\item $\a_i(\Delta)\leq\frac{7}{C}\a_e(\Delta)$

\item $\b_i(\Delta)\leq\frac{49}{C}\b_e(\Delta)$

\end{enumerate}

\end{lemma}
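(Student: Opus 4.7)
The plan is to prove both inequalities by a common peeling induction on the number $n$ of $a$-cells of $\Delta$, using the auxiliary planar graph $\Gamma_a(\Delta)$ of \Cref{sec-M-minimal-bands} for (1) and an analogous planar graph $\Gamma_b(\Delta)$ for (2). Note that by \Cref{M-minimal is smooth}, $\Delta$ is automatically smooth, so \Cref{Gamma_a special cell} applies; moreover every subdiagram of $\Delta$ remains $M$-minimal and smooth, which is what makes an inductive peeling viable.

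For (1), the base case $n = 0$ is immediate since then $\a_i(\Delta) = 0$. For $n \geq 1$, \Cref{Gamma_a special cell} supplies an $a$-cell $\pi$ such that at least $|\partial\pi|_\pazocal{A} - 6 \geq C - 6$ of the maximal positive $\pazocal{A}$-bands with an end on $\pi$ are external, so if $k_i$ and $k_e$ denote the number of internal and external $\pazocal{A}$-bands attached to $\pi$ respectively, one has $k_i \leq 6$ and $k_e \geq C - 6$. Excising $\pi$ (viewed combinatorially on $\Gamma_a(\Delta)$: deleting the vertex $v$ and redirecting its $k_i$ internal incident edges to $v_0$) yields a smaller $M$-minimal diagram $\Delta'$ with $n-1$ $a$-cells, $\a_i(\Delta') = \a_i(\Delta) - k_i$, and $\a_e(\Delta') = \a_e(\Delta) - k_e + k_i$. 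The inductive hypothesis $C\a_i(\Delta') \leq 7\a_e(\Delta')$ then gives
\[
C\a_i(\Delta) \;\leq\; 7\a_e(\Delta) + (C+7)k_i - 7k_e,
\]
and the bounds $k_i \leq 6$, $k_e \geq C - 6$ together with the parameter choice $C \geq 84$ (available from the scheme $N << C$ of \Cref{sec-parameters}) make $(C+7)k_i - 7k_e \leq 0$, closing the induction.

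For (2) the plan is to build an auxiliary planar graph $\Gamma_b(\Delta)$ whose interior vertices correspond to the $a$-cells of $\Delta$ carrying at least one $b$-edge, whose external vertex is $v_0$, and whose edges record the maximal positive $b$-bands (internal if joining two $a$-cells, external if the band has an end on a $(\theta,q)$-cell, a $(\theta,\pazocal{A})$-cell, or $\partial\Delta$). Planarity is established by an adaptation of the routing argument of \Cref{Gamma_a planar}, with $b$-band ends landing on $(\theta,\pazocal{A})$-cells guided out to $v_0$ along the ambient $\pazocal{A}$-band (which itself is routed as in the proof of \Cref{Gamma_a planar}). The lower bound $|\partial\pi|_b \geq D_\pazocal{A}/2$ coming from \Cref{M Lambda semi-computations}(3) whenever $\pi$ contributes a $b$-edge, together with the analog of \Cref{Gamma_a special cell} for $\Gamma_b(\Delta)$, supplies at each peeling step an $a$-cell whose attached $b$-bands are mostly external. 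The same arithmetic as in (1), with the parameter $C$ large enough to absorb the slacker constants arising from the $b$-band count, yields $\b_i(\Delta) \leq (49/C)\b_e(\Delta)$.

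The principal obstacle is the construction for (2): establishing that $\Gamma_b(\Delta)$ is planar and proving the $b$-band analog of \Cref{Gamma_a special cell}. The routing for planarity has to simultaneously avoid $\pazocal{A}$-bands, $q$-bands, and other $b$-bands, and the absence of $2$-gons on pairs of interior vertices of $\Gamma_b(\Delta)$ must be extracted from condition (MM2) transposed to the $b$-edge structure. A subsidiary point for both parts is verifying that the combinatorial excision of $\pi$ from $\Delta$ indeed leaves an $M$-minimal diagram; this is automatic for (MM1) and (MM2) which are preserved by passing to subdiagrams, but one must check that the redirected edges in $\Gamma_a(\Delta')$ respectively $\Gamma_b(\Delta')$ still correspond to genuine $\pazocal{A}$- or $b$-bands in the resulting diagram.
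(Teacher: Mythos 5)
Your treatment of (1) is essentially the paper's own proof: the same induction on the number of $a$-cells, the same appeal to \Cref{Gamma_a special cell} to find an $a$-cell whose attached $\pazocal{A}$-bands are external except for at most $6$, the same excision of the star of that cell, and the same arithmetic closing with $C\geq 84$. That part is fine (modulo the check, which you correctly flag and which the paper handles by excising an explicit subdiagram $\Delta_0$ bounded by the bands $\pazocal{U}(\textbf{e}_i)$, resp.\ $\pazocal{V}(\textbf{e}_i)$, so that the complement $\widetilde{\Delta}$ is a genuine $M$-minimal diagram and the former internal bands at $\pi$ become honest external bands of $\widetilde{\Delta}$).

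For (2), however, your plan has a genuine gap, and it sits exactly where you flag the ``principal obstacle.'' The peeling argument needs a $b$-band analogue of \Cref{Gamma_a special cell}, and that lemma rests on the fact that between two interior vertices of $\Gamma_a(\Delta)$ there are at most two parallel edges bounding $2$-gons, which is precisely what condition (MM2) provides \emph{for $\pazocal{A}$-bands}. Condition (MM2) says nothing about $b$-bands, and there is no reason a transposed version should hold: the boundary label of an $a$-cell has the form $u_0x_1^{\delta_1}u_1\dots x_k^{\delta_k}u_k$ with $u_i\in F(\pazocal{B})$ of length up to $3D_\pazocal{A}t$, so all the $b$-edges of a single syllable $u_i$ of $\partial\pi_1$ can be joined by parallel $b$-bands (with no $a$-cell or $\pazocal{A}$-band in between) to the corresponding syllable of a neighbouring $a$-cell $\pi_2$. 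After collapsing such parallel families, the degree of an interior vertex of your $\Gamma_b'(\Delta)$ is no longer bounded below by a fixed fraction of $|\partial\pi|_b$ (it could collapse to a handful of edges), so the Euler-characteristic argument behind \Cref{Gamma_a' special cell} does not produce a vertex with mostly external edges, and the inductive step fails. The paper avoids building a $b$-band graph altogether: in the inductive step it reuses the \emph{same} $a$-cell $\pi$ found via $\Gamma_a(\Delta)$, and controls $b$-edges quantitatively through \Cref{M Lambda semi-computations}(3). Writing $\partial\pi=\textbf{s}\textbf{t}$ with $\textbf{s}$ the arc carrying the $\ell\geq d(v)-6$ external $\pazocal{A}$-edges, that lemma gives $|\textbf{t}|_b\leq 12D_\pazocal{A}(t-1)$ while $|\textbf{s}|_b\geq\frac{C-8}{4}D_\pazocal{A}(t-1)$, where $t$ is the length of the $\Lambda^\pazocal{A}$-accepting semi-computation; the external $b$-bands ending on $\textbf{s}^{-1}$ are removed in passing to $\widetilde{\Delta}$, the at most $12D_\pazocal{A}(t-1)$ bands ending on $\textbf{t}^{\pm1}$ account for the increase in $\b_i$, and the ratio of these two quantities is what the constant $\frac{49}{C}$ and the choice $C\geq 2744$ absorb. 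If you want to salvage your route, you would have to prove a bound on the size of parallel $b$-band families between two $a$-cells, and that bound is exactly the content the paper instead extracts from \Cref{M Lambda semi-computations}; so the quantitative detour is not optional.
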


\begin{proof}

We prove both statements simultaneously by induction on the number $n$ of $a$-cells in $\Delta$, with the statement clear if $n=0,1$ as then Lemmas \ref{a-bands on a-cell} and \ref{M-minimal is smooth} imply $\a_i(\Delta)=\b_i(\Delta)=0$.

For the inductive step, as $n\geq2$, there exists an interior vertex $v$ of $\Gamma_a(\Delta)$ satisfying the statement of Lemma \ref{Gamma_a special cell}. 
Let $\pi$ be the $a$-cell of $\Delta$ corresponding to $v$ and let $e_1,\dots,e_\ell$ be the $\ell\geq d(v)-6$ consecutive external edges of $\Gamma_a(\Delta)$ connecting $v$ to $v_0$, enumerated counterclockwise about $\partial\pi$.  For $i\in\{1,\dots,\ell\}$, let $\pazocal{U}_i$ be the maximal positive $\pazocal{A}$-band corresponding to $e_i$.  Further, let $\textbf{e}_i$ be the edge of $\partial\pi_i$ such that $\textbf{e}_i^{\pm1}$ is an end of $\pazocal{U}_i$.

Then, for $i\in\{1,\dots,\ell\}$, let $\pazocal{S}_i$ be:

\begin{itemize}

\item the maximal $\pazocal{A}$-band $\pazocal{U}(\textbf{e}_i)$ if $\pazocal{U}_i$ has an end on $\partial\Delta$, or

\item the subdiagram $\pazocal{V}(\textbf{e}_i)$ if $\pazocal{U}_i$ has an end on a $(\theta,q)$-cell.

\end{itemize}

By construction, $\pazocal{S}_1,\dots,\pazocal{S}_\ell$ and $\pi$ together bound a subdiagram $\Delta_0$ of $\Delta$ (see \Cref{star-a}).  Further, as there are no vertices between $e_1,\dots,e_\ell$, $\pi$ must be the only $a$-cell of $\Delta_0$.

\begin{figure}[H]
\centering
\includegraphics[scale=1]{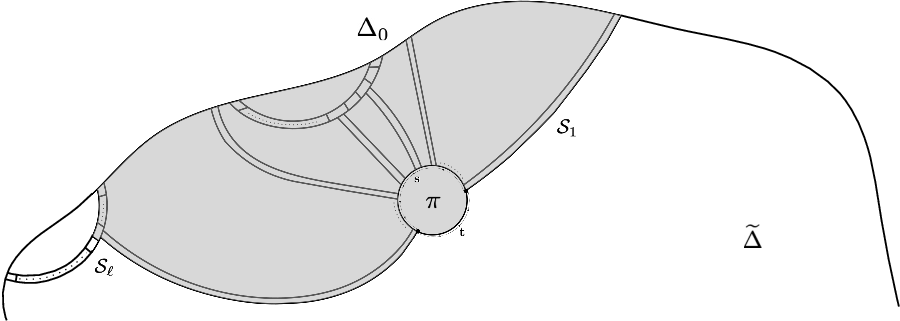}
\caption{The subdiagram $\Delta_0$}
\label{star-a}
\end{figure}

Let $\widetilde{\Delta}$ be the complement of $\Delta_0$ in $\Delta$.  Then $\widetilde{\Delta}$ is an $M$-minimal diagram containing $n-1$ $a$-cells, so that the inductive hypotheses imply $\a_i(\widetilde{\Delta})\leq\frac{7}{C}\a_e(\widetilde{\Delta})$ and $\b_i(\widetilde{\Delta})\leq\frac{49}{C}\b_e(\widetilde{\Delta})$.

Note that any external $a$-band of $\Delta$ that has an end on an $a$-cell other than $\pi$ corresponds to an external $a$-band of $\widetilde{\Delta}$.  Similarly, any internal $a$-band of $\Delta$ that does not have an end on $\pi$ corresponds to an internal $a$-band of $\widetilde{\Delta}$.

Consider the decomposition $\partial\pi=\textbf{s}\textbf{t}$ where $\textbf{s}$ is the minimal subpath containing the $\ell$ consecutive $\pazocal{A}$-edges $\textbf{e}_1,\dots,\textbf{e}_\ell$.  By construction, for any internal $a$-band of $\Delta$ having an end on $\pi$, this end must be an edge of $\textbf{t}^{\pm1}$.  On the other hand, each of these bands corresponds to an external $a$-band in $\widetilde{\Delta}$.  Hence, $\a_i(\Delta)\leq\a_i(\widetilde{\Delta})+|\textbf{t}|_\pazocal{A}$ and $\b_i(\Delta)\leq\b_i(\widetilde{\Delta})+|\textbf{t}|_b$.  

Since the number of $\pazocal{A}$-edges in $\textbf{t}$ is $d(v)-\ell\leq6$, this implies $\a_i(\Delta)\leq\a_i(\widetilde{\Delta})+6$.

Conversely, the $\ell$ consecutive external $\pazocal{A}$-bands with ends $\textbf{e}_1^{\pm1},\dots,\textbf{e}_\ell^{\pm1}$ of $\Delta$ are completely removed in passing to $\widetilde{\Delta}$.  So, $\a_e(\widetilde{\Delta})\leq\a_e(\Delta)-\ell+d(v)-\ell\leq\a_e(\Delta)-d(v)+12$.

Hence, $\a_i(\Delta)\leq\frac{7}{C}\a_e(\widetilde{\Delta})+6\leq\frac{7}{C}\a_e(\Delta)-\frac{7}{C}d(v)+\frac{84}{C}+6$.

So, (1) holds if $7d(v)\geq6C+84$. But by condition (L1) and Lemma \ref{semi-computation deltas}, $d(v)\geq C$ and so the statement follows by the parameter choice $C\geq84$.

Now, let $w\in\Omega$ such that $\lab(\partial\pi)\equiv w^{\pm1}$.  If $|w|_b=0$, then no internal or external $b$-band has an end on $\pi$, so that $\b_i(\Delta)=\b_i(\widetilde{\Delta})$ and $\b_e(\Delta)=\b_e(\widetilde{\Delta})$.  

Otherwise, there exists a word $w'$ freely conjugate to $w$ such that $w'\in\pazocal{E}(\Lambda^\pazocal{A})$.  Letting $t$ be the length of the semi-computation $\pazocal{S}(w')$ which $\Lambda^\pazocal{A}$-accepts $w'$, \Cref{M Lambda semi-computations} implies both that $|\textbf{t}|_b\leq12D_\pazocal{A}(t-1)$ and that $|\textbf{s}|_b\geq\lfloor\frac{\ell-1}{2}\rfloor\cdot \frac{1}{2}D_\pazocal{A}(t-1)\geq\frac{\ell-2}{4}D_\pazocal{A}(t-1)\geq\frac{C-8}{4}D_\pazocal{A}(t-1)$.  

So, $\b_i(\Delta)\leq\b_i(\widetilde{\Delta})+12D_\pazocal{A}(t-1)$.

Further, as with the $\ell$ consecutive external $\pazocal{A}$-bands with ends $\pi$, any maximal positive $b$-band of $\Delta_0$ with one end on $\textbf{s}^{-1}$ is an external $b$-band which is removed in passing to $\widetilde{\Delta}$, and thus 
$$\b_e(\widetilde{\Delta})\leq\b_e(\Delta)-\frac{C-8}{4}D_\pazocal{A}(t-1)+12D_\pazocal{A}(t-1)$$
Hence, $\b_i(\Delta)\leq\frac{49}{C}\b_e(\widetilde{\Delta})+12D_\pazocal{A}(t-1)\leq\frac{49}{C}\b_e(\Delta)+D_\pazocal{A}(t-1)\left(12+\frac{49}{C}(12-\frac{C-8}{4})\right)$.

As above, (2) then holds if $\frac{49}{C}(\frac{C-8}{4}-12)\geq12$.  

But this is equivalent to the parameter choice $C\geq2744$, so that the statement follows.



\end{proof}

\begin{lemma} \label{diskless weight}

If $\Delta$ is an $M$-minimal diagram, then $\text{wt}(\Delta)\leq c_0\|\partial\Delta\|^4+g_\pazocal{L}(c_0\|\partial\Delta\|^3)$.

\end{lemma}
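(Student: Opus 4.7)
The plan is to bound separately the contributions to $\text{wt}(\Delta)$ from the unit-weight cells (the $(\theta,q)$- and $(\theta,a)$-cells) and from the $a$-cells. The essential structural input is that $\Delta$ is an $M$-minimal diagram over $M_\Omega(\textbf{M}^\pazocal{L})$, hence contains no disks, and by Lemmas \ref{basic annuli 1}, \ref{basic annuli 2}, \ref{M(M) annuli}, and \ref{M-minimal theta-annuli} contains no $\theta$-, $q$-, or $a$-annuli either. Consequently every maximal $\theta$-band and every maximal $q$-band has both ends on $\partial\Delta$, which immediately gives bounds of at most $\frac{1}{2}\|\partial\Delta\|$ on each of their counts. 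Since the length of a $q$-band equals the number of $\theta$-bands crossing it, $\Delta$ contains at most $\frac{1}{4}\|\partial\Delta\|^2$ $(\theta,q)$-cells.

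For the $(\theta,a)$-cells, I would estimate the total number of maximal $\pazocal{A}$-, $b$- and ordinary $a$-bands. Each $a$-edge on $\partial\Delta$ or on a $(\theta,q)$-cell is an end of exactly one such maximal band, and by the explicit form of the rules of $\textbf{M}^\pazocal{L}$ (using Lemma \ref{simplify rules} together with the construction of $\textbf{M}_1^\pazocal{A}$) the number of $a$-edges on a single $(\theta,q)$-cell is bounded by $D_\pazocal{A}+2$, hence by $c_0$ via the parameter choice $c_0\gg D_\pazocal{A}$. Writing $\a_e,\a_i,\a_{nc}$ for the numbers of external, internal, and no-$a$-cell-contact maximal $\pazocal{A}$-bands respectively, one has $\a_e + 2\a_{nc} \leq |\partial\Delta|_\pazocal{A} + (\text{$\pazocal{A}$-edges on $(\theta,q)$-cells})$, and Lemma \ref{internal vs external bands} controls $\a_i$ by $\a_e$. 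The same estimates apply to $b$-bands, while ordinary $a$-bands never touch an $a$-cell. Together these give a bound of the form $c_0\|\partial\Delta\|^2$ on the total number of maximal $a$-bands of all types, and since each $(\theta,a)$-cell lies on one such band of length at most $\frac{1}{2}\|\partial\Delta\|$, we obtain at most $c_0\|\partial\Delta\|^3$ $(\theta,a)$-cells. Summed with the $(\theta,q)$-cell count, the unit-weight contribution is at most $c_0\|\partial\Delta\|^4$.

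For the $a$-cell contribution, the boundary of every $a$-cell $\pi$ consists only of $\pazocal{A}$- and $b$-edges, since the relators in $\Omega$ lie in $F(\pazocal{A}\sqcup\pazocal{A}_1\sqcup\pazocal{B})$. Each such edge is an end of a unique maximal $\pazocal{A}$- or $b$-band, so $\sum_\pi \|\partial\pi\| = 2(\a_i + \b_i) + (\a_e + \b_e)$, which by Lemma \ref{internal vs external bands} and the preceding estimates is bounded by $c_0\|\partial\Delta\|^2 \leq c_0\|\partial\Delta\|^3$. The function $g_\pazocal{L}$ is super-additive (as recorded explicitly in Section \ref{sec-weights}), so
\[
\sum_\pi g_\pazocal{L}(\|\partial\pi\|) \leq g_\pazocal{L}\!\left(\sum_\pi \|\partial\pi\|\right) \leq g_\pazocal{L}(c_0\|\partial\Delta\|^3).
\]
Adding this to the unit-weight contribution yields the claim. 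The hardest part will be the careful bookkeeping of band ends in paragraph two: one needs to correctly distribute band endings among $\partial\Delta$, $(\theta,q)$-cells, and $a$-cells, and then invoke Lemma \ref{internal vs external bands} to prevent internal bands from dominating. This lemma is the crucial ingredient that keeps the total perimeter of $a$-cells polynomially controlled before $g_\pazocal{L}$ is applied.
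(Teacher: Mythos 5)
Your overall strategy is the paper's: bound the unit-weight cells by counting maximal bands (using the absence of annuli in an $M$-minimal diagram and Lemma \ref{internal vs external bands} to control internal bands), then bound $\sum_\pi\|\partial\pi\|$ over $a$-cells and apply the super-additivity of $g_\pazocal{L}$. However, there is a genuine gap in your bookkeeping for $b$-bands. You assert that "the same estimates apply to $b$-bands," i.e.\ that every non-internal maximal $b$-band has an end on $\partial\Delta$ or on a $(\theta,q)$-cell, and conclude that the total number of maximal $a$-bands of all types is $O(\|\partial\Delta\|^2)$. But a maximal $b$-band may also have an end on a $(\theta,\pazocal{A})$-cell (this is exactly how the $b$-letters $v(y,a)$ enter the $(\theta,\pazocal{A})$-relations of $\textbf{M}_1^\pazocal{A}$), and the number of $(\theta,\pazocal{A})$-cells is not controlled by the perimeter alone: it is only bounded after you have bounded the $\pazocal{A}$-bands, and the bound is cubic ($\leq\frac{1}{2}\|\partial\Delta\|^3$), not quadratic. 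Consequently your claimed quadratic bound on the number of $b$-bands, your cubic bound on the total number of $(\theta,a)$-cells, and your quadratic bound on $\sum_\pi\|\partial\pi\|$ do not follow from the argument you give.

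The repair is the layered estimate: first bound the maximal positive $\pazocal{A}$-bands by $\|\partial\Delta\|^2$ (ends on $\partial\Delta$ or on $(\theta,q)$-cells, each of which carries at most one $\pazocal{A}$-edge, plus Lemma \ref{internal vs external bands}(1)); deduce at most $\frac{1}{2}\|\partial\Delta\|^3$ $(\theta,\pazocal{A})$-cells; then bound the non-internal $b$-bands by ends on $\partial\Delta$, $(\theta,q)$-cells, and $(\theta,\pazocal{A})$-cells, each carrying at most $D_\pazocal{A}$ $b$-edges, giving a cubic bound on all $b$-bands via Lemma \ref{internal vs external bands}(2), hence a quartic (not cubic) bound on $(\theta,b)$-cells and $\sum_\pi\|\partial\pi\|\leq 2(\a+\b)\leq 3D_\pazocal{A}\|\partial\Delta\|^3$. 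With the parameter choice $c_0\gg D_\pazocal{A}$ these corrected counts still give $\text{wt}(\Delta)\leq c_0\|\partial\Delta\|^4+g_\pazocal{L}(c_0\|\partial\Delta\|^3)$, so your final bound survives, but the intermediate claims as written are unjustified and the $b$-band/$(\theta,\pazocal{A})$-cell interaction must be handled explicitly.
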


\begin{proof}

First, note that if every cell of $\Delta$ is an $a$-cell, then Lemmas \ref{M-minimal is smooth} and \ref{Gamma_a special cell} imply $\partial\Delta$ contains at least $C-6$ $\pazocal{A}$-edges.  So, a parameter choice for $C$ also implies $\|\partial\Delta\|\geq2$.  Otherwise, $\Delta$ must contain a maximal $\theta$-band, in which case \Cref{M-minimal theta-annuli} implies $\|\partial\Delta\|\geq2$.

%

Now, Lemmas \ref{basic annuli 1}(2) and \Cref{M-minimal theta-annuli} implies there are at most $\frac{1}{2}\|\partial\Delta\|$ maximal positive $q$-bands and at most $\frac{1}{2}\|\partial\Delta\|$ maximal $q$-bands in $\Delta$.  Hence, it follows from \Cref{basic annuli 1}(1) that $\Delta$ contains at most $\frac{1}{4}\|\partial\Delta\|^2$ $(\theta,q)$-cells.

Let $\a$ be the number of maximal positive $\pazocal{A}$-bands in $\Delta$ and set $\a_i=\a_i(\Delta)$ and $\a_e=\a_e(\Delta)$.  Note that a maximal positive $\pazocal{A}$-band need not be internal or external, as such $\pazocal{A}$-bands must have at least one end on an $a$-cell.  Hence, $\a_i+\a_e\leq\a$.

Similarly, letting $\b$ be the number of maximal positive $b$-bands in $\Delta$ and setting $\b_i=\b_i(\Delta)$ and $\b_e=\b_e(\Delta)$, we have $\b_i+\b_e\leq\b$.

By the makeup of the relations, the boundary of any $(\theta,q)$-cell can have at most one $\pazocal{A}$-edge.  Hence, since \Cref{basic annuli 2}(2) implies that any maximal positive $\pazocal{A}$-band that is not internal must have at least one end on $\partial\Delta$ or on a $(\theta,q)$-cell, $\a-\a_i\leq\|\partial\Delta\|+\frac{1}{4}\|\partial\Delta\|^2\leq\frac{3}{4}\|\partial\Delta\|^2$.

Further, Lemma \ref{internal vs external bands}(1) implies that $\a_i\leq\frac{7}{C}\a_e\leq\frac{7}{C}(\a-\a_i)$, so that the parameter choice $C>21$ implies $\a_i\leq\frac{1}{3}(\a-\a_i)$.  So, $\a=\a_i+(\a-\a_i)\leq\frac{4}{3}(\a-\a_i)\leq\|\partial\Delta\|^2$.

Hence, \Cref{basic annuli 2}(1) implies that the number of $(\theta,\pazocal{A})$-cells in $\Delta$ is at most $\frac{1}{2}\|\partial\Delta\|^3$.

Next, note that the boundary of any $(\theta,q)$- or $(\theta,\pazocal{A})$-cell can have at most $D_\pazocal{A}$ $b$-edges.  So, as above, since any maximal positive $b$-band that is not internal must have at least one end on $\partial\Delta$, on a $(\theta,q)$-cell, or on a $(\theta,\pazocal{A})$-cell, $\b-\b_i\leq\|\partial\Delta\|+D_\pazocal{A}(\frac{1}{4}\|\partial\Delta\|^2+\frac{1}{2}\|\partial\Delta\|^3)\leq\|\partial\Delta\|+\frac{3}{4}D_\pazocal{A}\|\partial\Delta\|^3$.  

So, recalling that the value of $D_\pazocal{A}$ is dependent on $C$, a parameter choice for $C$ then yields $\b-\b_i\leq\frac{7}{8}D_\pazocal{A}\|\partial\Delta\|^3$.  Lemma \ref{internal vs external bands}(2) then implies $\b_i\leq\frac{49}{C}\b_e\leq\frac{49}{C}(\b-\b_i)$, so that the parameter choice $C\geq343$ yields $\b_i\leq\frac{1}{7}(\b-\b_i)$.

Hence, as above, $\b=\b_i+(\b-\b_i)\leq\frac{8}{7}(\b-\b_i)\leq D_\pazocal{A}\|\partial\Delta\|^3$, and so the number of $(\theta,b)$-cells in $\Delta$ is at most $\frac{1}{2}D_\pazocal{A}\|\partial\Delta\|^4$.

Finally, note that the boundary of any $(\theta,q)$-cell contains at most one ordinary $a$-edge.  So, since any maximal ordinary $a$-band has two ends which are on $\partial\Delta$ or on a $(\theta,q)$-cell, the number of maximal positive ordinary $a$-bands in $\Delta$ is at most $\frac{1}{2}(\|\partial\Delta\|+\frac{1}{2}\|\partial\Delta\|^2)\leq\frac{1}{2}\|\partial\Delta\|^2$.  Hence, the number of ordinary $(\theta,a)$-cells in $\Delta$ is at most $\frac{1}{4}\|\partial\Delta\|^3$.

Thus, letting $\pi_1,\dots,\pi_n$ be the $a$-cells of $\Delta$, a parameter choice for $C$ implies:
\begin{align*}
\text{wt}(\Delta)&\leq\sum_{i=1}^n\text{wt}(\pi_i)+\frac{1}{4}\|\partial\Delta\|^2+\frac{1}{2}\|\partial\Delta\|^3+\frac{1}{2}D_\pazocal{A}\|\partial\Delta\|^4+\frac{1}{4}\|\partial\Delta\|^3\leq\sum_{i=1}^n g_\pazocal{L}(\|\partial\pi_i\|)+D_\pazocal{A}\|\partial\Delta\|^4
\end{align*}
Since $g_\pazocal{L}$ is super-additive, $\sum\limits_{i=1}^n g_\pazocal{L}(\|\partial\pi_i\|)\leq g_\pazocal{L}\left(\sum\limits_{i=1}^n\|\partial\pi_i\|\right)$.  But since all of the edges on the boundary of an $a$-cell are $\pazocal{A}$- or $b$-edges, 
$$\sum\limits_{i=1}^n\|\partial\pi_i\|\leq2(\a+\b)\leq2\|\partial\Delta\|^2+2D_\pazocal{A}\|\partial\Delta\|^3\leq3D_\pazocal{A}\|\partial\Delta\|^3$$
Thus, the statement follows from the parameter choice $c_0>>C$.

\end{proof}

\medskip


\section{Diagrams with disks}

\subsection{Minimal diagrams} \

Analogous to the approach to diagrams over $M_\Omega(\textbf{M}^\pazocal{L})$ in \Cref{sec-no-disks}, the objective of this section is to study diagrams over the disk presentation of $G_\Omega(\textbf{M}^\pazocal{L})$ for the purpose of finding an upper bound of the weight of a reduced circular diagram in terms of its perimeter.  Again, this goal is not achieved for any possible reduced circular diagram, but rather for a `generic' class.

Recall that the standard base of $\textbf{M}^\pazocal{L}$ is $\left(\{t(1)\}B_4^\pazocal{L}(1)\right)\left(\{t(2)\}B_4^\pazocal{L}(2)\right)\dots\left(\{t(L)\}B_4^\pazocal{L}(L)\right)$ where:
$$B_4^\pazocal{L}(i)=Q_0^\pazocal{L}(i)Q_1^\pazocal{L}(i)\dots Q_N^\pazocal{L}(i)(R_N^\pazocal{L}(i))^{-1}\dots(R_1^\pazocal{L}(i))^{-1}(R_0^\pazocal{L}(i))^{-1}$$
for each $i=1,\dots,L$.  Letting $\pazocal{X}$ be the generators of the groups associated to $\textbf{M}^\pazocal{L}$ (see \Cref{sec-the-groups}), a $q$-letter of a word over $\pazocal{X}\cup\pazocal{X}^{-1}$ of the form $t(i)^{\pm1}$ for $2\leq i\leq L$ is called a \textit{$t$-letter}. Accordingly, a $q$-edge labelled by a $t$-letter is called a \textit{$t$-edge}, a $(\theta,q)$-relation corresponding to a $t$-letter is called a \textit{$(\theta,t)$-relation}, and a $q$-band corresponding to a part $\{t(i)\}$ for $i\geq2$ is called a \textit{$t$-band}. 

Note that for each positive rule $\theta$ and each $t$-letter, the corresponding $(\theta,t)$-relation is simply $\theta_jt(i)=t(i)\theta_{j+1}$.  Hence, each side of a $t$-band is labelled by a copy of the band's history.

Now, as in \cite{W}, we introduce a `grading' (see Section 13 of \cite{O} for the general definition of graded presentations) on the disk presentation of $G_\Omega(\textbf{M}^\pazocal{L})$ as follows:

For any diagram $\Delta$ over the disk presentation of $G_\Omega(\textbf{M}^\pazocal{L})$, define the values:
\begin{itemize}

\item $\sigma_1(\Delta)$ is the number of disks in $\Delta$

\item $\sigma_2(\Delta)$ is the number of $(\theta,t)$-cells in $\Delta$


\item $\sigma_3(\Delta)$ is the number of $a$-cells in $\Delta$

\item $\sigma_4(\Delta)$ is the number of $(\theta,\pazocal{A})$-cells in $\Delta$


\end{itemize}

The \textit{signature} of $\Delta$ is taken to be the quadruple $\tau(\Delta)=(\sigma_1(\Delta),\dots,\sigma_4(\Delta))$.  For $j\in\{1,2,3\}$, we also define the \textit{$j$-signature} of $\Delta$ to be the $j$-tuple $\tau_j(\Delta)=(\sigma_1(\Delta),\dots,\sigma_j(\Delta))$.  Signatures and $j$-signatures of diagrams over the disk presentation of $G_\Omega(\textbf{M}^\pazocal{L})$ are ordered lexicographically.  
%
%
%
%


A circular diagram $\Delta$ over the disk presentation of $G_\Omega(\textbf{M}^\pazocal{L})$ is \textit{minimal} if for any circular diagram $\Gamma$ over this presentation satisfying $\lab(\partial\Gamma)\equiv\lab(\partial\Delta)$, then $\tau(\Delta)\leq\tau(\Gamma)$.  Analogously, a circular diagram $\Delta$ over the disk presentation of $G_\Omega(\textbf{M}^\pazocal{L})$ is \textit{$j$-minimal} if it has the smallest possible $j$-signature amongst all circular diagrams with the same contour label.  Observe that minimal diagrams are necessarily $j$-minimal for any $j$, while $j$-minimal diagrams are necessarily $(j-1)$-minimal for appropriate $j$.

Note that for a minimal diagram $\Delta$ and a circular diagram $\Gamma$ over the disk presentation of $G_\Omega(\textbf{M}^\pazocal{L})$ satisfying $\lab(\partial\Delta)\equiv\lab(\partial\Gamma)$, it is not necessarily the case that $\text{wt}(\Delta)\leq\text{wt}(\Gamma)$.  In particular, in the sequel we define operations that add many cells of `low rank' in order to remove one or two cells of `high rank'; such an operation reduces the type of the diagram but can (and usually does) increase the weight.  However, despite this, the definition of minimal diagram provides a convenient setting for studying the structure of the group $G_\Omega(\textbf{M}^\pazocal{L})$.

Further, observe that the removal of cancellable cells (see \Cref{cancellable}) in a diagram over the disk presentation of $G_\Omega(\textbf{M}^\pazocal{L})$ can only decrease the ($j$)-signature of the diagram.  Hence, for any ($j$-)minimal diagram, there exists a reduced ($j$-)minimal diagram with the same contour label obtained by simply removing any pairs of cancellable cells.

Suppose $W$ is a word over $\pazocal{X}\cup\pazocal{X}^{-1}$ which represents the trivial element in $G_\Omega(\textbf{M}^\pazocal{L})$.  It follows from van Kampen's Lemma (see \Cref{sec-Diagrams}) that there exists a circular diagram $\Gamma$ over the disk presentation of $G_\Omega(\textbf{M}^\pazocal{L})$ such that $\lab(\partial\Gamma)\equiv W$.  As the lexicographic ordering on tuples of natural numbers is a well-ordering, without loss of generality $\tau(\Gamma)$ (or $\tau_j(\Gamma)$) is minimal amongst all such diagrams.  Hence, the next statement follows immediately, establishing the sense in which minimal diagrams are `generic':

\begin{lemma} \label{generic minimal diagram}

Let $W$ a word over $\pazocal{X}\cup\pazocal{X}^{-1}$ which represents the trivial element in $G_\Omega(\textbf{M}^\pazocal{L})$.  Then there exists a reduced ($j$-)minimal diagram $\Delta$ satisfying $\lab(\partial\Delta)\equiv W$.

\end{lemma}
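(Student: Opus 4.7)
The plan is to assemble the proof from the three ingredients already laid out in the paragraphs immediately preceding the statement: van Kampen's Lemma, the well-ordering of lexicographic order on $\mathbb{N}^k$, and the fact that removing pairs of cancellable cells cannot increase the signature. So the argument is essentially a finite-choice-plus-reduction argument, and the main difficulty is just making sure reduction preserves minimality.

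First, I would invoke van Kampen's Lemma (see \Cref{sec-Diagrams}) to produce at least one circular diagram $\Gamma$ over the disk presentation of $G_\Omega(\textbf{M}^\pazocal{L})$ with $\lab(\partial\Gamma)\equiv W$; this uses the hypothesis that $W$ represents the trivial element. The set
\begin{equation*}
\mathscr{D}(W)=\{\Gamma : \Gamma \text{ is a circular diagram over the disk presentation with } \lab(\partial\Gamma)\equiv W\}
\end{equation*}
is therefore non-empty. The signatures (or $j$-signatures) of diagrams in $\mathscr{D}(W)$ form a non-empty subset of $\mathbb{N}^4$ (or $\mathbb{N}^j$), which is well-ordered by the lexicographic order fixed just above the statement. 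Hence I can pick $\Delta_0\in\mathscr{D}(W)$ whose signature (respectively $j$-signature) is minimal in this set; by definition $\Delta_0$ is a minimal (respectively $j$-minimal) diagram with contour label $W$.

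The only remaining point is to show that $\Delta_0$ can be taken to be reduced. For this I would recall the observation from the paragraph preceding the statement: removal of a pair of cancellable cells from a diagram over the disk presentation yields a diagram with the same contour label, the same underlying classification of cells except for the two removed cells, and therefore a signature that is coordinate-wise no larger, with strict decrease in at least one coordinate (and so strictly smaller in lex order). Starting from $\Delta_0$ and repeatedly removing cancellable pairs produces a finite sequence (the area strictly decreases at each step) terminating in a reduced diagram $\Delta$ with $\lab(\partial\Delta)\equiv W$ and $\tau(\Delta)\le\tau(\Delta_0)$ (respectively $\tau_j(\Delta)\le\tau_j(\Delta_0)$). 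By minimality of $\Delta_0$, these inequalities are equalities, so $\Delta$ is itself a (respectively $j$-)minimal diagram, and it is reduced by construction. This completes the proof.

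There is no real obstacle here; the statement is being recorded as a lemma chiefly so that later arguments can appeal to it without re-deriving the well-ordering/reduction argument each time. The only care needed is the verification that cell-cancellation is monotone with respect to the lexicographic order on signatures, but this is immediate from the definitions of $\sigma_1,\dots,\sigma_4$, since each $\sigma_i$ simply counts cells of a prescribed type.
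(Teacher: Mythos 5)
Your proof is correct and follows essentially the same route as the paper: van Kampen's Lemma gives a nonempty set of diagrams with contour label $W$, the lexicographic well-ordering yields one of minimal ($j$-)signature, and removing cancellable pairs, which cannot increase the signature, produces a reduced diagram that is still ($j$-)minimal. One small inaccuracy: a cancellable pair need not strictly decrease the signature (the two cells may be of a type not counted by $\sigma_1,\dots,\sigma_4$), but this is harmless since, as you note, termination of the reduction process follows from the strict decrease of area rather than of the signature.
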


\medskip


\subsection{Removal surgeries} \

In this section, we define two types of surgery on reduced diagrams over the disk presentation of $G_\Omega(\textbf{M}^\pazocal{L})$ which reduce the type of the diagram.  These operations help describe the makeup of a minimal diagram, allowing for the estimates that follow.

\subsubsection{Removing $a$-cells} \

Our first operation uses the definition of $\Lambda^\pazocal{A}$ to study the $\pazocal{A}$-bands of reduced diagrams over $M_\Omega(\textbf{M}^\pazocal{L})$ which have ends on $a$-cells, demonstrating the condition (MM2) in minimal diagrams.

\begin{lemma} \label{cancellable a-cells}

Suppose the circular diagram $\Delta$ over the disk presentation of $G_\Omega(\textbf{M}^\pazocal{L})$ contains $a$-cells $\pi_1$ and $\pi_2$ such that:

\begin{itemize}

\item $\lab(\partial\pi_1),\lab(\partial\pi_2)\in\Lambda^\pazocal{A}$

\item There exists a simple path $\textbf{t}$ in $\Delta$ between vertices of $\partial\pi_1$ and $\partial\pi_2$ such that $\lab(\textbf{t})$ is freely trivial

\end{itemize} 
Then $\Delta$ is not $3$-minimal.

\end{lemma}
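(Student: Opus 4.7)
The plan is to invoke condition (L4) on $\Lambda^\pazocal{A}$ to perform a diagrammatic surgery that strictly decreases the $3$-signature of $\Delta$. Let $O_i$ be the endpoint of $\textbf{t}$ on $\partial\pi_i$, and let $u_i$ denote $\lab(\partial\pi_i)$ read starting at $O_i$; this is a cyclic permutation of a $\pm 1$ power of the original label, so by conditions (L2) and (L3) each $u_i$ lies in $\Lambda^\pazocal{A}$. After a standard $0$-refinement that ``doubles'' $\textbf{t}$ into two parallel copies separated by a thin strip of $0$-cells, the loop traversing $\partial\pi_1$ from $O_1$, then one copy of $\textbf{t}$, then $\partial\pi_2$ from $O_2$, then the inverse of the other copy, bounds a subdiagram $\Phi_0$ of $\Delta$ containing both $\pi_1$ and $\pi_2$, whose contour label is freely equal to $u_1 u_2$ (the contributions of $\textbf{t}$ and $\textbf{t}^{-1}$ cancel because $\lab(\textbf{t})$ is freely trivial).

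I then split into the two cases permitted by (L4). In Case~1, $u_1 \equiv u_2^{-1}$, so $u_1 u_2$ is freely trivial; I excise $\Phi_0$ from $\Delta$ and glue in its place a diagram with no positive cells (just $0$-cells and $0$-edges) with the same freely trivial contour label. In Case~2, $u_1 u_2$ is freely conjugate in $F(\pazocal{A})$ to some $u_3 \in \Lambda^\pazocal{A}$; since the empty semi-computation witnesses $\Lambda^\pazocal{A} \subseteq \pazocal{E}(\Lambda^\pazocal{A})$ and $u_3$ is cyclically reduced, we have $u_3 \in \Omega$, so $u_3$ is a genuine $a$-relator. Writing $u_1 u_2 = g u_3 g^{-1}$ in $F(\pazocal{A})$, I excise $\Phi_0$ and glue in its place the standard subdiagram consisting of a single $a$-cell with boundary label $u_3$, attached via a tail realising the conjugator $g$.

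In both cases, call the resulting diagram $\Delta'$; it satisfies $\lab(\partial\Delta') \equiv \lab(\partial\Delta)$. The surgery introduces neither disks nor $(\theta,t)$-cells, and introduces at most one new $a$-cell (in Case~2), while removing $\pi_1$, $\pi_2$, and every other cell of $\Phi_0$. Consequently $\sigma_1(\Delta') \leq \sigma_1(\Delta)$, $\sigma_2(\Delta') \leq \sigma_2(\Delta)$, and $\sigma_3(\Delta') \leq \sigma_3(\Delta) - 1 < \sigma_3(\Delta)$, which gives $\tau_3(\Delta') < \tau_3(\Delta)$ lexicographically and contradicts $3$-minimality of $\Delta$. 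The main subtlety I anticipate is the $0$-refinement step: the simple path $\textbf{t}$ may a priori share interior vertices with $\partial\pi_1$ or $\partial\pi_2$, or traverse boundary edges of other cells, so one must carefully push $\textbf{t}$ into general position before doubling it to ensure that the described loop genuinely bounds a planar subdiagram whose contour label is the desired $u_1 u_2$.
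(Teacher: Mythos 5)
Your proposal is correct and follows essentially the same route as the paper: $0$-refine so that $\pi_1$, $\pi_2$ and the (doubled) path $\textbf{t}$ bound a subdiagram with $3$-signature $(0,0,2)$ and contour label freely equal to $w_1w_2$, then use conditions (L2)--(L4) to replace it by a diagram with at most one $a$-cell (empty in the case $w_1\equiv w_2^{-1}$, a single $a$-cell with a conjugating tail otherwise), which strictly lowers $\tau_3$ and contradicts $3$-minimality. Your explicit handling of the conjugator tail and the general-position/$0$-refinement caveat are just slightly more detailed versions of steps the paper leaves implicit.
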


\begin{proof}

Let $O_1$ and $O_2$ be the vertices of $\partial\pi_1$ and $\partial\pi_2$ such that the initial and terminal points of $\textbf{t}$ are $O_1$ and $O_2$ (see \Cref{cancellable}).  Then let $w_i\in F(\pazocal{A})$ be $\lab(\partial\pi_i)$ read starting at $O_i$.

The process of $0$-refinement then produces a diagram $\Delta_0$ satisfying $\lab(\partial\Delta_0)\equiv\lab(\partial\Delta)$ and $\tau(\Delta_0)=\tau(\Delta)$ such that there exists a subdiagram $\Gamma$ of $\Delta_0$ with $\tau_3(\Gamma)=(0,0,2)$ and $$\lab(\partial\Gamma)\equiv\lab(\partial\pi_1)\lab(\textbf{t})\lab(\partial\pi_2)\lab(\textbf{t})^{-1}=_{F(\pazocal{X})}w_1w_2$$
By condition (L3), $w_1,w_2\in\Lambda^\pazocal{A}$.  So, by condition (L4), $w_1w_2$ is either freely trivial or freely equal to an element of $\Lambda^\pazocal{A}$.  

Hence, there exists a (reduced) circular diagram $\Gamma'$ over the disk presentation of $G_\Omega(\textbf{M}^\pazocal{L})$ with $\lab(\partial\Gamma')\equiv\lab(\partial\Gamma)$ such that $\tau_3(\Gamma')\leq(0,0,1)<\tau_3(\Gamma)$.

But then excising $\Gamma$ from $\Delta_0$ and replacing it with $\Gamma'$ produces a circular diagram $\Delta'$ over the disk presentation of $G_\Omega(\textbf{M}^\pazocal{L})$ with $\lab(\partial\Delta')\equiv\lab(\partial\Delta)$ such that $\tau_3(\Delta')<\tau_3(\Delta)$.

\end{proof}

\begin{lemma} \label{pure a-cells}

For any $w\in\Omega$, there exists a reduced circular diagram $\Gamma_w$ over $M_\Omega(\textbf{M}^\pazocal{L})$ satisfying:

\begin{itemize}

\item $\lab(\partial\Gamma_w)\equiv w$

\item $\tau_3(\Gamma_w)=(0,0,1)$

\item Letting $\pi$ be the unique $a$-cell of $\Gamma_w$, $\lab(\partial\pi)\in\Lambda^\pazocal{A}$

\end{itemize}

\end{lemma}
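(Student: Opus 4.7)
The plan is to construct $\Gamma_w$ by gluing a single $a$-cell into the annulus obtained from a semi-trapezium realizing a $\Lambda^\pazocal{A}$-accepting semi-computation of a free conjugate of $w$, and then folding to match the required boundary label.

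If $w \in \Lambda^\pazocal{A}$, I take $\Gamma_w$ to consist of a single $a$-cell with contour labeled $w$; all conditions are then immediate.  Otherwise, the definition of $\Omega$ together with (L3) forces $w$ to be freely conjugate to some $w' \in \pazocal{E}_1(\Lambda_1^\pazocal{A})$ (if the free conjugate were in $\Lambda^\pazocal{A}$, then since both $w$ and that conjugate would be cyclically reduced, $w$ would be one of its cyclic permutations and hence lie in $\Lambda^\pazocal{A}$ by (L3)).  By \Cref{M Lambda semi-computations}, there is a unique reduced $\Lambda^\pazocal{A}$-accepting semi-computation $\pazocal{S}(w') : w' \to \dots \to w_t \in \Lambda^\pazocal{A}$ of $\textbf{M}^\pazocal{L}$ in the special input sector (the $Q_0^\pazocal{L}(1) Q_1^\pazocal{L}(1)$-sector of the generalized $S$-machine $\textbf{M}^\pazocal{L}$).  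Applying \Cref{semi-computations are semi-trapezia} yields a semi-trapezium $\Delta$ over $M(\textbf{M}^\pazocal{L})$ whose bottom is labeled $w'$, whose top is labeled $w_t$, whose two sides are labeled identically by the history, and which consists entirely of $(\theta, a)$-cells of the special input sector, so contains no disks, no $(\theta, t)$-cells, and no $a$-cells.

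Next, I identify the two sides of $\Delta$ edge-by-edge (valid because they carry identical labels), converting $\Delta$ into an annular diagram whose outer contour is labeled $w'$ and inner contour is labeled $w_t$ (up to orientation).  Since $\Lambda^\pazocal{A} \subseteq \Omega$ and $\Omega$ is closed under inversion by \Cref{Omega inverses}, I paste a single $a$-cell $\pi$ with $\lab(\partial \pi) \in \Lambda^\pazocal{A}$ matching the inner contour into the inner hole.  After removing any cancellable $(\theta, a)$-pairs produced by the identification (which cannot involve $\pi$, the unique $a$-cell), the result is a reduced circular diagram $\Gamma_0$ over $M_\Omega(\textbf{M}^\pazocal{L})$ with $\lab(\partial \Gamma_0) \equiv w'$ and $\tau_3(\Gamma_0) = (0, 0, 1)$.

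Finally, I pass from $\Gamma_0$ (boundary $w'$) to $\Gamma_w$ (boundary $w$) via $0$-refinement.  Since $w' \in \pazocal{E}_1(\Lambda_1^\pazocal{A})$, the standard factorization $w' \equiv u_0 \mathscr{C}(w') u_k$ with $u_0, u_k \in F(\pazocal{B})$ applies; writing $u_0 \equiv p^{-1} u_0'$ and $u_k \equiv u_k' p$ with $p$ of maximal length, the cyclic reduction of $w'$ is $u_0' \mathscr{C}(w') u_k'$, and $w$ (being cyclically reduced and freely conjugate to $w'$) agrees with this word as a cyclic word.  Choosing a basepoint on $\partial \Gamma_0$ so that the cyclic reading of the boundary begins just after the $p^{-1}$-prefix, the linear boundary word takes the form $u_0' \mathscr{C}(w') u_k' \cdot p p^{-1}$; a routine $0$-refinement folding the mutually-inverse $p$- and $p^{-1}$-suffixes together via $0$-cells of type (2) produces $\Gamma_w$ with $\lab(\partial \Gamma_w) \equiv w$ (possibly after a further cyclic choice of basepoint on the shortened boundary) and the same positive cells as $\Gamma_0$, hence $\tau_3(\Gamma_w) = (0, 0, 1)$.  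The main technical hurdle lies in this last folding step: realizing the free-conjugation between $w$ and $w'$ entirely by $0$-refinements that preserve the van Kampen disk structure and leave the positive-cell signature intact.
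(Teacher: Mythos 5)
Your proposal is correct and follows essentially the same route as the paper: obtain the $\Lambda^\pazocal{A}$-accepting semi-computation of a free conjugate $w'$ via \Cref{M Lambda semi-computations}, realize it as a semi-trapezium via \Cref{semi-computations are semi-trapezia}, glue its identically labelled sides into an annulus, paste a single $a$-cell labelled by an element of $\Lambda^\pazocal{A}$ into the hole, and pass from boundary label $w'$ to $w$ by $0$-refinement and cancellation. Your separate handling of the degenerate case $w\in\Lambda^\pazocal{A}$ and the explicit description of the final folding are just added detail on the paper's argument, not a different approach.
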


\begin{proof}

By the definition of $\Omega$, there exists a word $w'\in\pazocal{E}(\Lambda^\pazocal{A})$ which is freely conjugate to $w$.  

Then, \Cref{M Lambda semi-computations} produces a (unique) semi-computation of $\textbf{M}^\pazocal{L}$ in the `special' input sector $\pazocal{S}(w'):w'\equiv w_0\to\dots\to w_t$ which $\Lambda^\pazocal{A}$-accepts $w'$.

By \Cref{semi-computations are semi-trapezia}, there then exists a semi-trapezium $\Delta_w$ over $M(\textbf{M}^\pazocal{L})$ in the `special' input sector such that $\lab(\textbf{bot}(\Delta_w))\equiv w'$ and $\lab(\textbf{top}(\Delta_w))\equiv w_t$.  

By definition, $\tau_3(\Delta_w)=(0,0,0)$ and the sides of $\Delta_w$ are labelled by identical copies of the history of $\pazocal{S}(w')$.  So, pasting the sides of $\Delta_w$ together produces an annular diagram $\Delta_w'$ over $M(\textbf{M}^\pazocal{L})$ with outer contour label $w'$, inner contour label $w_t^{-1}$, and $3$-signature $\tau_3(\Delta_w')=(0,0,0)$.

As $\pazocal{S}(w')$ is a $\Lambda^\pazocal{A}$-accepting computation, necessarily $w_t\in\Lambda^\pazocal{A}$, and so $w_t^{-1}\in\Lambda^\pazocal{A}$ by \Cref{Omega inverses}.  Hence, letting $\pi$ be an $a$-cell with $\lab(\partial\pi)\equiv w_t^{-1}$, $\pi$ can be pasted into the center of $\Delta_w'$ to produce a circular diagram $\Gamma_w'$ with $\lab(\partial\Gamma_w')\equiv w'$ and $\tau_3(\Gamma_w')=(0,0,1)$.

Thus, since $w'$ is freely conjugate to $w$, applying $0$-refinement (or gluing) and cancellation to $\Gamma_w'$ produces a diagram $\Gamma_w$ satisfying the statement.

%

\end{proof}

%
%
%
%
%
%
%
%
%
%
%
%

\begin{lemma} \label{minimal MM2}

Every $3$-minimal diagram $\Delta$ satisfies condition (MM2).

\end{lemma}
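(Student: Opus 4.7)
The strategy is to prove the contrapositive: if a reduced circular diagram $\Delta$ over the disk presentation of $G_\Omega(\textbf{M}^\pazocal{L})$ fails (MM2), then $\Delta$ is not $3$-minimal. Suppose (MM2) fails in $\Delta$, witnessed by $a$-cells $\pi_1,\pi_2$, three consecutive $\pazocal{A}$-edges $\textbf{e}_1,\textbf{e}_2,\textbf{e}_3$ of $\partial\pi_1$ whose $\pazocal{A}$-bands $\pazocal{U}(\textbf{e}_j)$ end on $\pi_2$ at edges $\textbf{f}_j^{\pm1}$, and with the enclosed subdiagram $\Psi$ containing no $a$-cell. The endgame is to construct a simple path $\textbf{t}$ from a vertex of $\partial\pi_1$ to a vertex of $\partial\pi_2$ with freely trivial label, so that Lemma 10.3 yields the desired contradiction.

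The first step reduces to the case $\lab(\partial\pi_1),\lab(\partial\pi_2)\in\Lambda^\pazocal{A}$. Apply Lemma 10.4 to get $\Delta'$ with $\lab(\partial\Delta')\equiv\lab(\partial\Delta)$, $\tau_3(\Delta')\leq\tau_3(\Delta)$, and every $a$-cell's boundary in $\Lambda^\pazocal{A}$; strict inequality already yields the contradiction, so assume equality. The surgery replaces each $\pi$ by $\Gamma_{\lab(\partial\pi)}$ (Lemma 10.2), consisting of a single $a$-cell with $\Lambda^\pazocal{A}$-boundary surrounded by a semi-trapezium of $(\theta,\pazocal{A})$-cells. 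Since semi-computations preserve the cyclic order of $\pazocal{A}$-letters, the three bands $\pazocal{U}(\textbf{e}_j)$ extend through the inserted semi-trapezia into $\pazocal{A}$-bands between the new $a$-cells, still attached to three consecutive $\pazocal{A}$-edges of the new $\partial\tilde\pi_1$, and the enlarged enclosed subdiagram still contains no $a$-cell. Thus the (MM2)-failure persists in $\Delta'$, and we may assume $\partial\pi_1,\partial\pi_2$ consist entirely of $\pazocal{A}$-edges with labels in $\pazocal{A}$; in particular $\textbf{e}_1,\textbf{e}_2,\textbf{e}_3$ are genuinely consecutive edges of $\partial\pi_1$, meeting at vertices $v,v'$.

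Next I construct $\textbf{t}$. If some $\pazocal{U}(\textbf{e}_j)$ has length zero, then $\textbf{e}_j^{\pm1}$ is shared between $\partial\pi_1$ and $\partial\pi_2$ and the trivial path at a shared vertex suffices. Otherwise each band has positive length, and since both endpoint letters lie in $\pazocal{A}$ rather than $\pazocal{A}_1$, Lemma 5.46 forces every band's history to begin with $\theta(s)_1$ and end with $\theta(s)_1^{-1}$. I focus on the subdiagram $\Psi_{12}\subseteq\Psi$ bounded by the inner sides of $\pazocal{U}(\textbf{e}_1),\pazocal{U}(\textbf{e}_2)$ together with the vertex $v$ on $\partial\pi_1$ and the corresponding vertex $w$ on $\partial\pi_2$ (the $\partial\pi_i$-portions degenerate to single vertices since $\textbf{e}_1,\textbf{e}_2$ and $\textbf{f}_1,\textbf{f}_2$ are consecutive). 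The absence of $a$-cells in $\Psi_{12}$ is leveraged to propagate, cell-by-cell starting from $v$, identifications of the $\theta$-edges emanating from the two boundary bands: the two $\theta(s)_{1,1}$-edges at $v$ coincide, forcing the first $(\theta(s)_1,\pazocal{A})$-cells of $\pazocal{U}(\textbf{e}_1),\pazocal{U}(\textbf{e}_2)$ to share an edge; iterating shows the bands share $\theta$-edges all the way along, hence have identical histories. The two inner sides of $\pazocal{U}(\textbf{e}_1),\pazocal{U}(\textbf{e}_2)$ then collapse onto a common path, which is the desired $\textbf{t}$, and Lemma 10.3 produces $\Delta''$ with $\tau_3(\Delta'')<\tau_3(\Delta)$.

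The main obstacle is the cell-by-cell propagation in the positive-length case. One must show that no $\theta$-band entering $\Psi_{12}$ from $\pazocal{U}(\textbf{e}_1)$ can terminate freely inside $\Psi_{12}$ in a way separating the first cells of the two bounding bands; it must instead reach the opposite boundary via a matching cell of $\pazocal{U}(\textbf{e}_2)$. The delicate point is excluding terminations on $(\theta,q)$-cells, $(\theta,a)$-cells, or disks in $\Psi_{12}$, each of which would force additional $b$- or $\pazocal{A}$-structure onto the $\partial\pi_i$-facing side and ultimately onto $\partial\pi_i$ itself, contradicting $\lab(\partial\pi_i)\in\pazocal{A}^*$. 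Ruling out these alternatives via the $\Omega$-relator structure and the basic-annuli results of Section 7.2 is the technical heart of the argument.
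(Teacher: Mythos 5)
There is a genuine gap, and it sits exactly where you flag the "technical heart." Your endgame requires a path $\textbf{t}$ from $\partial\pi_1$ to $\partial\pi_2$ with \emph{freely trivial} label in order to invoke \Cref{cancellable a-cells}. But even if your cell-by-cell propagation succeeded and the inner sides of $\pazocal{U}(\textbf{e}_1)$ and $\pazocal{U}(\textbf{e}_2)$ collapsed onto a common path, that path is labelled by the (common) history of the two bands, e.g. $\theta(s)_1H'\theta(s)_1^{-1}$, which is not freely trivial unless $H'$ is empty -- and nothing in your argument forces that. Worse, the collapse itself is false in general: when a working rule $\theta_y$ is applied, a word $v(y,a)$ of length $D_\pazocal{A}$ is inserted next to each $\pazocal{A}_1$-letter, so for every sign pattern of a reduced pair except $(+1,-1)$ (and by reducedness of a three-letter word at least one of the two adjacent pairs is not of this type) the region between consecutive $\pazocal{A}$-bands necessarily fills with $(\theta,b)$-cells; by \Cref{free subgroup} these $b$-words do not cancel, so the two bands do not "share $\theta$-edges all the way along," and your propagation from the vertex $v$ breaks down after the first $\theta(s)_1$-band. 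So the argument is missing the one ingredient that actually closes the proof: a quantitative reason why a nontrivial reduced history is incompatible with having pure $\pazocal{A}$-words (no $b$-letters) at \emph{both} ends of the region.

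The paper supplies exactly this ingredient, and your reduction via \Cref{4-minimal a-cells} and the final appeal to \Cref{cancellable a-cells} match its outer structure; the difference is the middle. After the reduction, the paper observes that $\Psi$ (bounded by $\pazocal{U}(\textbf{e}_1)$, $\pazocal{U}(\textbf{e}_3)$ and arcs of $\partial\pi_1,\partial\pi_2$) is a compressed semi-trapezium in the `special' input sector, converts it by \Cref{compressed semi-trapezia are compressed semi-computations} into a reduced compressed semi-computation whose initial and terminal words are three-letter words over $\pazocal{A}$ (here is where all three consecutive edges, not just two, are used), and then invokes \Cref{M compressed semi-computation three A}: a nonempty reduced history would force $b$-words of total length at least $\tfrac{1}{2}D_\pazocal{A}(t-1)$ between the $\pazocal{A}_1$-letters of the terminal word, contradicting $\lab(\partial\pi_2)\in\Lambda^\pazocal{A}$. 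Hence $H$ is freely trivial, the band sides are freely trivial, and only then does \Cref{cancellable a-cells} apply. To repair your proof you would need to replace the local collapse argument by this (or an equivalent) growth estimate for semi-computations; as written, the proposal does not establish the statement.
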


\begin{proof}


Suppose $\Delta$ does not satisfy (MM2).  So, there exist $a$-cells $\pi_1$ and $\pi_2$ and a subdiagram $\Psi$ contradicting the condition (see \Cref{MM2}).  Replacing each of $\pi_1$ and $\pi_2$ with the subdiagram arising in \Cref{pure a-cells} (and removing any cancellable cells) then produces a similar counterexample whose $a$-cells are labelled by elements of $\Lambda^\pazocal{A}$.  Hence, we may assume without loss of generality that $\lab(\partial\pi_i)\in\Lambda^\pazocal{A}$.

Let $\textbf{e}_1,\textbf{e}_2,\textbf{e}_3$ be the corresponding consecutive $\pazocal{A}$-edges of $\partial\pi_1$.  So, there exist edges $\textbf{f}_1,\textbf{f}_2,\textbf{f}_3$ of $\partial\pi_2$ such that $\textbf{f}_j^{-1}$ is an end of $\pazocal{U}(\textbf{e}_j)$.  Let $\textbf{s}_1$ be the subpath of $(\partial\pi_1)^{-1}$ with initial edge $\textbf{e}_3^{-1}$ and final edge $\textbf{e}_1^{-1}$.  Similarly, let $\textbf{s}_2$ be the subpath of $\partial\pi_2$ with initial edge $\textbf{f}_3$ and final edge $\textbf{f}_1$.  Further, let $\textbf{t}_1=\textbf{top}(\pazocal{U}(\textbf{e}_3))$ and $\textbf{t}_2=\textbf{bot}(\pazocal{U}(\textbf{e}_1))$.

Then, since by hypothesis $\Psi$ contains no $a$-cells, $\Psi$ is a compressed semi-trapezium over $M(\textbf{M}^\pazocal{L})$ in the `special' input sector with standard factorization $\textbf{t}_1^{-1}\textbf{s}_1\textbf{t}_2\textbf{s}_2^{-1}$.  Letting $H$ be the history of $\Psi$, note that $\lab(\textbf{t}_j)$ is a copy of $H$.

By \Cref{compressed semi-trapezia are compressed semi-computations}, there then exists a reduced compressed semi-computation $\pazocal{S}_\mathscr{C}:w_0\to\dots\to w_t$ of $\textbf{M}^\pazocal{L}$ in the `special' input sector with history $H$ such that $w_0\equiv\lab(\textbf{s}_1)$ and $w_t\equiv\lab(\textbf{s}_2)$.  

As $\lab(\partial\pi_1)\in\Lambda^\pazocal{A}$, there exist $y_i\in\pazocal{A}$ and $\delta_i\in\{\pm1\}$ such that $w_0\equiv y_1^{\delta_1}y_2^{\delta_2}y_3^{\delta_3}$.  Further, since $\lab(\partial\pi_2)\in\Lambda^\pazocal{A}$, \Cref{semi-computation deltas} implies there exist $z_i\in\pazocal{A}$ and $\eps_i\in\{\pm1\}$ such that $w_t\equiv z_1^{\eps_1}z_2^{\eps_2}z_3^{\eps_3}$.  In particular, $\pazocal{S}_\mathscr{C}$ satisfies the hypotheses of \Cref{M compressed semi-computation three A}, so that $H$ must be freely trivial.

But then $\lab(\textbf{t}_j)$ is also freely trivial, meaning \Cref{cancellable a-cells} yields a contradiction.

\end{proof}

\subsubsection{Removing disks} \label{sec-removing-disks} \

The next operation is used to study $t$-bands in minimal diagrams which have ends on two disks, yielding a condition for disks similar to (MM2).  This treatment is analogous to the that in \cite{W}.

First, we construct a diagram to simulate the `almost-extendability' of $\textbf{M}^\pazocal{L}$ (see \Cref{sec-almost-extendable}):

\begin{lemma} \label{almost-extendable a-trapezia hubs}

Let $j\in\{2,\dots,L\}$ and suppose $\pazocal{C}:W_{ac}(j)\to\dots\to W_{ac}(j)$ is a reduced computation of $\textbf{M}^\pazocal{L}$ with history $H$.  Then there exists a reduced circular diagram $\Delta$ over $M_\Omega(\textbf{M}^\pazocal{L})$ with $\partial\Delta=\textbf{t}_1^{-1}\textbf{s}_1\textbf{t}_2\textbf{s}_2^{-1}$ such that:

\begin{itemize}

\item $\lab(\textbf{s}_1)\equiv\lab(\textbf{s}_2)\equiv W_{ac}$

\item $\textbf{t}_1$ and $\textbf{t}_2$ are sides of maximal negative $q$-bands whose labels are identical copies of $H$

\item The history of every maximal negative $q$-band in $\Delta$ is $H$

\end{itemize}

\end{lemma}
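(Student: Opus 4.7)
The plan is to build $\Delta$ by stacking trapezia corresponding to pieces of $\pazocal{C}$, with $a$-cells inserted to bridge transitions between maximal one-machine subcomputations. If $\pazocal{C}$ is empty, take $\Delta$ to be the trivial diagram (after $0$-refinement) with $\textbf{t}_1, \textbf{t}_2$ degenerate and $\lab(\textbf{s}_1) \equiv \lab(\textbf{s}_2) \equiv W_{ac}$. Otherwise, apply \Cref{projected end to end} to factor $H \equiv H_1 \dots H_k$, obtaining for each $i$ a reduced extension $\pazocal{C}_i' : W_{i-1}^{(z_i)} \to \dots \to W_i^{(z_i)}$ in the standard base of $\textbf{M}^\pazocal{L}$ with history $H_i$, where $z_i \in \{1,2\}$ indicates the machine of the $i$-th maximal one-machine subcomputation of $\pazocal{C}$.

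For each $i$, apply \Cref{computations are trapezia} to $\pazocal{C}_i'$ to construct a trapezium $\Delta_i$ with trimmed bottom labeled $W_{i-1}^{(z_i)}$, trimmed top labeled $W_i^{(z_i)}$, history $H_i$, and left/right sides both labeled by identical copies of $H_i$. Stack $\Delta_1, \dots, \Delta_k$ vertically. At each interface between $\Delta_i$ and $\Delta_{i+1}$, the labels $W_i^{(z_i)}$ and $W_i^{(z_{i+1})}$ either coincide as $W_{ac}$ (case (a) of \Cref{projected end to end}, where $V_i \equiv W_{ac}(j)$) and the trapezia glue directly, or differ precisely by the presence of $w_i \in \pazocal{L}$ in the `special' input sector (case (b), where $V_i \equiv I(w_i,j)$; note $z_i \neq z_{i+1}$ for maximal one-machine subcomputations). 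In case (b), insert an $a$-cell labeled $w_i$ between the trapezia, identifying its boundary with the $w_i$-subpath of whichever configuration contains it; this is valid because condition (L5) gives $w_i \in \pazocal{L} \subseteq \Lambda^\pazocal{A}$, and (L1) yields $w_i \in \Omega$. After $0$-refinement the effective interface then matches the opposite configuration, allowing the trapezia to be glued.

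The resulting diagram has top and bottom both labeled $W_{ac}$ (using $W_0^{(1)} \equiv W_0^{(2)} \equiv W_{ac}$ by definition and $V_k \equiv W_{ac}(j)$ putting the final step in case (a)), and left/right sides concatenating to identical copies of $H$. These sides are sides of maximal $q$-bands of the cyclic standard base; by positioning the boundary decomposition $\partial\Delta = \textbf{t}_1^{-1}\textbf{s}_1\textbf{t}_2\textbf{s}_2^{-1}$ so that the corner $q$-edges at the ends of $\textbf{t}_1, \textbf{t}_2$ are labeled by $(R_j^\pazocal{L}(i))^{-1}$-letters (permitted because the wrap-around sectors of the cyclic standard base have empty tape alphabet, so $W_{ac}$ admits suitable cyclic linearizations), both $\textbf{t}_1$ and $\textbf{t}_2$ become sides of maximal negative $q$-bands. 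Since $H$ is reduced by hypothesis, no $q$-band contains a cancellable pair of $(\theta, q)$-cells, so every maximal negative $q$-band spans $\Delta$ with history $H$. After removing any remaining cancellable pairs (which only involve $(\theta, a)$-cells and do not affect the $q$-band structure or the boundary label), $\Delta$ satisfies the statement. The main obstacle is the $a$-cell insertion in case (b): verifying that these cells do not create cancellable pairs with cells of the adjacent trapezia, which follows from the localization of each $a$-cell to the `special' input sector and condition (L1) forcing $|w_i|_\pazocal{A} \geq C$, so that the $w_i$-subpath is genuinely "isolated" from interactions elsewhere in the diagram.
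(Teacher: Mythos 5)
Your proposal is correct and follows essentially the same route as the paper's proof: factor $H$ into histories of maximal one-machine subcomputations via \Cref{projected end to end}, extend each to the standard base, realize each as a trapezium via \Cref{computations are trapezia} (with no trimming needed since the $(R_0^\pazocal{L}(L))^{-1}\{t(1)\}$-sector is locked), and glue consecutive trapezia along a single $a$-cell labelled by the inserted word $w_i\in\pazocal{L}\subseteq\Lambda^\pazocal{A}\subseteq\Omega$, so that the maximal negative $q$-bands concatenate to bands with history $H$. Your additional remarks (the empty-computation case, positioning the corner $q$-edges, checking reducedness after the $a$-cell insertions) are harmless refinements of the same argument.
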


\begin{proof}

Let $H\equiv H_1\dots H_k$ be the factorization of $H$ such that for each $i=1,\dots,k$, $H_i$ is the history of a maximal one-machine subcomputation $\pazocal{C}_i$ of $\pazocal{C}$.  By \Cref{extend one-machine}, there then exists a one-machine computation $\pazocal{D}_i:U_i\to\dots\to V_i$ of $\textbf{M}^\pazocal{L}$ in the standard base with history $H_i$ extending $\pazocal{C}_i$.  So, \Cref{computations are trapezia} provides a trapezium $\Delta_i$ with $\lab(\textbf{tbot}(\Delta_i))\equiv U_i$, $\lab(\textbf{ttop}(\Delta_i))\equiv V_i$, and history $H_i$.  Note that by the definition of trapezia and \Cref{basic annuli 1}, every maximal negative $q$-band of $\Delta_i$ has history $H_i$.  As no trimming is necessary for computations in the standard base, the sides of $\Delta_i$ are hence labelled by identical copies of $H_i$.

Now, \Cref{projected end to end} implies that $V_i$ and $U_{i+1}$ differ by the insertion/deletion of an element of $\pazocal{L}$ in the `special' input sector.  But conditions (L1) and (L5) imply $\pazocal{L}\subseteq\Lambda^\pazocal{A}\subseteq\Omega$, so that the top of $\Delta_i$ and the bottom of $\Delta_{i+1}$ can be glued along a single $a$-cell to produce a reduced circular diagram $\Delta$ over $M_\Omega(\textbf{M}^\pazocal{L})$.  Note that this procedure glues all maximal negative $q$-bands together, so that such a band is the concatenation of the corresponding bands in $\Delta_1,\dots,\Delta_k$.  Thus, the statement is satisfied by letting $\textbf{s}_1=\textbf{tbot}(\Delta_1)$ and $\textbf{s}_2=\textbf{ttop}(\Delta_k)$.

\end{proof}

\begin{lemma} \label{almost-extendable a-trapezia}

Let $W_1$ and $W_2$ be accepted configurations of $\textbf{M}^\pazocal{L}$ with $\ell(W_1),\ell(W_2)\leq1$.  Suppose $\pazocal{C}:W_1(j)\to\dots\to W_2(j)$ is a reduced computation of $\textbf{M}^\pazocal{L}$ with history $H$ for some $j\in\{2,\dots,L\}$.  Then there exists a reduced circular diagram $\Delta$ over $M_\Omega(\textbf{M}^\pazocal{L})$ with $\partial\Delta=\textbf{t}_1^{-1}\textbf{s}_1\textbf{t}_2\textbf{s}_2^{-1}$ such that:

\begin{itemize}

\item $\lab(\textbf{s}_i)\equiv W_i$ for $i=1,2$

\item $\textbf{t}_1$ and $\textbf{t}_2$ are sides of maximal negative $q$-bands whose labels are identical copies of $H$

\item The history of every maximal negative $q$-band in $\Delta$ is $H$

\end{itemize}

\end{lemma}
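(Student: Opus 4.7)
The strategy is to adapt the proof of \Cref{almost-extendable a-trapezia hubs}, replacing its fixed endpoints $W_{ac}$ with the prescribed configurations $W_1, W_2$ by inserting additional $a$-cells at the bottom and top to bridge between $W_i$ and the canonical one-machine extension of $W_i(j)$ to the standard base given by \Cref{extend one-machine}.

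Factorize $H\equiv H_1'\cdots H_k'$ into maximal one-machine sub-histories, with corresponding sub-computations $\pazocal{C}_l':U_l\to V_l$ in the base $\{t(j)\}B_4^\pazocal{L}(j)$ (so $U_1\equiv W_1(j)$ and $V_k\equiv W_2(j)$). Apply \Cref{extend one-machine} to each $\pazocal{C}_l'$ to produce a one-machine extension $\tilde{\pazocal{C}}_l:\tilde{U}_l\to\tilde{V}_l$ of $\textbf{M}^\pazocal{L}$ in the standard base, and \Cref{computations are trapezia} to produce a trapezium $\tilde{\Delta}_l$ simulating $\tilde{\pazocal{C}}_l$ whose trimmed bottom and top are labelled $\tilde{U}_l$ and $\tilde{V}_l$, with both sides labelled by identical copies of $H_l'$ (no trimming is needed since the $(R_0^\pazocal{L}(L))^{-1}\{t(1)\}$-sector has empty tape alphabet). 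Exactly as in the proof of \Cref{almost-extendable a-trapezia hubs}, whenever $\tilde{V}_l\not\equiv\tilde{U}_{l+1}$ for consecutive trapezia, \Cref{projected end to end} yields some $w_l\in\pazocal{L}$ such that they differ by insertion or deletion of $w_l$ in the special input sector, and the trapezia can be glued along a single $a$-cell with boundary $w_l$ (using $\pazocal{L}\subseteq\Lambda^\pazocal{A}\subseteq\Omega$ from conditions (L1) and (L5)).

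For the bottom endpoint, if $W_1\not\equiv\tilde{U}_1$, then combining \Cref{one-machine equal configurations} with the admissibility of $W_1(j)$ for the first rule of $\pazocal{C}_1'$, the word $W_1(j)$ must be admissible for rules of two distinct machines and hence is either an end configuration (forcing $W_1\equiv\tilde{U}_1\equiv W_{ac}$, a contradiction) or a start configuration $I(u,j)\equiv J(u,j)$ for some $u\in\pazocal{L}$. In that case \Cref{one-machine language} implies that $W_1$ and $\tilde{U}_1$ are the two configurations $\{I(u),J(u)\}$ in some order, differing by $u$ in the special input sector, so we bridge them by gluing an $a$-cell labelled $u$ below $\tilde{\Delta}_1$. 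Handle the top $W_2$ versus $\tilde{V}_k$ symmetrically. The resulting circular diagram $\Delta$ over $M_\Omega(\textbf{M}^\pazocal{L})$ has bottom labelled $W_1$, top labelled $W_2$, both sides labelled by the full concatenation $H_1'\cdots H_k'\equiv H$, and every maximal negative $q$-band---being the concatenation across all $a$-cell interfaces of the corresponding bands (each of history $H_l'$) from $\tilde{\Delta}_1,\dots,\tilde{\Delta}_k$---has history $H$.

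The main obstacle I anticipate is the endpoint dichotomy: namely, showing that $W_1\not\equiv\tilde{U}_1$ forces $W_1(j)$ to be admissible for rules of both machines and hence to be a start or end configuration. This rests on observing that $\Theta_1$ and $\Theta_2$ have essentially disjoint applicability (the transition rules $\theta(s)_i$ and $\theta(a)_i$ being applicable only at start or end configurations, and the working rules' domains being constrained because $\Theta_2$ locks the special input sector while $\Theta_1$ does not), together with \Cref{one-machine language}. A secondary concern is verifying reducedness of the final glued diagram, which follows from the reducedness of each $\tilde{\Delta}_l$, the maximality of the one-machine factorization (preventing cancellable $(\theta,q)$-cells across the $a$-cell interfaces), and \Cref{cancellable a-cells} (ruling out cancellable pairs among the $a$-cells introduced at the interfaces and the boundary).
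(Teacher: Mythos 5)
Your construction differs from the paper's, and as written it has a gap at the internal junctions. You invoke \Cref{projected end to end} to conclude that whenever $\tilde{V}_l\not\equiv\tilde{U}_{l+1}$ the two extensions differ by a word $w_l\in\pazocal{L}$ in the `special' input sector, but that lemma is stated (and proved) only for reduced computations of the form $W_{ac}(j)\to\dots\to W_{ac}(j)$; your computation runs from $W_1(j)$ to $W_2(j)$ with arbitrary accepted endpoints, so the citation does not apply. The needed claim is still true, but it is not local to a junction: a junction configuration is merely an admissible subword of a start configuration, and its input word lies in $\pazocal{L}$ only because the acceptedness of $W_1$ (resp.\ $W_2$) propagates along the blocks --- one must show that each extended configuration $\tilde{U}_l$ is accepted by a one-machine computation of the $z_l$-th machine (your endpoint analysis gives this for $l=1$, via \Cref{one-machine equal configurations} and the start/end dichotomy), then concatenate with the inverse of $\tilde{\pazocal{C}}_l$ to see $\tilde{V}_l$ is a start configuration accepted by a one-machine computation of the $z_l$-th machine, and finally apply \Cref{one-machine language} to get $\tilde{V}_l\equiv I(w_l)$ or $J(w_l)$ with $w_l\in\pazocal{L}$, which both identifies the junction word and restarts the induction at $\tilde{U}_{l+1}$. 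Your write-up treats the endpoint bridging and the junction bridging as independent steps, and supplies no argument for the latter; without the propagation, a maximal one-machine block could a priori end at a start-type word whose input is not in $\pazocal{L}$, and then no $a$-cell over $\Omega$ bridges $\tilde{V}_l$ and $\tilde{U}_{l+1}$.

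For comparison, the paper sidesteps all of this with a conjugation trick: it takes accepting one-machine computations $\pazocal{C}_1,\pazocal{C}_2$ of $W_1,W_2$ (with histories $H_1,H_2$), observes that $H_1^{-1}HH_2$ is the history of a reduced computation $W_{ac}(j)\to\dots\to W_{ac}(j)$, applies \Cref{almost-extendable a-trapezia hubs} to that computation, and then caps the resulting diagram below and above with the trapezia for $\pazocal{C}_1$ and $\bar{\pazocal{C}}_2$ from \Cref{computations are trapezia}; after cancellations the $q$-band histories reduce from $H_1(H_1^{-1}HH_2)H_2^{-1}$ back to $H$. So your route can be repaired either by carrying out the acceptance-propagation sketched above (essentially re-proving a generalization of \Cref{projected end to end} for accepted endpoints with $\ell\leq1$), or, more economically, by composing with the accepting computations first, which is exactly what the published proof does.
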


\begin{proof}

Let $\pazocal{C}_i$ be a reduced computation of $\textbf{M}^\pazocal{L}$ accepting $W_i$ with $\ell(\pazocal{C}_i)=\ell(W_i)$.  Let $H_i$ be the history $\pazocal{C}_i$ and let $\bar{\pazocal{C}}_i$ be the inverse computation of $\pazocal{C}_i$.

The restriction of $\pazocal{C}_i$ to the base $\{t(j)\}B_4^\pazocal{L}(j)$ is then a reduced computation with history $H_i$ of the form $W_i(j)\to\dots\to W_{ac}(j)$, so that $H_1^{-1}HH_2$ is freely equal to the history of a reduced computation $\pazocal{C}':W_{ac}(j)\to\dots\to W_{ac}(j)$.  Let $\Delta'$ be the diagram corresponding to $\pazocal{C}'$ given by \Cref{almost-extendable a-trapezia hubs} and set $\partial\Delta'=(\textbf{t}_1')^{-1}(\textbf{s}_1')(\textbf{t}_2')(\textbf{s}_2')^{-1}$ as in that setting.

Let $\Delta_1$ and  $\bar{\Delta}_2$ be the trapezia corresponding to $\pazocal{C}_1$ and $\bar{\pazocal{C}}_2$, respectively, as given by \Cref{computations are trapezia}.  Noting that $\lab(\textbf{ttop}(\Delta_1))\equiv W_{ac}\equiv\lab(\textbf{s}_1')$ and $\lab(\textbf{tbot}(\bar{\Delta}_2))\equiv W_{ac}\equiv\lab(\textbf{s}_2')$, we construct a reduced diagram $\Delta$ by pasting together $\Delta_1$, $\Delta'$, and $\bar{\Delta}_2$ and making any necessary cancellations.

As in the proof of \Cref{almost-extendable a-trapezia hubs}, all maximal negative $q$-bands of $\Delta$ arise as the concatenation of such a band in $\Delta_1$, $\Delta'$, and $\bar{\Delta}_2$ (and making any necessary cancellations).  By construction, the history of this band is then freely equal to $H_1(H_1^{-1}HH_2)H_2^{-1}$, and so is $H$.  Thus, the statement follows by letting $\textbf{s}_1=\textbf{tbot}(\Delta_1)$ and $\textbf{s}_2=\textbf{ttop}(\bar{\Delta}_2)$.

\end{proof}

Let $\Pi$ be a disk in a reduced circular diagram $\Delta$ over the disk presentation of $G_\Omega(\textbf{M}^\pazocal{L})$.  A maximal $t$-band which has an end on $\Pi$ is called a \textit{$t$-spoke} of $\Pi$.  Given a $t$-edge $\textbf{e}$ of $\partial\Pi$, the $t$-spoke of $\Pi$ for which $\textbf{e}$ is a defining edge is denoted $\pazocal{Q}(\textbf{e})$.

With \Cref{almost-extendable a-trapezia}, we now arrive at the following analogue of \Cref{minimal MM2}, providing a version of condition (MM2) for $t$-bands connecting disks.  

\begin{lemma}[Compare with Lemma 9.5 in \cite{W}] \label{disk MM2}

Let $\Pi_1$ and $\Pi_2$ be two disks of a reduced $1$-minimal diagram $\Delta$.  Suppose there exist consecutive $t$-edges $\textbf{e}_1$ and $\textbf{e}_2$ of $\partial\Pi_1$ such that both $\pazocal{Q}(\textbf{e}_1)$ and $\pazocal{Q}(\textbf{e}_2)$ have ends on $\Pi_2$.  Let $\Psi$ be the subdiagram of $\Delta$ bounded by $\pazocal{Q}(\textbf{e}_i)$ and subpaths of $\partial\Pi_i$ such that neither $\Pi_1$ nor $\Pi_2$ is contained in $\Psi$ (see \Cref{fig-t-spoke}).  Then $\Psi$ contains a disk.

\end{lemma}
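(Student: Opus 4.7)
The plan is to argue by contradiction: assume $\Psi$ contains no disk, and produce a reduced diagram $\Delta'$ with $\lab(\partial\Delta')\equiv\lab(\partial\Delta)$ but $\sigma_1(\Delta')<\sigma_1(\Delta)$, contradicting the $1$-minimality of $\Delta$.

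First I would analyze the makeup of $\Psi$. Because distinct maximal $t$-bands cannot cross, the two ends $\textbf{f}_1,\textbf{f}_2$ of $\pazocal{Q}(\textbf{e}_1),\pazocal{Q}(\textbf{e}_2)$ on $\partial\Pi_2$ are consecutive $t$-edges of $\partial\Pi_2$. Writing $\partial\Psi=\textbf{t}_1^{-1}\textbf{s}_1\textbf{t}_2\textbf{s}_2^{-1}$, where $\textbf{t}_i$ is the side of $\pazocal{Q}(\textbf{e}_i)$ facing $\Psi$ and $\textbf{s}_i$ is the corresponding subpath of $\partial\Pi_i$, neither $\textbf{s}_1$ nor $\textbf{s}_2$ contains a $t$-edge. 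Since the only $(\theta,t)$-relations have the trivial commuting form $\theta_k t(i)=t(i)\theta_{k+1}$, the two sides of each $t$-spoke bear identical copies of its history; moreover a $(\theta,q)$-annulus ruled out by \Cref{basic annuli 1}(1) forces the two $t$-spokes to share a single history $H$. The diagrammatic analogue of \Cref{M(M) annuli} for $\Psi$ (which contains no disks by assumption) then forces every maximal $\theta$-band in $\Psi$ to connect $\textbf{t}_1$ to $\textbf{t}_2$, and the endpoints of $\textbf{s}_i$ are incidences with $q$-edges (the state letters from $B_4^\pazocal{L}(j)$ abutting the bounding $t$-letters). Hence $\Psi$ is a trapezium in the sense of \Cref{sec-trapezia}.

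By the definition of $t$-letter the part $\{t(1)\}$ is excluded, so the ordered pair $(\textbf{e}_1,\textbf{e}_2)$ corresponds to $(t(j),t(j+1))$ with $j\in\{2,\dots,L\}$ under cyclic indexing. \Cref{trapezia are computations} then yields a reduced computation $\pazocal{C}^\ast:\lab(\textbf{s}_1)\to\cdots\to\lab(\textbf{s}_2)$ of history $H$ whose base is $B_4^\pazocal{L}(j)$. Prepending the shared $t(j)$-letter (on which every rule acts by pure commutation) produces a reduced computation $\pazocal{C}:W_1(j)\to\cdots\to W_2(j)$ of history $H$ in the base $\{t(j)\}B_4^\pazocal{L}(j)$. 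Because $\Pi_1,\Pi_2$ are disks, the configurations $W_1,W_2$ are accepted by $\textbf{M}^\pazocal{L}$ with $\ell(W_i)\leq 1$, so $\pazocal{C}$ meets the hypotheses of \Cref{almost-extendable a-trapezia}. Applying that lemma yields a reduced diagram $\Delta_0$ over $M_\Omega(\textbf{M}^\pazocal{L})$ containing no disks, with contour $\textbf{u}_1^{-1}\textbf{v}_1\textbf{u}_2\textbf{v}_2^{-1}$ where $\lab(\textbf{v}_i)\equiv W_i$ and $\lab(\textbf{u}_1)\equiv\lab(\textbf{u}_2)$ is a copy of $H$.

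Let $\Gamma$ be the subdiagram of $\Delta$ obtained by gluing $\Pi_1$, $\Pi_2$, the two $t$-spokes, and $\Psi$. The contour of $\Gamma$ is a cyclic conjugate of $\lab(\partial\Delta_0)$ (both encoding, up to rotation, $W_1\cdot H\cdot W_2^{-1}\cdot H^{-1}$), so after $0$-refinement we may excise $\Gamma$ from $\Delta$ and paste $\Delta_0$ in its place, then remove any cancellable cells to obtain a reduced diagram $\Delta'$ with $\lab(\partial\Delta')\equiv\lab(\partial\Delta)$. Since $\Gamma$ contains exactly the disks $\Pi_1,\Pi_2$ and $\Delta_0$ contains none, $\sigma_1(\Delta')\leq\sigma_1(\Delta)-2$, contradicting the $1$-minimality of $\Delta$.

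The main obstacle I expect is justifying that $\Psi$ is genuinely a trapezium rather than merely a trapezium-shaped region, which requires ruling out $a$-cells inside $\Psi$. For $j\in\{2,\dots,L-1\}$ the base $B_4^\pazocal{L}(j)$ involves tape alphabets disjoint from $\pazocal{A}\sqcup\pazocal{A}_1\sqcup\pazocal{B}$, so no $\Omega$-relator can live in $\Psi$; the wraparound possibility $j=L$, whose body traverses coordinate $1$, needs an extra argument, for instance by first stripping off any $\Omega$-cells that lie over the special input sector via the compressed-semi-trapezium machinery of \Cref{compressed semi-trapezia are compressed semi-computations}, before extracting the trapezium and invoking \Cref{almost-extendable a-trapezia}.
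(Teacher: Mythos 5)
Your overall strategy is the paper's: show $\Psi$ is a trapezium, read off a computation $W_1(j)\to\dots\to W_2(j)$, extend it via \Cref{almost-extendable a-trapezia}, and use the resulting disk-free diagram to contradict $1$-minimality. But there is a genuine gap at the replacement step. The diagram $\Delta_0$ produced by \Cref{almost-extendable a-trapezia} has contour $\textbf{u}_1^{-1}\textbf{v}_1\textbf{u}_2\textbf{v}_2^{-1}$ with $\lab(\textbf{v}_i)\equiv W_i$ the \emph{entire} configurations, whereas the subdiagram $\Gamma\subseteq\Delta$ you want to excise (the two disks, the two spokes, and $\Psi$) exposes on its contour only the \emph{complements} $\textbf{y}_i$ of the subpaths of $\partial\Pi_i$ that face $\Psi$; the arcs labelled (roughly) $W_i(j)t(j+1)$ are interior to $\Gamma$. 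So $\lab(\partial\Gamma)$ has the form $\lab(\textbf{y}_2)\,H^{-1}\,\lab(\textbf{y}_1)^{-1}\,H$ up to conjugation, and is \emph{not} a cyclic permutation of $H^{-1}W_1HW_2^{-1}$ --- the two words do not even have the same $q$-length. Pasting $\Delta_0$ in place of $\Gamma$ therefore changes the boundary label, and no contradiction with $1$-minimality follows. The missing construction (which is exactly what the paper supplies) is a surgery on $\Delta_0$ itself: cut it along the sides of its internal $t$-bands for $\{t(j)\}$ and $\{t(j+1)\}$, discard the middle strip (whose boundary carries the copies of $W_i(j)t(j+1)$), and glue the two outer pieces along their $t$-band sides, which bear identical copies of $H$ since every maximal negative $q$-band of $\Delta_0$ has history $H$. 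Only the resulting diagram $\Gamma'$ has contour label matching that of the subdiagram $\Psi'=\Psi\cup\Pi_1\cup\Pi_2$ of $\Delta$, and then van Kampen's Lemma gives a disk-free diagram that can replace $\Psi'$ and lower $\sigma_1$.

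Two further points. Your handling of the wraparound case ($\{t(L),t(2)\}$) is only a sketch; the proposed "stripping of $\Omega$-cells via compressed semi-trapezia" is not worked out and is not obviously sound. The paper instead observes that in this case $\Psi$ contains a maximal $q$-band for the part $\{t(1)\}$ with ends on both disks, and simply reruns the same argument on the subdiagram bounded by the $t(2)$-spoke and that $\{t(1)\}$-band, whose boundary again avoids the special input sector. Finally, excluding $a$-cells from $\Psi$ is not just alphabet disjointness of $\partial\Psi$: one needs the estimating-graph statement (\Cref{Gamma_a special cell}, available by \Cref{minimal MM2} and the choice of $C$) to force any $a$-cell to emit many $\pazocal{A}$-bands ending on $\partial\Psi$ or on $(\theta,q)$-cells of the special input sector, neither of which exists in $\Psi$; you identify this as the expected obstacle but do not actually close it.
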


\begin{proof}

%
%
%
%

Let $\pazocal{Q}_i=\pazocal{Q}(\textbf{e}_i)$ and suppose to the contrary that $\Psi$ contains no disk.

First, suppose the adjacent $t$-letters corresponding to $\pazocal{Q}_1$ and $\pazocal{Q}_2$ are $\{t(j),t(j+1)\}$ for some $2\leq j\leq L-1$.  Then the subpath of $\partial\Psi$ shared with $(\partial\Pi_i)^{-1}$ is an admissible word whose base is either $\{t(j)\}B_4^\pazocal{L}(j)$ or its inverse.  So, since the sides of $t$-bands consist entirely of $\theta$-edges, no edge of $\partial\Psi$ is an $\pazocal{A}$-edge labelled by a letter from the tape alphabet of the `special' input sector.  Hence, \Cref{minimal MM2}, \Cref{Gamma_a special cell}, and the parameter choice $C\geq7$ imply that $\Psi$ contains no $a$-cells.

It then follows that $\Psi$ is a trapezium satisfying the hypotheses of \Cref{almost-extendable a-trapezia}.  But then the proof of the statement is completed by an identical argument to that presented in the analogous statement in \cite{W}.

Conversely, suppose the adjacent $t$-letters corresponding to $\pazocal{Q}_1$ and $\pazocal{Q}_2$ are $\{t(L),t(2)\}$.  Then there exists a maximal $t$-band $\pazocal{Q}_3$ in $\Psi$ corresponding to $\{t(1)\}$ which ends on both $\Pi_1$ and $\Pi_2$.  But then the above argument may be applied to the subdiagram $\Psi_0$ bounded by $\pazocal{Q}_3$ and the $\pazocal{Q}_i$ corresponding to $\{t(L)\}$ (see \Cref{fig-t-spoke}(b)), again implying the statement.

\renewcommand\thesubfigure{\alph{subfigure}}
\begin{figure}[H]
\centering
\begin{subfigure}[b]{\textwidth}
\centering
\includegraphics[scale=0.9]{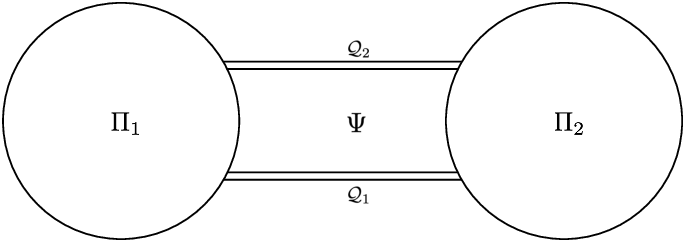}
\caption{Adjacent $t$-letters are $\{t(j),t(j+1)\}$ for $2\leq j\leq L-1$}
\end{subfigure} \\ \vspace{0.2in}
\begin{subfigure}[b]{\textwidth}
\centering
\includegraphics[scale=0.9]{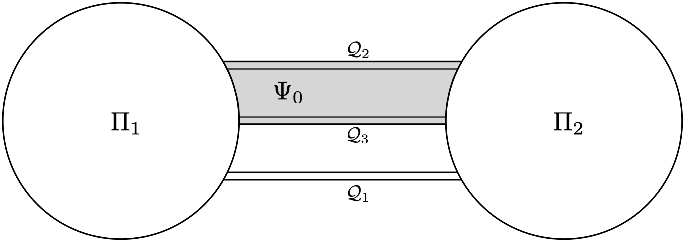}
\caption{Adjacent $t$-letters are $\{t(L),t(2)\}$           }
\end{subfigure}
\caption{Lemma \ref{disk MM2}}
\label{fig-t-spoke}
\end{figure}

\end{proof}

With \Cref{disk MM2}, we now adapt the methods of \Cref{sec-M-minimal-bands} to this context, defining an auxiliary graph to a reduced circular diagram $\Delta$ over the disk presentation of $G_\Omega(\textbf{M}^\pazocal{L})$ which is an estimating graph constructed from the disks of the diagram.  Note that this treatment is analogous to that of  \cite{O18}, \cite{OS19}, \cite{W}, and others.

To any reduced circular diagram $\Delta$ over the disk presentation $G_\Omega(\textbf{M}^\pazocal{L})$, construct the (unoriented) graph $\Gamma(\Delta)$ as follows:

\begin{enumerate}

\item The set of vertices is $\{v_0,v_1,\dots,v_\ell\}$, where each $v_i$ for $i\geq1$ corresponds to one of the $\ell$ disks of $\Delta$ and $v_0$ is a single exterior vertex.  

\item For $i,j\geq1$ and for any positive $t$-band which has ends on the disks corresponding to $v_i$ and $v_j$, there is a corresponding edge $(v_i,v_j)$. Such an edge is called \textit{internal}.

\item For $i\geq1$ and any positive $t$-band with one end on the disk corresponding to $v_i$ and the other end on $\partial\Delta$, there is a corresponding edge $(v_0,v_i)$. Such an edge is called \textit{external}.

\end{enumerate}

Analogous to the construction outlined in \Cref{Gamma_a planar}, $\Gamma(\Delta)$ can be constructed by placing interior vertices in the interior of the corresponding disk and constructing arcs running through the corresponding $t$-bands.  Hence, similar to that setting, $\Gamma(\Delta)$ can be assumed to be a planar graph (note this observation is easier to see in this setting given the simpler makeup of external edges).

By definition the label of the positive $q$-edges on the boundary of a disk is a representative of a different part of the state letters of $\textbf{M}^\pazocal{L}$.  Accordingly, a $q$-band can have at most one end on any particular disk.  In particular, any maximal positive $t$-band with an end on a disk corresponds to an edge of $\Gamma(\Delta)$.  Hence, $\Gamma(\Delta)$ contains no $1$-gons and the degree of any interior vertex is $L-1$.

Moreover, \Cref{disk MM2} implies that if $\Delta$ is a reduced $1$-minimal diagram, then no two internal edges of $\Gamma(\Delta)$ bound a 2-gon.

Thus, the next statement is a given by taking $L\geq7$, following in just the same way as \Cref{Gamma_a' special cell}, yielding a conclusion analogous to \Cref{Gamma_a special cell}:

\begin{lemma}[Lemma 3.2 of \cite{O97}] \label{Gamma special cell}

If $\Delta$ is a reduced $1$-minimal diagram containing at least one disk, then $\Delta$ contains a disk $\Pi$ such that $L-4$ consecutive $t$-spokes $\pazocal{Q}_1,\dots,\pazocal{Q}_{L-4}$ of $\Pi$ have ends on $\partial\Delta$ and such that every subdiagram $\Gamma_i$ bounded by $\pazocal{Q}_i$, $\pazocal{Q}_{i+1}$, $\partial\Pi$, and $\partial\Delta$ ($i=1,\dots,L-5$) contains no disks.

\end{lemma}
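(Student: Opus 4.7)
The plan is to apply the classical planarity/degree argument of Lemma 3.2 in \cite{O97} directly to the auxiliary graph $\Gamma(\Delta)$, following the template of \Cref{Gamma_a planar}--\Cref{Gamma_a' special cell}.

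First I would verify that $\Gamma(\Delta)$ may be realized as a planar graph, using the same arc construction as in the proof of \Cref{Gamma_a planar}: place each interior vertex in the interior of its corresponding disk and the exterior vertex in the unbounded region of the complement of $\partial\Delta$; for each positive $t$-band, trace an arc through the sequence of $(\theta,t)$-cells comprising it (inserting auxiliary points on its defining edges where several $t$-spokes share a $(\theta,q)$-cell, as in the construction of $\rho(\pazocal{U})$) to realize the corresponding edge of $\Gamma(\Delta)$. Since distinct positive $t$-bands cannot intersect, and bands corresponding to different parts $\{t(i)\}$, $\{t(j)\}$ for $2\le i\ne j\le L$ cross transversally in single $(\theta,q)$-cells, the resulting embedding is planar.

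Next I would record the combinatorial properties of $\Gamma(\Delta)$ needed to invoke Olshanskii's lemma. Because for each $i\in\{2,\dots,L\}$ the boundary of any disk contains exactly one occurrence of a letter from $\{t(i)\}^{\pm1}$, the graph $\Gamma(\Delta)$ contains no loops and the degree of every interior vertex is exactly $L-1$. Moreover, since $\Delta$ is $1$-minimal, \Cref{disk MM2} asserts that no two internal edges of $\Gamma(\Delta)$ bound a $2$-gon; this is exactly the hypothesis that removes the need for any "doubled pair" correction (in contrast to the passage from $\Gamma_a(\Delta)$ to $\Gamma'_a(\Delta)$). With the parameter choice $L\geq 7$ in force, the planar graph $\Gamma(\Delta)$ then satisfies the hypotheses of Lemma 3.2 of \cite{O97}, yielding an interior vertex $v$ such that at least $d(v)-3=L-4$ consecutive edges incident to $v$ (in the cyclic order around $v$ determined by the planar embedding) join $v$ with the exterior vertex, with no vertex of $\Gamma(\Delta)$ lying strictly between consecutive such edges.

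Finally I would translate this back to $\Delta$. The vertex $v$ corresponds to a disk $\Pi$; the $L-4$ consecutive external edges incident to $v$ correspond to $L-4$ consecutive positive $t$-spokes $\pazocal{Q}_1,\dots,\pazocal{Q}_{L-4}$ of $\Pi$, each of which (being external) has its other end on $\partial\Delta$. For each $i=1,\dots,L-5$, the subdiagram $\Gamma_i$ bounded by $\pazocal{Q}_i$, $\pazocal{Q}_{i+1}$, and the corresponding subpaths of $\partial\Pi$ and $\partial\Delta$ lies inside the planar region delimited by the two arcs representing these $t$-spokes; the "no vertex between" clause then forbids any interior vertex of $\Gamma(\Delta)$ from lying in that region, so no disk of $\Delta$ can be contained in $\Gamma_i$.

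The main obstacle is essentially packaged into the invocation of Lemma 3.2 of \cite{O97}; beyond that, the argument is bookkeeping. The only care required is to confirm that the local structure of $t$-spokes at a disk (one band per part $\{t(i)\}$, $2\le i\le L$) does produce a loopless planar graph of uniform interior degree $L-1$, and that \Cref{disk MM2} furnishes precisely the "no $2$-gon between interior vertices" condition that allows a direct application of the classical estimating-graph argument.
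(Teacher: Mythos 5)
Your proposal is correct and takes essentially the same route as the paper, which likewise realizes $\Gamma(\Delta)$ as a planar graph by the arc construction of \Cref{Gamma_a planar}, notes it has no loops and uniform interior degree $L-1$, invokes \Cref{disk MM2} to exclude $2$-gons between interior vertices (so no doubled-pair correction is needed), and then applies Lemma 3.2 of \cite{O97} with the parameter choice $L\geq 7$, translating the $d(v)-3=L-4$ consecutive external edges back into consecutive $t$-spokes with disk-free regions between them. One cosmetic point: distinct $t$-spokes corresponding to different parts $\{t(i)\}$, $\{t(j)\}$ never share a $(\theta,q)$-cell at all (each $(\theta,t)$-cell has both of its $q$-edges labelled by the same part), so the corresponding arcs are disjoint outright rather than ``crossing transversally in single $(\theta,q)$-cells''; this only makes the planarity claim easier, as the paper itself remarks.
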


\begin{figure}[H]
\centering
\includegraphics[scale=0.95]{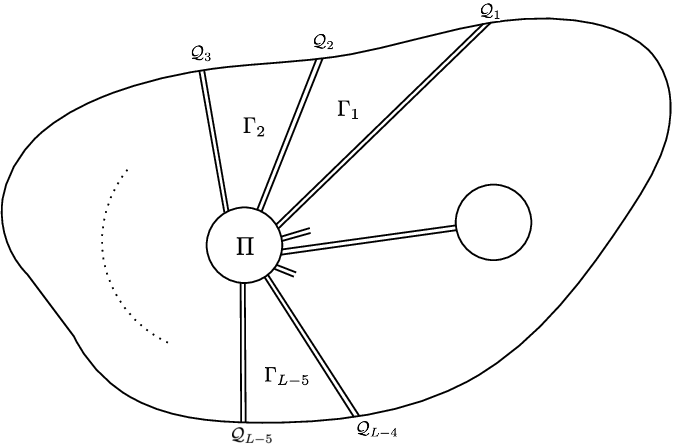}
\caption{Lemma \ref{Gamma special cell}}
\end{figure}


\medskip

\subsection{scopes} \label{sec-scopes} \

As in \Cref{sec-a-scopes}, we now take a brief interlude to investigate a consequence of \Cref{Gamma special cell} that will be useful for future arguments.

Let $\Pi$ be a disk and $\textbf{t}$ be a subpath of a boundary component of a reduced diagram $\Delta$ over the disk presentation of $G_\Omega(\textbf{M}^\pazocal{L})$.  Let $\textbf{e}_1$ and $\textbf{e}_2$ be $t$-edges of $\partial\Pi$ such that the $t$-bands $\pazocal{Q}(\textbf{e}_i)$ has an end on $\textbf{t}$.  Suppose there exists a subpath $\textbf{s}$ of $\partial\Pi$ such that $\textbf{s}$, a subpath of $\textbf{t}$, and the bands $\pazocal{Q}(\textbf{e}_1),\pazocal{Q}(\textbf{e}_2)$ bound a (circular) subdiagram $\Psi$ of $\Delta$ which does not contain $\Pi$.

Then $\Psi$ is called a \textit{scope} on $\textbf{t}$ with \textit{associated disk} $\Pi$, \textit{associated subpath} $\textbf{s}$, and \textit{size} $|\textbf{s}|_t$.

Analogous to the terminology of $a$-scopes, $\Psi$ is called a \textit{pure scope} if it contains no disk.  Further, the \textit{completion} of $\Psi$ is the subdiagram $\tilde{\Psi}$ consisting of both $\Psi$ and $\Pi$.

\begin{lemma} \label{big scope}

Let $\textbf{t}$ be a subpath of a boundary component of a reduced diagram $\Delta$ over the disk presentation of $G_\Omega(\textbf{M}^\pazocal{L})$.  Suppose $\Delta$ contains a scope $\Psi_0$ on $\textbf{t}$ such that the completion $\tilde{\Psi}_0$ is $1$-minimal.  If $\Psi_0$ is not a pure scope, then there exists a scope $\Psi_1$ on $\textbf{t}$ of size $\ell\geq L-6$ such that the completion $\tilde{\Psi}_1$ is a subdiagram of $\Psi_0$.

\end{lemma}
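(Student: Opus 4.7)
The plan is to mirror the strategy of Lemma \ref{big a-scope}, using the disk-analogues of the tools from the preceding section: Lemma \ref{Gamma special cell} in place of Lemma \ref{Gamma_a special cell}, Lemma \ref{disk MM2} in place of condition (MM2), and $t$-spokes in place of positive $\pazocal{A}$-bands emanating from an $a$-cell.

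First I would observe that $\Psi_0$ is itself a reduced $1$-minimal diagram: any replacement of $\Psi_0$ with a diagram of strictly smaller signature sharing its contour would paste back into $\tilde{\Psi}_0$ and contradict $1$-minimality of the completion. Since $\Psi_0$ is not pure, it contains at least one disk, so Lemma \ref{Gamma special cell} applied to $\Psi_0$ produces a disk $\Pi$ together with $L-4$ consecutive $t$-spokes $\pazocal{Q}_1,\dots,\pazocal{Q}_{L-4}$ of $\Pi$, each having an end on $\partial\Psi_0$, such that every subdiagram $\Gamma_i$ bounded by $\pazocal{Q}_i,\pazocal{Q}_{i+1}$, a subpath of $\partial\Pi$, and a subpath of $\partial\Psi_0$ contains no disk. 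For each $i$ denote by $\textbf{e}_i$ the $t$-edge of $\partial\Pi$ that is the defining edge of $\pazocal{Q}_i$.

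Let $\Pi_0$ and $\textbf{s}_0$ be the associated disk and associated subpath of $\Psi_0$, and let $\textbf{t}_0$ be the subpath of $\textbf{t}$ shared with $\partial\Psi_0$. Then $\partial\Psi_0$ decomposes as $\textbf{s}_0^{-1}$, $\textbf{t}_0$, and the two sides of the bounding $t$-bands from the scope definition; these last two sides contain no $t$-edges since they are sides of $t$-bands. Hence each spoke $\pazocal{Q}_i$ must end on either $\textbf{s}_0^{-1}$ or $\textbf{t}_0$. The key claim is that no two consecutive spokes $\pazocal{Q}_i,\pazocal{Q}_{i+1}$ can both end on $\textbf{s}_0^{-1}$: if they did, their ends in $\tilde{\Psi}_0$ would both lie on $\partial\Pi_0$, and Lemma \ref{disk MM2} applied to $\tilde{\Psi}_0$ would force the subdiagram between them to contain a disk. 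But that subdiagram coincides with $\Gamma_i$, which is disk-free by Lemma \ref{Gamma special cell}, a contradiction.

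Now let $k_1$ be the smallest index such that $\pazocal{Q}_{k_1}$ ends on $\textbf{t}_0$ and $k_2$ the largest. Since $\pazocal{Q}_1,\dots,\pazocal{Q}_{k_1-1}$ are consecutive spokes all ending on $\textbf{s}_0^{-1}$, the previous claim forces $k_1-1\leq 1$; analogously $L-4-k_2\leq 1$, giving $k_2-k_1+1\geq L-6$. Take $\textbf{s}$ to be the subpath of $\partial\Pi$ whose first and last $t$-edges are $\textbf{e}_{k_1}$ and $\textbf{e}_{k_2}$, chosen along the arc containing the intermediate spokes. Then $\textbf{s}$, the spokes $\pazocal{Q}_{k_1},\pazocal{Q}_{k_2}$, and the subpath of $\textbf{t}_0$ joining their ends on $\textbf{t}_0$ bound a subdiagram $\Psi_1$ of $\Psi_0$ that does not contain $\Pi$; this is a scope on $\textbf{t}$ with associated disk $\Pi$, associated subpath $\textbf{s}$, and size $|\textbf{s}|_t\geq L-6$. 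Since $\Pi\subseteq\Psi_0$, the completion $\tilde{\Psi}_1=\Psi_1\cup\Pi$ is a subdiagram of $\Psi_0$, as required.

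The main obstacle is bookkeeping rather than conceptual: verifying that the estimating graph $\Gamma(\Psi_0)$ inherits all the hypotheses needed to invoke Lemma \ref{Gamma special cell} (planarity, interior degree $L-1$, and absence of $2$-gons on internal edges coming from Lemma \ref{disk MM2}), and then confirming that the subdiagrams $\Gamma_i$ produced by Lemma \ref{Gamma special cell} in $\Psi_0$ coincide literally with the subdiagrams to which Lemma \ref{disk MM2} is applied in $\tilde{\Psi}_0$, so that the two disk-count conclusions directly contradict one another.
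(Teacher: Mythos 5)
Your proof is correct and follows essentially the same route as the paper: the paper's own proof of this lemma is literally a one-line reduction to the argument of \Cref{big a-scope}, substituting \Cref{disk MM2} for condition (MM2) and \Cref{Gamma special cell} for \Cref{Gamma_a special cell}, which is exactly the substitution you carry out (with the correct adjustment from ``three consecutive'' to ``two consecutive'' and the resulting bound $L-4-2=L-6$). Your fleshed-out version, including the observation that subdiagrams of the $1$-minimal completion are themselves $1$-minimal and that spoke ends must land on $\textbf{s}_0^{-1}$ or $\textbf{t}_0$, matches the intended argument at the paper's own level of detail.
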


\begin{proof}

The proof follows much the same outline as that of \Cref{big a-scope}, using \Cref{disk MM2} in place of condition (MM2) and \Cref{Gamma special cell} in place of \Cref{Gamma_a special cell}.

\end{proof}

Similarly, the following statement is proved in much the same way as \Cref{pure a-scope}, using iterated applications of \Cref{big scope}:

\begin{lemma} \label{pure scope}

Let $\textbf{t}$ be a subpath of a boundary component of a reduced diagram $\Delta$ over the disk presentation of $G_\Omega(\textbf{M}^\pazocal{L})$.  Suppose $\Delta$ contains a scope $\Psi_0$ on $\textbf{t}$ such that the completion $\tilde{\Psi}_0$ is $1$-minimal.  If $\Psi_0$ is not a pure scope, then there exists a pure scope $\Psi$ on $\textbf{t}$ of size $\ell\geq L-6$ such that the completion $\tilde{\Psi}_1$ is a subdiagram of $\Psi_0$.

\end{lemma}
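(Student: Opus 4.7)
The plan is to mirror the proof of Lemma \ref{pure a-scope} exactly, substituting Lemma \ref{big scope} for Lemma \ref{big a-scope} and counting disks in the scope in place of area. If $\Psi_0$ is already pure, the conclusion is trivial, so assume otherwise. Apply Lemma \ref{big scope} to $\Psi_0$ to produce a scope $\Psi_1$ on $\textbf{t}$ of size $\ell \geq L-6$ whose completion $\tilde{\Psi}_1$ is a subdiagram of $\Psi_0$. If $\Psi_1$ is pure, we are done; otherwise apply Lemma \ref{big scope} again, and continue.

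The termination measure I would use is the number of disks, $d(\Psi)$, contained in the scope. The key observation is that at each stage $k$, the completion $\tilde{\Psi}_{k+1}$ lies inside $\Psi_k$, and the associated disk $\Pi_{k+1}$ of the new scope $\Psi_{k+1}$ lies in $\Psi_k$ (it is not the associated disk $\Pi_k$ of $\Psi_k$, which is excluded from $\Psi_k$). Hence
\[
d(\Psi_{k+1}) = d(\tilde{\Psi}_{k+1}) - 1 \leq d(\Psi_k) - 1,
\]
so the number of disks strictly decreases at each step. Since $d(\Psi_k) \geq 0$, the process terminates after finitely many steps with a scope $\Psi$ that must be pure. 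Transitivity of subdiagram containment gives $\tilde{\Psi} \subseteq \Psi_0$, and the size bound $\ell \geq L-6$ is preserved from the final application of Lemma \ref{big scope}.

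The only technical point requiring care is that in order to invoke Lemma \ref{big scope} at the $k$-th stage, we need $\tilde{\Psi}_k$ to be $1$-minimal. This is inherited from $\tilde{\Psi}_0$ by a standard argument: if some subdiagram $\tilde{\Psi}_k$ admitted a replacement $\Gamma$ with $\lab(\partial\Gamma) \equiv \lab(\partial\tilde{\Psi}_k)$ and strictly smaller $1$-signature, then excising $\tilde{\Psi}_k$ from $\tilde{\Psi}_0$ and pasting $\Gamma$ in its place would produce a diagram with the same contour label as $\tilde{\Psi}_0$ but strictly fewer disks, contradicting the $1$-minimality of $\tilde{\Psi}_0$. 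This observation, which uses only that disks are counted globally and that $\tilde{\Psi}_k$ is circular, is the one place where the argument departs from the purely local reasoning used in the proof of Lemma \ref{pure a-scope}, and is the main thing to verify carefully; everything else is a direct transposition of that proof.
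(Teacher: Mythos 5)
Your proposal is correct and matches the paper's argument, which simply iterates Lemma \ref{big scope} exactly as Lemma \ref{pure a-scope} iterates Lemma \ref{big a-scope}; your use of the disk count as the termination measure (in place of area) and your explicit excise-and-paste verification that $1$-minimality passes to the completions $\tilde{\Psi}_k$ are sound and in the spirit of what the paper leaves implicit (the same heredity argument appears in Lemma \ref{minimal counterexample annuli}).
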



\medskip

\subsection{Transposition} \label{sec-transposition} \

Next, we define a process that allows us to move a $\theta$-band about an $a$-cell or a disk.  These operations appear similar as those in \cite{W}; however, the setting of the generalized $S$-machine $\textbf{M}^\pazocal{L}$ introduces some new obstructions for each.

\subsubsection{Transposition of a $\theta$-band and an $a$-cell} \

Let $\Delta$ be a circular diagram over the disk presentation of $G_\Omega(\textbf{M}^\pazocal{L})$ containing an $a$-cell $\pi$ and a reduced $\theta$-band $\pazocal{T}$ such that $|E(\pi,\pazocal{T})|\geq5$.  By \Cref{Omega inverses}, there exists $w\in\Omega$ such that $\lab(\partial\pi)\equiv w$.  Let $\theta$ be the history of $\pazocal{T}$.

Suppose $\partial\pi=\textbf{s}_1\textbf{s}_2$ where $\textbf{s}_1$ is a path satisfying:

\begin{itemize}

\item $\textbf{s}_1$ contains at least $5$ edges of $E(\pi,\pazocal{T})$

\item The first and last edges of $\textbf{s}_1$ are edges of $E(\pi,\pazocal{T})$

\item $\textbf{s}_1^{-1}$ is a subpath of $\textbf{bot}(\pazocal{T})$.

\end{itemize}

Then, let $\textbf{y}$ and $\textbf{z}$ be the minimal (perhaps trivial) subpaths of $\textbf{bot}(\pazocal{T})$ such that there exists a subband $\pazocal{T}'$ of $\pazocal{T}$ with $\textbf{bot}(\pazocal{T}')=\textbf{y}\textbf{s}_1^{-1}\textbf{z}_1$ (see \Cref{a-transposition}(a)).  Denote by $\Gamma$ the subdiagram of $\Delta$ consisting of $\pi$ and $\pazocal{T}'$.

Suppose $\textbf{y}$ is a non-trivial path.  Then, there exists a cell $\gamma$ of $\pazocal{T}'$ such that $\textbf{y}$ is a subpath of $\partial\gamma$.  In this case, $\gamma$ is a $(\theta,\pazocal{A})$-cell and the last edge of $\textbf{s}_1$ is an $\pazocal{A}$-edge of $(\partial\gamma)^{-1}$.  By the definition of the $(\theta,\pazocal{A})$-relations, $\lab(\textbf{y})\in F(\pazocal{B})$ and must be $\theta$-applicable with $\lab(\textbf{y})\cdot\theta\equiv\lab(\textbf{y})$.  

\begin{figure}[H]
\centering
\begin{subfigure}[b]{0.48\textwidth}
\centering
\raisebox{0.3825in}{\includegraphics[scale=0.7]{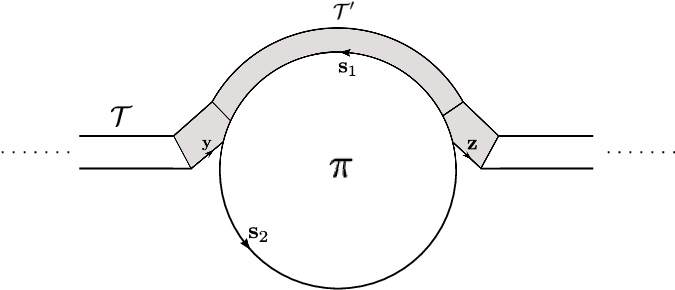}}
\caption{The subdiagram $\Gamma$}
\end{subfigure}\hfill
\begin{subfigure}[b]{0.48\textwidth}
\centering
\includegraphics[scale=0.7]{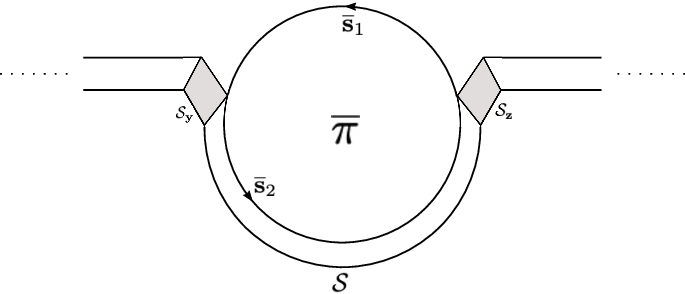}
\caption{The resulting subdiagram $\Gamma'$}
\end{subfigure}
\caption{The transposition of a $\theta$-band with an $a$-cell}
\label{a-transposition}
\end{figure}

Hence, \Cref{one-rule semi-computations are theta-bands} yields a $\theta$-band $\pazocal{S}_{\textbf{y}}$ with history $\theta$ consisting entirely of $(\theta,b)$-cells such that $\lab(\textbf{bot}(\pazocal{S}_{\textbf{y}}))\equiv\lab(\textbf{top}(\pazocal{S}_{\textbf{y}}))\equiv \lab(\textbf{y})$.

Similarly, if $\textbf{z}$ is a non-trivial path, then $\lab(\textbf{z})\in F(\pazocal{B})$ and \Cref{one-rule semi-computations are theta-bands} produces an analogous $\theta$-band $\pazocal{S}_{\textbf{z}}$.

In particular, $\textbf{bot}(\pazocal{T}')$ contains no $q$-edges, and hence \Cref{basic annuli 1} implies that $\pazocal{T}'$ consists entirely of $(\theta,a)$-cells.

Let $\textbf{s}_1''$ be the maximal subpath of $\textbf{s}_1$ such that there exists a subband $\pazocal{T}''$ of $\pazocal{T}'$ satisfying $\textbf{bot}(\pazocal{T}'')=(\textbf{s}_1'')^{-1}$.  Then, at most two cells of $\pazocal{T}'$ are not contained in $\pazocal{T}''$, and so the makeup of the relations implies $|\textbf{s}_1''|_\pazocal{A}\geq|E(\pi,\pazocal{T})|-2\geq3$.

Applying \Cref{theta-band is one-rule semi-computation} to the $\theta$-band $\pazocal{T}''$ then implies that $\lab((\textbf{s}_1'')^{-1})\equiv\lab(\textbf{s}_1'')^{-1}$ is $\theta$-applicable, and so \Cref{semi subword} implies $\lab(\textbf{s}_1'')$ is also $\theta$-applicable.  In particular, $\lab(\textbf{s}_1'')$ is a $\theta$-applicable subword of a cyclic permutation of $w$ with $|\textbf{s}_1''|_\pazocal{A}\geq3$, so that \Cref{semi extension} implies that $w$ is also $\theta$-applicable.

Let $v_i\equiv\lab(\textbf{s}_i)$ for $i=1,2$.  As $\lab(\textbf{s}_1\textbf{s}_2)$ is a cyclic permutation of $w$, \Cref{semi subword} implies $v_1$, $v_2$, and $v_1v_2$ are all $\theta$-applicable with $(v_1v_2)\cdot\theta=(v_1\cdot\theta)(v_2\cdot\theta)$.

Further, letting $u_{\textbf{y}}=\lab(\textbf{y})$ and $u_{\textbf{z}}=\lab(\textbf{z})$ (with these words taken to be trivial if the corresponding path is trivial), applying \Cref{theta-band is one-rule semi-computation} to $\pazocal{T}'$ implies $u_{\textbf{y}}v_1^{-1}u_{\textbf{z}}$ is $\theta$-applicable with $\lab(\textbf{top}(\pazocal{T}'))\equiv(u_{\textbf{y}}v_1^{-1}u_{\textbf{z}})\cdot\theta=u_{\textbf{y}}(v_1\cdot\theta)^{-1}u_{\textbf{z}}$.

Let $\pazocal{S}$ be the $\theta$-band given by \Cref{one-rule semi-computations are theta-bands} corresponding to the semi-computation $v_2\to(v_2\cdot\theta)$.  So, $\pazocal{S}$ has history $\theta$ with $\textbf{bot}(\pazocal{S})\equiv v_2$ and $\textbf{top}(\pazocal{S})\equiv v_2\cdot\theta$.

As $w\in\Omega$, there exists a word $w'\in\pazocal{E}(\Lambda^\pazocal{A})$ which is freely conjugate to $w$.  Let $p$ be a word such that $w'\equiv p^{-1}wp$.  Since $w$ is $\theta$-applicable, \Cref{Lambda not cyclically reduced} implies $w'$ is also $\theta$-applicable.  Since \Cref{semi subword} also implies $p$ is $\theta$-applicable, $w'\cdot\theta=(p\cdot\theta)^{-1}(w\cdot\theta)(p\cdot\theta)$.  But $w'\cdot\theta\in\pazocal{E}(\Lambda^\pazocal{A})$ by definition.  Hence, $w\cdot\theta$ is freely conjugate to $w'\cdot\theta\in\pazocal{E}(\Lambda^\pazocal{A})$, and so is freely conjugate to an element of $\Omega$.

In particular, $0$-refining a single $a$-cell, one can construct a circular diagram $\bar{\pi}$ with $\lab(\partial\bar{\pi})\equiv w\cdot\theta$ such that $\tau_3(\bar{\pi})=(0,0,1)$.  Then, $\lab(\partial\bar{\pi})$ is a cyclic permutation of $(v_1v_2)\cdot\theta$, and so using $0$-refinement we may assume $\partial\bar{\pi}=\bar{\textbf{s}}_1\bar{\textbf{s}}_2$ such that $\lab(\bar{\textbf{s}}_i)\equiv v_i\cdot\theta$.

So, we may glue $\pazocal{S}$ to $\bar{\pi}$ by identifying $\textbf{top}(\pazocal{S})$ and $\bar{\textbf{s}}_2$.  Then, perhaps pasting $\pazocal{S}_{\textbf{y}}$ and $\pazocal{S}_{\textbf{z}}$ to the ends of $\pazocal{S}$ (and making any necessary cancellations) then produces a reduced circular diagram $\Gamma'$ with $\lab(\partial\Gamma')\equiv\lab(\partial\Gamma)$ (see \Cref{a-transposition}(b)).

In this case, excising $\Gamma$ from $\Delta$ and replacing it with $\Gamma'$ is called the \textit{transposition} of the $\theta$-band $\pazocal{T}$ with the $a$-cell $\pi$ along $\textbf{s}_1$.

Note that the circular diagram $\Delta'$ resulting from the transposition has the same contour label as $\Delta$. Further, $\tau(\Gamma)=(0,0,1,|\textbf{s}_1|_\pazocal{A})$ and $\tau(\Gamma')=(0,0,1,|\partial\pi|_\pazocal{A}-|\textbf{s}_1|_\pazocal{A})$.  Hence, if in this setting $|\textbf{s}_1|_\pazocal{A}>\frac{1}{2}|\partial\pi|_\pazocal{A}$, then this transposition demonstrates that $\Delta$ is not minimal.

Indeed, the next statement shows that this observation applies in a more general setting:

\begin{lemma}[Compare with Lemma 9.9 of \cite{W}] \label{minimal diskless is M-minimal}

Every smooth minimal diagram over $M_\Omega(\textbf{M}^\pazocal{L})$ is $M$-minimal.

\end{lemma}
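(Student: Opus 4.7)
The plan is to verify both conditions (MM1) and (MM2) defining $M$-minimality. Condition (MM2) is immediate from \Cref{minimal MM2}: minimal diagrams are $3$-minimal. For (MM1), I argue by contradiction via the transposition surgery defined above. Suppose some pair $(\pi,\pazocal{T})$ in a smooth minimal diagram $\Delta$ satisfies $|E(\pi,\pazocal{T})| > \frac{1}{2}|\partial\pi|_\pazocal{A}$. Let $\textbf{e}_1,\dots,\textbf{e}_\ell$ be the edges of $E(\pi,\pazocal{T})$ ordered along $\partial\pi$, and let $\Psi_0$ denote the subdiagram of $\Delta$ bounded by $\pazocal{U}(\textbf{e}_1)$, $\pazocal{U}(\textbf{e}_\ell)$, the subpath of $\partial\pi$ from $(\textbf{e}_1)_-$ to $(\textbf{e}_\ell)_+$, and the corresponding subpath $\textbf{t}$ of $\textbf{bot}(\pazocal{T})^{-1}$; by construction $\Psi_0$ contains neither $\pi$ nor any cell of $\pazocal{T}$ strictly between the two crossings. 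Among all failing pairs in $\Delta$, I choose $(\pi,\pazocal{T})$ minimizing the area of $\Psi_0$.

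The heart of the argument is to show that $\Psi_0$ consists of a single subband of $\pazocal{T}$. First, $\Psi_0$ contains no $(\theta,q)$-cell: a $q$-band through such a cell could only terminate on another disk, on $\partial\Delta$, or on the $q$-edges of $\pazocal{T}$, none of which is available inside $\Psi_0$ without violating \Cref{basic annuli 1}. Similarly, any $\theta$-band inside $\Psi_0$ other than a subband of $\pazocal{T}$ must have both ends on $\partial\Psi_0$; since the $\theta$-labels on $\partial\Psi_0$ outside the sides of the bounding $\pazocal{A}$-bands all match the single-rule history of $\pazocal{T}$, any such band produces a cancellable configuration, contradicting reducedness. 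Second, I rule out $a$-cells in $\Psi_0$: otherwise, applying \Cref{Gamma_a special cell} to $\Psi_0$ (which, as a subdiagram of a minimal diagram, is itself $3$-minimal, hence satisfies (MM2) via \Cref{minimal MM2}) produces an $a$-cell $\pi' \subset \Psi_0$ with at least $|\partial\pi'|_\pazocal{A} - 6$ positive $\pazocal{A}$-bands ending on $\partial\Psi_0$. The sides of $\pazocal{U}(\textbf{e}_1)$ and $\pazocal{U}(\textbf{e}_\ell)$ carry no $\pazocal{A}$-edges, and (MM2) allows at most two consecutive such bands of $\pi'$ to terminate on $\partial\pi$; hence at least $|\partial\pi'|_\pazocal{A} - 10 > \frac{1}{2}|\partial\pi'|_\pazocal{A}$ of them (using the parameter choice $C \geq 21$) must end on $\textbf{t}$, i.e., cross $\pazocal{T}$. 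This makes $(\pi',\pazocal{T})$ a failing pair whose associated subdiagram is properly contained in $\Psi_0$, contradicting the minimal choice.

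With $\Psi_0$ collapsed to a single subband of $\pazocal{T}$, the cell $\pi$ is directly adjacent to $\pazocal{T}$ along the subpath $\textbf{s}_1$ of $\partial\pi$ from $(\textbf{e}_1)_-$ to $(\textbf{e}_\ell)_+$. Then $\textbf{s}_1^{-1}$ is a subpath of $\textbf{bot}(\pazocal{T})$, $|\textbf{s}_1|_\pazocal{A} = |E(\pi,\pazocal{T})| > \frac{1}{2}|\partial\pi|_\pazocal{A}$, and the parameter choice $C \geq 10$ forces $|E(\pi,\pazocal{T})| \geq 5$; all hypotheses of the transposition surgery are therefore met. Applying it along $\textbf{s}_1$ yields a diagram with the same contour label as $\Delta$ but with $\tau_3$ preserved and $\sigma_4$ strictly decreased, contradicting the minimality of $\Delta$.

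The hard part will be the $a$-scope-style elimination of $a$-cells from $\Psi_0$: certifying that the new pair $(\pi',\pazocal{T})$ genuinely has a strictly smaller associated subdiagram and does violate (MM1) requires careful bookkeeping with (MM2), smoothness, and the structural constraints on $\partial\Psi_0$. Ruling out parallel $\theta$-bands in $\Psi_0$ demands an equally delicate reducedness argument leveraging the uniformity of $\theta$-labels along $\textbf{t}$.
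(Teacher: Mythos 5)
Your overall strategy is the paper's: establish (MM2) via \Cref{minimal MM2}, take an extremal region between a violating $a$-cell $\pi$ and $\theta$-band $\pazocal{T}$, empty it of $(\theta,q)$-cells, extra $\theta$-bands and $a$-cells, and then transpose $\pi$ with $\pazocal{T}$ to drop $\sigma_4$ while keeping $\tau_3$, contradicting minimality. The $(\theta,q)$-cell exclusion, the $a$-cell elimination via \Cref{Gamma_a special cell}/(MM2), and the final transposition step are essentially sound. But there is a genuine gap in the step that rules out $\theta$-bands inside $\Psi_0$ other than subbands of $\pazocal{T}$. Your justification --- that the $\theta$-labels on $\partial\Psi_0$ away from the bounding $\pazocal{A}$-bands match the single rule of $\pazocal{T}$, so any such band yields a cancellable pair --- is false on both counts. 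The only $\theta$-edges of $\partial\Psi_0$ lie on the sides of $\pazocal{U}(\textbf{e}_1)$ and $\pazocal{U}(\textbf{e}_\ell)$ (the subpath of $\partial\pi$ and the subpath of $\textbf{bot}(\pazocal{T})^{-1}$ carry none), and those sides spell the arbitrary reduced histories of the two $\pazocal{A}$-bands, not copies of the rule of $\pazocal{T}$. Moreover, even a $\theta$-band with the same rule as $\pazocal{T}$ crossing the same $\pazocal{A}$-bands produces no cancellable cells: reducedness only forbids consecutive mutually inverse crossings along a band, so no contradiction with reducedness arises. (The case of both ends on the side of the same bounding $\pazocal{A}$-band is excluded by a $(\theta,\pazocal{A})$-annulus, i.e.\ \Cref{basic annuli 2}, again not by cancellation.)

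The correct way to dispatch this case --- and the reason it interacts badly with your extremal choice --- is the following. Such a band extends to a maximal $\theta$-band $\pazocal{T}'$ of $\Delta$ that crosses every $\pazocal{U}(\textbf{e}_i)$, so $(\pi,\pazocal{T}')$ is another failing pair with $E(\pi,\pazocal{T})\subseteq E(\pi,\pazocal{T}')$, and one wants the associated region to shrink. With your minimization over pairs only, where the region is determined by \emph{all} of $E(\pi,\pazocal{T}')$, this need not happen: $E(\pi,\pazocal{T}')$ may contain $\pazocal{A}$-edges of $\partial\pi$ outside the arc from $\textbf{e}_1$ to $\textbf{e}_\ell$, so the canonical region for $(\pi,\pazocal{T}')$ can pick up cells not in $\Psi_0$ and is not comparable to it. This is precisely why the paper minimizes over pairs \emph{together with} a chosen tuple of at least $\frac{1}{2}|\partial\pi|_\pazocal{A}$ crossing $\pazocal{A}$-bands: the same tuple remains admissible for $\pazocal{T}'$, and the subdiagram it bounds is then a proper subdiagram of the old one (missing the cells of the old $\theta$-band), which yields the needed descent. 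Your argument can be repaired by adopting that refined extremal choice (or otherwise arguing the comparison of regions), but as written this case is not closed.
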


\begin{proof}

Supposing $\Delta$ is a counterexample, \Cref{minimal MM2} implies does not satisfy (MM1).  Hence, there exists a pair $(\pi,\pazocal{T})$ such that $\pi$ is an $a$-cell and $\pazocal{T}$ is a maximal $\theta$-band in $\Delta$ satisfying $|E(\pi,\pazocal{T})|>\frac{1}{2}|\partial\pi|_\pazocal{A}$.  Let $\pazocal{P}(\Delta)$ be the set of all such pairs in $\Delta$.

For any $(\pi,\pazocal{T})\in\pazocal{P}(\Delta)$, define $\pazocal{B}(\pi,\pazocal{T})$ to be the set of all tuples $(\pazocal{B}_1,\dots,\pazocal{B}_s)$ consisting of $s>\frac{1}{2}|\partial\pi|_\pazocal{A}$ maximal $\pazocal{A}$-bands corresponding to edges of $E(\pi,\pazocal{T})$ and enumerated based on where they cross $\pazocal{T}$.  For fixed $(\pi,\pazocal{T})\in\pazocal{P}(\Delta)$ and $(\pazocal{B}_1,\dots,\pazocal{B}_s)\in\pazocal{B}(\pi,\pazocal{T})$, let $\pazocal{T}_0$ be the minimal subband of $\pazocal{T}$ such that each $\pazocal{B}_i$ crosses $\pazocal{T}_0$.  Then, there exists a subdiagram $\Delta_0$ not containing $\pi$ which is bounded by the $\theta$-band $\pazocal{T}_0$, subbands of the $\pazocal{A}$-bands $\pazocal{B}_i$, and a subpath $\textbf{x}$ of $\partial\pi$ (see \Cref{fig-a-bands}).

Now, fix $(\pi,\pazocal{T})\in\pazocal{P}(\Delta)$ and $(\pazocal{B}_1,\dots,\pazocal{B}_s)\in\pazocal{B}(\pi,\pazocal{T})$ such that the corresponding subdiagram $\Delta_0$ is of minimal area.

By \Cref{basic annuli 1}, $\Delta_0$ cannot contain a maximal $q$-band, as then this band would bound a $(\theta,q)$-annulus with a subband of $\pazocal{T}_0$.  Further, if $\Delta_0$ contains a maximal $\theta$-band other than $\pazocal{T}_0$, then by \Cref{M(M) annuli} it must cross each $\pazocal{B}_i$, producing a pair that contradicts the minimality of $\text{Area}(\Delta_0)$.


\begin{figure}[H]
\centering
\includegraphics[scale=1]{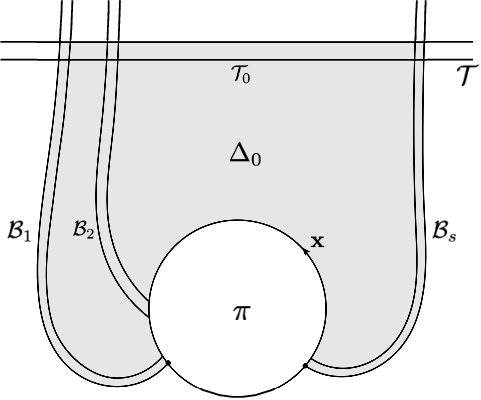}
\caption{Lemma \ref{minimal diskless is M-minimal}}
\label{fig-a-bands}
\end{figure}




Now, suppose $\Delta_0$ contains an $a$-cell.  Let $\tilde{\Delta}_0$ be the circular diagram consisting of both $\pi$ and $\Delta_0$ and let $\textbf{t}_0$ be the subpath of $\partial\tilde{\Delta}_0$ corresponding to the side of $\pazocal{T}_0$.  Then, as a subdiagram of $\tilde{\Delta}_0$, it then follows that $\Delta_0$ is a big $a$-scope on $\textbf{t}_0$ which is not pure.

Note that the completion of $\Delta_0$ is then $\tilde{\Delta}_0$, and so is smooth by hypothesis and satisfies (MM2) by \Cref{minimal MM2}.  \Cref{pure a-scope} then implies there exists a pure big $a$-scope $\Delta_1$ on $\textbf{t}_0$ such that the completion $\tilde{\Delta}_1$ is a subdiagram of $\Delta_0$.  In particular, $\text{Area}(\Delta_1)<\text{Area}(\Delta_0)$.  But then the associated $a$-cell and $\pazocal{T}$ form a pair which again contradicts the minimality of $\text{Area}(\Delta_0)$.

Thus, $\Delta_0$ cannot contain any cells apart from those of $\pazocal{T}_0$, {\frenchspacing i.e. $\textbf{x}$ or $\textbf{x}^{-1}$ is a subpath of a side of $\pazocal{T}$}. Without loss of generality assume that $\textbf{x}^{-1}$ is a subpath of $\textbf{bot}(\pazocal{T}_0)$.  A parameter choice for $C$ implies $s\geq5$, we may transpose $\pazocal{T}$ and $\pi$ along $\textbf{x}$ to produce a reduced circular diagram $\Delta'$.  

But then $\lab(\partial\Delta')\equiv\lab(\partial\Delta)$ and $\tau(\Delta')<\tau(\Delta)$, contradicting the hypothesis that $\Delta$ is minimal.

\end{proof}

Hence, \Cref{minimal diskless is M-minimal} implies the following analogue of \Cref{M-minimal is smooth}:

\begin{lemma} \label{minimal is smooth}

Any reduced minimal diagram is smooth.

\end{lemma}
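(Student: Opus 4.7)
The plan is to mirror the proof of \Cref{M-minimal is smooth}, handling the additional complication that minimal diagrams over the disk presentation may contain disks. Suppose for contradiction that $\Delta$ is a reduced minimal diagram containing a pinched $a$-cell, and choose a pair $(\pi,\textbf{s})$, with $\pi$ a pinched $a$-cell and $\textbf{s}$ a pinched subpath of $\partial\pi$, that minimizes $\text{wt}(\Psi_{\pi,\textbf{s}})$. Exactly the nesting argument used in \Cref{M-minimal is smooth} shows that $\Psi_{\pi,\textbf{s}}$ itself contains no pinched $a$-cell, i.e., is smooth. As a subdiagram of the minimal $\Delta$, $\Psi_{\pi,\textbf{s}}$ is itself minimal (signatures are additive over subdiagrams, so any improvement lifts), hence $3$-minimal, and so \Cref{minimal MM2} shows it satisfies condition (MM2). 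Finally, since $\lab(\partial\pi)\in\Omega\subseteq F(\pazocal{A}\cup\pazocal{A}_1\cup\pazocal{B})$, its subpath $\textbf{p}$ — the reversed contour of $\Psi_{\pi,\textbf{s}}$ — contains only $\pazocal{A}$- and $b$-edges, and in particular no $q$- or $\theta$-edges.

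The crucial new step is to rule out disks inside $\Psi_{\pi,\textbf{s}}$. If a disk were present, \Cref{Gamma special cell}, applied to the $1$-minimal diagram $\Psi_{\pi,\textbf{s}}$, would produce a disk with $L-4$ consecutive $t$-spokes ending on $\partial\Psi_{\pi,\textbf{s}}=\textbf{p}^{-1}$. But $\textbf{p}$ has no $q$-edges, so no $t$-spoke can terminate there --- a contradiction. Hence $\Psi_{\pi,\textbf{s}}$ contains no disks, and by \Cref{minimal diskless is M-minimal} it is $M$-minimal. Combining \Cref{basic annuli 1}, \Cref{M-minimal theta-annuli}, and the absence of $q$- and $\theta$-edges on $\partial\Psi_{\pi,\textbf{s}}$ then rules out all $(\theta,q)$-cells and $\theta$-bands, so every positive cell of $\Psi_{\pi,\textbf{s}}$ is an $a$-cell.

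The remainder proceeds essentially as in \Cref{M-minimal is smooth}. Since $\textbf{p}$ is a non-empty subpath of the cyclically reduced $\partial\pi$, $\lab(\textbf{p})$ is a non-trivial reduced word, forcing $\Psi_{\pi,\textbf{s}}$ to contain at least one $a$-cell. Applying \Cref{Gamma_a special cell} to the smooth diagram $\Psi_{\pi,\textbf{s}}$ (which satisfies (MM2)) provides an $a$-cell $\pi_0$ and at least $|\partial\pi_0|_\pazocal{A}-6\geq C-6$ consecutive $\pazocal{A}$-edges of $\partial\pi_0$ whose maximal $\pazocal{A}$-bands all end on $\textbf{p}^{-1}\subseteq\partial\pi$, with no $a$-cell between them. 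By the parameter choice $C\geq9$, one may select three such consecutive edges; in $\Delta$, the pair $(\pi_0,\pi)$ together with the corresponding three $\pazocal{A}$-bands bounds a subdiagram containing no $a$-cell and neither $\pi_0$ nor $\pi$, in direct violation of condition (MM2) --- contradicting \Cref{minimal MM2}.

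The main obstacle is the disk-removal step in the second paragraph: everything hinges on the observation that $\partial\Psi_{\pi,\textbf{s}}$ consists purely of $a$-letters from the `special' input sector, so that no $t$-spoke of an interior disk can possibly reach the boundary. This is what reduces the situation from the disk-presentation setting to the disk-free $M_\Omega$-setting, after which \Cref{minimal diskless is M-minimal} and the argument of \Cref{M-minimal is smooth} apply directly.
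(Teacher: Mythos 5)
Your argument is correct and follows essentially the same route as the paper's own proof: minimize $\text{wt}(\Psi_{\pi,\textbf{s}})$ to get smoothness by nesting, use the all-$a$ boundary together with \Cref{Gamma special cell} to exclude disks, conclude $M$-minimality via \Cref{minimal diskless is M-minimal}, and then rerun the \Cref{M-minimal is smooth} argument with \Cref{Gamma_a special cell} to contradict the condition of \Cref{minimal MM2}. The only differences are presentational — you make explicit that subdiagrams of minimal diagrams are minimal (hence $1$- and $3$-minimal), which the paper leaves implicit, and your final contradiction is phrased against (MM2), which is indeed the correct analogue of the diskless case.
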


\begin{proof}

Suppose to the contrary that the reduced minimal diagram $\Delta$ contains a pinched $a$-cell.  Choose an $a$-cell $\pi$ and a pinched subpath $\textbf{s}$ such that the subdiagram $\Psi_{\pi,\textbf{s}}$ has minimal weight.

As $\partial\Psi_{\pi,\textbf{s}}$ consists entirely of $a$-edges, by \Cref{Gamma special cell} it cannot contain a disk.  So, since the minimality of its weight implies $\Psi_{\pi,\textbf{s}}$ must be smooth, $\Psi_{\pi,\textbf{s}}$ is an $M$-minimal diagram by \Cref{minimal diskless is M-minimal}.

But then we arrive at a contradiction in exactly the same way as in the proof of \Cref{M-minimal is smooth}: 

Lemmas \ref{basic annuli 1} and \ref{M-minimal theta-annuli} imply that any cell of $\Psi_{\pi,\textbf{s}}$ must be an $a$-cell.  So, since $\lab(\partial\Psi_{\pi,\textbf{s}})$ is non-trivial, \Cref{Gamma_a special cell} yields an $a$-cell $\pi'$ in $\Psi_{\pi,\textbf{s}}$ which, together with $\pi$, produces a counterexample to condition (MM1).

\end{proof}




\subsubsection{Transposition of a $\theta$-band and a disk} \

We now adjust the above procedure in order to move a $\theta$-band about a disk.  Again, this is done in a manner similar to that of \cite{W} (and \cite{O18}, \cite{OS19}, etc), but with several more complications.

Let $\Delta$ be a circular diagram over the disk presentation of $G_\Omega(\textbf{M}^\pazocal{L})$ containing a disk $\Pi$ and a reduced $\theta$-band $\pazocal{T}$.  Let $\theta$ be the history of $\pazocal{T}$ and suppose $\lab(\partial\Pi)\equiv W^{-\eps}$ where $\eps\in\{\pm1\}$ and $W$ is an accepted configuration of $\textbf{M}^\pazocal{L}$ with $\ell(W)\leq1$.

Suppose the following conditions hold:

\begin{enumerate}

\item $W$ is $\theta$-admissible with $\ell(W\cdot\theta)\leq1$

\item There exists a decomposition $\partial\Pi=\textbf{s}_1\textbf{s}_2$ where $\textbf{s}_1$ is a path satisfying:

\begin{itemize}

\item $\textbf{s}_1$ contains $\ell\geq2$ $t$-edges

\item The first and last edges of $\textbf{s}_1$ are $t$-edges


\item $\textbf{s}_1^{-1}$ is a subpath of $\textbf{bot}(\pazocal{T})$.

\end{itemize}

\end{enumerate}

Note that each of the $\ell$ $t$-edges of $\textbf{s}_1^{-1}$ then correspond to positive $t$-spokes $\pazocal{Q}_1,\dots,\pazocal{Q}_\ell$ of $\Pi$ which cross $\pazocal{T}$.  Let $\pazocal{T}'$ be the minimal subband of $\pazocal{T}$ which crosses each of these $t$-spokes.  Then, since every $(R_0^\pazocal{L}(i))^{-1}\{t(i+1)\}$- and $\{t(i+1)\}Q_0^\pazocal{L}(i+1)$-sector is locked by each rule of $\textbf{M}^\pazocal{L}$, $\textbf{s}_1^{-1}=\textbf{bot}(\pazocal{T}')$.  In particular, $\Pi$ and $\pazocal{T}'$ form a subdiagram $\Gamma$ of $\Delta$ (see \Cref{fig-transposition}(a)).  

Let $V_1\equiv\lab(\textbf{s}_1)$ and $V_2\equiv\lab(\textbf{s}_2)$.  Then, since $V_1V_2$ is a cyclic permutation of the admissible word $W^{-\eps}$ and $V_1$ begins and ends with a $t$-letter, $V_1$ and $V_2$ are both admissible words.  Moreover, since $W$ is $\theta$-admissible, $V_1$ and $V_2$ are $\theta$-admissible with $\left(V_1\cdot\theta\right)\left(V_2\cdot\theta\right)$ a cyclic permutation of $(W\cdot\theta)^{-\eps}$.  Hence, since $\ell(W\cdot\theta)\leq1$, we may construct the disk $\bar{\Pi}$ with $\partial\bar{\Pi}=\bar{\textbf{s}}_1\bar{\textbf{s}}_2$ with $\lab(\bar{\textbf{s}}_i)\equiv V_i\cdot\theta$ for $i=1,2$.

Applying \Cref{theta-bands are one-rule computations} to $\pazocal{T}'$ implies $\lab(\textbf{ttop}(\pazocal{T}'))\equiv\lab(\textbf{s}_1^{-1})\cdot\theta\equiv\left(V_1\cdot\theta\right)^{-1}\equiv\lab(\bar{\textbf{s}}_1)^{-1}$.  As the first and last cells of $\pazocal{T}'$ are $(\theta,t)$-cells, no trimming is necessary in the band $\pazocal{T}'$, i.e $\lab(\textbf{top}(\pazocal{T}'))\equiv\lab(\textbf{ttop}(\pazocal{T}'))\equiv\lab(\bar{\textbf{s}}_1)^{-1}$.


Conversely, construct the $\theta$-band $\pazocal{S}$ given by \Cref{one-rule computations are theta-bands} corresponding to the computation $V_2\to V_2\cdot\theta$.  So, $\pazocal{S}$ has history $\theta$ with $\lab(\textbf{tbot}(\pazocal{S}))\equiv V_2$ and $\lab(\textbf{ttop}(\pazocal{S}))\equiv V_2\cdot\theta$.  As above, no trimming is necessary in the band $\pazocal{S}$, so that $\lab(\textbf{bot}(\pazocal{S}))\equiv V_2$ and $\lab(\textbf{top}(\pazocal{S}))\equiv V_2\cdot\theta$.

\renewcommand\thesubfigure{\alph{subfigure}}
\begin{figure}[H]
\centering
\begin{subfigure}[b]{0.48\textwidth}
\centering
\raisebox{0.2in}{\includegraphics[width=3in]{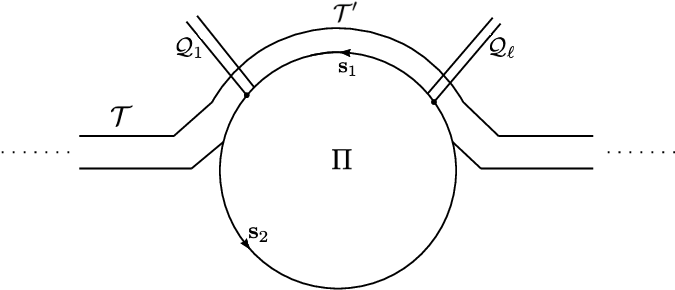}}
\caption{The subdiagram $\Gamma$}
\end{subfigure}\hfill
\begin{subfigure}[b]{0.48\textwidth}
\centering
\includegraphics[width=3in]{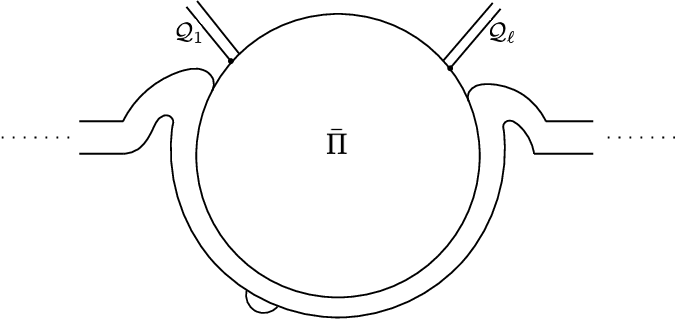}
\caption{The resulting subdiagram $\bar{\Gamma}$}
\end{subfigure}
\caption{The transposition of a $\theta$-band with a disk}
\label{fig-transposition}
\end{figure}

So, we may glue $\pazocal{S}$ to $\bar{\Pi}$ by identifying $\textbf{top}(\pazocal{S})$ and $\bar{\textbf{s}}_2$, producing a reduced circular diagram $\Gamma'$ which satisfies $\lab(\partial\Gamma')\equiv\lab(\partial\Gamma)$ (see \Cref{fig-transposition}(b)).

In this case, excising $\Gamma$ from $\Delta$ and replacing it with $\Gamma'$ is called the \textit{transposition} of the $\theta$-band $\pazocal{T}$ with the disk $\Pi$ along $\textbf{s}_1$.

Note that this procedure is identical to the one presented in previous literature (\cite{O18}, \cite{OS19}, etc.).  What's more, the circular diagram $\Delta'$ resulting from the transposition of $\pazocal{T}$ and $\Pi$ has the same contour label as $\Delta$.  Thus, since $\tau_2(\Gamma)=(1,\ell)$ and $\tau_2(\Gamma')=(1,L-1-\ell)$, if in this setting $\ell>(L-1)/2$, then the transposition demonstrates that $\Delta$ is not $2$-minimal, and so not minimal.

\medskip

Now, we adapt this procedure to a more general setting, assuming the disk is labelled by an arbitrary configuration and allowing some $a$-cells between the $\theta$-band and the disk.  Note that a similar procedure was discussed in \cite{W}, but as the machine in that setting was an $S$-machine (and not noisy), the adaptation was much simpler.

%
%
%
%

Let $\Phi$ be a reduced circular diagram over the disk presentation of $G_\Omega(\textbf{M}^\pazocal{L})$.  Suppose there exists a decomposition $\partial\Phi=\textbf{p}_1^{-1}\textbf{s}_2\textbf{p}_2\textbf{t}^{-1}$ such that:

\begin{itemize}

\item $\textbf{p}_1$ and $\textbf{p}_2$ are defining edges of a $\theta$-band $\pazocal{T}$ in $\Phi$

\item $\textbf{t}=\textbf{top}(\pazocal{T})$ and contains $\ell\geq2$ $t$-edges, two of which are its first and last edges

\item $\textbf{s}_2$ is a subpath of $\partial\Pi$ where $\Pi$ is the unique disk in $\Phi$

\end{itemize}

Let $\textbf{s}_1$ be the complement of $\textbf{s}_2$ in $\partial\Pi$.  By definition, $\textbf{p}_1^{-1}\textbf{s}_1^{-1}\textbf{p}_2\textbf{t}^{-1}$ is the contour of a subdiagram $\Psi$ of $\Phi$ which contains $\pazocal{T}$ but not $\Pi$.  If any cell of $\Psi$ other than those comprising $\pazocal{T}$ is an $a$-cell, then the diagram $\Phi$ is called a \textit{profile} and the subdiagram $\Psi$ is called its \textit{half-hat}.

In this case, $\ell$ is called the \textit{size} of the profile $\Phi$.  Note that necessarily $\tau_2(\Phi)=(1,\ell)$.

The \textit{history} of $\Phi$ is taken to be the history of the \textit{associated $\theta$-band} $\pazocal{T}$.  Further, the accepted configuration $W$ of $\textbf{M}^\pazocal{L}$ such that $\lab(\partial\Pi)^{-\eps}\equiv W$ for some $\eps\in\{\pm1\}$ is called the \textit{defining configuration} of $\Phi$.  Finally, the decomposition $\textbf{p}_1^{-1}\textbf{s}_2\textbf{p}_2\textbf{t}^{-1}$ is called the \textit{standard factorization} of $\partial\Phi$, while $\textbf{s}_1$ is called the \textit{hidden path} of $\Phi$.

If the half-hat $\Psi$ is a minimal diagram, then $\Phi$ is called \textit{flat profile}.  Conversely, if $\Psi$ contains no $a$-cells (i.e $\Psi$ is simply the associated $\theta$-band) then $\Phi$ is called a \textit{simple profile}.

%
%
%
%
%
%
%



\begin{lemma} \label{minimal half-hat}

Let $\Phi$ be a profile of size $\ell$ with history $\theta$ and defining configuration $W$.  Then, there exists a reduced circular diagram $\Phi'$ over the disk presentation of $G_\Omega(\textbf{M})$ such that:

\begin{enumerate}

\item $\lab(\partial\Phi')\equiv\lab(\partial\Phi)$

\item $\tau_2(\Phi')=\tau_2(\Phi)$

\item $\Phi'$ contains a subdiagram $\Phi_0'$ which is a flat profile of size $\ell$ with history $\theta$ and defining configuration $W$.

\end{enumerate}

\end{lemma}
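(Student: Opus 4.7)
The plan is to perform surgery on the half-hat $\Psi$ of $\Phi$ that replaces the $a$-cell region between the associated $\theta$-band $\pazocal{T}$ and the disk $\Pi$ with a minimal one, leaving $\pazocal{T}$ and $\Pi$ untouched. In the standard factorization $\partial\Psi = \textbf{p}_1^{-1}\textbf{s}_1^{-1}\textbf{p}_2\textbf{t}^{-1}$, the edges $\textbf{p}_1$ and $\textbf{p}_2$ are the only $\theta$-edges on $\partial\Psi$, since $\textbf{t} = \textbf{top}(\pazocal{T})$ carries no $\theta$-edges (the top of any $\theta$-band consists only of $q$- and $a$-edges). Hence removing $\pazocal{T}$ from $\Psi$ yields, after $0$-refinement, a subdiagram $\Omega$ consisting entirely of $a$-cells with $\partial\Omega = \textbf{s}_1^{-1}\textbf{bot}(\pazocal{T})^{-1}$.

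Next, I would invoke Lemma \ref{generic minimal diagram} to produce a reduced minimal diagram $\Omega'$ over the disk presentation of $G_\Omega(\textbf{M}^\pazocal{L})$ with $\lab(\partial\Omega') \equiv \lab(\partial\Omega)$. Since $\Omega$ has no disks, minimality forces $\sigma_1(\Omega') = 0$; since $\partial\Omega'$ contains no $\theta$-edges, any $\theta$-band in $\Omega'$ would have to be a $\theta$-annulus, which is forbidden by Lemma \ref{M(M) annuli}. Hence $\Omega'$ consists only of $a$-cells. Glue $\pazocal{T}$ back on top of $\Omega'$ along $\textbf{bot}(\pazocal{T})$ and reattach $\Pi$ along $\textbf{s}_1$ to obtain a diagram $\Phi' := \Phi_0'$: by construction $\lab(\partial\Phi') \equiv \lab(\partial\Phi)$, and since neither $\Pi$ nor the $\ell$ $(\theta,t)$-cells of $\pazocal{T}$ were altered, $\tau_2(\Phi') = (1,\ell) = \tau_2(\Phi)$. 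Thus $\Phi_0'$ is a profile of size $\ell$ with history $\theta$ and defining configuration $W$.

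To see that $\Phi_0'$ is flat, i.e., that $\Psi_0' := \pazocal{T} \cup \Omega'$ is a minimal half-hat, suppose $\Psi''$ were a reduced minimal diagram with $\lab(\partial\Psi'') \equiv \lab(\partial\Psi_0')$ and $\tau(\Psi'') < \tau(\Psi_0')$. As above, $\sigma_1(\Psi'')=0$; Lemma \ref{M(M) annuli} together with the fact that $\partial\Psi''$ has exactly two $\theta$-edges then yields a unique maximal $\theta$-band $\pazocal{T}''$ in $\Psi''$ with history $\theta$ connecting $\textbf{p}_1$ to $\textbf{p}_2$. A transposition argument analogous to the one in the proof of Lemma \ref{minimal diskless is M-minimal} will force $\textbf{top}(\pazocal{T}'') = \textbf{t}$, so that excising $\pazocal{T}''$ from $\Psi''$ yields a reduced $\Omega''$ with $\lab(\partial\Omega'') \equiv \lab(\partial\Omega')$ and $\tau(\Omega'') < \tau(\Omega')$, contradicting the minimality of $\Omega'$. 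The hard part will be this push-to-top claim: verifying that in a minimal $\Psi''$, any $a$-cells sitting above $\pazocal{T}''$ admit a signature-reducing transposition. This will require adapting the auxiliary graph machinery of Section \ref{sec-M-minimal-bands} and carefully bookkeeping the $\pazocal{A}$- and $b$-edges that an offending $a$-cell must share with $\pazocal{T}''$, in particular accounting for the $(\theta,b)$-cells in $\pazocal{T}''$ that arise from the generalized $S$-machine structure and were not present in the analogous trapezia treatments in prior work.
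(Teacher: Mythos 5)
Your construction diverges from the paper's at the decisive point, and the divergence creates a genuine gap. The paper does not keep the original $\theta$-band $\pazocal{T}$: it replaces the \emph{entire} half-hat $\Psi$ (band included) by a reduced minimal diagram $\Psi'$ with the same contour label, observes that $\Psi'$ has no disks and no $\theta$-annuli, so the maximal $\theta$-band $\pazocal{T}'$ starting at $\textbf{p}_1'$ ends at $\textbf{p}_2'$ and every other cell is an $a$-cell, and then defines $\Phi_0'$ as the \emph{subdiagram} obtained by discarding the $a$-cells lying between $\textbf{top}(\pazocal{T}')$ and $\textbf{t}'$. Flatness of $\Phi_0'$ is then essentially free, since its half-hat sits inside the minimal diagram $\Psi'$, and $\tau_2(\Phi')=\tau_2(\Phi)$ follows from the sandwich $(1,\ell)=\tau_2(\Phi_0')\leq\tau_2(\Phi')\leq\tau_2(\Phi)=(1,\ell)$. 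Your route instead minimizes only the $a$-cell region $\Omega$ below $\textbf{bot}(\pazocal{T})$ and then must prove that the reassembled half-hat $\Psi_0'=\pazocal{T}\cup\Omega'$ is itself a minimal diagram; this is a strictly stronger claim than the lemma needs, and it is where your argument fails.

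Concretely, minimality is measured by the full signature, including $\sigma_4$, the number of $(\theta,\pazocal{A})$-cells, and the band $\pazocal{T}$ itself contributes to $\sigma_4$ of $\Psi_0'$. If some $a$-cell $\pi$ of $\Omega'$ has more than $\tfrac12|\partial\pi|_\pazocal{A}$ of its $\pazocal{A}$-bands crossing $\pazocal{T}$, then transposing $\pazocal{T}$ with $\pi$ fixes $\sigma_1,\sigma_2,\sigma_3$ and strictly decreases $\sigma_4$, so $\Psi_0'$ is not minimal; nothing in the minimality of $\Omega'$ (whose signature is blind to where the band sits) rules this configuration out. For the same reason your proposed contradiction does not close: even granting the ``push-to-top'' claim for a competitor $\Psi''$, the transpositions used to push $\pazocal{T}''$ up change the band's own $(\theta,\pazocal{A})$-cell count, so $\tau(\Psi'')<\tau(\Psi_0')$ does not yield $\tau(\Omega'')<\tau(\Omega')$ after excising the band. (A minor additional point: the fact that $\Omega'$ has no $\theta$-bands should be drawn from \Cref{disk theta-annuli} for minimal diagrams, not from \Cref{M(M) annuli}, which concerns diagrams over $M(\textbf{M}^\pazocal{L})$.) The fix is to follow the paper's strategy: minimize the whole half-hat at once and let the new band, rather than the old one, determine the flat profile, passing to the subdiagram below it.
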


\begin{proof}

Let $\textbf{p}_1^{-1}\textbf{s}_2\textbf{p}_2\textbf{t}^{-1}$ be the standard factorization of $\partial\Phi$ and $\textbf{s}_1$ be the hidden path of $\Phi$.

Letting $\Psi$ be the half-hat of $\Phi$, let $\Psi'$ be a reduced minimal diagram with $\lab(\partial\Psi')\equiv\lab(\partial\Psi)$.  So, there exists a decomposition $\partial\Psi'=(\textbf{p}_1')^{-1}(\textbf{s}_1')^{-1}\textbf{p}_2'(\textbf{t}')^{-1}$ with corresponding labels, i.e such that $\lab(\textbf{p}_i')\equiv\lab(\textbf{p}_i)$, $\lab(\textbf{s}_1')\equiv\lab(\textbf{s}_1)$, and $\lab(\textbf{t}')\equiv\lab(\textbf{t})$.

Let $\pazocal{T}'$ be the maximal $\theta$-band of $\Psi'$ for which $\textbf{p}_1'$ is a defining edge.  Then, since $|\textbf{t}'|_\theta=|\textbf{s}_1'|_\theta=0$, $\textbf{p}_1'$ and $\textbf{p}_2'$ must be the ends of $\pazocal{T}'$.  Since $\tau(\Psi')\leq\tau(\Psi)$ and $\Psi$ is a half-hat, $\Psi'$ also contains no disks.  Hence, Lemmas \ref{minimal is smooth}, \ref{minimal MM2}, and \ref{Gamma_a special cell} imply that $\Psi'$ contains no $\theta$-annuli.

Thus, any cell of $\Psi'$ other than those comprising $\pazocal{T}'$ is an $a$-cell.  In particular, any cell between $\textbf{top}(\pazocal{T}')$ and $\textbf{t}'$ is an $a$-cell, so that every $t$-edge of $\textbf{t}'$ is also an edge of $\textbf{top}(\pazocal{T}')$.

Now, let $\Phi'$ be the diagram obtained from $\Phi$ by replacing $\Psi$ with $\Psi'$.  Then, it follows immediately that $\lab(\partial\Phi')\equiv\lab(\partial\Phi)$ and $\tau_2(\Phi')\leq\tau_2(\Phi)$.

Let $\Phi_0'$ be the subdiagram of $\Phi'$ obtained by removing any $a$-cells of $\Psi'$ between $\textbf{top}(\pazocal{T}')$ and $\textbf{t}'$.  Then, by construction, $\Phi_0'$ is a flat profile satisfying (3), and so $\tau_2(\Phi')\geq\tau_2(\Phi_0')=(1,\ell)=\tau_2(\Phi)$.

\end{proof}

Let $\Phi$ be a profile with associated $\theta$-band $\pazocal{T}$.  Let $\textbf{e}_0,\textbf{e}_1,\dots,\textbf{e}_k$ be the enumeration of the $q$-edges of $\textbf{bot}(\pazocal{T})$.  By definition, these $q$-edges are in correspondence with the $q$-edges of the hidden path $\textbf{s}_1$ of $\Phi$, and so $\textbf{e}_0,\textbf{e}_1,\dots,\textbf{e}_k$ is also the enumeration of the $q$-edges of $\textbf{s}_1^{-1}$.  Letting $\Pi$ be the disk of $\Phi$, we can then continue this to obtain an enumeration $\textbf{e}_0,\textbf{e}_1,\dots,\textbf{e}_n$ of the $q$-edges of $(\partial\Pi)^{-1}$.

In this case, the sequence $(\textbf{e}_0,\dots,\textbf{e}_k ; \textbf{e}_{k+1},\dots,\textbf{e}_n)$ is called the \textit{$q$-enumeration} of $\Phi$.  Note that the value of $n$ is determined by simply the length of the standard base of $\textbf{M}^\pazocal{L}$, while the value of $k$ depends on the size (and makeup) of the profile.

Let $\textbf{y}_i$ be the (perhaps trivial) subpath of $(\partial\Pi)^{-1}$ between $\textbf{e}_{i-1}$ and $\textbf{e}_i$.  Then, by the definition of disk relations, $\lab(\textbf{e}_{i-1}\textbf{y}_i\textbf{e}_i)$ is an admissible word with reduced two-letter base $U_iV_i$, where $V_i=U_{i+1}$.  In particular, there exists a unique index $s=s(\Phi)$ such that $U_sV_s=(Q_0^\pazocal{L}(1)Q_1^\pazocal{L}(1))^{\pm1}$.

Now, for any $j\in\{0,\dots,k\}$, let $\gamma_j$ be the $(\theta,q)$-cell of $\pazocal{T}$ such that $\textbf{e}_j$ is an edge of $\partial\gamma_j$.  With this, let $\pazocal{T}_i$ be the minimal subband of $\pazocal{T}$ containing both $\gamma_{i-1}$ and $\gamma_i$.  Then, define the \textit{$i$-th cover} of the half-hat of $\Phi$ to be the subdiagram $\Psi_i$ bounded by $\pazocal{T}_i$ and $\textbf{y}_i$.  Note that, as indicated by the name, every cell of the half-hat is contained in a cover $\Psi_i$.  Moreover, $\gamma_i$ is the unique cell contained in both $\Psi_i$ and $\Psi_{i+1}$, while $\Psi_i$ and $\Psi_j$ share no cells if $|j-i|\geq2$.

Further, note that \Cref{theta-bands are one-rule computations} implies $\lab(\textbf{ttop}(\pazocal{T}_i))$ is, like $\lab(\textbf{e}_{i-1}\textbf{y}_i\textbf{e}_i)$, an admissible word with base $U_iV_i$.  Hence, any $a$-edge of $\textbf{ttop}(\pazocal{T}_i)$ or of $\textbf{y}_i$ is labelled by an $a$-letter from the tape alphabet of $\textbf{M}^\pazocal{L}$ corresponding to the $U_iV_i$-sector.  In particular, any $a$-edge of $\partial\Psi_i$ which is on the boundary of either a $(\theta,a)$- or an $a$-cell is labelled by an $a$-letter of this tape alphabet.




\begin{lemma} \label{profile makeup}

Let $\Phi$ be a flat profile with $q$-enumeration $(\textbf{e}_0,\dots\textbf{e}_k;\textbf{e}_{k+1},\dots,\textbf{e}_n)$.  If $\Phi$ is not a simple profile, then $s=s(\Phi)\leq k$ and any $a$-cell in $\Phi$ is contained in $\Psi_s$.






\end{lemma}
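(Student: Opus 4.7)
The plan is to show that every $a$-cell of $\Phi$ is forced to lie in the unique cover whose corresponding sector is the special input sector $Q_0^\pazocal{L}(1)Q_1^\pazocal{L}(1)$, and then to deduce $s\leq k$ from the existence of such a cell. Because $\Phi$ is flat, its half-hat $\Psi$ is a reduced minimal diagram containing no disks; hence by \Cref{minimal diskless is M-minimal} it is $M$-minimal, and by \Cref{M-minimal is smooth} it is smooth, so Lemmas \ref{basic annuli 2}, \ref{a-bands on a-cell}, and the observation preceding the statement all apply inside $\Psi$.

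First, let $\pi$ be an $a$-cell of $\Phi$. Since the covers $\Psi_1,\dots,\Psi_k$ exhaust every cell of $\Psi$ other than the $(\theta,q)$-cells $\gamma_0,\dots,\gamma_k$, and two distinct covers $\Psi_i,\Psi_j$ share a cell only when $|i-j|=1$ and that cell is some $\gamma_i$, there is a unique index $i=i(\pi)\in\{1,\dots,k\}$ with $\pi\in\Psi_i$. The bulk of the work is to show $i=s$, from which $s\leq k$ (since at least one such cell exists by the non-simplicity of $\Phi$) and the containment of all $a$-cells in $\Psi_s$ both follow.

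To identify $i$ with $s$, note first that $\lab(\partial\pi)\in\Omega$, so every edge of $\partial\pi$ is an $a$-edge labelled by a letter of $(\pazocal{A}\cup\pazocal{A}_1\cup\pazocal{B})^{\pm1}$, i.e. from the tape alphabet of the special input sector. Pick an $\pazocal{A}$-edge $\textbf{e}$ of $\partial\pi$ and follow its maximal $\pazocal{A}$-band $\pazocal{U}=\pazocal{U}(\textbf{e})$ inside $\Psi$. By smoothness and \Cref{a-bands on a-cell}, $\pazocal{U}$ cannot have both ends on $\pi$. Its other end must therefore lie either on another $a$-cell in $\Psi_i$, on a $(\theta,a)$- or $(\theta,q)$-cell of $\pazocal{T}_i$, or on the subpath $\textbf{y}_i^{-1}$ of $\partial\Psi_i$; in any case, because $\pazocal{U}$ carries a fixed letter of $\pazocal{A}\cup\pazocal{A}_1$ along its length, if we iterate through any finite chain of internal $\pazocal{A}$-bands joining $a$-cells of $\Psi_i$ we must terminate on an $\pazocal{A}$-edge of $\partial\Psi_i$ or on a $(\theta,a)$-cell of $\pazocal{T}_i$ whose boundary $\pazocal{A}$-edge lies on $\partial\Psi_i$. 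By the observation preceding the lemma, such an $\pazocal{A}$-edge carries a letter of the tape alphabet of the $U_iV_i$-sector. Since the tape alphabets of distinct sectors of the standard base are pairwise disjoint, the letter of $\pazocal{A}\cup\pazocal{A}_1$ transported by $\pazocal{U}$ forces $U_iV_i=(Q_0^\pazocal{L}(1)Q_1^\pazocal{L}(1))^{\pm1}$, i.e. $i=s$.

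The main technical obstacle is the finite-chain routing of $\pazocal{A}$-bands in the paragraph above: one must ensure that starting at $\pi$ and jumping from $a$-cell to $a$-cell through internal $\pazocal{A}$-bands yields, after finitely many steps, an edge of $\partial\Psi_i$ or of a cell of $\pazocal{T}_i$, rather than cycling back. This is guaranteed by finiteness of $\Psi_i$, by smoothness (which rules out pinched $a$-cells and hence $\pazocal{A}$-bands of length $0$ with both ends on a single $a$-cell), and by \Cref{a-bands on a-cell}. Once $i=s$ is established, the second clause of the lemma is immediate (any $a$-cell lies in $\Psi_s$), and the first clause ($s\leq k$) follows because the non-simple hypothesis guarantees the existence of at least one $a$-cell $\pi$, whose cover index $i(\pi)=s$ then lies in $\{1,\dots,k\}$.
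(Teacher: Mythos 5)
Your overall skeleton matches the paper's (identify an $a$-cell in some cover $\Psi_i$, route an $\pazocal{A}$-band carrying a special-input letter to $\partial\Psi_i$, and use disjointness of tape alphabets together with the observation preceding the lemma to force $i=s$), but the step you wave through is exactly the nontrivial one, and your justification for it does not work. You claim that by "iterating through a finite chain of internal $\pazocal{A}$-bands joining $a$-cells of $\Psi_i$" you must eventually reach an $\pazocal{A}$-edge on $\partial\Psi_i$, and you attribute this to finiteness of $\Psi_i$, smoothness, and \Cref{a-bands on a-cell}. None of these rules out the configuration you need to exclude: a priori there could be a cluster of $a$-cells in $\Psi_i$ all of whose $\pazocal{A}$-bands are internal (each joining two $a$-cells of the cluster), and your chain would simply wander inside the cluster forever — there is no decreasing quantity and no rule for choosing the next band that guarantees progress. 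Ruling this out is precisely the content of \Cref{Gamma_a special cell} (the planar estimating graph of \Cref{sec-M-minimal-bands}, relying on every $a$-cell having $\geq C$ boundary $\pazocal{A}$-edges, the absence of $1$-gons from \Cref{a-bands on a-cell}, and condition (MM2) via \Cref{minimal MM2}), which you never invoke. The paper's proof applies that lemma to $\Psi_i$ (legitimate because flatness makes $\Psi_i$ minimal, hence smooth and $M$-minimal by \Cref{minimal is smooth} and \Cref{minimal diskless is M-minimal}) to obtain $\geq C-6$ consecutive external $\pazocal{A}$-bands on a single $a$-cell, and then counts: each $(\theta,q)$-cell has at most one boundary $\pazocal{A}$-edge and $\Psi_i$ contains only the two $(\theta,q)$-cells $\gamma_{i-1},\gamma_i$, so with $C\geq9$ at least one of these bands must end on $\partial\Psi_i$ itself, after which the alphabet argument closes the proof.

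Two smaller inaccuracies in your case analysis of where a band can terminate: a maximal $\pazocal{A}$-band cannot "end" on a $(\theta,a)$-cell — $(\theta,\pazocal{A})$-cells are constituent cells of $\pazocal{A}$-bands, and the only possible ends are $a$-cells, $(\theta,q)$-cells, and the boundary; and when a band ends on one of the $(\theta,q)$-cells $\gamma_{i-1},\gamma_i$, the corresponding $\pazocal{A}$-edge need not lie on $\partial\Psi_i$, so the observation preceding the lemma does not apply to that case and it must instead be eliminated by the counting argument above. As written, then, the proposal has a genuine gap at the routing step and an incorrect terminal case analysis; both are repaired by following the paper's use of \Cref{Gamma_a special cell} plus the $(\theta,q)$-cell count.
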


\begin{proof}

As $\Phi$ is not a simple profile, there exists $i\in\{1,\dots,k\}$ such that $\Psi_i$ contains an $a$-cell.  Since $\Phi$ is flat, this subdiagram $\Psi_i$ is a minimal diagram.  So, Lemmas \ref{minimal is smooth}, \ref{minimal diskless is M-minimal}, and \ref{Gamma_a special cell} imply that there exists an $a$-cell $\pi$ in $\Psi_i$ and $m\geq C-6$ consecutive $\pazocal{A}$-edges $\textbf{e}_1',\dots,\textbf{e}_m'$ of $\partial\pi$ such that each maximal $\pazocal{A}$-band $\pazocal{U}(\textbf{e}_j')$ of $\Psi_i$ has an end on either a $(\theta,q)$-cell or on $\partial\Psi_i$.

Note that the contour of any $(\theta,q)$-cell contains at most one $\pazocal{A}$-edge.  So, since $\Psi_i$ contains exactly two $(\theta,q)$-cells, the parameter choice $C\geq9$ implies that at least one $\pazocal{A}$-band $\pazocal{U}(\textbf{e}_j')$ has an end which is an edge of $\partial\Psi_i$.  This end is thus an $a$-edge of $\partial\Psi_i$ which is on the boundary of a $(\theta,a)$- or an $a$-cell and is labelled by an $\pazocal{A}$-letter of the `special' input sector.  Therefore, the index $i$ must correspond to the `special' input sector, i.e $i=s$.

\end{proof}

%

\begin{lemma} \label{profile projection}

Let $\Phi$ be a flat profile with history $\theta$ and defining configuration $W$.  Then, $W(2)$ is $\theta$-admissible.

\end{lemma}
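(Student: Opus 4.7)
The plan is to locate a full component $W(j_0)$ with $j_0\ge 2$ on which the associated $\theta$-band of $\Phi$ acts, deduce its $\theta$-admissibility from Lemma \ref{theta-bands are one-rule computations}, and then transfer this admissibility to $W(2)$ via the parallelism built into $\textbf{M}^\pazocal{L}$ on components of index at least $2$.

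First, I would identify the covered component. By the definition of a profile the hidden path $\textbf{s}_1$ of $\Phi$ contains $\ell\ge 2$ $t$-edges, labelled in cyclic order along $\partial\Pi$ by letters of $\{t(2),\dots,t(L)\}$. Let $t(j_0)$ be the first of these, so $j_0\in\{2,\dots,L\}$. Because $\textbf{s}_1$ is a contiguous subpath of $\partial\Pi$ extending through at least one subsequent $t$-edge, every $q$-edge of $\partial\Pi$ lying strictly between $t(j_0)$ and the next $t$-letter belongs to $\textbf{s}_1$; these are precisely the $q$-edges of $W(j_0)$, so $W(j_0)$ is fully covered by $\textbf{s}_1$. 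Let $\pazocal{T}_{j_0}$ be the subband of the associated $\theta$-band $\pazocal{T}$ whose $(\theta,q)$-cells are those attached to the $q$-edges of $W(j_0)$ together with the subsequent $t$-edge; its first and last cells are $(\theta,t)$-cells, so $\textbf{tbot}(\pazocal{T}_{j_0})$ is well-defined. Lemma \ref{profile makeup} shows that every $a$-cell of the half-hat lies in the cover $\Psi_{s(\Phi)}$ attached to the special input sector, which belongs to $W(1)$; no $a$-cell therefore interferes with $\pazocal{T}_{j_0}$, and the label of $\textbf{tbot}(\pazocal{T}_{j_0})$ is an admissible word whose restriction to the base $\{t(j_0)\}B_4^\pazocal{L}(j_0)$ is $W(j_0)^{\pm 1}$ (the sign depending on the parameter $\eps$ in the definition of $\Phi$).

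Lemma \ref{theta-bands are one-rule computations} applied to $\pazocal{T}_{j_0}$ then gives that this label is $\theta$-admissible, and since admissibility is a sector-local condition preserved under inversion (Lemma \ref{inverse rules}), $W(j_0)$ itself is $\theta$-admissible. To transfer this to $W(2)$, I would note that by the construction of $\textbf{M}^\pazocal{L}$ every rule of $\Theta$ acts on the subwords $\{t(i)\}B_4^\pazocal{L}(i)$ of the standard base in a coordinate-equivariant fashion for all $i\ge 2$, the only asymmetry being in $W(1)$ (acted on with the special input sector locked whenever the rule belongs to the second machine). Moreover, since $W$ is accepted with $\ell(W)\le 1$, Lemma \ref{extend one-machine} combined with Lemma \ref{one-machine equal configurations} forces $W(i)$ and $W(j)$ to be coordinate shifts of one another for all $i,j\ge 2$. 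Hence $\theta$-admissibility of $W(j_0)$ descends to $\theta$-admissibility of $W(2)$, as required. The main delicate point lies in this last step: one must verify, rule by rule (including the transition rules $\theta(s)_i^{\pm 1}$ and $\theta(a)_i^{\pm 1}$), that applicability of $\theta$ to a high-index component is invariant under coordinate shifts; the bookkeeping on orientations and cyclic indices in the first two steps is otherwise routine.
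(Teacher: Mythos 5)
Your overall strategy is the paper's: find one full component $W(j)$ with $j\ge 2$ whose stretch of $\partial\Pi$ lies on $\textbf{bot}(\pazocal{T})$, apply Lemma \ref{theta-bands are one-rule computations}, and transfer admissibility to $W(2)$ by the parallel action of the rules on coordinates $\ge 2$ (your last step, via Lemma \ref{extend one-machine}/\ref{one-machine equal configurations}, is essentially the paper's opening remark and is fine). The gap is in your first step. You take the first $t$-edge $t(j_0)$ of the hidden path and assert that the $q$-edges strictly between it and the next $t$-letter ``are precisely the $q$-edges of $W(j_0)$,'' and that Lemma \ref{profile makeup} guarantees no $a$-cell interferes with the subband $\pazocal{T}_{j_0}$. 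Both claims fail in the wrap-around case $\{\lab(\textbf{e}_0),\lab(\textbf{e}_m)\}=\{t(L)^{\eps},t(2)^{\eps}\}$: recall $t(1)$ is a $q$-letter but not a $t$-letter, so between $t(L)$ and $t(2)$ the boundary of $\Pi$ carries $W(L)\,t(1)\,$(interior of $W(1))\,t(2)$ — two components, not one — and this stretch contains the ``special'' input sector $Q_0^{\pazocal{L}}(1)Q_1^{\pazocal{L}}(1)$, which is exactly where Lemma \ref{profile makeup} allows the half-hat's $a$-cells to sit. There the band need not hug the disk, so $\textbf{tbot}$ of your subband does not read off an admissible subword of $W^{\pm1}$ as claimed. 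This cannot be sidestepped by choosing a different consecutive pair of $t$-edges, since profiles of size exactly $2$ whose two $t$-edges are labelled $t(L)^{\eps}$ and $t(2)^{\eps}$ are permitted, and then the wrap-around pair is the only one available.

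The paper's proof handles precisely this case by truncating at the $q$-edge labelled $t(1)^{\eps}$: the subpath of the hidden path from $t(L)^{\eps}$ to $t(1)^{\eps}$ (orientation chosen according to $\eps$) has base $\bigl(\{t(L)\}B_4^{\pazocal{L}}(L)\{t(1)\}\bigr)^{\eps}$, which avoids the special input sector, so by Lemma \ref{profile makeup} it is a subpath of $\textbf{bot}(\pazocal{T})$; applying Lemma \ref{theta-bands are one-rule computations} to the subband between the corresponding $(\theta,q)$-cells shows that $W(L)$ is $\theta$-admissible, and the parallelism argument then finishes as you describe. Your proposal needs this case split (non-wrapping pair $t(j),t(j+1)$ with $2\le j\le L-1$ versus the pair $t(L),t(2)$ cut at $t(1)$) to be complete.
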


\begin{proof}

Note that by the parallel nature of the rules of $\textbf{M}^\pazocal{L}$, it suffices to show that $W(j)$ is $\theta$-admissible for some $j\in\{2,\dots,L\}$.

Let $\Pi$ be the disk in $\Phi$ and fix $\eps\in\{\pm1\}$ such that $\lab(\partial\Pi)^{-\eps}\equiv W$.  Further, let $\pazocal{T}$ be the associated $\theta$-band and $\textbf{s}_1$ be the hidden path of $\Phi$.

If $\textbf{s}_1^{-1}$ has a subpath $\textbf{x}$ shared with $\textbf{bot}(\pazocal{T})$ such that $\lab(\textbf{x})$ is an admissible word with base $\left(\{t(j)\}B_4^\pazocal{L}(j)\{t(j+1)\}\right)^{\eps}$ for some $j\in\{2,\dots,L\}$, then the this admissible subword of $W^{\pm1}$ is $\theta$-admissible, and so $W(j)$ is as well.

%

Now, let $(\textbf{e}_0,\dots,\textbf{e}_k;\textbf{e}_{k+1},\dots,\textbf{e}_n)$ be the $q$-enumeration of $\Phi$ and set $m\in\{1,\dots,n\}$ as the minimal index such that $\textbf{e}_m$ is a $t$-edge.  As the size of a profile is at least $2$, it must hold that $m\leq k$.  Let $\textbf{z}$ be the initial subpath of $\textbf{s}_1^{-1}$ whose last edge is $\textbf{e}_m$.

First, suppose $\{\lab(\textbf{e}_0),\lab(\textbf{e}_m)\}=\{t(j)^{\eps},t(j+1)^{\eps}\}$ for some $j\in\{2,\dots,L-1\}$.  Then, by definition, $\lab(\textbf{z})$ is then an admissible word with base $\left(\{t(j)\}B_4^\pazocal{L}(j)\{t(j+1)\}\right)^{\eps}$.  But then $s(\Phi)\notin\{0,\dots,m\}$, so that \Cref{profile makeup} implies $\textbf{z}$ is a subpath of $\textbf{bot}(\pazocal{T})$.  Hence, setting $\textbf{x}=\textbf{z}$ as above, we conclude that $W(j)$ is $\theta$-admissible.

Otherwise, $\{\lab(\textbf{e}_0),\lab(\textbf{e}_m)\}=\{t(L)^\eps,t(2)^\eps\}$.  By the makeup of the standard base, there then exists $r\in\{1,\dots,m-1\}$ such that $\lab(\textbf{e}_r)= t(1)^\eps$.  Define the subpath $\textbf{z}'$ of $\textbf{z}$ by:

\begin{itemize}

\item If $\eps=1$, then $\textbf{z}'$ is the initial subpath of $\textbf{z}$ whose last edge is $\textbf{e}_r$

\item If $\eps=-1$, then $\textbf{z}'$ is the terminal subpath of $\textbf{z}$ whose first edge is $\textbf{e}_r$

\end{itemize}

In either case, $\lab(\textbf{z}')$ is an admissible word with base $\left(\{t(L)\}B_4^\pazocal{L}(L)\{t(1)\}\right)^\eps$.  But then \Cref{profile makeup} again implies $\textbf{z}'$ is a subpath of $\textbf{bot}(\pazocal{T})$, so that $W(L)$ is $\theta$-admissible.

\end{proof}

\begin{lemma} \label{flat profile to simple profile}

Let $\Phi$ be a flat profile of size $\ell$ with history $\theta$ and defining configuration $W$.  If $W$ is $\theta$-admissible, then there exists a circular diagram $\Phi'$ over the disk presentation of $G_\Omega(\textbf{M}^\pazocal{L})$ such that:

\begin{enumerate}

\item $\lab(\partial\Phi')\equiv\lab(\partial\Phi)$

\item $\tau_2(\Phi')=\tau_2(\Phi)$

\item There exists a subdiagram $\Phi_0'$ of $\Phi'$ which is a simple profile of size $\ell$ with history $\theta$ and defining configuration $W$.

\end{enumerate}

\end{lemma}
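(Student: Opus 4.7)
The plan is to locate the $a$-cells of the half-hat via \Cref{profile makeup}, construct a simple profile $\Phi_0'$ directly from the $\theta$-admissibility of $W$, and graft an $a$-cap onto $\Phi_0'$ to recover the desired boundary. If $\Phi$ is already simple take $\Phi'=\Phi_0'=\Phi$, so assume otherwise; then \Cref{profile makeup} places every $a$-cell of the half-hat in the cover $\Psi_s$ corresponding to the special input sector. In particular the only cover $\Psi_i$ that contains cells other than those of $\pazocal{T}$ is $\Psi_s$, and the $(\theta,\pazocal{A})$-cells of $\pazocal{T}$ lie entirely inside the sub-band $\pazocal{T}_s\subseteq\pazocal{T}$ spanning the special input sector.

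Let $\textbf{s}_1$ be the hidden path, so that $\lab(\textbf{s}_1)^{-\eps}$ is an admissible subword of $W$ whose base begins and ends with $t$-letters; since $W$ is $\theta$-admissible, so is this subword. Then \Cref{one-rule computations are theta-bands} produces a $\theta$-band $\pazocal{T}_0'$ with history $\theta$ whose first and last cells are $(\theta,t)$-cells (so no trimming is needed at its ends) and whose trimmed bottom is labelled $\lab(\textbf{s}_1)^{-1}$. Gluing $\pazocal{T}_0'$ to a disk $\Pi_0'$ with $\lab(\partial\Pi_0')^{-\eps}\equiv W$ along a copy of $\textbf{s}_1$ yields a simple profile $\Phi_0'$ of size $\ell$, history $\theta$, and defining configuration $W$; this realizes condition (3), provided $\Phi_0'$ can be embedded in a diagram $\Phi'$ satisfying (1) and (2).

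Comparing the standard factorizations, $\partial\Phi$ and $\partial\Phi_0'$ agree along the sides (both labelled by $\theta$) and along $\textbf{s}_2$, so they can differ only along the top. Outside the special input sector, no $a$-cells appear in the half-hat, so the bottoms of $\pazocal{T}$ and $\pazocal{T}_0'$ carry the same label there and their tops coincide after applying $\theta$. Within the special input sector, the sub-diagram $\Psi_s$ exhibits the label $v$ of the corresponding portion of $\textbf{bot}(\pazocal{T}_s)$ and the label $u$ of the corresponding portion of $\textbf{s}_1$ as equal in $G_\Omega$ via a product of conjugates of elements of $\Omega$; the discrepancy between the two tops is therefore $\widetilde{f}_{\theta,1}(u^{-1}v)$. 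The key claim is that $\widetilde{f}_{\theta,1}$ sends each $\Omega$-relator appearing in this product to a word freely conjugate to an element of $\Omega$: the $\pazocal{A}$-bands from the $a$-cells of $\Psi_s$ to the $(\theta,\pazocal{A})$-cells of $\pazocal{T}_s$ force each such $a$-cell boundary to be $\theta$-applicable, and writing such a boundary $w$ as $p^{-1}w'p$ with $w'\in\pazocal{E}(\Lambda^\pazocal{A})$ and invoking \Cref{Lambda not cyclically reduced} shows $w\cdot\theta$ is freely conjugate to $w'\cdot\theta$, whose cyclic reduction lies in $\Omega$ by the semi-computation definition of $\pazocal{E}(\Lambda^\pazocal{A})$. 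Consequently the discrepancy lies in the normal closure of $\Omega$, so by van Kampen's lemma it bounds a reduced circular diagram built entirely from $a$-cells; pasting this $a$-cap on top of $\pazocal{T}_0'$ over the special input sector produces $\Phi'$, and since the cap contributes neither disks nor $(\theta,t)$-cells we have $\tau_2(\Phi')=(1,\ell)=\tau_2(\Phi)$.

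The main obstacle is the $\theta$-invariance of $\Omega$ used in the final step. Verifying that every $a$-cell boundary appearing in $\Psi_s$ is $\theta$-applicable needs a brief case analysis using the structure of the domains $X_1(\theta)$ of the rules of $\textbf{M}_1^\pazocal{A}$ from \Cref{sec-M_1} against the shape of the $a$-cell labels forced by \Cref{M Lambda semi-computations}(1); the claim that $w\cdot\theta$ is again freely conjugate to an element of $\Omega$ then relies on \Cref{Lambda not cyclically reduced} to pass between $w$ and its cyclic reduct $w'$, together with the observation that extending the semi-computation $\pazocal{S}(w')$ by $\theta^{-1}$ witnesses membership of $w'\cdot\theta$ in $\pazocal{E}(\Lambda^\pazocal{A})$.
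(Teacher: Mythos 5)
Your overall strategy is genuinely different from the paper's, but as written the key step has a real gap. Everything up to the construction of the simple profile $\Phi_0'$ from $\lab(\textbf{s}_1)$ via \Cref{one-rule computations are theta-bands} is fine, and so is the observation that the two boundaries can only differ over the `special' input sector. The problem is the claim that the discrepancy $\widetilde{f}_{\theta,1}(u^{-1}v)$ is a product of conjugates of $\Omega$-relators. First, there is no reason that every $a$-cell of $\Psi_s$ is joined by $\pazocal{A}$-bands to $(\theta,\pazocal{A})$-cells of $\pazocal{T}_s$ (its $\pazocal{A}$-bands may end on other $a$-cells or on $\textbf{y}_s$), and even when such bands exist, $\theta$-applicability of the whole boundary label requires something like \Cref{semi extension}, i.e.\ a $\theta$-applicable subword with at least three $\pazocal{A}$-letters on the band -- exactly the hypothesis ($|E(\pi,\pazocal{T})|\geq5$) that the transposition machinery demands and that is unavailable here. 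Second, even granting $\theta$-applicability of each relator, you cannot push $\widetilde{f}_{\theta,1}$ through the free-group identity $u^{-1}v=\prod c_j^{-1}\omega_j c_j$ factor by factor: the conjugators $c_j$, and for a working rule $\theta$ any relators $\omega_j\in\Lambda^\pazocal{A}$ (which are words over $\pazocal{A}^{\pm1}$), need not lie in the domain $\gen{X_1(\theta)}=F(\pazocal{A}_1\sqcup\pazocal{B})$ of $\widetilde{f}_{\theta,1}$, so ``the image of each factor'' is not even defined. \Cref{Lambda not cyclically reduced} and \Cref{M Lambda semi-computations} do not bridge this.

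For contrast, the paper never touches the $a$-cells and needs no $\theta$-invariance of $\Omega$: it applies \Cref{one-rule computations are theta-bands} to the admissible subword $W_s'$ of $W$ with base $Q_0^\pazocal{L}(1)Q_1^\pazocal{L}(1)$, glues the resulting $\theta$-band $\pazocal{S}$ to its mirror band, and pastes this boundary-neutral pair onto $\Psi_s$ along the disk side; the boundary label of $\Psi_s$ is unchanged, the re-routed band $\pazocal{S}'$ now hugs $\partial\Pi$, and $\Pi\cup\pazocal{S}'$ is the simple profile, with $\tau_2$ visibly preserved since only $(\theta,a)$-cells and the two non-$t$ corner $(\theta,q)$-cells are added. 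Your route can in fact be repaired without the problematic claim: the discrepancy word is trivial in $G_\Omega(\textbf{M}^\pazocal{L})$ for free (it equals $\theta_1^{-1}(u^{-1}v)\theta_1$ with $u^{-1}v\in\gen{\gen{\Omega}}$, or simply because $\Phi$ and $\Phi_0'$ are diagrams sharing the rest of their boundary), and since it is a word over the tape alphabet of the `special' input sector, a $2$-minimal filling contains no disks by \Cref{Gamma special cell} and hence no $(\theta,q)$-cells by \Cref{basic annuli 1}, so capping with it preserves $\tau_2$. With that substitution your argument goes through, but as submitted the step you flag as ``a brief case analysis'' is the actual crux and does not work as described.
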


\begin{proof}

We may assume $\Phi$ is not simple, as otherwise the statement is satisfied for $\Phi'=\Phi$.

Let $(\textbf{e}_0,\dots,\textbf{e}_k;\textbf{e}_{k+1},\dots,\textbf{e}_n)$ be the $q$-enumeration of $\Phi$.  By \Cref{profile makeup}, it then follows that $s=s(\Phi)\leq k$ and every $a$-cell of $\Phi$ is contained in the subdiagram $\Psi_s$ of the half-hat.

Define the $(\theta,q)$-cells $\gamma_i$, the subbands $\pazocal{T}_i$ of the associated $\theta$-band $\pazocal{T}$, and the paths $\textbf{y}_i$ as above.  For each $i$, fix the decomposition $\partial\gamma_i=\textbf{p}_i^{-1}\textbf{e}_i\textbf{q}_i\textbf{f}_i^{-1}$ such that $\textbf{f}_i$ is a $q$-edge of $\textbf{top}(\pazocal{T})$.  Then, $\partial\Psi_s=\textbf{p}_{s-1}^{-1}(\textbf{e}_{s-1}\textbf{y}_s\textbf{e}_s)\textbf{q}_s\textbf{ttop}(\pazocal{T}_s)^{-1}$. 

As $\textbf{e}_{s-1}\textbf{y}_s\textbf{e}_s$ is a subpath of $(\partial\Pi)^{-1}$ for $\Pi$ the unique disk of $\Phi$, $W_s'\equiv\lab(\textbf{e}_{s-1}\textbf{y}_s\textbf{e}_s)$ is the admissible subword of $W^\eps$ with base $\left(Q_0^\pazocal{L}(1)Q_1^\pazocal{L}(1)\right)^\eps$.  Hence, $W_s'$ is $\theta$-admissible.

Applying \Cref{one-rule computations are theta-bands} to the computation $W_s'\to W_s'\cdot\theta$ then produces a $\theta$-band $\pazocal{S}$ with history $\theta$ such that $\lab(\textbf{tbot}(\pazocal{S}))\equiv W_s'$ and $\lab(\textbf{ttop}(\pazocal{S}))\equiv W_s'\cdot\theta$.  Note that the first and last cells of $\pazocal{S}$ are copies of $\gamma_{s-1}$ and $\gamma_s$, respectively.  So, $\partial\pazocal{S}=(\textbf{p}_{s-1}')^{-1}\textbf{tbot}(\pazocal{S})\textbf{q}_s'\textbf{ttop}(\pazocal{S})^{-1}$ such that $\lab(\textbf{p}_{s-1}')\equiv\lab(\textbf{p}_{s-1})$ and $\lab(\textbf{q}_s')\equiv\lab(\textbf{q}_s)$.  Note that no $q$-edge of $\partial\pazocal{S}$ is a $t$-edge, and so \Cref{basic annuli 1} implies $\tau_2(\pazocal{S})=(0,0)$.

Next, consider the `mirror' $\theta$-band $\overline{\pazocal{S}}$ of $\pazocal{S}$ (see \Cref{mirror}), i.e the $\theta$-band with history $\theta^{-1}$ such that $\lab(\textbf{bot}(\overline{\pazocal{S}}))\equiv\lab(\textbf{top}(\pazocal{S}))$ and $\lab(\textbf{top}(\overline{\pazocal{S}}))\equiv\lab(\textbf{bot}(\pazocal{S}))$.  Then, $$\partial\overline{\pazocal{S}}=\textbf{p}_{s-1}''\textbf{tbot}(\overline{\pazocal{S}})(\textbf{q}_s'')^{-1}\textbf{ttop}(\overline{\pazocal{S}})^{-1}$$
where $\lab(\textbf{p}_{s-1}'')\equiv\lab(\textbf{p}_{s-1})$ and $\lab(\textbf{q}_s'')\equiv\lab(\textbf{q}_s)$.  

Now, construct the (unreduced) circular diagram $\text{I}$ over the disk presentation of $G_\Omega(\textbf{M}^\pazocal{L})$ obtained by pasting $\textbf{ttop}(\pazocal{S})$ to $\textbf{tbot}(\overline{\pazocal{S}})$.  Then, $\partial\text{I}=\textbf{p}_{s-1}''(\textbf{p}_{s-1}')^{-1}\textbf{tbot}(\pazocal{S})\textbf{q}_s'(\textbf{q}_s'')^{-1}\textbf{ttop}(\overline{\pazocal{S}})^{-1}$.

Further, as $\lab(\textbf{ttop}(\overline{\pazocal{S}}))\equiv\lab(\textbf{tbot}(\pazocal{S}))\equiv W_s'\equiv\lab(\textbf{e}_{s-1}\textbf{y}_s\textbf{e}_s)$, we can paste $\text{I}$ to $\Psi_s$ by identifying the subpath $\textbf{p}_{s-1}^{-1}(\textbf{e}_{s-1}\textbf{y}_s\textbf{e}_s)\textbf{q}_s$ of $\partial\Psi_s$ with the subpath $(\textbf{p}_{s-1}'')^{-1}\textbf{ttop}(\overline{\pazocal{S}})\textbf{q}_s''$ of $(\partial\text{I})^{-1}$.  This produces an unreduced circular diagram $\Psi_s'$ over the disk presentation of $G_\Omega(\textbf{M}^\pazocal{L})$ with $\partial\Psi_s'=(\textbf{p}_{s-1}')^{-1}\textbf{tbot}(\pazocal{S})\textbf{q}_s'\textbf{ttop}(\pazocal{T}_s)^{-1}$.

Hence, $\lab(\partial\Psi_s')\equiv\lab(\partial\Psi_s)$, and so we can construct the circular diagram $\Phi'$ by excising $\Psi_s$ from $\Phi$ and pasting $\Psi_s'$ in its place.  By construction, there exists a maximal reduced $\theta$-band $\pazocal{S}'$ of $\Phi'$ obtained from $\pazocal{T}$ by replacing $\pazocal{T}_s$ with $\pazocal{S}$.  By \Cref{profile makeup}, $\textbf{bot}(\pazocal{S}')$ is a subpath of $(\partial\Pi)^{-1}$.  

Thus, the subdiagram $\Phi_0'$ of $\Phi'$ consisting of $\Pi$ and $\pazocal{S}'$ is a flat profile satisfying the statement.

\end{proof}

\begin{lemma} \label{basic transpose a}

Let $\Phi$ be a profile of size $\ell$ with history $\theta$ and defining configuration $W$.  Suppose $W$ is $\theta$-admissible with $\ell(W\cdot\theta)\leq1$.  Then there exists a circular diagram $\Phi'$ over the disk presentation of $G_\Omega(\textbf{M}^\pazocal{L})$ with $\lab(\partial\Phi')\equiv\lab(\partial\Phi)$ such that $\tau_2(\Phi')=(1,L-1-\ell)$.

\end{lemma}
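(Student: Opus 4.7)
The proof plan is to reduce to the setting in which we can directly invoke the transposition of a $\theta$-band with a disk described in Section 9.4.2, which requires the configuration, the $\theta$-band, and the disk to be in the concrete geometric arrangement of a simple profile.

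First, I would apply \Cref{minimal half-hat} to the given profile $\Phi$, producing a diagram $\Phi_1$ with $\lab(\partial\Phi_1)\equiv\lab(\partial\Phi)$ and $\tau_2(\Phi_1)=\tau_2(\Phi)=(1,\ell)$, together with a distinguished subdiagram $\Phi_0^{(1)}\subseteq\Phi_1$ that is a flat profile of size $\ell$ with history $\theta$ and defining configuration $W$. Next, since $W$ is assumed $\theta$-admissible, the hypothesis of \Cref{flat profile to simple profile} is met, so I would apply that lemma to $\Phi_0^{(1)}$ to obtain a circular diagram $\Phi_0^\ast$ with $\lab(\partial\Phi_0^\ast)\equiv\lab(\partial\Phi_0^{(1)})$, the same $\tau_2$, and containing a \emph{simple} profile $\Phi_0^{(2)}$ of size $\ell$, history $\theta$, and defining configuration $W$. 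Excising $\Phi_0^{(1)}$ from $\Phi_1$ and pasting $\Phi_0^\ast$ in its place gives a diagram $\Phi_2$ with $\lab(\partial\Phi_2)\equiv\lab(\partial\Phi)$ and $\tau_2(\Phi_2)=(1,\ell)$ whose only disk and only $(\theta,t)$-cells lie inside the simple profile $\Phi_0^{(2)}$.

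Now I am in the exact setting of Section 9.4.2, with the simple profile $\Phi_0^{(2)}$ playing the role of the subdiagram $\Gamma$ built from a disk $\Pi$ and the associated $\theta$-band $\pazocal{T}$ along a path $\textbf{s}_1$ containing $\ell\ge 2$ consecutive $t$-edges, the first and last of which are $t$-edges, with $\textbf{s}_1^{-1}$ a subpath of $\textbf{bot}(\pazocal{T})$. The hypotheses that $W$ is $\theta$-admissible and that $\ell(W\cdot\theta)\le 1$ are precisely the two conditions listed there to license the construction of the replacement disk $\bar\Pi$ (coming from the disk relation corresponding to $W\cdot\theta$) and the replacement $\theta$-band $\pazocal{S}$ on the complementary arc $\textbf{s}_2$. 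Performing this transposition inside $\Phi_2$ yields a reduced circular diagram $\Phi'$ with $\lab(\partial\Phi')\equiv\lab(\partial\Phi)$.

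It then remains to compute $\tau_2(\Phi')$. Outside the replaced subdiagram $\Phi_0^{(2)}$, nothing changes, and by construction of $\Phi_2$ there are no disks or $(\theta,t)$-cells there. Inside, the unique disk $\Pi$ has been replaced by the unique disk $\bar\Pi$, so $\sigma_1$ stays equal to $1$; the associated $\theta$-band $\pazocal{T}$ contributed $\ell$ $(\theta,t)$-cells, while the new $\theta$-band $\pazocal{S}$ runs along the complementary arc $\textbf{s}_2$ of $\partial\bar\Pi$, which contains exactly the remaining $L-1-\ell$ $t$-edges of $\bar\Pi$ (since every configuration of $\textbf{M}^\pazocal{L}$ has exactly $L-1$ $t$-letters in its base). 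Hence $\sigma_2(\Phi')=L-1-\ell$, giving $\tau_2(\Phi')=(1,L-1-\ell)$ as required.

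The only genuinely delicate point I foresee is the bookkeeping in the passage from $\Phi$ to the simple profile: one must keep track of the fact that \Cref{minimal half-hat} and \Cref{flat profile to simple profile} preserve the two-signature and leave the disk and its boundary label untouched, so that the hypotheses $\theta$-admissibility of $W$ and $\ell(W\cdot\theta)\le 1$ may be transferred verbatim to the simple profile $\Phi_0^{(2)}$. Once this is verified, the transposition procedure and the final $\tau_2$-count are immediate from the construction in Section 9.4.2.
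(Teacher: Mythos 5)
Your proposal is correct and follows essentially the same route as the paper: reduce via \Cref{minimal half-hat} and \Cref{flat profile to simple profile} to a simple profile with the same history $\theta$ and defining configuration $W$, perform the transposition of the associated $\theta$-band with the disk (licensed exactly by the hypotheses $\theta$-admissibility of $W$ and $\ell(W\cdot\theta)\leq1$), and then count $\tau_2$ of the excise-and-paste result. The bookkeeping point you flag is handled automatically, since both lemmas preserve the size, history, and defining configuration of the profile and the $2$-signature of the ambient diagram.
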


\begin{proof}

By Lemmas \ref{minimal half-hat} and \ref{flat profile to simple profile}, there exists a circular diagram $\Gamma$ over the disk presentation of $G_\Omega(\textbf{M}^\pazocal{L})$ such that:

\begin{itemize}

\item $\lab(\partial\Gamma)\equiv\lab(\partial\Phi)$

\item $\tau_2(\Gamma)=\tau_2(\Phi)$

\item There exists a subdiagram $\Gamma_0$ of $\Gamma$ which is a simple profile of size $\ell$ with history $\theta$ and defining configuration $W$

\end{itemize}

Let $\pazocal{T}_0$ be the associated $\theta$-band, $\textbf{s}_1$ be the hidden path, and $\Pi$ be the (unique) disk of $\Gamma_0$.  Then, $\pazocal{T}_0$ and $\Pi$ may be transposed along $\textbf{s}_1$, producing a diagram $\Gamma_0'$ with $\lab(\partial\Gamma_0')\equiv\lab(\partial\Gamma_0)$ and $\tau_2(\Gamma_0')=(1,L-1-\ell)$.  Thus, letting $\Phi'$ be the diagram obtained from $\Gamma$ by excising $\Gamma_0$ and pasting $\Gamma_0'$ in its place satisfies the statement.

\end{proof}

Finally, the next statement demonstrates \Cref{basic transpose a} in the general case, removing any assumption on the defining configuration:

\begin{lemma} \label{transpose a}

Let $\Phi$ be a profile of size $\ell$.  Then there exists a circular diagram $\Phi'$ over the disk presentation of $G_\Omega(\textbf{M}^\pazocal{L})$ with $\lab(\partial\Phi')\equiv\lab(\partial\Phi)$ such that $\tau_2(\Phi')=(1,L-1-\ell)$.

\end{lemma}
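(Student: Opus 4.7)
The plan is to reduce the statement to Lemma \ref{basic transpose a}, whose hypotheses demand that the defining configuration $W$ is $\theta$-admissible and that $\ell(W\cdot\theta)\le 1$. I would begin by invoking Lemma \ref{minimal half-hat} to replace $\Phi$ with a flat profile of the same size, history, and defining configuration. Once the profile is flat, Lemma \ref{profile projection} applies and yields that $W(2)$ is $\theta$-admissible. In the favorable subcase where $W$ itself is $\theta$-admissible and $\ell(W\cdot\theta)\le 1$, Lemma \ref{basic transpose a} then delivers $\Phi'$ directly. Otherwise, since $W(2)$ is $\theta$-admissible but $W$ is not (or $\ell(W\cdot\theta)\ge 2$), Lemmas \ref{transposition computation not applicable} and \ref{transposition computation not one-machine} force exactly one of the exceptional cases $(\theta,W)=(\theta(s)_2,I(w))$ or $(\theta,W)=(\theta(s)_1,J(w))$ for some $w\in\pazocal{L}$.

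In each exceptional case I would perform a \emph{disk swap}: the configurations $I(w)$ and $J(w)$ differ only in the special input sector by an insertion of $w$, and since $w\in\pazocal{L}\subseteq\Lambda^\pazocal{A}$ is cyclically reduced, $w\in\Omega$, so a single $a$-cell labeled $w$ can realize the swap. Concretely, I would construct a new flat profile $\tilde\Phi$ whose disk is labeled by the swapped configuration $W'$ (namely $J(w)$ in the first case, $I(w)$ in the second), keeping the same half-hat as $\Phi$. In the $\theta=\theta(s)_2$ case, the fact that $\theta(s)_2$ locks the special input sector forces the bottom $\textbf{s}_1^{-1}$ of $\pazocal{T}$ to avoid both $q$-edges of that sector, so $\textbf{s}_1^{-1}$ embeds as a subpath of $\partial\tilde\Pi$ with the same label. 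Both $J(w)$ and $I(w)$ are $\theta(s)_i$-admissible (for the appropriate $i$) with $\ell(W'\cdot\theta)=1$ (by dropping the initial rule from the one-machine accepting computations furnished by Lemma \ref{one-machine language}), so Lemma \ref{basic transpose a} applies to $\tilde\Phi$ to yield a diagram with the correct signature. To recover a diagram with boundary $\lab(\partial\Phi)$, I would then attach the $a$-cell labeled $w$ at the position on the boundary corresponding to the inserted or deleted copy of $w$.

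The main obstacle is the subcase where $(\theta,W)=(\theta(s)_1,J(w))$ and the bottom $\textbf{s}_1^{-1}$ of $\pazocal{T}$ includes both of the (adjacent) $q$-edges of the empty special input sector of $J(w)^{-\eps}$. Here, swapping the disk to $I(w)^{-\eps}$ inserts a copy of $w$ between the two $q$-edges on $\partial\tilde\Pi$, but $\pazocal{T}$ has two adjacent $(\theta,q)$-cells there with no $(\theta,a)$-cells in between to match. I would resolve this by extending $\pazocal{T}$ with $\|w\|$ new $(\theta,a)$-cells, one for each letter $a$ of $w$, corresponding to the relations $a\theta_1=\theta_1\cdot\varphi_1(a)$ built into $\theta(s)_1$ on the special input sector. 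This extension realigns the bottom of the band with $\partial\tilde\Pi$ but leaves an extra copy of $\widetilde\varphi_1(w)$ on the top. Since $\widetilde\varphi_1(w)\in\Lambda_1^\pazocal{A}\subseteq\pazocal{E}(\Lambda^\pazocal{A})$ is cyclically reduced, $\widetilde\varphi_1(w)\in\Omega$, and a second $a$-cell labeled $\widetilde\varphi_1(w)$ attached after the application of Lemma \ref{basic transpose a} compensates for the discrepancy and restores $\lab(\partial\Phi')\equiv\lab(\partial\Phi)$. Since neither auxiliary $a$-cell contributes to $\sigma_1$ or $\sigma_2$, the final signature is $\tau_2(\Phi')=(1,L-1-\ell)$, as required.
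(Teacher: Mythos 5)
Your overall route --- reduce to Lemma \ref{basic transpose a}, and in the exceptional cases $(\theta,W)=(\theta(s)_2,I(w))$ or $(\theta(s)_1,J(w))$ swap the disk's label and compensate with an $a$-cell labelled $w$ --- is the same as the paper's, but the execution of the swap has a genuine gap. Your key structural claim in the $\theta(s)_2$ case, that locking of the `special' input sector forces the hidden path $\textbf{s}_1$ to avoid the two $q$-edges of that sector (so that the old half-hat can be reattached verbatim to the $J(w)$-disk), identifies $\textbf{s}_1^{-1}$ with $\textbf{bot}(\pazocal{T})$; this is valid only for simple profiles, and you cannot reduce to a simple profile here because Lemma \ref{flat profile to simple profile} requires precisely the $\theta$-admissibility of $W$ (with $\ell(W\cdot\theta)\leq1$) that fails in the exceptional cases. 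For a flat profile that is not simple, Lemma \ref{profile makeup} forces $s(\Phi)\leq k$: the hidden path does traverse the special input sector, its label contains the copy of $w$ written in $I(w)$, and the $a$-cells of the half-hat sit in $\Psi_s$ capping this $w$ against the locked band, whose bottom over that sector is empty. In that subcase your plan breaks twice: the old half-hat no longer matches the boundary of the $J(w)$-disk, and the deleted copy of $w$ lies on the interior hidden path, not on $\partial\Phi$, so the compensating $a$-cell cannot be ``attached at the position on the boundary.'' The same over-simplified picture underlies your treatment of the $\theta(s)_1$ case (``two adjacent $(\theta,q)$-cells with no $(\theta,a)$-cells in between''): a flat non-simple profile with defining configuration $J(w)$ can have a nonempty band bottom over that sector capped by $a$-cells, and your band-extension fix does not address it.

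The paper's proof avoids touching the half-hat by making the swap boundary-neutral: it replaces the disk $\Pi$ by a two-cell diagram $\Sigma$ (a disk labelled by $J(w)$, resp.\ $I(w)$, plus one $a$-cell labelled $w$) satisfying $\lab(\partial\Sigma)^{\eps}\equiv W$, so nothing outside $\Pi$ changes; then, according to whether $s(\Phi)\leq k$ or $s(\Phi)>k$, the $a$-cell either joins the half-hat of the resulting profile or lies outside it, and Lemma \ref{basic transpose a} applies to that profile with defining configuration $J(w)$ or $I(w)$. Your argument can be repaired by adopting this interposition --- placing the compensating $a$-cell between the swapped disk and the old half-hat when $s(\Phi)\leq k$ --- rather than keeping the half-hat attached directly to the new disk and correcting the boundary afterwards; as written, the exceptional cases are not established.
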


\begin{proof}

Let $\theta$ be the history, $W$ the defining configuration, and $(\textbf{e}_0,\dots,\textbf{e}_k;\textbf{e}_{k+1},\dots,\textbf{e}_n)$ be the $q$-enumeration of $\Phi$.  Letting $\Pi$ be the disk of $\Phi$, let $\eps\in\{\pm1\}$ such that $\lab(\partial\Pi)^\eps\equiv W$.

By \Cref{basic transpose a}, the statement holds if $W$ is $\theta$-admissible and $\ell(W\cdot\theta)\leq1$.

First, suppose $W$ is not $\theta$-admissible.  Then \Cref{transposition computation not applicable} implies that $\theta=\theta(s)_2$ and $W\equiv I(w)$ for some $w\in\pazocal{L}$.  By condition (L5), we can construct a circular diagram $\Sigma$ over the disk presentation of $G_\Omega(\textbf{M}^\pazocal{L})$ with $\tau(\Sigma)=(1,0,1,0)$ such that:

\begin{itemize}

\item The single disk $\bar{\Pi}$ of $\Sigma$ satisfies $\lab(\partial\bar{\Pi})^\eps\equiv J(w)$

\item The single $a$-cell $\pi$ of $\Sigma$ satisfies $\lab(\partial\pi)^\eps\equiv w$

\item $\lab(\partial\Sigma)^\eps\equiv W$

\end{itemize}

Excising $\Pi$ from $\Phi$ and replacing it with $\Sigma$ then yields a circular diagram $\Delta$ over the disk presentation of $G_\Omega(\textbf{M}^\pazocal{L})$ with $\lab(\partial\Delta)\equiv\lab(\partial\Phi)$ and $\tau_2(\Delta)=\tau_2(\Phi)$.

If $s(\Phi)\leq k$, then $\Delta=\Gamma$ is a profile of size $\ell$ with history $\theta(s)_2$ and defining configuration $J(w)$.

Otherwise, if $s(\Phi)>k$, then let $\Gamma$ be the subdiagram of $\Delta$ obtained by removing $\pi$.  Then $\Gamma$ is a profile of size $\ell$ with history $\theta(s)_2$ and defining configuration $J(w)$ such that $\tau_2(\Gamma)=\tau_2(\Phi)$.

Note that $J(w)$ is $\theta(s)_2$-admissible and $\ell(J(w)\cdot\theta(s)_2)=1$.  

Hence, by \Cref{basic transpose a}, there exists a circular diagram $\Phi_0$ over the disk presentation of $G_\Omega(\textbf{M}^\pazocal{L})$ with $\lab(\partial\Phi_0)\equiv\lab(\partial\Gamma)$ and $\tau_2(\Phi_0)=(1,L-1-\ell)$.  

Thus, the circular diagram $\Phi'$ obtained from $\Delta$ by replacing $\Gamma$ with $\Phi_0$ satisfies the statement.

Now, suppose $W$ is $\theta$-admissible but $\ell(W\cdot\theta)>1$.  Then \Cref{transposition computation not one-machine} implies that $\theta=\theta(s)_1$ and $W\equiv J(w)$ for some $w\in\pazocal{L}$.  As above, we can then construct a reduced diagram $\Sigma'$ over the disk presentation of $G_\Omega(\textbf{M}^\pazocal{L})$ with $\tau(\Sigma')=(1,0,1,0)$ such that:

\begin{itemize}

\item The single disk $\bar{\Pi}'$ of $\Sigma'$ satisfies $\lab(\partial\bar{\Pi}')^\eps\equiv I(w)$

\item The single $a$-cell $\pi'$ of $\Sigma'$ satisfies $\lab(\partial\bar{\Pi}')^\eps\equiv w$

\item $\lab(\partial\Sigma')^\eps\equiv W$

\end{itemize}

Again, excising $\Pi$ from $\Phi$ and replacing it with $\Sigma'$ then yields a circular diagram $\Delta'$ over the disk presentation of $G_\Omega(\textbf{M}^\pazocal{L})$ with $\lab(\partial\Delta')\equiv\lab(\partial\Phi)$ and $\tau_2(\Delta')=\tau_2(\Phi)$.

As above, the value of $s(\Phi)$ then determines a subdiagram $\Gamma'$ of $\Delta'$ which is a profile of size $\ell$ with history $\theta(s)_1$ and defining configuration $I(w)$.  As $\ell(I(w)\cdot\theta(s)_1)=1$, again \Cref{basic transpose a} provides a circular diagram $\Phi_0'$ with $\lab(\partial\Phi_0')\equiv\lab(\partial\Gamma')$ and $\tau_2(\Phi_0')=(1,L-1-\ell)$.  

Thus, the circular diagram $\Phi'$ obtained from $\Delta'$ by replacing $\Gamma'$ with $\Phi_0'$ satisfies the statement.

\end{proof}

Note that it is an immediate consequence that for any ($2$-)minimal diagram $\Delta$ over the disk presentation of $G_\Omega(\textbf{M}^\pazocal{L})$, the size of any profile contained in $\Delta$ is at most $(L-1)/2$.

Now, as with the transposition of a $\theta$-band and an $a$-cell, the following analogue of \Cref{minimal diskless is M-minimal} shows that this observation applies in a more general setting:

\begin{lemma} \label{minimal t-spokes theta-band}

Let $\Pi$ be a disk and $\pazocal{T}$ be a maximal $\theta$-band in a reduced minimal diagram $\Delta$.  Then $\pazocal{T}$ crosses at most $(L-1)/2$ positive $t$-spokes of $\Pi$.

\end{lemma}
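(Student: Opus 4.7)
The plan is to proceed by contradiction, paralleling the argument of \Cref{minimal diskless is M-minimal} at the disk/$t$-spoke level: disks replace $a$-cells, $t$-spokes replace $\pazocal{A}$-bands, \Cref{Gamma special cell} replaces \Cref{Gamma_a special cell}, \Cref{pure scope} replaces \Cref{pure a-scope}, and the final contradiction comes from \Cref{transpose a} via the profile machinery of \Cref{sec-transposition}. Suppose the lemma fails, and let $\pazocal{P}(\Delta)$ be the set of pairs $(\Pi', \pazocal{T}')$ in $\Delta$ for which $\pazocal{T}'$ crosses more than $(L-1)/2$ positive $t$-spokes of $\Pi'$. For each such pair and each tuple $(\pazocal{Q}_1, \dots, \pazocal{Q}_s)$ of positive $t$-spokes crossing $\pazocal{T}'$, enumerated by the order in which they cross $\pazocal{T}'$, let $\pazocal{T}_0'$ be the minimal subband of $\pazocal{T}'$ containing these crossings; this data bounds a subdiagram $\Delta_0$ of $\Delta$ that does not contain $\Pi'$. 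Choose a counterexample $(\Pi, \pazocal{T})$ with tuple $(\pazocal{Q}_1, \dots, \pazocal{Q}_s)$ minimizing $\text{Area}(\Delta_0)$, and write $\pazocal{T}_0$ for the corresponding subband.

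Next I will show that $\Delta_0$ contains no cells other than those of $\pazocal{T}_0$ and $a$-cells, using the fact that $\Delta_0$ is itself a reduced minimal subdiagram: the only $\theta$-edges on $\partial\Delta_0$ lie on the sides of $\pazocal{T}_0$, $\pazocal{Q}_1$, and $\pazocal{Q}_s$, and similarly for the $q$-edges. Any other non-annular maximal $\theta$-band in $\Delta_0$ cannot end on the side of $\pazocal{T}_0$ (as maximal $\theta$-bands share no cells), so it must cross both $\pazocal{Q}_1$ and $\pazocal{Q}_s$, and hence by topology every intermediate $\pazocal{Q}_i$; extending to a maximal $\theta$-band in $\Delta$ yields another element of $\pazocal{P}(\Delta)$ whose bounded subdiagram sits strictly inside $\Delta_0$, contradicting minimality. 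A $(\theta,q)$-cell outside $\pazocal{T}_0$ lies in a maximal $q$-band whose only permissible ends are on the side of $\pazocal{T}_0$, producing a forbidden $(\theta,q)$-annulus by \Cref{basic annuli 1}(1). Finally, if $\Delta_0$ contains a disk $\Pi_1$, one replaces (as in \Cref{minimal half-hat}) the subdiagram between $\Pi_1$ and the corresponding portion of $\pazocal{T}_0$ by a $1$-minimal diagram with the same boundary label, and applies \Cref{pure scope} to the resulting scope on the side of $\pazocal{T}_0$ facing $\Pi$; the extracted pure scope has size at least $L - 6$, which by the parameter choice $L \gg C$ exceeds $(L-1)/2$, and each of its $t$-spokes crosses $\pazocal{T}_0 \subseteq \pazocal{T}$, producing a new element of $\pazocal{P}(\Delta)$ whose bounded subdiagram is strictly inside $\Delta_0$ -- a contradiction.

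With $\Delta_0$ thus stripped down, the subdiagram $\Phi$ of $\Delta$ consisting of $\Pi$, $\pazocal{T}_0$, and $\Delta_0$ fits the definition of a profile of size $s$: $\pazocal{T}_0$ is the associated $\theta$-band, the subpath of $\partial\Pi$ bounding $\Delta_0$ plays the role of the hidden path (carrying exactly the $s$ $t$-edges corresponding to $\pazocal{Q}_1, \dots, \pazocal{Q}_s$), and the remaining $a$-cells comprise the half-hat. \Cref{transpose a} then furnishes a diagram $\Phi'$ with $\lab(\partial\Phi') \equiv \lab(\partial\Phi)$ and $\tau_2(\Phi') = (1, L - 1 - s)$. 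Excising $\Phi$ from $\Delta$ and pasting $\Phi'$ in its place yields a diagram $\Delta'$ with $\lab(\partial\Delta') \equiv \lab(\partial\Delta)$ in which $\sigma_1$ is preserved while $\sigma_2$ strictly decreases (since $s > (L-1)/2 > L - 1 - s$), contradicting the minimality of $\Delta$.

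The main technical obstacle is the disk case of Step 2: one must verify both that the disk delivered by \Cref{pure scope} genuinely gives a configuration in $\pazocal{P}(\Delta)$, i.e., that all its relevant $t$-spokes cross the ambient $\pazocal{T}$ (not just some abstract subband), and that the \Cref{minimal half-hat}-style replacement of a completion strictly shrinks (rather than merely preserves) the bounded subdiagram. Both reduce to careful bookkeeping of how the replacement interacts with $\pazocal{T}_0$, together with the parameter inequality $L - 6 > (L-1)/2$ that is guaranteed by the standing assignment in \Cref{sec-parameters}.
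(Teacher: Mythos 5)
Your proposal is correct and follows essentially the same route as the paper's proof: choose the counterexample pair and spokes so that the bounded subdiagram $\Delta_0$ has minimal area, rule out other non-annular $\theta$-bands and disks exactly as in \Cref{minimal diskless is M-minimal} (with \Cref{pure scope} and the inequality $L-6>(L-1)/2$ replacing \Cref{pure a-scope}), and then recognize $\Pi$ together with $\Delta_0$ as a profile of size $\ell>(L-1)/2$ so that \Cref{transpose a} contradicts minimality of $\Delta$. The technical points you flag (that the spokes of the disk produced by \Cref{pure scope} cross the ambient band $\pazocal{T}$, and that the extracted completion sits strictly inside $\Delta_0$) are handled in the paper by the same bookkeeping you describe, so no genuine gap remains.
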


\begin{proof}

Suppose $\Delta$ is a reduced diagram over the disk presentation of $G_\Omega(\textbf{M}^\pazocal{L})$ containing a $\theta$-band $\pazocal{T}$ which crosses $\ell>(L-1)/2$ positive $t$-spokes of a disk $\Pi$.

As in the proof of \Cref{minimal diskless is M-minimal}, a subband $\pazocal{T}_0$ of $\pazocal{T}$, a subpath $\textbf{x}$ of $\partial\Pi$, and the positive $t$-spokes of $\Pi$ that cross $\pazocal{T}$ bound a subdiagram $\Delta_0$ (see \Cref{fig-a-bands}).  Assuming $\Delta_0$ has minimal area among all subdiagrams arising in this way, then using \Cref{pure scope} in place of \Cref{pure a-scope} (and a parameter choice for $L$), the same argument as that employed in \Cref{minimal diskless is M-minimal} implies $\Delta_0$ contains no disk.

Perhaps replacing $\Delta_0$ with a minimal diagram with the same contour label, Lemmas \ref{M-minimal theta-annuli} and \ref{minimal diskless is M-minimal} imply that $\Delta_0$ contains no maximal $\theta$-band distinct from $\pazocal{T}_0$.  Hence, any cell of $\Delta_0$ which is not a part of $\pazocal{T}_0$ is an $a$-cell.

Finally, as in the proof of \Cref{minimal diskless is M-minimal}, perhaps passing to the $\theta$-band $\overline{\pazocal{T}}$ with opposite direction, it may be assumed that $\textbf{x}$ and $\textbf{bot}(\pazocal{T}_0)$ have the same endpoints.

But then the subdiagram $\Phi$ of $\Delta$ consisting of $\Pi$ and $\Delta_0$ is a profile of size $\ell$ with associated $\theta$-band $\pazocal{T}$, so that \Cref{transpose a} implies $\Delta$ is not minimal.

\end{proof}

\begin{lemma} \label{disk theta-annuli}

A reduced minimal diagram contains no $\theta$-annuli.

\end{lemma}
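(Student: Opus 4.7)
Suppose, toward contradiction, that $\Delta$ is a reduced minimal diagram containing a $\theta$-annulus $\pazocal{S}$. By \Cref{minimal is smooth}, $\Delta$ is smooth. Let $\Delta'_\pazocal{S}$ be the subdiagram of $\Delta$ bounded by the inner contour of $\pazocal{S}$, so that $\Delta'_\pazocal{S}$ is the simply-connected region filling the hole of $\pazocal{S}$ and contains no cell of $\pazocal{S}$. Any subdiagram of a minimal diagram is itself minimal (any lower-signature replacement would contradict minimality of the ambient diagram), so $\Delta'_\pazocal{S}$ is minimal. The plan is to show first that $\Delta'_\pazocal{S}$ has no disk and then to derive a contradiction from \Cref{M-minimal theta-annuli}.

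The crux is the claim that $\Delta'_\pazocal{S}$ contains no disk. Assume otherwise; mirroring the constructions of \Cref{Gamma_a planar} and \Cref{sec-removing-disks}, I build a planar auxiliary graph whose interior vertices are the disks of $\Delta'_\pazocal{S}$ with a single external vertex recording $\partial\Delta'_\pazocal{S}$, internal edges arising from maximal positive $t$-bands in $\Delta'_\pazocal{S}$ joining two such disks, and external edges from those $t$-bands with one end on a disk and the other on $\partial\Delta'_\pazocal{S}$. Each of the $L-1$ positive $t$-letters on a disk's boundary contributes exactly one incident edge, so every interior vertex has degree exactly $L-1$; distinct $t$-letters label distinct edges, so there are no loops; and since $\Delta'_\pazocal{S}$ is in particular $1$-minimal, \Cref{disk MM2} forbids any $2$-gon on a pair of interior vertices. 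Lemma 3.2 of \cite{O97}, as used in \Cref{Gamma_a' special cell} and \Cref{Gamma special cell}, then produces an interior vertex $v$, corresponding to some disk $\Pi$, incident to at least $(L-1)-3=L-4$ consecutive external edges. Each such external edge is a positive $t$-spoke of $\Pi$ whose maximal extension in $\Delta$ must exit $\Delta'_\pazocal{S}$ and so cross $\pazocal{S}$; by \Cref{basic annuli 1}(1) the crossing is unique, as a $t$-spoke crossing $\pazocal{S}$ twice would form a $(\theta,q)$-annulus with a subband of $\pazocal{S}$. Consequently $\pazocal{S}$ crosses at least $L-4$ positive $t$-spokes of $\Pi$, while \Cref{minimal t-spokes theta-band} permits at most $(L-1)/2$; the parameter choice $L>7$ yields $L-4>(L-1)/2$, a contradiction.

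Hence $\Delta'_\pazocal{S}$ contains no disk. Now set $\Delta_\pazocal{S}=\Delta'_\pazocal{S}\cup\pazocal{S}$, the subdiagram of $\Delta$ bounded by the outer contour of $\pazocal{S}$, which retains $\pazocal{S}$ as a $\theta$-annulus. The cells of $\pazocal{S}$ being $(\theta,q)$- or $(\theta,a)$-cells, $\Delta_\pazocal{S}$ is diskless; as a subdiagram of $\Delta$ it is minimal, smooth by \Cref{minimal is smooth}, and $M$-minimal by \Cref{minimal diskless is M-minimal}. But \Cref{M-minimal theta-annuli} forbids a $\theta$-annulus in a smooth $M$-minimal diagram, contradicting the presence of $\pazocal{S}$ in $\Delta_\pazocal{S}$. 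The main technical obstacle will be the first step: verifying that the auxiliary graph on the disks of $\Delta'_\pazocal{S}$ truly enjoys the structural properties required by the estimating-graph argument (planarity, degree $L-1$, absence of loops and of $2$-gons between interior vertices) and carefully matching each external edge in the graph with a crossing of $\pazocal{S}$ by a positive $t$-spoke of the witnessing disk, so that the count $L-4$ can be confronted with the bound from \Cref{minimal t-spokes theta-band}.
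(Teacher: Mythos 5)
Your proof is correct and follows essentially the same route as the paper: the diskless case is dispatched via \Cref{minimal is smooth}, \Cref{minimal diskless is M-minimal}, and \Cref{M-minimal theta-annuli}, and the case of a disk inside the annulus is dispatched by the disk estimating-graph argument together with \Cref{minimal t-spokes theta-band}. The only difference is cosmetic: you re-derive the graph conclusion by hand for the hole bounded by the inner contour, where the paper simply applies the already-established \Cref{Gamma special cell} to the subdiagram bounded by the outer contour of the annulus.
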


\begin{proof}

Suppose the reduced minimal diagram $\Delta$ contains a $\theta$-annulus $\pazocal{S}$.  Let $\Delta_\pazocal{S}$ be the subdiagram of $\Delta$ bounded by the outer contour of $\pazocal{S}$.

By Lemmas \ref{minimal is smooth}, \ref{minimal diskless is M-minimal}, and \ref{M-minimal theta-annuli}, $\Delta_\pazocal{S}$ must contain a disk.  So, \Cref{Gamma special cell} yields a disk $\Pi$ of $\Delta_\pazocal{S}$ such that $L-4$ consecutive $t$-spokes of $\Pi$ (in $\Delta_\pazocal{S}$) have ends on $\partial\Delta_\pazocal{S}$.  But taking $L\geq8$, then $L-4>(L-1)/2$ and so the $\theta$-band $\pazocal{S}$ and the disk $\Pi$ provide a contradiction to \Cref{minimal t-spokes theta-band}.

\end{proof}

As a consequence, we arrive at the following statement, essential for the proof that the maps defined in Sections 13-14 for the proof of Theorem A are embeddings:

\begin{lemma} \label{all A's is Lambda}

Suppose $\Delta$ is a reduced minimal diagram such that $\lab(\partial\Delta)$ is a word over $\pazocal{A}^{\pm1}$.  Then, letting $k$ be the number of $a$-cells of $\Delta$, $\lab(\partial\Delta)$ is freely equal to a product $w_1\dots w_k$ such that each $w_i$ is a word over $\pazocal{A}^{\pm1}$ freely conjugate to an element of $\Lambda^\pazocal{A}$.

\end{lemma}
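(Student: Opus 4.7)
My plan is to first establish that $\Delta$ consists only of $a$-cells, then induct on $k = \sigma_3(\Delta)$, peeling off one $a$-cell at a time using \Cref{Gamma_a special cell}.

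For the structural step, any maximal $\theta$-band in $\Delta$ would have both ends on $\partial\Delta$ by \Cref{disk theta-annuli}, impossible since $\lab(\partial\Delta)$ has no $\theta$-letters; and any disk would produce, via \Cref{Gamma special cell}, $L - 4 \geq 1$ consecutive $t$-spokes ending on $\partial\Delta$, impossible since $\lab(\partial\Delta)$ has no $t$-letters. So $\Delta$ has no disks, no $\theta$-bands, and thus no $(\theta, q)$- or $(\theta, a)$-cells; by \Cref{minimal diskless is M-minimal}, $\Delta$ is $M$-minimal. The base case $k = 0$ is then immediate: $\Delta$ has no cells, $\lab(\partial\Delta)$ is freely trivial, and the empty product suffices.

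For the inductive step ($k \geq 1$) I apply \Cref{Gamma_a special cell} (justified by \Cref{minimal MM2}) to obtain an $a$-cell $\pi_0$ whose corresponding vertex $v$ is joined to the exterior vertex by at least $d(v) - 6 \geq C - 6$ consecutive external edges with no intermediate vertices. Because $\Delta$ has no $(\theta, q)$-cells, these external $\pazocal{A}$-bands have length zero, so the defining $\pazocal{A}$-edges lie directly on $\partial\Delta$; hence $\partial\pi_0$ has a subpath $\textbf{x}_1$ of $\ell \geq C - 6$ consecutive $\pazocal{A}$-edges with $\textbf{x}_1^{-1}$ a subpath of $\partial\Delta$, and the labels of these edges lie in $\pazocal{A}^{\pm 1}$ (not $\pazocal{A}_1^{\pm 1}$). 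By \Cref{M Lambda semi-computations}(1), $\lab(\partial\pi_0) \in \Omega$ is freely conjugate either to an element of $\Lambda^\pazocal{A}$ or to an element of $\pazocal{E}_1(\Lambda_1^\pazocal{A})$; since elements of the latter have all $\pazocal{A}$-letters in $\pazocal{A}_1^{\pm 1}$, the presence of $\pazocal{A}$-letters from $\pazocal{A}$ itself on $\partial\pi_0$ rules the latter possibility out. Thus $\lab(\partial\pi_0)$ is freely conjugate to an element of $\Lambda^\pazocal{A}$ and is itself a word over $\pazocal{A}^{\pm 1}$.

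Write $\partial\pi_0 = \textbf{x}_1 \textbf{x}_2$ and $\partial\Delta = \textbf{y}_0 \textbf{x}_1^{-1} \textbf{y}_1$; excising $\pi_0$ yields a diagram $\Delta'$ with $k - 1$ $a$-cells and contour over $\pazocal{A}^{\pm 1}$, and a direct free-group computation gives $\lab(\partial\Delta) =_{F(\pazocal{X})} v\, \lab(\partial\pi_0)^{\pm 1} v^{-1} \cdot \lab(\partial\Delta')$ for some word $v$ over $\pazocal{A}^{\pm 1}$ built from $\lab(\textbf{y}_0)$ and $\lab(\textbf{x}_1)$. Passing to a reduced minimal diagram $\tilde{\Delta}'$ with $\lab(\partial\tilde{\Delta}') \equiv \lab(\partial\Delta')$, the first paragraph's reasoning applied to $\tilde{\Delta}'$ gives $\sigma_1(\tilde{\Delta}') = \sigma_2(\tilde{\Delta}') = \sigma_4(\tilde{\Delta}') = 0$. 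The hard part will be showing $\sigma_3(\tilde{\Delta}') = k - 1$ exactly: if it were strictly less, then pasting $\pi_0$ back into $\tilde{\Delta}'$ along $\textbf{x}_2$ and cancelling any resulting pairs would produce a reduced diagram with the same contour label as $\Delta$ but $\tau_3$ strictly less than $(0, 0, k)$, contradicting the minimality of $\Delta$---the fact that $\pi_0$ has only $\pazocal{A}$-edges, so its reinsertion adds no cells of higher signature, is essential here. Given this, the inductive hypothesis yields $\lab(\partial\tilde{\Delta}') =_{F(\pazocal{X})} w_2 \cdots w_k$ of the required form, and taking $w_1 = v\, \lab(\partial\pi_0)^{\pm 1} v^{-1}$ (a word over $\pazocal{A}^{\pm 1}$ freely conjugate to an element of $\Lambda^\pazocal{A}$ by condition (L2)) completes the decomposition.
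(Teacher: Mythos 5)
Your proposal is correct and follows essentially the same route as the paper's proof: first eliminate disks and $\theta$-bands so that $\Delta$ consists only of $a$-cells, then induct on $k$, using \Cref{Gamma_a special cell} (with \Cref{minimal is smooth}, \Cref{minimal diskless is M-minimal}) to peel off an $a$-cell whose boundary arc lies on $\partial\Delta$, whose label is consequently in $\Lambda^\pazocal{A}$ by \Cref{M Lambda semi-computations}, and whose removal leaves a diagram handled by the inductive hypothesis. The only presentational difference is that the paper treats the complement $\Delta'$ of the excised cell directly as a reduced minimal diagram with exactly $k-1$ $a$-cells, whereas you pass to a fresh minimal diagram $\tilde\Delta'$ and recover $\sigma_3(\tilde\Delta')=k-1$ via the re-pasting argument --- the same underlying appeal to the minimality of $\Delta$.
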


\begin{proof}

First, note that the conclusion is independent of the vertex from which $\lab(\partial\Delta)$ is read.


As $\partial\Delta$ consists entirely of $a$-edges, Lemmas \ref{disk theta-annuli} and \Cref{Gamma special cell} imply every (positive) cell in $\Delta$ is an $a$-cell.  So, if $k=0$, then $\Delta$ is a diagram over the free group, meaning $\lab(\partial\Delta)$ is freely trivial, and so the statement is trivially satisfied.


Otherwise, assuming $k\geq1$, Lemmas \ref{minimal is smooth}, \ref{minimal diskless is M-minimal}, and \ref{Gamma_a special cell} yield an $a$-cell $\pi$ and $\ell\geq|\partial\pi|_\pazocal{A}-6$ consecutive $\pazocal{A}$-edges $\textbf{e}_1,\dots,\textbf{e}_\ell$ of $\partial\pi$ such that each $\pazocal{A}$-band $\pazocal{U}(\textbf{e}_i)$ has an end on $\partial\Delta$ and there are no $a$-cells between these $\pazocal{A}$-bands.  In particular, each of these edges is shared with $\partial\Delta$, and so  $\lab(\partial\pi)\in\Lambda^\pazocal{A}$ by \Cref{M Lambda semi-computations}.


Fix $1\leq i\leq\ell$ and let $\textbf{s}_i$ be the complement of $\textbf{e}_i$ in $\partial\pi$, {\frenchspacing i.e. so that $\partial\pi=\textbf{e}_i\textbf{s}_i$}.  Note that condition (L3) implies $w_i=\lab(\textbf{e}_i\textbf{s}_i)\in\Lambda^\pazocal{A}$.  Further, let $\textbf{t}_i$ be the path such that $\partial\Delta=\textbf{e}_i\textbf{t}_i$.



Using $0$-refinement to push the other edges away from the boundary, we may then excise $\pi$ from $\Delta$ by cutting along $\textbf{s}_i$, yielding a reduced minimal diagram $\Delta'$ with $\partial\Delta'=\textbf{s}_i^{-1}\textbf{t}_i$.  As $\lab(\textbf{s}_i^{-1})$ is a subword of $\lab(\partial\pi)\in\Lambda^\pazocal{A}$, $\lab(\partial\Delta')$ is a word over $\pazocal{A}^{\pm1}$.  So, since $\Delta'$ consists of $k-1$ $a$-cells, an inductive argument implies $\lab(\textbf{s}_i^{-1}\textbf{t}_i)=_{F(\pazocal{A})}w_2\dots w_k$ where each word $w_2,\dots,w_k$ is a word over $\pazocal{A}^{\pm1}$ freely conjugate to an element of $\Lambda^\pazocal{A}$.

Thus, $\lab(\textbf{e}_i\textbf{t}_i)=_{F(\pazocal{A})}\lab(\textbf{e}_i\textbf{s}_i)\lab(\textbf{s}_i^{-1}\textbf{t}_i)=_{F(\pazocal{A})}w_1w_2\dots w_k$, implying the statement.

%
%
%

\end{proof}

Similarly, the next two statements are essential for establishing the malnormality of the embeddings (see \Cref{no counterexample annulus}):

\begin{lemma} \label{equal b's are freely equal}

Let $w_1$ and $w_2$ are reduced words over $\pazocal{B}^{\pm1}$.  Identifying $\pazocal{B}$ with the corresponding subset of the tape alphabet of the `special' input sector, suppose $w_1$ and $w_2$ represent the same element of $G_\Omega(\textbf{M}^\pazocal{L})$.  Then $w_1\equiv w_2$.

\end{lemma}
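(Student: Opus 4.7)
The plan is to argue by van Kampen's lemma that any reduced minimal diagram $\Delta$ over the disk presentation of $G_\Omega(\textbf{M}^\pazocal{L})$ with $\lab(\partial\Delta)\equiv w_1w_2^{-1}$ must in fact be a diagram over the free group $F(\pazocal{B})$, from which $w_1\equiv w_2$ follows since both are reduced. By \Cref{generic minimal diagram}, such a $\Delta$ exists.

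First I would rule out all non-$a$-cells in $\Delta$. Since $\partial\Delta$ consists entirely of $b$-edges, it contains no $\theta$-edges, no $q$-edges, no $t$-edges, no $\pazocal{A}$-edges, and no ordinary $a$-edges. By \Cref{disk theta-annuli}, $\Delta$ contains no $\theta$-annuli, so every maximal $\theta$-band has two ends on $\partial\Delta$; as $\partial\Delta$ has no $\theta$-edges, $\Delta$ contains no $\theta$-edges at all, and hence no $(\theta,q)$- or $(\theta,a)$-cells. For disks, if $\Delta$ contained one, \Cref{Gamma special cell} (together with the parameter choice $L\geq 7$) would produce a disk with $L-4\geq 3$ consecutive $t$-spokes ending on $\partial\Delta$, contradicting the absence of $t$-edges on $\partial\Delta$. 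By \Cref{basic annuli 1}(2) there are no $q$-annuli, and any remaining $q$-band would have to end on a disk, a hub, or $\partial\Delta$—none of which are available—so there are no $q$-edges in $\Delta$ either. Thus every positive cell of $\Delta$ is an $a$-cell.

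Next I would rule out $a$-cells. By \Cref{minimal is smooth} $\Delta$ is smooth, and by \Cref{minimal diskless is M-minimal} (applied after noting that $\Delta$ is a minimal diskless diagram, hence a minimal diagram over $M_\Omega(\textbf{M}^\pazocal{L})$) it is $M$-minimal. Suppose for contradiction that $\Delta$ contains at least one $a$-cell. \Cref{Gamma_a special cell} then yields an $a$-cell $\pi$ corresponding to a vertex $v$ of $\Gamma_a(\Delta)$ together with at least $d(v)-6$ consecutive external edges joining $v$ to the exterior vertex. Each such external edge is a maximal positive $\pazocal{A}$-band with an end on $\pi$ whose other end must lie on either a $(\theta,q)$-cell or on $\partial\Delta$. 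Since $\Delta$ has no $(\theta,q)$-cells and $\partial\Delta$ has no $\pazocal{A}$-edges, this is impossible, yet $d(v)\geq C\geq 12$ by condition (L1) and \Cref{semi-computation deltas}.

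Therefore $\Delta$ contains no positive cells at all, so $\lab(\partial\Delta)\equiv w_1w_2^{-1}$ is freely trivial. Since $w_1$ and $w_2$ are reduced words over $\pazocal{B}^{\pm1}$, this forces $w_1\equiv w_2$. The main obstacle is simply the bookkeeping of verifying that every possible type of cell and band is excluded by the purely $b$-labelled boundary; once this is established, the conclusion is essentially immediate from the special cell lemmas.
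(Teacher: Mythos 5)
Your proposal is correct and follows essentially the same route as the paper: take a reduced minimal diagram with boundary label $w_1w_2^{-1}$, use the all-$b$ boundary together with \Cref{Gamma special cell}, \Cref{basic annuli 1}, \Cref{disk theta-annuli}, and \Cref{Gamma_a special cell} to exclude disks, $(\theta,q)$-cells, $(\theta,a)$-cells, and $a$-cells, and conclude that $w_1w_2^{-1}$ is freely trivial. The only difference is the order of the exclusions (you remove $\theta$-edges before disks), which is immaterial.
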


\begin{proof}

By \Cref{generic minimal diagram}, there exists a reduced minimal diagram $\Delta$ over the disk presentation of $G_\Omega(\textbf{M}^\pazocal{L})$ which satisfies $\lab(\partial\Delta)\equiv w_1w_2^{-1}$.  As in the previous proof, Lemmas \ref{disk theta-annuli} and \Cref{Gamma special cell} imply every (positive) cell in $\Delta$ is an $a$-cell.  

But Lemmas \ref{minimal is smooth} and \ref{minimal diskless is M-minimal} imply $\Delta$ is a smooth $M$-minimal diagram, so that \Cref{Gamma_a special cell} implies $\Delta$ contains no $a$-cells.  Hence, $\Delta$ is a circular diagram over the free group, so that the statement follows from the hypothesis that $w_1$ and $w_2$ are reduced.

\end{proof}

\begin{lemma} \label{compressed semi-trapezia sides not equal}

Let $\Delta$ be a compressed semi-trapezium over $M(\textbf{M}^\pazocal{L})$ in the `special' input sector with $\lab(\mathscr{C}\textbf{bot}(\Delta))\equiv y_1^{\delta_1}\dots y_k^{\delta_k}$ for some $y_i\in\pazocal{A}$ and $\delta_i\in\{\pm1\}$.  Suppose:

\begin{enumerate}

\item $y_1^{\delta_1}\dots y_k^{\delta_k}$ is cyclically reduced

\item $\delta_1\neq-1$ or $\delta_k\neq1$

\item The history of $\Delta$ can be factored as $\theta(s)_1H\theta(s)_1^{-1}$

\end{enumerate}
Then the label of the sides of $\Delta$ are not equal in $G_\Omega(\textbf{M}^\pazocal{L})$.

\end{lemma}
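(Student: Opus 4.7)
By \Cref{compressed semi-trapezia are compressed semi-computations}, $\Delta$ corresponds to a reduced compressed semi-computation $\pazocal{S}\colon w_0 \to w_1 \to \cdots \to w_h$ in the `special' input sector with history $\theta(s)_1 H \theta(s)_1^{-1}$, so $w_1 = \widetilde\varphi_1(w_0) \in F(\pazocal{A}_1)$ and the applicability of $\theta(s)_1^{-1}$ at the final step forces $w_{h-1} \in F(\pazocal{A}_1)$. Since rules in $\Theta_2$ lock the `special' input sector, $H$ consists of working rules of the first machine and the middle subcomputation $w_1\to\cdots\to w_{h-1}$ can be identified with a reduced compressed semi-computation of $\textbf{M}_1^\pazocal{A}$ in the $Q_0^\pazocal{A} Q_1^\pazocal{A}$-sector. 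The reducedness of the full history $\theta(s)_1 H \theta(s)_1^{-1}$ (equivalently, the absence of cancellable cell pairs between adjacent $\theta$-bands of $\Delta$) forces $H$ to be non-empty.

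If $k \geq 3$, I apply \Cref{M_1 compressed semi-computation three A}: the bound $\|u_{i-1}\|+\|u_i\| \geq \frac{1}{2} D_\pazocal{A}\|H\|$ together with $\|H\|\ge1$ forces $w_{h-1}$ to contain a $\pazocal{B}$-letter between some pair of consecutive $\pazocal{A}_1$-letters, contradicting $w_{h-1}\in F(\pazocal{A}_1)$. The case $k=2$ with $(\delta_1,\delta_2)\in\{(1,1),(-1,-1)\}$ is handled identically using \Cref{M_1 compressed semi-computation two A}, whose applicability condition $\delta_1\ne 1$ or $\delta_2\ne -1$ is met in each. In all these subcases no such $\Delta$ exists, and the conclusion of the lemma holds vacuously.

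The substantive situation is $k=2$ with $(\delta_1,\delta_2)=(1,-1)$ (permitted by condition (2)), together with the analogous $k=1$. Here the compressed application of each working rule $\theta_y$ reproduces $w_1$ without introducing interior $\pazocal{B}$-letters, so the earlier bounds give no contradiction. I will analyze $\lab(\textbf{p}_1)$ and $\lab(\textbf{p}_2)$ directly. Since $f_{\theta_y,1}(a_1) = v(y,a)\,a_1$, each $(\theta_y,\pazocal{A})$-cell has a short side labeled $\theta_{y,1}$ and a long side labeled (up to inversion) by $\theta_{y,1}v(y,a)$; the orientation of each cell's $\pazocal{A}$-edge, dictated by the sign $\delta_j$, places the long side of every leftmost-band cell on $\textbf{p}_1$ with contribution $v(y^{(j)},y_1)$ and the long side of every rightmost-band cell on $\textbf{p}_2$ with contribution $v(y^{(j)},y_k)^{\pm1}$. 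The $(\theta(s)_1^{\pm1},\pazocal{A})$-cells contribute only $\theta(s)_{1,1}^{\pm1}$, since $\widetilde\varphi_1^{\pm1}$ are length-preserving. Thus $\lab(\textbf{p}_j) = \theta(s)_{1,1}\,M_j\,\theta(s)_{1,1}^{-1}$ for $j=1,2$, and the equality $\lab(\textbf{p}_1)=\lab(\textbf{p}_2)$ in $G_\Omega(\textbf{M}^\pazocal{L})$ reduces to $M_1=M_2$. Using the $(\theta,b)$-relations to slide each $\pazocal{B}$-block of $M_j$ to the right past the working-rule $\theta$-letters, the $\theta$-letter prefixes of $M_1,M_2$ are identical copies of the history with the same sector index, so they cancel; what remains is the comparison $V_1 = V_2$ for $V_1 = \prod_j v(y^{(j)},y_1)$ and $V_2 = \prod_j v(y^{(j)},y_k)^{\pm1}$. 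Condition (1) gives $y_1\ne y_k$ when $k=2$, and \Cref{free subgroup} places the generators $\{v(\cdot,y_1)\}$ and $\{v(\cdot,y_k)\}$ in disjoint parts of a basis of a free subgroup of $F(\pazocal{B})$; thus $V_1 V_2^{-1}$ is a non-trivial reduced word over $\pazocal{B}$, and \Cref{equal b's are freely equal} forces $V_1\ne V_2$ in $G_\Omega(\textbf{M}^\pazocal{L})$. The case $k=1$ is similar: a single $\pazocal{A}$-band is present and the long side contributes non-trivial $v(y^{(j)},y_1)$-blocks to exactly one of $\textbf{p}_1,\textbf{p}_2$, making $V_1V_2^{-1}$ itself a non-trivial product of $v(y^{(j)},y_1)$'s.

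The main obstacle is this last case: no compressed-semi-computation bound applies, so one must argue geometrically, tracking how the sign $\delta$ controls which side of each $(\theta_y,\pazocal{A})$-cell carries the long $\pazocal{B}$-block onto $\partial\Delta$; assembling these contributions via the $(\theta,b)$-commutation into comparable $\pazocal{B}$-words; and reducing the resulting non-equality to the free non-triviality statement of \Cref{equal b's are freely equal} through the linear independence guaranteed by \Cref{free subgroup}.
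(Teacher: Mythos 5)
Your proposal is correct, and in the decisive case it runs on the same engine as the paper's proof: write out $\lab(\textbf{p}_1)$ and $\lab(\textbf{p}_2)$ explicitly as $\theta(s)_1'$-conjugates of alternating products of history letters and $v(z_j,\cdot)$-blocks, use the $(\theta,b)$-commutation to strip the common $\theta$-letters, and then combine \Cref{equal b's are freely equal} with \Cref{free subgroup} (plus cyclic reducedness to get $y_1\neq y_k$ in the mixed-sign case, and reducedness of $H$ to get a nonempty reduced product over $S_\pazocal{A}$). Where you diverge is the case organization: you first dispose of $k\geq3$ and of $k=2$ with $(\delta_1,\delta_k)\in\{(1,1),(-1,-1)\}$ by showing no such $\Delta$ exists, via \Cref{compressed semi-trapezia are compressed semi-computations} together with the length bounds of \Cref{M_1 compressed semi-computation three A} and \Cref{M_1 compressed semi-computation two A} forcing an interior $\pazocal{B}$-block that blocks the final $\theta(s)_1^{-1}$. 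That vacuity observation is valid (and is a true fact worth knowing), but it is an unnecessary detour: the paper handles all sign patterns uniformly for every $k$ by the same side-label computation, since in the $(1,1)$ and $(-1,-1)$ cases one of the two sides simply carries no $v$-blocks at all, so equality of the sides would force a nonempty reduced product over the basis $S_\pazocal{A}$ to be trivial in $F(\pazocal{B})$ — no appeal to the compressed-semi-computation estimates and no split on $k$ is needed. Two small points of hygiene: the exponents in your $V_1$ and $V_2$ are the same sequence $\eps_1,\dots,\eps_\ell$ coming from the common history (your ``$\pm1$'' is loose, though the disjoint-basis argument is insensitive to it), and your parenthetical equating reducedness of the history with absence of cancellable cells should be an implication (reducedness of $\Delta$ forces the history to be reduced), which is all you use.
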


\begin{proof}

As the history of $\Delta$ is reduced, $H$ must be a non-trivial word consisting entirely of working rules.  In particular, letting $H\equiv\theta_1\dots\theta_\ell$, there exists $z_j\in\pazocal{A}\cup\pazocal{B}$ and $\eps_j\in\{\pm1\}$ such that $\theta_j$ is the copy of the rule $\theta_{z_j}^{\eps_j}$ of $\textbf{M}_1^\pazocal{A}$ in $\Theta_1$.

Now, for any rule $\theta\in\Theta_1$, let $\theta'$ be the copy of $\theta$ in $T$ which is used to define the $(\theta,a)$-relations corresponding to the `special' input sector.  

Then, letting $\partial\Delta=\textbf{p}_1^{-1}\textbf{q}_1\textbf{p}_2\textbf{q}_2^{-1}$ be the standard factorization of $\Delta$, we have:

\begin{itemize}

\item If $\delta_1=1$, then $\lab(\textbf{p}_1)=\theta(s)_1'\left(\prod\limits_{j=1}^\ell(\theta_j'v(z_j,y_1))^{\eps_j}\right)(\theta(s)_1')^{-1}$

\item If $\delta_1=-1$, then $\lab(\textbf{p}_1)=\theta(s)_1'\left(\prod\limits_{j=1}^\ell (\theta_j')^{\eps_j}\right)(\theta(s)_1')^{-1}$

\item If $\delta_k=1$, then $\lab(\textbf{p}_2)=\theta(s)_1'\left(\prod\limits_{j=1}^\ell(\theta_j'v(z_j,y_k))^{\eps_j}\right)(\theta(s)_1')^{-1}$

\item If $\delta_k=-1$, then $\lab(\textbf{p}_1)=\theta(s)_1'\left(\prod\limits_{j=1}^\ell (\theta_j')^{\eps_j}\right)(\theta(s)_1')^{-1}$

\end{itemize}

Note that the definition of the rules of $\textbf{M}_1^\pazocal{A}$ dictates that all letters $\theta_j'$ commute with any $b$-letters in these products.

First, suppose $\delta_1=1=\delta_k$.  Then, assuming that the statement is false, the word $\prod_{j=1}^\ell v(z_j,y_1)^{\eps_j}$ must represent the identity in $G_\Omega(\textbf{M}^\pazocal{L})$.  \Cref{equal b's are freely equal} then implies that this word is freely trivial.  But \Cref{free subgroup} then implies that $H$ must be freely trivial.

Similarly, if $\delta_1=-1=\delta_k$, then assuming the statement is false implies word $\prod_{j=1}^\ell v(z_j,y_k)^{\eps_j}$ is freely trivial, which yields a contradiction in the same way.

Finally, suppose $\delta_1=1$ and $\delta_k=-1$.  Then, assuming the statement is false, the words $\prod_{j=1}^\ell v(z_j,y_1)^{\eps_j}$ and $\prod_{j=1}^\ell v(z_j,y_k)^{\eps_j}$ must be equal in $G_\Omega(\textbf{M}^\pazocal{L})$, and so must be freely equal by \Cref{equal b's are freely equal}.  But then \Cref{free subgroup} implies that $y_1=y_k$, so that the word $y_1^{\delta_1}\dots y_k^{\delta_k}$ is not cyclically reduced.

\end{proof}

\medskip


\subsection{Upper bound on weights} \label{sec-upper-bound} \

To aid with the weight estimates established in the next section, we now study the arrangement of particular maximal bands in a reduced minimal diagram.  This is done in an analogous manner as that employed in \Cref{sec M-minimal upper bound} to study of the positive $\pazocal{A}$- and $b$-bands of an $M$-minimal diagram.

Let $\Pi$ be a disk in a reduced minimal diagram $\Delta$ and let $\pazocal{Q}$ be a maximal positive $q$-band in $\Delta$ which has an end on $\Pi$.  If $\pazocal{Q}$ has an end on another disk, then $\pazocal{Q}$ is called an \textit{internal $q$-band} in $\Delta$.  Otherwise, $\pazocal{Q}$ is called an \textit{external $q$-band} in $\Delta$.

Note that the makeup of the disk relations dictates that no $q$-band can have two ends on the same disk.  In particular, the internal $t$-bands of $\Delta$ correspond to the internal edges of $\Gamma(\Delta)$.

For a reduced minimal diagram $\Delta$, define the following values:

\begin{itemize}

\item $\rho_i(\Delta)$ is the number of internal $q$-bands in $\Delta$

\item $\rho_e(\Delta)$ is the number of external $q$-bands in $\Delta$

\item $\mu_q(\Delta)$ is the number of $(\theta,q)$-cells in $\Delta$

\end{itemize}

The next statement then provides an analogue of \Cref{internal vs external bands} in this setting:

\begin{lemma} \label{internal vs external q-bands}

If $\Delta$ is a reduced minimal diagram, then $\rho_i(\Delta)\leq\rho_e(\Delta)$.

\end{lemma}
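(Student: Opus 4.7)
The plan is to induct on the number $d$ of disks in $\Delta$, adapting the strategy of Lemma \ref{internal vs external bands} with the auxiliary graph $\Gamma(\Delta)$ in place of $\Gamma_a(\Delta)$ and Lemma \ref{Gamma special cell} as the structural input. It is convenient to prove the stronger claim that $\rho_i(\Delta) \leq \rho_e(\Delta)$ for every reduced $1$-minimal diagram $\Delta$, since (global) minimality descends to the relevant subdiagrams only in the weaker $1$-minimal sense. The base case $d \leq 1$ is immediate, as no internal $q$-band can exist.

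For $d \geq 2$, I would apply Lemma \ref{Gamma special cell} to produce a disk $\Pi$ with $L-4$ consecutive $t$-spokes $\pazocal{Q}_1, \dots, \pazocal{Q}_{L-4}$ of $\Pi$ ending on $\partial \Delta$ and $L-5$ disk-free subdiagrams $\Gamma_i$ between consecutive pairs. Let $\tilde{\Delta}$ denote the complement in $\Delta$ of the ``pie slice'' $\Pi \cup \bigcup \pazocal{Q}_i \cup \bigcup \Gamma_i$; this is a reduced circular diagram with $d - 1$ disks. The key counting step is to bound $i(\Pi)$, the number of internal positive $q$-bands of $\Pi$: since maximal $q$-bands are pairwise disjoint and no $\Gamma_i$ contains a disk, every positive $q$-band of $\Pi$ whose $\partial \Pi$-edge lies in the arc containing the $L-4$ consecutive $t$-edges together with the $L-5$ intervening blocks of $2N+2$ non-$t$ $q$-edges is confined to the pie slice and must end on $\partial \Delta$, hence is external. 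Writing $M = L(2N+3)$ for the total number of $q$-edges on $\partial \Pi$, at most $M - (L-4) - (L-5)(2N+2) = 10N + 14$ positive $q$-bands of $\Pi$ can be internal, giving $i(\Pi) \leq 10N + 14$.

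Letting $e(\Pi) = M - i(\Pi)$, the transition from $\Delta$ to $\tilde{\Delta}$ yields
\begin{equation*}
\rho_i(\tilde{\Delta}) = \rho_i(\Delta) - i(\Pi), \qquad \rho_e(\tilde{\Delta}) = \rho_e(\Delta) - e(\Pi) + i(\Pi),
\end{equation*}
since each internal band of $\Pi$ becomes external in $\tilde{\Delta}$ (its non-$\Pi$ end still lies on a disk while its $\Pi$-end lies on $\partial \tilde{\Delta}$), each external band of $\Pi$ either vanishes with the pie slice or loses its unique disk-end, and bands of other disks cannot cross the pie slice's $q$-band boundary. Combining with the inductive hypothesis $\rho_i(\tilde{\Delta}) \leq \rho_e(\tilde{\Delta})$ yields $\rho_i(\Delta) - \rho_e(\Delta) \leq 3 i(\Pi) - M \leq 3(10N+14) - L(2N+3)$, which is nonpositive by the parameter choice $L \gg N$.

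The main obstacle is justifying that $\tilde{\Delta}$ is itself $1$-minimal, which is needed so that the inductive hypothesis applies. I would argue this by contradiction: if $\Gamma$ were a circular diagram with $\lab(\partial \Gamma) \equiv \lab(\partial \tilde{\Delta})$ and $\sigma_1(\Gamma) < \sigma_1(\tilde{\Delta})$, then substituting $\Gamma$ for $\tilde{\Delta}$ inside $\Delta$ would produce a diagram with the same contour label as $\Delta$ but strictly smaller $\sigma_1$, contradicting the $1$-minimality of $\Delta$. With this in hand the induction runs cleanly within the class of reduced $1$-minimal diagrams.
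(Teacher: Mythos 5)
Your proposal is correct and follows essentially the same route as the paper's proof: induction on the number of disks, \Cref{Gamma special cell} to isolate a disk $\Pi$ with $L-4$ consecutive external $t$-spokes, removal of the resulting ``pie slice'', and the same count (at most $10N+14$ $q$-edges on the short arc of $\partial\Pi$ versus at least $(L-4)+(L-5)(2N+2)$ on the long arc) settled by the parameter choice $L>>N$. Your parenthetical reason for passing to $1$-minimal diagrams is not quite right — since signatures are additive under subdiagram replacement and the lexicographic order on $\N^4$ is translation-invariant, full minimality is also inherited by subdiagrams — but working at the $1$-minimal level is perfectly legitimate, as \Cref{Gamma special cell} and \Cref{disk MM2} require nothing more, so this does not affect the argument.
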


\begin{proof}

The proof proceeds by induction on the number $n$ of disks in $\Delta$, with the statement clear if $n=0,1$ as then $\rho_i(\Delta)=0$.

Let $\Pi$ be the disk and $\pazocal{Q}_1,\dots,\pazocal{Q}_{L-4}$ the consecutive positive $t$-spokes of $\Pi$ given by \Cref{Gamma special cell}.  Let $\textbf{e}_1,\dots,\textbf{e}_{L-4}$ be the $t$-edges of $\partial\Pi$ such that $\pazocal{Q}(\textbf{e}_i)$ is the $t$-spoke corresponding to $\pazocal{Q}_i$.  

Let $\textbf{s}_1$ be the subpath of $\partial\Pi$ with first edge $\textbf{e}_1$ and last edge $\textbf{e}_{L-4}$.  Then, letting $\textbf{s}_2$ be the complement of $\textbf{s}_1$ in $\partial\Pi$, let $\textbf{p}=\textbf{top}(\pazocal{Q}(\textbf{e}_{L-4}))^{-1}\textbf{s}_2\textbf{bot}(\pazocal{Q}(\textbf{e}_1))$.

Cutting along $\textbf{p}$ separates $\Delta$ into two subdiagrams $\bar{\Delta}_1$ and $\Delta_2$, where $\bar{\Delta}_1$ is the subdiagram consisting of $\Pi$ and the subdiagrams $\Gamma_1,\dots,\Gamma_{L-5}$ defined in \Cref{Gamma special cell}.  Let $\Delta_1$ be the subdiagram of $\bar{\Delta}_1$ obtained by removing $\Pi$ (see \Cref{fig-counterexample}).

By construction, $\Delta_2$ is a minimal diagram containing $n-1$ disks, so that the inductive hypothesis implies $\rho_i(\Delta_2)\leq\rho_e(\Delta_2)$.

\begin{figure}[H]
\centering
\includegraphics[scale=0.85]{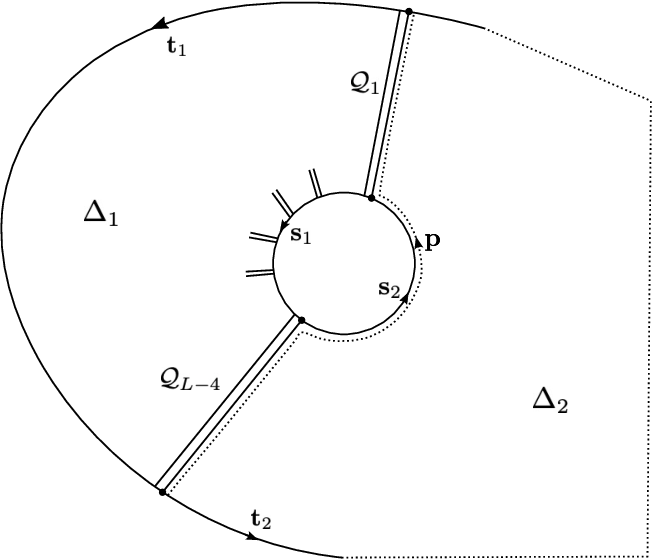}
\caption{Reduced minimal diagram $\Delta$}
\label{fig-counterexample}
\end{figure}

Note that any external $q$-band of $\Delta$ that has an end on a disk other than $\Pi$ corresponds to an external $q$-band of $\Delta_2$.  Similarly, any internal $q$-band of $\Delta$ that does not have an end on $\Pi$ corresponds to an internal $q$-band of $\Delta_2$.

Now, for any internal $q$-band of $\Delta$ which has an end on $\Pi$, this end must be an edge of $\textbf{s}_2^{\pm1}$.  On the other hand, each of these bands corresponds to an external $q$-band in $\Delta_2$.  Every $q$-edge of $\textbf{s}_1$, conversely, corresponds to an external $q$-band of $\Delta$ which is removed entirely when passing to $\Delta_2$.

Hence, $\rho_i(\Delta)\leq\rho_i(\Delta_2)+|\textbf{s}_2|_q$ and $\rho_e(\Delta_2)\leq\rho_e(\Delta)-|\textbf{s}_1|_q+|\textbf{s}_2|_q$, so that $$\rho_i(\Delta)\leq\rho_i(\Delta_2)+|\textbf{s}_2|_q\leq\rho_e(\Delta_2)+|\textbf{s}_2|_q\leq\rho_e(\Delta)-|\textbf{s}_1|_q+2|\textbf{s}_2|_q$$
As $|\textbf{s}_2|_t=3$, though, the makeup of the disk relations implies $|\textbf{s}_2|_q\leq 10(N+1)+4$.  But since $|\textbf{s}_1|_t=L-4$, the parameter choice $L>>N$ yields $|\textbf{s}_2|_q\leq\frac{1}{2}|\textbf{s}_1|_q$, thus implying the statement.

\end{proof}

\begin{lemma} \label{minimal rho}

If $\Delta$ is a reduced minimal diagram, then $\mu_q(\Delta)\leq\|\partial\Delta\|^2$.

\end{lemma}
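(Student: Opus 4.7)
\textbf{Proof plan for Lemma \ref{minimal rho}.} The idea is to count $(\theta,q)$-cells as crossings of maximal positive $\theta$-bands with maximal positive $q$-bands, using Lemma \ref{basic annuli 1}(1) which guarantees any such crossing consists of a single $(\theta,q)$-cell. So I need separate linear bounds for the number of maximal positive $\theta$-bands and for the number of maximal positive $q$-bands in terms of $\|\partial\Delta\|$.

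\textbf{Bound on $\theta$-bands.} By Lemma \ref{disk theta-annuli}, $\Delta$ contains no $\theta$-annuli, so every maximal $\theta$-band is non-annular and, since $\theta$-bands can only end on $\partial\Delta$, each such band contributes two $\theta$-edges to $\partial\Delta$. Hence the number of maximal positive $\theta$-bands is at most $\tfrac{1}{2}\|\partial\Delta\|$.

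\textbf{Bound on $q$-bands.} This is the more delicate step. By Lemma \ref{basic annuli 1}(2), $\Delta$ contains no $q$-annuli, and since a maximal $q$-band can only terminate on $\partial\Delta$ or on a disk, every maximal positive $q$-band falls into one of three classes: \emph{internal} (both ends on disks), \emph{external} (one end on a disk, one on $\partial\Delta$), and what I shall call \emph{boundary-only} (both ends on $\partial\Delta$). Write $\rho_i$, $\rho_e$, $\rho_b$ for the three counts. Counting $q$-edges of $\partial\Delta$ (which are in bijection with the boundary ends of maximal positive $q$-bands), we get $\rho_e + 2\rho_b \leq \|\partial\Delta\|$. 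Lemma \ref{internal vs external q-bands} supplies $\rho_i \leq \rho_e$. Combining these,
\begin{equation*}
\rho_i + \rho_e + \rho_b \;\leq\; 2\rho_e + \rho_b \;\leq\; 2(\rho_e + 2\rho_b) \;\leq\; 2\|\partial\Delta\|.
\end{equation*}

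\textbf{Conclusion.} By Lemma \ref{basic annuli 1}(1), each $(\theta,q)$-cell of $\Delta$ is the unique intersection cell of some maximal positive $\theta$-band with some maximal positive $q$-band, and distinct $(\theta,q)$-cells correspond to distinct such pairs. Therefore
\begin{equation*}
\mu_q(\Delta) \;\leq\; \tfrac{1}{2}\|\partial\Delta\| \cdot 2\|\partial\Delta\| \;=\; \|\partial\Delta\|^2.
\end{equation*}
The one non-routine ingredient is Lemma \ref{internal vs external q-bands} (already established); everything else is a clean counting argument, so I expect no genuine obstacle beyond being careful to treat the boundary-only $q$-bands, which are not explicitly covered by the internal/external dichotomy of Section~10.4.
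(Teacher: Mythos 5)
Your proposal is correct and follows essentially the same route as the paper: bound the maximal positive $\theta$-bands by $\tfrac{1}{2}\|\partial\Delta\|$ via \Cref{disk theta-annuli}, bound the maximal positive $q$-bands by $2\|\partial\Delta\|$ via \Cref{basic annuli 1}(2) together with \Cref{internal vs external q-bands}, and count $(\theta,q)$-cells as crossings using \Cref{basic annuli 1}(1). Your explicit handling of "boundary-only" $q$-bands is just a slightly more detailed version of the paper's observation that every non-internal band has an end on $\partial\Delta$, so the two arguments coincide.
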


\begin{proof}

By \Cref{basic annuli 1}(2), any maximal positive $q$-band of $\Delta$ which is not one must have an end on $\partial\Delta$.  So, letting $\rho(\Delta)$ be the number of maximal positive $q$-bands in $\Delta$, $\rho(\Delta)-\rho_i(\Delta)\leq\|\partial\Delta\|$.  But \Cref{internal vs external q-bands} implies $\rho_i(\Delta)\leq\rho_e(\Delta)$, and so $\rho(\Delta)\geq\rho_i(\Delta)+\rho_e(\Delta)\geq2\rho_i(\Delta)$.  Hence, $\rho(\Delta)\leq2\|\partial\Delta\|$.

\Cref{disk theta-annuli}, on the other hand, implies that any maximal $\theta$-band in $\Delta$ must have two ends on $\partial\Delta$.  So, the number of maximal positive $\theta$-bands in $\Delta$ is at most $\frac{1}{2}\|\partial\Delta\|$.  Thus, the statement follows by \Cref{basic annuli 1}(1).

\end{proof}

Now, let $a\in Y_j^\pazocal{L}(i)\cup\pazocal{Y}_j^\pazocal{L}(i)$ for $i\geq2$ and any $j$, {\frenchspacing i.e. $a$ is} an $a$-letter from the tape alphabet of either the $Q_{j-1}^\pazocal{L}(i)Q_j^\pazocal{L}(i)$- or $(R_j^\pazocal{L}(i))^{-1}(R_{j-1}^\pazocal{L}(i))^{-1}$-sector of the standard base.  Then $a^{\pm1}$ is called an \textit{unrestricted $a$-letter}.

As with other types of letters, an $a$-edge $\textbf{e}$ in a reduced minimal diagram is called an \textit{unrestricted $a$-edge} if $\lab(\textbf{e})$ is an unrestricted $a$-letter.  Accordingly, the unrestricted $a$-edges are partitioned into three types: \textit{unrestricted $\pazocal{A}$-edges}, \textit{unrestricted $b$-edges}, and \textit{unrestricted ordinary $a$-edges}.  

Further, $(\theta,a)$-cells and $a$-bands in reduced minimal diagrams are called \textit{unrestricted $(\theta,a)$-cells} and \textit{unrestricted $a$-bands} if they correspond to unrestricted $a$-letters.  

Naturally, \textit{unrestricted $(\theta,\pazocal{A})$-cells}, \textit{unrestricted $(\theta,b)$-cells}, \textit{unrestricted ordinary $(\theta,a)$-cells}, \textit{unrestricted $\pazocal{A}$-bands}, \textit{unrestricted $b$-bands}, and \textit{unrestricted ordinary $a$-bands} are defined in the obvious way.   Note that no unrestricted $a$-band can have an end on an $a$-cell.

\begin{lemma} \label{pinched disk unrestricted}

Let $\pazocal{U}$ be a maximal unrestricted $a$-band in a reduced minimal diagram $\Delta$.  If $\pazocal{U}$ has two ends on disks, then these ends are on distinct disks.

\end{lemma}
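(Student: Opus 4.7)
The plan is to argue by contradiction: suppose $\pazocal{U}$ has both ends on the same disk $\Pi$. By \Cref{a-bands on disk}, $\pazocal{U}$ must then have length $0$, so it is a single $\pazocal{A}$-edge $\textbf{e}$ whose inverse also lies on $\partial\Pi$, making $\Pi$ a pinched disk. I will fix a maximal pinched subpath $\textbf{s}$ containing $\textbf{e}$ and use the resulting pinched factorization $\partial\Pi=\textbf{s}^{\pm1}\textbf{q}\textbf{s}^{\mp1}\textbf{p}$, with associated subdiagrams $\Psi=\Psi_{\Pi,\textbf{s}}$ (contour $\textbf{p}^{-1}$, not containing $\Pi$) and $\Phi=\Phi_{\Pi,\textbf{s}}$ (contour $\textbf{q}$, containing $\Pi$ and $\Psi$). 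Since the surgery in \Cref{4-minimal a-cells} only modifies $a$-cells locally and leaves $\Pi$ and $\textbf{e}$ intact, I can first replace $\Delta$ by a diagram of equal $3$-signature in which every $a$-cell has contour in $\Lambda^\pazocal{A}$ while retaining the pinched structure.

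Exactly one of $\textbf{p}$ or $\textbf{q}$ contains $q$-edges. In the case that $\textbf{q}$ has no $q$-edges, $\Phi$ is a $1$-minimal subdiagram (subdiagrams of minimal diagrams inherit $1$-minimality) containing the disk $\Pi$, yet its contour $\textbf{q}$ has no $t$-edges; applying \Cref{Gamma special cell} to $\Phi$ would force the existence of a disk in $\Phi$ with $L-4$ consecutive $t$-spokes ending on $\partial\Phi$, which is impossible.

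The principal case is when $\textbf{p}$ has no $q$-edges, so $\partial\Psi=\textbf{p}^{-1}$ has no $q$- or $\theta$-edges. Applying \Cref{Gamma special cell}, \Cref{basic annuli 1}(2), and \Cref{disk theta-annuli} to $\Psi$ in succession rules out disks, $(\theta,q)$-cells, and $(\theta,a)$-cells respectively, leaving $\Psi$ with only $a$-cells and $0$-cells. If $\Psi$ contains no $a$-cells, then $\lab(\textbf{p})$ would be freely trivial, contradicting that it is a nontrivial reduced subword of the cyclically reduced configuration $W^{\pm1}=\lab(\partial\Pi)$. Otherwise, each $\pazocal{A}$-edge on the boundary of an $a$-cell of $\Psi$ carries a letter from the special input sector, whereas $\partial\Psi$ carries letters only from an unrestricted input sector and $\Psi$ contains no $(\theta,\pazocal{A})$-cells; such an edge must therefore be shared between two distinct $a$-cells (distinctness via \Cref{minimal is smooth}) through a length-$0$ $\pazocal{A}$-band. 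The trivial path at their shared endpoint will then invoke \Cref{cancellable a-cells}, contradicting $3$-minimality.

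The hard part will be this principal case: carefully verifying that $\Psi$ is constrained enough (no disks, no $(\theta,q)$- or $(\theta,a)$-cells) that any $\pazocal{A}$-band internal to $\Psi$ has length $0$ and connects two distinct $a$-cells, which is what triggers \Cref{cancellable a-cells}. The minor bookkeeping point is verifying that the $\Lambda^\pazocal{A}$-normalization from \Cref{4-minimal a-cells} preserves the pinched disk, which follows from the locality of that surgery.
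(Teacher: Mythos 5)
Your opening is sound and matches the paper: by \Cref{a-bands on disk} the band has length $0$, $\Pi$ is a pinched disk, and the pinched factorization $\textbf{s}^{\pm1}\textbf{q}\textbf{s}^{\mp1}\textbf{p}$ sets up the subdiagrams $\Psi$ and $\Phi$. Your case split is also workable (your Case A, killed by applying \Cref{Gamma special cell} to $\Phi$, is handled in the paper instead by showing $\textbf{p}$ cannot contain $q$-edges and by ruling out disks in $\Psi$ via \Cref{disk MM2}; both routes are fine). The problem is the endgame. You normalize the diagram \emph{first} via \Cref{4-minimal a-cells} so that \Cref{cancellable a-cells} can eventually be invoked, but that normalization only preserves the $3$-signature: the replacement diagrams $\Gamma_w$ of \Cref{pure a-cells} insert semi-trapezia, i.e.\ $(\theta,\pazocal{A})$- and $(\theta,b)$-cells of the `special' input sector (in fact $\theta$-annuli) around each $a$-cell. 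After this surgery the diagram is no longer known to be minimal, so \Cref{disk theta-annuli} and \Cref{minimal is smooth} (whose proofs rest on \Cref{minimal diskless is M-minimal}, which genuinely uses the fourth coordinate of the signature) cannot be applied to it or its subdiagrams; worse, your key structural claim that $\Psi$ contains no $(\theta,a)$-cells, so that every $\pazocal{A}$-band leaving an $a$-cell has length $0$ and hits another $a$-cell, simply fails in the normalized diagram, because the inserted cells are exactly such $(\theta,a)$-cells. Reordering does not save the argument either: if you do the structural analysis in the original minimal $\Delta$ (where it is valid) and normalize only at the end, the semi-trapezia separate the two central $a$-cells, the connecting path is now labelled by a history rather than a freely trivial word, and \Cref{cancellable a-cells} no longer applies. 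Nothing in the paper gives a version of \Cref{cancellable a-cells} for $\Omega$-labelled cells, so as written the final contradiction is not established.

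The fix is short and is what the paper does: stay in the original minimal $\Delta$ (so $\Psi$ is smooth, $M$-minimal and satisfies (MM2) by \Cref{minimal is smooth}, \Cref{minimal diskless is M-minimal}, \Cref{minimal MM2}) and apply \Cref{Gamma_a special cell} to $\Psi$. Since $\Psi$ has no $(\theta,q)$-cells, this produces at least $C-6\geq1$ maximal positive $\pazocal{A}$-bands from some $a$-cell ending on $\partial\Psi$; but such bands carry letters of the `special' input alphabet while, as you yourself showed, every edge of $\partial\Psi$ is an unrestricted $a$-edge — the alphabet mismatch is the contradiction, with no $\Lambda^\pazocal{A}$-normalization and no two-cell configuration needed. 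In other words, the very observation you use only to manufacture an adjacent pair of $a$-cells already contradicts \Cref{Gamma_a special cell} directly; routing through \Cref{cancellable a-cells} is both unnecessary and, in the form proposed, invalid.
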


\begin{proof}

Assume toward contradiction that $\pazocal{U}$ has two ends on the disk $\Pi$.  By \Cref{a-bands on disk}, $\pazocal{U}$ must then be an $a$-band of length 0, so that $\Pi$ is a pinched disk.  

So, $\pazocal{U}$ has a unique defining edge $\textbf{e}$, and both $\textbf{e}$ and $\textbf{e}^{-1}$ are edges of $\partial\Pi$.  Let $\textbf{s}$ be the pinched subpath of $\partial\Pi$ containing the edge $\textbf{e}$ and let $\textbf{s}^{\pm1}\textbf{q}\textbf{s}^{\mp1}\textbf{p}$ be the pinched factorization of $\partial\Pi$ with respect to $\textbf{s}$.  Then, $\textbf{p}^{-1}$ bounds a subdiagram $\Psi_{\Pi,\textbf{s}}$.  

By the definition of disk relators, the label of each edge of $\textbf{p}^{-1}$ must be an $a$-letter corresponding to a letter of the same tape alphabet as $\lab(\textbf{e})$.  In particular, every edge of $\partial\Psi_{\Pi,\textbf{s}}$ is an unrestricted $a$-edge, while \Cref{disk theta-annuli} implies $\Psi_{\Pi,\textbf{s}}$ consists entirely of disks and $a$-cells.
But Lemmas \ref{Gamma special cell} and \ref{disk MM2} quickly rule out the existence of a disk, so that $\Psi_{\Pi,\textbf{s}}$ consists entirely of $a$-cells.  

Since disk relators are cyclically reduced by construction, $\Psi_{\Pi,\textbf{s}}$ must contain at least one $a$-cell.  Lemmas \ref{minimal is smooth}, \ref{minimal diskless is M-minimal}, and \ref{Gamma_a special cell} then produce an $a$-cell $\pi$ in $\Psi_{\Pi,\textbf{s}}$ and at least $C-6$ maximal positive $\pazocal{A}$-bands which have ends on both $\pi$ and $\partial\Psi_{\Pi,\textbf{s}}$.  Taking $C\geq7$, there exists an edge $\textbf{e}'$ of $\partial\Psi_{\Pi,\textbf{s}}$ corresponding to the end of such an $\pazocal{A}$-band $\pazocal{U}'$.  

But then $\textbf{e}'$ is not an unrestricted $a$-edge, yielding a contradiction.

\end{proof}

Note that the proof of \Cref{pinched disk unrestricted} can be expanded statement for $a$-bands which correspond to a subset of the tape alphabet of any sector other than the `special' input sector.  However, the consideration of unrestricted $a$-bands will suffice for our purposes.

A maximal positive unrestricted $\pazocal{A}$-band is called \textit{D-internal} if it has two ends on (distinct) disks.

For any reduced minimal diagram $\Delta$, define the values:

\begin{itemize}

\item $\rho_i'(\Delta)$ is the number of D-internal $\pazocal{A}$-bands in $\Delta$

\item $\rho'(\Delta)$ is the number of maximal positive unrestricted $\pazocal{A}$-bands in $\Delta$

\item $\mu_\pazocal{A}(\Delta)$ is the number of unrestricted $(\theta,\pazocal{A})$-cells in $\Delta$


\end{itemize}

\begin{lemma} \label{internal vs external A-bands}

If $\Delta$ is a reduced minimal diagram, then $\rho_i'(\Delta)\leq\frac{1}{2}\rho'(\Delta)$.

\end{lemma}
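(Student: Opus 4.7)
I propose proving the bound by induction on the number $n$ of disks of $\Delta$, mirroring the structure of Lemma \ref{internal vs external q-bands} but tracking D-internal unrestricted $\pazocal{A}$-bands in place of internal $q$-bands. The base cases $n \leq 1$ are immediate: Lemma \ref{pinched disk unrestricted} rules out D-internal unrestricted $\pazocal{A}$-bands in that setting, so $\rho_i'(\Delta) = 0$. For $n \geq 2$, I would invoke Lemma \ref{Gamma special cell} to locate a disk $\Pi$ equipped with $L-4$ consecutive positive $t$-spokes $\pazocal{Q}_1, \dots, \pazocal{Q}_{L-4}$, with defining $t$-edges $\mathbf{e}_1, \dots, \mathbf{e}_{L-4}$ on $\partial\Pi$, whose other ends lie on $\partial\Delta$ and whose intervening subdiagrams $\Gamma_1, \dots, \Gamma_{L-5}$ contain no disks. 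As in Lemma \ref{internal vs external q-bands}, cutting $\Delta$ along $\mathbf{top}(\pazocal{Q}(\mathbf{e}_{L-4}))^{-1}\mathbf{s}_2\mathbf{bot}(\pazocal{Q}(\mathbf{e}_1))$ separates $\bar{\Delta}_1 = \Pi \cup \Gamma_1 \cup \dots \cup \Gamma_{L-5}$ from a reduced minimal diagram $\Delta_2$ carrying the other $n-1$ disks, where $\mathbf{s}_1 \subset \partial\Pi$ is the subpath from $\mathbf{e}_1$ to $\mathbf{e}_{L-4}$ and $\mathbf{s}_2$ its complement. The inductive hypothesis then supplies $\rho_i'(\Delta_2) \leq \tfrac{1}{2}\rho'(\Delta_2)$.

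The next step is to compare unrestricted $\pazocal{A}$-bands in $\Delta$ with those in $\Delta_2$. Since an $\pazocal{A}$-band cannot share a cell with a $q$-band, any maximal unrestricted $\pazocal{A}$-band whose defining edge lies on $\mathbf{s}_1$ is trapped inside $\bar{\Delta}_1$ between two of the spokes $\pazocal{Q}_i$, and by Lemma \ref{pinched disk unrestricted} it cannot return to $\Pi$; hence it is destroyed by the cut. A band with a defining edge on $\mathbf{s}_2$, by contrast, has its $\Pi$-adjacent cell on the $\Delta_2$-side of the cut, so it survives in $\Delta_2$ as an external band. Letting $|\mathbf{t}|_{\widetilde{\pazocal{A}}}$ denote the number of unrestricted $\pazocal{A}$-edges on a path $\mathbf{t}$, these observations would yield
\[
\rho'(\Delta_2) \leq \rho'(\Delta) - |\mathbf{s}_1|_{\widetilde{\pazocal{A}}}, \qquad \rho_i'(\Delta) \leq \rho_i'(\Delta_2) + |\mathbf{s}_2|_{\widetilde{\pazocal{A}}},
\]
since a D-internal band of $\Delta$ with an end on $\Pi$ must have that end on $\mathbf{s}_2$. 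Combining with the inductive hypothesis reduces the lemma to the geometric inequality $|\mathbf{s}_1|_{\widetilde{\pazocal{A}}} \geq 2|\mathbf{s}_2|_{\widetilde{\pazocal{A}}}$.

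To establish that inequality I would exploit the fact that $\partial\Pi$ is labelled by an accepted configuration $W$ with $\ell(W) \leq 1$: the rules of $\textbf{M}^\pazocal{L}$ act in parallel on the subwords $\{t(i)\}B_4^\pazocal{L}(i)$ for $i \in \{2,\dots,L\}$, so the pair of input-sector tape words inside each $B_4^\pazocal{L}(i)$ with $i \geq 2$ is a coordinate shift of one common pair. Let $k$ be the total number of $\pazocal{A}$-letters in this common pair; then each $B_4^\pazocal{L}(i)$ with $i \geq 2$ contributes exactly $k$ unrestricted $\pazocal{A}$-edges to $\partial\Pi$, whereas $B_4^\pazocal{L}(1)$ contributes none. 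Cyclically $\partial\Pi$ has $L-1$ gaps between consecutive $t$-edges, with $\mathbf{s}_1$ spanning $L-5$ of them and $\mathbf{s}_2$ the remaining $4$; every such gap (including the unique ``wrap-around'' stretch $B_4^\pazocal{L}(L)\,t(1)\,B_4^\pazocal{L}(1)$) contributes exactly $k$, so $|\mathbf{s}_1|_{\widetilde{\pazocal{A}}} = k(L-5)$ and $|\mathbf{s}_2|_{\widetilde{\pazocal{A}}} = 4k$. The required inequality $k(L-5) \geq 8k$ then follows from the parameter assignment $L \gg C$ (specifically $L \geq 13$). The main subtlety is verifying that the wrap-around gap, which is the only gap touching the special coordinate $i=1$, nevertheless contributes the same $k$ as every other gap; once this uniformity is confirmed, the whole argument reduces cleanly to the numerical bound $L - 5 \geq 8$.
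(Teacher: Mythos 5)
Your proposal is correct and takes essentially the same route as the paper's proof: the same induction on the number of disks with base cases supplied by \Cref{pinched disk unrestricted}, the same cut through the disk $\Pi$ produced by \Cref{Gamma special cell} with the inductive hypothesis applied to $\Delta_2$, and the same count of unrestricted $\pazocal{A}$-edges on $\textbf{s}_1$ versus $\textbf{s}_2$ (namely $(L-5)|W(2)|_\pazocal{A}$ versus $4|W(2)|_\pazocal{A}$), settled by the parameter choice $L\geq13$. The wrap-around subtlety you flag resolves exactly as you suggest, since the coordinate-$1$ $\pazocal{A}$-edges are not unrestricted and therefore never enter any of the counts.
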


\begin{proof}

The proof follows by induction on the number $n$ of disks in $\Delta$, with base cases $n=0,1$ following immediately from \Cref{pinched disk unrestricted}.

Let $\Pi$ be the disk and $\pazocal{Q}_1,\dots,\pazocal{Q}_{L-4}$ be the consecutive positive $t$-spokes of $\Pi$ given by \Cref{Gamma special cell}.  Let $\partial\Pi=\textbf{s}_1\textbf{s}_2$ and define the subdiagrams $\Delta_1$ and $\Delta_2$ as in the proof of \Cref{internal vs external q-bands} (see \Cref{fig-counterexample}).  Finally, let $\partial\Delta=\textbf{t}_1\textbf{t}_2$ where $\textbf{t}_i$ is a subpath of $\partial\Delta_i$.  The inductive hypothesis implies $\rho_i'(\Delta_2)\leq\frac{1}{2}\rho'(\Delta_2)$.

Note that any internal $\pazocal{A}$-band of $\Delta$ which does not have an end on $\Pi$ corresponds to an internal $\pazocal{A}$-band of $\Delta_2$.  Conversely, for any internal $\pazocal{A}$-band of $\Delta$ which has an end on $\Pi$, this end must be an unrestricted $\pazocal{A}$-edge of $\textbf{s}_2^{\pm1}$.

Let $W$ be the configuration corresponding to $\lab(\partial\Pi)$.  Then, the parallel nature of the rules implies $|W(j)|_\pazocal{A}=|W(2)|_\pazocal{A}$ for each $j\in\{2,\dots,L\}$.  In particular, the number of unrestricted $\pazocal{A}$-edges of $\textbf{s}_2$ is equal to $4|W(2)|_\pazocal{A}$.  Hence, $\rho_i'(\Delta)\leq\rho_i'(\Delta_2)+4|W(2)|_\pazocal{A}\leq\frac{1}{2}\rho'(\Delta_2)+4|W(2)|_\pazocal{A}$.

However, each of the $(L-5)|W(2)|_\pazocal{A}$ unrestricted $\pazocal{A}$-edges of $\textbf{s}_1$ corresponds to a maximal positive unrestricted $\pazocal{A}$-band of $\Delta$ which cannot be internal.  This implies $\rho'(\Delta)\geq\rho'(\Delta_2)+(L-5)|W(2)|_\pazocal{A}$, {\frenchspacing i.e. $\rho_i'(\Delta)\leq\frac{1}{2}\rho'(\Delta)-\frac{L-5}{2}|W(2)|_\pazocal{A}+4|W(2)|_\pazocal{A}$}.  Thus, the statement follows by taking $L\geq13$.

\end{proof}

\begin{lemma} \label{minimal rho A}

If $\Delta$ is a reduced minimal diagram, then $\mu_\pazocal{A}(\Delta)\leq|\partial\Delta|_\theta(\mu_q(\Delta)+|\partial\Delta|_\pazocal{A})$.

\end{lemma}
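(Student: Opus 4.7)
The plan is to bound $\mu_\pazocal{A}(\Delta)$ by separately bounding the number of maximal positive $\theta$-bands and the number of maximal positive unrestricted $\pazocal{A}$-bands, and then using \Cref{basic annuli 2}(1) to conclude that each unrestricted $(\theta,\pazocal{A})$-cell is the unique crossing of one band of each type. By \Cref{disk theta-annuli}, every maximal $\theta$-band in $\Delta$ is non-annular with both ends on $\partial\Delta$, so the number of maximal positive $\theta$-bands is at most $\tfrac{1}{2}|\partial\Delta|_\theta$.

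The main work is estimating $\rho'(\Delta)$. The key observation is that an unrestricted $\pazocal{A}$-band corresponds to a letter from a tape alphabet of coordinate $\geq 2$, while every $a$-cell has boundary label in $\Omega$, a word over the tape alphabet of the `special' input sector. Hence no unrestricted $\pazocal{A}$-band can have an end on an $a$-cell, and (using \Cref{basic annuli 2}(2)) each end of a maximal positive unrestricted $\pazocal{A}$-band lies on a disk, on a $(\theta,q)$-cell, or on $\partial\Delta$. Since a $(\theta,q)$-cell has at most one $\pazocal{A}$-edge on its boundary, the number of ends that are not on disks is at most $\mu_q(\Delta)+|\partial\Delta|_\pazocal{A}$. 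Each non-D-internal band contributes at least one such end, giving $\rho'(\Delta)-\rho_i'(\Delta)\leq\mu_q(\Delta)+|\partial\Delta|_\pazocal{A}$.

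Next, I would invoke \Cref{internal vs external A-bands} to bound the D-internal bands by the others: $\rho_i'(\Delta)\leq\tfrac{1}{2}\rho'(\Delta)$ gives $\rho'(\Delta)\leq 2(\rho'(\Delta)-\rho_i'(\Delta))\leq 2(\mu_q(\Delta)+|\partial\Delta|_\pazocal{A})$. Combining the two estimates via \Cref{basic annuli 2}(1) yields
\[
\mu_\pazocal{A}(\Delta)\;\leq\;\tfrac{1}{2}|\partial\Delta|_\theta\cdot 2\bigl(\mu_q(\Delta)+|\partial\Delta|_\pazocal{A}\bigr)\;=\;|\partial\Delta|_\theta\bigl(\mu_q(\Delta)+|\partial\Delta|_\pazocal{A}\bigr),
\]
which is precisely the desired bound. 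The main conceptual obstacle, already handled by \Cref{internal vs external A-bands} and the preceding discussion, is ensuring that unrestricted $\pazocal{A}$-bands cannot terminate on $a$-cells or pile up between two disks; the present lemma is largely a bookkeeping consequence of those results, so the only care needed is correctly counting ends on $(\theta,q)$-cells (each cell contributes at most one) and carefully applying \Cref{internal vs external A-bands}.
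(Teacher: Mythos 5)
Your proposal is correct and follows essentially the same route as the paper: bound the non-D-internal unrestricted $\pazocal{A}$-bands by $\mu_q(\Delta)+|\partial\Delta|_\pazocal{A}$ via the one-$\pazocal{A}$-edge-per-$(\theta,q)$-cell observation, double it using \Cref{internal vs external A-bands}, count maximal positive $\theta$-bands via \Cref{disk theta-annuli}, and conclude with \Cref{basic annuli 2}(1). No gaps; the bookkeeping matches the paper's proof.
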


\begin{proof}

Note that any maximal positive unrestricted $\pazocal{A}$-band which is not internal must have one end which is on a $(\theta,q)$-cell or on $\partial\Delta$.  So, since any $(\theta,q)$-cell has at most one boundary $\pazocal{A}$-edge, $\rho'(\Delta)-\rho_i'(\Delta)\leq \mu_q(\Delta)+|\partial\Delta|_\pazocal{A}$.  Hence, \Cref{internal vs external A-bands} implies $\rho'(\Delta)\leq2(\mu_q(\Delta)+|\partial\Delta|_\pazocal{A})$, so that the statement follows by Lemmas \ref{disk theta-annuli} and \ref{basic annuli 2}(1).


\end{proof}


\begin{lemma} \label{minimal number of disks}

If $\Delta$ is a reduced minimal diagram with $m_2\geq1$ disks, then $m_2+1\leq|\partial\Delta|_q$.

\end{lemma}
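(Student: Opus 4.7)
The plan is to induct on the number $m_2$ of disks. For the base case $m_2=1$, note that the single disk $\Pi$ has one $t$-edge on $\partial\Pi$ for each part $\{t(i)\}$ with $2\leq i\leq L$, yielding $L-1$ positive $t$-spokes of $\Pi$. Since there are no other disks in $\Delta$, each of these spokes must have its other end on $\partial\Delta$ (by \Cref{basic annuli 1}(2), $q$-bands are non-annular, so their ends must go somewhere). This contributes $L-1\geq 2=m_2+1$ $t$-edges to $\partial\Delta$, establishing the base case by the parameter choice on $L$.

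For the inductive step with $m_2\geq 2$, I would invoke \Cref{Gamma special cell} to select a disk $\Pi$ with $L-4$ consecutive external $t$-spokes $\pazocal{Q}_1,\dots,\pazocal{Q}_{L-4}$ ending on $\partial\Delta$ and no disks in the intermediate subdiagrams $\Gamma_1,\dots,\Gamma_{L-5}$. I would then reuse the dissection from the proof of \Cref{internal vs external q-bands}: cut $\Delta$ along the path $\textbf{p}=\textbf{top}(\pazocal{Q}_{L-4})^{-1}\textbf{s}_2\textbf{bot}(\pazocal{Q}_1)$, where $\partial\Pi=\textbf{s}_1\textbf{s}_2$ with $\textbf{s}_1$ containing the $L-4$ $t$-edges that are ends of the chosen spokes. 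This yields a subdiagram $\bar{\Delta}_1$ containing $\Pi$, the $L-4$ spokes, and the $\Gamma_i$'s, and a complementary subdiagram $\Delta_2$ with exactly $m_2-1$ disks, such that $\partial\Delta$ decomposes as $\textbf{t}_1\textbf{t}_2$ with $\textbf{t}_1\subseteq\partial\Delta_2$ and $\textbf{t}_2\subseteq\partial\bar{\Delta}_1$.

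The main obstacle is that the subdiagram $\Delta_2$ cut from a minimal diagram is not automatically minimal, so the inductive hypothesis does not apply directly. To fix this, I would replace $\Delta_2$ by a reduced minimal diagram $\Delta_2'$ with $\lab(\partial\Delta_2')\equiv\lab(\partial\Delta_2)$, then paste $\bar{\Delta}_1$ and $\Delta_2'$ (reducing any cancellable cells introduced) to form $\tilde{\Delta}$ with $\lab(\partial\tilde{\Delta})\equiv\lab(\partial\Delta)$. Componentwise, $\sigma_1(\tilde{\Delta})\leq\sigma_1(\bar{\Delta}_1)+\sigma_1(\Delta_2')=1+\sigma_1(\Delta_2')$, while minimality of $\Delta$ forces $m_2=\sigma_1(\Delta)\leq\sigma_1(\tilde{\Delta})$, giving $\sigma_1(\Delta_2')\geq m_2-1$; combined with the reverse inequality from minimality of $\Delta_2'$, this yields $\sigma_1(\Delta_2')=m_2-1$. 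The inductive hypothesis then applies to $\Delta_2'$, giving $|\partial\Delta_2|_q=|\partial\Delta_2'|_q\geq m_2$.

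To finish, I would count $q$-edges. Since the sides of a $q$-band consist entirely of $\theta$-edges, $|\textbf{p}|_q=|\textbf{s}_2|_q$, so $|\textbf{t}_1|_q\geq m_2-|\textbf{s}_2|_q$. On the other hand, $\textbf{t}_2$ contains the $L-4$ $t$-edges that are the $\partial\Delta$-ends of the spokes $\pazocal{Q}_i$, hence $|\textbf{t}_2|_q\geq L-4$. Combining, $|\partial\Delta|_q=|\textbf{t}_1|_q+|\textbf{t}_2|_q\geq m_2+(L-4-|\textbf{s}_2|_q)$. As noted in the proof of \Cref{internal vs external q-bands}, $|\textbf{s}_2|_t=3$ forces $|\textbf{s}_2|_q\leq 10(N+1)+4$, and the parameter regime $L>>N$ ensures $L-4-|\textbf{s}_2|_q\geq 1$, completing the induction.
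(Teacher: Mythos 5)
Your proof is correct and follows essentially the same route as the paper: induction on the number of disks, with the inductive step using \Cref{Gamma special cell} and the cut along $\textbf{p}$ from the proof of \Cref{internal vs external q-bands}, followed by a $q$-edge count using $|\textbf{s}_2|_q=O(N)$ and $L>>N$. The only difference is that you justify the minimality of the complementary subdiagram by a replace-and-compare argument on signatures (the paper simply asserts that $\Delta_2$ is reduced minimal, which holds since subdiagrams of minimal diagrams are minimal), and your counting via the $L-4$ boundary $t$-edges of the spokes is a harmless variant of the paper's comparison of $|\partial\Delta|_q$ with $|\partial\Delta_2|_q$.
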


\begin{proof}


If $m_2=1$, then for $\Pi$ the unique disk of $\Delta$, every $q$-edge of $\partial\Pi$ is an end of a maximal $q$-band which also ends on $\partial\Delta$.  As a result, $|\partial\Delta|_q\geq|\partial\Pi|_q=L(2N+3)$.


Now suppose $m_2\geq2$.  Let $\Pi$ be the disk and $\pazocal{Q}_1,\dots,\pazocal{Q}_{L-4}$ be the consecutive positive $t$-spokes of $\Pi$ given by \Cref{Gamma special cell}.  Further, let $\Delta_1$ and $\Delta_2$ be the subdiagrams of $\Delta$ and $\textbf{s}_i$, $\textbf{t}_i$, and $\textbf{p}$ be the paths as in \Cref{internal vs external q-bands} (see \Cref{fig-counterexample}).  Every $q$-edge of $\textbf{s}_1$ is a defining edge (and an end) of a maximal $q$-band of $\Delta_1$ which must have an end on $\textbf{t}_1$.  So, $|\textbf{t}_1|_q\geq|\textbf{s}_1|_q\geq(L-5)(2N+3)$.  Conversely, since the sides of $q$-bands contain no $q$-edges, $|\textbf{p}|_q=|\textbf{s}_2|_q\leq5(2N+3)$.  In particular, since $\partial\Delta=\textbf{t}_1\textbf{t}_2$ while $\partial\Delta_2=\textbf{p}^{-1}\textbf{t}_2$, parameter choices yield $|\partial\Delta|_q-|\partial\Delta_2|_q\geq(L-10)(2N+3)\geq1$.

But $\Delta_2$ is a reduced minimal diagram containing $m_2-1\geq1$ disks, so that an inductive argument implies $m_2\leq|\partial\Delta_2|_q\leq|\partial\Delta|_q-1$.

\end{proof}

Thus, taking $\mu(\Delta)=\mu_q(\Delta)+\mu_\pazocal{A}(\Delta)$ for any reduced minimal diagram $\Delta$, we arrive at the following upper bound for the weight of such diagrams:

\begin{lemma} \label{minimal weight bound}

If $\Delta$ is a reduced minimal diagram with $m_2$ disks, then for $m_1=\|\partial\Delta\|+\mu(\Delta)$:
$$\text{wt}(\Delta)\leq (m_2+1)\left(Lm_1^4+g_\pazocal{L}(Lm_1^3)+f_\pazocal{L}(m_1)\right)$$

\end{lemma}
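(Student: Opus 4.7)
The proof proceeds by induction on $m_2$. For the base case $m_2=0$, Lemma \ref{minimal diskless is M-minimal} implies that $\Delta$ is $M$-minimal, so Lemma \ref{diskless weight} gives $\text{wt}(\Delta)\leq c_0\|\partial\Delta\|^4+g_\pazocal{L}(c_0\|\partial\Delta\|^3)$. Since $\|\partial\Delta\|\leq m_1$, since $c_0\leq L$ by the parameter choice $c_0<<L$, and since $g_\pazocal{L}$ is nondecreasing, this bound is at most $Lm_1^4+g_\pazocal{L}(Lm_1^3)\leq Lm_1^4+g_\pazocal{L}(Lm_1^3)+f_\pazocal{L}(m_1)$, which suffices.

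For the inductive step $m_2\geq 1$, I apply Lemma \ref{Gamma special cell} to select a disk $\Pi$ with $L-4$ consecutive $t$-spokes $\pazocal{Q}_1,\dots,\pazocal{Q}_{L-4}$ ending on $\partial\Delta$ and with no disks between them. Following the cutting procedure used in the proofs of Lemmas \ref{internal vs external q-bands} and \ref{internal vs external A-bands}, the path $\textbf{p}=\textbf{top}(\pazocal{Q}(\textbf{e}_{L-4}))^{-1}\textbf{s}_2\textbf{bot}(\pazocal{Q}(\textbf{e}_1))$ separates $\Delta$ into a subdiagram $\bar{\Delta}_1$ containing $\Pi$ together with the $\Gamma_i$'s, and a subdiagram $\Delta_2$ carrying the remaining $m_2-1$ disks. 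Setting $\Delta_1=\bar{\Delta}_1\setminus\Pi$, I decompose $\text{wt}(\Delta)=\text{wt}(\Pi)+\text{wt}(\Delta_1)+\text{wt}(\Delta_2)$. Since $\Delta_1$ is a diskless subdiagram of the minimal diagram $\Delta$, Lemmas \ref{minimal is smooth}, \ref{minimal MM2}, and \ref{minimal diskless is M-minimal} propagate smoothness and conditions (MM1)-(MM2) to $\Delta_1$, so $\Delta_1$ is a smooth $M$-minimal diagram and Lemma \ref{diskless weight} applies; the inductive hypothesis applies to $\Delta_2$; and $\text{wt}(\Pi)=f_\pazocal{L}(\|W(2)\|)$, where $W$ is the configuration with $\lab(\partial\Pi)^\eps\equiv W$.

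The proof then reduces to three numerical estimates: (i) $\|W(2)\|\leq m_1$; (ii) $\|\partial\Delta_1\|\leq O(m_1)$ with constants absorbed by $c_0<<L$, giving $\text{wt}(\Delta_1)\leq Lm_1^4+g_\pazocal{L}(Lm_1^3)$; and (iii) $m_1(\Delta_2)=\|\partial\Delta_2\|+\mu(\Delta_2)\leq m_1$, so that the inductive bound yields $\text{wt}(\Delta_2)\leq m_2[Lm_1^4+g_\pazocal{L}(Lm_1^3)+f_\pazocal{L}(m_1)]$. Summing the three contributions produces $(m_2+1)[Lm_1^4+g_\pazocal{L}(Lm_1^3)+f_\pazocal{L}(m_1)]$, as required. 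Estimate (i) follows from the parallel action of the rules of $\textbf{M}^\pazocal{L}$ on the components $W(j)$ for $j\geq 2$: all such components have equal $a$-length, so $|W(j)|_a=|W(2)|_a$ for every $j\geq 2$ and consequently $\sum_{\Pi}(L-1)|W_\Pi(2)|_a$ equals the total number of unrestricted $a$-edges on the boundaries of all disks. Each such edge is an end of a maximal positive unrestricted $a$-band, whose other end must lie on another disk, on a $(\theta,q)$- or unrestricted $(\theta,\pazocal{A})$-cell, or on $\partial\Delta$ (an unrestricted $a$-band cannot end on an $a$-cell, since $a$-cells live in the special input sector). Lemma \ref{internal vs external A-bands} for unrestricted $\pazocal{A}$-bands, together with analogous "internal vs.~external" counts for unrestricted $b$-bands and ordinary $a$-bands obtained from Lemmas \ref{a-bands on disk} and \ref{basic annuli 2}, then bounds the total count by a constant multiple of $m_1$, and the parameter choice $L>>N$ extracts $\|W(2)\|\leq m_1$. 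Estimate (ii) follows from $\|\partial\Delta_1\|\leq\|\partial\Delta\|+\|\partial\Pi\|+2\max_i|\pazocal{Q}_i|$, where the first summand is at most $m_1$, the second is $L(2N+3)+L|W(2)|_a=O(m_1)$ by (i) and a parameter choice, and the third is at most $|\partial\Delta|_\theta\leq m_1$ by Lemma \ref{disk theta-annuli}. For (iii), cutting contributes at most $\|\textbf{p}\|\leq 2\max_i|\pazocal{Q}_i|+\|\textbf{s}_2\|$ to the new boundary, while the savings $\mu(\bar{\Delta}_1)$ include all $(\theta,q)$-cells of the $L-5$ interior $t$-spokes as well as the unrestricted $(\theta,\pazocal{A})$-cells of the $\Gamma_i$, which by the parameter choice $L>>N$ dominate the contribution from $\textbf{s}_2$ (whose length is bounded by $5(2N+3)+5|W(2)|_a$).

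The main obstacle I anticipate is estimate (i): Lemma \ref{internal vs external A-bands} addresses only unrestricted $\pazocal{A}$-bands, and the proof requires extending its core argument (an induction based on Lemma \ref{Gamma special cell} coupled with Lemma \ref{disk MM2}) to the unrestricted $b$- and ordinary $a$-band settings, taking some care that each type of $a$-band's end structure still precludes an end on the same disk twice (Lemma \ref{a-bands on disk}). Once (i) is in hand, estimates (ii) and (iii) are bookkeeping that exploits the inequalities $c_0<<L$ and $N<<L$ to absorb the accumulated constant factors into the stated form of the bound.
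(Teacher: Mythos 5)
Your skeleton (induction on the number of disks, the same cut through the disk $\Pi$ supplied by \Cref{Gamma special cell}, and the three-way split $\text{wt}(\Delta)=\text{wt}(\Pi)+\text{wt}(\Delta_1)+\text{wt}(\Delta_2)$) agrees with the paper, but the three numerical estimates you reduce to are not established by your arguments, and estimate (ii) as you state it cannot yield the bound. Knowing only $\|W(2)\|\leq m_1$, the boundary of $\Delta_1$ still contains the subpath $\textbf{s}_1$ of $\partial\Pi$, which carries $(L-5)|W(2)|_a$ $a$-edges; so your bound for $\|\partial\Delta_1\|$ is genuinely of order $Lm_1$, and feeding that into \Cref{diskless weight} gives $c_0(Lm_1)^4+g_\pazocal{L}(c_0(Lm_1)^3)$. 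Since $L$ is a fixed parameter that appears only linearly in the target per-disk budget $Lm_1^4+g_\pazocal{L}(Lm_1^3)$, the factor $c_0L^4$ cannot be absorbed by the parameter choice $c_0<<L$ -- the inequality $c_0L^4\leq L$ is simply false. The missing idea is the dichotomy the paper runs on the selected disk: tracking the unrestricted $a$-bands emanating from $\textbf{s}_1$ inside the disk-free region $\Delta_1$ (they cannot return to $\Pi$ by \Cref{pinched disk unrestricted}, cannot cross the $t$-spokes, and cannot end on $a$-cells), one gets $|\textbf{t}_1|_a\geq(L-5)|W(2)|_a-(D_\pazocal{A}+2)\mu(\Delta_1)$. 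Hence either $|W(2)|_a\lesssim\frac{D_\pazocal{A}}{L}\mu(\Delta_1)$, in which case the factor $L$ cancels and $\|\partial\Delta_1\|\leq 2(D_\pazocal{A}+2)m_1$ with a constant depending only on $C$, or $|\textbf{t}_1|_a\geq 5|W(2)|_a$, in which case $\|\partial\Delta_1\|\leq 3\|\partial\Delta\|\leq 3m_1$; only then does $L>>c_0>>C$ absorb the constants.

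The same dichotomy is what makes your estimate (iii) true, and your stated justification for it does not hold in general: the added boundary $\textbf{s}_2$ contains up to $5|W(2)|_a$ $a$-edges, and there is no a priori reason that the $(\theta,t)$-cells of the interior spokes or the $(\theta,\pazocal{A})$-cells of the cloves number at least that many (all spokes may have short histories and the cloves may be thin); the parameter choice $L>>N$ only handles the $q$-edge contribution $5(2N+3)$ of $\textbf{s}_2$, not its $a$-edges. What saves the day is again that the $a$-edges of $\textbf{s}_1$ are either absorbed by $\mu(\Delta_1)$ (case 1, where the extra $5|W(2)|_a$ is charged to $\mu(\Delta)-\mu(\Delta_2)$) or reach $\textbf{t}_1$ (case 2, where the removed $|\textbf{t}_1|_a$ compensates and $\|\partial\Delta_2\|\leq\|\partial\Delta\|$); in both cases one also needs $|\textbf{s}_1|_q\leq|\textbf{t}_1|_q$ to absorb the $q$-edges of $\textbf{s}_2$. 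Finally, the obstacle you flag for (i) -- proving $b$-band and ordinary-$a$-band analogues of \Cref{internal vs external A-bands} via a global count over all disks -- is avoidable: once the dichotomy is in place, $\|W(2)\|\leq m_1$ falls out case by case ($\|W(2)\|\leq|\textbf{t}_1|_q+\mu(\Delta_1)$ in the first case, $\|W(2)\|\leq|\textbf{t}_1|_a+|\textbf{t}_1|_q$ in the second), so the extra lemmas are unnecessary, but they would not by themselves repair (ii) and (iii).
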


\begin{proof}

The proof follows by induction on the number $m_2$ of disks in the diagram.

If $m_2=0$, then Lemmas \ref{minimal diskless is M-minimal} and \ref{diskless weight} imply $\text{wt}(\Delta)\leq c_0\|\partial\Delta\|^4+g_\pazocal{L}(c_0\|\partial\Delta\|^3)$.  So, the statement follows from $|\partial\Delta|\leq m_1$ and the parameter choice $L>>c_0$.

Now, suppose $m_2\geq1$ and let $\Pi$ be the disk and $\pazocal{Q}_1,\dots,\pazocal{Q}_{L-4}$ be the consecutive positive $t$-spokes of $\Pi$ given by \Cref{Gamma special cell}.  Further, let $\Delta_1$ and $\Delta_2$ be the subdiagrams of $\Delta$ and $\textbf{s}_i$ and $\textbf{t}_i$ be the paths as in \Cref{internal vs external q-bands} (see \Cref{fig-counterexample}).

Let $\textbf{p}_1$ be the subpath of $\partial\Delta_1$ such that $\textbf{p}_1^{\pm1}$ is a side of $\pazocal{Q}_1$.  Then, as $\pazocal{Q}_1$ is a $t$-band, $h_1=\|\textbf{p}_1\|$ is the length of the band's history.  Similarly, define the subpath $\textbf{p}_{L-4}$ of $\partial\Delta_1$ and set $h_{L-4}=\|\textbf{p}_{L-4}\|$.

Let $\textbf{e}$ be an edge of $\pazocal{Q}_1$.  Then, $\textbf{e}$ is a $\theta$-edge, and so there exists a maximal $\theta$-band $\pazocal{T}_\textbf{e}'$ of $\Delta_1$ for which $\textbf{e}$ is a defining edge (and an end).  Since $\textbf{s}_1$ contains no $\theta$-edges, \Cref{basic annuli 1}(1) implies that $\pazocal{T}_\textbf{e}'$ must also have an end on either $\textbf{t}_1$ or on $\textbf{p}_{L-4}$.  By \Cref{disk theta-annuli}, there exists a unique maximal $\theta$-band $\pazocal{T}_\textbf{e}$ which contains $\pazocal{T}_\textbf{e}'$ as a subband.  If $\pazocal{T}_\textbf{e}'$ has an end $\textbf{p}_{L-4}$, then $\pazocal{T}_\textbf{e}$ must cross every $t$-spoke $\pazocal{Q}_i$ of $\Pi$.  But the parameter choice $L\geq8$ then implies that $\pazocal{T}_\textbf{e}$ and $\Pi$ form a counterexample to \Cref{minimal t-spokes theta-band}.
Hence, $\pazocal{T}_\textbf{e}'$ must have an end on $\textbf{t}_1$.  

Similarly, every edge of $\textbf{p}_{L-4}$ is a defining edge of a maximal $\theta$-band of $\Delta_1$ which has an end on $\textbf{t}_1$.  Thus, $h_1+h_{L-4}\leq|\textbf{t}_1|_\theta$.

Next, let $W$ be the accepted configuration of $\textbf{M}^\pazocal{L}$ with $\lab(\partial\Pi)\equiv W^{\pm1}$.  Then, the parallel nature of the machine and \Cref{M component size} imply that $|W(1)|_a\leq|W(2)|_a=|W(j)|_a$ for every $j\in\{2,\dots,L\}$.  So, by construction:
\begin{itemize}

\item $(L-5)|W(2)|_a\leq|\textbf{s}_1|_a\leq(L-4)|W(2)|_a$

\item $(L-5)(2N+3)+1\leq|\textbf{s}_1|_q\leq(L-4)(2N+3)+1$

\item $|\textbf{s}_2|_a\leq5|W(2)|_a$ 

\item $|\textbf{s}_2|_q\leq5(2N+3)$

\end{itemize}

Taking $L\geq10$, it then follows that $|\textbf{s}_1|_q\geq5(2N+3)+1\geq|\textbf{s}_2|_q+1$.  Note that each $q$-edge of $\textbf{s}_1$ corresponds to a maximal $q$-band of $\Delta_1$ which, by the makeup of disk relations, must have an end on $\textbf{t}_1$.  So, $|\textbf{s}_1|_q\leq|\textbf{t}_1|_q$.  Since $\partial\Delta_2=\textbf{t}_2(\textbf{p}_{L-4}\textbf{s}_2\textbf{p}_1)^{-1}$, this implies: 
\begin{align*}
\|\partial\Delta_2\|&=\|\textbf{t}_2\|+h_1+h_{L-4}+\|\textbf{s}_2\|\leq\|\textbf{t}_2\|+|\textbf{t}_1|_\theta+|\textbf{s}_2|_q+|\textbf{s}_2|_a \\
&\leq\|\textbf{t}_2\|+|\textbf{t}_1|_\theta+(|\textbf{s}_1|_q-1)+|\textbf{s}_2|_a\leq\|\textbf{t}_2\|+|\textbf{t}_1|_\theta+|\textbf{t}_1|_q+|\textbf{s}_2|_a \\
&\leq\|\partial\Delta\|-|\textbf{t}_1|_a+5|W(2)|_a
\end{align*}
Similarly, $\partial\Delta_1=\textbf{p}_{L-4}\textbf{s}_1^{-1}\textbf{p}_1\textbf{t}_1$, so that:
\begin{align*}
\|\partial\Delta_1\|&=h_1+h_{L-4}+\|\textbf{t}_1\|+\|\textbf{s}_1\|\leq\|\textbf{t}_1\|+|\textbf{t}_1|_\theta+|\textbf{s}_1|_q+|\textbf{s}_1|_a\leq\|\textbf{t}_1\|+|\textbf{t}_1|_\theta+|\textbf{t}_1|_q+|\textbf{s}_1|_a \\
&\leq2\|\textbf{t}_1\|-|\textbf{t}_1|_a+(L-4)|W(2)|_a
\end{align*}
Now, let $\kappa_o$ be the number of unrestricted ordinary $a$-edges of $\textbf{s}_1$.  Similarly, let $\kappa_\pazocal{A}$ and $\kappa_b$ be the number of unrestricted $\pazocal{A}$- and $b$-edges of $\textbf{s}_1$.  By construction, $\kappa_o+\kappa_\pazocal{A}+\kappa_b=(L-5)|W(2)|_a$.

Let $\textbf{e}$ be an unrestricted $a$-edge of $\textbf{s}_1$ and let $\pazocal{U}$ be the maximal $a$-band of $\Delta_1$ for which $\textbf{e}$ is a defining edge (and end).  Note that if $\pazocal{U}$ has two ends on $\partial\Delta_1$, then \Cref{pinched disk unrestricted} and the makeup of the $(\theta,t)$-relations imply that $\pazocal{U}$ must have an end on $\textbf{t}_1$.

If $\textbf{e}$ is an ordinary $a$-edge, then $\pazocal{U}$ must have an end on a $(\theta,q)$-cell of $\Delta_1$ or on $\textbf{t}_1$.  Since \Cref{simplify rules} implies that any $(\theta,q)$-cell has at most one boundary ordinary $a$-edge, it follows that at least $\max(\kappa_o-\mu_q(\Delta_1),0)$ unrestricted ordinary $a$-edges of $\textbf{s}_1$ are defining edges of $a$-bands which have an end on $\textbf{t}_1$.

Similarly, if $\textbf{e}$ is an $\pazocal{A}$-edge, then $\pazocal{U}$ must have an end on a $(\theta,q)$-cell of $\Delta_1$ or on $\textbf{t}_1$.  The makeup the rules then implies at least $\max(\kappa_\pazocal{A}-\mu_q(\Delta_1),0)$ unrestricted $\pazocal{A}$-edges of $\textbf{s}_1$ are defining edges of $a$-bands which have an end on $\textbf{t}_1$.

Finally, if $\textbf{e}$ is a $b$-edge, then $\pazocal{U}$ must have an end on a $(\theta,q)$-cell of $\Delta_1$, on a $(\theta,\pazocal{A})$-cell of $\Delta_1$, or on $\textbf{t}_1$.  But every $(\theta,q)$- and $(\theta,\pazocal{A})$-cell has at most $D_\pazocal{A}$ boundary $b$-edges, and so at least $\max(\kappa_b-D_\pazocal{A}\mu(\Delta_1),0)$ such edges are defining edges of $a$-bands which have an end on $\textbf{t}_1$.

Hence, accounting for the distinct unrestricted $a$-bands which have ends on both $\textbf{s}_1$ and $\textbf{t}_1$, we have $|\textbf{t}_1|_a\geq(L-5)|W(2)|_a-(D_\pazocal{A}+2)\mu(\Delta_1)$.  

Set $m_1'=\|\partial\Delta_2\|+\mu(\Delta_2)$.  Since $\Delta_2$ consists of $m_2-1$ disks, the inductive hypothesis implies:
$$\text{wt}(\Delta_2)\leq m_2(L(m_1')^4+g_\pazocal{L}(L(m_1')^3)+f_\pazocal{L}(m_1'))$$

%
%
%

\textbf{1.} Suppose $|W(2)|_a\leq\frac{2(D_\pazocal{A}+2)}{L-4}\mu(\Delta_1)$.  

The parameter choice $L>>C$ (recalling that $D_\pazocal{A}$ depends on $C$) then implies 
$$\|\partial\Delta_2\|\leq\|\partial\Delta\|+5|W(2)|_a\leq\|\partial\Delta\|+\mu(\Delta_1)$$  As a result, noting that $\mu(\Delta)=\mu(\Delta_1)+\mu(\Delta_2)$, we have $m_1'\leq m_1$.  Since $f_\pazocal{L}$ and $g_\pazocal{L}$ are non-decreasing functions, this means 
$$\text{wt}(\Delta_2)\leq m_2(Lm_1^4+g_\pazocal{L}(Lm_1^3)+f_\pazocal{L}(m_1))$$
Further, $\|\partial\Delta_1\|\leq2\|\textbf{t}_1\|+(L-4)|W(2)|_a\leq2\|\textbf{t}_1\|+2(D_\pazocal{A}+2)\mu(\Delta_1)\leq 2(D_\pazocal{A}+2)m_1$.  Hence, since $\Delta_1$ is an $M$-minimal diagram, \Cref{diskless weight} and the parameter choices $L>>c_0>>C$ yield $\text{wt}(\Delta_1)\leq Lm_1^4+g_\pazocal{L}(Lm_1^3)$.

Finally, $\|W(2)\|\leq|\textbf{s}_1|_q+|W(2)|_a\leq|\textbf{t}_1|_q+\mu(\Delta_1)\leq \|\partial\Delta\|+\mu(\Delta)=m_1$, and so as a consequence $\text{wt}(\Pi)=f_\pazocal{L}(\|W(2)\|)\leq f_\pazocal{L}(m_1)$.  Thus,
\begin{align*}
\text{wt}(\Delta)&=\text{wt}(\Delta_1)+\text{wt}(\Delta_2)+\text{wt}(\Pi)\leq (m_2+1)(Lm_1^4+g_\pazocal{L}(Lm_1^3)+f_\pazocal{L}(m_1))
\end{align*}

\textbf{2.} Suppose $|W(2)|_a\geq\frac{2(D_\pazocal{A}+2)}{L-4}\mu(\Delta_1)$.

Then, $|\textbf{t}_1|_a\geq(L-5)|W(2)|_a-(D_\pazocal{A}+2)\mu(\Delta_1)\geq(\frac{1}{2}L-3)|W(2)|_a$.  Taking $L\geq16$, this yields $|\textbf{t}_1|_a\geq5|W(2)|_a$, so that $\|\partial\Delta_2\|\leq\|\partial\Delta\|$.  This implies $m_1'\leq m_1$, so that as in the previous case $$\text{wt}(\Delta_2)\leq m_2(Lm_1^4+g_\pazocal{L}(Lm_1^3)+f_\pazocal{L}(m_1))$$
Further, $\|\partial\Delta_1\|\leq2\|\textbf{t}_1\|-|\textbf{t}_1|_a+(L-4)|W(2)|_a\leq2\|\textbf{t}_1\|+|\textbf{t}_1|_a\leq3\|\partial\Delta\|\leq 3m_1$, so that \Cref{diskless weight} and the parameter choice $L>>c_0$ yields $\text{wt}(\Delta_1)\leq Lm_1^4+g_\pazocal{L}(Lm_1^3)$.

Finally, $\|W(2)\|=|W(2)|_a+|\textbf{s}_1|_q\leq |\textbf{t}_1|_a+|\textbf{t}_1|_q\leq\|\partial\Delta\|\leq m_1$, so that $\text{wt}(\Pi)\leq f_\pazocal{L}(m_1)$.  

Thus, the desired bound again follows.

\end{proof}


\begin{lemma} \label{main upper bound}

If $\Delta$ is a reduced minimal diagram with $n=\|\partial\Delta\|$, then:
$$\text{wt}(\Delta)\leq n\left(Kn^{12}+g_\pazocal{L}(Kn^9)+f_\pazocal{L}(Kn^3)\right)$$

\end{lemma}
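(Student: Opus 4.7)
The plan is to derive the bound directly from Lemma~\ref{minimal weight bound} by controlling the two parameters appearing on its right-hand side, namely the number of disks $m_2$ and the quantity $m_1 = \|\partial\Delta\| + \mu(\Delta)$, purely in terms of $n = \|\partial\Delta\|$. The degenerate case $n = 0$ is handled first: a reduced circular diagram with empty contour contains no cells at all, so $\text{wt}(\Delta) = 0$ and both sides of the desired inequality vanish. Thus we may assume $n \ge 1$.

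The substantive step is a bookkeeping computation. Lemma~\ref{minimal number of disks} (together with the trivial case $m_2 = 0$ when $n \ge 1$) gives $m_2 + 1 \le |\partial\Delta|_q \le n$. Lemma~\ref{minimal rho} yields $\mu_q(\Delta) \le \|\partial\Delta\|^2 \le n^2$. Feeding this into Lemma~\ref{minimal rho A} gives
\[
\mu_\pazocal{A}(\Delta) \le |\partial\Delta|_\theta \bigl(\mu_q(\Delta) + |\partial\Delta|_\pazocal{A}\bigr) \le n(n^2 + n) \le 2n^3,
\]
so $\mu(\Delta) = \mu_q(\Delta) + \mu_\pazocal{A}(\Delta) \le 3n^3$, whence $m_1 \le n + 3n^3 \le 4n^3$.

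Substituting these estimates into Lemma~\ref{minimal weight bound} and using that $f_\pazocal{L}$ and $g_\pazocal{L}$ are nondecreasing,
\[
\text{wt}(\Delta) \le (m_2 + 1)\bigl(L m_1^4 + g_\pazocal{L}(L m_1^3) + f_\pazocal{L}(m_1)\bigr) \le n\bigl(256 L n^{12} + g_\pazocal{L}(64 L n^9) + f_\pazocal{L}(4 n^3)\bigr).
\]
The parameter assignment $K >> L$ from Section~\ref{sec-parameters} ensures $K \ge 256 L$ (and hence $K \ge 64 L$ and $K \ge 4$), so that $K n^{12} \ge 256 L n^{12}$, $K n^9 \ge 64 L n^9$, and $K n^3 \ge 4 n^3$. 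A final appeal to monotonicity of $f_\pazocal{L}$ and $g_\pazocal{L}$ delivers the claimed bound.

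There is no genuine obstacle here beyond careful bookkeeping: the weight estimate was already established in Lemma~\ref{minimal weight bound}, and the polynomial control on $m_2$, $\mu_q$, and $\mu_\pazocal{A}$ was the work of the immediately preceding lemmas. The only point requiring any care is verifying that the various multiplicative constants produced along the way can all be absorbed into the single parameter $K$, which is guaranteed by the choice of the parameter chain.
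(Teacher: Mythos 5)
Your proposal is correct and follows essentially the same route as the paper: bound $m_2+1\le n$ via Lemma~\ref{minimal number of disks}, bound $\mu(\Delta)$ via Lemmas~\ref{minimal rho} and~\ref{minimal rho A} to get $m_1\le 4n^3$, and then feed these into Lemma~\ref{minimal weight bound}, absorbing constants using $K>>L$ and the monotonicity of $f_\pazocal{L}$ and $g_\pazocal{L}$. The only cosmetic difference is in the $n=0$ case, where the paper invokes minimality against an all-$0$-cell diagram rather than asserting directly that the diagram has no cells, but this does not affect correctness.
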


\begin{proof}

Clearly, we may assume $n\geq1$, as otherwise the diagram may be formed over a free group.
%
As in \Cref{minimal weight bound}, let $m_2$ be the number of disks in $\Delta$ and $m_1=\|\partial\Delta\|+\mu(\Delta)$.  \Cref{minimal number of disks} then implies $m_2+1\leq n$, while Lemmas \ref{minimal rho} and \ref{minimal rho A} yield $\mu(\Delta)\leq n^2+n(n^2+n)=n^3+2n^2$, so that $m_1\leq4n^3$.  Thus, the statement follows from \Cref{minimal weight bound} and the parameter choice $K>>L$.

\end{proof}

\medskip


\section{Annular Diagrams} \label{sec-annular-diagrams}

The goal of this section is to exhibit the malnormality of the subgroup $H_\pazocal{A}=\gen{\pazocal{A}}$ of $G_\Omega(\textbf{M}^\pazocal{L})$ (recall that $\pazocal{A}$ is identified with the subset of the tape alphabet of the `special' input sector).  To achieve this, we study the structure of annular diagrams over the disk presentation of $G_\Omega(\textbf{M}^\pazocal{L})$.  

Given the intricate nature of the necessary arguments, we must access the full power of $0$-refinement, and so generally cannot ignore the presence of $0$-edges and $0$-cells in this section.  This slightly alters some of the ways in which we refer to the structures described in previous sections.  For example, bands may now involve $0$-cells, so that the `defining edge sequence' of a band of length $0$ need not be a single edge but rather a sequence of edges such that any pair of consecutive edges are immediately adjacent. That said, this does not alter these conceptualizations in any meaningful way, as the presence of $0$-cells was simply implicit in previous settings.

Throughout this section, we assume $H_\pazocal{A}$ is not a malnormal subgroup of $G_\Omega(\textbf{M}^\pazocal{L})$ and fix group elements demonstrating this, {\frenchspacing i.e. $g\in G_\Omega(\textbf{M}^\pazocal{L})\setminus H_\pazocal{A}$ and $h_1,h_2\in H_\pazocal{A}\setminus\{1\}$} with $g^{-1}h_1g=h_2$.  


Let $\Delta$ be an annular diagram over the disk presentation of $G_\Omega(\textbf{M}^\pazocal{L})$ such that:

\begin{itemize}

\item There exists a vertex $O_1$ of the outer contour $\textbf{q}_1$ of $\Delta$ such that the word $w_1$ given by reading $\lab(\textbf{q}_1)$ from $O_1$ is a word over $\pazocal{A}\cup\pazocal{A}^{-1}$ that represents $h_1$ in $H_\pazocal{A}$

\item There exists a vertex $O_2$ of the inner contour $\textbf{q}_2$ of $\Delta$ such that the word $w_2$ given by reading $\lab(\textbf{q}_2^{-1})$ from $O_2$ is a word over $\pazocal{A}\cup\pazocal{A}^{-1}$ that represents $h_2$ in $H_\pazocal{A}$

\item There exists a path $\textbf{t}$ in $\Delta$ with $\textbf{t}_-=O_1$ and $\textbf{t}_+=O_2$ such that $\lab(\textbf{t})$ is a word $u$ over $\pazocal{X}\cup\pazocal{X}^{-1}$ that represents $g$ in $G_\Omega(\textbf{M}^\pazocal{L})$

\end{itemize}

Then, $\Delta$ is called a \textit{counterexample annulus}.  

In this case, the path $\textbf{t}$ is called a \textit{$g$-path} and the tuple of words $(u,w_1,w_2)$ is called the \textit{defining triple} of $\Delta$ with respect to $\textbf{t}$.  Note that for any word $u\in(\pazocal{X}\cup\pazocal{X}^{-1})^*$ that represents $g$ in $G_\Omega(\textbf{M}^\pazocal{L})$ and any pair of words $w_1,w_2\in(\pazocal{A}\cup\pazocal{A}^{-1})^*$ such that $w_i$ represents $h_i$ in $H_\pazocal{A}$, van Kampen's Lemma (see \Cref{sec-initial-embedding}) implies the existence of a counterexample annulus for which $(u,w_1,w_2)$ is a defining triple.  Hence, by hypothesis there must exist counterexample annuli.

The diagram $\Delta$ is called a \textit{minimal counterexample annulus} if $\tau(\Delta)\leq\tau(\Delta')$ for any counterexample annulus $\Delta'$.  A \textit{$j$-minimal counterexample annulus} is defined analogously.  Note that the existence of counterexample annuli implies the existence of ($j$-)minimal counterexample annuli.

A counterexample annulus $\Delta$ is called \textit{reduced} if it is a reduced annular diagram over the disk presentation of $G_\Omega(\textbf{M}^\pazocal{L})$.  A \textit{reduced ($j$-)minimal counterexample annulus} is defined analogously.  Note that the existence of a reduced ($j$-)minimal counterexample is slightly more subtle: While we may remove pairs of cancellable cells, we want to be able to do so without altering the $g$-path (or at least some $g$-path).  The next few statements address this subtlety.

\begin{lemma} \label{counterexample annuli trivial path}

If $\textbf{p}$ is a simple closed path in a counterexample annulus $\Delta$ which is not combinatorially null-homotopic, then $\lab(\textbf{p})$ represents a non-trivial element of $G_\Omega(\textbf{M}^\pazocal{L})$.

\end{lemma}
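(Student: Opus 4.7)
The plan is a short proof by contradiction combining the planar topology of the annulus with van Kampen's Lemma for annular diagrams (Section 2.1).

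Suppose toward contradiction that $\lab(\textbf{p})$ represents the trivial element of $G_\Omega(\textbf{M}^\pazocal{L})$. Since $\textbf{p}$ is a simple closed path in the 2-complex $\Delta$, and $\Delta$ has the topology of an annulus, a standard Jordan-curve argument (a simple closed curve on an annulus either bounds a disk or separates the annulus into two annular regions) applied to $\Delta$ shows that $\textbf{p}$, not being combinatorially null-homotopic, separates $\Delta$ into two subdiagrams $\Delta_1$ and $\Delta_2$, each of which is itself an annular diagram over the disk presentation of $G_\Omega(\textbf{M}^\pazocal{L})$. After relabelling if needed, one may arrange the contour components of $\Delta_1$ to be $\textbf{q}_1$ (inherited from $\Delta$) and $\textbf{p}^\eps$ for some $\eps\in\{\pm1\}$, while those of $\Delta_2$ are $\textbf{p}^{-\eps}$ and $\textbf{q}_2$.

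Applying the easy direction of the characterization of conjugacy by annular diagrams (recalled in Section 2.1) to $\Delta_1$ then shows that $\lab(\textbf{q}_1)\equiv w_1$ is conjugate in $G_\Omega(\textbf{M}^\pazocal{L})$ to $\lab(\textbf{p})^{\pm1}$. By the definition of counterexample annulus, $w_1$ represents $h_1\in H_\pazocal{A}\setminus\{1\}$, which is a non-trivial element of $G_\Omega(\textbf{M}^\pazocal{L})$. Since conjugation preserves non-triviality, $\lab(\textbf{p})$ cannot represent $1$, contradicting our supposition. (Equivalently, one could argue using $\Delta_2$ and $h_2\neq1$.)

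There is no serious obstacle here: the content is essentially topological, and the only point requiring care is the correct invocation of the annulus-separation fact and the matching of orientations (outer vs.\ inner contour traced counterclockwise vs.\ clockwise, as fixed in Section~5.2) so that the conjugacy conclusion from van Kampen's Lemma applies. The real utility of the lemma lies downstream, where it will rule out various degenerate configurations in minimal counterexample annuli.
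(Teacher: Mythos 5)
Your proof is correct and follows essentially the same route as the paper: cut along $\textbf{p}$ to split $\Delta$ into two annular subdiagrams, apply the annular form of van Kampen's Lemma to conclude $\lab(\textbf{p})$ is conjugate to $h_1$ (or $h_2$), and use that these are non-trivial. The only cosmetic difference is that you phrase it as a contradiction while the paper argues directly.
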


\begin{proof}

Cutting along $\textbf{p}$ separates $\Delta$ into two connected components, each of which is an annular diagram with one boundary component identified with $\textbf{p}^{\pm1}$ and the other identified with a boundary component of $\Delta$.  As a result, van Kampen's Lemma implies that $\lab(\textbf{p})$ (or $\lab(\textbf{p})^{-1}$) represents an element $h$ of $G_\Omega(\textbf{M}^\pazocal{L})$ which is conjugate to both $h_1$ and $h_2$.  But $h_1$ and $h_2$ are non-trivial elements of $H_\pazocal{A}$ by hypothesis, so that $h$ must be a non-trivial element of $G_\Omega(\textbf{M}^\pazocal{L})$.

\end{proof}

\begin{lemma} \label{counterexample annuli H path}

Let $\Delta$ be a counterexample annulus with outer contour $\textbf{q}_1$ and inner contour $\textbf{q}_2$.  For any path $\textbf{p}$ in $\Delta$ such that $\textbf{p}_-$ is a vertex of $\textbf{q}_1$ and $\textbf{p}_+$ is a vertex of $\textbf{q}_2$, $\lab(\textbf{p})$ represents an element of $G_\Omega(\textbf{M}^\pazocal{L})\setminus H_\pazocal{A}$.

\end{lemma}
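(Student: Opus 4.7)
The plan is to argue by contradiction: assume that $\lab(\textbf{p})$ represents some element of $H_\pazocal{A}$, and deduce that $g \in H_\pazocal{A}$, contradicting the choice of $g$.

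First I would build an alternative path from $O_1$ to $O_2$ using $\textbf{p}$ together with appropriate subpaths of the contour. Let $\alpha_1$ be the subpath of $\textbf{q}_1$ from $O_1$ to $\textbf{p}_-$, and let $\alpha_2$ be the subpath of $\textbf{q}_2$ from $\textbf{p}_+$ to $O_2$. Then $\textbf{p}' \defeq \alpha_1 \textbf{p} \alpha_2$ is a path in $\Delta$ with the same endpoints as the $g$-path $\textbf{t}$. Because both contour components are labelled by words over $\pazocal{A} \cup \pazocal{A}^{-1}$ (ignoring $0$-edges), the labels of $\alpha_1$ and $\alpha_2$ each represent elements of $H_\pazocal{A}$; combined with the working assumption on $\textbf{p}$, this gives $\lab(\textbf{p}') \in H_\pazocal{A}$.

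Next I would invoke the annular analogue of Lemma 11.4 of \cite{O}, which is the same tool already applied in the proof of \Cref{SQ malnormal}: two paths in an annular diagram sharing endpoints differ, as group elements, by an integer power of the element represented by a loop around the annulus. Comparing $\textbf{p}'$ to $\textbf{t}$ in $\Delta$ produces an integer $k$ such that $\lab(\textbf{p}') = h_1^k \cdot g$ in $G_\Omega(\textbf{M}^\pazocal{L})$, since reading $\lab(\textbf{q}_1)$ from $O_1$ yields the word $w_1$ representing $h_1$. Combining with the previous paragraph, $h_1^k g \in H_\pazocal{A}$, and since $h_1 \in H_\pazocal{A}$ this forces $g \in H_\pazocal{A}$—the desired contradiction.

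I do not anticipate a serious obstacle here: the statement is essentially a book-keeping consequence of the annular van Kampen lemma, mirroring the final paragraph of the proof of \Cref{SQ malnormal}. The only subtlety is to keep track of orientations consistently—in particular, $w_2$ was defined by reading $\lab(\textbf{q}_2^{-1})$ from $O_2$, so some attention is needed to confirm that $\lab(\alpha_2)$ lies in $H_\pazocal{A}$ regardless of which way along $\textbf{q}_2$ one traverses it—but this only affects sign conventions, not the logic of the argument.
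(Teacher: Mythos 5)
Your proposal is correct and follows essentially the same argument as the paper: connect the endpoints of the $g$-path to $\textbf{p}$ via subpaths of the two contours (whose labels lie in $H_\pazocal{A}$), apply the annular form of van Kampen's Lemma (Lemma 11.4 of \cite{O}) to compare the resulting path with the $g$-path up to a power of $h_1$, and conclude $g\in H_\pazocal{A}$, a contradiction. The orientation subtlety you flag is harmless, exactly as you say, since subpath labels of either contour are words over $\pazocal{A}\cup\pazocal{A}^{-1}$ in either direction.
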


\begin{proof}

Let $\textbf{t}$ be a $g$-path of $\Delta$ and let $(u,w_1,w_2)$ be the defining triple of $\Delta$ with respect to $\textbf{t}$.  

Let $\textbf{s}_1$ be the subpath of $\textbf{q}_1$ with $(\textbf{s}_1)_-=\textbf{t}_-$ and $(\textbf{s}_1)_+=\textbf{p}_-$.  Similarly, let $\textbf{s}_2$ be the subpath of $\textbf{q}_2$ with $(\textbf{s}_2)_-=\textbf{p}_+$ and $(\textbf{s}_2)_+=\textbf{t}_+$.  Then, $\textbf{y}=\textbf{s}_1\textbf{p}\textbf{s}_2$ is a path with $\textbf{y}_-=\textbf{t}_-$ and $\textbf{y}_+=\textbf{t}_+$.  

As a consequence of van Kampen's Lemma (see Lemma 11.4 of \cite{O}), there then exists an integer $k$ such that $w_1^k\lab(\textbf{y})$ represents $g$ in $G_\Omega(\textbf{M}^\pazocal{L})$.  In particular, $w_1^k\lab(\textbf{y})=w_1^k\lab(\textbf{s}_1)\lab(\textbf{p})\lab(\textbf{s}_2)$ represents an element of $G_\Omega(\textbf{M}^\pazocal{L})\setminus H_\pazocal{A}$.

But by construction the letters comprising $w_1$, $\lab(\textbf{s}_1)$, and $\lab(\textbf{s}_2)$ are from $\pazocal{A}\cup\pazocal{A}^{-1}$, so that $w_1^k\lab(\textbf{s}_1),\lab(\textbf{s}_2)\in H_\pazocal{A}$.  The statement thus follows.

%
%

\end{proof}

\begin{lemma} \label{counterexample annulus subdiagram}

Suppose $\Gamma$ is a subdiagram of a counterexample annulus $\Delta$.  Then there exists a counterexample annulus $\Delta_0$ containing a subdiagram $\Gamma_0$ such that:

\begin{itemize}

\item $\lab(\partial\Gamma_0)\equiv\lab(\partial\Gamma)$

\item $\tau(\Gamma_0)=\tau(\Gamma)$

\item $\tau(\Delta_0)=\tau(\Delta)$

\item There exists a $g$-path $\textbf{t}_0$ of $\Delta_0$ which is disjoint from $\Gamma_0$

\item $\Gamma_0$ is disjoint from the boundary of $\Delta_0$

\end{itemize}

\end{lemma}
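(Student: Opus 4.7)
\smallskip

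Let $\textbf{t}$ be a $g$-path of $\Delta$ with defining triple $(u,w_1,w_2)$. The plan is to proceed in two stages: first, modify $\textbf{t}$ (without changing $\Delta$) so that its interior is disjoint from the interior of $\Gamma$; second, apply $0$-refinement to insert a collar of $0$-cells around $\Gamma$, simultaneously separating $\partial\Gamma$ from $\partial\Delta$ and from the modified $g$-path.

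For the first stage, suppose $\textbf{t}$ has a maximal subpath $\textbf{p}$ whose interior lies in the interior of $\Gamma$; then $\textbf{p}_-$ and $\textbf{p}_+$ are vertices of $\partial\Gamma$. Because $\Gamma$ is a circular (disk-like) subdiagram, there is a subpath $\textbf{p}'$ of $\partial\Gamma$ with $\textbf{p}'_- = \textbf{p}_-$ and $\textbf{p}'_+ = \textbf{p}_+$, and the loop $\textbf{p}(\textbf{p}')^{-1}$ bounds a circular subdiagram of $\Gamma$. Van Kampen's Lemma applied to that subdiagram shows $\lab(\textbf{p}) = \lab(\textbf{p}')$ in $G_\Omega(\textbf{M}^\pazocal{L})$, so replacing $\textbf{p}$ by $\textbf{p}'$ in $\textbf{t}$ yields a new path from $O_1$ to $O_2$ whose label still represents $g$. (The case where $\textbf{p}_- = \textbf{p}_+$ is handled by taking $\textbf{p}'$ trivial; then $\lab(\textbf{p})$ is freely conjugate to the boundary label of a disk subdiagram of $\Gamma$, hence trivial in the group.) Iterating until no such $\textbf{p}$ remains produces a $g$-path $\tilde{\textbf{t}}$ whose interior is disjoint from the interior of $\Gamma$.

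For the second stage, I would apply $0$-refinement along every edge of $\partial\Gamma$, inserting a ring of $0$-cells that separates $\partial\Gamma$ from every neighboring positive cell, from the contours of $\Delta$, and crucially from $\tilde{\textbf{t}}$. Since $\tilde{\textbf{t}}$ meets $\Gamma$ only along (sub)paths of $\partial\Gamma$, the refinement can be arranged so that each edge of $\tilde{\textbf{t}}$ that was shared with $\partial\Gamma$ is duplicated: one copy is retained on the new boundary $\partial\Gamma_0$ and the other becomes part of the new path $\textbf{t}_0$, with a $0$-cell in between. The result is a diagram $\Delta_0$ containing a subdiagram $\Gamma_0$ obtained from $\Gamma$ by inserting $0$-cells along its contour, together with a path $\textbf{t}_0$ from $O_1$ to $O_2$ avoiding $\Gamma_0$. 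Because $0$-refinement preserves the combinatorial content of positive cells and of contour labels, $\lab(\partial\Gamma_0) \equiv \lab(\partial\Gamma)$, $\tau(\Gamma_0) = \tau(\Gamma)$, and $\tau(\Delta_0) = \tau(\Delta)$; and $\lab(\textbf{t}_0) = \lab(\tilde{\textbf{t}})$, so $\textbf{t}_0$ is a $g$-path.

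The main obstacle is verifying that the $0$-refinement in the second stage can be performed consistently: one must handle the edges of $\partial\Gamma$ that already lie on $\partial\Delta$ or on $\tilde{\textbf{t}}$ simultaneously, and near the endpoints $O_1$, $O_2$ one must confirm that the duplicated edges can be arranged so that both $\partial\Delta_0 \cap \Gamma_0 = \emptyset$ and $\textbf{t}_0 \cap \Gamma_0 = \emptyset$. This is a straightforward but notationally involved diagram manipulation, and it is precisely where one exploits the fact that the first stage has reduced intersections of $\textbf{t}$ with $\Gamma$ to lie on $\partial\Gamma$ alone.
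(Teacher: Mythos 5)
Your proposal is correct and follows essentially the same route as the paper's proof: first reroute the $g$-path by replacing each maximal subpath lying in $\Gamma$ with a subpath of $\partial\Gamma$ between the same endpoints (the labels agree in $G_\Omega(\textbf{M}^\pazocal{L})$ since the two paths are combinatorially homotopic), and then $0$-refine by inserting a layer of $0$-cells along $\partial\Gamma$ so that the resulting $\Gamma_0$ is disjoint from both the new $g$-path and the boundary, with all signatures and labels preserved. The consistency issue you flag in the second stage is exactly what the paper handles implicitly by taking $\Gamma_0$ to be $\Gamma'$ minus the added collar of $0$-cells, so no vertex of $\Gamma_0$ lies on $\partial\Gamma'$.
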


\begin{proof}

Let $\textbf{t}$ be a $g$-path of $\Delta$.  If $\textbf{t}$ and $\Gamma$ are disjoint, then the statement is satisfied by $0$-refinement, adding a `band' of $0$-cells around the boundary of $\Gamma$ to assure its contour is disjoint from the diagram's boundary components.

Otherwise, let $\textbf{p}$ be a maximal subpath of $\textbf{t}$ which is contained in $\Gamma$.  By construction, $\textbf{p}_-$ and $\textbf{p}_+$ must be vertices of $\partial\Gamma$.  So, there exists a subpath $\textbf{p}'$ of $\partial\Gamma$ with $\textbf{p}'_-=\textbf{p}_-$ and $\textbf{p}'_+=\textbf{p}_+$.  By construction, $\textbf{p}'$ and $\textbf{p}$ are combinatorially homotopic, and so $\lab(\textbf{p}')$ and $\lab(\textbf{p})$ represent the same element of $G_\Omega(\textbf{M}^\pazocal{L})$.  Hence, replacing all maximal subpaths $\textbf{p}$ of $\textbf{t}$ with the corresponding path $\textbf{p}'$ produces a $g$-path $\textbf{t}'$ with $\textbf{t}'_-=\textbf{t}_-$ and $\textbf{t}'_+=\textbf{t}_+$.

As above, the $0$-refinement given by adding a band of $0$-cells around the boundary of $\Gamma$ then produces a counterexample annulus satisfying the statement.

%
%
%

\end{proof}

With \Cref{counterexample annulus subdiagram} in hand, we may now prove the existence of a reduced minimal counterexample annulus:

\begin{lemma} \label{reduced counterexample annuli}

For any counterexample annulus $\Delta$, there exists a reduced counterexample annulus $\tilde{\Delta}$ satisfying $\tau(\tilde{\Delta})\leq\tau(\Delta)$.

\end{lemma}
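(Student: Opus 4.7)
The plan is to induct on $\text{Area}(\Delta)$, using Lemma \ref{counterexample annulus subdiagram} to reroute the $g$-path away from any cancellable pair before excising it. If $\Delta$ is already reduced, set $\tilde{\Delta}=\Delta$. Otherwise, fix a pair of cancellable cells $\Pi_1,\Pi_2$ in $\Delta$ together with a connecting simple path $\textbf{s}$ satisfying $\lab(\textbf{s})=1$ in $F(\pazocal{X})$ and with $\lab(\partial\Pi_1)$, $\lab(\partial\Pi_2)$ read from appropriate base vertices being mutually inverse. After a $0$-refinement of $\Delta$ (which does not alter its signature, its area, or the labels of its contour components), the loop $\textbf{s}(\partial\Pi_2)^{-1}\textbf{s}^{-1}(\partial\Pi_1)^{-1}$ bounds a circular subdiagram $\Gamma$ of $\Delta$ with freely trivial contour label and consisting of $\Pi_1$, $\Pi_2$, and only $0$-cells.

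Now apply Lemma \ref{counterexample annulus subdiagram} to $(\Delta,\Gamma)$ to produce a counterexample annulus $\Delta_0$ containing a subdiagram $\Gamma_0$ with $\lab(\partial\Gamma_0)\equiv\lab(\partial\Gamma)$, $\tau(\Gamma_0)=\tau(\Gamma)$, $\tau(\Delta_0)=\tau(\Delta)$, such that some $g$-path $\textbf{t}_0$ of $\Delta_0$ is disjoint from $\Gamma_0$, and $\Gamma_0$ is disjoint from $\partial\Delta_0$. Since $\lab(\partial\Gamma_0)$ is freely trivial, I can excise $\Gamma_0$ from $\Delta_0$ and fill the resulting disk-shaped hole with a circular diagram $\Gamma_0'$ over the free group $F(\pazocal{X})$ consisting only of $0$-cells and $0$-edges (this is possible by van Kampen's Lemma applied to the free group). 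The resulting annular diagram $\Delta_0'$ has the same inner and outer contour labels as $\Delta_0$, and $\text{Area}(\Delta_0')=\text{Area}(\Delta)-2$.

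Because $\textbf{t}_0$ was chosen disjoint from $\Gamma_0$ and from $\partial\Delta_0$, it survives in $\Delta_0'$ as a path between the same pair of boundary vertices with the same label, so $\Delta_0'$ is again a counterexample annulus with the inherited defining triple $(u,w_1,w_2)$. The removal of the two cells $\Pi_1$ and $\Pi_2$ cannot increase any coordinate of the signature, so $\tau(\Delta_0')\leq\tau(\Delta_0)=\tau(\Delta)$. Since $\text{Area}(\Delta_0')<\text{Area}(\Delta)$, the inductive hypothesis yields a reduced counterexample annulus $\tilde{\Delta}$ with $\tau(\tilde{\Delta})\leq\tau(\Delta_0')\leq\tau(\Delta)$, as required.

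The main subtlety is exactly the need for Lemma \ref{counterexample annulus subdiagram}: if the original $g$-path were to traverse the subdiagram $\Gamma$ containing the cancellable pair, a naive excision of $\Pi_1,\Pi_2$ would destroy the connecting path whose label represents $g$, and the resulting diagram would only witness conjugacy of $h_1$ and $h_2$ by some element not necessarily equal to $g$ (and possibly lying in $H_\pazocal{A}$, which would be worthless). Rerouting the $g$-path via the subdiagram-swap of Lemma \ref{counterexample annulus subdiagram} is exactly what keeps the counterexample property intact through each stage of the induction.
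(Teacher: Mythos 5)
Your proposal is correct and follows essentially the same route as the paper: isolate the cancellable pair in a subdiagram with freely trivial contour label via $0$-refinement, invoke Lemma \ref{counterexample annulus subdiagram} to keep the $g$-path and the boundary away from it, excise and refill with $0$-cells, and observe that the counterexample property and the signature bound survive. The only cosmetic difference is that you organize the repetition as an induction on area where the paper simply iterates the removal step.
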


\begin{proof}

Suppose $\Delta$ contains a pair of cancellable cells $\Pi_1$ and $\Pi_2$.  Using $0$-refinement, we can assume that $\Delta$ contains a subdiagram $\Gamma$ consisting of this pair of cancellable cells (and $0$-cells) such that $\lab(\partial\Gamma)$ is freely trivial.
By \Cref{counterexample annulus subdiagram}, it may be assumed that $\Gamma$ is disjoint from both a $g$-path $\textbf{t}$ and the boundary of $\Delta$.  But then we may remove $\Gamma$ from $\Delta$ as in the description of cancellable cells in \Cref{sec-Diagrams} without affecting the $g$-path.  

Thus, we may produce the reduced counterexample $\tilde{\Delta}$ by simply iterating this process, altering the $g$-path as necessary to remove a pair of cancellable cells until there is no such pair remaining.

%
%
%
%

\end{proof}

\begin{lemma} \label{minimal counterexample annuli}

Any subdiagram of a reduced ($j$-)minimal counterexample annulus is a reduced ($j$-)minimal circular diagram.

\end{lemma}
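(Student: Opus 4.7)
The plan is to argue as follows. Reducedness of $\Gamma$ is automatic: any pair of cancellable cells in $\Gamma$ would already be a pair of cancellable cells of $\Delta$ (the path $\textbf{t}$ witnessing cancellability lies inside $\Gamma$, hence inside $\Delta$), contradicting the assumption that $\Delta$ is reduced. So the real content is ($j$-)minimality.

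Suppose for contradiction that $\Gamma$ is not ($j$-)minimal as a circular diagram. Then there exists a circular diagram $\Gamma'$ over the disk presentation of $G_\Omega(\textbf{M}^\pazocal{L})$ with $\lab(\partial\Gamma')\equiv\lab(\partial\Gamma)$ and $\tau(\Gamma')<\tau(\Gamma)$ (respectively $\tau_j(\Gamma')<\tau_j(\Gamma)$). The strategy is to replace $\Gamma$ by $\Gamma'$ inside $\Delta$ to produce a counterexample annulus of strictly smaller ($j$-)signature, contradicting the minimality of $\Delta$.

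To do this cleanly, first apply \Cref{counterexample annulus subdiagram} to $\Delta$ and $\Gamma$, producing a counterexample annulus $\Delta_0$ with $\tau(\Delta_0)=\tau(\Delta)$ that contains a subdiagram $\Gamma_0$ satisfying $\lab(\partial\Gamma_0)\equiv\lab(\partial\Gamma)$ and $\tau(\Gamma_0)=\tau(\Gamma)$, and moreover such that $\Gamma_0$ is disjoint from both $\partial\Delta_0$ and from some $g$-path $\textbf{t}_0$ of $\Delta_0$. Since the construction in \Cref{counterexample annulus subdiagram} uses only $0$-refinement, $\Delta_0$ remains reduced. Now excise $\Gamma_0$ from $\Delta_0$ and paste $\Gamma'$ in its place, using $0$-refinement on the common contour label if necessary to match boundary edges; call the resulting annular diagram $\Delta'$. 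Because $\Gamma_0$ was disjoint from $\partial\Delta_0$, the two contour components of $\Delta'$ can be identified with those of $\Delta_0$ with the same labels, and because $\textbf{t}_0$ was disjoint from $\Gamma_0$, the path $\textbf{t}_0$ survives unchanged in $\Delta'$ as a $g$-path. Hence $\Delta'$ is again a counterexample annulus.

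Signatures count cells of each type across the whole diagram, so they are additive over the decomposition of $\Delta_0$ into $\Gamma_0$ and its complement; thus componentwise
\begin{equation*}
\sigma_i(\Delta')=\sigma_i(\Delta_0)-\sigma_i(\Gamma_0)+\sigma_i(\Gamma')\quad\text{for each }i.
\end{equation*}
If $k$ is the first index at which $\sigma_k(\Gamma')<\sigma_k(\Gamma_0)$, then the same index is the first at which $\sigma_k(\Delta')<\sigma_k(\Delta_0)$, so $\tau(\Delta')<\tau(\Delta_0)=\tau(\Delta)$ lexicographically (and analogously for $\tau_j$). Finally, applying \Cref{reduced counterexample annuli} to $\Delta'$ yields a reduced counterexample annulus $\tilde\Delta'$ with $\tau(\tilde\Delta')\leq\tau(\Delta')<\tau(\Delta)$, contradicting the ($j$-)minimality of $\Delta$. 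The only step that requires genuine care is arranging the $g$-path to avoid $\Gamma$ before performing the surgery, and this is precisely what \Cref{counterexample annulus subdiagram} delivers.
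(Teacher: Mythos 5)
Your proposal is correct and follows essentially the same route as the paper: both use \Cref{counterexample annulus subdiagram} to make the subdiagram disjoint from a $g$-path and the boundary, perform the excise-and-paste surgery, and invoke additivity of the signature together with the ($j$-)minimality of the annulus — the paper phrases it directly (replace by a minimal diagram and deduce equality of signatures) while you argue by contradiction, which is only a presentational difference. Your explicit remark that reducedness of the subdiagram is inherited from $\Delta$, and the final appeal to \Cref{reduced counterexample annuli}, are harmless extras since minimality is taken over all counterexample annuli.
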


\begin{proof}

Let $\Gamma$ be a subdiagram of a reduced minimal counterexample annulus $\Delta$ and let $\textbf{t}$ be a $g$-path of $\Delta$.  Using the $0$-refinement of \Cref{counterexample annulus subdiagram}, we may assume that $\Gamma$ is disjoint from both the $g$-path $\textbf{t}$ and the boundary of $\Delta$.

Let $\tilde{\Gamma}$ be a minimal diagram with $\lab(\partial\tilde{\Gamma})\equiv\lab(\partial\Gamma)$.  Then, let $\tilde{\Delta}$ be the annular diagram obtained from $\Delta$ by excising $\Gamma$ and pasting $\tilde{\Gamma}$ in its place.  As in the proof of \Cref{reduced counterexample annuli}, $\tilde{\Delta}$ is itself a counterexample diagram.

Since $\Delta$ is a minimal counterexample annulus, it then follows that $\tau(\Delta)\leq\tau(\tilde{\Delta})$.  On the other hand, as $\tilde{\Gamma}$ is minimal, $\tau(\tilde{\Gamma})\leq \tau(\Gamma)$, and hence $\tau(\tilde{\Delta})\leq\tau(\Delta)$.  But then $\tau(\Delta)=\tau(\tilde{\Delta})$ implies $\tau(\tilde{\Gamma})=\tau(\Gamma)$ by construction, so that $\Gamma$ must itself be minimal.

If $\Delta$ is a reduced $j$-minimal counterexample annulus, then an analogous argument applies.

\end{proof}

\begin{lemma} \label{minimal counterexample annuli disks}

Let $\Delta$ be a reduced $1$-minimal counterexample annulus.  Then $\Delta$ contains no disks.

\end{lemma}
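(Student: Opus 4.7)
The plan is to argue by contradiction. Suppose $\Delta$ contains at least one disk, and let $m \geq 1$ denote the number of disks. The first and crucial observation is that since $\lab(\textbf{q}_1)$ and $\lab(\textbf{q}_2^{-1})$ are words over $\pazocal{A}^{\pm 1}$, the boundary $\partial \Delta$ contains no $q$-edges whatsoever. Consequently, no maximal $q$-band of $\Delta$ can have an end on $\partial\Delta$, and by \Cref{basic annuli 1}(2) $\Delta$ contains no $q$-annulus. Hence every maximal $q$-band of $\Delta$ has both of its ends on (necessarily distinct) disks, so in particular each of the $L-1$ $t$-spokes of every disk terminates on another disk.

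Next I would form the estimating graph $\Gamma = \Gamma(\Delta)$ naturally embedded in the annulus $\Delta$, whose vertex set is the collection of disks (so $V(\Gamma) = m$) and whose edge set is the collection of maximal $t$-bands of $\Delta$ (so $E(\Gamma) = m(L-1)/2$, and $\Gamma$ has no loops). By \Cref{minimal counterexample annuli}, every subdiagram of $\Delta$ is a reduced $1$-minimal circular diagram, so \Cref{disk MM2} is available. It rules out $2$-gon faces of $\Gamma$: if two parallel $t$-spokes $\pazocal{Q}_1, \pazocal{Q}_2$ at $\Pi_1$ both end on $\Pi_2$ and are consecutive at $\Pi_1$, the subdiagram they bound would be forced to contain a further disk; if they are not consecutive at $\Pi_1$, another $t$-spoke lies between them in the cyclic order at $\Pi_1$ and hence in the interior of the would-be $2$-gon, contradicting its being a face.

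Finally I would invoke the Euler identity $V - E + \sum_f \chi(f) = \chi(\Delta) = 0$ for this embedding. Since every face $f$ is an open connected planar surface one has $\chi(f) \leq 1$, with equality iff $f$ is a topological disk. Letting $D$ denote the number of disk faces, we obtain $D \geq \sum_f \chi(f) = E - V$. Moreover, each disk face has at least three edges on its boundary counted with multiplicity: no $1$-gon disk faces since $\Gamma$ has no loops, and no $2$-gon disk faces by the previous paragraph. Combined with the identity $\sum_f d(f) = 2E$ this yields $2E \geq \sum_{\text{disk}\ f} d(f) \geq 3D \geq 3(E - V)$, i.e., $E \leq 3V$. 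Substituting $V = m$ and $E = m(L-1)/2$ gives $L \leq 7$, contradicting the parameter assignment.

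The main obstacle, I expect, will be the justification that every disk face of $\Gamma$ has boundary edge count at least $3$; the subtle case not covered by the argument above is a ``lollipop''-type disk face whose boundary traverses a single bridge edge of $\Gamma$ twice. I would handle it by examining the topology of bridges in the annular embedding: either (a) arguing that in the annulus the face around such a bridge must have $\chi \leq 0$ and hence is not a disk, so it is already excluded from the count $D$; or (b) decomposing $\Gamma$ into its $2$-edge-connected components (joined in a tree-like fashion by bridges) and running the Euler estimate on the bridgeless subgraph, accounting separately for the tree of bridges which satisfies $(\text{bridges}) \leq (\text{components})$.
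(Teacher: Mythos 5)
Correct, and essentially the paper's own proof: the same estimating graph on the annulus (vertices the disks, edges the $t$-bands joining them), no $1$-gons by the structure of disk relators and no $2$-gons by \Cref{minimal counterexample annuli} together with \Cref{disk MM2}, every vertex of degree exactly $L-1$ because $\partial\Delta$ carries no $q$-edges, and then an Euler-characteristic count on the annulus forcing a low-degree vertex (the paper simply cites Lemma 10.1 of \cite{O} to get a vertex of degree at most $18$, where you carry out the $2E\geq 3D\geq 3(E-V)$ count explicitly — both contradict the choice of $L$). The only blemish is your appeal to \Cref{basic annuli 1}(2), which is stated for circular diagrams and so does not by itself exclude $q$-annuli wrapping around the hole; but this is never needed, since a $t$-spoke has an end on a disk, hence is non-annular, and its other end cannot lie on $\partial\Delta$, which is all your degree count uses.
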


\begin{proof}

Suppose $\Delta$ contains at least one disk.

Similar to the construction of \Cref{sec-removing-disks} (but omitting external edges), we construct the auxiliary graph $\Gamma(\Delta)$ as follows:

\begin{enumerate}

\item The set of vertices is $\{v_1,\dots,v_\ell\}$, where each $v_i$ corresponds to one of the $\ell$ disks of $\Delta$.  

\item For $i,j\geq1$ and for any positive $t$-band which has ends on the disks corresponding to $v_i$ and $v_j$, there is a corresponding edge $(v_i,v_j)$. 


\end{enumerate}

As in that setting, this graph can be constructed as an auxiliary graph to the graph underlying $\Delta$, {\frenchspacing i.e. it is constructed on an annulus}.  It follows immediately from the definition of the disk relations that $\Gamma(\Delta)$ has no $1$-gons.  Further, Lemmas \ref{minimal counterexample annuli} and \ref{disk MM2} imply that $\Gamma(\Delta)$ has no $2$-gons.

Hence, an appeal to the Euler characteristic of the annulus (see, for example, Lemma 10.1 of \cite{O}) implies there must exist a vertex $v$ of $\Gamma(\Delta)$ with degree at most $18$.

Note that, by definition, every boundary edge of $\Delta$ is an $a$-edge.  In particular, any maximal non-annular $t$-band of $\Delta$ must have two ends on (distinct) disks.  But then the degree of every vertex of $\Gamma(\Delta)$ must be $L-1$, so that a parameter choice for $L$ provides a contradiction.

\end{proof}

\begin{lemma} \label{minimal counterexample annuli q}

A reduced $1$-minimal counterexample annulus $\Delta$ contains no $q$-annuli or $a$-annuli.

\end{lemma}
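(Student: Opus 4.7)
The plan is by contradiction: suppose $\Delta$ contains a $q$-annulus (or $a$-annulus) $S$. The starting observation is that since $\lab(\textbf{q}_j)\in F(\pazocal{A})$ for $j=1,2$, $\partial\Delta$ contains no $q$-edges and no $\theta$-edges; combined with \Cref{minimal counterexample annuli disks}, this forces every maximal $q$-band of $\Delta$ to be a $q$-annulus and every maximal $\theta$-band of $\Delta$ to be a $\theta$-annulus. I treat the case where $S$ is a $q$-annulus; the $a$-annulus case is analogous, invoking \Cref{basic annuli 2} in place of \Cref{basic annuli 1} and considering maximal $\pazocal{A}$-, $b$-, or ordinary $a$-bands in place of $\theta$-bands.

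First, suppose $S$ is null-homotopic in $\Delta$, so that $S$ bounds a disk subdiagram $\Delta_S$ of $\Delta$. By \Cref{minimal counterexample annuli}, $\Delta_S$ is a reduced $1$-minimal circular subdiagram, and hence by \Cref{minimal counterexample annuli disks} contains no disks. The argument of \Cref{basic annuli 1}(2) then applies verbatim within $\Delta_S$: for a $\theta$-edge $\textbf{e}$ on the outer side of $S$, the maximal $\theta$-band of $\Delta_S$ through $\textbf{e}$ has no available end except on the outer side of $S$ itself, producing a $(\theta,q)$-annulus and contradicting \Cref{basic annuli 1}(1).

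Now suppose $S$ is non-null-homotopic, so that $S$ separates $\Delta$ into two annular subdiagrams. Pick a $\theta$-edge $\textbf{e}$ on a side of $S$ and let $\pazocal{T}$ be the maximal $\theta$-band of $\Delta$ through $\textbf{e}$; by the opening observation, $\pazocal{T}$ is a $\theta$-annulus, and it shares with $S$ the cell $\gamma$ adjacent to $\textbf{e}$. If $\pazocal{T}$ is itself null-homotopic in $\Delta$, then applying the previous paragraph's argument to $\pazocal{T}$ in place of $S$ yields a contradiction. Otherwise $\pazocal{T}$ is non-null-homotopic and wraps around $\Delta$ in parallel with $S$. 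Since any two essential simple closed curves in an annulus have even $\bmod\,2$ intersection number, $\pazocal{T}$ and $S$ must share at least one further cell $\gamma'\ne\gamma$; choosing $\gamma,\gamma'$ consecutive along $\pazocal{T}\cap S$, the subbands of $\pazocal{T}$ and $S$ joining $\gamma$ to $\gamma'$ together bound a disk subdiagram of $\Delta$ and form a $(\theta,q)$-annulus, again contradicting \Cref{basic annuli 1}(1).

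The hard part will be rendering the topological argument in the last step fully rigorous: guaranteeing that $\pazocal{T}$ and $S$ share the requisite second cell, and precisely identifying subbands whose union forms a null-homotopic $(\theta,q)$-annulus inside $\Delta$.
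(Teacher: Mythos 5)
Your opening observation and the null-homotopic case are fine (the latter is essentially the paper's first case, which simply cites \Cref{basic annuli 1}(2)/\Cref{basic annuli 2}(2) for the circular subdiagram bounded by a side of $\pazocal{S}$), but the case you yourself flag -- $\pazocal{S}$ and the $\theta$-annulus $\pazocal{T}$ both essential -- is a genuine gap as written. The mod-$2$ intersection count does force a second crossing cell, but the assertion that for $\gamma,\gamma'$ ``consecutive along $\pazocal{T}\cap S$'' the two subbands bound a disk is not automatic: if $\gamma,\gamma'$ are consecutive along $\pazocal{T}$, the arc of $\pazocal{T}$ together with one of the two arcs of $\pazocal{S}$ between them gives two embedded closed curves, and exactly one of them is null-homotopic in the annulus (the one of even winding number); you must make that choice explicitly, or run an innermost-bigon argument, and then still check that the two subbands meet only in the corner cells and sit inside a reduced circular subdiagram so that \Cref{basic annuli 1}(1) (resp.\ \Cref{basic annuli 2}(1)) actually applies to the resulting $(\theta,q)$- (resp.\ $(\theta,a)$-) annulus. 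A smaller slip: in the $a$-annulus case the bands you send across the sides of $\pazocal{S}$ are still $\theta$-bands (the sides of an $a$-band consist of $\theta$-edges); one does not replace them by $\pazocal{A}$-, $b$- or ordinary $a$-bands -- only the type of forbidden annulus changes.

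The paper's proof takes a different, cut-first route that avoids all of this surface topology. After noting via \Cref{counterexample annuli trivial path} that the sides of $\pazocal{S}$ are not null-homotopic, hence that $\pazocal{S}$ has positive length, it cuts $\Delta$ along $\textbf{bot}(\pazocal{S})$; the component $\Delta_1$ containing $\pazocal{S}$ is an annular diagram whose boundary consists of $\textbf{bot}(\pazocal{S})^{\pm1}$ and one contour of $\Delta$. A maximal $\theta$-band $\pazocal{T}$ of $\Delta_1$ starting at a $\theta$-edge of $\textbf{bot}(\pazocal{S})$ is then an honest non-annular band with two ends, and since $\partial\Delta$ carries no $\theta$-edges both ends must lie on $\textbf{bot}(\pazocal{S})^{\pm1}$, i.e.\ $\pazocal{T}$ crosses $\pazocal{S}$ twice. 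A second cut, along a side of $\pazocal{T}$, lands in a reduced circular diagram in which $\pazocal{T}$ and a subband of $\pazocal{S}$ form a $(\theta,q)$- or $(\theta,a)$-annulus, contradicting \Cref{basic annuli 1}(1) or \Cref{basic annuli 2}(1). This sidesteps both the null-homotopic/essential dichotomy for $\pazocal{T}$ and the intersection-number/bigon machinery; if you prefer to keep your route, the bigon criterion (or the parity-of-winding choice indicated above) is exactly what is needed to close the step you left open.
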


\begin{proof}

Assuming the statement is false, let $\pazocal{S}$ be a maximal $q$-annulus or $a$-annulus in $\Delta$.  Then, each side of $\pazocal{S}$ can be assumed to be (perhaps with $0$-refinement) a simple closed path in $\Delta$.


If a side of $\pazocal{S}$ is combinatorially null-homotopic, then $\pazocal{S}$ bounds a subdiagram $\Gamma_0$ of $\Delta$.  But then $\Gamma_0$ is a reduced circular diagram, so that the presence of $\pazocal{S}$ provides a contradiction to either \Cref{basic annuli 1}(2) or \Cref{basic annuli 2}(2).

Hence, each side of $\pazocal{S}$ is not combinatorially null-homotopic, so that \Cref{counterexample annuli trivial path} implies the labels of these sides represent non-trivial elements of $G_\Omega(\textbf{M}^\pazocal{L})$.  In particular, $\pazocal{S}$ must be a band of length $\ell>0$.

Note that each of these $\ell$ cells correspond to the crossing of a maximal $\theta$-band with $\pazocal{S}$.  But $\Delta$ contains no boundary $\theta$-edges, so that these $\theta$-bands must cross $\pazocal{S}$ more than once, creating $(\theta,q)$- or $(\theta,a)$-annuli that contradict \Cref{basic annuli 1}(1) or \Cref{basic annuli 2}(1).

\end{proof}

\begin{lemma} \label{minimal counterexample annuli a-boundary}

Let $\pi$ be an $a$-cell in a 3-minimal counterexample annulus $\Delta$.  Then no edge of $\partial\pi$ is a boundary edge of $\Delta$.

\end{lemma}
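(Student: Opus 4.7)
The plan is to argue by contradiction: suppose some edge of $\partial\pi$ lies on $\partial\Delta$, and construct a counterexample annulus $\Delta'$ with $\tau_3(\Delta')<\tau_3(\Delta)$. By the symmetry between the outer and inner contours (with the case of edges on both components handled analogously), I restrict to the situation where $\partial\pi$ shares an edge with the outer contour $\textbf{q}_1$.

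First I would show $\lab(\partial\pi)\in\Lambda^\pazocal{A}$. Since $\lab(\textbf{q}_1)$ is a word over $\pazocal{A}^{\pm1}$, the shared edge is an $\pazocal{A}$-edge, so the cyclically reduced word $\lab(\partial\pi)\in\Omega$ contains an $\pazocal{A}$-letter. By \Cref{M Lambda semi-computations}(1), $\lab(\partial\pi)$ is a cyclic conjugate of some element of $\pazocal{E}(\Lambda^\pazocal{A})=\Lambda^\pazocal{A}\sqcup\pazocal{E}_1(\Lambda_1^\pazocal{A})$; since elements of $\pazocal{E}_1(\Lambda_1^\pazocal{A})$ are words over $(\pazocal{A}_1\sqcup\pazocal{B})^{\pm1}$ and cyclic reduction does not introduce $\pazocal{A}$-letters, this cyclic conjugate lies in $\Lambda^\pazocal{A}$, and condition (L3) yields $\lab(\partial\pi)\in\Lambda^\pazocal{A}$.

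Next, I perform the surgery. Let $\textbf{s}$ be a maximal subpath of $\partial\pi$ shared with $\textbf{q}_1$, and decompose $\partial\pi=\textbf{s}\textbf{s}_0$ cyclically; then both $\lab(\textbf{s})$ and $\lab(\textbf{s}_0)$ are words over $\pazocal{A}^{\pm1}$, and the relator $\lab(\partial\pi)$ gives $\lab(\textbf{s})\lab(\textbf{s}_0)=1$ in $G_\Omega(\textbf{M}^\pazocal{L})$. After $0$-refinement to isolate $\pi$ as the unique positive cell in the disk bounded by $\textbf{s}\textbf{s}_0^{-1}$, I excise $\pi$ and replace the subpath $\textbf{s}$ of $\textbf{q}_1$ with $\textbf{s}_0^{-1}$, producing an annular diagram $\Delta'$ with $\sigma_3(\Delta')=\sigma_3(\Delta)-1$ and $\sigma_i(\Delta')=\sigma_i(\Delta)$ for $i=1,2$, hence $\tau_3(\Delta')<\tau_3(\Delta)$.

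The main obstacle is verifying that $\Delta'$ is still a counterexample annulus for the fixed triple $(g,h_1,h_2)$. The inner contour and its vertex $O_2$ are preserved, and the new outer contour label $w_1'$ represents the same element $h_1$ as $w_1$ in $G_\Omega(\textbf{M}^\pazocal{L})$, since they differ by a conjugate of the trivial relator $\lab(\partial\pi)$. The delicate point is selecting a starting vertex $O_1'$ on $\textbf{q}_1'$ together with a $g$-path from $O_1'$ to $O_2$ representing exactly $h_1$ and $g$ respectively, not merely conjugates of these elements. If $O_1\notin\mathrm{int}(\textbf{s})$, we take $O_1'=O_1$ and reroute $\textbf{t}$ around $\pi$ (in the spirit of \Cref{counterexample annulus subdiagram}), with the rerouted path differing from $\textbf{t}$ only by conjugates of the trivial relator $\lab(\partial\pi)$ and hence representing the same element $g$. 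If $O_1\in\mathrm{int}(\textbf{s})$, then after the $0$-refinement the first edge of $\textbf{t}$ must traverse $\textbf{s}_0$; shifting $O_1'$ to the far endpoint of this first edge and trimming it from $\textbf{t}$ yields a preserved starting vertex and a valid $g$-path, the labels again differing from the originals only by conjugates of $\lab(\partial\pi)=1$. Once $\Delta'$ is verified as a counterexample annulus, we contradict the $3$-minimality of $\Delta$.
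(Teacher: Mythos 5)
Your overall strategy is the same as the paper's — show $\lab(\partial\pi)\in\Lambda^\pazocal{A}$ via \Cref{M Lambda semi-computations}, excise $\pi$, replace the shared boundary portion by the complementary arc of $\partial\pi$, and contradict $3$-minimality — and your first case ($O_1\notin\mathrm{int}(\textbf{s})$), including the rerouting of $\textbf{t}$, is sound. The gap is in your second case. If $O_1$ lies in the interior of the maximal shared subpath $\textbf{s}$, then after the surgery $O_1$ is no longer a vertex of the outer contour, and your repair does not work: the first (positive) edge of $\textbf{t}$ at $O_1$ is an edge of $\textbf{s}$, not of $\textbf{s}_0$, since the whole interior angle of $\Delta$ at an interior vertex of $\textbf{s}$ is filled by $\pi$; more importantly, shifting the basepoint to the far endpoint of a trimmed initial segment with label $a$ does \emph{not} change the labels "only by conjugates of $\lab(\partial\pi)$". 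The new boundary word read from the shifted vertex represents $a^{-1}h_1a$ and the trimmed path represents $a^{-1}g$, so the resulting annulus witnesses the triple $(a^{-1}g,\,a^{-1}h_1a,\,h_2)$ rather than the fixed triple $(g,h_1,h_2)$. Since counterexample annuli — and hence the $3$-minimality of $\Delta$ — are defined in \Cref{sec-annular-diagrams} relative to the fixed triple, no contradiction is obtained in this case.

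The paper avoids this issue by a different use of $0$-refinement: it fixes a single shared edge $\textbf{e}$ of $\partial\pi\cap\textbf{q}_1$ and $0$-refines so that no other edge of $\partial\pi$ lies on $\partial\Delta$; then only the single edge $\textbf{e}$ is replaced by $\textbf{s}^{-1}$ (the complement of $\textbf{e}$ in $\partial\pi$). Because a single edge has no interior vertices, the basepoint $o=\textbf{t}_-$ necessarily survives the surgery, the new boundary word read from $o$ still represents $h_1$, and replacing each occurrence of $\textbf{e}^{\pm1}$ in $\textbf{t}$ by $\textbf{s}^{\mp1}$ gives a path still representing $g$. Your argument can be repaired the same way — reduce to a shared subpath of which $O_1$ is not an interior vertex (e.g.\ a single edge) by buffering the rest of $\partial\pi$ from $\partial\Delta$ with $0$-cells — but as written the case $O_1\in\mathrm{int}(\textbf{s})$ is not handled correctly.
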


\begin{proof}

Suppose there is an edge $\textbf{e}$ of $\partial\pi$ which is also an edge of the outer contour $\textbf{q}_1$ of $\Delta$.  Then, letting $\textbf{t}$ be a $g$-path of $\Delta$, let $\textbf{y}\textbf{e}\textbf{z}$ be the decomposition of $\textbf{q}_1$ as a loop about the vertex $o=\textbf{t}_-$, {\frenchspacing i.e. such that $\textbf{y}_-=\textbf{z}_+=o$}.  Further, let $\textbf{s}$ be the complement of $\textbf{e}$ in $\partial\pi$.  Perhaps $0$-refining, we may assume that no edge of $\textbf{s}$ is a boundary edge of $\Delta$.

Note that since $\textbf{e}$ is a boundary edge of $\Delta$, $\lab(\textbf{e})\in\pazocal{A}^{\pm1}$, and so $\lab(\partial\pi)\in\Lambda^\pazocal{A}$ by \Cref{M Lambda semi-computations}.  In particular, $\lab(\textbf{s})\in(\pazocal{A}\cup\pazocal{A}^{-1})^*$ and $\lab(\textbf{e})$ represents the same word as $\lab(\textbf{s})^{-1}$ in $G_\Omega(\textbf{M}^\pazocal{L})$.

Now, consider the annular diagram $\tilde{\Delta}$ obtained by cutting along $\textbf{s}$ and removing $\pi$.  By construction, the outer contour $\tilde{\textbf{q}}_1$ of $\tilde{\Delta}$ has a copy of the vertex $o$ (indeed of all vertices of $\textbf{q}_1$), so that the decomposition of $\tilde{\textbf{q}}_1$ as a loop about this vertex can be identified with $\textbf{y}\textbf{s}^{-1}\textbf{z}$.  By construction, $\lab(\textbf{y}\textbf{s}^{-1}\textbf{z})$ is a word over $\pazocal{A}\cup\pazocal{A}^{-1}$ which is equal to $\lab(\textbf{y}\textbf{e}\textbf{z})$ in $G_\Omega(\textbf{M}^\pazocal{L})$.  In particular, $\lab(\tilde{\textbf{q}}_1)$ read starting at $o$ represents $h_1$.

Similarly, replacing in $\textbf{t}$ any occurrence of the edge $\textbf{e}^{\pm1}$ with the subpath $\textbf{s}^{\mp1}$ produces a path $\tilde{\textbf{t}}$ in $\tilde{\Delta}$ whose label represents $g$ in $G_\Omega(\textbf{M}^\pazocal{L})$.  Hence, since the inner contour of $\Delta$ is undisturbed in passing to $\tilde{\Delta}$, it follows that $\tilde{\Delta}$ is itself a counterexample annulus.  But by construction $\tau_3(\tilde{\Delta})<\tau_3(\Delta)$, contradicting the hypothesis that $\Delta$ is a $3$-minimal counterexample annulus.

If there is an edge $\textbf{e}$ of $\partial\pi$ such that $\textbf{e}^{-1}$ is an edge of the inner contour, then an analogous argument produces a contradiction in the same way.

\end{proof}


\begin{lemma} \label{minimal counterexample annuli transposition}

Let $\pi$ be an $a$-cell and $\textbf{q}$ be a boundary component in a reduced minimal counterexample annulus $\Delta$.  If $\Psi$ is an $a$-scope on $\textbf{q}$ with associated $a$-cell $\pi$ and size $\ell\geq5$, then $\Psi$ is not a pure $a$-scope.

\end{lemma}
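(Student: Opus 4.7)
The plan is to suppose, for contradiction, that $\Psi$ is a pure $a$-scope with associated $a$-cell $\pi$ and size $\ell\geq 5$, and then to exhibit $\Psi$ as a compressed semi-trapezium whose compressed top and bottom both lie in $F(\pazocal{A})$, forcing an unreduced history. As a preliminary step I would use (the annular analogue of) \Cref{4-minimal a-cells} to reduce to the main case $\lab(\partial\pi)\in\Lambda^\pazocal{A}$; this is legitimate since a reduced minimal counterexample annulus is in particular $3$-minimal and the surgery from \Cref{pure a-cells} preserves $\tau_3$. Under this assumption $\lab(\textbf{s})\in F(\pazocal{A})$ with $\|\lab(\textbf{s})\|=\ell\geq 5$, and the corresponding subpath $\textbf{v}$ of $\textbf{q}$ also has $\lab(\textbf{v})\in F(\pazocal{A})$ since the boundary component $\textbf{q}$ is labelled entirely by letters from $\pazocal{A}\cup\pazocal{A}^{-1}$.

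I would then verify that $\Psi$ is a compressed semi-trapezium in the special input sector: it has no $(\theta,q)$-cells by definition of $a$-scope, no disks by \Cref{minimal counterexample annuli disks}, no $a$-cells by purity, and no $\theta$-annuli by Lemmas \ref{minimal counterexample annuli} and \ref{disk theta-annuli}. Since the only $\theta$-edges on $\partial\Psi$ lie on the sides of $\pazocal{U}(\textbf{e}_1)$ and $\pazocal{U}(\textbf{e}_2)$, every maximal $\theta$-band of $\Psi$ crosses it from one side to the other, exhibiting the compressed-semi-trapezium structure with $\mathscr{C}\textbf{bot}(\Psi)=\textbf{s}$ and $\mathscr{C}\textbf{top}(\Psi)=\textbf{v}^{-1}$. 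The height must be at least $1$: otherwise $\pazocal{U}(\textbf{e}_1)$ would have length $0$ in $\Psi$ and $\textbf{e}_1$ would simultaneously be an edge of $\partial\pi$ and of $\textbf{q}$, contradicting \Cref{minimal counterexample annuli a-boundary}. \Cref{compressed semi-trapezia are compressed semi-computations} then yields a non-empty reduced compressed semi-computation $\pazocal{S}_\mathscr{C}:w_0\to\cdots\to w_t$ with $w_0\equiv\lab(\textbf{s})$ and $w_t\equiv\lab(\textbf{v})^{-1}$, both lying in $F(\pazocal{A})$. By \Cref{semi applicable} the only rule applicable to a word in $F(\pazocal{A})$ is $\theta(s)_1$, so reducedness (any interior $\theta(s)_1^{-1}$ would put the next word into $F(\pazocal{A})$, forcing the following letter to be $\theta(s)_1$ and producing a cancellation) pins the history down to $\theta(s)_1\,H\,\theta(s)_1^{-1}$ with $H$ a reduced word in working rules of the first machine, identifiable with a compressed semi-computation of $\textbf{M}_1^\pazocal{A}$ in the $Q_0^\pazocal{A}Q_1^\pazocal{A}$-sector.

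To finish, since $\ell\geq 3$ I would select three consecutive $\pazocal{A}$-edges of $\textbf{s}$ and, using \Cref{semi subword} together with the homomorphism nature of the compressed rule applications, obtain a sub-compressed-semi-computation of $\textbf{M}_1^\pazocal{A}$ on the three-letter word $x_i^{\delta_i}x_{i+1}^{\delta_{i+1}}x_{i+2}^{\delta_{i+2}}\in F(\pazocal{A}_1)$ with history $H$. \Cref{M_1 compressed semi-computation three A} then yields an image of the form $x_i^{\delta_i}u_1x_{i+1}^{\delta_{i+1}}u_2x_{i+2}^{\delta_{i+2}}$ with $u_1,u_2\in F(\pazocal{B})$ and $\tfrac{1}{2}D_\pazocal{A}\|H\|\leq\|u_1\|+\|u_2\|$; applicability of the terminal $\theta(s)_1^{-1}$ requires the full intermediate word to sit in $F(\pazocal{A}_1)$, forcing $u_1=u_2=1$ and hence $\|H\|=0$, so the history collapses to the unreduced word $\theta(s)_1\theta(s)_1^{-1}$, contradicting reducedness of $\pazocal{S}_\mathscr{C}$. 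The residual case $\lab(\partial\pi)\in\Lambda_1^\pazocal{A}$ (in which the reduction above may increase $\sigma_4$ and hence not preserve $\tau$-minimality) would be handled by first using the same $b$-letter preservation of the working rules of $\textbf{M}_1^\pazocal{A}$ to force $\Psi$ to have height $1$ with history $\theta(s)_1^{-1}$, then transposing $\pi$ across that $\theta$-band to produce an $a$-cell $\bar{\pi}$ with $\lab(\partial\bar{\pi})\in\Lambda^\pazocal{A}$ sharing $\ell\geq 5$ edges with $\textbf{q}$, and finally removing $\bar{\pi}$ as in the proof of \Cref{minimal counterexample annuli a-boundary} to violate $3$-minimality of $\Delta$. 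The main obstacle I anticipate is this residual case: verifying that the transposition really lands $\bar{\pi}$ with edges on $\partial\Delta'$ and that the subsequent boundary-replacement surgery yields a bona fide counterexample annulus of strictly smaller $\tau_3$.
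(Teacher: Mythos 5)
Your main-case computation (reduce to $\lab(\partial\pi)\in\Lambda^\pazocal{A}$, view the pure scope as a compressed semi-trapezium in the `special' input sector with both compressed sides labelled by words over $\pazocal{A}^{\pm1}$, pin the history to $\theta(s)_1H\theta(s)_1^{-1}$, and use \Cref{M_1 compressed semi-computation three A} on an interior triple to force $\|H\|=0$ and hence an unreduced $\pazocal{A}$-band history) is sound and is essentially the argument the paper uses elsewhere (in \Cref{minimal MM2} and \Cref{no counterexample annulus}). The gap is in how you get to that case and in what you call the ``residual case.'' A generic $a$-cell of $\Delta$ has $\lab(\partial\pi)\in\Omega$, i.e.\ a cyclically reduced conjugate of a word in $\pazocal{E}_1(\Lambda_1^\pazocal{A})$ that may carry arbitrarily many $b$-letters; $\Lambda_1^\pazocal{A}$ is only the $b$-free sliver of this set. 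For such labels your height-one claim is false: the history of the scope is $H\theta(s)_1^{-1}$ with $\|H\|$ comparable to the $b$-length of $\lab(\partial\pi)$ (this is exactly \Cref{M Lambda semi-computations}(3)), so a single transposition does not suffice and your final paragraph does not cover the actual hard case. If instead you lean on the preliminary surgery (the annular analogue of \Cref{4-minimal a-cells}/\Cref{pure a-cells}) to eliminate these labels, then — as you yourself note — full $\tau$-minimality is lost, so you must check that every later step survives on $3$-minimality plus reducedness alone (no disks needs $1$-minimality, \Cref{minimal counterexample annuli a-boundary} needs $3$-minimality, the annulus lemmas need only reducedness), \emph{and} that after the surgery and the subsequent removal of cancellable pairs the configuration ``pure $a$-scope of size $\geq5$ on $\textbf{q}$ with associated $a$-cell labelled in $\Lambda^\pazocal{A}$'' persists. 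None of this verification is carried out, and without it the case split ``$\Lambda^\pazocal{A}$ after reduction, else $\Lambda_1^\pazocal{A}$'' simply does not exhaust the possibilities.

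For comparison, the paper's proof needs no case analysis and no surgery on $\Omega$-labels: since the scope is pure and every maximal $\theta$-band of $\Psi$ crosses all $\ell\geq5$ bands $\pazocal{U}(\textbf{e}_1),\dots,\pazocal{U}(\textbf{e}_\ell)$, one may iteratively transpose $\pi$ with each of these $\theta$-bands (after insulating $\tilde{\Psi}$ from the boundary and the $g$-path by the $0$-refinement of \Cref{counterexample annulus subdiagram}); each transposition preserves $\tau_3$, so the result is still a $3$-minimal counterexample annulus in which the transposed $a$-cell shares $\pazocal{A}$-edges with $\textbf{q}$, contradicting \Cref{minimal counterexample annuli a-boundary}. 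Your residual-case mechanism is precisely this argument restricted to height one; iterating it uniformly is what closes the gap, and at that point neither the $\Lambda^\pazocal{A}$-reduction nor the semi-computation estimate is needed.
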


\begin{proof}

Suppose to the contrary that $\Psi$ is pure.  By the definition of pure $a$-scope, every cell of $\Psi$ is then a $(\theta,a)$-cell.

Let $\textbf{e}_1,\dots,\textbf{e}_\ell$ be the consecutive $\pazocal{A}$-edges of $\partial\pi$ which comprise the associated subpath of $\Psi$.  By \Cref{minimal counterexample annuli}, both $\Psi$ and its completion $\tilde{\Psi}$ are  reduced minimal diagrams.  Hence, Lemmas \ref{a-bands on a-cell}, \ref{minimal is smooth}, and \ref{minimal counterexample annuli a-boundary} imply that each $\pazocal{A}$-band $\pazocal{U}(\textbf{e}_i)$ is of positive length and has an end on $\textbf{q}$.

Further, \Cref{disk theta-annuli} implies $\Psi$ contains only non-annular $\theta$-bands.  In particular, since $\Delta$ contains no boundary $\theta$-edges, the positive cells of $\Psi$ consist entirely of those forming $\theta$-bands that all cross each of the $\pazocal{A}$-bands $\pazocal{U}(\textbf{e}_1),\dots,\pazocal{U}(\textbf{e}_\ell)$.


Using the $0$-refinement procedure of \Cref{counterexample annulus subdiagram}, we may construct a reduced minimal counterexample annulus $\Delta_0$ containing a subdiagram identified with $\tilde{\Psi}$ which is disjoint from both the boundary of $\Delta_0$ and and a $g$-path $\textbf{t}_0$ of $\Delta_0$.

As $\ell\geq5$ and $\Psi$ contains no $a$-cells, we may then iteratively transpose $\pi$ with each of the $\theta$-bands of $\Psi$, producing an annular diagram $\Delta_0'$ with corresponding subdiagram $\tilde{\Psi}_0$.  As $\tilde{\Psi}$ is disjoint from both $\textbf{t}_0$ and the boundary of $\Delta_0$, $\Delta_0'$ is itself a counterexample annulus.  Moreover, since the transposition of a $\theta$-band and an $a$-cell changes only the number of $(\theta,a)$-cells in the diagram, $\tau_3(\Delta_0')=\tau_3(\Delta_0)$.  In particular, $\Delta_0'$ is a $3$-minimal counterexample annulus.

Now, let $\pi'$ be the $a$-cell of $\tilde{\Psi}_0$.  By construction, each of the $\pazocal{A}$-edges $\textbf{e}_1',\dots,\textbf{e}_\ell'$ of $\partial\pi'$ is adjacent to an edge of the boundary of $\Delta_0'$.  But then removing the corresponding $0$-cells produces a counterexample annulus with the same $3$-signature and containing an $a$-cell that shares a boundary $\pazocal{A}$-edge (indeed $\ell$ such edges) with the boundary of the diagram, contradicting \Cref{minimal counterexample annuli a-boundary}.

\end{proof}

\begin{lemma} \label{minimal counterexample annuli a bound}

Let $\pi$ be an $a$-cell and $\textbf{q}$ be a boundary component of a minimal counterexample annulus $\Delta$.  Then at most 4 positive $\pazocal{A}$-bands have ends on both $\pi$ and $\textbf{q}$.

\end{lemma}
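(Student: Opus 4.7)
The plan is to argue by contradiction: suppose five positive $\pazocal{A}$-bands $\pazocal{U}_1,\dots,\pazocal{U}_5$ all have ends on both $\pi$ and $\textbf{q}$, with corresponding defining edges $\textbf{e}_1,\dots,\textbf{e}_5$ on $\partial\pi$. Since $\pi$ is a single interior $a$-cell of the annulus $\Delta$ and each $\pazocal{U}_i$ runs from $\pi$ out to the boundary component $\textbf{q}$, the five bands cut the region between $\pi$ and $\textbf{q}$ into five wedge-shaped subregions; exactly one of these wedges must contain the other boundary component of $\Delta$, so the remaining four are simply connected. After a cyclic relabeling I may assume the wedge between $\pazocal{U}_5$ and $\pazocal{U}_1$ is the one containing the second boundary component.

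Next I form the subdiagram $\Psi_0$ obtained by joining the four simply connected wedges together with the three separating bands $\pazocal{U}_2,\pazocal{U}_3,\pazocal{U}_4$. Then $\Psi_0$ is bounded by $\pazocal{U}_1$, $\pazocal{U}_5$, a subpath $\textbf{s}$ of $\partial\pi$ that contains $\textbf{e}_1,\dots,\textbf{e}_5$, and a subpath of $\textbf{q}$. Before invoking the scope machinery I need to verify that $\Psi_0$ is a valid $a$-scope. Its boundary contains no $q$-edges, so any $(\theta,q)$-cell inside $\Psi_0$ would lie on a maximal $q$-band of $\Delta$ all of whose ends must also lie in $\Psi_0$; since $\Delta$ contains no disks by Lemma~\ref{minimal counterexample annuli disks} and no $q$-edges appear on $\partial\Delta$, such a band could have no ends at all and would be a $q$-annulus, contradicting Lemma~\ref{minimal counterexample annuli q}. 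Hence $\Psi_0$ is an $a$-scope on $\textbf{q}$ with associated $a$-cell $\pi$ and size $|\textbf{s}|_\pazocal{A}\geq 5$.

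With $\Psi_0$ in hand, the rest is a two-step iteration of the machinery from Sections~\ref{sec-a-scopes} and \ref{sec-transposition}. Lemma~\ref{minimal counterexample annuli transposition} forces $\Psi_0$ to be non-pure, so $\Psi_0$ contains some $a$-cell. The completion $\tilde\Psi_0$ is a subdiagram of $\Delta$, hence by Lemma~\ref{minimal counterexample annuli} a reduced minimal circular diagram, which is smooth by Lemma~\ref{minimal is smooth} and satisfies (MM2) by Lemma~\ref{minimal MM2}. Lemma~\ref{pure a-scope} then produces a pure big $a$-scope $\Psi_1$ on $\textbf{q}$ whose completion is a subdiagram of $\Psi_0$, with some associated $a$-cell $\pi_1$ satisfying $|\partial\pi_1|_\pazocal{A}\geq C$ (the contour label of any $a$-cell lies in $\Omega$ and, by Lemma~\ref{M Lambda semi-computations}, carries at least $C$ letters from $\pazocal{A}\cup\pazocal{A}_1$). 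Thus $\Psi_1$ has size strictly greater than $C/2$, which by a parameter choice $C\geq 10$ is at least $5$; applying Lemma~\ref{minimal counterexample annuli transposition} a second time (now to $\Psi_1$ and $\pi_1$) yields that $\Psi_1$ is not pure, contradicting its construction.

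The step I expect to require the most care is the initial topological bookkeeping of the five wedges -- in particular, verifying that pasting the four simply connected wedges together with the three intermediate bands really does produce a simply-connected subdiagram of $\Delta$ whose boundary has the claimed form with no $q$-edges. Once that setup is in place, the concluding argument is exactly the familiar ``non-pure $\Rightarrow$ pure big $\Rightarrow$ non-pure'' loop already used in the proofs of Lemmas~\ref{M-minimal theta-annuli} and \ref{minimal counterexample annuli transposition}, so no further surprises are expected.
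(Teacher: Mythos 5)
Your proposal is correct and follows essentially the same route as the paper: build the subdiagram $\Psi_0$ bounded by the extreme $\pazocal{A}$-bands, a subpath of $\partial\pi$ containing the five $\pazocal{A}$-edges, and a subpath of $\textbf{q}$; note it is an $a$-scope of size $\geq5$; then run the non-pure $\Rightarrow$ pure big (via \Cref{minimal counterexample annuli}, \Cref{minimal is smooth}, \Cref{minimal MM2}, \Cref{pure a-scope}) $\Rightarrow$ contradiction with \Cref{minimal counterexample annuli transposition} loop, using $|\partial\pi'|_\pazocal{A}\geq C$ and a parameter choice for $C$. Your explicit wedge bookkeeping and the derivation that $\Delta$ has no $(\theta,q)$-cells are just expanded versions of steps the paper states tersely ("perhaps with $0$-refinement\dots" and the citation of \Cref{minimal counterexample annuli q}), so there is nothing substantively different.
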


\begin{proof}

Assume to the contrary that there exist $\pazocal{A}$-edges $\textbf{e}_1,\dots,\textbf{e}_5$ of $\partial\pi$ such that each maximal $\pazocal{A}$-band $\pazocal{U}(\textbf{e}_1),\dots,\pazocal{U}(\textbf{e}_5)$ has an end on $\textbf{q}$.  

Perhaps with $0$-refinement, there then exists a subpath $\textbf{s}$ of $\partial\pi$ containing each of the $\pazocal{A}$-edges $\textbf{e}_i$ such that $\textbf{s}$, a subpath of $\textbf{q}$, and the $\pazocal{A}$-bands $\pazocal{U}(\textbf{e}_i)$ bound a subdiagram $\Psi_0$ of $\Delta$ not containing $\pi$.  As $\Delta$ contains no $(\theta,q)$-cell by \Cref{minimal counterexample annuli q}, $\Psi_0$ is an $a$-scope on $\textbf{q}$ with associated $a$-cell $\pi$ and associated subpath $\textbf{s}$.  Hence, the size of $\Psi_0$ is $|\textbf{s}|_\pazocal{A}\geq5$, so that \Cref{minimal counterexample annuli transposition} implies $\Psi_0$ cannot be a pure $a$-scope.

Further, Lemmas \ref{minimal counterexample annuli}, \ref{minimal MM2}, and \ref{minimal is smooth} imply that the completion $\tilde{\Psi}_0$ is smooth and satisfies condition (MM2).  Hence, \Cref{pure a-scope} implies the existence of a pure big $a$-scope $\Psi$ on $\textbf{q}$.  But then by condition (L1) and \Cref{semi-computation deltas}, the size of $\Psi$ is greater than $C/2$, so that the parameter choice $C\geq10$ yields a contradiction to \Cref{minimal counterexample annuli transposition}.

%
%
%

\end{proof}

\begin{lemma} \label{minimal counterexample annuli a-cells}

A reduced minimal counterexample annulus contains no $a$-cells.

\end{lemma}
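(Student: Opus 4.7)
The plan is to derive a contradiction from the assumption that $\Delta$ contains at least one $a$-cell by producing an $a$-cell with at least five positive $\pazocal{A}$-bands ending on a single boundary component of $\Delta$, in violation of Lemma~\ref{minimal counterexample annuli a bound}.

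First I would argue that $\Delta$ contains no $(\theta,q)$-cells. Both boundary labels of $\Delta$ are words over $\pazocal{A}\cup\pazocal{A}^{-1}$, so $\partial\Delta$ has no $q$-edges; Lemma~\ref{minimal counterexample annuli disks} gives no disks, and Lemma~\ref{minimal counterexample annuli q} rules out $q$-annuli. Hence no $q$-band has anywhere to place its ends, so none exists, and every maximal positive $\pazocal{A}$-band of $\Delta$ has each of its ends on an $a$-cell or on a boundary component. Next, combining Lemmas~\ref{minimal counterexample annuli}, \ref{minimal is smooth}, \ref{minimal diskless is M-minimal}, and \ref{minimal MM2} applied to the circular subdiagrams of $\Delta$, I would verify that $\Delta$ is smooth and that conditions (MM1) and (MM2) hold for its $a$-cells and $\theta$-bands.

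I would then form the auxiliary planar graph $\Gamma_a(\Delta)$ on the annulus in the manner of Lemma~\ref{Gamma_a planar}, with $n\geq 1$ interior vertices for the $a$-cells and two exterior vertices for $\textbf{q}_1,\textbf{q}_2$; the collapsed graph $\Gamma_a'(\Delta)$ has no loops (Lemma~\ref{a-bands on a-cell}), no bigons on pairs of interior vertices (by (MM2)), and every interior vertex has degree at least $C/2$. To exhibit the required special $a$-cell I would reduce to the circular case. In the case where some maximal positive $\pazocal{A}$-band $\pazocal{U}$ joins $\textbf{q}_1$ and $\textbf{q}_2$, cutting $\Delta$ along $\pazocal{U}$ (via a standard $0$-refinement) opens the annulus into a reduced smooth $M$-minimal circular diagram $\Delta^*$ carrying precisely the same $a$-cells and $\pazocal{A}$-bands as $\Delta$, because distinct maximal $\pazocal{A}$-bands are disjoint; Lemma~\ref{Gamma_a special cell} then supplies an $a$-cell $\pi$ with at least $|\partial\pi|_\pazocal{A}-6\geq C-6$ consecutive $\pazocal{A}$-bands ending on $\partial\Delta^*$, and none of these ends can lie on the cut band $\pazocal{U}$, so every one of them terminates on $\textbf{q}_1$ or on $\textbf{q}_2$. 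In the opposite case, where no $\pazocal{A}$-band joins $\textbf{q}_1$ to $\textbf{q}_2$, the complement of the $\pazocal{A}$-bands and $a$-cells contains a simple closed curve $\gamma$ of $\Delta$ that wraps the central hole; cutting along $\gamma$ splits $\Delta$ into two annular diagrams of strictly smaller $a$-cell count, and by Lemmas~\ref{counterexample annuli trivial path} and~\ref{counterexample annuli H path} the piece containing an $a$-cell can be replaced by a reduced counterexample annulus strictly smaller in signature than $\Delta$, contradicting minimality.

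In either case a single $a$-cell $\pi$ carries at least $C-6$ positive $\pazocal{A}$-bands ending on $\partial\Delta$; the pigeonhole principle forces at least $\lceil(C-6)/2\rceil$ of them onto one boundary component, and the parameter choice $C\geq 17$ makes this exceed the bound of four asserted by Lemma~\ref{minimal counterexample annuli a bound}. The main obstacle is the second case above: when no $\pazocal{A}$-band links the two boundary components the cutting curve $\gamma$ has a label that is not a priori a word over $\pazocal{A}\cup\pazocal{A}^{-1}$, so one must either homotope $\gamma$ along the boundaries of suitable $a$-cells to ensure such a label before cutting, or else replace the annular Grindlinger step by a direct Euler-characteristic count on $\Gamma_a'(\Delta)$ on the annulus, being careful to control bigons between interior and exterior vertices via Lemma~\ref{minimal counterexample annuli a bound} and to handle non-disk faces by cutting along any $\pazocal{A}$-band-free loop around the hole.
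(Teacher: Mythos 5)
Your overall target is right --- playing the degree of an $a$-cell in an auxiliary graph against the bound of \Cref{minimal counterexample annuli a bound} --- but the reduction to the circular case contains genuine gaps. First, the dichotomy is not exhaustive in the way you need: in the ``opposite'' case the hypothesis is only that no \emph{single} $\pazocal{A}$-band joins $\textbf{q}_1$ to $\textbf{q}_2$, yet a chain alternating between $\pazocal{A}$-bands and $a$-cells can still connect the two boundary components, and then the complement of the $\pazocal{A}$-bands and $a$-cells contains no simple closed curve wrapping the hole; the curve $\gamma$ you propose to cut along need not exist. Second, even when $\gamma$ exists, the contradiction ``replace the piece containing an $a$-cell by a smaller counterexample annulus, contradicting minimality'' is not available: a counterexample annulus is defined relative to the fixed elements $g,h_1,h_2$, so its boundary labels must be $\pazocal{A}$-words representing $h_1$ and $h_2$ and it must carry a $g$-path; the two annular pieces obtained by cutting along $\gamma$ have a boundary component labelled by $\lab(\gamma)$, which is neither an $\pazocal{A}$-word nor tied to $h_1,h_2$, so minimality of $\Delta$ (which is taken only over counterexample annuli) says nothing about them. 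Third, in your first case you apply \Cref{Gamma_a special cell} to the cut-open diagram $\Delta^*$, but that lemma requires $\Delta^*$ itself to be smooth and to satisfy (MM2); the lemmas you cite (\Cref{minimal counterexample annuli}, \Cref{minimal is smooth}, \Cref{minimal diskless is M-minimal}, \Cref{minimal MM2}) apply to minimal circular diagrams or to circular \emph{subdiagrams} of $\Delta$, and $\Delta^*$ is neither a subdiagram nor known to be minimal for its (new) boundary label, so these hypotheses are asserted rather than proved.

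The fallback you mention in your last sentence is in fact the route the paper takes, and it avoids cutting altogether: build the graph $\Gamma_a(\Delta)$ on the annulus using \emph{only} the $a$-cells as vertices (no exterior vertices), rule out loops by \Cref{a-bands on a-cell} together with minimality of circular subdiagrams, rule out $2$-gons (beyond doubled pairs) because the disk region between two such edges is a circular subdiagram violating (MM2) via \Cref{minimal counterexample annuli} and \Cref{minimal MM2}, and then invoke the Euler characteristic of the annulus to produce a vertex of degree at most $18$ in the collapsed graph $\Gamma_a'(\Delta)$. Against this, \Cref{minimal counterexample annuli a bound} caps at $8$ the number of positive $\pazocal{A}$-bands running from a given $a$-cell to the two boundary components combined, so every vertex of $\Gamma_a'(\Delta)$ has degree at least $\tfrac12|\partial\pi|_\pazocal{A}-4\ge \tfrac12 C-4$, and the parameter choice $C>44$ gives the contradiction. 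If you want to salvage your cut-and-apply-\Cref{Gamma_a special cell} strategy, you would have to both verify smoothness and (MM2) for the cut-open diagram directly and handle the chained configuration where no single band (and no band-free separating curve) is available; as written, the proof is not complete.
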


\begin{proof}

Suppose the reduced minimal counterexample annulus $\Delta$ contains at least one $a$-cell.  Similar to the proof of \Cref{minimal counterexample annuli disks}, we begin by adapting the construction of the auxiliary graphs of \Cref{sec-M-minimal}.  To this end, we construct the graph $\Gamma_a(\Delta)$ as follows:

\begin{enumerate}

\item The set of vertices is $\{v_1,\dots,v_\ell\}$, where each $v_i$ corresponds to one of the $\ell$ $a$-cells of $\Delta$.  

\item For $i,j\geq1$ and for any positive $\pazocal{A}$-band which has ends on the $a$-cells corresponding to $v_i$ and $v_j$, there is a corresponding edge $(v_i,v_j)$.


\end{enumerate}

As in the proof of \Cref{Gamma_a planar}, the graph $\Gamma_a(\Delta)$ can be constructed as an auxiliary graph to the graph underlying $\Delta$, and so constructed on an annulus (indeed, the lack of $(\theta,q)$-cells makes this a much simpler version of that presented in \Cref{Gamma_a planar}).

By Lemmas \ref{minimal counterexample annuli}, \ref{minimal is smooth}, and \ref{a-bands on a-cell}, $\Gamma_a(\Delta)$ contains no $1$-gon.


Suppose $\Gamma_a(\Delta)$ contains edges $e_1,\dots,e_m$ connecting the vertices $v_i$ and $v_j$ such that $e_k$ and $e_{k+1}$ bound a $2$-gon for each $k=1,\dots,m-1$.  Let $\pi_i$ and $\pi_j$ be the $a$-cells corresponding to $v_i$ and $v_j$, respectively, and let $\pazocal{U}_k$ be the maximal positive $\pazocal{A}$-band corresponding to $e_k$.  

Suppose $m\geq3$.  Then $\pi_i$, $\pi_j$, $\pazocal{U}_1$, and $\pazocal{U}_3$ bound a circular subdiagram $\Gamma'$ of $\Delta$.  As $e_k$ and $e_{k+1}$ bound a $2$-gon, the only $a$-cells in $\Gamma'$ are $\pi_i$ and $\pi_j$.
But then $\Gamma'$ does not satisfy condition (MM2), contradicting Lemmas \ref{minimal counterexample annuli} and \ref{minimal MM2}.

Hence, as in \Cref{sec-M-minimal}, any $2$-gon in $\Gamma_a(\Delta)$ arises in the form of a doubled pair of edges.  As in this previous setting, we construct the graph $\Gamma_a'(\Delta)$ by simply replacing any doubled pair of edges with a single edge.

By construction, $\Gamma_a'(\Delta)$ has no $1$-gons or $2$-gons, and thus as in the proof of \Cref{minimal counterexample annuli disks} the Euler characteristic of the annulus implies the graph must contain a vertex with degree at most 18 (see Lemma 10.1 of \cite{O}).

Now, by \Cref{minimal counterexample annuli a bound}, for any $a$-cell $\pi$ in $\Delta$, at most $8$ maximal positive $\pazocal{A}$-bands have an end on $\pi$ and on a boundary component.  Hence, by condition (L1) and \Cref{semi-computation deltas} the degree of each vertex of $\Gamma_a(\Delta)$ is at least $C-8$.  But then the degree of every vertex of $\Gamma_a'(\Delta)$ is at least $\frac{1}{2}C-4$, so that the parameter choice $C>44$ provides a counterexample to the bound given by the Euler characteristic.

%

\end{proof}

Combining Lemmas \ref{minimal counterexample annuli a-cells}, \ref{minimal counterexample annuli q}, and \ref{minimal counterexample annuli disks}, a reduced minimal counterexample annulus $\Delta$ is a reduced annular diagram over $M(\textbf{M}^\pazocal{L})$ in which every (positive) cell is a $(\theta,a)$-cell.

\begin{lemma} \label{minimal counterexample annuli theta-bands}

Let $\pazocal{T}$ be a maximal $\theta$-band in a reduced minimal counterexample annulus $\Delta$.  Then $\pazocal{T}$ is a $\theta$-annulus of positive length.

\end{lemma}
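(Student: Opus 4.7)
The plan is straightforward given the structural restrictions accumulated in the preceding lemmas, so the argument is essentially a bookkeeping exercise with no serious obstacle. First, I would invoke the remark following Lemma \ref{minimal counterexample annuli a-cells}: combining Lemmas \ref{minimal counterexample annuli disks}, \ref{minimal counterexample annuli q}, and \ref{minimal counterexample annuli a-cells}, every positive cell of $\Delta$ is a $(\theta,a)$-cell, so $\Delta$ contains no disks, no $(\theta,q)$-cells, and no $a$-cells. At the same time, the defining triple $(u, w_1, w_2)$ of $\Delta$ consists of a word $u$ over $\pazocal{X} \cup \pazocal{X}^{-1}$ labeling a $g$-path together with two words $w_1, w_2$ over $\pazocal{A} \cup \pazocal{A}^{-1}$ labeling the outer and inner contours, so every boundary edge of $\Delta$ is an $a$-edge and, in particular, $\Delta$ has no boundary $\theta$-edges.

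Next I would show that $\pazocal{T}$ must be a $\theta$-annulus. A maximal $\theta$-band is either annular or non-annular with two ends, and in the latter case each end is a $\theta$-edge which, by the discussion of where bands may terminate in Section 7.2, must either lie on the boundary of a disk or on a boundary component of the diagram. Since $\Delta$ has no disks (Lemma \ref{minimal counterexample annuli disks}) and no boundary $\theta$-edges, the non-annular case is impossible, forcing $\pazocal{T}$ to be a $\theta$-annulus.

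Finally, for the positive length claim, let $\textbf{e}$ be any defining edge of $\pazocal{T}$. Since $\Delta$ has no boundary $\theta$-edges, $\textbf{e}$ is an interior edge of $\Delta$, and therefore lies on the boundary of some positive cell of $\Delta$. By the first step that cell is a $(\theta,a)$-cell, and such a cell is necessarily a member of the maximal $\theta$-band through $\textbf{e}$, namely $\pazocal{T}$. Hence $\pazocal{T}$ contains at least one $(\theta,a)$-cell and so has positive length, which is in any case built into the definition of a $\pazocal{Z}$-annulus (length $m \geq 1$). The only mildly subtle point is ruling out non-annular $\theta$-bands, but that reduces immediately to the absence of disks and the purely $a$-labeled boundary, so there is no genuine obstacle.
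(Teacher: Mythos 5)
Your first two steps are fine and coincide with the paper: since the boundary components of a counterexample annulus carry only $\pazocal{A}$-letters, $\Delta$ has no boundary $\theta$-edges, and since there are no disks (Lemma \ref{minimal counterexample annuli disks}) a maximal $\theta$-band cannot have ends, so $\pazocal{T}$ must close up into a $\theta$-annulus. The gap is in your positive-length step. You assert that any defining edge $\textbf{e}$ of $\pazocal{T}$, being interior, ``lies on the boundary of some positive cell,'' and you also lean on the clause ``length $m\geq 1$ is built into the definition of a $\pazocal{Z}$-annulus.'' Neither is available here. Section 11 explicitly warns that in counterexample annuli one cannot ignore $0$-edges and $0$-cells, and that bands of length $0$ are now chains of $\theta$-edges joined by type-(2) $0$-cells. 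An interior $\theta$-edge can perfectly well have a type-(2) $0$-cell on each side (each pairing it with another $\theta$-edge of inverse label), so a maximal $\theta$-band could a priori be a closed chain of $\theta$-edges glued only through $0$-cells --- a $\theta$-annulus of length $0$, containing no positive cell at all, possibly encircling the hole of the annulus. Ruling out exactly this degenerate configuration is the entire content of the ``positive length'' claim, and it cannot be done by the local combinatorial observation you make.

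The paper excludes it with a group-theoretic argument that your proposal is missing: if a side of $\pazocal{T}$ were combinatorially null-homotopic, it would bound a circular subdiagram containing $\pazocal{T}$, which by Lemma \ref{minimal counterexample annuli} is a reduced minimal circular diagram and hence contains no $\theta$-annuli by Lemma \ref{disk theta-annuli} --- a contradiction. So the sides of $\pazocal{T}$ are essential closed paths, and Lemma \ref{counterexample annuli trivial path} then forces $\lab(\textbf{bot}(\pazocal{T}))$ to represent a non-trivial element of $G_\Omega(\textbf{M}^\pazocal{L})$; a band of length $0$ has sides labelled by freely trivial words, so $\pazocal{T}$ must contain at least one positive cell. (Note this also rules out the null-homotopic annulus case, which your argument never addresses.) Your opening reduction to ``every positive cell is a $(\theta,a)$-cell'' is correct but not what is needed; the missing ingredient is the use of Lemmas \ref{counterexample annuli trivial path}, \ref{minimal counterexample annuli}, and \ref{disk theta-annuli} to exclude the all-$0$-cell annulus.
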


\begin{proof}

As $\Delta$ contains no boundary $\theta$-edge, $\pazocal{T}$ must be a $\theta$-annulus.  By Lemmas \ref{minimal counterexample annuli} and \ref{disk theta-annuli}, $\textbf{bot}(\pazocal{T})$ cannot be combinatorially null-homotopic.  But then \Cref{counterexample annuli trivial path} implies $\lab(\textbf{bot}(\pazocal{T}))$ represents a non-trivial element of $G_\Omega(\textbf{M}^\pazocal{L})$, so that $\pazocal{T}$ is a band of positive length.



\end{proof}

\begin{lemma} \label{minimal counterexample annuli a-bands}

Let $\textbf{q}_1$ be the outer contour and $\textbf{q}_2$ be the inner contour of a reduced minimal counterexample annulus $\Delta$.

\begin{enumerate}

\item Any maximal $\pazocal{A}$-band of length $0$ has two ends on $\textbf{q}_i$ for some $i=1,2$

\item Any maximal $\pazocal{A}$-band of positive length has an end on $\textbf{q}_1$ and an end on $\textbf{q}_2$

\item $\Delta$ contains at least one $\pazocal{A}$-band of positive length

\end{enumerate}

\end{lemma}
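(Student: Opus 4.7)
My plan begins by exploiting the severe structural restrictions imposed on $\Delta$ by the preceding lemmas. Combining Lemmas \ref{minimal counterexample annuli disks}, \ref{minimal counterexample annuli q}, and \ref{minimal counterexample annuli a-cells} yields that $\Delta$ contains no disks, no $q$- or $a$-annuli, and no $a$-cells. Since $\lab(\textbf{q}_i)$ is a word over $\pazocal{A}^{\pm1}$, $\partial\Delta$ contains no $q$-edges, so any maximal $q$-band would have nowhere to end, forcing $\Delta$ to have no $(\theta,q)$-cells. Thus every positive cell is a $(\theta,a)$-cell, Lemma \ref{minimal counterexample annuli theta-bands} makes every maximal $\theta$-band a $\theta$-annulus of positive length, and every maximal $\pazocal{A}$-band must have both of its ends on $\partial\Delta$.

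For (1), I will argue by contradiction: if a length-$0$ $\pazocal{A}$-band $\pazocal{U}$ with defining edge $\textbf{e}$ had one end on $\textbf{q}_1$ and the other on $\textbf{q}_2$, then passing through $\textbf{e}$ (after a $0$-refinement) yields a path $\textbf{p}$ from a vertex of $\textbf{q}_1$ to a vertex of $\textbf{q}_2$ whose label is the single $\pazocal{A}$-letter $\lab(\textbf{e})$. This label represents an element of $H_\pazocal{A}$, directly violating Lemma \ref{counterexample annuli H path}.

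The main obstacle is part (2), for which I will deploy a topological intersection argument in the annulus. Fix a maximal $\pazocal{A}$-band $\pazocal{U}$ of positive length, choose a cell $\pi$ of $\pazocal{U}$, and let $\pazocal{T}$ be the maximal $\theta$-band through $\pi$; by Lemma \ref{basic annuli 2}(1), $\pazocal{T}\cap\pazocal{U}=\{\pi\}$. Suppose toward contradiction that both ends of $\pazocal{U}$ lie on $\textbf{q}_1$, and let $\sigma$ be a subpath of $\textbf{q}_1$ connecting those endpoints. The concatenation $L=\pazocal{U}\cup\sigma$ is a simple closed curve in the annular surface $\Delta$ and hence separates $\Delta$ into two components. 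As a closed loop in $\Delta$, $\pazocal{T}$ must cross the separating curve $L$ an even number of times. But $\pazocal{T}$ lies entirely in the interior of $\Delta$ and is thus disjoint from $\sigma\subset\partial\Delta$, giving $\pazocal{T}\cap L=\pazocal{T}\cap\pazocal{U}=\{\pi\}$, a single crossing. This parity contradiction rules out both ends of $\pazocal{U}$ lying on $\textbf{q}_1$, and a symmetric argument handles $\textbf{q}_2$, so $\pazocal{U}$ has one end on each boundary component.

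Finally, for (3), I will assume that every $\pazocal{A}$-band has length $0$. Then no $\pazocal{A}$-edge lies on the boundary of a $(\theta,\pazocal{A})$-cell, so $\Delta$ contains no $(\theta,\pazocal{A})$-cells; every positive cell is thus a $(\theta,b)$-cell or an ordinary $(\theta,a)$-cell. Since $\partial\Delta$ has no $b$- or ordinary $a$-edges, and no $(\theta,q)$-cell, $a$-cell, disk, or $(\theta,\pazocal{A})$-cell is available in $\Delta$ to serve as an end for such bands, every maximal $b$- or ordinary $a$-band would have to be an $a$-annulus, contradicting Lemma \ref{minimal counterexample annuli q}. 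Hence $\Delta$ contains no positive cells. Cutting the cell-free annular diagram along the $g$-path $\textbf{t}$ produces a cell-free disk whose boundary label must be freely trivial, yielding $w_1=u w_2 u^{-1}$ in $F(\pazocal{X})$. But $H_\pazocal{A}=F(\pazocal{A})$ is a free factor of $F(\pazocal{X})$ and therefore malnormal in $F(\pazocal{X})$; together with $w_1,w_2\in F(\pazocal{A})\setminus\{1\}$, this forces $u\in F(\pazocal{A})$, so $u$ represents an element of $H_\pazocal{A}$, contradicting that $u$ represents $g\notin H_\pazocal{A}$.
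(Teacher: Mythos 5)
Parts (1) and (3) of your proposal are essentially sound: (1) matches the paper's argument (a path between the two contours whose label lies in $H_\pazocal{A}$ contradicts \Cref{counterexample annuli H path}), and in (3) your derivation that $\Delta$ has no positive cells is correct; your endgame (free-group conjugacy plus malnormality of the free factor $F(\pazocal{A})$ in $F(\pazocal{X})$) differs from the paper, which instead applies \Cref{counterexample annuli H path} directly to a path crossing the cell-free annulus, but it works — just note that $H_\pazocal{A}$ is not $F(\pazocal{A})$; you mean the free subgroup $F(\pazocal{A})\leq F(\pazocal{X})$, whose image in $G_\Omega(\textbf{M}^\pazocal{L})$ is $H_\pazocal{A}$.

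The gap is in part (2), at the step ``by \Cref{basic annuli 2}(1), $\pazocal{T}\cap\pazocal{U}=\{\pi\}$.'' \Cref{basic annuli 2} is proved only for reduced \emph{circular} diagrams, and $\Delta$ is annular; the entire point of this section of the paper is that the band/annuli lemmas may only be invoked after cutting out a circular subdiagram (compare the proofs of \Cref{minimal counterexample annuli q} and \Cref{minimal counterexample annuli theta-bands}). Worse, under your contradiction hypothesis the single-crossing claim is exactly what fails: if both ends of $\pazocal{U}$ lie on $\textbf{q}_1$, then $\pazocal{U}$ together with an arc of $\textbf{q}_1$ cuts off a disk, and since the two $\theta$-edges of each cell of $\pazocal{U}$ lie on opposite sides of $\pazocal{U}$, every passage of the $\theta$-annulus $\pazocal{T}$ through $\pazocal{U}$ switches it between that disk region and its complement. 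Hence a $\theta$-band meeting $\pazocal{U}$ automatically crosses it an \emph{even} number of times, so your parity count with the separating curve $L$ detects nothing. The genuine content — and the paper's actual argument — is that two consecutive such crossings, with the connecting $\theta$-subband lying inside the cut-off disk, produce a $(\theta,\pazocal{A})$-annulus contained in the circular subdiagram bounded by a side of $\pazocal{U}$ and a subpath of $\textbf{q}_1$, contradicting \Cref{basic annuli 2}(1) applied to that circular subdiagram (which is reduced and minimal by \Cref{minimal counterexample annuli}); since $\pazocal{U}$ has positive length it contains a $(\theta,\pazocal{A})$-cell and hence is met by some maximal $\theta$-band, so this contradiction is unavoidable. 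Your proof needs this cut-off-disk argument to replace the unjustified single-crossing claim; with it, the parity bookkeeping becomes superfluous.
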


\begin{proof}

Lemmas \ref{minimal counterexample annuli q} and \ref{minimal counterexample annuli a-cells} imply that any maximal $\pazocal{A}$-band must have two ends on the boundary of $\Delta$.  So, since a band of length $0$ ending on both $\textbf{q}_1$ and $\textbf{q}_2$ would yield a contradiction to \Cref{counterexample annuli H path}, (1) follows immediately.


Suppose $\pazocal{U}$ is a maximal $\pazocal{A}$-band of positive length which has two ends on $\textbf{q}_i$ for some $i=1,2$.  As any (positive) cell of $\pazocal{U}$ represents a crossing with a maximal $\theta$-band and $\textbf{q}_1$ contains no $\theta$-edges, a subband of $\pazocal{U}$ bounds a $(\theta,a)$-annulus with some $\theta$-band.  But then this provides a counterexample to \Cref{basic annuli 1}(1).  Hence, (2) must hold.


Finally, suppose every $\pazocal{A}$-band of $\Delta$ has length $0$.  In particular, $\Delta$ contains no $(\theta,\pazocal{A})$-cell.  So, Lemmas \ref{minimal counterexample annuli q} and \ref{minimal counterexample annuli a-cells} imply that any maximal $a$-band must have two ends on the boundary of $\Delta$.  As the boundary of $\Delta$ consists entirely of $\pazocal{A}$-edges, though, this means that $\Delta$ has no positive cells at all.  In particular, \Cref{minimal counterexample annuli q} implies the only $a$-edges of $\Delta$ are boundary edges, and so are $\pazocal{A}$-edges labelled by letters of the `special' input sector.  \Cref{minimal counterexample annuli theta-bands} further implies that $\Delta$ contains no $\theta$-edge, while Lemmas \ref{minimal counterexample annuli disks} and \ref{minimal counterexample annuli q} imply $\Delta$ contains no $q$-edge.  Thus, for any path $\textbf{p}$ in $\Delta$ such that $\textbf{p}_-$ is a vertex of $\textbf{q}_1$ and $\textbf{p}_+$ is a vertex of $\textbf{q}_2$, the only positive edges of $\textbf{p}$ are boundary edges.  But then $\lab(\textbf{p})$ represents an element of $H_\pazocal{A}$, contradicting \Cref{counterexample annuli H path}.

\end{proof}

We now reach the desired contradiction:

\begin{lemma} \label{no counterexample annulus}

There is no counterexample annulus.

\end{lemma}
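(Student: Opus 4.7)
\textbf{Proof plan for Lemma \ref{no counterexample annulus}.}

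The plan is to derive a contradiction from the very rigid structure forced on any reduced minimal counterexample annulus by Lemmas \ref{minimal counterexample annuli disks}--\ref{minimal counterexample annuli a-bands}, and then to pit that structure against Lemma \ref{compressed semi-trapezia sides not equal}. First, assuming a counterexample annulus exists, I would invoke \Cref{reduced counterexample annuli} to take a reduced minimal counterexample annulus $\Delta$. The earlier results tell us that $\Delta$ has no disks, no $a$-cells, no $q$- or $a$-annuli, every maximal $\theta$-band is a $\theta$-annulus of positive length, and there exists an $\pazocal{A}$-band $\pazocal{U}$ of positive length with one end on $\textbf{q}_1$ and one on $\textbf{q}_2$. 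In particular every positive cell of $\Delta$ is a $(\theta,a)$-cell of the special input sector, and each of the nested $\theta$-annuli intersects $\pazocal{U}$ in exactly one cell.

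Next, I would excise $\pazocal{U}$ from $\Delta$ (cells and end edges), obtaining a simply connected diagram $\Delta^*$ whose boundary reads $\textbf{q}_1^\ast\cdot\textbf{top}(\pazocal{U})^{-1}\cdot(\textbf{q}_2^\ast)^{-1}\cdot\textbf{bot}(\pazocal{U})$, where $\textbf{q}_i^\ast$ is the boundary component $\textbf{q}_i$ opened at the cut point. All positive cells of $\Delta^\ast$ are $(\theta,a)$-cells of the special input sector inherited from $\Delta$; the cut ends of $\pazocal{U}$ now form the left and right sides, while each former $\theta$-annulus becomes a $\theta$-band of $\Delta^\ast$ whose compressed bottom or top coincides (up to $0$-cells) with the appropriate $\textbf{q}_i^\ast$. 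So $\Delta^\ast$ is a compressed semi-trapezium in the special input sector in the sense of Section \ref{sec-transposition}, with $\textbf{p}_1=\textbf{top}(\pazocal{U})$, $\textbf{p}_2=\textbf{bot}(\pazocal{U})$, and compressed sides labelled by (cyclic permutations of) the words representing $h_1$ and $h_2^{\pm 1}$.

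Then I would check the hypotheses of Lemma \ref{compressed semi-trapezia sides not equal}. Condition (3) on the history of $\Delta^\ast$ follows from \Cref{semi applicable}: since both compressed sides carry purely $\pazocal{A}$-labels, the reduced history of the associated semi-computation must begin with $\theta(s)_1$ and end with $\theta(s)_1^{-1}$. Since $h_1,h_2$ are non-trivial, I would replace them by cyclically reduced representatives of their conjugacy classes; the resulting counterexample annulus can be chosen minimal with cyclically reduced boundaries, giving condition (1). For condition (2), I would argue (using that no non-trivial word in $\pazocal{A}^{\pm 1}$ can simultaneously be all-inverse-starting and all-positive-ending under every cyclic shift) that we can arrange the cut point on $\textbf{q}_1$ (by choosing among the $\pazocal{A}$-bands of positive length, which by Lemma \ref{minimal counterexample annuli a-bands}(3) may be chosen appropriately, possibly after a preliminary cyclic $0$-refinement of $\Delta$) so that $\lab(\mathscr{C}\textbf{bot}(\Delta^\ast))$ satisfies $\delta_1\neq-1$ or $\delta_k\neq 1$. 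The lemma then yields $\lab(\textbf{top}(\pazocal{U}))\neq\lab(\textbf{bot}(\pazocal{U}))$ in $G_\Omega(\textbf{M}^\pazocal{L})$.

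Finally, I would contradict this by exploiting the counterexample structure. Using \Cref{semi-computation deltas}, the $\pazocal{A}$-projections of the two compressed sides of $\Delta^\ast$ agree, so $h_1=h_2$ (after aligning the cut points), and the annular van Kampen Lemma 11.4 of \cite{O} tells us that any two paths from $O_1$ to $O_2$ in $\Delta$ differ by a power of $h_1$. Extending $\textbf{top}(\pazocal{U})$ and $\textbf{bot}(\pazocal{U})$ by the obvious $\pazocal{A}$-arcs on $\textbf{q}_1,\textbf{q}_2$ to obtain two $g$-paths, combining the power-of-$h_1$ ambiguity with the identity of sides of $\pazocal{U}$ inside $M(\textbf{M}^\pazocal{L})$ (from $\lab(\partial\pazocal{U})=1$) and the equality $h_1=h_2$, I would deduce $\lab(\textbf{top}(\pazocal{U}))=\lab(\textbf{bot}(\pazocal{U}))$ in $G_\Omega(\textbf{M}^\pazocal{L})$. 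This is the main obstacle: carefully tracking the $\pazocal{A}$-arcs, cyclic shifts, and the integer power of $h_1$ to cancel them against each other and produce the precise equality that collides with the output of Lemma \ref{compressed semi-trapezia sides not equal}. An alternative (and possibly cleaner) route through the same obstacle is to use the coincidence of compressed sides to exhibit a path from a vertex of $\textbf{q}_1$ to a vertex of $\textbf{q}_2$ whose label lies in $H_{\pazocal A}$, directly contradicting \Cref{counterexample annuli H path}; either way, a contradiction is obtained, so no counterexample annulus exists.
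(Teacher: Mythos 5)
Your overall construction---cutting the reduced minimal counterexample annulus along the positive-length $\pazocal{A}$-band supplied by \Cref{minimal counterexample annuli a-bands} and reading the result as a compressed semi-trapezium in the `special' input sector aimed at \Cref{compressed semi-trapezia sides not equal}---is indeed the paper's strategy, but two of your steps fail as stated. First, hypothesis (2) of \Cref{compressed semi-trapezia sides not equal} cannot be arranged ``by choosing the cut point'': the compressed bottom is the boundary word read from the vertex at which the chosen positive-length $\pazocal{A}$-band meets $\textbf{q}_1$, and \Cref{minimal counterexample annuli a-bands}(3) guarantees only that at least one such band exists, so the cyclic shift you end up with is dictated by the diagram; replacing $h_1,h_2$ by cyclically reduced conjugates or performing $0$-refinement does not move this vertex. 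The sign pattern $\delta_1=-1$, $\delta_k=1$ is therefore unavoidable in general, and your plan has no tool for it. This is precisely why the paper does not rely on \Cref{compressed semi-trapezia sides not equal} alone: for $|w_0|_\pazocal{A}\geq 3$, and for every two-letter sign pattern except $(1,-1)$, it applies \Cref{M compressed semi-computation three A} and \Cref{M compressed semi-computation two A} to the associated non-empty reduced compressed semi-computation and concludes that the opposite boundary component would have to carry letters from $(\pazocal{A}_1\sqcup\pazocal{B})^{\pm1}$, contradicting that it is a word over $\pazocal{A}^{\pm1}$; only the residual short cases ($w_0$ a single letter or $y_1y_2^{-1}$) go through \Cref{compressed semi-trapezia sides not equal}, and there hypothesis (2) holds automatically.

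Second, your final contradiction targets the wrong equality. After excising $\pazocal{U}$, the two vertical sides of your would-be semi-trapezium are $\textbf{top}(\pazocal{U})$ and $\textbf{bot}(\pazocal{U})$, and these are not equal in $G_\Omega(\textbf{M}^\pazocal{L})$ in general: the boundary of the band gives $\lab(\textbf{top}(\pazocal{U}))=x^{-1}\lab(\textbf{bot}(\pazocal{U}))\,y$ in the group, where $x,y$ are the $\pazocal{A}$-letters (or their $\pazocal{A}_1$-images) labelling the ends of $\pazocal{U}$. So the equality you acknowledge as the ``main obstacle'' is genuinely unprovable as stated; the detour through $h_1=h_2$ (matching $\pazocal{A}$-projections only identifies the boundary words as cyclic words, i.e.\ up to conjugacy, not as based elements) and powers of $h_1$ does not repair it, and the alternative of ``exhibiting'' a connecting path with label in $H_\pazocal{A}$ is exactly what \Cref{counterexample annuli H path} rules out, so it cannot serve as the contradiction. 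The paper sidesteps all of this by cutting along one side of $\pazocal{U}$ while keeping $\pazocal{U}$ inside the cut diagram: one side of the resulting compressed semi-trapezium is the top of (a subband of) $\pazocal{U}$ and the other is the bottom of the adjacent maximal $\pazocal{A}$-band $\pazocal{V}'$, whose label is freely equal to it, so \Cref{compressed semi-trapezia sides not equal} then contradicts an identity that holds on the nose. Your verification of hypothesis (3) via \Cref{semi applicable} (with non-emptiness of the history coming from the positive length of $\pazocal{U}$) is fine; it is the case analysis and the setup of the final contradiction that need to be redone along the paper's lines.
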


\begin{proof}

Let $\textbf{q}_1$ be the outer contour and $\textbf{q}_2$ be the inner contour of a reduced minimal counterexample annulus $\Delta$.  By \Cref{minimal counterexample annuli a-bands}, there exists an $\pazocal{A}$-edge $\textbf{e}$ of $\textbf{q}_1^{-1}$ which is a defining edge (indeed an end) of a maximal $\pazocal{A}$-band $\pazocal{U}$ which has an end on $\textbf{q}_2$.

By \Cref{counterexample annuli H path}, $\lab(\textbf{top}(\pazocal{U}))$ must be an element of $G_\Omega(\textbf{M}^\pazocal{L})\setminus H_\pazocal{A}$.  In particular, this label must be non-trivial, so that the history $H$ of $\pazocal{U}$ is a reduced word with $\|H\|>0$.

Cutting $\Delta$ along $\textbf{top}(\pazocal{U})$ then produces a reduced circular diagram $\Gamma$ containing a maximal $\pazocal{A}$-band identified with $\pazocal{U}$ such that $\partial\Gamma=\textbf{p}_1^{-1}\textbf{s}_1\textbf{p}_2\textbf{s}_2^{-1}$ where: 

\begin{itemize}

\item $\textbf{p}_1=\textbf{top}(\pazocal{U})$

\item $\lab(\textbf{p}_2)\equiv\lab(\textbf{p}_1)$

\item $\textbf{s}_1$ is identified with $\textbf{q}_1$ read starting at $\textbf{e}_+=(\textbf{top}(\pazocal{U}))_-$

\item $\textbf{s}_2$ is identified with $\textbf{q}_2^{-1}$ read starting at $(\textbf{top}(\pazocal{U}))_+$

\end{itemize}

As noted above, Lemmas \ref{minimal counterexample annuli disks}, \ref{minimal counterexample annuli q}, and \ref{minimal counterexample annuli a-cells} imply any positive cell of $\Gamma$ is a $(\theta,a)$-cell.

Enumerate the $\theta$-edges of $\textbf{p}_1$ by $\textbf{e}_1,\dots,\textbf{e}_\ell$.  For each $i\in\{1,\dots,\ell\}$, let $\pazocal{T}_i$ be the maximal $\theta$-band of $\Gamma$ for which $\textbf{e}_i$ is an end.  As $\textbf{e}_i$ is on the boundary of a $(\theta,\pazocal{A})$-cell of $\pazocal{U}$, $\pazocal{T}_i$ cannot have two ends on $\textbf{p}_1^{-1}$, and so must have an end on $\textbf{p}_2$.  In particular, since $|\textbf{p}_2|_\theta=\ell$, every positive cell must be contained in one and only one $\theta$-band $\pazocal{T}_i$.

Let $\textbf{t}_1=\textbf{bot}(\pazocal{T}_1)$ and $\textbf{t}_2=\textbf{top}(\pazocal{T}_\ell)$.  Then, as any cell between $\textbf{t}_i$ and $\textbf{s}_i$ must be a $0$-cell, $\lab(\textbf{t}_i)$ and $\lab(\textbf{s}_i)$ must be equal in $F(\pazocal{X})$.  In particular, $\lab(\textbf{t}_i)$ is a freely reduced word which is conjugate in $F(\pazocal{A})$ to a word that represents $h_i$.  Note that this necessarily implies $\lab(\textbf{t}_i)$ is a non-trivial word over $\pazocal{A}\cup\pazocal{A}^{-1}$.

Let $\Gamma'$ be the subdiagram of $\Gamma$ obtained by removing any $0$-cells between $\textbf{t}_i$ and $\textbf{s}_i$.  Then, letting $\pazocal{U}'$ be the subband of $\pazocal{U}$ obtained by removing any initial or terminal subsequence of $0$-cells, $\partial\Gamma'=(\textbf{p}_1')^{-1}\textbf{t}_1\textbf{p}_2'\textbf{t}_2^{-1}$, where $\textbf{p}_1'=\textbf{top}(\pazocal{U}')$ and $\lab(\textbf{p}_2')\equiv\lab(\textbf{p}_1')$.  Note that, by definition, the history of $\pazocal{U}'$ is $H$.

Let $\textbf{f}$ be the initial positive edge of $\textbf{t}_1^{-1}$.  Noting that $\textbf{f}$ is then an $\pazocal{A}$-edge, let $\pazocal{V}'$ be the maximal $\pazocal{A}$-band of $\Gamma'$ with end $\textbf{f}$ (note that it is possible that $\textbf{e}=\textbf{f}$, in which case $\pazocal{V}'=\pazocal{U}'$).  Then, let $\textbf{p}_2''=\textbf{bot}(\pazocal{V}')$.  By construction, $\textbf{p}_2''$ and $\textbf{p}_2'$ bound a subdiagram consisting entirely of $0$-cells, and so $\lab(\textbf{p}_2'')$ is freely equal to $\lab(\textbf{p}_1')$.

Now, let $\Psi$ be the subdiagram of $\Gamma'$ with $\partial\Psi=(\textbf{p}_1')^{-1}\textbf{t}_1\textbf{p}_2''\textbf{t}_2^{-1}$.  Then, by construction, $\Psi$ is a compressed semi-trapezium in the `special' input sector with standard factorization $(\textbf{p}_1')^{-1}\textbf{t}_1\textbf{p}_2''\textbf{t}_2^{-1}$.  Further, the maximal $\theta$-bands $\pazocal{T}_1,\dots,\pazocal{T}_\ell$ are enumerated from bottom to top.

Hence, letting $w_{j-1}=\lab(\mathscr{C}\textbf{bot}(\pazocal{T}_j))$ for $j=1,\dots,\ell$ and $w_\ell=\lab(\mathscr{C}\textbf{top}(\pazocal{T}_\ell))$, \Cref{compressed semi-trapezia are compressed semi-computations} yields an associated reduced compressed semi-computation $\pazocal{S}_\mathscr{C}:w_0\to\dots\to w_\ell$ of $\textbf{M}^\pazocal{L}$ in the `special' input sector with history $H$.

Suppose $\|w_0\|\geq3$.  Then, letting $v_i$ be the minimal prefix of $w_i$ with $|v_i|_\pazocal{A}=3$, there exists a reduced compressed semi-computation $\pazocal{S}_\mathscr{C}':v_0\to\dots\to v_\ell$ of $\textbf{M}^\pazocal{L}$ in the `special' input sector with history $H$.  As $v_0\in F(\pazocal{A})$ with $\|v_0\|=3$, $\pazocal{S}_\mathscr{C}'$ satisfies the hypotheses of \Cref{M compressed semi-computation three A}.  Hence, $v_\ell$ must be a non-trivial word over $(\pazocal{A}_1\sqcup\pazocal{B})^{\pm1}$.  But $w_\ell\equiv\lab(\textbf{t}_2)$, yielding a contradiction.

Similarly, if $w_0\equiv y_1^{\delta_1}y_2^{\delta_2}\in F(\pazocal{A})$ such that $\delta_1\neq1$ or $\delta_2\neq-1$, then $\pazocal{S}_\mathscr{C}$ satisfies the hypotheses of \Cref{M compressed semi-computation two A}.  But then this implies $w_\ell$ is a non-trivial word over $(\pazocal{A}_1\sqcup\pazocal{B})^{\pm1}$, again yielding a contradiction.

Hence, we may assume that $w_0\equiv y_1y_2^{-1}\in F(\pazocal{A})$ or $w_0\in\pazocal{A}^{\pm1}$.  Either way, $\Psi$ satisfies the hypotheses of \Cref{compressed semi-trapezia sides not equal}.  But then $\lab(\textbf{p}_1')$ and $\lab(\textbf{p}_2'')$ do not represent the same element of $G_\Omega(\textbf{M}^\pazocal{L})$, yielding a contradiction.

\end{proof}

Thus, \Cref{no counterexample annulus} immediately implies:

\begin{lemma} \label{H_A malnormal}

 $H_\pazocal{A}\leq_{mal}G_\Omega(\textbf{M}^\pazocal{L})$.
 
 \end{lemma}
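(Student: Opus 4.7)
The plan is to derive Lemma \ref{H_A malnormal} as a one-step contrapositive from Lemma \ref{no counterexample annulus}, since the technical content of malnormality was already packaged into the notion of a counterexample annulus. Suppose toward contradiction that $H_\pazocal{A}$ is not malnormal in $G_\Omega(\textbf{M}^\pazocal{L})$. By definition, this gives an element $g \in G_\Omega(\textbf{M}^\pazocal{L}) \setminus H_\pazocal{A}$ and non-trivial elements $h_1, h_2 \in H_\pazocal{A}$ satisfying $g^{-1} h_1 g = h_2$. Note that these are exactly the data fixed at the start of Section \ref{sec-annular-diagrams}, so the machinery of counterexample annuli was set up precisely for this situation.

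Next I would choose representing words explicitly: a word $u$ over $\pazocal{X} \cup \pazocal{X}^{-1}$ representing $g$, and words $w_1, w_2$ over $\pazocal{A} \cup \pazocal{A}^{-1}$ representing $h_1$ and $h_2$ respectively (the latter exist because $h_1, h_2 \in H_\pazocal{A} = \langle \pazocal{A} \rangle$). The relation $u^{-1} w_1 u w_2^{-1} = 1$ in $G_\Omega(\textbf{M}^\pazocal{L})$ expresses precisely the fact that $w_2$ is conjugate to $w_1$ via $u$. By van Kampen's Lemma for annular diagrams, recalled in Section \ref{sec-Diagrams} together with the discussion of conjugacy via annular diagrams with two contour components, this relation is witnessed by a reduced annular diagram $\Delta$ over the disk presentation of $G_\Omega(\textbf{M}^\pazocal{L})$ with outer contour labelled (from some basepoint $O_1$) by $w_1$, inner contour labelled (from some basepoint $O_2$) by $w_2^{-1}$, and a connecting path $\textbf{t}$ from $O_1$ to $O_2$ labelled $u$. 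This is the exact data specified in the definition of a counterexample annulus, with $(u, w_1, w_2)$ serving as its defining triple.

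The existence of $\Delta$ therefore contradicts Lemma \ref{no counterexample annulus}, completing the proof. There is essentially no obstacle at this stage, since the construction of Section \ref{sec-annular-diagrams} was built up so that Lemma \ref{H_A malnormal} falls out as a formal consequence; the entire difficulty was in establishing Lemma \ref{no counterexample annulus} via the intricate analysis of reduced minimal counterexample annuli (Lemmas \ref{counterexample annuli trivial path} through \ref{minimal counterexample annuli a-bands}), culminating in the compressed semi-trapezium argument that invoked Lemmas \ref{M compressed semi-computation three A}, \ref{M compressed semi-computation two A}, and \ref{compressed semi-trapezia sides not equal}. Thus the proof of the lemma reduces to a short formal derivation occupying one or two sentences.
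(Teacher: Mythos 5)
Your proof is correct and follows exactly the paper's route: Section \ref{sec-annular-diagrams} fixes the data $g, h_1, h_2$ witnessing a failure of malnormality, observes via van Kampen's Lemma that a counterexample annulus would then exist, and derives \Cref{H_A malnormal} as an immediate consequence of \Cref{no counterexample annulus}. Nothing is missing; the formal derivation you give is precisely what the paper intends by ``immediately implies.''
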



\section{Distortion diagrams} \label{sec-distortion-diagrams}

The goal of this section is to demonstrate that the subgroup $H_\pazocal{A}$ is undistorted in $G_\Omega(\textbf{M}^\pazocal{L})$.  This is accomplished by studying minimal circular diagrams with a particular contour decomposition, resembling the treatment of `$g$-minimal diagrams' in \cite{W}.

Before this, though, it will prove convenient to first modify the length of words over the disk presentation of $G_\Omega(\textbf{M}^\pazocal{L})$ and, by extension, the paths in diagrams over these presentations.  This is done in a way resembling that used in \cite{O18}, \cite{OS19}, and \cite{W}, but with a significant difference.

\subsection{Modified length function} \


To begin, a word $u$ over $\pazocal{X}\cup\pazocal{X}^{-1}$ is called a \textit{$(\theta,a)$-syllable} if $|u|_\theta=1$, $|u|_q=0$, and $|u|_\pazocal{A}+|u|_o\leq1$.
%
%
%
%
Note that, by definition, a single $\theta$-letter is a $(\theta,a)$-syllable and that the inverse of a $(\theta,a)$-syllable is another $(\theta,a)$-syllable.  Further, observe that there is no bound on the number of $b$-letters present in a $(\theta,a)$-syllable.  

Given a word $w$ over $\pazocal{X}\cup\pazocal{X}^{-1}$, a \textit{decomposition} of $w$ is a factorization $w\equiv u_1\dots u_k$ such that each $u_i$ is either a single letter or a $(\theta,a)$-syllable.  The \textit{length} of such a decomposition is the sum of the lengths of its factors, where we take the length of $u_i$ to be:



\begin{itemize}

\item $1$ if $u_i$ is a $q$-letter or a $(\theta,a)$-syllable

\item $\delta$ if $u_i$ is an $\pazocal{A}$-letter or an ordinary $a$-letter

\item $0$ if $u_i$ is a $b$-letter

\end{itemize}

As indicated in \Cref{sec-parameters}, the parameter $\delta$ assigned to be the length of an $\pazocal{A}$-letter or ordinary $a$-letter may be thought of as a very small positive number.

Finally, the \textit{length} of the word $w$, denoted $|w|$, is the minimal length of any of its decompositions.

Observe that this definition fundamentally differs from the analogous modified length function defined in previous literature for similar settings: While those functions were still equivalent to the natural combinatorial length function, designating the length of $b$-letters to $0$ means this function is not.  With that said, disregarding the $b$-letters entirely (but not reducing the resulting word) allows for an identical analysis of the properties of this length function.

For example, the next statement follows immediately from the definition:

\begin{lemma}[Compare to Lemma 6.2 of \cite{OS19}] \label{lengths 1}

Let $w\equiv w_1w_2$ be a word over $\pazocal{X}\cup\pazocal{X}^{-1}$.

\begin{enumerate} [label=(\alph*)]

\item $|w^{-1}|=|w|$

\item $|w|\geq|w|_q+|w|_\theta+\delta\max(0,|w|_\pazocal{A}-|w|_\theta)+\delta\max(0,|w|_o-|w|_\theta)$

\item $|w_1|+|w_2|-\delta\leq|w|\leq|w_1|+|w_2|$

\item If the last letter of $w_1$ or the first letter of $w_2$ is a $q$-letter, then $|w|=|w_1|+|w_2|$

\end{enumerate}

\end{lemma}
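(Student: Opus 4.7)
The plan is to handle (a)--(d) in order, each following from close inspection of the defining decompositions. I would first observe that the definitions of $(\theta,a)$-syllable and the length function $\lambda$ are invariant under inversion of words: the conditions $|u|_\theta=1$, $|u|_q=0$, and $|u|_\pazocal{A}+|u|_o\leq 1$ all transfer to $u^{-1}$, and each letter's assigned length is preserved. Consequently, inversion gives a bijection between decompositions of $w$ and decompositions of $w^{-1}$ of equal length, yielding (a).

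For (b), I would fix an optimal decomposition $w\equiv u_1\cdots u_k$ and count contributions by letter type. Since $(\theta,a)$-syllables contain no $q$-letters, every $q$-letter must be its own singleton factor, contributing $|w|_q$. Every $\theta$-letter appears either as a singleton factor or as the unique $\theta$-letter of a $(\theta,a)$-syllable, and in both cases the containing factor has length $1$, so $\theta$-letters collectively contribute $|w|_\theta$. The key observation is that each of the (at most $|w|_\theta$) $(\theta,a)$-syllables absorbs at most one $\pazocal{A}$-letter, so at least $\max(0,|w|_\pazocal{A}-|w|_\theta)$ $\pazocal{A}$-letters must remain as singleton factors of length $\delta$; the identical count applies separately to ordinary $a$-letters (since the two kinds of unabsorbed letters are already disjoint from the factors counted in $|w|_q+|w|_\theta$, the contributions add). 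Summing gives (b).

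For (c), the upper bound is immediate from concatenating optimal decompositions of $w_1$ and $w_2$. For the lower bound, I would take an optimal decomposition $w\equiv u_1\cdots u_k$ and examine where the $w_1\mid w_2$ boundary falls. If it lies between two consecutive factors, the induced decompositions of $w_1$ and $w_2$ have total length $|w|$. Otherwise, a single factor $u_i\equiv v_1v_2$ straddles the boundary; since singleton letters cannot split, $u_i$ must be a $(\theta,a)$-syllable. I would then do a short case analysis based on the positions within $u_i$ of its unique $\theta$-letter and of its at most one $\pazocal{A}$- or ordinary $a$-letter. In every case the pieces admit decompositions of total length at most $1+\delta$ (versus $1$ for the intact syllable): the worst case is when the $\theta$-letter and the non-$b$ letter lie on opposite sides of the split, so that one piece is a $(\theta,a)$-syllable of length $1$ and the other is a single $\pazocal{A}$- or ordinary $a$-letter surrounded by length-zero $b$-letters, contributing $\delta$; when they lie on the same side the other piece consists only of $b$-letters, contributing $0$. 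This yields $|w_1|+|w_2|\leq|w|+\delta$.

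For (d), I would note that a $q$-letter can only appear as its own singleton factor in any decomposition, since $(\theta,a)$-syllables contain no $q$-letters. Hence if the last letter of $w_1$ or the first of $w_2$ is a $q$-letter, no factor of any decomposition of $w$ straddles the $w_1\mid w_2$ boundary, and the defect $\delta$ in the argument of (c) disappears; combining with the upper bound from (c) gives equality. The only step with any subtlety is the case analysis in (c), but even there the arithmetic reduces to observing that a separated $\pazocal{A}$- or ordinary $a$-letter costs exactly $\delta$, so no estimate worse than $\delta$ can be incurred by a single split.
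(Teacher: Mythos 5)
Your proposal is correct and follows essentially the same route as the paper's proof: the inversion bijection on decompositions for (a), counting contributions by letter type for (b), the concatenation bound plus an analysis of a single straddling $(\theta,a)$-syllable for (c), and the observation that a $q$-letter at the junction forbids any straddling factor for (d). The only cosmetic difference is that you split the straddling case in (c) by the position of the non-$b$ letter, whereas the paper argues by which piece carries the $\theta$-letter, but the arithmetic is identical.
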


Naturally, given a diagram $\Delta$ over any of the presentations of the groups associated to $\textbf{M}^\pazocal{L}$ ({\frenchspacing e.g. the disk presentation} of $G_\Omega(\textbf{M}^\pazocal{L})$), the \textit{length} of a path $\textbf{s}$ in $\Delta$ is defined to be the length of its label, {\frenchspacing i.e. $|\textbf{s}|=|\lab(\textbf{s})|$}.

The next statement is thus given by the following observation regarding the definition of the defining relations: The word labelling a subpath of the boundary of a $(\theta,q)$-relation between two $q$-edges is a $(\theta,a)$-syllable.

\begin{lemma}[Compare to Lemma 6.2 of \cite{OS19}] \label{lengths 2}

Let $\textbf{s}$ be a path in a diagram $\Delta$ over the disk presentation of $G_\Omega(\textbf{M}^\pazocal{L})$.

\begin{enumerate}[label=(\alph*)]

\item If $\textbf{s}$ is a side of a $q$-band, then $|\textbf{s}|=|\textbf{s}|_\theta$

\item If $\textbf{s}$ is a side of a $\theta$-band, then $|\textbf{s}|=|\textbf{s}|_q+\delta|\textbf{s}|_\pazocal{A}+\delta|\textbf{s}|_o$ 

\end{enumerate}

\end{lemma}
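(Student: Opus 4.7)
The plan is to handle (b) first, as it is the cleaner case. A side $\textbf{s}$ of a $\theta$-band consists entirely of $q$- and $a$-edges, since the $\theta$-edges of the cells comprising the band are its defining edges rather than boundary edges of its sides. Hence $|\textbf{s}|_\theta=0$, so no factor in any decomposition of $\lab(\textbf{s})$ can be a $(\theta,a)$-syllable. Every factor in every decomposition must therefore be a single letter, and summing contributions (length $1$ per $q$-letter, $\delta$ per $\pazocal{A}$- or ordinary $a$-letter, and $0$ per $b$-letter) gives the claimed equality.

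For (a), I plan to prove both inequalities. The key structural observation driving the upper bound is that every rule $\theta$ of $\textbf{M}^\pazocal{L}$ has each part $q_i\to u_iq_i'v_{i+1}$ with both $u_i$ and $v_{i+1}$ containing at most one non-$b$ $a$-letter (possibly together with many $b$-letters). Granting this, each $(\theta,q)$-cell of a $q$-band contributes to a given side a single $\theta$-edge together with a tape word containing at most one non-$b$ $a$-letter. Grouping each such cellular contribution into one factor yields a decomposition of $\lab(\textbf{s})$ into exactly $|\textbf{s}|_\theta$ factors, each one a $(\theta,a)$-syllable of length $1$, so $|\textbf{s}|\leq|\textbf{s}|_\theta$. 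The matching lower bound follows from Lemma \ref{lengths 1}(b): since $|\textbf{s}|_q=0$ and the structural observation also yields $|\textbf{s}|_\pazocal{A}\leq|\textbf{s}|_\theta$ and $|\textbf{s}|_o\leq|\textbf{s}|_\theta$, we obtain $|\textbf{s}|\geq|\textbf{s}|_\theta$.

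The main work of the proof is thus the structural verification underlying (a), which I would do by cases. Rules descended from $\textbf{M}_2^\pazocal{L}$ satisfy the stronger bound $\|u_i\|+\|v_{i+1}\|\leq1$ of Lemma \ref{simplify rules}; the rules $\theta_y$ and $\theta_{b_i}$ of $\textbf{M}_1^\pazocal{A}$ (and their natural copies amongst the working rules of $\Theta_1$ and $\Theta_2$) satisfy the non-$b$ bound by direct inspection of their definitions in \Cref{sec-M_1}; and the transitions $\theta(s)_i$, $\theta(a)_i$, and $\sigma$ have trivial $u_i$ and $v_{i+1}$ in every part. The only delicate case is the inverse rules $\theta_a^{-1}$ for $a\in\pazocal{A}$, whose relevant $u_1$ expands as $v(a,a)^{-1}a_1$: one must unpack the inversion formula for generalized $S$-rules from \Cref{sec-generalized-S-machine} and observe that $v(a,a)\in F(\pazocal{B})$, so that the extra letters introduced by inverting are all $b$-letters and the bound of at most one non-$b$ letter per side of each part is preserved.
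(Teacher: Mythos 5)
Your proposal is correct and follows essentially the same route as the paper: part (b) via the observation that a decomposition of a word with no $\theta$-letters must consist of single letters, and part (a) by combining the lower bound from Lemma \ref{lengths 1}(b) with an upper bound obtained by reading the side of the $q$-band cell-by-cell as a product of $|\textbf{s}|_\theta$ many $(\theta,a)$-syllables. The only difference is that the paper dismisses the key structural fact — that either side of a single $(\theta,q)$-cell is a $(\theta,a)$-syllable — with the phrase ``by construction,'' whereas you carry out the case-by-case verification over the rules of $\textbf{M}^\pazocal{L}$ (including the inverse rules $\theta_a^{-1}$, where the extra letters from $v(a,a)^{-1}$ are all $b$-letters); this is exactly the content the paper leaves implicit, so your argument is a fleshed-out version of the same proof.
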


%
%
%
%
%
%
%
%
%
%

\begin{lemma} \label{theta-band lengths}

Let $\pazocal{T}$ be a $\theta$-band of positive length in a diagram $\Delta$ over the disk presentation of $G_\Omega(\textbf{M}^\pazocal{L})$.  Letting $l_b$ be the length of the base of $\pazocal{T}$, then $-2\delta l_b\leq|\textbf{top}(\pazocal{T})|-|\textbf{bot}(\pazocal{T})|\leq2\delta l_b$.

\end{lemma}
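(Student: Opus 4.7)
The plan is to apply Lemma \ref{lengths 2}(b) to each side of $\pazocal{T}$ and to estimate the resulting difference cell-by-cell. Since $\textbf{top}(\pazocal{T})$ and $\textbf{bot}(\pazocal{T})$ contain no $\theta$-edges, Lemma \ref{lengths 2}(b) yields
\[ |\textbf{top}(\pazocal{T})|-|\textbf{bot}(\pazocal{T})| = \bigl(|\textbf{top}(\pazocal{T})|_q-|\textbf{bot}(\pazocal{T})|_q\bigr) + \delta\bigl(|\textbf{top}(\pazocal{T})|_\pazocal{A}-|\textbf{bot}(\pazocal{T})|_\pazocal{A}\bigr) + \delta\bigl(|\textbf{top}(\pazocal{T})|_o-|\textbf{bot}(\pazocal{T})|_o\bigr), \]
so it suffices to show the $q$-difference vanishes and that the sum of the absolute $\pazocal{A}$- and ordinary $a$-differences is bounded by $2l_b$.

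For the $q$-contribution, every $(\theta,q)$-cell of $\pazocal{T}$ contributes a single $q$-edge to each side (the edges labelled $q_i$ and $q_i'$ in the relation $q_i\theta_{i+1}=\theta_iv_iq_i'u_{i+1}$), while $(\theta,a)$-cells contribute none. Thus $|\textbf{top}(\pazocal{T})|_q=|\textbf{bot}(\pazocal{T})|_q=l_b$ by the definition of base length. For the $a$-contributions, I next argue that every $(\theta,a)$-cell of $\pazocal{T}$ contributes identically to the two sides for both the $\pazocal{A}$- and the ordinary-$a$ counts: a $(\theta,\pazocal{A})$-cell corresponds to $x\theta_i=\theta_if_{\theta,i}(x)$ where the explicit $f_{\theta,i}$ constructed in Sections~5--6 sends the single $\pazocal{A}$-letter $x$ to a word with exactly one $\pazocal{A}$-letter (the remaining factors lying in $F(\pazocal{B})$); and $(\theta,b)$- and ordinary $(\theta,a)$-cells use $f_{\theta,i}=\mathrm{id}$, so the two sides match letter-for-letter. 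Consequently the $(\theta,a)$-cells do not contribute to either of the two $a$-differences.

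The remaining contributions come from the $l_b$ $(\theta,q)$-cells, and each such cell with part $q_i\to u_iq_i'v_{i+1}$ contributes at most $\|u_i\|_\pazocal{A}+\|u_i\|_o+\|v_{i+1}\|_\pazocal{A}+\|v_{i+1}\|_o$ to the combined absolute difference. I then verify case-by-case across $\Theta(\textbf{M}^\pazocal{L})$ that this per-part sum is at most $2$: (i) rules inherited from $\textbf{M}_2^\pazocal{L}$ satisfy the stronger bound $\|u_i\|+\|v_{i+1}\|\leq 1$ via Lemma \ref{simplify rules}; (ii) the rules $\theta_{b_i}^{\pm 1}$ of $\textbf{M}_1^\pazocal{A}$ introduce only $b$-letters; (iii) each $\theta_y^{\pm 1}$ for $y\in\pazocal{A}$ writes exactly one $\pazocal{A}$-letter in the special input sector and one ordinary $a$-letter in the adjacent sector (the long factor $v(y,y)^{-1}$ appearing in $\theta_y^{-1}$ lies entirely in $F(\pazocal{B})$); and (iv) the transition rules $\theta(s)_i$, $\theta(a)_i$, and $\sigma$ introduce at most one non-$b$ letter per part. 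Summing the per-cell bound over the $l_b$ $(\theta,q)$-cells and multiplying by $\delta$ then yields the claimed inequalities.

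The main obstacle is the exhaustive verification in step~(iii): although routine, it requires tracking the rules through the tower $\textbf{M}_1^\pazocal{A},\textbf{M}_2^\pazocal{L},\textbf{M}_3^\pazocal{L},\ldots,\textbf{M}^\pazocal{L}$ and confirming that the parallelization, reflection, and composition operations preserve the per-part bound. The crucial structural observation that makes (iii) work is that the potentially long word $v(y,y)^{-1}$ in $\theta_y^{-1}$ sits in $F(\pazocal{B})$ and hence contributes $0$ to the modified length function.
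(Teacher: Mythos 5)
Your accounting strategy is in spirit the same as the paper's (bound the non-$b$ contribution of each rule part, use Lemma \ref{simplify rules} for the rules inherited from $\textbf{M}_2^\pazocal{L}$, and exploit that the long $v(\cdot,\cdot)$-factors lie in $F(\pazocal{B})$ and so have length zero under the modified length function), and your per-cell bound of $2$ non-$b$ letters per $(\theta,q)$-cell is consistent with the paper's per-sector count. However, there is a genuine gap: you compute the differences of the $q$-, $\pazocal{A}$- and ordinary-$a$ counts cell-by-cell on the \emph{unreduced} concatenation of the cells' side labels, whereas $|\textbf{top}(\pazocal{T})|$ and $|\textbf{bot}(\pazocal{T})|$ are by convention lengths of the sides after $0$-refinement/gluing has made their labels \emph{reduced}. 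Free reduction can in principle cancel letters on one side only, so neither your claim that the $(\theta,a)$-cells ``match letter-for-letter'' on the two sides, nor the claim $|\textbf{top}(\pazocal{T})|_q=|\textbf{bot}(\pazocal{T})|_q=l_b$, nor the summation of per-cell bounds is justified as stated: if, say, $2k$ $\pazocal{A}$-letters cancelled in the top label, the difference could exceed $2\delta l_b$ in magnitude. Ruling this out is not automatic; it is exactly the content of Lemma \ref{semi-computation deltas} (no $\pazocal{A}$-letter is lost to reduction under a working rule, because between potentially cancelling $\pazocal{A}$-letters there remains a nontrivial word over $\pazocal{B}^{\pm1}$), and for $q$-letters it uses reducedness of the band.

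The paper sidesteps the issue by not counting raw cell contributions at all: it splits $\pazocal{T}$ into the minimal subband $\pazocal{T}'$ containing all $(\theta,q)$-cells and the two flanking subbands with base of length zero, and applies Lemmas \ref{theta-band is one-rule semi-computation} and \ref{theta-bands are one-rule computations}, which identify the (already reduced) trimmed side labels with an admissible word $W$ and its image $W\cdot\theta$ (reduction is built into the rule application); then Lemma \ref{semi-computation deltas}, Lemma \ref{simplify rules}, and the form of the rules give that each sector changes by at most two $\pazocal{A}$- or ordinary $a$-letters, yielding $2\delta(l_b-1)$ plus $2\delta$ for the untrimmed ends. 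To repair your argument, replace the unreduced letter count by an appeal to these lemmas (and use reducedness of the band to exclude $q$-cancellations); the case analysis (i)--(iv) you sketch then becomes the verification that each part of each rule contributes at most two non-$b$ letters, which is the same verification the paper performs.
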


\begin{proof}

Let $\theta$ be the history of $\pazocal{T}$ and suppose without loss of generality that $\theta\in\Theta^+$.

First, suppose $l_b=0$, so that every cell of $\pazocal{T}$ is a $(\theta,a)$-cell.  Then the defining $\theta$-edges must be labelled identically, so that the $(\theta,a)$-cells are all of the same sector.  \Cref{theta-band is one-rule semi-computation} then implies that $\lab(\textbf{bot}(\pazocal{T}))$ is $\theta$-applicable with $\lab(\textbf{bot}(\pazocal{T}))\cdot\theta\equiv\lab(\textbf{top}(\pazocal{T}))$.

If this semi-computation is of a non-input sector, then the definition of the rules necessitates that $\lab(\textbf{bot}(\pazocal{T}))\equiv\lab(\textbf{top}(\pazocal{T}))$, and so $|\textbf{bot}(\pazocal{T})|=|\textbf{top}(\pazocal{T})|$.  Otherwise, \Cref{semi-computation deltas} implies $|\textbf{bot}(\pazocal{T})|_\pazocal{A}=|\textbf{top}(\pazocal{T})|_\pazocal{A}$.  Hence, as no letter of an input alphabet is an ordinary $a$-letter, \Cref{lengths 2}(b) implies $|\textbf{bot}(\pazocal{T})|=|\textbf{top}(\pazocal{T})|$.

Now suppose $l_b>0$.  By the definition of the rules of $\textbf{M}^\pazocal{L}$ and \Cref{simplify rules}, the boundary of a $(\theta,q)$-cell has at most one $\pazocal{A}$-edge and at most one ordinary $a$-edge.  Hence, applying the above argument to maximal subbands of $\pazocal{T}$ with base of length $0$, the statement follows from Lemmas \ref{lengths 1}(d) and \ref{lengths 2}(b).

\end{proof}

\medskip


\subsection{$h$-distortion diagrams} \

Throughout the arguments spanning the rest of this section, we fix an element $h\in H_\pazocal{A}$.  

Recall that $H_\pazocal{A}$ is the subgroup of $G_\Omega(\textbf{M}^\pazocal{L})$ generated by $\pazocal{A}$.  Hence, we may define $|h|_\pazocal{A}$ in the standard way, {\frenchspacing i.e. the minimal number} of letters of $\pazocal{A}\cup\pazocal{A}^{-1}$ necessary to produce a word which represents $h$.

Conversely, since $h$ is an element of $G_\Omega(\textbf{M}^\pazocal{L})$, we define $|h|$ to be the minimal length of a word over $\pazocal{X}\cup\pazocal{X}^{-1}$ (in the sense defined in the previous section) which represents $h$ in $G_\Omega(\textbf{M}^\pazocal{L})$.  

Let $w$ be a word realizing this length.  It should be noted that, by definition, $w$ need not be a reduced word; indeed, $w$ may have a freely trivial subword of arbitrarily large size consisting of $b$-letters.  However, as free reduction cannot increase the length of the word, the reduced word $w'$ obtained from $w$ by a sequence of cancellations is another word over $\pazocal{X}\cup\pazocal{X}^{-1}$ realizing $|h|$.


Note that by definition and \Cref{lengths 1}(a), $|h^{-1}|_\pazocal{A}=|h|_\pazocal{A}$ and $|h^{-1}|=|h|$.

Then, a circular diagram $\Delta$ over the disk presentation of $G_\Omega(\textbf{M}^\pazocal{L})$ is called an \textit{$h$-distortion diagram} if there exists a factorization $\partial\Delta=\textbf{q}\textbf{p}$ such that for some $\eps\in\{\pm1\}$:

\begin{itemize}

\item $\lab(\textbf{q})$ is a reduced word over $\pazocal{X}\cup\pazocal{X}^{-1}$ representing $h^\eps$ such that $|\textbf{q}|=|h|$

\item $\lab(\textbf{p})$ is a (reduced) word over $\pazocal{A}\cup\pazocal{A}^{-1}$ representing $h^{-\eps}$ satisfying $\|\textbf{p}\|=|h|_\pazocal{A}$

\end{itemize}
 
In this case, $\partial\Delta=\textbf{qp}$ is called the \textit{standard factorization} of the contour of the $h$-distortion diagram.  Further, $\eps$ is called the \textit{sign} of the $h$-distortion diagram.

Note that per the definition, for any $h$-distortion diagram $\Delta$, there exists an $h$-distortion diagram $\bar{\Delta}$ with opposite sign formed by taking the `mirror copy' of each cell of $\Delta$, so that any cell $\pi$ of $\Delta$ corresponds to a cell $\bar{\pi}$ in $\bar{\Delta}$ with $\lab(\partial\bar{\pi})\equiv\lab(\partial\pi)^{-1}$.  As such, $\bar{\Delta}$ is called the \textit{mirror} of $\Delta$.

\begin{lemma} \label{distortion diagram q}

Any maximal $q$-band of a reduced $h$-distortion diagram $\Delta$ has an end on a disk.

\end{lemma}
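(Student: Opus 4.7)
The plan is to argue by contradiction: suppose $\pazocal{Q}$ is a maximal $q$-band of length $m$ in $\Delta$ with neither end on a disk. By \Cref{basic annuli 1}(2), $\pazocal{Q}$ cannot be a $q$-annulus, so it has two distinct ends, each on $\partial\Delta$. Since $\lab(\textbf{p})$ is a word over $\pazocal{A}^{\pm 1}$ and thus contains no $q$-letters, both ends of $\pazocal{Q}$ are $q$-edges on $\textbf{q}$; non-annularity together with both ends on the same boundary component forces them to appear with opposite orientations, yielding a factorization $\textbf{q}=\textbf{y}_1\textbf{e}_0^{-1}\textbf{y}_2\textbf{e}_m\textbf{y}_3$, where $\pazocal{Q}$ together with $\textbf{y}_2$ bounds a subdiagram $\Delta_1$ on one side, with the complementary subdiagram $\Delta_2$ containing $\textbf{y}_1$, $\textbf{y}_3$, and $\textbf{p}$.

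The key step is to bound $|\textbf{y}_2|$ from below by $m$. For each cell $\pi_j$ of $\pazocal{Q}$ ($j=1,\ldots,m$), let $\pazocal{T}_j$ be the maximal $\theta$-band through $\pi_j$. I would first verify that $\pazocal{T}_j$ is non-annular: if $\pazocal{T}_j$ were a $\theta$-annulus, it would bound a subdiagram $\Delta_{\pazocal{T}_j}$, and since $\pazocal{Q}$ and $\pazocal{T}_j$ can cross only once (else \Cref{basic annuli 1}(1) would be violated), one of the two $\pazocal{Q}$-extensions from $\pi_j$ would be trapped inside $\Delta_{\pazocal{T}_j}$; this trapped extension cannot reach $\partial\Delta$ (without crossing $\pazocal{T}_j$ a second time) and cannot end on an $a$-cell or $(\theta,a)$-cell (neither carries $q$-edges), so it would be forced to end on a disk in $\Delta_{\pazocal{T}_j}$, contradicting the hypothesis. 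Since $\theta$-bands can only end on the boundary of the diagram (disks and $a$-cells bear no $\theta$-edges), the two ends of $\pazocal{T}_j$ lie on $\textbf{q}$; since $\pazocal{T}_j$ crosses $\pazocal{Q}$ exactly once, one end lies on $\partial\Delta_1\cap\partial\Delta=\textbf{y}_2$ and the other on $\textbf{y}_1\cup\textbf{y}_3$. As the $\pazocal{T}_j$ are pairwise distinct maximal bands, they contribute $m$ distinct $\theta$-edges to $\textbf{y}_2$, so $|\textbf{y}_2|_\theta\ge m$ and hence $|\textbf{y}_2|\ge m$ by \Cref{lengths 1}(b).

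The final step derives a length contradiction. Cutting $\Delta$ along the side $\textbf{s}'$ of $\pazocal{Q}$ facing $\Delta_2$ produces the subdiagram $\Delta_2$ whose boundary (up to orientation) is $\textbf{y}_1\textbf{s}'\textbf{y}_3\textbf{p}$; since its label is trivial in $G_\Omega(\textbf{M}^\pazocal{L})$, the word $\lab(\textbf{y}_1\textbf{s}'\textbf{y}_3)$ represents $h^\eps$. By iterated applications of \Cref{lengths 1}(c) and by \Cref{lengths 2}(a) (which gives $|\textbf{s}'|=m$), we have $|\textbf{y}_1\textbf{s}'\textbf{y}_3|\le|\textbf{y}_1|+m+|\textbf{y}_3|$. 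On the other hand, splitting $\textbf{q}$ at the $q$-edges $\textbf{e}_0^{-1}$ and $\textbf{e}_m$ and applying \Cref{lengths 1}(d) at each split yields $|\textbf{q}|=|\textbf{y}_1|+|\textbf{y}_2|+|\textbf{y}_3|+2$, so combining with $|\textbf{y}_2|\ge m$ gives $|\textbf{y}_1\textbf{s}'\textbf{y}_3|\le|\textbf{y}_1|+m+|\textbf{y}_3|<|\textbf{q}|=|h|$ (using \Cref{lengths 1}(a) to pass between $|h|$ and $|h^{-1}|$ when $\eps=-1$), contradicting the minimality of $|h|$ as the length of a word representing $h^\eps$. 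The main obstacle is the non-annularity of $\pazocal{T}_j$ in the second paragraph; this is exactly the point where the hypothesis "no end on a disk" is used essentially, and everything else follows mechanically from the modified length function.
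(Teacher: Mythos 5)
Your proof is correct and takes essentially the same approach as the paper: locate a maximal $q$-band with both ends on $\textbf{q}$, count the $\theta$-bands crossing it to show the boundary arc it cuts off has $\theta$-length at least the length of the band's side, then replace that arc by the side of the band (using \Cref{lengths 1} and \Cref{lengths 2}(a)) to produce a word representing $h^{\eps}$ of length strictly less than $|h|$, a contradiction. The only difference is bookkeeping: the paper counts $\theta$-bands inside the subdiagram bounded by the arc and the band's side (so the annulus issue never arises), whereas you count maximal $\theta$-bands of $\Delta$ through the cells of the $q$-band and therefore need your extra (correct) argument excluding $\theta$-annuli via the ``no end on a disk'' hypothesis.
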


\begin{proof}

Let $\partial\Delta=\textbf{qp}$ be the standard factorization of the contour of $\Delta$ and $\eps$ be the sign of $\Delta$.  As $|\textbf{p}|_q=0$, any $q$-edge of $\partial\Delta$ must be an edge of $\textbf{q}$.

Suppose there exists a maximal $q$-band which has no end on a disk.  Then, \Cref{basic annuli 1}(2) implies that this band has two ends on $\textbf{q}$.

Now, enumerate the $q$-edges $\textbf{e}_1,\dots,\textbf{e}_k$ of $\partial\Delta$ so that $\textbf{q}=\textbf{u}_0\textbf{e}_1\textbf{u}_1\dots\textbf{u}_{k-1}\textbf{e}_k\textbf{u}_k$ for some (perhaps trivial) subpaths $\textbf{u}_i$.  Then, for each $i\in\{1,\dots,k\}$, let $\pazocal{Q}_i$ be the maximal $q$-band of $\Delta$ for which $\textbf{e}_i$ is a defining edge.

By hypothesis, there then exists a pair of indices $i,j\in\{1,\dots,k\}$ with $i<j$ such that $\textbf{e}_i^{-1}$ is an end of $\pazocal{Q}_j$.  Let $\textbf{q}'$ be the subpath of $\textbf{q}$ with initial edge $\textbf{e}_i$ and terminal edge $\textbf{e}_j$.  Further, let $\textbf{s}_1$ and $\textbf{s}_2$ be the (perhaps trivial) subpaths of $\textbf{q}$ so that $\textbf{q}=\textbf{s}_1\textbf{q}'\textbf{s}_2$.

Then, $\textbf{q}'$ and $\textbf{top}(\pazocal{Q}_j)$ bound a subdiagram $\Delta_0$ of $\Delta$ containing $\pazocal{Q}$.  By \Cref{basic annuli 1}(1), any maximal $\theta$-band of $\Delta_0$ must have at least one end on $\textbf{q}'$.  Hence, $|\textbf{q}'|_\theta\geq|\textbf{top}(\pazocal{Q}_j)|_\theta$, so that \Cref{lengths 1}(b) implies $|\textbf{q}'|\geq|\textbf{top}(\pazocal{Q}_j)|_\theta+2$.  \Cref{lengths 2}(a) then implies $|\textbf{top}(\pazocal{Q}_j)|=|\textbf{top}(\pazocal{Q}_j)|_\theta<|\textbf{q}'|$.

By \Cref{lengths 1}(c), $|\textbf{s}_1\textbf{top}(\pazocal{Q}_j)\textbf{s}_2|\leq|\textbf{s}_1|+|\textbf{top}(\pazocal{Q}_j)|+|\textbf{s}_2|$.  Meanwhile, since $\textbf{q}'$ starts and ends with $q$-edges, \Cref{lengths 1}(d) implies $|\textbf{q}|=|\textbf{s}_1|+|\textbf{q}'|+|\textbf{s}_2|$.  Hence, $|\textbf{s}_1\textbf{top}(\pazocal{Q}_j)\textbf{s}_2|<|\textbf{q}|$.

But applying van Kampen's Lemma to $\Delta_0$, $\lab(\textbf{q}')$ and $\lab(\textbf{top}(\pazocal{Q}_j))$ represent the same element of $G_\Omega(\textbf{M}^\pazocal{L})$, so that $\lab(\textbf{s}_1\textbf{top}(\pazocal{Q}_j)\textbf{s}_2)$ is a word over $\pazocal{X}\cup\pazocal{X}^{-1}$ representing $h^\eps$.  Thus, $|\textbf{s}_1\textbf{top}(\pazocal{Q}_j)\textbf{s}_2|<|\textbf{q}|=|h|$ yields a contradiction.

\end{proof}

\begin{lemma} \label{distortion diagram A}

Let $\partial\Delta=\textbf{qp}$ be the standard factorization of the contour of a reduced $h$-distortion diagram $\Delta$.  Then no $\pazocal{A}$-band of $\Delta$ has two ends on $\textbf{p}$.

\end{lemma}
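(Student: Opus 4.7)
The plan is to argue by contradiction, mirroring the strategy of \Cref{distortion diagram q}. Suppose some $\pazocal{A}$-band $\pazocal{U}$ of $\Delta$ has both ends $\textbf{f}_0,\textbf{f}_m$ on $\textbf{p}$; without loss of generality $\textbf{f}_0$ precedes $\textbf{f}_m^{-1}$ along $\textbf{p}$. Let $\textbf{p}_0$ be the subpath of $\textbf{p}$ from $\textbf{f}_0$ to $\textbf{f}_m^{-1}$, write $\textbf{p}=\textbf{p}_1\textbf{p}_0\textbf{p}_2$, and let $\textbf{s}$ be the side of $\pazocal{U}$ adjacent to $\textbf{p}_0$. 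Then $\textbf{p}_0$ and $\textbf{s}$ share endpoints and bound a subdiagram $\Delta_0\subseteq\Delta$ disjoint from the interior of $\pazocal{U}$.

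First I would record a structural constraint: since $\textbf{p}$ is labelled over $\pazocal{A}^{\pm 1}$ while the defining edges of an $\pazocal{A}$-band for letter $a$ lie in $\{\tilde{a},\tilde{a}_1\}^{\pm 1}$, the labels $\lab(\textbf{f}_0)$ and $\lab(\textbf{f}_m)$ both lie in $\{a,a^{-1}\}$. Inspection of the rules of $\textbf{M}^\pazocal{L}$ shows that only $\theta(s)_1^{\pm 1}$-cells carry a defining $\pazocal{A}$-edge in $\pazocal{A}$ rather than $\pazocal{A}_1$; hence the first and last cells of $\pazocal{U}$ are $\theta(s)_1^{\pm 1}$-cells, and reducedness of the history of $\pazocal{U}$ forces band length $m\geq 3$. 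Likewise the reducedness of $\textbf{p}$ combined with $\lab(\textbf{f}_0)\lab(\textbf{f}_m^{-1})=aa^{-1}$ forces $\|\textbf{p}_0\|\geq 3$.

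Next I would apply van Kampen's Lemma to $\Delta_0$ to get $\lab(\textbf{p}_0)=\lab(\textbf{s})$ in $G_\Omega(\textbf{M}^\pazocal{L})$, so that $\lab(\textbf{p}_1)\lab(\textbf{s})\lab(\textbf{p}_2)$ represents $h^{-\eps}$ and therefore $|h|\leq|\textbf{p}_1\textbf{s}\textbf{p}_2|$. By \Cref{lengths 1}(c) this is at most $|\textbf{p}_1|+|\textbf{s}|+|\textbf{p}_2|$. Since $\textbf{p}_i$ consists of $\pazocal{A}$-edges, $|\textbf{p}_i|=\delta\|\textbf{p}_i\|$; and since $\textbf{s}$ has $m$ $\theta$-edges interspersed with $b$-edges (which have weight $0$ and are absorbable into $(\theta,a)$-syllables alongside the $\theta$-letters), $|\textbf{s}|=m$ by \Cref{lengths 1}(b). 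This yields the upper bound $|h|\leq\delta(\|\textbf{p}_1\|+\|\textbf{p}_2\|)+m$. A matching lower bound follows from counting the $\theta$-bands crossing $\pazocal{U}$: by \Cref{basic annuli 2}(1) the $m$ cells of $\pazocal{U}$ lie in $m$ distinct maximal $\theta$-bands, each non-annular with two ends in $\Delta$; since $\theta$-bands in the disk presentation end only on $\partial\Delta$ and $\textbf{p}$ carries no $\theta$-edges, all $2m$ endpoints lie on $\textbf{q}$. Hence $|\textbf{q}|_\theta\geq 2m$, and by \Cref{lengths 1}(b), $|h|=|\textbf{q}|\geq 2m$.

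Combining the two estimates gives $2m\leq\delta(\|\textbf{p}_1\|+\|\textbf{p}_2\|)+m$, i.e.\ $m\leq\delta(\|\textbf{p}_1\|+\|\textbf{p}_2\|)$. The main obstacle is converting this into a genuine contradiction with $m\geq 3$ and the smallness of $\delta$ in the parameter hierarchy. I expect this to be resolved by choosing $\pazocal{U}$ among all $\pazocal{A}$-bands with two ends on $\textbf{p}$ so that $\|\textbf{p}_1\|+\|\textbf{p}_2\|$ is minimised (for instance by taking the ``outermost'' such band whose ends are closest to the extremes of $\textbf{p}$), or by iterating the above argument on the residual subdiagram $\Delta_0$ to peel off nested bands one at a time. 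Either route should drive $\delta(\|\textbf{p}_1\|+\|\textbf{p}_2\|)$ below $3$ and hence contradict $m\geq 3$, closing the proof.
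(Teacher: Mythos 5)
There is a genuine gap at the final step, and it is not a repairable technicality: your concluding inequality $m\leq\delta(\|\textbf{p}_1\|+\|\textbf{p}_2\|)$ is simply not a contradiction. The quantity $\|\textbf{p}_1\|+\|\textbf{p}_2\|$ is bounded only by $\|\textbf{p}\|=|h|_\pazocal{A}$, which is an arbitrary datum of $h$ and is in no way controlled by the parameter $\delta^{-1}$; taking the "outermost" band does not make $\textbf{p}_1,\textbf{p}_2$ short (their $\pazocal{A}$-edges can be ends of bands going to $a$-cells, $(\theta,q)$-cells or $\textbf{q}$, in unbounded number), and iterating inside $\Delta_0$ only affects what is nested between the two ends, not $\textbf{p}_1,\textbf{p}_2$. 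In fact your two estimates ($|h|\leq\delta(\|\textbf{p}_1\|+\|\textbf{p}_2\|)+m$ and $|h|\geq 2m$) are perfectly consistent with a band of positive length, since the paper later proves $|h|=\delta|h|_\pazocal{A}$ exactly (\Cref{lengths are equal}); so no contradiction can come from comparing $|h|$ with these quantities alone. A secondary issue: your structural analysis presupposes the band has cells (hence $m\geq3$), but the case $m=0$ — an $\pazocal{A}$-band of length zero with both ends on $\textbf{p}$ — is never addressed, and this is exactly the case that can actually occur.

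The paper's proof works on the $\textbf{p}$-side, against the minimality of $|h|_\pazocal{A}$, not the $\textbf{q}$-side. Writing $\textbf{p}=\textbf{s}_1\textbf{p}'\textbf{s}_2$ with $\textbf{p}'$ running between the two ends, one takes the subdiagram $\Delta_0$ bounded by $\textbf{p}'$ and a side of the band. Any maximal $\theta$-band of $\Delta_0$ must have an end on $\textbf{p}'$ (two ends on the band's side would give a $(\theta,\pazocal{A})$-annulus, contradicting \Cref{basic annuli 2}(1)); but $|\textbf{p}'|_\theta=0$, so $\Delta_0$ contains no $\theta$-bands at all. Hence the $\pazocal{A}$-band has length $0$ and its side has trivial label, so by van Kampen's Lemma $\lab(\textbf{p}')=1$ in $G_\Omega(\textbf{M}^\pazocal{L})$, and $\lab(\textbf{s}_1)\lab(\textbf{s}_2)$ is a word over $\pazocal{A}\cup\pazocal{A}^{-1}$ representing $h^{-\eps}$ of length strictly less than $\|\textbf{p}\|=|h|_\pazocal{A}$ — the contradiction. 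Your length computations for $|\textbf{s}|$, $|\textbf{q}|_\theta\geq2m$ (modulo noting that a $\theta$-annulus would have to cross the band twice, which is how annuli are excluded) are fine, but they aim at the wrong minimality and cannot close the argument.
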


\begin{proof}

Analogous to the proof of \Cref{distortion diagram q}, enumerate the $\pazocal{A}$-edge of $\textbf{p}$ by $\textbf{e}_1,\dots,\textbf{e}_k$.  Note that by definition, $\textbf{p}=\textbf{e}_1\dots\textbf{e}_k$.

For each $i\in\{1,\dots,k\}$, let $\pazocal{U}_i$ be the maximal $\pazocal{A}$-band for which $\textbf{e}_i$ is a defining edge.  Then, assuming the statement is false, there must exist $i,j\in\{1,\dots,k\}$ with $i<j$ such that $\textbf{e}_j^{-1}$ is an end of $\pazocal{U}_i$.


Let $\textbf{p}'=\textbf{e}_i\dots\textbf{e}_j$ and let $\textbf{s}_1$ and $\textbf{s}_2$ be the (perhaps trivial) subpaths of $\textbf{p}$ such that $\textbf{p}=\textbf{s}_1\textbf{p}'\textbf{s}_2$.  Then, analogous to the proof of \Cref{distortion diagram q}, let $\textbf{p}'$ and $\textbf{bot}(\pazocal{U}_i)$ bound a subdiagram $\Delta_0$ of $\Delta$ containing $\pazocal{U}$.  By \Cref{basic annuli 2}(1), any maximal $\theta$-band of $\Delta_0$ must have at least one end on $\textbf{p}'$.  But $|\textbf{p}'|_\theta=0$ by definition, so that $\Delta_0$ must contain no $\theta$-bands.

In particular, this implies $\pazocal{U}_i$ must be an $\pazocal{A}$-band of length $0$, so that $\lab(\textbf{bot}(\pazocal{U}_i))\equiv1$.

But then letting $\eps$ be the sign of $\Delta$, $w\equiv\lab(\textbf{s}_1)\lab(\textbf{s}_2)$ is a word over $\pazocal{A}\cup\pazocal{A}^{-1}$ representing $h^{-\eps}$ with $\|w\|=\|\textbf{s}_1\|+\|\textbf{s}_2\|<\|\textbf{p}\|=|h|_\pazocal{A}$, contradicting the definition of $|h|_\pazocal{A}$.

\end{proof}

\begin{lemma} \label{distortion diagram a-cell}

Let $\partial\Delta=\textbf{qp}$ be the standard factorization of the contour of an $h$-distortion diagram $\Delta$.  Suppose there exists a subpath $\textbf{x}$ of $\textbf{p}$ or $\textbf{q}$ such that $\textbf{x}$ is a subpath of $\partial\pi$ for some $a$-cell $\pi$.  Then $|\textbf{x}|_\pazocal{A}\leq\frac{1}{2}|\partial\pi|_\pazocal{A}$.

\end{lemma}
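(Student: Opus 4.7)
The plan is to argue by contradiction: assume $|\textbf{x}|_\pazocal{A}>\frac{1}{2}|\partial\pi|_\pazocal{A}$, and let $\textbf{y}$ be the complementary subpath of $\partial\pi$, so that $\partial\pi=\textbf{x}\textbf{y}$ with $|\textbf{y}|_\pazocal{A}<|\textbf{x}|_\pazocal{A}$. Since $\lab(\partial\pi)\in\Omega$ is cyclically reduced, \Cref{M Lambda semi-computations}(1) together with (L1) and (L3) forces $\lab(\partial\pi)$ to be a cyclic permutation of an element of $\Lambda^\pazocal{A}$ (a word over $\pazocal{A}^{\pm1}$) or of $\pazocal{E}_1(\Lambda_1^\pazocal{A})$ (a word over $(\pazocal{A}_1\sqcup\pazocal{B})^{\pm1}$), and in either case $|\partial\pi|_\pazocal{A}\geq C$. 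In both cases $\lab(\textbf{x})\lab(\textbf{y})=1$ in $G_\Omega(\textbf{M}^\pazocal{L})$, so substituting $\textbf{y}^{-1}$ for $\textbf{x}$ in the relevant boundary component produces a new word representing the same element of $G_\Omega(\textbf{M}^\pazocal{L})$.

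If $\textbf{x}$ is a subpath of $\textbf{p}$, then $\lab(\textbf{x})$ is over $\pazocal{A}^{\pm1}$; the $\pazocal{E}_1$-alternative then forces $\textbf{x}$ to be trivial (making the conclusion vacuous), so $\lab(\partial\pi)\in\Lambda^\pazocal{A}$ and $\lab(\textbf{y})$ is also over $\pazocal{A}^{\pm1}$. Writing $\textbf{p}=\textbf{p}_1\textbf{x}\textbf{p}_2$, the word $\lab(\textbf{p}_1)\lab(\textbf{y})^{-1}\lab(\textbf{p}_2)$ is over $\pazocal{A}^{\pm1}$, represents $h^{-\eps}$, and has $\pazocal{A}$-length $\|\textbf{p}\|-(|\textbf{x}|_\pazocal{A}-|\textbf{y}|_\pazocal{A})<|h|_\pazocal{A}$, contradicting the definition of $|h|_\pazocal{A}$.

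For $\textbf{x}$ a subpath of $\textbf{q}$, write $\textbf{q}=\textbf{q}_1\textbf{x}\textbf{q}_2$ and set $w\equiv\lab(\textbf{q}_1)\lab(\textbf{y})^{-1}\lab(\textbf{q}_2)$, which represents $h^{\eps}$. The plan is to show $|w|<|\textbf{q}|=|h|$ by expanding the modified length as $|u|=|u|_q+|u|_\theta+\delta(|u|_\pazocal{A}+|u|_o)-\delta M^*(u)$, where $M^*(u)$ is the maximum number of $\pazocal{A}$- or ordinary $a$-letters that can be absorbed into $(\theta,a)$-syllable factors of a decomposition of $u$ (equivalently, a maximum matching of $\theta$-letters with adjacent, modulo $b$-letters, $\pazocal{A}$- or ordinary $a$-letters). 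Since $\textbf{x}$ and $\textbf{y}$ consist only of $\pazocal{A}$- and $b$-letters, only the $\pazocal{A}$-counts of $\lab(\textbf{q})$ and $w$ differ, yielding
\[
|\textbf{q}|-|w|=\delta\bigl[(|\textbf{x}|_\pazocal{A}-|\textbf{y}|_\pazocal{A})-(M^*(\lab(\textbf{q}))-M^*(w))\bigr].
\]
The central combinatorial step is the bound $M^*(\lab(\textbf{q}))-M^*(w)\leq\max(0,2-|\textbf{y}|_\pazocal{A})$: after collapsing $b$-letters, any $\pazocal{A}$-letter interior to $\textbf{x}$ has only other $\pazocal{A}$-letters of $\textbf{x}$ as neighbors and so cannot be absorbed, so at most the two boundary $\pazocal{A}$-letters of $\textbf{x}$ appear in an optimal matching for $\lab(\textbf{q})$; removing those at-most-two matches leaves a valid matching for $w$, and up to $\min(2,|\textbf{y}|_\pazocal{A})$ of them can be restored by rematching the boundary $\theta$-letters of $\textbf{q}_1,\textbf{q}_2$ with $\pazocal{A}$-letters of $\textbf{y}^{-1}$ via the same $b$-channels. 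A case split on $|\textbf{y}|_\pazocal{A}$ then closes the argument: if $|\textbf{y}|_\pazocal{A}\geq2$ the gap is $0$ while $|\textbf{x}|_\pazocal{A}-|\textbf{y}|_\pazocal{A}\geq1$, giving $|\textbf{q}|-|w|\geq\delta>0$; if $|\textbf{y}|_\pazocal{A}\leq1$ then $|\textbf{x}|_\pazocal{A}\geq C-1$ so $|\textbf{x}|_\pazocal{A}-|\textbf{y}|_\pazocal{A}\geq C-2$ overwhelms the matching gap $\leq2$ by the parameter choice on $C$. Either way $|w|<|h|$, contradicting the minimality of $|\textbf{q}|$.

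The main obstacle will be executing this matching estimate cleanly: the crude bound $|\textbf{q}|\geq|\textbf{q}_1|+|\textbf{x}|+|\textbf{q}_2|-2\delta$ coming from \Cref{lengths 1}(c) leaves a $2\delta$ deficit that the naive inequality $|\textbf{x}|_\pazocal{A}-|\textbf{y}|_\pazocal{A}\geq1$ cannot overcome, so it is essential to track that every cross-boundary $(\theta,a)$-syllable saving available to $\lab(\textbf{q})$ through an $\pazocal{A}$-letter of $\textbf{x}$ is likewise available to $w$ through an $\pazocal{A}$-letter of $\textbf{y}^{-1}$ whenever $|\textbf{y}|_\pazocal{A}$ permits, and otherwise that $|\partial\pi|_\pazocal{A}\geq C$ dominates any loss outright.
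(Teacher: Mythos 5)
Your proposal is correct and follows essentially the same route as the paper: contradict minimality of $|h|_\pazocal{A}$ directly in the $\textbf{p}$-case, and in the $\textbf{q}$-case observe that at most the two boundary $\pazocal{A}$-letters of $\textbf{x}$ can sit in cross-boundary $(\theta,a)$-syllables, then either rematch those savings to the first/last $\pazocal{A}$-letters of $\textbf{y}$ when $|\textbf{y}|_\pazocal{A}\geq2$ or let $|\partial\pi|_\pazocal{A}\geq C$ absorb the at-most-$2\delta$ loss when $|\textbf{y}|_\pazocal{A}\leq1$. Your "maximum matching" functional $M^*$ is just a repackaging of the paper's explicit manipulation of an optimal decomposition $u_1\dots u_k$ (its syllables $u_r,u_s$ and the substitute factors $u_r'v_1'$, $v_2'u_s'$), so the two arguments coincide in substance.
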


\begin{proof}

Assume toward contradiction that $|\textbf{x}|_\pazocal{A}>\frac{1}{2}|\partial\pi|_\pazocal{A}$ and let $\eps$ be the sign of $\Delta$.

Set $\textbf{y}$ be the subpath of $(\partial\pi)^{-1}$ such that $\partial\pi=\textbf{xy}^{-1}$.  So, $\lab(\textbf{y})$ is a word consisting entirely of $\pazocal{A}$-letters and $b$-letters with $|\textbf{y}|_\pazocal{A}<|\textbf{x}|_\pazocal{A}$.  Moreover, $\lab(\textbf{y})$ and $\lab(\textbf{x})$ represent the same element as $G_\Omega(\textbf{M}^\pazocal{L})$.  

First, suppose $\textbf{x}$ is a subpath of $\textbf{p}$.  Then, $\lab(\textbf{x})$ is a non-trivial word over $\pazocal{A}\cup\pazocal{A}^{-1}$, so that \Cref{M Lambda semi-computations} implies $\lab(\partial\pi)\in\Lambda^\pazocal{A}$.  Let $\textbf{p}_1$ and $\textbf{p}_2$ be the (perhaps trivial) subpaths of $\textbf{p}$ satisfying $\textbf{p}=\textbf{p}_1\textbf{x}\textbf{p}_2$.  But then $\lab(\textbf{p}_1\textbf{y}\textbf{p}_2)$ is a word over $\pazocal{A}\cup\pazocal{A}^{-1}$ representing $h^{-\eps}$ and satisfying $\|\textbf{p}_1\textbf{y}\textbf{p}_2\|<\|\textbf{p}\|$, contradicting the definition of $h$-distortion diagram.

Now, suppose $\textbf{x}$ is a subpath of $\textbf{q}$.  Similar to the previous setting, let $\textbf{q}_1$ and $\textbf{q}_2$ be the (perhaps trivial) subpaths of $\textbf{q}$ satisfying $\textbf{q}=\textbf{q}_1\textbf{x}\textbf{q}_2$.  Then, $\lab(\textbf{q}_1\textbf{y}\textbf{q}_2)$ is a word representing $h^\eps$ in $G_\Omega(\textbf{M}^\pazocal{L})$.  Hence, the definition of $h$-distortion diagram necessitates $|\textbf{q}_1\textbf{y}\textbf{q}_2|\geq|\textbf{q}|$.

If $|\textbf{y}|_\pazocal{A}\leq1$, then $|\textbf{x}|_\pazocal{A}=|\partial\pi|_\pazocal{A}-|\textbf{y}|_\pazocal{A}\geq C-1$, so that a parameter choice for $C$ implies $|\textbf{x}|_\pazocal{A}>3$.  As $\lab(\textbf{x})$ and $\lab(\textbf{y})$ both consist entirely of $\pazocal{A}$-letters and $b$-letters, \Cref{lengths 1}(a) then implies $|\textbf{y}|\leq\delta$ and $|\textbf{x}|>3\delta$.  But then \Cref{lengths 1}(c) yields the contradiction:
$$|\textbf{q}_1\textbf{y}\textbf{q}_2|\leq|\textbf{q}_1|+|\textbf{y}|+|\textbf{q}_2|\leq|\textbf{q}_1|+\delta+|\textbf{q}_2|<|\textbf{q}_1|+|\textbf{x}|-2\delta+|\textbf{q}_2|\leq|\textbf{q}|$$
Hence, it may be assumed that $|\textbf{y}|_\pazocal{A}\geq2$.  

Now, by \Cref{lengths 1}(d), there exists $\ell\in\{0,1,2\}$ such that $|\textbf{q}_1|+|\textbf{x}|+|\textbf{q}_2|-|\textbf{q}|=\ell\delta$.  In this setting, the value of $\ell$ corresponds to $\pazocal{A}$-edges of $\textbf{x}$ such that the corresponding letter can be placed in a $(\theta,a)$-syllable in a decomposition of $\textbf{q}$.  Since $|\textbf{x}|_\theta=0$, such an edge must be the first or the last $\pazocal{A}$-edge of $\textbf{x}$.  

But since $|\textbf{y}|_\pazocal{A}\geq2$, the first or last $\pazocal{A}$-edge of $\textbf{y}$ then corresponds to a letter that can be placed in a $(\theta,a)$-syllable in a decomposition of $\lab(\textbf{q}_1\textbf{y}\textbf{q}_2)$, meaning $|\textbf{q}_1|+|\textbf{y}|+|\textbf{q}_2|=|\textbf{q}_1\textbf{y}\textbf{q}_2|+\ell\delta$.

Thus, $|\textbf{q}_1\textbf{y}\textbf{q}_2|<|\textbf{q}|$, again yielding a contradiction.

\end{proof}

\begin{lemma} \label{distortion diagram pure a-scope}

Let $\partial\Delta=\textbf{qp}$ be the standard factorization of the contour of a reduced minimal $h$-distortion diagram $\Delta$.  Then there exists no big $a$-scope on $\textbf{p}$.

\end{lemma}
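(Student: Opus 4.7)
The plan is to derive a contradiction with the minimality of $\Delta$ by transposing the associated $a$-cell of the hypothetical big $a$-scope through the topmost $\theta$-band of the scope, reserving an appeal to \Cref{distortion diagram a-cell} for the degenerate case in which no $\theta$-band is present. So suppose for contradiction that $\Delta$ contains a big $a$-scope $\Psi$ on $\textbf{p}$, with associated $a$-cell $\pi$ and associated subpath $\textbf{s}\subseteq\partial\pi$ satisfying $|\textbf{s}|_\pazocal{A}>\frac{1}{2}|\partial\pi|_\pazocal{A}$. Every subdiagram of $\Delta$ is minimal by the usual replacement argument, so Lemmas~\ref{minimal is smooth} and \ref{minimal MM2} tell us that the completion $\tilde\Psi$ is smooth and satisfies (MM2). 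Applying \Cref{pure a-scope} we may replace $\Psi$ by a pure big $a$-scope whose completion is still a subdiagram of $\Delta$, and henceforth assume $\Psi$ itself is pure. In particular $\Psi$ contains no disks, no $a$-cells, and no $(\theta,q)$-cells.

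By \Cref{disk theta-annuli} every $\theta$-band of $\Psi$ is non-annular. Since the only $\theta$-edges of $\partial\Psi$ lie along the sides of $\pazocal{U}(\textbf{e}_1),\pazocal{U}(\textbf{e}_2)$, each $\theta$-band of $\Psi$ must have one end on each of these boundary $\pazocal{A}$-bands, and the $\theta$-bands of $\Psi$ are therefore linearly ordered by their height along $\pazocal{U}(\textbf{e}_1)$. If $\Psi$ contains no $\theta$-band then every remaining positive cell of $\Psi$ would have to be a $(\theta,a)$-cell lying in some $\theta$-band, and so $\Psi$ contains no positive cell at all; the boundary bands $\pazocal{U}(\textbf{e}_1),\pazocal{U}(\textbf{e}_2)$ then have length $0$ and, after $0$-refinement, $\textbf{s}$ is identified with the reverse of the subpath $\textbf{u}$ of $\textbf{p}$ bounding $\Psi$ on the opposite side. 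Thus $\textbf{s}$, up to orientation, is simultaneously a subpath of $\textbf{p}$ and of $\partial\pi$ with $|\textbf{s}|_\pazocal{A}>\frac{1}{2}|\partial\pi|_\pazocal{A}$, contradicting \Cref{distortion diagram a-cell}.

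In the remaining case let $\pazocal{T}$ be the topmost $\theta$-band of $\Psi$, i.e.\ the one whose $(\theta,\pazocal{A})$-cells in $\pazocal{U}(\textbf{e}_1)$ and $\pazocal{U}(\textbf{e}_2)$ lie closest to $\pi$. The region of $\Psi$ between $\textbf{s}$ and the top of the relevant subband $\pazocal{T}_0\subseteq\pazocal{T}$ contains no further $\theta$-band (by maximality of $\pazocal{T}$) and hence no positive cell, so its two boundary paths have freely equal labels; because both labels are reduced words, the two paths coincide after $0$-refinement, showing that $\pi$ is directly adjacent to $\pazocal{T}$. Reversing the direction of $\pazocal{T}$ if necessary, we arrange $\textbf{s}^{-1}\subseteq\textbf{bot}(\pazocal{T})$. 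Every $\pazocal{A}$-edge of $\textbf{s}$ lies in $E(\pi,\pazocal{T})$, since its maximal $\pazocal{A}$-band runs from $\pi$ down through $\Psi$ to $\textbf{u}$ and so must cross $\pazocal{T}$, which topologically separates these parts of $\partial\Psi$; in particular the endpoint edges $\textbf{e}_1,\textbf{e}_2$ lie in $E(\pi,\pazocal{T})$ and $|E(\pi,\pazocal{T})|\geq|\textbf{s}|_\pazocal{A}>\frac{1}{2}|\partial\pi|_\pazocal{A}\geq C/2\geq 5$ by our parameter choices.

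Hence the transposition of $\pazocal{T}$ and $\pi$ along $\textbf{s}$ is defined; it replaces a local subdiagram of type $(0,0,1,|\textbf{s}|_\pazocal{A})$ by one of type $(0,0,1,|\partial\pi|_\pazocal{A}-|\textbf{s}|_\pazocal{A})$, strictly reducing $\sigma_4$ while leaving $\tau_3$ unchanged. The resulting circular diagram has the same contour label (and therefore the same standard factorization) as $\Delta$, so after removing any cancellable pairs produced by the surgery we obtain a reduced $h$-distortion diagram of strictly smaller signature than $\Delta$, contradicting minimality. The main obstacle in this plan is the adjacency claim at the start of paragraph three: one must confirm that no $(\theta,b)$-cell or $b$-band slips between $\pi$ and $\pazocal{T}$. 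This is precisely where the purity of the scope — combined with the fact that every positive cell of $\Psi$ belongs to some $\theta$-band and the fact that the relevant boundary labels are reduced — does the real work, allowing one to equate $\textbf{s}$ with a side of $\pazocal{T}_0$ directly rather than only up to freely trivial clutter.
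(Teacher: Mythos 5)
Your argument is correct in substance, but it takes a different route from the paper's in the main case. The paper never performs a transposition here: it first observes that a $\theta$-band crossing both bounding $\pazocal{A}$-bands $\pazocal{U}(\textbf{e}_1),\pazocal{U}(\textbf{e}_2)$ would cross all $|\textbf{s}|_\pazocal{A}>\frac{1}{2}|\partial\pi|_\pazocal{A}$ of the $\pazocal{A}$-bands emanating from $\textbf{s}$, contradicting condition (MM1) (invoked via \Cref{minimal is smooth} and \Cref{minimal diskless is M-minimal}); it then concludes that $\pazocal{U}(\textbf{e}_1)$ and $\pazocal{U}(\textbf{e}_2)$ have length $0$ (any $\theta$-band through their cells would have to end on $\textbf{t}$, which has no $\theta$-edges), so the pure scope consists only of $0$-cells, and finishes exactly as in your degenerate case by identifying $\textbf{s}$ with a subpath of $\textbf{t}$ and contradicting \Cref{distortion diagram a-cell}. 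Thus your ``main case'' is a configuration the paper shows cannot occur at all, and your handling of it essentially inlines the transposition argument that underlies (MM1)/\Cref{minimal diskless is M-minimal}: because the scope is pure, the $a$-cell is directly adjacent (after $0$-refinement) to the topmost $\theta$-band, so you can transpose immediately and contradict minimality of $\tau(\Delta)$ without the area-minimization over choices of $(\pi,\pazocal{T},\pazocal{B}_1,\dots,\pazocal{B}_s)$ that lemma needs. What your route buys is that it never asks $\Delta$ to satisfy (MM1) globally -- a point the paper's citation glosses over, since \Cref{minimal diskless is M-minimal} is stated for diskless diagrams and $\Delta$ may still contain disks at this stage -- at the cost of redoing, in this special situation, work the paper has already packaged.

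One bookkeeping point to tighten: ``$\Psi$ contains no $\theta$-band'' does not by itself give that $\pazocal{U}(\textbf{e}_1)$ and $\pazocal{U}(\textbf{e}_2)$ have length $0$. If one of these bounding bands has a cell, the maximal $\theta$-band through that cell cannot end on $\textbf{t}$ and cannot recross the same $\pazocal{A}$-band (\Cref{basic annuli 2}), so it must cross the other bounding band -- possibly passing through $\Psi$ with no cells strictly inside $\Psi$, in which case it is not a ``$\theta$-band of $\Psi$'' under your reading, yet your degenerate conclusion fails. The fix is only to rephrase the dichotomy as ``some $\theta$-band crosses both $\pazocal{U}(\textbf{e}_1)$ and $\pazocal{U}(\textbf{e}_2)$'' versus ``none does'': in the first case your adjacency-and-transposition argument applies verbatim (the separating band still picks up every $\pazocal{A}$-edge of $\textbf{s}$ in $E(\pi,\pazocal{T})$), and only in the second do the bounding bands have length $0$ and the appeal to \Cref{distortion diagram a-cell} close the proof.
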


\begin{proof}

Suppose $\Psi$ is such a big $a$-scope on $\textbf{p}$.  By \Cref{pure a-scope}, it may be assumed that $\Psi$ is a pure big $a$-scope.

Let $\pi$ be the associated $a$-cell and $\textbf{s}$ be the associated subpath of $\Psi$.  Then, there exist $\pazocal{A}$-edges $\textbf{e}_1$ and $\textbf{e}_2$ of $\partial\pi$ such that $\Psi$ is bounded by $\pazocal{U}(\textbf{e}_1)$, $\pazocal{U}(\textbf{e}_2)$, $\textbf{s}$, and a subpath $\textbf{t}$ of $\textbf{p}$.

By \Cref{minimal is smooth}, every $\pazocal{A}$-edge of $\textbf{s}$ is the end of a maximal $\pazocal{A}$-band which has an end on $\textbf{t}$.  So, if a $\theta$-band of $\pazocal{T}$ of $\Psi$ crosses both $\pazocal{U}(\textbf{e}_1)$ and $\pazocal{U}(\textbf{e}_2)$, then it crosses the $|\textbf{s}|_\pazocal{A}>\frac{1}{2}|\partial\pi|_\pazocal{A}$ maximal $\pazocal{A}$-bands of $\Psi$ that have ends on $\textbf{s}^{-1}$.  But Lemmas \ref{minimal is smooth} and \ref{minimal diskless is M-minimal} imply that $\Delta$ satisfies condition (MM1), yielding a contradiction.  Hence, no $\theta$-band can cross both $\pazocal{U}(\textbf{e}_1)$ and $\pazocal{U}(\textbf{e}_2)$.

By \Cref{basic annuli 2}(1), it then follows that every maximal $\theta$-band which has an end on the side of $\pazocal{U}(\textbf{e}_i)$ has an end on $\textbf{t}$.  But $|\textbf{t}|_\theta=0$, so that $\pazocal{U}(\textbf{e}_1)$ and $\pazocal{U}(\textbf{e}_2)$ must be $\pazocal{A}$-bands of length 0.  As a result, $\Psi$ contains no $\theta$-bands at all, so that it must consist entirely of $0$-cells as it is pure.

Since $\lab(\textbf{s})$ and $\lab(\textbf{t})$ are both reduced words over $\pazocal{A}\cup\pazocal{A}^{-1}$ which are freely equal, $\textbf{s}$ and $\textbf{t}$ can be identified as subpaths of $\partial\Delta$.  But then $\pi$ and $\textbf{s}$ form a contradiction to \Cref{distortion diagram a-cell}.

\end{proof}

Let $\partial\Delta=\textbf{qp}$ be the standard factorization of the contour of an $h$-distortion diagram $\Delta$.  Note that since $|\textbf{p}|_\theta=0$, any maximal $\theta$-band $\pazocal{T}$ which has an end on $\partial\Delta$ must have two ends on $\textbf{q}$.  Let $\textbf{e}_1\textbf{b}\textbf{e}_2$ be the subpath of $\textbf{q}$ such that $\textbf{e}_1$ and $\textbf{e}_2$ are the $\theta$-edges corresponding to the ends of $\pazocal{T}$.  Without loss of generality, let $\textbf{e}_1^{-1}$ and $\textbf{e}_2$ be defining edges (indeed, the ends) of $\pazocal{T}$.  Observe that $\textbf{b}$ must be a non-trivial path since $\lab(\textbf{q})$ is reduced.

In contrast to $q$-bands (see \Cref{distortion diagram q}), the length function does not allow us to immediately rule out the existence of $\theta$-bands that have two ends on the boundary of an $h$-distortion diagram.  However, the next statements do say something about such bands.

\begin{lemma} \label{distortion diagram theta 0}

Every maximal $\theta$-band which has an end on the boundary of a reduced $h$-distortion diagram has positive length.

\end{lemma}

\begin{proof}

Let $\partial\Delta=\textbf{qp}$ be the standard factorization of the contour of a reduced $h$-distortion diagram $\Delta$ and suppose to the contrary that there exists a maximal $\theta$-band $\pazocal{T}$ with an end on $\partial\Delta$ of length $0$.  Then let $\textbf{e}_1\textbf{b}\textbf{e}_2$ be the subpath of $\textbf{q}$ corresponding to $\pazocal{T}$ and define the (perhaps trivial) subpaths $\textbf{q}_1$ and $\textbf{q}_2$ such that $\textbf{q}=\textbf{q}_1\textbf{e}_1\textbf{b}\textbf{e}_2\textbf{q}_2$.

As $\pazocal{T}$ has length $0$, $\textbf{e}_1^{-1}$ and $\textbf{e}_2$ are adjacent edges, so that $\lab(\textbf{e}_1\textbf{b}\textbf{e}_2)$ is trivial in $G_\Omega(\textbf{M}^\pazocal{L})$.  Cutting along any $0$-edges defining the adjacency allows us to remove the subpath $\textbf{e}_1\textbf{b}\textbf{e}_2$, producing a path $\textbf{q}'$ in $\Delta$ homotopic to $\textbf{q}$ such that $\lab(\textbf{q}')\equiv\lab(\textbf{q}_1)\lab(\textbf{q}_2)$ represents $h^\eps$ in $G_\Omega(\textbf{M}^\pazocal{L})$.  

But then \Cref{lengths 1}(c) implies $|\textbf{q}'|\leq|\textbf{q}_1|+|\textbf{q}_2|\leq|\textbf{q}|-|\textbf{e}_1\textbf{b}\textbf{e}_2|+2\delta\leq|\textbf{q}|-2+2\delta$, so that taking $\delta^{-1}>1$ produces a contradiction to the minimality of $|\textbf{q}|$.

\end{proof}

In the context above, if any positive cell of $\Delta$ between $\textbf{b}$ and $\textbf{bot}(\pazocal{T})$ is an $a$-cell, then $\pazocal{T}$ is called a \textit{quasi-rim $\theta$-band}.  Further, if there are no such $a$-cells ({\frenchspacing i.e. if} $\textbf{b}=\textbf{bot}(\pazocal{T})$), then $\pazocal{T}$ is called a \textit{rim $\theta$-band}.

\begin{lemma} \label{distortion diagram rim theta}

The base of a rim $\theta$-band in a reduced $h$-distortion diagram $\Delta$ has length $l_b>K$.

\end{lemma}

\begin{proof}

Let $\pazocal{T}$ be a rim $\theta$-band and, letting $\partial\Delta=\textbf{qp}$ be the standard factorization of $\partial\Delta$, define $\textbf{q}=\textbf{q}_1\textbf{e}_1\textbf{b}\textbf{e}_2\textbf{q}_2$ as in the proof of \Cref{distortion diagram theta 0}.


As $\textbf{b}=\textbf{bot}(\pazocal{T})$, then \Cref{theta-band lengths} implies $|\textbf{top}(\pazocal{T})|\leq|\textbf{b}|+2\delta l_b$.  Further, since $\textbf{top}(\pazocal{T})$ and $\textbf{e}_1\textbf{b}\textbf{e}_2$ bound a subdiagram of $\Delta$, then for $\eps$ the sign of $\Delta$, $\lab(\textbf{q}_1\textbf{top}(\pazocal{T})\textbf{q}_2)$ represents $h^\eps$ in $G_\Omega(\textbf{M}^\pazocal{L})$.  So, $|\textbf{q}_1\textbf{top}(\pazocal{T})\textbf{q}_2|\geq|h|=|\textbf{q}|$.

But \Cref{lengths 1}(c) implies 
\begin{align*}
|\textbf{q}_1\textbf{top}(\pazocal{T})\textbf{q}_2|&\leq|\textbf{q}_1|+|\textbf{top}(\pazocal{T})|+|\textbf{q}_2|\leq|\textbf{q}_1|+|\textbf{b}|+|\textbf{q}_2|+2\delta l_b \\
&\leq|\textbf{q}_1|+|\textbf{e}_1|+|\textbf{b}|+|\textbf{e}_2|+|\textbf{q}_2|+2\delta l_b-2\leq|\textbf{q}|+2\delta(l_b+2)-2
\end{align*}
Hence, $l_b\geq\delta^{-1}-2$, so that the parameter choice $\delta^{-1}>>K$ implies $l_b>K$.

\end{proof}

\begin{lemma} \label{distortion diagram quasi-rim theta}

The base of a quasi-rim $\theta$-band in a reduced minimal $h$-distortion diagram $\Delta$ has length $l_b>K$.

\end{lemma}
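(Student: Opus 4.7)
Assume $\pazocal{T}$ is a quasi-rim $\theta$-band in $\Delta$ and let $\Gamma_0$ denote the region between $\textbf{b}$ and $\textbf{bot}(\pazocal{T})$, which by hypothesis contains only $a$-cells as positive cells. If $\Gamma_0$ contains no $a$-cells, then $\pazocal{T}$ is a rim $\theta$-band and the conclusion $l_b>K$ follows directly from \Cref{distortion diagram rim theta}. So suppose for contradiction that $\Gamma_0$ contains at least one $a$-cell; the plan is to produce a subpath of $\textbf{q}$ that violates \Cref{distortion diagram a-cell}, thereby forcing the reduction to the rim case.

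First I would establish the key observation that $\textbf{b}$ contains no $\theta$-edges: every edge of $\textbf{b}$ bounds a cell of $\Gamma_0 \cup \pazocal{T}$, and neither the $a$-cells of $\Gamma_0$ (whose boundary labels in $\Omega$ contain no $\theta$-letters) nor the cells of $\pazocal{T}$ (whose $\theta$-edges are interior defining edges of the band, not on the side $\textbf{bot}(\pazocal{T})$ that can coincide with $\textbf{b}$) can contribute a $\theta$-edge to $\textbf{b}$. Next I would apply \Cref{Gamma_a special cell} to $\Gamma_0$, which inherits minimality from $\Delta$ and hence is smooth and satisfies (MM1)--(MM2) by Lemmas \ref{minimal is smooth}, \ref{minimal diskless is M-minimal}, and \ref{minimal MM2}. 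This yields an $a$-cell $\pi_0 \in \Gamma_0$ with $\ell \ge |\partial\pi_0|_\pazocal{A} - 6$ consecutive $\pazocal{A}$-edges whose $\pazocal{A}$-bands in $\Gamma_0$ terminate on $\partial\Gamma_0 = \textbf{b} \cup \textbf{bot}(\pazocal{T})^{-1}$. A band that terminates on $\textbf{bot}(\pazocal{T})$ extends in $\Delta$ through a $(\theta,\pazocal{A})$-cell of $\pazocal{T}$ and hence crosses $\pazocal{T}$, contributing to $E(\pi_0,\pazocal{T})$; by (MM1) applied in $\Delta$, at most $\tfrac{1}{2}|\partial\pi_0|_\pazocal{A}$ of these bands terminate on $\textbf{bot}(\pazocal{T})$, so at least $\tfrac{1}{2}|\partial\pi_0|_\pazocal{A} - 6$ terminate on $\textbf{b}$. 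Choosing two such bands with appropriate endpoints produces an $a$-scope on $\textbf{b}$, and invoking \Cref{pure a-scope} yields a pure big $a$-scope $\Psi$ on $\textbf{b}$ with completion inside $\Gamma_0$.

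Finally, I would adapt the argument of \Cref{distortion diagram pure a-scope}: for the associated $a$-cell $\pi$ of $\Psi$ and associated subpath $\textbf{s}$ with $|\textbf{s}|_\pazocal{A} > \tfrac{1}{2}|\partial\pi|_\pazocal{A}$, no $\theta$-band in $\Psi$ can cross both bounding $\pazocal{A}$-bands $\pazocal{U}(\textbf{f}_1), \pazocal{U}(\textbf{f}_2)$ without crossing more than $\tfrac{1}{2}|\partial\pi|_\pazocal{A}$ $\pazocal{A}$-bands from $\pi$, which would violate (MM1) in $\Delta$. By \Cref{basic annuli 2}(1), any $\theta$-band with an end on the side of $\pazocal{U}(\textbf{f}_i)$ must therefore have another end on the subpath $\textbf{t}$ of $\textbf{b}$ bounding $\Psi$; since $\textbf{t}$ inherits from $\textbf{b}$ the absence of $\theta$-edges, both $\pazocal{U}(\textbf{f}_1)$ and $\pazocal{U}(\textbf{f}_2)$ must have length $0$, forcing $\Psi$ to consist entirely of $0$-cells. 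Identifying $\textbf{s}$ with a subpath of $\textbf{b} \subseteq \textbf{q}$ (after accounting for the length-$0$ bounding bands, which in particular shows $\textbf{t}$ has only $\pazocal{A}$- and $b$-edges so that $\lab(\textbf{s})$ and $\lab(\textbf{t})$ are freely equal reduced words), the inequality $|\textbf{s}|_\pazocal{A} > \tfrac{1}{2}|\partial\pi|_\pazocal{A}$ contradicts \Cref{distortion diagram a-cell}. The main obstacle I anticipate is the careful combinatorial selection in the second paragraph needed to guarantee that the resulting $a$-scope can be upgraded to one with the strict inequality $|\textbf{s}|_\pazocal{A} > \tfrac{1}{2}|\partial\pi|_\pazocal{A}$, especially in boundary cases where (MM1) is saturated and the bands ending on $\textbf{b}$ are interleaved with those ending on $\textbf{bot}(\pazocal{T})$.
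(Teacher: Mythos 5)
Your reduction to the rim case via a contradiction does not go through, and the obstacle you flag at the end is in fact fatal rather than technical. After applying \Cref{Gamma_a special cell} and (MM1) to the $a$-cell $\pi_0$ in $\Gamma_0$, all you can conclude is that at least $\frac{1}{2}|\partial\pi_0|_\pazocal{A}-6$ of its $\pazocal{A}$-edges lie on $\textbf{b}$ and at most $\frac{1}{2}|\partial\pi_0|_\pazocal{A}$ cross $\pazocal{T}$. That never produces a \emph{big} $a$-scope on $\textbf{b}$ in the case you need: if the $a$-scope on $\textbf{b}$ attached to $\pi_0$ is not pure, then \Cref{pure a-scope} does give a pure big $a$-scope and hence a contradiction with \Cref{distortion diagram a-cell} (this is exactly the first step of the paper's proof); but if it is pure and not big, i.e. $\pi_0$ has at most half of its $\pazocal{A}$-edges on $\textbf{b}$ and the rest crossing $\pazocal{T}$, then nothing is contradicted --- \Cref{distortion diagram a-cell} and (MM1) are both compatible with this configuration, and it genuinely occurs. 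So the existence of $a$-cells between $\textbf{b}$ and $\textbf{bot}(\pazocal{T})$ cannot be refuted; your plan to ``force the reduction to the rim case'' inside the given diagram $\Delta$ is chasing a false statement.

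The missing idea is to \emph{modify the diagram} rather than seek a contradiction. The paper's proof first shows, as above, that every $a$-scope on $\textbf{b}$ is pure and not big, so each $a$-cell of $\Delta_0$ has at least $\frac{1}{2}C-6\geq 7$ consecutive $\pazocal{A}$-edges on $\textbf{bot}(\pazocal{T})^{-1}$ (via \Cref{pure a-scope} applied on the $\textbf{bot}(\pazocal{T})$ side). Since a transposition of $\pazocal{T}$ with an $a$-cell only requires at least $5$ edges of $E(\pi,\pazocal{T})$ --- not more than half --- one can transpose the $a$-cells of $\Delta_0$ through $\pazocal{T}$ one at a time. Each transposition leaves the boundary label, the base of the band, and the reducedness of the $h$-distortion diagram intact (minimality may be lost, but is not needed), and after $k$ steps $\pazocal{T}$ becomes a rim $\theta$-band with the same base in a new reduced $h$-distortion diagram, at which point \Cref{distortion diagram rim theta} gives $l_b>K$. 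Your argument supplies the first half of this (the location of the $\pazocal{A}$-edges) but lacks the transposition step, which is where the actual reduction to the rim case happens.
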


\begin{proof}

As in the previous two proofs, let $\partial\Delta=\textbf{qp}$ be the standard factorization of $\partial\Delta$, let $\textbf{e}_1\textbf{b}\textbf{e}_2$ be the subpath of $\textbf{q}$ corresponding to the quasi-rim $\theta$-band $\pazocal{T}$, and let $\textbf{q}_1,\textbf{q}_2$ be the (perhaps trivial) subpaths of $\textbf{q}$ such that $\textbf{q}=\textbf{q}_1\textbf{e}_1\textbf{b}\textbf{e}_2\textbf{q}_2$.

By \Cref{distortion diagram rim theta}, it suffices to assume that (through $0$-refinement) $\textbf{b}$ and $\textbf{bot}(\pazocal{T})$ bound a subdiagram $\Delta_0$ of $\Delta$ consisting of $a$-cells $\pi_1,\dots,\pi_k$.

As no cell of $\Delta_0$ is a $(\theta,q)$- or $(\theta,a)$-cell, $|\textbf{b}|_\theta=0$ by \Cref{distortion diagram theta 0}.  An identical proof to that of \Cref{distortion diagram pure a-scope} then implies there is no big $a$-scope on $\textbf{b}$.  Hence, by \Cref{pure a-scope} any $a$-scope on $\textbf{b}$ with associated $a$-cell $\pi_i$ is a pure $a$-scope that is not big.  In particular, since $\pi_1,\dots,\pi_k$ comprise every positive cell of $\Delta_0$, for each $i$ there exists a maximal (perhaps trivial) subpath $\textbf{t}_i$ of $\partial\pi_i$ shared with $\textbf{b}$ such that every $\pazocal{A}$-edge shared by $\partial\pi_i$ and $\textbf{b}$ is an edge of $\textbf{t}_i$.  Observe that $|\textbf{t}_i|_\pazocal{A}\leq\frac{1}{2}|\partial\pi_i|_\pazocal{A}$ since the corresponding $a$-scope is not big.



Noting that $\Delta_0$ is an $M$-minimal diagram by \Cref{minimal diskless is M-minimal}, \Cref{Gamma_a special cell} implies there exists $j\in\{1,\dots,k\}$ such that $|\partial\pi_{j}|_\pazocal{A}-6$ $\pazocal{A}$-edges of $\partial\pi_{j}$ are shared with $\partial\Delta_0$.  Since $|\textbf{t}_{j}|_\pazocal{A}\leq\frac{1}{2}|\partial\pi_{j}|_\pazocal{A}$, at least $\frac{1}{2}|\partial\pi_{j}|_\pazocal{A}-6\geq \frac{1}{2}C-6$ $\pazocal{A}$-edges of $\partial\pi_{j}$ are shared with $\textbf{bot}(\pazocal{T})^{-1}$.  

So, \Cref{pure a-scope} and the parameter choice $C\geq26$ implies there exists a pure $a$-scope on $\textbf{bot}(\pazocal{T})^{-1}$ of size at least $7$.  Letting $\pi_{\ell_1}$ be the associated $a$-cell of this pure $a$-scope, the associated subpath $\textbf{s}_{\ell_1}$ is a subpath of both $\partial\pi_{\ell_1}$ and $\textbf{bot}(\pazocal{T})^{-1}$ and satisfies $|\textbf{s}_{\ell_1}|_\pazocal{A}\geq7$.  Hence, since $\pazocal{A}$-bands and $q$-bands cannot cross, $\textbf{s}_{\ell_1}$ contains at least $5$ edges of $E(\pi_{\ell_1},\pazocal{T})$, and so we may perform the transposition of $\pi_{\ell_1}$ and $\pazocal{T}$ along $\textbf{s}_{\ell_1}$.

Note that the result of this transposition is a reduced diagram $\Delta_1$ with the same contour label and $3$-signature, and so $\Delta_1$ is a reduced $3$-minimal $h$-distortion diagram.  
By construction, the $\theta$-band $\pazocal{T}_1$ arising from $\pazocal{T}$ has the same base.  Further, identifying $\textbf{b}$ with a subpath of $\partial\Delta_1$, $\textbf{bot}(\pazocal{T}_1)$ and $\textbf{b}$ bound a subdiagram $\Delta_{1,0}$ comprised of $k-1$ $a$-cells identified with the subdiagram of $\Delta_0$ obtained by removing $\pi_{\ell_1}$.  

Hence, while $\Delta_1$ need not be minimal, the subdiagram $\Delta_{1,0}$ is still $M$-minimal.  Thus, the process may be iterated to move all the $a$-cells past the $\theta$-band, producing a reduced $h$-distortion diagram containing a rim $\theta$-band whose base has length $l_b$.  The statement is therefore given by \Cref{distortion diagram rim theta}.

\end{proof}

\medskip


\subsection{Disks in $h$-distortion diagrams} \

Our next goal is to show that a reduced minimal $h$-distortion diagram cannot contain any disks.  This requires the most technical argument of this paper; to present it as efficiently as possible, we introduce auxiliary terminology specific to this setting.

Fix a reduced minimal $h$-distortion diagram $\Delta$ containing a disk.  Let $\partial\Delta=\textbf{qp}$ be the standard factorization of the contour and $\eps$ be the sign of $\Delta$.  Lemmas \ref{Gamma special cell} and \ref{pure scope} then imply the existence of a pure scope $\Psi$ on $\textbf{q}$ of size $L-6$.  

Let $\textbf{s}$ be the associated subpath and $\Pi$ be the associated disk of $\Psi$.  Perhaps passing to the mirror $\bar{\Delta}$, it may be assumed that $\lab(\partial\Pi)\equiv W$ for some accepted configuration $W$ of $\textbf{M}^\pazocal{L}$. 

Enumerate the $t$-edges of $\textbf{s}$ by $\textbf{e}_1,\dots,\textbf{e}_{L-6}$.  Then, for each $i\in\{1,\dots,L-6\}$, let $\pazocal{Q}_i$ be the $t$-spoke $\pazocal{Q}(\textbf{e}_i)$ of $\Pi$.  As $\Psi$ is a pure scope, each $\pazocal{Q}_i$ must have an end on $\textbf{q}$.  In particular, there exists a factorization $\partial\Psi=\textbf{s}^{-1}(\textbf{bot}(\pazocal{Q}_1))\textbf{t}(\textbf{top}(\pazocal{Q}_{L-6}))^{-1}$.

Let $\textbf{z}$ be the subpath of $(\partial\Pi)^{-1}$ such that $\textbf{z}^{-1}$ is the complement of $\textbf{s}$ in $\partial\Pi$, {\frenchspacing i.e. $\partial\Pi=\textbf{s}\textbf{z}^{-1}$}.  Then, define the path $\textbf{t}_0=(\textbf{bot}(\pazocal{Q}_1)^{-1})\textbf{z}(\textbf{top}(\pazocal{Q}_{L-6}))$ in $\Delta$.  Note that, by definition, $\textbf{t}_0$ is combinatorially homotopic to $\textbf{t}$.

\begin{lemma} \label{clove path}

For any word $w$ over $\pazocal{X}\cup\pazocal{X}^{-1}$ which represents the same element of $G_\Omega(\textbf{M}^\pazocal{L})$ as $\lab(\textbf{t}_0)$, $|w|\geq|\textbf{t}|$.

\end{lemma}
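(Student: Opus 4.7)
The plan is to argue by contradiction: assume $|w|<|\textbf{t}|$ and construct a word representing $h^\eps$ of length strictly less than $|h|$, contradicting the definition of $|h|$ as the minimum length of a word representing $h$ in $G_\Omega(\textbf{M}^\pazocal{L})$.

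First I verify that $\lab(\textbf{t})$ and $\lab(\textbf{t}_0)$ represent the same element of $G_\Omega(\textbf{M}^\pazocal{L})$. The subdiagram $\tilde{\Psi}=\Pi\cup\Psi$ of $\Delta$ (the completion of the scope $\Psi$) has its contour obtained by gluing $\partial\Psi=\textbf{s}^{-1}\textbf{bot}(\pazocal{Q}_1)\textbf{t}\textbf{top}(\pazocal{Q}_{L-6})^{-1}$ to $\partial\Pi=\textbf{s}\textbf{z}^{-1}$ along the shared subpath $\textbf{s}$; the resulting loop is a cyclic permutation of $\textbf{t}\textbf{t}_0^{-1}$. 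Van Kampen's lemma then gives $\lab(\textbf{t})=\lab(\textbf{t}_0)$ in $G_\Omega(\textbf{M}^\pazocal{L})$, and by hypothesis $w$ also represents this element.

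Next, by the convention that the sides of a $q$-band exclude its defining edges, the $t$-edges at the $\partial\Delta$-ends of $\pazocal{Q}_1$ and $\pazocal{Q}_{L-6}$ cannot belong to $\textbf{bot}(\pazocal{Q}_1)$ or $\textbf{top}(\pazocal{Q}_{L-6})$; reading off the factorization of $\partial\Psi$, they must therefore occur as the first and last edges of $\textbf{t}$. Since $t$-letters are $q$-letters, writing $\textbf{q}=\textbf{q}_1\textbf{t}\textbf{q}_2$ and applying Lemma~\ref{lengths 1}(d) twice yields $|\textbf{q}|=|\textbf{q}_1|+|\textbf{t}|+|\textbf{q}_2|$. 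Now the word $u\equiv\lab(\textbf{q}_1)\,w\,\lab(\textbf{q}_2)$ represents the same group element as $\lab(\textbf{q}_1)\lab(\textbf{t})\lab(\textbf{q}_2)=\lab(\textbf{q})$, namely $h^\eps$, and Lemma~\ref{lengths 1}(c) gives $|u|\leq|\textbf{q}_1|+|w|+|\textbf{q}_2|<|\textbf{q}_1|+|\textbf{t}|+|\textbf{q}_2|=|\textbf{q}|=|h|$. Combined with $|h|=|h^{-1}|$ from Lemma~\ref{lengths 1}(a), this contradicts the minimality defining $|h|$.

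I expect the main obstacle to be the bookkeeping around whether the $t$-edges at the $\partial\Delta$-ends of $\pazocal{Q}_1$ and $\pazocal{Q}_{L-6}$ are genuinely included as the extremal edges of $\textbf{t}$; this is a purely definitional matter about the standard factorization of a band, but it is essential to invoking Lemma~\ref{lengths 1}(d). Once that point is pinned down, the rest of the argument is a direct combination of Lemma~\ref{lengths 1} and van Kampen's lemma, and requires no further use of the detailed structure of $\Delta$ as an $h$-distortion diagram beyond the built-in equality $|\textbf{q}|=|h|$.
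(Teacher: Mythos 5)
Your proof is correct and follows essentially the same route as the paper: decompose $\textbf{q}=\textbf{q}_1\textbf{t}\textbf{q}_2$, use Lemma \ref{lengths 1}(d) (justified by the first and last edges of $\textbf{t}$ being $t$-edges) to get $|\textbf{q}|=|\textbf{q}_1|+|\textbf{t}|+|\textbf{q}_2|$, and use Lemma \ref{lengths 1}(c) together with the fact that $\lab(\textbf{q}_1)w\lab(\textbf{q}_2)$ represents $h^\eps$. The paper phrases this directly rather than by contradiction and quotes the already-noted combinatorial homotopy of $\textbf{t}$ and $\textbf{t}_0$, which you verify explicitly; these are only cosmetic differences.
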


\begin{proof}

Let $\textbf{q}_1$ and $\textbf{q}_2$ be the (perhaps trivial) subpaths of $\textbf{q}$ such that $\textbf{q}=\textbf{q}_1\textbf{t}\textbf{q}_2$.  Then, $(\lab(\textbf{q}_1))w(\lab(\textbf{q}_2))$ represents $h^\eps$ in $G_\Omega(\textbf{M}^\pazocal{L})$.  So, \Cref{lengths 1}(c) implies:
$$|\textbf{q}_1|+|w|+|\textbf{q}_2|=|\lab(\textbf{q}_1)|+|w|+|\lab(\textbf{q}_2)|\geq|(\lab(\textbf{q}_1))w(\lab(\textbf{q}_2))|\geq|h|$$
But the first and last edges of $\textbf{t}$ are $q$-edges, so that \Cref{lengths 1}(d) implies 
$$|h|=|\textbf{q}|=|\textbf{q}_1|+|\textbf{t}|+|\textbf{q}_2|$$ 

\end{proof}

\begin{figure}[H]
\centering
\includegraphics[scale=2.75]{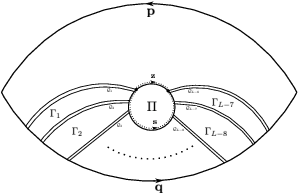}
\caption{Cloves formed by the disk $\Pi$ in the $h$-distortion diagram $\Delta$}
\label{h-distortion-disk}
\end{figure}


Now, for each $i\in\{1,\dots,L-7\}$, let $\Gamma_i$ be the subdiagram of $\Psi$ bounded by the $t$-bands $\pazocal{Q}_i$ and $\pazocal{Q}_{i+1}$ (see \Cref{h-distortion-disk}).  Each such subdiagram $\Gamma_i$ is called a \textit{clove}.  Note that for each $i$, there exist subpaths $\textbf{s}_i$ and $\textbf{t}_i$ of $\textbf{s}$ and $\textbf{t}$, respectively, such that $\partial\Gamma_i=\textbf{s}_i^{-1}(\textbf{bot}(\pazocal{Q}_i))\textbf{t}_i(\textbf{top}(\pazocal{Q}_{i+1}))^{-1}$.

For each $i$, $\Gamma_i$ and $\Gamma_{i+1}$ intersect along the $t$-band $\pazocal{Q}_{i+1}$.  So, the cloves $\Gamma_1,\dots,\Gamma_{L-7}$ form a `cover' of $\Psi$.  Moreover, for any $1\leq k<\ell\leq L-7$, there exists a subdiagram $\Psi_{k,\ell}$ `covered' by $\Gamma_k,\dots,\Gamma_{\ell-1}$; in other words, $\Psi_{k,\ell}$ is the subdiagram of $\Psi$ bounded by $\pazocal{Q}_k$ and $\pazocal{Q}_\ell$.  Note that it follows from this definition that $\Psi_{1,L-6}=\Psi$ and $\Psi_{i,i+1}=\Gamma_i$.

Let $\textbf{s}_{k,\ell}$ be the minimal subpath of $\textbf{s}$ containing each subpath $\textbf{s}_i$ for $k\leq i\leq\ell-1$.  Similarly, let $\textbf{t}_{k,\ell}$ be the minimal subpath of $\textbf{t}$ containing each $\textbf{t}_i$.  Then $\partial\Psi_{k,\ell}=\textbf{s}_{k,\ell}^{-1}(\textbf{bot}(\pazocal{Q}_k))\textbf{t}_{k,\ell}(\textbf{top}(\pazocal{Q}_\ell))^{-1}$.

\begin{lemma} \label{clove theta-bands}

Let $k,\ell\in\{1,\dots,L-6\}$ such that $\ell-k>(L-3)/2$.  Then any maximal $\theta$-band of $\Psi_{k,\ell}$ has one end on $\textbf{t}_{k,\ell}$ and crosses exactly one of either $\pazocal{Q}_k$ or $\pazocal{Q}_\ell$.

\end{lemma}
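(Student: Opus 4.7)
Let $\pazocal{T}$ be a maximal $\theta$-band of $\Psi_{k,\ell}$. Its two ends are $\theta$-edges on $\partial\Psi_{k,\ell}$, and since $\textbf{s}_{k,\ell}^{-1}$ is a subpath of $\partial\Pi$ carrying no $\theta$-edges, each end lies on $\textbf{bot}(\pazocal{Q}_k)$, $\textbf{top}(\pazocal{Q}_\ell)$, or $\textbf{t}_{k,\ell}$. The only configurations compatible with the conclusion are those with exactly one end on $\textbf{t}_{k,\ell}$ and the other on exactly one of $\textbf{bot}(\pazocal{Q}_k)$ or $\textbf{top}(\pazocal{Q}_\ell)$, so I will rule out the remaining possibilities case by case.

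If both ends lie on $\textbf{bot}(\pazocal{Q}_k)$ (symmetrically for $\textbf{top}(\pazocal{Q}_\ell)$), then $\pazocal{T}$ together with the corresponding subband of $\pazocal{Q}_k$ (resp.\ $\pazocal{Q}_\ell$) form a $(\theta,q)$-annulus in $\Delta$, contradicting \Cref{basic annuli 1}(1). If one end is on $\textbf{bot}(\pazocal{Q}_k)$ and the other on $\textbf{top}(\pazocal{Q}_\ell)$, then the maximal $\theta$-band $\widehat{\pazocal{T}}$ of $\Delta$ containing $\pazocal{T}$ must cross each of the intermediate $t$-spokes as well; hence $\widehat{\pazocal{T}}$ crosses at least $\ell-k+1>(L-1)/2$ positive $t$-spokes of $\Pi$, contradicting \Cref{minimal t-spokes theta-band}.

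The remaining and hardest case is when both ends of $\pazocal{T}$ lie on $\textbf{t}_{k,\ell}$; here $\pazocal{T}$ is itself a maximal $\theta$-band of $\Delta$ with both ends on $\textbf{q}$. Letting $\textbf{b}$ be the subpath of $\textbf{t}_{k,\ell}$ between these ends and $\Xi$ the subdiagram of $\Delta$ bounded by $\pazocal{T}$ and $\textbf{b}$, I note that $\Xi\subseteq\Psi$ contains no disks. Choosing an innermost maximal $\theta$-band $\pazocal{T}'$ of $\Xi$ with both ends on $\textbf{b}$ (possibly $\pazocal{T}'=\pazocal{T}$), the subdiagram between $\pazocal{T}'$ and the corresponding subpath $\textbf{b}'$ of $\textbf{b}$ contains only $a$-cells, so $\pazocal{T}'$ is a rim or quasi-rim $\theta$-band of $\Delta$. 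By \Cref{distortion diagram rim theta} and \Cref{distortion diagram quasi-rim theta}, its base has length $l_b>K$, providing more than $K$ distinct maximal $q$-bands of $\Delta$ crossing $\pazocal{T}'$. Each such $q$-band has an end on $\textbf{b}'$ and, by \Cref{distortion diagram q}, terminates on a disk at its other end. Tracing each $q$-band out of $\Xi$ through $\pazocal{T}$ into $\Psi_{k,\ell}$, and using that $\Pi$ is the only disk in $\Psi$, I plan to show that each either terminates at $\Pi$ (yielding at most $L(2N+4)$ possibilities, one per $q$-edge of $\partial\Pi$) or exits $\Psi_{k,\ell}$ by crossing $\pazocal{Q}_k$ or $\pazocal{Q}_\ell$, with the latter tally controlled by the histories of these two $t$-spokes. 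The target contradiction then comes from the parameter hierarchy $K\gg LN$ once the total count is bounded polynomially in $L$ and $N$.

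The main obstacle will be completing this last step, and in particular obtaining a sharp enough bound on the number of $q$-bands that can leak through $\pazocal{Q}_k$ and $\pazocal{Q}_\ell$ into the complement of $\Psi_{k,\ell}$ and reach disks lying outside $\Psi$. This requires exploiting the minimality of $\Delta$ together with the fact that every maximal $q$-band of $\Delta$ terminates at a disk (\Cref{distortion diagram q}), to preclude the presence of too many disks accessible via short detours around $\Psi$.
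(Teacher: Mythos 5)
Your first two cases (two ends on the same $t$-spoke, and crossing both $\pazocal{Q}_k$ and $\pazocal{Q}_\ell$) are handled exactly as in the paper, via \Cref{basic annuli 1}(1) and \Cref{minimal t-spokes theta-band}. The gap is in the third case, where both ends of the $\theta$-band lie on $\textbf{t}_{k,\ell}$: your counting argument is left unfinished, and the obstacle you flag --- bounding the number of $q$-bands that ``leak through $\pazocal{Q}_k$ or $\pazocal{Q}_\ell$'' to disks outside $\Psi$ --- does not actually exist. The spokes $\pazocal{Q}_k,\pazocal{Q}_\ell$ are $t$-bands, hence themselves $q$-bands, and a $(\theta,q)$-cell carries $q$-edges from exactly one part of the state letters, so no $q$-band can cross a $t$-band; moreover the sides of a $t$-band consist only of $\theta$-edges, so no $q$-band can end on them either. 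Consequently a maximal $q$-band of $\Psi_{k,\ell}$ is already maximal in $\Delta$, and since $\Psi$ is a pure scope (its only disk is $\Pi$) and, by \Cref{distortion diagram q}, every maximal $q$-band must reach a disk while the makeup of the disk relations forbids two ends on the same disk, each such band has one end on $\textbf{s}_{k,\ell}^{-1}\subset\partial\Pi$ and the other on $\textbf{t}_{k,\ell}$. Their total number is therefore at most $|\textbf{s}_{k,\ell}|_q\leq 3LN$.

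Once this observation is in place, the case closes at once, and in fact more directly than in your plan: any maximal $\theta$-band of $\Psi_{k,\ell}$ with two ends on $\textbf{t}_{k,\ell}$ crosses each of these $q$-bands at most once (\Cref{basic annuli 1}(1)), so its base has length at most $3LN\leq K$ by the parameter choices $K>>L>>N$; but (after passing to an innermost such band, as you do) it is a rim or quasi-rim $\theta$-band of $\Delta$, whose base must have length $>K$ by \Cref{distortion diagram rim theta} and \Cref{distortion diagram quasi-rim theta}. This is exactly the paper's argument, with the inequality run in the opposite direction from the one you set up: the paper gets the upper bound $3LN$ structurally and contradicts the quasi-rim lower bound, whereas you took the lower bound $>K$ as the starting point and then needed the upper bound you did not supply. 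So the missing ingredient is precisely the structural fact about maximal $q$-bands of $\Psi_{k,\ell}$ stated above; with it your argument is complete, without it the contradiction never materializes.
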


\begin{proof}

Let $\pazocal{T}$ be a maximal $\theta$-band in $\Psi_{k,\ell}$.  By \Cref{disk theta-annuli}, $\pazocal{T}$ must have two ends on $\partial\Psi_{k,\ell}$.  As $|\textbf{s}_{k,\ell}|_\theta=0$, these ends must be on $\textbf{t}_{k,\ell}$, on $\textbf{bot}(\pazocal{Q}_k)$, or on $\textbf{top}(\pazocal{Q}_\ell)^{-1}$.  By \Cref{basic annuli 1}(1), $\pazocal{T}$ has at most one end on $\textbf{bot}(\pazocal{Q}_k)$ and at most one end on $\textbf{top}(\pazocal{Q}_\ell)^{-1}$.

First, suppose $\pazocal{T}$ crosses both $\pazocal{Q}_k$ and $\pazocal{Q}_\ell$.  Then, $\pazocal{T}$ crosses each of the $t$-bands $\pazocal{Q}_i$ for $k\leq i\leq\ell$.  In particular, viewing it as a $\theta$-band in $\Delta$, $\pazocal{T}$ crosses $\ell-k+1>(L-1)/2$ $t$-spokes of $\Pi$.  But $\Delta$ is a reduced minimal diagram, so that $\Pi$ and $\pazocal{T}$ contradict \Cref{minimal t-spokes theta-band}.  Hence, $\pazocal{T}$ must have at least one end on $\textbf{t}_{k,\ell}$.  

Now, suppose $\pazocal{T}$ has two ends on $\textbf{t}_{k,\ell}$.  By \Cref{distortion diagram q} and the makeup of the disk relations, any maximal $q$-band of $\Psi_{k,\ell}$ has ends on both $\textbf{s}_{k,\ell}^{-1}$ and $\textbf{t}_{k,\ell}$.  So, since $\pazocal{T}$ crosses any of these bands at most once, the length of the base of $\pazocal{T}$ is at most $|\textbf{s}_{k,\ell}|_q\leq 3LN$.  In particular, the parameter assignments $K>>L>>N$ imply that the length of the base of $\pazocal{T}$ is at most $K$.  But then this implies the existence of a quasi-rim $\theta$-band in $\Delta$ with base of length at most $K$, contradicting \Cref{distortion diagram quasi-rim theta}.  Thus, $\pazocal{T}$ has exactly one end on $\textbf{t}_{k,\ell}$ and so the statement follows.

\end{proof}

For every $i\in\{1,\dots,L-6\}$, let $H_i$ be the history of $\pazocal{Q}_i$.  

For $2\leq k<(L-9)/2$, applying \Cref{clove theta-bands} to $\Psi_{k,L-6}$ and to $\Psi_{k-1,L-6}$ implies that any maximal $\theta$-band of $\Psi_{k-1,L-6}$ that crosses $\pazocal{Q}_k$ must have ends on $\textbf{bot}(\pazocal{Q}_{k-1})$ and on $\textbf{t}_{k,L-6}$.  In particular, this implies $H_k$ is a prefix of $H_{k-1}$.

Similarly, for $(L-1)/2<\ell\leq L-7$, any maximal $\theta$-band of $\Psi_{1,\ell+1}$ that crosses $\pazocal{Q}_\ell$ has ends on $\textbf{top}(\pazocal{Q}_{\ell+1})^{-1}$ and on $\textbf{t}_{1,\ell}$, so that $H_\ell$ is a prefix of $H_{\ell+1}$.

Hence, letting $h_i=\|H_i\|$ for each $i$, the parameter choice $L>23$ implies $h_1\geq\dots\geq h_7$ and $h_{L-12}\leq\dots\leq h_{L-6}$.

For each $i\in\{1,\dots,L-6\}$, fix the index $j_i\in\{2,\dots,L\}$ such that $\pazocal{Q}_i$ is a $t$-band corresponding to the part $\{t(j_i)\}$ of the standard base of $\textbf{M}^\pazocal{L}$.  If there exists an index $i\in\{1,\dots,L-7\}$ such that $j_i=L$, then $\Gamma_i$ is called the \textit{distinguished clove}.   The makeup of the disk relations immediately implies that there is at most one distinguished clove.  Note that $\lab(\textbf{s}_i)\equiv W(L)W(1)t(2)$ if $\Gamma_i$ is the distinguished clove, while $\lab(\textbf{s}_i)\equiv W(j_i)t(j_i+1)$ otherwise.

\begin{lemma} \label{non-distinguished a-cells}

If $\Gamma_i$ is not the distinguished clove, then it contains no $a$-cells.

\end{lemma}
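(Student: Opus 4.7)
Suppose toward contradiction that $\Gamma_i$ contains an $a$-cell. The plan is to locate an $a$-cell $\pi'$ whose boundary shares a subpath of $\pazocal{A}$-length greater than $\tfrac{1}{2}|\partial\pi'|_\pazocal{A}$ with $\textbf{t}_i \subseteq \textbf{q}$, directly contradicting \Cref{distortion diagram a-cell}.

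The key alphabet observation is that, since $j_i \neq L$ and $\Gamma_i$ is not distinguished, $\textbf{s}_i^{-1}$ is a subpath of $(\partial\Pi)^{-1}$ whose tape letters all belong to the $j_i$-th component's copy of the input alphabet, which is disjoint from the `special' input sector's $\pazocal{A} \cup \pazocal{A}_1 \cup \pazocal{B}$; and $\textbf{bot}(\pazocal{Q}_i)$, $\textbf{top}(\pazocal{Q}_{i+1})^{-1}$ consist of $\theta$-edges only. On the other hand, any $a$-cell $\pi \subseteq \Gamma_i$ may be assumed (by \Cref{4-minimal a-cells}) to satisfy $\lab(\partial\pi) \in \Lambda^\pazocal{A}$, so its $\pazocal{A}$-edges are labeled by letters from the `special' input sector. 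For any such $\pazocal{A}$-edge $\textbf{e}$, the maximal $\pazocal{A}$-band $\pazocal{U}(\textbf{e})$ is of the `special' input sector type; since $\pazocal{A}$-bands cannot cross $t$-bands ($(\theta,t)$-cells carry no $a$-edges), the other end of $\pazocal{U}(\textbf{e})$ cannot lie on $\textbf{s}_i^{-1}$ or on the sides of $\pazocal{Q}_i$, $\pazocal{Q}_{i+1}$. It must therefore lie either on another $a$-cell of $\Gamma_i$, on a $(\theta,q)$-cell of $\Gamma_i$ carrying a coordinate-$1$ $q$-letter, or on $\textbf{t}_i$.

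Since $\Gamma_i$ is a minimal subdiagram of $\Delta$, it is smooth and $M$-minimal and satisfies (MM2) by Lemmas \ref{minimal is smooth}, \ref{minimal diskless is M-minimal}, and \ref{minimal MM2}, so \Cref{Gamma_a special cell} applied to $\Gamma_i$ yields an $a$-cell $\pi$ and at least $|\partial\pi|_\pazocal{A} - 6 \geq C - 6$ consecutive $\pazocal{A}$-edges of $\partial\pi$ whose maximal $\pazocal{A}$-bands are external in $\Gamma_a(\Gamma_i)$. By the alphabet observation, each of these bands terminates either on $\textbf{t}_i$ or on a coordinate-$1$ $(\theta,q)$-cell of $\Gamma_i$; in the latter case, the containing coordinate-$1$ $q$-band has, by \Cref{distortion diagram q} and the absence of disks in $\Gamma_i \subseteq \Psi$, both of its ends on $\textbf{t}_i$, and following it via a $\pazocal{V}(\textbf{e})$-type subdiagram as in \Cref{sec-M-minimal-bands} extends the external arc all the way to $\textbf{t}_i$. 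After $0$-refinement, the outermost two of these connections, together with the corresponding subpath of $\partial\pi$ and a subpath of $\textbf{t}_i$, bound an $a$-scope on $\textbf{t}_i$ of size at least $C - 6 > \tfrac{1}{2}|\partial\pi|_\pazocal{A}$, i.e.\ a big $a$-scope; \Cref{pure a-scope} then produces a pure big $a$-scope $\Psi'$ on $\textbf{t}_i$, with associated $a$-cell $\pi'$ and associated subpath $\textbf{s}'$ satisfying $|\textbf{s}'|_\pazocal{A} > \tfrac{1}{2}|\partial\pi'|_\pazocal{A}$.

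Finally, repeating the alphabet argument for $\pi'$ forces the two bounding $\pazocal{A}$-bands of $\Psi'$ to be of length $0$. Any $\theta$-band in $\Psi'$ would then have both ends on $\textbf{t}_i$ (there are no $\theta$-edges on $\partial\pi'$ nor on the length-$0$ bounding $\pazocal{A}$-bands), and since $\Psi'$ is pure such a $\theta$-band would be a rim $\theta$-band with base length $0$, contradicting \Cref{distortion diagram rim theta}. Hence $\Psi'$ consists only of $0$-cells, so $\textbf{s}'$ identifies with a subpath of $\textbf{t}_i \subseteq \textbf{q}$, and this subpath contradicts \Cref{distortion diagram a-cell}. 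The main obstacle I anticipate is the routing argument in the third paragraph: verifying that $\pazocal{V}(\textbf{e})$-extensions through coordinate-$1$ $q$-bands integrate cleanly into the planar graph $\Gamma_a(\Gamma_i)$, and that the pure big $a$-scope produced thereby sits in a region where the rim-$\theta$-band bound of \Cref{distortion diagram rim theta} truly applies.
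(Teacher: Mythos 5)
Your overall route matches the paper's: apply \Cref{Gamma_a special cell} inside $\Gamma_i$, push the resulting external $\pazocal{A}$-bands to $\textbf{t}_i$, extract a pure big $a$-scope, and contradict \Cref{distortion diagram a-cell}. But the middle of your argument has a genuine gap: you never establish the key fact that $\Gamma_i$ contains no $q$-band of coordinate $1$ and hence no $(\theta,\pazocal{A})$-cell of the `special' input sector, so that every $\pazocal{A}$-band with an end on an $a$-cell of $\Gamma_i$ has length $0$. This is exactly what the paper proves first: a maximal $q$-band meeting $\Gamma_i$ cannot cross the bounding $t$-bands, so it either ends on the disk through a $q$-edge of $\textbf{s}_i^{-1}$ (forcing coordinate $j_i$ or $j_{i+1}$, since $\Gamma_i$ is not distinguished) or lies entirely in $\Gamma_i$ with both ends on $\textbf{t}_i\subseteq\textbf{q}$, which \Cref{distortion diagram q} forbids; then, since every $\theta$-band of $\Gamma_i$ crosses a $q$-band of coordinate $j_i$ or $j_{i+1}$ (\Cref{clove theta-bands}), a special-input $(\theta,\pazocal{A})$-cell is impossible, because the index of the $\theta$-letters along a $\theta$-band changes only at $(\theta,q)$-cells and would require coordinate-$1$ $(\theta,q)$-cells in between. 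Your paragraph on the coordinate-$1$ case gets this backwards: you cite \Cref{distortion diagram q} together with the absence of disks in $\Gamma_i$ to conclude that such a $q$-band has both ends on $\textbf{t}_i$, whereas that is precisely the configuration the lemma excludes -- the correct conclusion is that this case cannot occur. Moreover, the ``$\pazocal{V}(\textbf{e})$-routing'' you build on it encloses a region that is not an $a$-scope in the paper's sense (it would contain $(\theta,q)$-cells and is not bounded by the two $\pazocal{A}$-bands themselves), so \Cref{pure a-scope} does not apply to it.

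The same omission undermines your last step. You assert that ``repeating the alphabet argument'' forces the two bounding $\pazocal{A}$-bands of the pure big $a$-scope $\Psi'$ to have length $0$, but that argument only constrains where those bands can end, not their length; without knowing that $\Gamma_i$ has no special-input $(\theta,\pazocal{A})$-cells, a bounding band could have positive length, its sides would carry $\theta$-edges, the ``rim $\theta$-band of base $0$'' dichotomy breaks (a $\theta$-band of $\Psi'$ could end on those sides rather than on $\textbf{q}$, and even with both ends on $\textbf{t}_i$ it is only a rim band after passing to an innermost one), and the associated subpath of $\partial\pi'$ need not lie on $\textbf{t}_i$, so \Cref{distortion diagram a-cell} cannot be invoked. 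Once the length-$0$ fact is in hand, neither the routing case nor the rim-band detour is needed: the external $\pazocal{A}$-edges themselves lie on $\textbf{t}_i$ and the pure big $a$-scope contradicts \Cref{distortion diagram a-cell} directly, which is how the paper concludes. Two smaller points: the appeal to \Cref{4-minimal a-cells} is unnecessary (and would need a justification that the replacement preserves minimality of the fixed $h$-distortion diagram), since every $a$-cell is already labelled by a word over $(\pazocal{A}\cup\pazocal{A}_1\cup\pazocal{B})^{\pm1}$; and the inequality ``$C-6>\tfrac{1}{2}|\partial\pi|_\pazocal{A}$'' is false in general -- what the parameter choice $C\geq13$ gives is $|\partial\pi|_\pazocal{A}-6>\tfrac{1}{2}|\partial\pi|_\pazocal{A}$.
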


\begin{proof}

By \Cref{distortion diagram q}, every maximal $q$-band of $\Gamma_i$ has an end on $\textbf{s}_i^{-1}$.  So, since $\Gamma_i$ is not the distinguished clove, every $q$-band corresponds to a part of the standard base of $\textbf{M}^\pazocal{L}$ with coordinate $j_i$ or $j_{i+1}$.  In particular, no $q$-band corresponds to a part with coordinate 1.

Further, since \Cref{clove theta-bands} implies that every $\theta$-band must cross at least one $q$-band, no $(\theta,\pazocal{A})$-cell of $\Gamma_i$ can correspond to a relation of the `special' input sector.  Hence, every $\pazocal{A}$-band with one end on an $a$-cell must be of length $0$.

Now, suppose $\Gamma_i$ contains an $a$-cell.  Then, Lemmas \ref{Gamma_a special cell}, \ref{minimal diskless is M-minimal}, and \ref{minimal is smooth} imply the existence of an $a$-cell $\pi_0$ and $\ell\geq|\partial\pi_0|_\pazocal{A}-6$ consecutive $\pazocal{A}$-edges $\textbf{f}_1,\dots,\textbf{f}_\ell$ of $\partial\pi_0$ such that each $\pazocal{A}$-band $\pazocal{U}(\textbf{f}_j)$ is external.  As such, each $\textbf{f}_j$ must be an edge of $\textbf{t}_i$.  As $|\partial\pi_0|_\pazocal{A}\geq C$, the parameter choice $C\geq13$ then implies the existence of a big $a$-scope on $\textbf{t}_i$ with associated $a$-cell $\pi_0$.  \Cref{pure a-scope} then implies the existence of a pure big $a$-scope on $\textbf{t}_i$.  

Letting $\pi$ be the associated $a$-cell and $\textbf{x}$ the associated subpath of this pure $a$-scope, $\textbf{x}$ is a subpath of $\partial\pi$ and each edge of $\textbf{x}$ is an edge of $\textbf{t}_i$.  But then since $\lab(\textbf{t}_i)$ is reduced, $\textbf{x}$ is a subpath of $\textbf{t}_i$ with $|\textbf{x}|_\pazocal{A}>\frac{1}{2}|\partial\pi|_\pazocal{A}$, contradicting \Cref{distortion diagram a-cell}.

\end{proof}

Fix $j\in\{2,\dots,L\}$ and suppose $\Sigma$ is a circular diagram over $M(\textbf{M}^\pazocal{L})$ such that every cell has coordinate $j$ and no cell is a $(\theta,t)$-cell.  So, for any cell $\pi$ in $\Sigma$, $\lab(\partial\pi)$ is given by either a $(\theta,q)$- or a $(\theta,a)$-relation with coordinate $j$.  Then, for any $r\in\{2,\dots,L\}$, the parallel nature of the rules of $\textbf{M}^\pazocal{L}$ implies the existence of another such relation obtained from this relation by:

\begin{itemize}

\item Switching the coordinate of any $q$-letter from $j$ to $r$

\item Taking the copy of any $a$-letter in the tape alphabet of the corresponding sector of $B_4^\pazocal{L}(r)$

\item Adjusting the index of the $\theta$-letters accordingly

\end{itemize}

The relation obtained can then be written on the boundary of a cell to produce a `copy' $\pi(r)$ of $\pi$, with the structure of the cell remaining much the same.  Replacing every cell of $\Sigma$ with its `copy' then produces a circular diagram $\Sigma(r)$ over $M(\textbf{M}^\pazocal{L})$ with much the same structure as $\Sigma$, but so that every cell has coordinate $r$ and no cell is a $(\theta,t)$-cell.

Note that by construction, if $\Sigma$ is a trapezium, then the label of the trimmed side of any maximal $\theta$-band of $\Sigma(r)$ is a coordinate shift (see \Cref{sec-almost-extendable}) of the label of the trimmed side of the corresponding $\theta$-band of $\Sigma$.  As such, $\Sigma(r)$ is called a \textit{coordinate shift} of $\Sigma$.

Further, suppose that for any $\theta$-band $\pazocal{T}$ of $\Sigma$ whose history is a rule of the second machine, no cell comprising $\pazocal{T}$ is either:

\begin{itemize}

\item a $(\theta,q)$-cell which is part of a $q$-band corresponding to the part $Q_1^\pazocal{L}(j)$, or

\item a $(\theta,a)$-cell of the input $Q_0^\pazocal{L}(j)Q_1^\pazocal{L}(j)$-sector.

\end{itemize}

Then in the same way as above, we may construct the coordinate shift $\Sigma(1)$.  In this case, $\Sigma$ is called \textit{exceptional}.

Next, recall the symmetry of the machine $\textbf{M}_4^\pazocal{L}$ arising from the `reflected copies' of the machine $\textbf{M}_3^\pazocal{L}$ in its construction (see \Cref{sec-M_4}).  Given this symmetry, for any cell $\pi$ in $\Sigma$, the $(\theta,q)$- or $(\theta,a)$-relation defining $\lab(\partial\pi)$ corresponds to a `reflected' such relation obtained by:

\begin{itemize}

\item Taking the inverse of maximal (cyclic) subwords not containing $\theta$-letters

\item Replacing any resulting $q$-letter of $Q_i^\pazocal{L}(j)$ with its copy in $R_i^\pazocal{L}(j)$, and vice versa

\item Replacing any resulting $a$-letter of $Y_i^\pazocal{L}(j)$ with its copy in $\pazocal{Y}_i^\pazocal{L}(j)$, and vice versa

\item Adjusting the index of the $\theta$-letters accordingly

\end{itemize}

The relation obtained can then be written on the boundary of a cell to produce a `reflected copy' $\bar{\pi}$ of $\pi$ whose structure is that of a `mirror image' of $\pi$ (see \Cref{Reflected-copy}(a)).  As such, for any maximal positive $\theta$-band $(\pi_1,\dots,\pi_k)$ of $\Sigma$, we may construct a maximal positive $\theta$-band $(\bar{\pi}_k,\dots,\bar{\pi}_1)$ with the same history.  Doing so for all maximal positive $\theta$-bands produces a circular diagram $\bar{\Sigma}$ over $M(\textbf{M}^\pazocal{L})$ such that every cell has coordinate $j$ and no cell is a $(\theta,t)$-cell (see \Cref{Reflected-copy}(b)).  Accordingly, $\bar{\Sigma}$ is called the \textit{reflected copy} of $\Sigma$.

\begin{figure}[H]
\centering
\begin{subfigure}[b]{0.48\textwidth}
\centering
\includegraphics[scale=1.5]{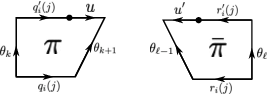}
\caption{The `reflected copy' $\bar{\pi}$ of a $(\theta,q)$-cell $\pi$ corresponding to a relation involving a part $Q_i^\pazocal{L}(j)$ of the standard base}
\end{subfigure}\hfill
\begin{subfigure}[b]{0.48\textwidth}
\centering
\includegraphics[scale=1]{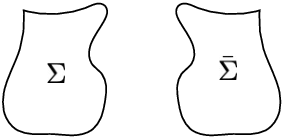}
\caption{The reflected copy $\bar{\Sigma}$ of a circular diagram $\Sigma$ \\ \ \\ \ }
\end{subfigure}
\caption{ \ }
\label{Reflected-copy}
\end{figure}


%
%

Now, fix an index $i\in\{1,\dots,L-7\}$ such that $\Gamma_i$ is not the distinguished clove.  Let $\Sigma_i$ be the subdiagram of $\Gamma_i$ obtained by removing the $t$-bands $\pazocal{Q}_i$ and $\pazocal{Q}_{i+1}$.  Combining Lemmas \ref{clove theta-bands} and \ref{non-distinguished a-cells}, $\Sigma_i$ is a circular diagram over $M(\textbf{M}^\pazocal{L})$ such that every cell has coordinate $j_i$ and no cell is a $(\theta,t)$-cell.  Hence, we may construct the reflected copy $\bar{\Sigma}_i$.

Consider the factorization $\partial\Sigma_i=\textbf{x}_i^{-1}\textbf{p}_{i}\textbf{y}_i\textbf{q}_{i}^{-1}$ such that $\textbf{p}_{i}=\textbf{top}(\pazocal{Q}_i)$, $\textbf{q}_{i}=\textbf{bot}(\pazocal{Q}_{i+1})$, and $\textbf{x}_i,\textbf{y}_i$ are subpaths of $\textbf{s}_i,\textbf{t}_i$, respectively.  Then, there exists a factorization  $\partial\bar{\Sigma}_i=(\bar{\textbf{x}}_i^{-1}\bar{\textbf{p}}_{i}\bar{\textbf{y}}_i\bar{\textbf{q}}_{i}^{-1})^{-1}$ where the naming of each subpath is indicative of its correspondence to a subpath of $\partial\Sigma_i$.  

As $\lab(\textbf{x}_i)$ is an admissible subword of the accepted configuration $W$ with base $B_4^\pazocal{L}(j_i)$, the symmetric nature of the rules of $\textbf{M}_4^\pazocal{L}$ then implies that $\lab(\textbf{x}_i)\equiv\lab(\bar{\textbf{x}}_i^{-1})$.  Moreover, $\lab(\bar{\textbf{p}}_{i})$ is the word over $T\cup T^{-1}$ obtained from $\lab(\textbf{p}_{i})$ by switching the index of each $\theta$-letter to that of the letters comprising $\lab(\textbf{q}_{i})$; $\lab(\bar{\textbf{q}}_{i})$ is obtained from $\lab(\textbf{q}_{i})$ analogously.

Since $\bar{\Sigma}_i$ is a circular diagram over $M(\textbf{M}^\pazocal{L})$ such that every cell has coordinate $j_i$ and no cell is a $(\theta,t)$-cell, we may then construct its coordinate shift $\bar{\Sigma}_i(r)$ for any $r\in\{2,\dots,L\}$.  Then, there exists a factorization $\partial\bar{\Sigma}_i(r)=((\bar{\textbf{x}}_i(r))^{-1}\bar{\textbf{p}}_{i}(r)\bar{\textbf{y}}_i(r)(\bar{\textbf{q}}_{i}(r))^{-1})^{-1}$ such that each subpath arises from the corresponding subpath of $\partial\bar{\Sigma}_i$ in the natural way.  As such, $\lab((\bar{\textbf{x}}_i(r))^{-1})$ is the coordinate shift of $\lab(\textbf{x}_i)$ with base $B_4^\pazocal{L}(r)$ while $\lab(\bar{\textbf{p}}_{i}(r))$ and $\lab(\bar{\textbf{q}}_i(r))$ are the words over $T\cup T^{-1}$ obtained by simply changing the indices of each $\theta$-letter accordingly.

Let $\pazocal{Q}_i(r,1)$ be the positive $q$-band corresponding to $\{t(r+1)\}$ with history $H_i$.  Note that $\pazocal{Q}_i(r,1)$ is a $t$-band unless $r=L$, in which case $r+1$ is taken to be $1$ and $\pazocal{Q}_i(r,1)$ is a $q$-band corresponding to $\{t(1)\}$.  Observe that $\lab(\textbf{bot}(\pazocal{Q}_i(r,1)))\equiv\lab(\bar{\textbf{p}}_{i}(r))$.  

On the other hand, letting $\pazocal{Q}_i(r,2)$ be the positive $t$-band corresponding to $\{t(r)\}$ with history $H_{i+1}$, then $\lab(\textbf{top}(\pazocal{Q}_i(r,2)))\equiv\lab(\bar{\textbf{q}}_{i}(r))$.

Hence, $\pazocal{Q}_i(r,1)$ and $\pazocal{Q}_i(r,2)$ may be pasted to $\bar{\Sigma}_i(r)$ by identifying the corresponding paths, yielding a circular diagram $\bar{\Gamma}_i(r)$.  By construction, there exists a factorization 
$$\partial\bar{\Gamma}_i(r)=(\textbf{s}_i(r))^{-1}\textbf{bot}(\pazocal{Q}_i(r,2))\textbf{t}_i(r)\textbf{top}(\pazocal{Q}_i(r,1))^{-1}$$ 
such that  $\lab(\textbf{s}_i(r))\equiv W(r)t(r+1)$ and $|\textbf{t}_i(r)|=|\textbf{t}_i|$.  As such, $\lab(\textbf{s}_i(r))$ is an admissible subword of $W$, and so $\textbf{s}_i(r)$ can be identified with a subpath of $\partial\Pi$.

Suppose neither $\Gamma_i$ nor $\Gamma_{i+1}$ is the distinguished clove and that $2< r\leq L$, {\frenchspacing i.e. so that we may construct both} $\bar{\Gamma}_i(r)$ and $\bar{\Gamma}_{i+1}(r-1)$.  By construction, the $t$-bands $\pazocal{Q}_i(r,2)$ and $\pazocal{Q}_{i+1}(r-1,1)$ are identical, and so we may paste $\bar{\Gamma}_i(r)$ and $\bar{\Gamma}_{i+1}(r-1)$ together by identifying these bands, producing a circular diagram $\bar{\Psi}_{i,i+2}(r)$ whose structure is that of a `mirror copy' of the diagram $\Psi_{i,i+2}$.  Iterating this procedure yields a circular diagram $\bar{\Psi}_{k,\ell}(r)$ whose structure is that of a `mirror copy' of $\Psi_{k,\ell}$ for appropriate choices of $k$, $\ell$, and $r$.  

Observe that the notation $\bar{\Psi}_{k,\ell}(r)$ indicates that the shift is made so that the copy of $\bar{\Gamma}_k$ has coordinate $r$, while the copy of $\bar{\Gamma}_{\ell-1}$ has coordinate $r-(\ell-1-k)$.  


\begin{lemma} \label{no distinguished clove}

There is no distinguished clove in $\Psi$.

\end{lemma}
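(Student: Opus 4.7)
The plan is to assume toward contradiction that $\Gamma_i$ is the distinguished clove, and to construct a word $w$ over $\pazocal{X}\cup\pazocal{X}^{-1}$ representing the same element of $G_\Omega(\textbf{M}^\pazocal{L})$ as $\lab(\textbf{t}_0)$ with $|w|<|\textbf{t}|$, contradicting \Cref{clove path}.

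First I would observe that every clove $\Gamma_k$ with $k\neq i$ is non-distinguished, so by \Cref{non-distinguished a-cells} and \Cref{clove theta-bands} each associated $\Sigma_k$ is a circular diagram over $M(\textbf{M}^\pazocal{L})$ whose cells all have coordinate $j_k\neq L$ and none of which is a $(\theta,t)$-cell. Hence the reflected coordinate-shifted copies $\bar\Gamma_k(r)$ are available for every $r\in\{2,\dots,L\}$. Exploiting the cyclic structure of the standard base I would select shifts $r_k$ so that pasting $\bar\Gamma_{i+1}(r_{i+1}),\bar\Gamma_{i+2}(r_{i+1}-1),\dots$ on the right, via the adjacency rule immediately preceding the lemma, yields a chain $\bar\Psi_{i+1,L-6}(r_{i+1})$; similarly on the left, a chain $\bar\Psi'_{1,i}(r_{i-1})$ is built from $\bar\Gamma_{i-1},\bar\Gamma_{i-2},\dots,\bar\Gamma_1$. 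The shifts are chosen so that (a) no intermediate shift equals the forbidden coordinate $1$; (b) the $\textbf{s}$-sides of the two chains together tile consecutive $B_4^\pazocal{L}$-sectors along the $\textbf{z}$-arc of $\partial\Pi$; and (c) the two chains meet at a sector that can be bridged by a single closing trapezium, produced by \Cref{computations are trapezia} from a one-machine computation supplied by \Cref{ell(W)} and \Cref{one-machine language}. This is possible because only $6$ sectors (the $5$ $t$-edges of $\textbf{z}$ together with the seam) must be reached, while $L-8$ reflected cloves are available — here $L\gg N$ is essential.

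Denote the assembled subdiagram by $\Xi$. By construction, $\Xi$ can be glued along $\textbf{z}$ to $\Pi$ (after appropriate $0$-refinement), so that its outer boundary together with $\textbf{bot}(\pazocal{Q}_1)^{-1}$ and $\textbf{top}(\pazocal{Q}_{L-6})$ yields a word $w$ representing the same element of $G_\Omega(\textbf{M}^\pazocal{L})$ as $\lab(\textbf{t}_0)$. The length-preservation property $|\bar{\textbf{t}}_k(r)|=|\textbf{t}_k|$ built into each $\bar\Gamma_k(r)$, combined with \Cref{theta-band lengths} bounding the filler trapezium and \Cref{lengths 2}(a) for the bounding $q$-bands, gives
\begin{equation*}
|w|\;\le\;\sum_{k\neq i}|\textbf{t}_k|\;+\;\kappa\bigl(\delta|W(2)|+|H_1|+|H_{L-6}|+N\bigr)
\end{equation*}
for some constant $\kappa$ depending only on the machine. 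On the other hand, since $\textbf{t}=\textbf{t}_1\cdots\textbf{t}_{L-7}$, \Cref{lengths 1}(c,d) yields $|\textbf{t}|\ge\sum_k|\textbf{t}_k|-(L-8)\delta$, while the distinguished clove's contribution $|\textbf{t}_i|$ must absorb the modified length of the $\pazocal{A}$-letters of $\textbf{s}_i\equiv W(L)W(1)t(2)$ propagated upward through $\Gamma_i$; \Cref{distortion diagram a-cell}, \Cref{distortion diagram pure a-scope}, and the length-distortion estimates of \Cref{theta-band lengths} force this contribution to dominate the overhead above, using the parameter assignments $L\gg N$ and $\delta^{-1}\gg K$ as in the proofs of \Cref{distortion diagram rim theta} and \Cref{distortion diagram quasi-rim theta}. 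Consequently $|w|<|\textbf{t}|$, contradicting \Cref{clove path}.

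The main obstacle is the consistent threading of the coordinate shifts: because coordinate $1$ cannot appear as a shift for any non-exceptional $\bar\Gamma_k(r)$, the chains $r_{i+1},r_{i+1}-1,\dots$ on the right and $r_{i-1},r_{i-1}+1,\dots$ on the left must be arranged around $\{2,\dots,L\}$ so that they meet at consecutive coordinates bracketing the $\textbf{z}$-sectors while jointly avoiding the forbidden value. Verifying that the two chains actually meet consistently, and that the closing trapezium does not involve coordinate $1$, is a combinatorial check requiring the pigeonhole argument that $L-8$ available reflected cloves suffice to miss a single forbidden value while covering $6$ sectors. A secondary subtlety is the strictness of $|w|<|\textbf{t}|$, which rests on the $\delta^{-1}\gg K$ balance absorbing the $O(N+|H_1|+|H_{L-6}|)$ overhead against the $\pazocal{A}$-letter contribution forced by the distinguished clove.
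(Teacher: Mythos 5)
Your overall strategy -- build, from reflected coordinate-shifted copies of non-distinguished cloves, a path homotopic to $\textbf{t}_0$ whose label is shorter than $|\textbf{t}|$, and contradict \Cref{clove path} -- is exactly the paper's strategy, but the construction you describe does not work as stated. The hidden arc $\textbf{z}$ of $\partial\Pi$ spans only about seven components of $W$ (the scope already covers $L-6$ of the $t$-edges), while the $\textbf{s}$-side of each reflected clove $\bar{\Gamma}_k(r)$ occupies a full component $W(r)t(r+1)$. So your two chains, built from all $L-8$ non-distinguished cloves, cannot have their $\textbf{s}$-sides ``tile the $\textbf{z}$-arc'': $L-8$ components simply do not fit along an arc of seven components, and no choice of shifts repairs this. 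The paper instead uses a block of just six consecutive cloves taken from whichever end of the scope avoids the distinguished clove ($\Gamma_1,\dots,\Gamma_6$ when $d\geq 7$, or $\Gamma_{L-12},\dots,\Gamma_{L-7}$ when $d\leq 6$); the reflected, shifted copy of that block has its $\textbf{s}$-side labelled exactly by $\lab(\textbf{e}_{L-6}\textbf{z}^{-1}\textbf{e}_1)$, because the presence of the distinguished clove inside the scope forces the hidden coordinates to avoid the special coordinate $1$ altogether -- no exceptionality, pigeonhole over forbidden shifts, or closing trapezium is needed, and the shifts are forced rather than chosen.

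The second gap is in the length estimate, where the contradiction must actually come from. Your bound $|w|\leq\sum_{k\neq i}|\textbf{t}_k|+\kappa(\cdot)$ already contains essentially all of $|\textbf{t}|$ except $|\textbf{t}_i|$, and there is no reason the single clove contribution $|\textbf{t}_i|$ should absorb an overhead involving $h_1+h_{L-6}$; the histories at the ends of the scope are the longest ones, so this inequality cannot be forced by parameters. In the paper the saving is structural: after deleting the two bounding $t$-bands from the reflected block one gets a word of length at most $h_7+|\textbf{t}_{1,7}|-2+h_{L-6}$ (resp. the symmetric expression at the other end) representing the same element as $\lab(\textbf{t}_0)$, whereas $|\textbf{t}|=|\textbf{t}_{1,7}|+|\textbf{t}_{7,L-6}|-1$ and $|\textbf{t}_{7,L-6}|\geq h_7+h_{L-6}+|\textbf{t}_{7,L-6}|_q$ by \Cref{clove theta-bands} and \Cref{lengths 1}(b); the shortcut bypasses the long stretch $\textbf{t}_{7,L-6}$, whose many $q$-edges (one per $t$-spoke end, each of length $1$) supply the strict inequality. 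The $\delta^{-1}\gg K$ balance you invoke plays no role here; what matters is $L\gg N$ and the count of $q$-edges skipped.
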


\begin{proof}

Assume toward contradiction that $\Gamma_d$ is the distinguished clove for some $d\in\{1,\dots,L-7\}$.

Then, $j_{L-6}\in\{2,\dots,L-7\}$ with $\lab(\textbf{e}_{L-6}\textbf{z}^{-1}\textbf{e}_1)\equiv W(j_{L-6})\dots W(j_{L-6}+5)t(j_{L-6}+6)$ and $j_{L-6}+6=j_1$.

Suppose $d\geq7$.  Then, we may construct the circular diagram $\bar{\Psi}_{1,7}(j_1-1)$.  By construction, there exists a factorization $\partial\bar{\Psi}_{1,7}(j_1-1)=(\textbf{s}_{1,7}')^{-1}\textbf{bot}(\pazocal{Q}_6(j_1-6,2))\textbf{t}_{1,7}'\textbf{top}(\pazocal{Q}_1(j_1-1,1))^{-1}$ such that $|\textbf{t}_{1,7}'|=|\textbf{t}_{1,7}|$ and $\lab(\textbf{s}_{1,7}')\equiv\lab(\textbf{e}_{L-6}\textbf{z}^{-1}\textbf{e}_1)$.

Note that, by construction, $\pazocal{Q}_1(j_1-1,1)$ is identical to $\pazocal{Q}_1$.  Further, $\pazocal{Q}_6(j_1-6,2)$ is a $t$-band with history $H_7$ corresponding to the part $\{t(j_{L-6})\}$ of the standard base.

Let $\bar{\Phi}_{1,7}$ be the diagram obtained from $\bar{\Psi}_{1,7}(j_1-1)$ by removing the bands $\pazocal{Q}_1(j_1-1,1)$ and $\pazocal{Q}_6(j_1-6,2)$.  Then, applying \Cref{lengths 1}(d) and \Cref{lengths 2}(a), there exists a factorization $\partial\bar{\Phi}_{1,7}=(\textbf{s}_{1,7}'')^{-1}\textbf{p}_{1,7}''\textbf{t}_{1,7}''(\textbf{q}_{1,7}'')^{-1}$ such that:

\begin{itemize}

\item $\lab(\textbf{s}_{1,7}'')\equiv\lab(\textbf{z}^{-1})$

\item $|\textbf{t}_{1,7}''|=|\textbf{t}_{1,7}|-2$

\item $\lab(\textbf{q}_{1,7}'')\equiv\lab(\textbf{bot}(\pazocal{Q}_1))$

\item $|\textbf{p}_{1,7}''|=h_7$

\end{itemize}

In particular, $\lab(\textbf{p}_{1,7}''\textbf{t}_{1,7}'')^{-1}$ represents the same element of $G_\Omega(\textbf{M}^\pazocal{L})$ as $\lab(\textbf{bot}(\pazocal{Q}_1)^{-1}\textbf{z})$ and, by \Cref{lengths 1}(c), $|\textbf{p}_{1,7}''\textbf{t}_{1,7}''|\leq h_7+|\textbf{t}_{1,7}|-2$.  As a result, $w\equiv\lab(\textbf{p}_{1,7}''\textbf{t}_{1,7}'')^{-1}\lab(\textbf{top}(\pazocal{Q}_{L-6}))$ represents the same element of $G_\Omega(\textbf{M}^\pazocal{L})$ as $\lab(\textbf{t}_0)$ and satisfies $|w|\leq h_7+|\textbf{t}_{1,7}|-2+h_{L-6}$.

But \Cref{clove theta-bands} and a parameter choice for $L$ imply that $|\textbf{t}_{7,L-6}|_\theta= h_7+h_{L-6}$, so that \Cref{lengths 1} yields $|w|\leq|\textbf{t}_{1,7}|+|\textbf{t}_{7,L-6}|_\theta-2<|\textbf{t}_{1,7}|+|\textbf{t}_{7,L-6}|-1=|\textbf{t}|$, contradicting \Cref{clove path}.  

Hence, it may be assumed that $d\leq 6$.  

By a parameter choice for $L$, we may then assume that $L-12\geq d$.  This implies the ability to construct the diagram $\bar{\Psi}_{L-12,L-6}(j_{L-6}+5)$ and then to remove $\pazocal{Q}_{L-12}(j_{L-6}+5,1)$ and $\pazocal{Q}_{L-7}(j_{L-6},2)$ to produce the circular diagram $\bar{\Phi}_{L-12,L-6}$.  Observing that the $t$-band $\pazocal{Q}_{L-7}(j_{L-6},2)$ is identical to $\pazocal{Q}_{L-6}$, then analogous to the arguments in the previous case, there exists a factorization $\partial\bar{\Phi}_{L-12,L-6}=(\textbf{s}_{L-12,L-6}'')^{-1}\textbf{p}_{L-12,L-6}''\textbf{t}_{L-12,L-6}''(\textbf{q}_{L-12,L-6}'')^{-1}$ such that:

\begin{itemize}

\item $\lab(\textbf{s}_{L-12,L-6}'')\equiv\lab(\textbf{z}^{-1})$

\item $|\textbf{t}_{L-12,L-6}''|=|\textbf{t}_{L-12,L-6}|-2$

\item $\lab(\textbf{p}_{L-12,L-6}'')\equiv\lab(\textbf{top}(\pazocal{Q}_{L-6}))$

\item $|\textbf{q}_{L-12,L-6}''|=h_{L-12}$

\end{itemize}

So, $\lab(\textbf{t}_{L-12,L-6}''(\textbf{q}_{L-12,L-6}'')^{-1})^{-1}$ represents the same element of $G_\Omega(\textbf{M}^\pazocal{L})$ as $\lab(\textbf{z}~\textbf{top}(\pazocal{Q}_{L-6}))$, with \Cref{lengths 1}(c) implying the bound $|\textbf{t}_{L-12,L-6}''(\textbf{q}_{L-12,L-6}'')^{-1}|\leq|\textbf{t}_{L-12,L-6}|-2+h_{L-12}$.  In particular, $v\equiv\lab(\textbf{bot}(\pazocal{Q}_1))^{-1}\lab(\textbf{t}_{L-12,L-6}''(\textbf{q}_{L-12,L-6}'')^{-1})^{-1}$ represents the same element of $G_\Omega(\textbf{M}^\pazocal{L})$ as $\lab(\textbf{t}_0)$ and satisfies $|v|\leq h_1+|\textbf{t}_{L-12,L-6}|-2+h_{L-12}$.  

But then as above, \Cref{clove theta-bands}, \Cref{lengths 1}, and a parameter choice for $L$ imply
$$|v|<|\textbf{t}_{1,L-12}|+|\textbf{t}_{L-12,L-6}|-1=|\textbf{t}|$$ providing a contradiction to \Cref{clove path}.

\end{proof}

\Cref{no distinguished clove} implies there exists $d\in\{1,\dots,6\}$ such that $j_1=8-d$, $j_{L-6}=L-d+1$, and
$$\lab(\textbf{e}_{L-6}\textbf{z}^{-1}\textbf{e}_1)\equiv W(L-d+1)\dots W(L)W(1)\dots W(7-d)t(8-d)$$
Let $\textbf{z}_2$ be the subpath of $\textbf{z}$ such that $\lab(\textbf{z}_2^{-1})$ is an admissible word with base $B_4^\pazocal{L}(1)$.  Further, let $\textbf{z}_1$ and $\textbf{z}_3$ be the (perhaps trivial) subpaths such that $\textbf{z}=\textbf{z}_1\textbf{z}_2\textbf{z}_3$.

Let $k=7-d$.  If $k\geq2$, then we may construct the circular diagram $\bar{\Psi}_{1,k}(j_1-1)$.  In this case, similar to the proof of \Cref{no distinguished clove}, let $\bar{\Phi}_{1,k}$ be the diagram obtained from $\bar{\Psi}_{1,k}(j_1-1)$ by removing the $t$-band $\pazocal{Q}_1(j_1-1,1)$ (which is identical to $\pazocal{Q}_1$).  Then there exists a factorization of the contour $\partial\bar{\Phi}_{1,k}=(\textbf{s}_{1,k}'')^{-1}\textbf{p}_{1,k}''\textbf{t}_{1,k}''(\textbf{q}_{1,k}'')^{-1}$ where:

\begin{itemize}

\item $\lab(\textbf{s}_{1,k}'')\equiv \lab(\textbf{z}_1^{-1})$

\item $|\textbf{t}_{1,k}''|=|\textbf{t}_{1,k}|-1$

\item $\lab(\textbf{q}_{1,k}'')\equiv\lab(\textbf{bot}(\pazocal{Q}_1))$

\item $\lab(\textbf{p}_{1,k}'')\equiv\lab(\textbf{bot}(\pazocal{Q}_{k-1}(2,2)))$

\end{itemize}



%
%
%
%
%

For completeness, if $k=1$, then define $\textbf{t}_{1,1}$ to be the single $t$-edge of $\textbf{t}$ corresponding to the end of $\pazocal{Q}_1$, $\textbf{t}_{1,1}''$ as the trivial path at $(\textbf{t}_0)_-$, and $\pazocal{Q}_0(2,2)=\pazocal{Q}_1$.

Then, in any case we have $|\textbf{t}_{1,k}''|=|\textbf{t}_{1,k}|-1$ and
\begin{equation}
\lab(\textbf{bot}(\pazocal{Q}_1)^{-1}\textbf{z}_1)=_{G_\Omega(\textbf{M}^\pazocal{L})}\lab((\textbf{t}_{1,k}'')^{-1})\lab(\textbf{bot}(\pazocal{Q}_{k-1}(2,2)))^{-1}
\end{equation}

On the other side, letting $\ell=L-6-d$, we may construct the circular diagram $\bar{\Psi}_{\ell,L-6}(L)$.  As above, let $\bar{\Phi}_{\ell,L-6}$ be the diagram obtained from $\bar{\Psi}_{\ell,L-6}(L)$ by removing the $t$-band $\pazocal{Q}(j_{L-6},2)$, which is identical to $\pazocal{Q}_{L-6}$.  Then, again as in the proof of \Cref{no distinguished clove}, there exists a contour factorization $\partial\bar{\Phi}_{\ell,L-6}=(\textbf{s}_{\ell,L-6}'')^{-1}\textbf{p}_{\ell,L-6}''\textbf{t}_{\ell,L-6}''(\textbf{q}_{\ell,L-6}'')^{-1}$ such that:

\begin{itemize}

\item $\lab(\textbf{s}_{\ell,L-6}'')\equiv \lab(\textbf{z}_3^{-1})$

\item $|\textbf{t}_{\ell,L-6}''|=|\textbf{t}_{\ell,L-6}|-1$

\item $\lab(\textbf{p}_{\ell,L-6}'')\equiv\lab(\textbf{top}(\pazocal{Q}_{L-6}))$

\item $\lab(\textbf{q}_{\ell,L-6}'')\equiv\lab(\textbf{top}(\pazocal{Q}_{\ell}(L,1)))$

\end{itemize}

%
%
%
%
%
%

Then, we have:
\begin{equation}
\lab(\textbf{z}_3\textbf{top}(\pazocal{Q}_{L-6}))=_{G_\Omega(\textbf{M}^\pazocal{L})}\lab(\textbf{top}(\pazocal{Q}_{\ell}(L,1)))\lab(\textbf{t}_{\ell,L-6}'')^{-1}
\end{equation}

\begin{lemma} \label{z_2 length}

For any word $v$ over $\pazocal{X}\cup\pazocal{X}^{-1}$ which represents the same element of $G_\Omega(\textbf{M}^\pazocal{L})$ as $\lab(\textbf{bot}(\pazocal{Q}_{k-1}(2,2)))^{-1}\lab(\textbf{z}_2)\lab(\textbf{top}(\pazocal{Q}_{\ell}(L,1)))$, $|v|\geq|\textbf{t}_{k,\ell}|$.

\end{lemma}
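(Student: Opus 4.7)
The plan is to combine the two identities (1) and (2) established just before the statement with Lemma \ref{clove path}, using the length estimates of Lemma \ref{lengths 1} to extract the desired bound on $|w|$.

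First, I would write $\lab(\textbf{t}_0)$ as the concatenation $\lab(\textbf{bot}(\pazocal{Q}_1)^{-1})\lab(\textbf{z}_1)\lab(\textbf{z}_2)\lab(\textbf{z}_3)\lab(\textbf{top}(\pazocal{Q}_{L-6}))$, substitute the right side of (1) in for the prefix $\lab(\textbf{bot}(\pazocal{Q}_1)^{-1})\lab(\textbf{z}_1)$ and the right side of (2) in for the suffix $\lab(\textbf{z}_3)\lab(\textbf{top}(\pazocal{Q}_{L-6}))$. This yields an equality in $G_\Omega(\textbf{M}^\pazocal{L})$ of $\lab(\textbf{t}_0)$ with
$$\lab((\textbf{t}_{1,k}'')^{-1})\cdot\left(\lab(\textbf{bot}(\pazocal{Q}_{k-1}(2,2)))^{-1}\lab(\textbf{z}_2)\lab(\textbf{top}(\pazocal{Q}_{\ell}(L,1)))\right)\cdot\lab(\textbf{t}_{\ell,L-6}'')^{-1}.$$
Hence, if $w$ represents $\lab(\textbf{bot}(\pazocal{Q}_{k-1}(2,2)))^{-1}\lab(\textbf{z}_2)\lab(\textbf{top}(\pazocal{Q}_{\ell}(L,1)))$ in $G_\Omega(\textbf{M}^\pazocal{L})$, then the word $w'=\lab((\textbf{t}_{1,k}'')^{-1})\,w\,\lab(\textbf{t}_{\ell,L-6}'')^{-1}$ represents the same element as $\lab(\textbf{t}_0)$.

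The second step is to bound $|w'|$ from below using Lemma \ref{clove path}, which gives $|w'|\geq|\textbf{t}|$, and from above using two applications of Lemma \ref{lengths 1}(c) together with Lemma \ref{lengths 1}(a), which yield $|w'|\leq|\textbf{t}_{1,k}''|+|w|+|\textbf{t}_{\ell,L-6}''|$. Combining the two inequalities gives
$$|w|\geq|\textbf{t}|-|\textbf{t}_{1,k}''|-|\textbf{t}_{\ell,L-6}''|.$$

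Finally, I would use the identities $|\textbf{t}_{1,k}''|=|\textbf{t}_{1,k}|-1$ and $|\textbf{t}_{\ell,L-6}''|=|\textbf{t}_{\ell,L-6}|-1$ recorded just before the statement, along with Lemma \ref{lengths 1}(d) applied twice to the decomposition $\textbf{t}=\textbf{t}_{1,k}\textbf{t}_{k,\ell}\textbf{t}_{\ell,L-6}$ (valid since each subpath begins and ends with a $t$-edge, as observed in the proof of Lemma \ref{clove path}), to obtain $|\textbf{t}|=|\textbf{t}_{1,k}|+|\textbf{t}_{k,\ell}|+|\textbf{t}_{\ell,L-6}|$. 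Plugging these into the inequality above yields $|w|\geq|\textbf{t}_{k,\ell}|+2$, which is stronger than the required bound.

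There is no real obstacle here, since the heavy lifting — namely Lemma \ref{clove path} (which encapsulates the minimality of $|\textbf{q}|=|h|$) and the careful construction of the diagrams $\bar{\Phi}_{1,k}$ and $\bar{\Phi}_{\ell,L-6}'$ underlying identities (1) and (2) — has already been done. The only small point to verify is the applicability of Lemma \ref{lengths 1}(d) in the length decomposition of $\textbf{t}$, which is immediate from the structure of the cloves.
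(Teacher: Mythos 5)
Your route is the same as the paper's: combine the identities (12.1) and (12.2) to rewrite a representative of $\lab(\textbf{t}_0)$ as $\lab((\textbf{t}_{1,k}'')^{-1})\,w\,\lab(\textbf{t}_{\ell,L-6}'')^{-1}$, bound it below via \Cref{clove path} and above via the length estimates of \Cref{lengths 1}, and then unwind the bookkeeping. (The paper gets the upper bound as an exact equality via \Cref{lengths 1}(d), using that the relevant junction letters are $t$-letters; your use of the subadditive bound \Cref{lengths 1}(c) is enough for the direction needed, so that difference is harmless.)

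The one step that fails is the final decomposition $\textbf{t}=\textbf{t}_{1,k}\textbf{t}_{k,\ell}\textbf{t}_{\ell,L-6}$ and the resulting identity $|\textbf{t}|=|\textbf{t}_{1,k}|+|\textbf{t}_{k,\ell}|+|\textbf{t}_{\ell,L-6}|$. Under the paper's conventions these three subpaths are not disjoint: consecutive pieces share the single $t$-edge at their junction (this is visible both in the convention $|\textbf{t}_{1,1}|=1$ for $k=1$, in the factorization $\textbf{t}_{k,\ell}=\textbf{f}_k\textbf{y}_k'\textbf{t}_{k,\ell}'$ used in \Cref{z_22 length}, and in the identity $|\textbf{t}|=|\textbf{t}_{1,7}|+|\textbf{t}_{7,L-6}|-1$ in the proof of \Cref{no distinguished clove}). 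The correct relation is therefore $|\textbf{t}|=|\textbf{t}_{1,k}|+|\textbf{t}_{k,\ell}|+|\textbf{t}_{\ell,L-6}|-2$. Your intermediate inequality $|w|\geq|\textbf{t}|-|\textbf{t}_{1,k}''|-|\textbf{t}_{\ell,L-6}''|$ is fine, and plugging in the corrected relation together with $|\textbf{t}_{1,k}''|=|\textbf{t}_{1,k}|-1$ and $|\textbf{t}_{\ell,L-6}''|=|\textbf{t}_{\ell,L-6}|-1$ yields exactly $|w|\geq|\textbf{t}_{k,\ell}|$, with no slack; the ``stronger'' bound $|w|\geq|\textbf{t}_{k,\ell}|+2$ that you announce is an artifact of the miscounted overlap and is not justified by the argument. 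So the proposal proves the lemma once this off-by-two in the length bookkeeping is repaired, but as written it asserts an identity that is false in this setting and a quantitative improvement that does not follow.
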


\begin{proof}

By (12.1) and (12.2), $w\equiv\Big(\lab(\textbf{t}_{1,k}'')^{-1}\Big)v\Big(\lab(\textbf{t}_{\ell,L-6}'')^{-1}\Big)$ represents the same element of $G_\Omega(\textbf{M}^\pazocal{L})$ as $\lab(\textbf{t}_0)$.  So, \Cref{clove path} implies $|w|\geq|\textbf{t}|$.

Note that by construction, the last letter of $\lab(\textbf{t}_{\ell,L-6}'')$ is a $t$-letter.  Similarly, $\lab(\textbf{t}_{1,k}'')$ is either trivial (if $k=1$) or its first letter is a $t$-letter.  Hence, in any case \Cref{lengths 1}(d) yields $|w|=|\textbf{t}_{1,k}''|+|v|+|\textbf{t}_{\ell,L-6}''|=|\textbf{t}_{1,k}|+|v|+|\textbf{t}_{\ell,L-6}|-2$.

But \Cref{lengths 1} also implies $|\textbf{t}|=|\textbf{t}_{1,k}|+|\textbf{t}_{k,\ell}|+|\textbf{t}_{\ell,L-6}|-2$, implying the statement.

\end{proof}

Now, let $\textbf{z}_{2,2}$ be the subpath of $\textbf{z}_2$ such that $\lab(\textbf{z}_{2,2}^{-1})$ is the admissible subword of $W$ with base $Q_0^\pazocal{L}(1)Q_1^\pazocal{L}(1)$.  The structure of the standard base of $\textbf{M}^\pazocal{L}$ implies $\textbf{z}_{2,2}$ is a terminal subpath of $\textbf{z}_2$, and so there exists a factorization $\textbf{z}_2=\textbf{z}_{2,1}\textbf{z}_{2,2}$.

Then, letting $\textbf{f}$ be the first edge of $\textbf{z}_{2,2}$, $\lab(\textbf{f}^{-1}\textbf{z}_{2,1}^{-1})$ is the admissible subword of $W(1)$ with base $Q_1^\pazocal{L}(1)\dots Q_N^\pazocal{L}(1)(R_N^\pazocal{L}(1))^{-1}\dots (R_0^\pazocal{L}(1))^{-1}$.  In particular, as $W$ is an accepted configuration, the parallel nature of the rules of $\textbf{M}^\pazocal{L}$ implies that $\lab(\textbf{f}^{-1}\textbf{z}_{2,1}^{-1})$ is a coordinate shift of the corresponding admissible subword of $W(j)$ for any $j$.

\begin{lemma} \label{z_22 length}

For any word $u$ over $\pazocal{X}\cup\pazocal{X}^{-1}$ which represents the same element of $G_\Omega(\textbf{M}^\pazocal{L})$ as $\lab(\textbf{z}_{2,2})\lab(\textbf{top}(\pazocal{Q}_{\ell}(L,1)))$, $|u|\geq h_\ell+|\textbf{t}_{k+1,\ell}|_q+3$.

\end{lemma}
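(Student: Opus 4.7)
My plan is to reduce the statement to \Cref{z_2 length} by left-extending $w$. Given a word $w$ representing $\lab(\textbf{z}_{2,2})\lab(\textbf{top}(\pazocal{Q}_\ell(L,1)))$, I would form
$$w^*\equiv\lab(\textbf{bot}(\pazocal{Q}_{k-1}(2,2)))^{-1}\lab(\textbf{z}_{2,1})\cdot w.$$
Since $\lab(\textbf{z}_2)\equiv\lab(\textbf{z}_{2,1})\lab(\textbf{z}_{2,2})$, $w^*$ represents precisely the element whose length is bounded by \Cref{z_2 length}, so $|w^*|\geq|\textbf{t}_{k,\ell}|$, and the subadditivity in \Cref{lengths 1}(c) yields
$$|w|\;\geq\;|\textbf{t}_{k,\ell}|\;-\;|\lab(\textbf{bot}(\pazocal{Q}_{k-1}(2,2)))^{-1}|\;-\;|\lab(\textbf{z}_{2,1})|.$$

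Next I would compute each length on the right. \Cref{lengths 2}(a) gives $|\lab(\textbf{bot}(\pazocal{Q}_{k-1}(2,2)))^{-1}|=h_k$. Since $\lab(\textbf{z}_{2,1})$ carries no $\theta$-letters, exactly $2N$ $q$-letters, and the $a$-content of $W(1)$ outside the ``special'' input sector, its only decomposition is into singletons and \Cref{lengths 1}(b) gives $|\lab(\textbf{z}_{2,1})|=2N+\delta\bigl(|\lab(\textbf{z}_{2,1})|_\pazocal{A}+|\lab(\textbf{z}_{2,1})|_o\bigr)$. For $|\textbf{t}_{k,\ell}|$, the parameter assignments (together with $k\leq 6$ and $\ell=L-6-k$) give $\ell-k>(L-3)/2$, so \Cref{clove theta-bands} applies to $\Psi_{k,\ell}$ and yields $|\textbf{t}_{k,\ell}|_\theta=h_k+h_\ell$. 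The clove decomposition $\textbf{t}_{k,\ell}=\textbf{t}_k\cdot\textbf{e}\cdot\textbf{t}_{k+1,\ell}$, where $\textbf{e}$ is the outer $t$-edge of $\pazocal{Q}_{k+1}$, combined with the count of $2N+2$ interior $q$-bands per non-distinguished clove, produces $|\textbf{t}_{k,\ell}|_q-|\textbf{t}_{k+1,\ell}|_q=2N+3$. Applying \Cref{lengths 1}(b) to $|\textbf{t}_{k,\ell}|$ and substituting into the previous display gives
$$|w|\;\geq\;h_\ell+|\textbf{t}_{k+1,\ell}|_q+3+\delta\Bigl(\max\bigl(0,|\textbf{t}_{k,\ell}|_\pazocal{A}-h_k-h_\ell\bigr)+\max\bigl(0,|\textbf{t}_{k,\ell}|_o-h_k-h_\ell\bigr)-|\lab(\textbf{z}_{2,1})|_\pazocal{A}-|\lab(\textbf{z}_{2,1})|_o\Bigr).$$

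The main obstacle will be the final $\delta$-bracket: I must show that the $\pazocal{A}$- and ordinary-$a$ contributions to $|\textbf{t}_{k,\ell}|$ dominate those of $\lab(\textbf{z}_{2,1})$. To handle this I plan to trace $a$-bands through the $\ell-k$ non-distinguished cloves. By the parallel nature of the rules of $\textbf{M}^\pazocal{L}$ on non-``special'' coordinates, together with \Cref{M component size}, each unrestricted $\pazocal{A}$- or ordinary-$a$ letter in $\lab(\textbf{z}_{2,1})$ has a counterpart inside every $\textbf{s}_i$ for $k\leq i\leq\ell-1$, and each such letter is the end of a maximal $a$-band in $\Psi_{k,\ell}$. \Cref{basic annuli 1}(1) and \Cref{basic annuli 2}(1) forbid these bands from curling back, so each must either terminate on $\textbf{t}_{k,\ell}$ directly or end on a $(\theta,q)$- or $(\theta,\pazocal{A})$-cell whose accompanying band in turn carries the $a$-edge through to $\textbf{t}_{k,\ell}$. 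Summing these contributions across all $\ell-k$ cloves will provide the required $\delta$-content and close the estimate to $|w|\geq h_\ell+|\textbf{t}_{k+1,\ell}|_q+3$.
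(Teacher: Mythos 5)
Your reduction to \Cref{z_2 length} via $w^*\equiv\lab(\textbf{bot}(\pazocal{Q}_{k-1}(2,2)))^{-1}\lab(\textbf{z}_{2,1})\,w$ is legitimate, and your $\theta$- and $q$-bookkeeping is essentially right: $|\lab(\textbf{bot}(\pazocal{Q}_{k-1}(2,2)))|=h_k$ cancels against the $h_k$ inside $|\textbf{t}_{k,\ell}|_\theta=h_k+h_\ell$, and $|\textbf{t}_{k,\ell}|_q-|\textbf{t}_{k+1,\ell}|_q=2N+3$ absorbs the $2N$ $q$-letters of $\lab(\textbf{z}_{2,1})$ and produces the $+3$. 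The genuine gap is the final $\delta$-bracket, and it is not a technicality. You need $\max\bigl(0,|\textbf{t}_{k,\ell}|_\pazocal{A}-h_k-h_\ell\bigr)+\max\bigl(0,|\textbf{t}_{k,\ell}|_o-h_k-h_\ell\bigr)\geq|\lab(\textbf{z}_{2,1})|_\pazocal{A}+|\lab(\textbf{z}_{2,1})|_o$, i.e.\ that the $\pazocal{A}$-/ordinary-$a$ content surviving on the outer arc $\textbf{t}_{k,\ell}$ dominates the corresponding content of the disk word $W(1)$ \emph{plus} an extra $h_k+h_\ell$ coming from the $\max(0,\cdot-|\cdot|_\theta)$ in \Cref{lengths 1}(b). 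Your band-tracing does not deliver this: an unrestricted $\pazocal{A}$- or ordinary $a$-band emanating from $\textbf{s}_{k,\ell}$ may perfectly well end on a $(\theta,q)$-cell inside a clove --- that is exactly how the machine erases tape letters --- and there is no ``accompanying band that carries the $a$-edge through to $\textbf{t}_{k,\ell}$''; Lemmas \ref{basic annuli 1} and \ref{basic annuli 2} only forbid annuli, not absorption. If $W$ has large non-special $a$-content erased by a computation of comparable length, $|\textbf{t}_{k,\ell}|_a$ can be far smaller than $|\lab(\textbf{z}_{2,1})|_a$, your bound degrades to $h_\ell+|\textbf{t}_{k+1,\ell}|_q+3-\delta(\cdots)$ with an unbounded deficit, and the estimate does not close. (Note also that at this stage you cannot invoke \Cref{clove special input}, since that lemma is itself deduced from the present one.)

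The paper's proof sidesteps exactly this: instead of paying for $\lab(\textbf{z}_{2,1})$ at disk-label cost, it cuts $\Sigma_k$ along the $q$-band $\pazocal{Q}_k'$ corresponding to $(R_1^\pazocal{L}(j_k))^{-1}$, takes the reflected copy of the cut-off piece $\Sigma_k'$, notes it is exceptional, and coordinate-shifts it to coordinate $1$. Reading the boundary of $\bar{\Sigma}_k'(1)$ shows that $\lab(\textbf{bot}(\pazocal{Q}_{k-1}(2,2)))^{-1}\lab(\textbf{z}_{2,1})$ equals in $G_\Omega(\textbf{M}^\pazocal{L})$ a word of length $|\textbf{y}_k'|+\|H_k'\|$ --- quantities measured on $\textbf{t}_{k,\ell}$ and on the history of $\pazocal{Q}_k'$, not on the disk --- so after subtracting, the $a$-content cancels exactly and \Cref{lengths 1}(b) applied to $\textbf{t}_{k,\ell}'$ gives $|w|\geq h_\ell+|\textbf{t}_{k,\ell}'|_q+1=h_\ell+|\textbf{t}_{k+1,\ell}|_q+3$. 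To salvage your outline you would need to replace the naive subtraction of $|\lab(\textbf{z}_{2,1})|$ by some such re-expression of the prefix in terms of the clove boundary.
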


\begin{proof}

Let $\textbf{e}_k'$ be the edge of $\textbf{s}_k$ satisfying $\lab(\textbf{e}_k')\in(R_1^\pazocal{L}(j_k))^{-1}$.  Then, let $\pazocal{Q}_k'$ be the maximal positive $q$-band of $\Sigma_k$ for which $\textbf{e}_k'$ is a defining edge.  

Cutting along $\textbf{bot}(\pazocal{Q}_k')$ separates $\Sigma_k$ into two subdiagrams, one of which, denoted $\Sigma_k'$, does not contain $\pazocal{Q}_k'$.  By construction, we may factor the contour $\partial\Sigma_k'=(\textbf{x}_k')^{-1}\textbf{p}_{k}\textbf{y}_k'(\textbf{q}_{k}')^{-1}$ such that $\textbf{p}_{k}=\textbf{top}(\pazocal{Q}_k)$ (note this subpath is as denoted in $\partial\Sigma_k$), $\textbf{q}_{k}'=\textbf{bot}(\pazocal{Q}_k')$, $\textbf{x}_k'$ is a subpath of $\textbf{x}_k$, and $\textbf{y}_k'$ is a subpath of $\textbf{y}_k$.  

Let $\textbf{x}_k''$ be the subpath of $\textbf{x}_k$ such that $\textbf{x}_k=\textbf{x}_k'\textbf{x}_k''$.  Since $k\leq6$, $\ell-k=L-6-2k\geq L-18$.  So, the parameter choice $L>33$ implies $\Psi_{k,\ell}$ satisfies the hypotheses of \Cref{clove theta-bands}.  As such, $|\textbf{t}_{k,\ell}|_\theta=h_k+h_\ell$ and every maximal $\theta$-band of $\Sigma_k'$ has an end on $\textbf{p}_{k}$.  So, letting $H_k'$ be the history of $\pazocal{Q}_k'$, we have $$h_k=|\textbf{p}_{k}|_\theta=|\textbf{y}_k'|_\theta+|\textbf{q}_{k}'|_\theta=|\textbf{y}_k'|_\theta+\|H_k'\|$$
with $|\textbf{q}_{k}'|=\|H_k'\|$ by \Cref{lengths 2}(a).

Let $\textbf{f}_k$ be the edge of $\textbf{t}$ which is an end of $\pazocal{Q}_k$.  Then, let $\textbf{t}_{k,\ell}'$ be the subpath of $\textbf{t}_{k,\ell}$ such that $\textbf{t}_{k,\ell}=\textbf{f}_k\textbf{y}_k'\textbf{t}_{k,\ell}'$.  By construction, $|\textbf{t}_{k,\ell}'|_q=|\textbf{t}_{k+1,\ell}|_q+2$ and the first edge of $\textbf{t}_{k,\ell}'$ is a $q$-edge.  So, \Cref{lengths 1}(d) implies $$|\textbf{t}_{k,\ell}|=|\textbf{f}_k|+|\textbf{y}_k'|+|\textbf{t}_{k,\ell}'|=|\textbf{y}_k'|+|\textbf{t}_{k,\ell}'|+1$$
Further, $|\textbf{t}_{k,\ell}|_\theta=|\textbf{y}_k'|_\theta+|\textbf{t}_{k,\ell}'|_\theta$, so that $|\textbf{t}_{k,\ell}'|_\theta=h_k+h_\ell-|\textbf{y}_k'|_\theta=\|H_k'\|+h_\ell$.  

Since $\Sigma_k'$ is a subdiagram of $\Sigma_k$, it is a circular diagram over $M(\textbf{M}^\pazocal{L})$ such that every cell has coordinate $j_k$ and no cell is a $(\theta,t)$-cell.  As such, we may construct the reflected copy $\bar{\Sigma}_k'$.  Let $\partial\bar{\Sigma}_k'=((\bar{\textbf{x}}_k')^{-1}\bar{\textbf{p}}_{k}\bar{\textbf{y}}_k'(\bar{\textbf{q}}_{k}')^{-1})^{-1}$ with the naming indicative of the correspondence to the subpaths of $\partial\Sigma_k'$.  Observe that since $W$ is an accepted configuration, the parallel nature of the rules of $\textbf{M}_4^\pazocal{L}$ implies $\lab(\bar{\textbf{x}}_k')^{-1}$ is a suffix of $W(j_k)$.

As $\Sigma_k'$ contains no $q$-bands corresponding to the parts $(R_1^\pazocal{L}(j_k))^{-1}$ or $(R_0^\pazocal{L}(j_k))^{-1}$ of the standard base, $\bar{\Sigma}_k'$ contains no $q$-bands corresponding to the parts $Q_0^\pazocal{L}(j_k)$ or $Q_1^\pazocal{L}(j_k)$.  In particular, $\bar{\Sigma}_k'$ is exceptional, so that we may construct $\bar{\Sigma}_k'(1)$.

As in the previous constructions, let $\partial\bar{\Sigma}_k'(1)=((\bar{\textbf{x}}_k'(1))^{-1}\bar{\textbf{p}}_{k}(1)\bar{\textbf{y}}_k'(1)(\bar{\textbf{q}}_{k}'(1))^{-1})^{-1}$ where:

\begin{itemize}

\item $\lab(\bar{\textbf{x}}_k'(1))\equiv\lab(\textbf{z}_{2,1})$

\item $\lab(\bar{\textbf{p}}_{k}(1))\equiv\lab(\textbf{bot}(\pazocal{Q}_{k-1}(2,2)))$

\item $|\bar{\textbf{y}}_k'(1)|=|\textbf{y}_k'|$

\item $|\bar{\textbf{q}}_{k}'(1)|=|\textbf{q}_{k}'|$

\end{itemize}

So, $\lab(\textbf{bot}(\pazocal{Q}_{k-1}(2,2)))^{-1}\lab(\textbf{z}_{2,1})=_{G_\Omega(\textbf{M}^\pazocal{L})}\lab(\bar{\textbf{y}}_k'(1)(\bar{\textbf{q}}_{k}'(1))^{-1})$.

In particular, $v\equiv\lab(\bar{\textbf{y}}_k'(1)(\bar{\textbf{q}}_{k}'(1))^{-1})u$ is a word over $\pazocal{X}\cup\pazocal{X}^{-1}$ which represents the same element of $G_\Omega(\textbf{M}^\pazocal{L})$ as $\lab(\textbf{bot}(\pazocal{Q}_{k-1}(2,2)))^{-1}\lab(\textbf{z}_2)\lab(\textbf{top}(\pazocal{Q}_\ell(L,1)))$, so that $|v|\geq|\textbf{t}_{k,\ell}|$ by \Cref{z_2 length}.  On the other hand, \Cref{lengths 1}(c) implies:
$$|v|\leq|\bar{\textbf{y}}_k'(1)|+|\bar{\textbf{q}}_{k}'(1)|+|u|=|\textbf{y}_k'|+|\textbf{q}_{k}'|+|u|=|\textbf{t}_{k,\ell}|-|\textbf{t}_{k,\ell}'|-1+|\textbf{q}_{k}'|+|u|$$
Hence, $|u|\geq|\textbf{t}_{k,\ell}'|-|\textbf{q}_{k}'|+1=|\textbf{t}_{k,\ell}'|-\|H_k'\|+1$.

But \Cref{lengths 1}(b) implies $|\textbf{t}_{k,\ell}'|\geq|\textbf{t}_{k,\ell}'|_\theta+|\textbf{t}_{k,\ell}'|_q=\|H_k'\|+h_\ell+|\textbf{t}_{k,\ell}'|_q$, so that $$|u|\geq h_\ell+|\textbf{t}_{k,\ell}'|_q+1=h_\ell+|\textbf{t}_{k+1,\ell}|_q+3$$

\end{proof}

\begin{lemma} \label{clove special input}

$W$ is accepted by a one-machine computation of the first machine.

\end{lemma}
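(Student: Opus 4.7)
The plan is to proceed by contradiction, assuming $W$ is not accepted by any one-machine computation of the first machine. By Lemma \ref{ell(W)}, exactly one of the following holds: (i) $\ell(W)=1$ with the accepting one-machine computation belonging to the second machine, or (ii) $\ell(W)=2$. In either case, I will exhibit a word $w_0$ over $\pazocal{X}\cup\pazocal{X}^{-1}$ representing the same element of $G_\Omega(\textbf{M}^\pazocal{L})$ as $\lab(\textbf{z}_{2,2})\lab(\textbf{top}(\pazocal{Q}_\ell(L,1)))$ whose length contradicts the lower bound of Lemma \ref{z_22 length}.

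The crucial unifying claim is that in both cases the ``special'' input sector of $W$ contains a word $u_W\in F(\pazocal{B})$, possibly empty. For case (i), every rule of $\Theta_2$ locks the $Q_0^\pazocal{L}(1)Q_1^\pazocal{L}(1)$-sector, so the content of $W$'s special input sector is forced to coincide with that of $W_{ac}$ and is therefore empty. For case (ii), Lemma \ref{ell(W)}(b) provides $w\in\pazocal{L}$ and a first-machine history $H_1$ with $W\cdot H_1\equiv J(w)$. Since $J(w)$ is a start configuration and $W$ is not, Lemma \ref{(s) (a) first or last}(b) and the fact that neither $\theta(s)_1$ nor $\theta(a)_1^{\pm1}$ can occur internally in $H_1$ force a factorization $H_1\equiv H_1'\theta(s)_1^{-1}$ with $H_1'$ consisting entirely of working rules. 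Then $W\cdot H_1'\equiv J(w)\cdot\theta(s)_1$, whose special input sector content is $\widetilde{\varphi}_1$ applied to the empty word and hence empty. Invoking the identification (\Cref{sec-M-semi}) of the action of first-machine working rules on the special input sector with semi-computations of $\textbf{M}_1^\pazocal{A}$ in the $Q_0^\pazocal{A}Q_1^\pazocal{A}$-sector, together with the $\pazocal{A}$-projection invariance of Lemma \ref{semi-computation deltas}, the content $u_W$ of $W$'s special input sector satisfies $\delta(u_W)\equiv\delta(1)\equiv 1$, and hence $u_W\in F(\pazocal{B})$.

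Since $b$-letters have length $0$ under the modified length function, the label of $\textbf{z}_{2,2}$, which is $q_0u_Wq_1$ for the two $q$-letters $q_0,q_1$ bracketing the sector, has length exactly $2$ regardless of how long $u_W$ might be as a word. Thus the specific word $w_0\equiv\lab(\textbf{z}_{2,2})\lab(\textbf{top}(\pazocal{Q}_\ell(L,1)))$ satisfies
\[
|w_0|=|\textbf{z}_{2,2}|+|\textbf{top}(\pazocal{Q}_\ell(L,1))|=2+h_\ell,
\]
where Lemma \ref{lengths 1}(d) applies since $\textbf{z}_{2,2}$ ends with a $q$-edge and Lemma \ref{lengths 2}(a) evaluates the side of a $t$-band as its $\theta$-length. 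Substituting into Lemma \ref{z_22 length} gives $2+h_\ell\ge h_\ell+|\textbf{t}_{k+1,\ell}|_q+3$, i.e.\ $|\textbf{t}_{k+1,\ell}|_q\le -1$, contradicting $|\textbf{t}_{k+1,\ell}|_q\ge 0$.

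The main obstacle is rigorously verifying, in case (ii), that $u_W\in F(\pazocal{B})$; the argument above sketches the required chain of identifications, but carrying it out will depend on a careful unpacking of how the reduced history $H_1'$, viewed as a semi-computation in the special input sector, interacts with the admissible word $W$ and ultimately with the work of $\textbf{M}_1^\pazocal{A}$.
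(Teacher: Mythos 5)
Your first case is precisely the paper's proof: every rule of $\Theta_2$ locks the `special' input sector, so if $W$ were accepted by a one-machine computation of the second machine that sector would be empty, whence $|\textbf{z}_{2,2}|=2$ and, by \Cref{lengths 1}(d) and \Cref{lengths 2}(a), the word $\lab(\textbf{z}_{2,2})\lab(\textbf{top}(\pazocal{Q}_\ell(L,1)))$ has length $h_\ell+2$, contradicting \Cref{z_22 length}. That is all the paper does, because your case (ii) is vacuous: $W$ is the label of a disk in a diagram over the disk presentation of $G_\Omega(\textbf{M}^\pazocal{L})$, and disk relations are by definition only those $W=1$ with $W$ accepted and $\ell(W)\leq1$, so $\ell(W)=2$ simply cannot occur. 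Noting this would have let you drop case (ii) entirely.

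As written, however, your treatment of case (ii) contains a genuine error, and it is exactly at the step you flagged as the main obstacle. The tape word of the `special' input sector does not evolve along a computation by a semi-computation of $\textbf{M}_1^\pazocal{A}$: in a computation, a working rule coming from $\theta_a^{\pm1}$ with $a\in\pazocal{A}$ also acts on the state letter via $q_1\to a_1^{-1}q_1a_2$, inserting or deleting an $\pazocal{A}_1$-letter adjacent to $q_1$, so by \Cref{one-rule delta} the $\pazocal{A}$-projection of that sector is \emph{not} preserved (this is precisely the mechanism by which $\textbf{M}_1^\pazocal{A}$ shifts its input out of the sector). \Cref{semi-computation deltas} applies only to semi-computations, where rules act merely by $\widetilde{f}_{\theta,1}$. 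From $W\cdot H_1'\equiv J(w)\cdot\theta(s)_1$ one can only conclude that the sector word $u_W$ of $W$ is carried to the empty word by a computation of $\textbf{M}_1^\pazocal{A}$ in the base $Q_0^\pazocal{A}Q_1^\pazocal{A}$; such words need not lie in $F(\pazocal{B})$ — already one application of $\theta_a^{-1}$ to the empty sector yields $v(a,a)^{-1}a_1$, and shiftable-type words (cf. \Cref{shiftable positive}) can carry arbitrarily many $\pazocal{A}_1$-letters. Then $|\textbf{z}_{2,2}|=2+\delta|u_W|_\pazocal{A}$, and since nothing bounds $|u_W|_\pazocal{A}$ against $\delta^{-1}$, the comparison with \Cref{z_22 length} no longer yields a contradiction. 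So the proposal as written has a gap, but it disappears once you invoke the definition of disks to exclude $\ell(W)=2$, after which your argument coincides with the paper's.
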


\begin{proof}

Suppose to the contrary that $W$ is accepted by a one-machine computation of the second machine.  As every rule of the second machine locks the `special' input sector, the admissible subword of $W$ with base $Q_0^\pazocal{L}(1)Q_1^\pazocal{L}(1)$ has empty tape word.  In particular, $|\textbf{z}_{2,2}|=2$.

But then Lemmas \ref{lengths 1}(d) and \ref{lengths 2}(a) imply $w\equiv\lab(\textbf{z}_{2,2})\lab(\textbf{top}(\pazocal{Q}_\ell(L,1)))$ itself satisfies $|w|=|\textbf{z}_{2,2}|+|\textbf{top}(\pazocal{Q}_\ell(L,1))|=h_\ell+2<h_\ell+|\textbf{t}_{k+1,\ell}|+3$, contradicting \Cref{z_22 length}.

\end{proof}

\begin{lemma} \label{no exceptional}

The reflected copy $\bar{\Sigma}_{\ell-1}$ is not exceptional.

\end{lemma}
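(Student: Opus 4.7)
The plan is to proceed by contradiction. I would suppose that $\bar{\Sigma}_{\ell-1}$ is exceptional, so that the coordinate shift $\bar{\Sigma}_{\ell-1}(1)$ can be constructed, and then mimic the construction in the proof of \Cref{z_22 length}, this time working with the clove $\Gamma_{\ell-1}$ at the `right' end of the scope $\Psi$ rather than $\Gamma_k$ at the `left' end. Specifically, I would identify the edge of $\textbf{s}_{\ell-1}$ with label in $R_1^\pazocal{L}(j_{\ell-1})^{-1}$, take the maximal positive $q$-band $\pazocal{Q}_{\ell-1}'$ of $\Sigma_{\ell-1}$ containing it, and cut along $\textbf{top}(\pazocal{Q}_{\ell-1}')$ to obtain a subdiagram $\Sigma_{\ell-1}'$. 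The exceptional hypothesis combined with this cutting then permits construction of the coordinate shift $\bar{\Sigma}_{\ell-1}'(1)$.

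Next, I would paste $\bar{\Sigma}_{\ell-1}'(1)$ to appropriate $t$-bands, producing an analog of the diagram $\bar{\Gamma}_i(r)$ introduced prior to \Cref{no distinguished clove} but at coordinates neighboring $1$. From this I would derive a length bound of the form $|w'| \geq h_{\ell-1} + |\textbf{t}_{\ell-1, L-7-k}|_q + c$ (for some small constant $c$) on any word $w'$ representing $\lab(\textbf{bot}(\pazocal{Q}_{k-1}(2,2)))^{-1}\lab(\textbf{z}_{2,1})$. Combined with the bound of \Cref{z_22 length}, via concatenation and iterated applications of \Cref{lengths 1}(c), this would produce a word representing $\lab(\textbf{bot}(\pazocal{Q}_{k-1}(2,2)))^{-1}\lab(\textbf{z}_2)\lab(\textbf{top}(\pazocal{Q}_\ell(L,1)))$ whose length is strictly less than $|\textbf{t}_{k,\ell}|$, contradicting \Cref{z_2 length}.

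The main obstacle I expect is the precise accounting required in this symmetric construction. Although the two ends of $\Psi$ are structurally analogous, the labeling of $\partial\Pi$ distinguishes them (via $\textbf{z}_1$ versus $\textbf{z}_3$, whose lengths depend on $k$), and I must verify both that the appropriate $q$-band to cut along exists in $\Sigma_{\ell-1}$ (using a mirror-image version of the argument surrounding $\Sigma_k'$ in the proof of \Cref{z_22 length}) and that the resulting coordinate shift pastes consistently with the existing $t$-band structure of $\Delta$ to yield the stated bound. The identification of precisely which $q$-band and which coordinate shift to use will likely require invoking \Cref{clove special input}, which controls the structure of $W$ on $\partial\Pi$.
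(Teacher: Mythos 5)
Your plan has two intertwined problems, and together they prevent the argument from reaching a contradiction. First, the cutting step defeats the purpose of the hypothesis. In the proof of \Cref{z_22 length}, the whole reason for cutting $\Sigma_k$ along the $q$-band corresponding to $(R_1^\pazocal{L}(j_k))^{-1}$ is that the resulting reflected piece is \emph{automatically} exceptional (it contains no $q$-bands for the parts $Q_0^\pazocal{L}(j_k)$ or $Q_1^\pazocal{L}(j_k)$), so the shift to coordinate $1$ can be performed with no assumption at all. If you perform the same cut on $\Sigma_{\ell-1}$, the assumed exceptionality of the full reflected copy $\bar{\Sigma}_{\ell-1}$ is never genuinely used, so nothing built afterwards can contradict it. The hypothesis is needed precisely so that you may shift the \emph{uncut} diagram $\bar{\Sigma}_{\ell-1}$ to coordinate $1$; only then does the boundary piece $\bar{\textbf{x}}_{\ell-1}(1)$ carry the entire component at coordinate $1$ \emph{including the `special' input sector}, and it is \Cref{clove special input} (the first machine acts in parallel on coordinate $1$ as well) that identifies this label with $\lab(\textbf{z}_2)$ --- not merely a bookkeeping device for selecting a $q$-band. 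Cutting along the $(R_1)^{-1}$-band discards exactly the portion corresponding to $\textbf{z}_{2,2}$, which is the part you cannot recover by any other means.

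Second, the bound directions in your combination step are reversed. Explicit constructions (shifts of reflected cloves) produce short representative words, i.e.\ upper bounds; the minimality of $|h|$ is what produces the lower bounds \Cref{z_2 length} and \Cref{z_22 length}. To contradict \Cref{z_2 length} you must exhibit a word representing $\lab(\textbf{bot}(\pazocal{Q}_{k-1}(2,2)))^{-1}\lab(\textbf{z}_2)\lab(\textbf{top}(\pazocal{Q}_\ell(L,1)))$ of length strictly less than $|\textbf{t}_{k,\ell}|$. Your plan proposes a lower bound of the form $|w'|\geq h_{\ell-1}+\dots$ for the $\textbf{z}_{2,1}$-piece and then combines it with the lower bound of \Cref{z_22 length} for the $\textbf{z}_{2,2}$-piece; two lower bounds on the pieces cannot be concatenated into a short word for the product, and indeed \Cref{z_22 length} says precisely that no short word for the $\textbf{z}_{2,2}$-piece is available in general. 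The paper's proof avoids all of this: assuming exceptionality, the shift of the whole $\bar{\Sigma}_{\ell-1}$ gives $v\equiv\lab(\bar{\textbf{p}}_{\ell-1,1}(1)\bar{\textbf{y}}_{\ell-1}(1))$ representing $\lab(\textbf{z}_2)\lab(\textbf{top}(\pazocal{Q}_\ell(L,1)))$ with $|v|\leq h_{\ell-1}+|\textbf{y}_{\ell-1}|$, prepending $\lab(\textbf{bot}(\pazocal{Q}_{k-1}(2,2)))^{-1}$ gives a word of length at most $h_k+h_{\ell-1}+|\textbf{y}_{\ell-1}|$, while \Cref{clove theta-bands} and \Cref{lengths 1} force $|\textbf{t}_{k,\ell}|\geq h_k+h_{\ell-1}+|\textbf{y}_{\ell-1}|+1$. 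That single comparison, using \Cref{z_2 length} alone, is the contradiction; \Cref{z_22 length} plays no role in this lemma.
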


\begin{proof}

Assuming $\bar{\Sigma}_{\ell-1}$ is exceptional, $\partial\bar{\Sigma}_{\ell-1}(1)=((\bar{\textbf{x}}_{\ell-1}(1))^{-1}\bar{\textbf{p}}_{\ell-1}(1)\bar{\textbf{y}}_{\ell-1}(1)(\bar{\textbf{q}}_{\ell-1}(1))^{-1})^{-1}$ where:

\begin{itemize}

\item $\lab(\bar{\textbf{x}}_{\ell-1}(1)^{-1})$ is the coordinate shift of $\lab(\textbf{x}_{\ell-1})$ with base $B_4^\pazocal{L}(1)$

\item $\lab(\bar{\textbf{q}}_{\ell-1}(1))\equiv\lab(\textbf{top}(\pazocal{Q}_\ell(L,1)))$

\item $|\bar{\textbf{p}}_{\ell-1}(1)|=h_{\ell-1}$

\item $|\bar{\textbf{y}}_{\ell-1}(1)|=|\textbf{y}_{\ell-1}|$

\end{itemize}

It then follows from \Cref{clove special input} and the parallel nature of the rules of the first machine that $\lab(\bar{\textbf{x}}_{\ell-1}(1)^{-1})$ is the admissible subword of $W$ with base $B_4^\pazocal{L}(1)$, and so $\lab(\bar{\textbf{x}}_{\ell-1}(1))\equiv\lab(\textbf{z}_2)$.

As a result, $v\equiv\lab(\bar{\textbf{p}}_{\ell-1}(1)\bar{\textbf{y}}_{\ell-1}(1))$ is a word over $\pazocal{X}\cup\pazocal{X}^{-1}$ which represents the same element of $G_\Omega(\textbf{M}^\pazocal{L})$ as $\lab(\textbf{z}_2)\lab(\textbf{top}(\pazocal{Q}_\ell(L,1)))$.  Hence, $w\equiv\lab(\textbf{bot}(\pazocal{Q}_{k-1}(2,2)))^{-1}v$ satisfies the hypotheses of \Cref{z_2 length}, so that $|w|\geq|\textbf{t}_{k,\ell}|$.

As $d\leq 6$, we have $\ell-1-k\geq L-19$.  A parameter choice for $L$ and \Cref{clove theta-bands} thus imply $|\textbf{t}_{k,\ell-1}|_\theta=h_k+h_{\ell-1}$.  Further, \Cref{lengths 1}(d) implies $|\textbf{t}_{k,\ell}|=|\textbf{t}_{k,\ell-1}|+|\textbf{y}_{\ell-1}|+1$, so that \Cref{lengths 1}(b) implies $|\textbf{t}_{k,\ell}|\geq h_k+h_{\ell-1}+|\textbf{y}_{\ell-1}|+1$.

But Lemmas \ref{lengths 1} and \ref{lengths 2} imply
$$|w|\leq|\textbf{bot}(\pazocal{Q}_{k-1}(2,2))|+|v|\leq h_k+|\bar{\textbf{p}}_{\ell-1}(1)|+|\bar{\textbf{y}}_{\ell-1}(1)|=h_k+h_{\ell-1}+|\textbf{y}_{\ell-1}|<|\textbf{t}_{k,\ell}|$$
yielding a contradiction.

\end{proof}

Finally, we reach the desired contradiction:

\begin{lemma} \label{no distortion disks}

A reduced minimal $h$-distortion diagram contains no disks.

\end{lemma}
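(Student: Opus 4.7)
The plan is to assume toward contradiction that the reduced minimal $h$-distortion diagram $\Delta$ contains a disk, invoke the full setup of this section, and combine \Cref{clove special input} with \Cref{no exceptional} to violate the disk condition $\ell(W)\leq1$. Under the standing assumption, Lemmas \ref{Gamma special cell} and \ref{pure scope} provide a pure scope $\Psi$ of size $L-6$ on $\textbf{q}$ associated with a disk $\Pi$ labeled (up to sign) by an accepted configuration $W$, and we obtain the cloves $\Gamma_1,\ldots,\Gamma_{L-7}$, the subdiagrams $\Sigma_i$, and the indices $k\in\{1,\ldots,6\}$, $\ell=L-6-k$ as above.

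By \Cref{clove special input} together with \Cref{one-machine language}, $W\equiv I(w)$ for some $w\in\pazocal{L}$, so in particular the $q$-letters of $W$ are the start letters of $\textbf{M}^\pazocal{L}$ itself (not copies of the start letters of $\textbf{M}_{6,2}^\pazocal{L}$) and every input sector of $W$ carries a non-trivial coordinate shift of $\widetilde{\varphi}_1(w)$. On the other hand, \Cref{no exceptional} furnishes a $\theta$-band $\bar{\pazocal{T}}$ in $\bar{\Sigma}_{\ell-1}$ whose history is a working rule $\theta\in\Theta_2^{\pm}$ and which meets the $Q_1^\pazocal{L}(j_{\ell-1})$-band or the $Q_0^\pazocal{L}(j_{\ell-1})Q_1^\pazocal{L}(j_{\ell-1})$-sector; via the reflection construction, this yields a $\theta$-band $\pazocal{T}$ in $\Sigma_{\ell-1}$ with the same history $\theta$. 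The next step is to realize a suitable subdiagram of $\Gamma_{\ell-1}$ containing $\pazocal{T}$ as a trapezium and then apply \Cref{trapezia are computations} to extract a reduced computation of $\textbf{M}^\pazocal{L}$ in which $\theta\in\Theta_2^{\pm}$ is applied; admissibility of a working second-machine rule requires the state letters to be copies of the start letters of $\textbf{M}_{6,2}^\pazocal{L}$, so, starting from the coordinate shift of $W$ encoded in the clove, this local computation can only contain $\theta$ after a transition $\theta(s)_2$ has already been applied. But $\theta(s)_2$ locks the special input sector while the special input sector of $W\equiv I(w)$ is non-empty, so lifting the local computation via \Cref{extend one-machine} to an accepting computation of $W$ forces $\ell(W)\geq2$, in direct contradiction with the disk relation condition $\ell(W)\leq1$.

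The chief obstacle will be the rigorous identification of a subdiagram of $\Gamma_{\ell-1}$ containing $\pazocal{T}$ as a genuine trapezium so that \Cref{trapezia are computations} can be invoked, and the subsequent stitching of the local computation into a global accepting computation of $W$ compatible with the $\ell$-count. This will require a narrow-base analogue of \Cref{clove theta-bands} or a surgery adapted from \Cref{almost-extendable a-trapezia}, together with careful bookkeeping of coordinates via \Cref{extend one-machine} and the transition-rule analysis of \Cref{(s) (a) first or last}--\Cref{first machine J to J}; once this is in place, the incompatibility between the transition rule $\theta(s)_2$ and the non-empty special input sector of $W\equiv I(w)$ closes out the argument.
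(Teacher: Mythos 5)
There is a genuine gap, and it sits exactly at the two places where you try to shortcut the paper's argument. First, the identification $W\equiv I(w)$ is unjustified: \Cref{clove special input} only says the disk label $W$ is accepted by a one-machine computation of the first machine, and \Cref{one-machine language} converts this into $W\equiv I(w)$ only when $W$ is a \emph{start} configuration. A disk in an $h$-distortion diagram is labelled by an arbitrary accepted configuration with $\ell(W)\leq1$, and generically it is not a start configuration, so you cannot conclude that its `special' input sector is non-empty (or carries a copy of $\widetilde{\varphi}_1(w)$).

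Second, and more seriously, the intended contradiction with $\ell(W)\leq1$ does not exist. The computation you extract from the clove via \Cref{trapezia are computations} (and lift with \Cref{extend one-machine}) is not an accepting computation of $W$; it is just the prefix $H_\ell'$ of the spoke history followed by the first occurrence of a $\Theta_2$-rule. Since $\ell(W)$ is a minimum over \emph{accepting} computations, the mere existence of a multi-machine computation issuing from $W$ says nothing about $\ell(W)$, and configurations with $\ell(W)\leq1$ routinely evolve into configurations admissible for rules of the other machine — this is precisely the situation analysed, without any contradiction, in Lemmas \ref{transposition computation not applicable} and \ref{transposition computation not one-machine}. The paper's proof instead uses that dichotomy to conclude that the `special' input tape word of $W\cdot H_\ell'$ represents the identity in $G_\Omega(\textbf{M}^\pazocal{L})$ (either it is empty, or $W\equiv I(u)$ with $u\in\pazocal{L}$ and condition (L5) applies), builds from the restriction of the computation to the `special' input sector a trapezium whose side together with a two-letter word gives a path representing $\lab(\textbf{z}_{2,2})\lab(\textbf{top}(\pazocal{Q}_\ell(L,1)))$ of length at most $h_\ell+2$, and contradicts the metric lower bound of \Cref{z_22 length}. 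In other words, the contradiction ultimately comes from the minimality of $|\textbf{q}|=|h|$ (through \Cref{clove path}, \Cref{z_2 length}, \Cref{z_22 length}), not from the machine-theoretic count $\ell(W)$; your sketch never brings the length function back into play at the final step, so the argument cannot close as written.
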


\begin{proof}

By \Cref{no exceptional}, the reflected copy $\bar{\Sigma}_{\ell-1}$ cannot be exceptional.  In particular, there must exist a maximal $\theta$-band in $\bar{\Sigma}_{\ell-1}$ whose history is a rule of the second machine.  

By construction, this implies the existence of a maximal $\theta$-band $\pazocal{T}$ in $\Sigma_{\ell-1}$ whose history is a rule of the second machine.

Recall that a parameter choice for $L$ implies $(L-1)/2<\ell\leq L-7$.  So, \Cref{clove theta-bands} implies:

\begin{enumerate}

\item Every maximal $\theta$-band of $\Gamma_{\ell-1}$ crosses $\pazocal{Q}_\ell$.

\item Every maximal $\theta$-band of $\Psi_{1,\ell+1}$ that crosses $\pazocal{Q}_\ell$ also crosses $\pazocal{Q}_{\ell+1}$.

\end{enumerate}

In light of condition (2), Lemmas \ref{non-distinguished a-cells} and \ref{no distinguished clove} imply that the maximal $\theta$-bands of $\Gamma_\ell$ that cross both $\pazocal{Q}_\ell$ and $\pazocal{Q}_{\ell+1}$ form a subdiagram $\Lambda_\ell$ which is a trapezium with history $H_\ell$.  Letting $\partial\Lambda_\ell=\textbf{p}_1^{-1}\textbf{q}_1\textbf{p}_2\textbf{q}_2^{-1}$ be the standard factorization of the contour of this trapezium, then by construction:

\begin{itemize}

\item $\textbf{q}_1=\textbf{s}_\ell^{-1}$

\item $\textbf{p}_1$ is a subpath of $\textbf{top}(\pazocal{Q}_{\ell+1})$

\item $\textbf{p}_2$ is a subpath of $\textbf{bot}(\pazocal{Q}_\ell)$

\end{itemize}

The existence of $\pazocal{T}$ implies $H_\ell$ is non-empty, and so by \Cref{trapezia are computations} there exists a reduced computation $\pazocal{D}:W(j_\ell)\equiv V_0\to\dots\to V_t$ with history $H_\ell$.  In particular, there exists a letter of $H_\ell$ corresponding to a rule of the second machine.

Now, let $H_\ell'$ be the maximal (perhaps empty) prefix of $H_\ell$ consisting entirely of rules of the first machine.  By hypothesis, $H_\ell'$ is a proper prefix of $H_\ell$.  

If $H_\ell'$ is non-empty, then the subcomputation $\pazocal{D}':V_0\to\dots\to V_s$ with history $H_\ell'$ is a one-machine computation of the first machine.  So, \Cref{extend one-machine} yields a one-machine computation $\pazocal{C}':W_0\to\dots\to W_s\equiv W'$ of the first machine in the standard base with history $H_\ell'$.
By \Cref{clove special input} and the construction of \Cref{extend one-machine}, $W_0\equiv W$.  As a result, $W'$ is an accepted configuration with $\ell(W')\leq1$ such that $W'\equiv W\cdot H_\ell'$.  

Note that if $H_\ell'$ is empty, an analogous condition is given by taking $W'\equiv W$, {\frenchspacing i.e. by setting $\pazocal{C}'$ to be the empty computation}.

However, since $H_\ell'$ is a proper prefix of $H_\ell$, $W'(j_\ell)$ must be $\theta$-admissible for some $\theta\in\Theta_2$.  As a result, either:

\begin{enumerate}[label=(\roman*)]

\item $W'$ is $\theta$-admissible, in which case it has empty `special' input sector, or

\item $W'$ is not $\theta$-admissible, in which case \Cref{transposition computation not applicable} implies $W'\equiv I(u)$ for some $u\in\pazocal{L}$.

\end{enumerate}

Note that in case (ii), condition (L5) implies that the tape word of $W'$ in the `special' input sector represents the identity in $G_\Omega(\textbf{M}^\pazocal{L})$, while this condition is obviously true in case (i).

Let $\pazocal{C}''$ be the restriction of $\pazocal{C}'$ to the `special' input sector and let $\Delta_\pazocal{C}$ be the trapezium corresponding to $\pazocal{C}''$ given by \Cref{computations are trapezia}.  Then, letting $\partial\Delta_\pazocal{C}=(\textbf{p}_1')^{-1}\textbf{q}_1'\textbf{p}_2'(\textbf{q}_2')^{-1}$ be the standard factorization of the contour of this trapezium, by construction:

\begin{enumerate}[label=(\alph*)]

\item $\lab(\textbf{q}_1')$ and $\lab(\textbf{q}_2')$ are the admissible subwords of $W$ and $W'$, respectively, with base $Q_0^\pazocal{L}(1)Q_1^\pazocal{L}(1)$

\item $\textbf{p}_1'=\textbf{bot}(\pazocal{Q}_{0,\pazocal{C}})$ where $\pazocal{Q}_{0,\pazocal{C}}$ is a positive $q$-band corresponding to the part $Q_0^\pazocal{L}(1)$ of the standard base with history $H_\ell'$

\item $\textbf{p}_2'=\textbf{top}(\pazocal{Q}_{1,\pazocal{C}})$ where $\pazocal{Q}_{1,\pazocal{C}}$ is a positive $q$-band corresponding to the part $Q_1^\pazocal{L}(1)$ of the standard base with history $H_\ell'$

\end{enumerate}

Again, for uniformity we may deal with the case where $H_\ell'$ is empty by taking $\Delta_\pazocal{C}$ to be a `trapezium' filled with $0$-cells.

By the definition of the path $\textbf{z}_{2,2}$, (a) implies that $\lab(\textbf{q}_1')\equiv\lab(\textbf{z}_{2,2}^{-1})$.  

Observe that $\pazocal{Q}_\ell(L,1)$ can be viewed as the concatenation of two subbands, $\pazocal{Q}_\ell(L,1)'$ and $\pazocal{Q}_\ell(L,1)''$, where the (perhaps empty) subband $\pazocal{Q}_\ell(L,1)'$ has history $H_\ell'$.  Then, since every rule of $\textbf{M}^\pazocal{L}$ locks the $\{t(1)\}Q_0^\pazocal{L}(1)$-sector, (b) implies $\lab(\textbf{p}_1')\equiv\lab(\textbf{top}(\pazocal{Q}_\ell(L,1)'))$.

Finally, let $W''$ be the admissible word with base $Q_0^\pazocal{L}(1)Q_1^\pazocal{L}(1)$ with empty tape word and whose state letters appear in $W'$.  Then as the tape word of $W'$ in the `special' input sector represents the identity in $G_\Omega(\textbf{M}^\pazocal{L})$, $W'$ represents the same element of $G_\Omega(\textbf{M}^\pazocal{L})$ as $\lab(\textbf{q}_2')$ and satisfies $|W'|=|W'|_q=2$.

Hence, $u\equiv\lab(\textbf{p}_2')(W')^{-1}\lab(\textbf{top}(\pazocal{Q}_\ell(L,1)''))$ is a word over $\pazocal{X}\cup\pazocal{X}^{-1}$ which represents the same element of $G_\Omega(\textbf{M}^\pazocal{L})$ as $\lab(\textbf{z}_{2,2})\lab(\textbf{top}(\pazocal{Q}_\ell(L,1)))$ and, by Lemmas \ref{lengths 1} and \ref{lengths 2}, satisfies:
$$|u|\leq|\textbf{p}_2'|+|W'|+|\textbf{top}(\pazocal{Q}_\ell(L,1)'')|=\|H_\ell'\|+2+(h_\ell-\|H_\ell'\|)=h_\ell+2$$
But this contradicts \Cref{z_22 length}.

\end{proof}

\medskip


\subsection{Equivalence of length functions} \

We now study the immediate consequences of \Cref{no distortion disks}, establishing the equivalence that assures the proof that the embeddings constructed for the proof of Theorem A are indeed quasi-isometric.  

Recall that the modified length function defined in this section sets the length of the `noise' $b$-letters to $0$, and so is not equivalent to the `standard' word norm.  However, the equivalence we seek is only for the elements of the subgroup $H_\pazocal{A}$.  The next two statements, based on our study of $h$-distortion diagrams, establishes this equivalence.

\begin{lemma} \label{lengths are equal}

$|h|=\delta|h|_\pazocal{A}$.

\end{lemma}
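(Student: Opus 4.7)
The plan is to prove the two inequalities $|h|\le\delta|h|_\pazocal{A}$ and $|h|\ge\delta|h|_\pazocal{A}$ separately. The upper bound is immediate: any word of length $|h|_\pazocal{A}$ over $\pazocal{A}^{\pm1}$ representing $h$ has modified length exactly $\delta|h|_\pazocal{A}$ by construction, and $|h|$ is the minimum of the modified length across all representatives of $h$.

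For the lower bound, I will fix a reduced minimal $h$-distortion diagram $\Delta$ with standard factorization $\partial\Delta=\textbf{q}\textbf{p}$, so that $|\textbf{q}|=|h|$, and strip it of nearly all of its structure. First, \Cref{no distortion disks} removes all disks from $\Delta$. Then \Cref{distortion diagram q} asserts that every maximal $q$-band has an end on a disk, so $\Delta$ must have no $q$-edges at all, and hence no $(\theta,q)$-cells. Any maximal $\theta$-band must, by \Cref{disk theta-annuli}, be non-annular with both ends on $\textbf{q}$; choosing one that minimizes the area of the subdiagram it encloses together with a subpath of $\textbf{q}$ ensures no further $\theta$-band lies inside, so the band is either rim or quasi-rim and, since no $(\theta,q)$-cells exist, has base length $0$. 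This directly contradicts \Cref{distortion diagram rim theta} and \Cref{distortion diagram quasi-rim theta}, which demand base length exceeding $K$. Hence $\Delta$ has no $\theta$-edges, every positive cell is an $a$-cell, and $|\textbf{q}|_q=|\textbf{q}|_\theta=0$.

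Next I will show that $\lab(\textbf{q})$ is a word over $\pazocal{A}\cup\pazocal{A}_1\cup\pazocal{B}$. Since every positive cell of $\Delta$ is an $a$-cell, the equality $\lab(\textbf{q})\lab(\textbf{p})=1$ already holds in $\langle\pazocal{X}\mid\Omega\rangle$, which decomposes as the free product $\langle\pazocal{A}\cup\pazocal{A}_1\cup\pazocal{B}\mid\Omega\rangle * F\bigl(\pazocal{X}\setminus(\pazocal{A}\cup\pazocal{A}_1\cup\pazocal{B})\bigr)$ because $\Omega$ only involves letters of the first alphabet. Since $\lab(\textbf{p})$ sits in the first factor, if $\lab(\textbf{q})$ contained any letter of the second factor, then by reducedness of $\lab(\textbf{q})$ the free-product normal form of $\lab(\textbf{q})\lab(\textbf{p})$ would retain at least one non-trivial syllable in $F\bigl(\pazocal{X}\setminus(\pazocal{A}\cup\pazocal{A}_1\cup\pazocal{B})\bigr)$, contradicting triviality. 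Thus $|\textbf{q}|_o=0$, and \Cref{lengths 1}(b) gives the exact equality $|\textbf{q}|=\delta|\textbf{q}|_\pazocal{A}$.

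To finish, I will bound $|\textbf{q}|_\pazocal{A}$ below by $|h|_\pazocal{A}$ through the homomorphism $\phi:F(\pazocal{X})\to F(\pazocal{A})$ acting as the identity on $\pazocal{A}$, as $\varphi_1^{-1}$ on $\pazocal{A}_1$, and trivially on every other generator. The main obstacle is checking that $\phi$ descends to $\bar\phi:\langle\pazocal{X}\mid\Omega\rangle\to\langle\pazocal{A}\mid\Lambda^\pazocal{A}\rangle$, equivalently that $\phi(\Omega)\subseteq\langle\langle\Lambda^\pazocal{A}\rangle\rangle$. For $w\in\Lambda^\pazocal{A}$ this is immediate; for $w\in\pazocal{E}_1(\Lambda_1^\pazocal{A})$, iterating \Cref{semi-computation deltas} along the semi-computation $\pazocal{S}(w)$ yields $\phi(w)=\delta(w)=\delta(w_t)$ where $w_t\in\Lambda_1^\pazocal{A}$ is the terminal word, and since $w_t\equiv\widetilde\varphi_1(v)$ for some $v\in\Lambda^\pazocal{A}$ we have $\delta(w_t)\equiv v\in\Lambda^\pazocal{A}$; the general case follows by conjugation invariance of the normal closure. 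Composing $\bar\phi$ with the natural map $\langle\pazocal{A}\mid\Lambda^\pazocal{A}\rangle\to H_\pazocal{A}$ and applying it to the equality $\lab(\textbf{q})=\lab(\textbf{p})^{-1}$ yields a word $\phi(\lab(\textbf{q}))$ over $\pazocal{A}^{\pm1}$ of length $|\textbf{q}|_\pazocal{A}$ representing $h$, so $|h|_\pazocal{A}\le|\textbf{q}|_\pazocal{A}$ and consequently $|h|=|\textbf{q}|=\delta|\textbf{q}|_\pazocal{A}\ge\delta|h|_\pazocal{A}$.
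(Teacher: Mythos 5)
Your first half coincides with the paper's: strip the reduced minimal $h$-distortion diagram of disks via \Cref{no distortion disks}, of $q$-bands via \Cref{distortion diagram q}, and of $\theta$-bands by extracting a rim or quasi-rim band and contradicting \Cref{distortion diagram rim theta} and \Cref{distortion diagram quasi-rim theta}, so that only $a$-cells remain. After that you genuinely diverge. The paper stays diagrammatic: any $a$-cell meeting $\textbf{p}$ has label in $\Lambda^\pazocal{A}$ by \Cref{M Lambda semi-computations}, cannot share an edge with another $a$-cell by \Cref{cancellable a-cells}, hence splits its $\pazocal{A}$-edges evenly between $\textbf{p}$ and $\textbf{q}$ by \Cref{distortion diagram a-cell}, while \Cref{distortion diagram A} pairs the remaining edges of $\textbf{p}$ with edges of $\textbf{q}$, giving $|\textbf{q}|_\pazocal{A}\ge|\textbf{p}|_\pazocal{A}=|h|_\pazocal{A}$. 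You instead note that once all positive cells are $a$-cells, $\lab(\textbf{q})=\lab(\textbf{p})^{-1}$ already holds in $\gen{\pazocal{X}\mid\Omega}$, and you push this through the $\pazocal{A}$-projection, which descends modulo $\Omega$ to $\gen{\pazocal{A}\mid\Lambda^\pazocal{A}}$ thanks to \Cref{semi-computation deltas}, \Cref{M Lambda semi-computations}, and conjugation-invariance of the normal closure; composing with the map to $H_\pazocal{A}$ gives $|h|_\pazocal{A}\le|\textbf{q}|_\pazocal{A}$ directly. This retraction-style argument is sound, avoids the $a$-cell boundary bookkeeping, and is arguably more conceptual; the paper's route buys freedom from verifying the descent of the projection and stays entirely within the diagrammatic toolkit it has already built. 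Both finish with \Cref{lengths 1}(b).

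The one step that does not hold as written is the free-product purity claim. Free reducedness of $\lab(\textbf{q})$ over $\pazocal{X}$ does not make its syllable decomposition reduced in the free product, because a syllable over $\pazocal{A}\cup\pazocal{A}_1\cup\pazocal{B}$ may be trivial in $\gen{\pazocal{A}\cup\pazocal{A}_1\cup\pazocal{B}\mid\Omega}$ without being freely trivial: for instance $\theta u\theta^{-1}$ with $u\in\Omega$ and $\theta$ a $\theta$-letter is freely reduced, contains a letter of the second factor, and is trivial in $\gen{\pazocal{X}\mid\Omega}$. To rescue the claim you would have to invoke minimality of $|\textbf{q}|$ and run an excision argument showing such trivial syllables can be removed with a strict drop in the modified length. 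Fortunately the claim is dispensable: \Cref{lengths 1}(b) already yields $|\textbf{q}|\ge\delta|\textbf{q}|_\pazocal{A}$ whatever letters occur (each $\theta$-letter contributes at least as much as the single $\pazocal{A}$-letter its syllable can absorb), and your projection kills the extraneous letters in any case, so the chain $|h|=|\textbf{q}|\ge\delta|\textbf{q}|_\pazocal{A}\ge\delta|h|_\pazocal{A}$, together with the trivial inequality $|h|\le\delta|h|_\pazocal{A}$, goes through with the purity paragraph deleted; only the claimed exact equality $|\textbf{q}|=\delta|\textbf{q}|_\pazocal{A}$, which is not needed, is lost.
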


\begin{proof}

Let $\Delta$ be a reduced minimal $h$-distortion diagram and let $\partial\Delta=\textbf{qp}$ be the standard factorization of its contour.  

By \Cref{no distortion disks}, $\Delta$ contains no disk.  As a result, \Cref{distortion diagram q} implies $\Delta$ has no $q$-band.  So, the base of any $\theta$-band of $\Delta$ must have length 0.  But then the existence of a $\theta$-band implies the existence of a quasi-rim $\theta$-band, which would then contradict \Cref{distortion diagram quasi-rim theta}.

Hence, every positive cell of $\Delta$ must be an $a$-cell.

Now, fix an $a$-cell $\pi$ in $\Delta$ and suppose an edge $\textbf{e}$ of $\partial\pi$ is an edge of $\textbf{p}$.  As $\lab(\textbf{e})\in\pazocal{A}$, \Cref{M Lambda semi-computations} implies $\lab(\partial\pi)\in\Lambda^\pazocal{A}$.  Suppose an edge of $\partial\pi$ is on the boundary of an $a$-cell $\pi'$.  By \Cref{minimal is smooth}, $\pi$ and $\pi'$ are distinct $a$-cells.  Further, \Cref{M Lambda semi-computations} again implies $\lab(\partial\pi')\in\Lambda^\pazocal{A}$.  But then $\pi$ and $\pi'$ provide a contradiction to \Cref{cancellable a-cells}.

So, every edge of $\partial\pi$ is an edge of $\partial\Delta$.  In particular, there exists a factorization $\partial\pi=\textbf{xy}$ such that $\textbf{x}$ is a subpath of $\textbf{p}$ and $\textbf{y}$ is a subpath of $\textbf{q}$.  \Cref{distortion diagram a-cell} then implies that $|\textbf{x}|_\pazocal{A},|\textbf{y}|_\pazocal{A}\leq\frac{1}{2}|\partial\pi|_\pazocal{A}$, and so $|\textbf{x}|_\pazocal{A}=|\textbf{y}|_\pazocal{A}=\frac{1}{2}|\partial\pi|_\pazocal{A}$.

Hence, as \Cref{distortion diagram A} implies that any edge of $\textbf{p}$ which is not on the boundary of an $a$-cell is adjacent to an edge of $\textbf{q}^{-1}$, it follows that $|\textbf{q}|_\pazocal{A}\geq|\textbf{p}|_\pazocal{A}$.  As $|\textbf{q}|_\theta=0$, \Cref{lengths 1}(b) then implies $|h|=|\textbf{q}|\geq\delta|\textbf{p}|_\pazocal{A}=\delta\|\textbf{p}\|=\delta|h|_\pazocal{A}$.  But by definition $|h|\leq\delta|h|_\pazocal{A}$, so that the statement follows.

\end{proof}

\begin{lemma} \label{no distortion H}

$\delta|h|_\pazocal{A}\leq|h|_\pazocal{X}\leq|h|_\pazocal{A}$.

\end{lemma}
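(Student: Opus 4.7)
The plan is to observe that this statement, unlike the detailed geometric content of the preceding lemmas, is essentially a formal consequence of \Cref{lengths are equal} together with the basic relationship between the modified length function $|\cdot|$ introduced in this section and the standard word-length function $|\cdot|_\pazocal{X}$ induced by the generating set $\pazocal{X}$.

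For the upper bound $|h|_\pazocal{X}\leq|h|_\pazocal{A}$, I would simply note that $\pazocal{A}\subseteq\pazocal{X}$, so any word over $\pazocal{A}\cup\pazocal{A}^{-1}$ of length $|h|_\pazocal{A}$ representing $h$ in $H_\pazocal{A}$ is automatically a word over $\pazocal{X}\cup\pazocal{X}^{-1}$ of the same combinatorial length representing $h$ in $G_\Omega(\textbf{M}^\pazocal{L})$, giving the inequality.

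For the lower bound $\delta|h|_\pazocal{A}\leq|h|_\pazocal{X}$, the key observation is that for any word $w$ over $\pazocal{X}\cup\pazocal{X}^{-1}$, the factorization into single letters is a valid decomposition in the sense of the current section, and under the length assignments in the definition every single letter contributes either $1$, $\delta$, or $0$ to the decomposition's length. In particular each contributes at most $1$, so the resulting decomposition has length at most $\|w\|$, whence $|w|\leq\|w\|$. Applying this to a shortest $\pazocal{X}$-word $w_0$ representing $h$ yields $|h|\leq|w_0|\leq\|w_0\|=|h|_\pazocal{X}$. Combined with \Cref{lengths are equal}, which gives $|h|=\delta|h|_\pazocal{A}$, this delivers $\delta|h|_\pazocal{A}=|h|\leq|h|_\pazocal{X}$.

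There is no real obstacle here: the whole content of the lemma has been absorbed into \Cref{lengths are equal}, whose proof relied on the heavy machinery of the preceding sections (no disks in a minimal $h$-distortion diagram, the structure of $a$-cells on the boundary, and the absence of $\theta$-bands once disks are excluded). Once that equality $|h|=\delta|h|_\pazocal{A}$ is in hand, \Cref{no distortion H} reduces to the tautological comparison between the two length functions described above, and the proof is a two-line argument.
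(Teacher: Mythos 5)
Your proposal is correct and follows essentially the same route as the paper: the upper bound from $\pazocal{A}\subseteq\pazocal{X}$, and the lower bound by combining \Cref{lengths are equal} with the observation that the modified length of a word never exceeds its combinatorial length (you verify this via the single-letter decomposition, while the paper bounds $\lambda(u_i)\leq\|u_i\|$ term by term in an optimal decomposition — a cosmetic difference only).
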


\begin{proof}

As $\pazocal{A}\subseteq\pazocal{X}$, it follows immediately that $|h|_\pazocal{X}\leq|h|_\pazocal{A}$.

Conversely, let $w$ be a word over $\pazocal{X}\cup\pazocal{X}^{-1}$ representing $h$ in $G_\Omega(\textbf{M}^\pazocal{L})$ satisfying $\|w\|=|h|_\pazocal{X}$.  Then, \Cref{lengths are equal} implies $|w|\geq|h|=\delta|h|_\pazocal{A}$.  Letting $w\equiv u_1\dots u_k$ be a decomposition of $w$ which realizes $|w|$, then the sum of the lengths of the factors $u_i$ is at least $\delta|h|_\pazocal{A}$.  

But each factor at least one letter and at most length 1, so that $\|w\|=\sum\|u_i\|\geq\delta|h|_\pazocal{A}$.

\end{proof}

%
%
%

\medskip


\section{Proof of Theorem A: General Case} \label{Theorem 1.1 proof}


Now we are ready to complete the proof of Theorem A.  However, this proof proceeds in two cases, outlined in the next two sections, relating to the decidability of the Word Problem for the group that is being embedded.

The case outlined in this section deals with the `general case', exhibiting an embedding that satisfies conditions (1)-(4) of Theorem A for an arbitrary finitely generated recursively presented group $R$.  While this construction may be carried out for any initial group, no control is achieved regarding the decidability of the Word Problem for the resulting finitely presented group, and so this construction does not settle condition (5) in general.

With that said, this is not an issue if the Word Problem for the initial group $R$ is undecidable, as containing $R$ as a subgroup immediately implies the Word Problem for $H$ must be undecidable.  Thus, this simple construction will handle the motivating case of an initial group with undecidable Word Problem, while the proof in the next section will assume from the offset that the Word Problem for $R$ is decidable.

\medskip

\subsection{Malnormality and distortion} \

Fix a finitely a finitely generated recursively presented group $R$.  Then, using a `standard trick' (see Lemma 12.17 and Exercise 12.12 of \cite{Rotman}), there exists a presentation $\gen{Y\mid\pazocal{S}}$ of $R$ such that $|Y|<\infty$ and $\pazocal{S}$ is a recursive set of positive words in $Y$.  As cofinite sets and intersections of recursive sets are recursive, it may be assumed without loss of generality that $\pazocal{S}$ does not contain the trivial word.  Hence, $\gen{Y\mid\pazocal{S}}$ satisfies conditions (R1)-(R3) (see \Cref{sec-initial-embedding}).

Thus, we may construct the group $R_C$ with presentation $\gen{Y_C\mid\pazocal{S}_C}$ as in \Cref{sec-initial-embedding}.  By Lemmas \ref{SQ quasi} and \ref{SQ malnormal}, there exists an undistorted malnormal embedding $\varphi:R\to R_C$ given by sending the generator $a_i\in Y$ to the product $a_{1,i}\dots a_{C,i}$ of generators in $Y_C$.

We now specify an assignment for the sets pertinent to the construction of our groups $G(\textbf{M}^\pazocal{L})$, verifying the relevant hypotheses along the way.  

Here, the alphabet $\pazocal{A}$ is taken to be in bijection with the generating set $Y_C$ of the group $R_C$, with $\zeta:Y_C\to\pazocal{A}$ a fixed bijection.  We extend $\zeta$ to a bijection $\tilde{\zeta}:(Y_C\cup Y_C^{-1})^*\to(\pazocal{A}\cup\pazocal{A}^{-1})^*$ in the natural way, that is, if $w\equiv x_1^{\eps_1}\dots x_k^{\eps_k}$ for $x_i\in Y_C$ and $\eps_i\in\{\pm1\}$, then $\tilde{\zeta}(w)\equiv\zeta(x_1)^{\eps_1}\dots\zeta(x_k)^{\eps_k}$.  

With this, the language $\pazocal{L}$ is taken to be the the corresponding copy of the set of relators $\pazocal{S}_C$, {\frenchspacing i.e. $\pazocal{L}=\tilde{\zeta}(\pazocal{S}_C)$}.  Note that since $\pazocal{S}$ is assumed to be a recursive subset of $Y^*$, then $\pazocal{L}$ is similarly a recursive subset of $\pazocal{A}^*$.

Finally, $\Lambda^\pazocal{A}$ is taken to be the set of all non-trivial cyclically reduced words over $\pazocal{A}\cup\pazocal{A}^{-1}$ whose copy over $Y_C\cup Y_C^{-1}$ is a word which represents the identity in the group $R_C$, i.e. $$\Lambda^\pazocal{A}=\{w\in(\pazocal{A}\cup\pazocal{A}^{-1})^*\setminus\{1\}: w\text{ is cyclically reduced, } \tilde{\zeta}^{-1}(w)=_{R_C}1\}$$
It must be noted that this choice satisfies condition (L1) by \Cref{SQ lengths}; conditions (L2)-(L5) are immediately satisfied by construction.

The following statements illustrate the purpose of these choices.

\begin{lemma} \label{a-relations}

For any $w\in\pazocal{L}$, the relation $w=1$ holds in the group $G(\textbf{M}^\pazocal{L})$.

\end{lemma}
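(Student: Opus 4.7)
The plan is to exploit the fact that $\pazocal{L}$ is the set of inputs accepted by $\textbf{M}^\pazocal{L}$, together with the fact that adding disk relations does not change the group. Specifically, I want to compare the two accepted configurations $I(w)$ and $J(w)$: they are identical words over $\pazocal{X}\cup\pazocal{X}^{-1}$ except for a single factor of (the copy of) $w$ sitting in the `special' input sector. Since both are set equal to $1$ by the disk relations, dividing one relation by the other should leave exactly $w=1$.

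In more detail, I would proceed as follows. Fix $w\in\pazocal{L}$. By \Cref{M language} (or already by \Cref{one-machine language}), the configurations $I(w)$ and $J(w)$ are both accepted by $\textbf{M}^\pazocal{L}$. Applying \Cref{disk relations} to each produces reduced circular diagrams $\Gamma_{I(w)}$ and $\Gamma_{J(w)}$ over $G(\textbf{M}^\pazocal{L})$ whose contour labels are $I(w)$ and $J(w)$ respectively; by van Kampen's lemma, this gives the relations $I(w)=1$ and $J(w)=1$ in $G(\textbf{M}^\pazocal{L})$. Now from the construction in \Cref{sec-M-standard}, $J(w)$ is obtained from $I(w)$ by erasing the copy of $w$ in the $Q_0^\pazocal{L}(1)Q_1^\pazocal{L}(1)$-sector. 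Writing $I(w)\equiv AwB$ and $J(w)\equiv AB$ for the appropriate prefix $A$ and suffix $B$ (where $w$ is identified with its copy in the tape alphabet of the special input sector via $\tilde{\zeta}^{-1}$, and ultimately with the original word over $\pazocal{A}\cup\pazocal{A}^{-1}$), the two relations yield $AwB = AB$ in $G(\textbf{M}^\pazocal{L})$, hence $w=1$.

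There is essentially no technical obstacle to this argument; the substance has already been absorbed into \Cref{M language} and \Cref{disk relations}. The only point that requires a small remark is the identification of the word $w\in\pazocal{L}\subseteq\pazocal{A}^*$ with the corresponding word appearing inside $I(w)$. By construction of $I(w)$ (the configuration obtained from the natural copy of $I_6(w)$ by applying $\theta(s)_1^{-1}$), the relevant factor between the state letters $q_{0,s}(1)$ and $q_{1,s}(1)$ is precisely $w$ read over the copy of $\pazocal{A}$ in the tape alphabet of the special input sector, which is the same generating set used to define $G(\textbf{M}^\pazocal{L})$ in this sector. Thus the cancellation $AwB\cdot(AB)^{-1}$ really produces the conjugate $Aw A^{-1}$, which being trivial in $G(\textbf{M}^\pazocal{L})$ is equivalent to $w=1$, completing the proof.
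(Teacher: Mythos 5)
Your proposal is correct and is essentially the paper's own argument: the paper likewise invokes \Cref{one-machine language} and \Cref{disk relations} to get $I(w)=1$ and $J(w)=1$ in $G(\textbf{M}^\pazocal{L})$, and then notes these two words differ only by the insertion of $w$ in the `special' input sector, whence $w=1$. Your extra remark about identifying $w$ with its copy in the tape alphabet of the special input sector is harmless and accurate, since that alphabet is identified with $\pazocal{A}\sqcup\pazocal{A}_1\sqcup\pazocal{B}$ by construction.
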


\begin{proof}

Lemmas \ref{one-machine language} and \ref{disk relations} imply that the words corresponding to the configurations $I(w)$ and $J(w)$ are trivial over the group $G(\textbf{M}^\pazocal{L})$.  These two words differ only by the insertion of the word $w$ in the `special' input sector, so that $w=1$ in $G(\textbf{M}^\pazocal{L})$.

\end{proof}

Identifying $\pazocal{A}$ with the corresponding subset of the tape alphabet of the `special' input sector, $\zeta$ may be identified with a map $Y_C\to G(\textbf{M}^\pazocal{L})$.  Lemma \ref{a-relations} and the theorem of von Dyck \cite{vonDyck} then imply that this map extends to a homomorphism $\phi:R_C\to G(\textbf{M}^\pazocal{L})$.  

\begin{lemma} \label{isomorphism}

The identity map on $\pazocal{X}$ extends to an isomorphism $\mu:G_\Omega(\textbf{M}^\pazocal{L})\to G(\textbf{M}^\pazocal{L})$.

\end{lemma}

\begin{proof}

By the construction of the presentations for these two groups, it suffices to show that every element of $\Omega$ represents the identity in $G(\textbf{M}^\pazocal{L})$.  What's more, by the definition of $\Omega$, it suffices to show that every element of $\pazocal{E}(\Lambda^\pazocal{A})$ represents the identity in $G(\textbf{M}^\pazocal{L})$.

Note that $\Lambda^\pazocal{A}$ consists of images under $\tilde{\zeta}$ of the words which represent the trivial element of $R_C$.  But $\tilde{\zeta}$ induces the homomorphism $\phi$, and so the words of $\Lambda^\pazocal{A}$ represent the identity in $G(\textbf{M}^\pazocal{L})$.

Now, let $w\in\pazocal{E}(\Lambda^\pazocal{A})$.  Then, there exists a semi-computation $\pazocal{S}:w\equiv w_0\to\dots\to w_t$ of $\textbf{M}^\pazocal{L}$ in the `special' input sector which $\Lambda^\pazocal{A}$-accepts $w$.  \Cref{semi-computations are semi-trapezia} then provides a semi-trapezium $\Delta$ corresponding to $\pazocal{S}$, i.e so that $\lab(\textbf{bot}(\Delta))\equiv w$ and $\lab(\textbf{top}(\Delta))\equiv w_t$.  Hence, as the sides of any semi-trapezium are labelled by identical copies of the corresponding semi-computation, $w$ and $w_t$ are conjugate in $M(\textbf{M}^\pazocal{L})$, and so are conjugate in $G(\textbf{M}^\pazocal{L})$.

But $w_t\in\Lambda^\pazocal{A}$ and so represents the identity in $G(\textbf{M}^\pazocal{L})$.  Thus, $w=1$ in $G(\textbf{M}^\pazocal{L})$.

\end{proof}

\begin{lemma} \label{embedding}

The homomorphism $\phi:R_C\to G(\textbf{M}^\pazocal{L})$ is an embedding.

\end{lemma}

\begin{proof}


Let $g\in R_C$ such that $\phi(g)=1$ and let $w$ be a word over $Y_C^{\pm1}$ which represents $g$ in $R_C$.  Then, $\tilde{w}=\tilde{\zeta}(w)$ represents $1$ in $G(\textbf{M}^\pazocal{L})$, and so also represents $1$ in $G_\Omega(\textbf{M}^\pazocal{L})$ by \Cref{isomorphism}.  Hence, there exists a reduced minimal diagram $\Delta$ over $G_\Omega(\textbf{M}^\pazocal{L})$ such that $\lab(\partial\Delta)\equiv\tilde{w}$.

By construction, $\lab(\partial\Delta)\equiv\tilde{w}$ is a word over $\pazocal{A}^{\pm1}$.  So, letting $k$ be the number of $a$-cells in $\Delta$, \Cref{all A's is Lambda} implies there exists a factorization $\tilde{w}=_{F(\pazocal{A})}\tilde{w}_1\dots\tilde{w}_k$ such that each $\tilde{w}_i$ is freely conjugate to an element of $\Lambda^\pazocal{A}$.

Letting $w_i=\tilde{\zeta}^{-1}(\tilde{w}_i)$, it follows from the definition of $\Lambda^\pazocal{A}$ that $w_i=_{R_C}1$.  But this implies $w=_{F(Y_C)}w_1\dots w_k=_{R_C}1$, so that $g=1$.

\end{proof}

Now define $\psi=\phi\circ\varphi$.  By the arguments of Section 3 and \Cref{embedding}, $\psi:R\to G(\textbf{M}^\pazocal{L})$ is an embedding.  Thus, as the canonical presentation of $G(\textbf{M}^\pazocal{L})$ is finite, it suffices to show that $\psi$ satisfies conditions (2)-(4) of Theorem A.

The next statement establishes conditions (2) and (4), essentially appealing to the transitivity of malnormal and undistorted subgroups.

\begin{lemma} \label{main-malnormal-and-q-i}

The embedding $\psi:R\to G(\textbf{M}^\pazocal{L})$ is malnormal and the restriction of $|\cdot|_{G(\textbf{M}^\pazocal{L})}$ to $R$ (identified with the image of $\psi$) is equivalent to $|\cdot|_R$.

\end{lemma}

\begin{proof}

By construction, the image of $\phi$ is the subgroup generated by the elements of $\pazocal{A}$.  So, Lemmas \ref{isomorphism} and \ref{H_A malnormal} imply that $\phi$ is a malnormal embedding.  But $\varphi$ is also a malnormal embedding by \Cref{SQ malnormal}, so that $\psi$ is malnormal by the transitivity of malnormality.

Now fix $r\in R$.  By \Cref{SQ quasi}, $|\varphi(r)|_{Y_C}=C|r|_Y$.  Moreover, as $\psi(r)=\phi(\varphi(r))\in H_\pazocal{A}$, Lemmas \ref{isomorphism} and \ref{no distortion H} yield $\delta|\psi(r)|_\pazocal{A}\leq|\psi(r)|_\pazocal{X}\leq|\psi(r)|_\pazocal{A}$.

But $\phi$ is induced by $\zeta$, and so $|\psi(r)|_\pazocal{A}=|\varphi(r)|_{Y_C}$.  Thus, $\delta C|r|_Y\leq|\psi(r)|_\pazocal{X}\leq C|r|_Y$.

\end{proof}

\medskip

\subsection{Congruence extension property} \

Thus, by \Cref{main-malnormal-and-q-i} our goal for this section is complete if we can demonstrate that $\psi$ is a CEP-embedding.  However, this argument is slightly more intricate, and indeed requires a different assignment of the sets in the construction of $G(\textbf{M}^\pazocal{L})$.

Recall that $\varphi:R\to R_C$ is given by the map which sends each letter $a_i\in Y$ to the (positive) word $A_i\equiv a_{1,i}\dots a_{C,i}$ over $Y_C$.  As such, the set of words $\pazocal{D}=\{A_1,\dots,A_m\}$ forms a basis for a free subgroup $F$ of $F(Y_C)$ with $\varphi(R)\cong\gen{\pazocal{D}\mid\pazocal{S}_C}$.

Now, let $N$ be a normal subgroup of $R$.  Then, since $R\cong \varphi(R)\cong F/\gen{\gen{\pazocal{S}_C}}^F$, there exists a normal subgroup $M\triangleleft F$ containing $\gen{\gen{\pazocal{S}_C}}^{F}$ such that $\varphi(N)\cong M/\gen{\gen{\pazocal{S}_C}}^{F}$.

As in the constructions in \Cref{sec-initial-embedding}, let $T_{M}$ be the set of non-trivial cyclically reduced words over $\pazocal{D}\cup\pazocal{D}^{-1}$ which are elements of $M$.  Note that by construction, every element of $T_{M}$ is cyclically reduced as a word over $Y_C\cup Y_C^{-1}$.  Further, as in that setting, let $L_{M}=\gen{\gen{M}}^{F(Y_C)}$.

Finally, let $\Lambda^\pazocal{A}_N$ be the set of non-trivial cyclically reduced words $w$ over $\pazocal{A}\cup\pazocal{A}^{-1}$ which satisfy $\tilde{\zeta}^{-1}(w)\in L_{M}$.

By \Cref{SQ lengths 0}, every word $w\in\Lambda^\pazocal{A}_N$ satisfies $|w|_\pazocal{A}\geq C$.  As such, $\Lambda^\pazocal{A}_N$ satisfies condition (L1).  What's more, since $L_{M}\triangleleft F(Y_C)$, it follows immediately that $\Lambda^\pazocal{A}_N$ satisfies conditions (L2)-(L4).  Lastly, setting $\pazocal{L}=\tilde{\zeta}(\pazocal{S}_C)$ as in the previous section, it follows from $\pazocal{S}_C\subseteq M$ that $\Lambda^\pazocal{A}_N$ satisfies condition (L5).

Hence, letting $\Omega_N$ be the set of cyclically reduced words over $(\pazocal{A}\cup\pazocal{A}_1\cup\pazocal{B})^{\pm1}$ which are freely conjugate to an element of $\pazocal{E}(\Lambda^\pazocal{A}_N)$, the group $G_{\Omega_N}(\textbf{M}^\pazocal{L})\cong G(\textbf{M}^\pazocal{L})/\gen{\gen{\Omega_N}}^{G(\textbf{M}^\pazocal{L})}$ satisfies the hypotheses necessary for the treatment of Sections 6-12.

Let $g\in\gen{\gen{\Omega_N}}^{G(\textbf{M}^\pazocal{L})}\cap\psi(R)$.  Then, letting $r=\psi^{-1}(g)\in R$, there exists a word $V\in F$ which represents $\varphi(r)$.  So, $W\equiv\tilde{\zeta}(V)$ is a word over $\pazocal{A}\cup\pazocal{A}^{-1}$ which represents $g$.

As $W$ represents an element of the normal subgroup $\gen{\gen{\Omega_N}}^{G(\textbf{M}^\pazocal{L})}$ of $G(\textbf{M}^\pazocal{L})$, there exists a reduced minimal diagram $\Delta$ over the disk presentation of $G_{\Omega_N}(\textbf{M}^\pazocal{L})$ with $\lab(\partial\Delta)\equiv W$.  \Cref{all A's is Lambda} then yields a factorization $W=_{F(\pazocal{A})} w_1\dots w_k$ where each $w_i$ is a word over $\pazocal{A}\cup\pazocal{A}^{-1}$ that is freely conjugate to an element of $\Lambda^\pazocal{A}_N$.  Hence, as $L_{M}\triangleleft F(Y_C)$, it follows that $V\in L_{M}$.

This implies $V\in L_{M}\cap F$, so that \Cref{SQ} implies $V\in M$.  But then the definition of $\varphi$ implies $r\in N$, so that $g\in\psi(N)$.  

Thus, $\gen{\gen{\Omega_N}}^{G(\textbf{M}^\pazocal{L})}$ is a normal subgroup of $G(\textbf{M}^\pazocal{L})$ which satisfies $\gen{\gen{\Omega_N}}^{G(\textbf{M}^\pazocal{L})}\cap\psi(R)=\psi(N)$, implying $\psi(R)\leq_{CEP}G(\textbf{M}^\pazocal{L})$.

\bigskip


\section{Proof of Theorem A: Preserving the Word Problem}


The goal of this section is to complete the proof of Theorem A by exhibiting condition (5).  As discussed in the previous section, the arguments therein suffice for the case when the Word Problem for the initial group is undecidable; what is left is to show that if the Word Problem is decidable, then the finitely presented group can be constructed to also have decidable Word Problem.

However, the arguments of the previous section will not suffice for this purpose.  Instead, we begin with an entirely new setup for the construction of the finitely presented group.

\subsection{Construction} \

Let $R$ be a finitely generated group with decidable Word problem.  Letting $X$ be a finite generating set for $R$, define $\pazocal{R}$ to be the set of all non-trivial (perhaps unreduced) words over $X\cup X^{-1}$ which represent the identity in $R$.  As the set of non-trivial words over $X\cup X^{-1}$ is a cofinite subset $(X\cup X^{-1})^*$, $\pazocal{R}$ is a recursive subset of $(X\cup X^{-1})^*$.  Note that $\gen{X\mid\pazocal{R}}$ is a presentation of $R$.

We now employ the `standard trick' referenced in \Cref{Theorem 1.1 proof}:

Let $Y=X\sqcup\bar{X}$, where $\bar{X}$ is a copy of $X$ with defining bijection $\tau:\bar{X}\to X$.  Then, define the bijection $\xi:Y\to X\cup X^{-1}$ by $\xi(x)=x$ for all $x\in X$ and $\xi(\bar{x})=\tau(\bar{x})^{-1}$ for all $\bar{x}\in\bar{X}$.  

The map $\xi$ then extends to a map $\tilde{\xi}:(Y\cup Y^{-1})^*\to(X\cup X^{-1})^*$ which restricts to a bijection $\tilde{\xi}_0:Y^*\to(X\cup X^{-1})^*$.  With this, define $\pazocal{S}=\tilde{\xi}_0^{-1}(\pazocal{R})$.  By construction, $\pazocal{S}$ is a set of (positive) words over $Y$ which does not contain the trivial word.  Moreover, as $\pazocal{R}$ is a recursive subset of $(X\cup X^{-1})^*$, $\pazocal{S}$ is a recursive subset of $Y^*$.

For convenience, we also introduce the set $\pazocal{S}_2=\{x\cdot\tau^{-1}(x)\mid x\in X\}$.  Note that for any $x\in X$, $\tilde{\xi}_0(x\cdot\tau^{-1}(x))=x\cdot x^{-1}\in\pazocal{R}$, and so $\pazocal{S}_2\subseteq\pazocal{S}$.

\begin{lemma} \label{caps in R}

Let $w$ be a non-trivial word over $Y\cup Y^{-1}$ such that $\tilde{\xi}(w)=_R1$.  Then there exists a circular diagram $\Psi_w$ over $\gen{Y\mid\pazocal{S}}$ such that:

\begin{enumerate}

\item $\lab(\partial\Psi_w)\equiv w$

\item $\text{Area}(\Psi_w)\leq\|w\|$

\item For every positive cell $\pi$ of $\Psi_w$, $\|\partial\pi\|\leq2\|w\|$

\end{enumerate}

\end{lemma}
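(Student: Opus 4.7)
The plan is to build $\Psi_w$ from at most one $\pazocal{S}_1$-cell carrying the ``main'' relation together with $\pazocal{S}_2$-cells of boundary length $2$ that convert negative letters of $w$ into the appropriate positive letters of $Y$. For each $y\in Y$ let $\tilde y$ denote its $\pazocal{S}_2$-partner: $\tilde y=\tau^{-1}(y)\in\bar X$ if $y\in X$, and $\tilde y=\tau(y)\in X$ if $y\in\bar X$. In either case $y\tilde y$ is, up to cyclic permutation and inversion, a relator in $\pazocal{S}_2$. Write $w\equiv y_1^{\eps_1}\cdots y_k^{\eps_k}$ and let $n,p$ count the indices with $\eps_i=-1$ and $\eps_i=+1$ respectively, so $\|w\|=n+p\ge 1$.

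Define the positive word $w^+\equiv z_1\cdots z_k\in Y^*$ by $z_i=y_i$ when $\eps_i=+1$ and $z_i=\tilde y_i$ when $\eps_i=-1$. A direct check separating the subcases $y_i\in X$ and $y_i\in\bar X$ shows that $\xi(z_i)=\xi(y_i)^{\eps_i}$ for every $i$, so $\tilde\xi_0(w^+)$ agrees with $\tilde\xi(w)$ as a word in $(X\cup X^{-1})^*$. Since $w^+$ is non-empty and $\tilde\xi(w)$ is a non-empty word representing the identity in $R$, $\tilde\xi(w)\in\pazocal{R}$ and therefore $w^+\in\pazocal{S}_1$.

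Next, I would place a single $\pazocal{S}_1$-cell $\pi_0$ with $\lab(\partial\pi_0)\equiv w^+$, and for each index $i$ with $\eps_i=-1$ attach an $\pazocal{S}_2$-lens along the $z_i$-edge of $\partial\pi_0$ so that its opposite edge reads $y_i^{-1}$. The outer contour then spells $w$ letter-for-letter and the resulting diagram has $n+1$ positive cells. Running the identical construction with $w$ replaced by $w^{-1}$ and then reflecting yields a companion diagram for $w$ of area $p+1$. I would take $\Psi_w$ to be whichever is smaller, giving area $\min(n,p)+1\le n+p=\|w\|$; the last inequality holds because $\|w\|\ge 1$ forces $\max(n,p)\ge 1$. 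Every $\pazocal{S}_2$-cell has boundary length $2$ and the $\pazocal{S}_1$-cell has boundary length $\|w^+\|=\|w\|$, so the cell-boundary bound $2\|w\|$ is comfortably met.

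The one delicate point I expect is the orientation bookkeeping for the $\pazocal{S}_2$-lenses: one must verify that a two-edge $\pazocal{S}_2$-cell can always be glued so that its two edges carry any prescribed complementary labels drawn from $\{y^{\pm 1},\tilde y^{\pm 1}\}$. This is immediate from the closure of $\pazocal{S}_2$ under cyclic permutation and inversion, so in practice no real obstacle arises and the construction reduces to routine cell arithmetic.
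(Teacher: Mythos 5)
Your construction is correct and is essentially the paper's own proof: the paper likewise converts each negative letter of $w$ (after possibly replacing $w$ by $w^{-1}$ to control how many there are) via a length-two $\pazocal{S}_2$-cell, obtaining a positive word in $\pazocal{S}_1$ that bounds a single central cell, which gives exactly your area count $\min(n,p)+1\leq\|w\|$ and the boundary-length bound $\|w^+\|=\|w\|\leq 2\|w\|$. No gap; the orientation bookkeeping you flag is handled just as you say, since cell boundary labels may be cyclic permutations of inverses of relators.
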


\begin{proof}

Let $w\equiv y_1^{\eps_1}\dots y_k^{\eps_k}$ where $y_1,\dots,y_k\in Y$ and $\eps_1,\dots,\eps_k\in\{\pm1\}$.

Let $I=\{i\in\{1,\dots,k\}\mid \eps_i=-1\}$.  Perhaps passing to $w^{-1}$, we may assume that $|I|\leq\frac{1}{2}\|w\|$.  Note that if $\|w\|=1$, then $|I|=0$ so that $\|w\|-|I|=\|w\|=1$; otherwise, $\|w\|-|I|\geq\frac{1}{2}\|w\|\geq1$.

If $y_i\in X$ for $i\in I$, then $y_i\cdot\tau^{-1}(y_i)\in\pazocal{S}_2$.  In this case, we may construct a cell $\pi_i$ satisfying $\lab(\partial\pi_i)\equiv(y_i\cdot\tau^{-1}(y_i))^{-1}$.  Similarly, if $y_i\in\bar{X}$ for $i\in I$, then $\tau(y_i)\cdot y_i\in\pazocal{S}_2$, so that we may construct a cell $\pi_i$ satisfying $\lab(\partial\pi_i)\equiv(\tau(y_i)\cdot y_i)^{-1}$.

Then, there exists an annular diagram $\Psi_w'$ over $\gen{Y\mid\pazocal{S}}$ consisting of the $|I|$ cells $\pi_i$ with outer contour label $w$ and inner contour label $v^{-1}$, where $v\in Y^*$ and satisfies $\tilde{\xi}(v)\equiv\tilde{\xi}(w)$.  

In particular, $\tilde{\xi}_0(v)=_R1$, so that $V\in\pazocal{S}$.  Hence, we may paste a single cell in the middle of the annulus $\Psi_w'$ to produce a circular diagram $\Psi_w$ over $\gen{Y\mid\pazocal{S}}$ which satisfies the statement.

\end{proof}

Identifying $\xi$ with a map $Y\to\gen{X\mid\pazocal{R}}$, \Cref{caps in R} and the theorem of von Dyck imply that $\xi$ extends to a homomorphism $\gen{Y\mid\pazocal{S}}\to\gen{X\mid\pazocal{R}}$.  

Similarly, identifying the natural injection $X\to Y$ with a map $X\to\gen{Y\mid\pazocal{S}}$, this map extends to a homomorphism $\gen{X\mid\pazocal{R}}\to\gen{Y\mid\pazocal{S}}$.  Indeed, since $\xi$ restricts to the identity on $X$, these homomorphisms are inverses.  

Hence, $\gen{Y\mid\pazocal{S}}$ is a presentation of $R$ which satisfies conditions (R1)-(R3).  As such, we may define the group $R_C$ with presentation $\gen{Y_C\mid\pazocal{S}_C}$ as constructed in \Cref{sec-initial-embedding}.  The terminology of \Cref{sec-initial-embedding} is adopted for this setting.  In particular, the set $\pazocal{D}$ forms a basis for a free subgroup $F$ of $F(Y_C)$.

\begin{lemma} \label{caps in R_C}

Let $w$ be a non-trivial word over $Y_C\cup Y_C^{-1}$ which is a cyclic permutation of an element of $\gen{\gen{\pazocal{S}_C}}^F$.  Then there exists a circular diagram $\Psi_w^C$ over $\gen{Y_C\mid\pazocal{S}_C}$ such that:

\begin{enumerate}

\item $\lab(\partial\Psi_w^C)\equiv w$

\item $\text{Area}(\Psi_w^C)\leq\frac{1}{C}\|w\|$

\item For every positive cell $\pi$ of $\Psi_w^C$, $\|\partial\pi\|\leq2\|w\|$

\end{enumerate}

\end{lemma}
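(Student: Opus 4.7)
The plan is to reduce the statement to the already-proved Lemma \ref{caps in R} for the presentation $\gen{Y\mid\pazocal{S}}$ and then ``blow up'' the resulting diagram into one over $\gen{Y_C\mid\pazocal{S}_C}$ by replacing each $y_i$-edge with a path reading $A_i$.

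First I would fix a cyclic permutation $w_0$ of the word $w$ that represents an element of $\gen{\gen{\pazocal{S}_C}}^F$, and assume, after free reduction (absorbing the resulting cancellations with $0$-cells at the end), that $w_0$ is freely reduced over $Y_C^{\pm 1}$.  Since $\pazocal{D}=\{A_1,\dots,A_m\}$ is a free basis for $F$ and distinct $A_i$'s share no $Y_C$-letters, the reduced word $w_0$ parses uniquely as $A_{i_1}^{\eps_1}\cdots A_{i_k}^{\eps_k}$ with no consecutive mutually inverse pair; in particular $\|w_0\|=Ck$.  Let $v_0\equiv y_{i_1}^{\eps_1}\cdots y_{i_k}^{\eps_k}\in F(Y)$ be the preimage of $w_0$ under the isomorphism $F(Y)\to F$, $y_i\mapsto A_i$.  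This isomorphism carries $\gen{\gen{\pazocal{S}}}^{F(Y)}$ onto $\gen{\gen{\pazocal{S}_C}}^F$, so that $v_0\in\gen{\gen{\pazocal{S}}}^{F(Y)}$; hence $v_0$ is trivial in $\gen{Y\mid\pazocal{S}}$, and, through the isomorphism $\gen{Y\mid\pazocal{S}}\cong R$ provided by $\xi$, also $\tilde\xi(v_0)=_R 1$.

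Applying Lemma \ref{caps in R} to $v_0$ produces a circular diagram $\Psi_{v_0}$ over $\gen{Y\mid\pazocal{S}}$ with $\lab(\partial\Psi_{v_0})\equiv v_0$, area at most $k$, and every positive cell of perimeter at most $2k$.  I would then form $\Psi_{w_0}^C$ by an edge-expansion procedure: subdivide each positive edge of $\Psi_{v_0}$ labelled $y_i^{\pm 1}$ into $C$ consecutive positive edges reading $A_i^{\pm 1}$, subdivide the incident $0$-cells accordingly, and replace each positive cell whose boundary label is $r\in\pazocal{S}$ by a positive cell whose boundary label is the corresponding $r_C\in\pazocal{S}_C$.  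Because the substitution $y_i\mapsto A_i$ is injective on letters, commutes with inversion, and respects cyclic permutations, these replacements agree on every edge shared between two cells, and the result is a well-defined circular diagram over $\gen{Y_C\mid\pazocal{S}_C}$ whose boundary label is $w_0$.  Rebasing at the vertex that separates the prefix from the suffix in the decomposition $w_0\equiv vu$, $w\equiv uv$ (this cut point is a vertex because it sits at a letter boundary of $w_0$) then yields the desired diagram $\Psi_w^C$ with $\lab(\partial\Psi_w^C)\equiv w$.

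For the numerical bounds: the edge-expansion leaves the count of positive cells unchanged, so $\text{Area}(\Psi_w^C)=\text{Area}(\Psi_{v_0})\leq k=\|w\|/C$; and if $\pi$ is a positive cell of $\Psi_{v_0}$ whose boundary consists of $|r|$ positive edges together with some number of $0$-edges, totalling at most $2k$, then after expansion the corresponding cell of $\Psi_w^C$ has boundary of combinatorial length at most $(C-1)|r|+2k\leq 2Ck=2\|w\|$.  I expect the main point of care to be the bookkeeping in the edge-expansion: one has to verify that the subdivision can be carried out consistently on all cells sharing an edge (so that planarity and the $2$-complex structure are preserved), that the expanded boundary label of each positive cell is indeed a cyclic permutation of $r_C^{\pm 1}$ (using $(r^{-1})_C\equiv r_C^{-1}$ and the compatibility of the substitution with cyclic permutation), and that the edge case in which $w$ is unreduced (arising precisely when the cyclic cut falls at a cancellation point of $w_0$) is handled by a single pad of $0$-cells along $\partial\Psi_w^C$, which affects neither the area count nor the positive-cell perimeters.
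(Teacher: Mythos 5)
Your proposal is correct and follows essentially the same route as the paper: pass from $w$ to the corresponding word over $Y$ via the basis $\pazocal{D}$, invoke Lemma \ref{caps in R}, and then subdivide each edge of the resulting diagram into a path of length $C$ labelled by the appropriate $A_i$, which preserves the cell count and gives the stated perimeter bound. Your extra bookkeeping (unique $\pazocal{D}$-parsing, rebasing at a vertex to realize the cyclic permutation, and the cell-perimeter estimate $(C-1)|r|+2k\leq 2Ck$) just makes explicit what the paper's terse argument leaves implicit.
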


\begin{proof}

Since the contour label can be read as a cyclic word, we may assume without loss of generality that $w\in\gen{\gen{\pazocal{S}_C}}^F$.  Hence, $w$ is a word over $\pazocal{D}\cup\pazocal{D}^{-1}$, and so corresponds in the natural way to a non-trivial word $u$ over $Y\cup Y^{-1}$ with $\|u\|=\frac{1}{C}\|w\|$.

As $w\in\gen{\gen{\pazocal{S}_C}}^F$, it follows that $u\in\gen{\gen{\pazocal{S}}}^{F(Y)}$, so that $\tilde{\xi}(u)=_R1$.  So, \Cref{caps in R} produces a circular diagram $\Psi_u$ over $\gen{Y\mid\pazocal{S}}$ such that $\lab(\partial\Psi_u)\equiv u$, $\text{Area}(\Psi_u)\leq\|u\|$, and every positive cell $\pi$ of $\Psi_u$ satisfies $\|\partial\pi\|\leq2\|u\|$.

But then subdividing each edge of $\Psi_u$ into an $F$-subpath of length $C$ labelled by the corresponding element of $\pazocal{D}$ produces a circular diagram $\Psi_w^C$ over $\gen{Y_C\mid\pazocal{S}_C}$ satisfying the statement.

\end{proof}

\begin{lemma} \label{caps in R_C 1}

Let $w$ be a word over $Y_C\cup Y_C^{-1}$ which represents the identity in $R_C$.  Then there exists a circular diagram $\Phi_w$ over $\gen{Y_C\mid\pazocal{S}_C}$ such that:

\begin{enumerate}

\item $\lab(\partial\Phi_w)\equiv w$

\item $\text{Area}(\Phi_w)\leq\frac{1}{C}\|w\|$

\item For every positive cell $\pi$ of $\Phi_w$, $\|\partial\pi\|\leq2\|w\|$

\end{enumerate}

\end{lemma}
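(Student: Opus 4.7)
The plan is to prove this by strong induction on $\|w\|$, building $\Phi_w$ from a diagram provided by \Cref{caps in R_C} together with a smaller diagram obtained from the inductive hypothesis. The induction is driven by \Cref{SQ lengths}, which guarantees that a non-trivial cyclically reduced word representing the identity in $R_C$ always contains a ``relator chunk'' of length at least $C$, so one can peel off such a chunk and reduce the word length by at least $C$ per step.

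First I will reduce to the cyclically reduced case. Given $w$, I cyclically reduce it to obtain $w'$ with $\|w'\|\le\|w\|$; any diagram for $w'$ can be turned into one for $w$ with the same set of positive cells by inserting $0$-cells implementing the free reduction and cyclic conjugation (so neither the area nor the cell sizes change, and the bound $\|\partial\pi\|\le2\|w\|$ is preserved because $\|w'\|\le\|w\|$). If $w'$ is the empty word, then $w$ is freely trivial and the desired $\Phi_w$ consists only of $0$-cells, giving area $0$. Otherwise $w'$ is non-trivial and cyclically reduced, so \Cref{SQ lengths} produces a non-trivial subword $v$ of some cyclic permutation of $w'$ with $\|v\|\ge C$ such that $v$ is a cyclic permutation of an element of $\langle\langle\pazocal{S}_C\rangle\rangle^F$. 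Cyclically permuting $w'$ to $v u$, the word $u$ has length $\|u\|=\|w'\|-\|v\|\le\|w\|-C<\|w\|$, and since $vu=_{R_C}1$ and $v=_{R_C}1$, we have $u=_{R_C}1$ as well.

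Now I will apply \Cref{caps in R_C} to $v$ to get a diagram $\Psi_v^C$ with boundary label $v$, area at most $\|v\|/C$, and every positive cell having boundary length at most $2\|v\|\le 2\|w\|$. The inductive hypothesis (which applies since $\|u\|<\|w\|$) supplies a diagram $\Phi_u$ with boundary label $u$, area at most $\|u\|/C$, and cells of boundary length at most $2\|u\|\le 2\|w\|$. Pasting $\Psi_v^C$ and $\Phi_u$ along a single boundary vertex yields a circular diagram whose contour label is the cyclic permutation $vu$ of $w'$, whose area is at most $\|v\|/C+\|u\|/C=\|w'\|/C\le\|w\|/C$, and whose cells all satisfy $\|\partial\pi\|\le 2\|w\|$. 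Undoing the cyclic permutation and free reduction via $0$-refinement as in the first step produces the desired $\Phi_w$ with boundary label literally $w$.

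There is no serious obstacle here: the inductive step decreases $\|w\|$ by at least $C$, so the recursion terminates, and the area bound telescopes exactly because \Cref{caps in R_C} already delivers the ratio $\|v\|/C$. The only point requiring care is bookkeeping during the ``un-cyclic-permutation'' and ``un-free-reduction'' steps, namely verifying that the resulting diagram truly has $w$ (not merely a word freely equal to $w$) as its contour label; this is handled by standard $0$-refinement, which leaves the positive cells (and hence the area and the cell-size bounds) untouched. The base case $\|w\|<C$ is forced by \Cref{SQ lengths}: any such $w$ representing the identity must be freely trivial after cyclic reduction, which is exactly the trivial-diagram case handled above.
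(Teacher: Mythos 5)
Your proof is correct and follows essentially the same route as the paper: induct on $\|w\|$, use \Cref{SQ lengths} to split a cyclic permutation of the (cyclically reduced) word into a relator chunk handled by \Cref{caps in R_C} and a shorter identity word handled by the inductive hypothesis, then paste with $0$-refinement. The only cosmetic difference is that you cyclically reduce once at the outset, whereas the paper absorbs the non-cyclically-reduced case into the induction; the bounds work out identically either way.
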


\begin{proof}

The proof follows induction on $\|w\|$.  For the base case $\|w\|=0$, there exists a circular diagram $\Phi_w$ with $\lab(\Phi_w)\equiv w$ consisting entirely of $0$-cells, and so satisfies the statement.

Now assume $\|w\|\geq1$.


It is easy to see that we may assume $w$ is cyclically reduced, as otherwise we may freely conjugate to a shorter word and apply the inductive hypothesis.  \Cref{SQ lengths} thus implies the existence of a factorization $w'\equiv uv$ of a cyclic permutation $w'$ of $w$ such that $u$ is a non-trivial cyclic permutation of an element of $\gen{\gen{\pazocal{S}_C}}^F$.  As such, $u$ must represent the identity in $R_C$, and so $v$ must as well.

\Cref{caps in R_C} then implies the existence of a circular diagram $\Psi_u^C$ over $\gen{Y_C\mid\pazocal{S}_C}$ such that $\lab(\partial\Psi_u^C)\equiv u$, $\text{Area}(\Psi_u^C)\leq\frac{1}{C}\|u\|$, and $\|\partial\pi\|\leq2\|u\|\leq2\|w\|$ for every positive cell $\pi$ in $\Psi_u^C$.  
Moreover, the inductive hypothesis produces a circular diagram $\Phi_v$ over $\gen{Y_C\mid\pazocal{S}_C}$ such that $\lab(\partial\Phi_v)\equiv v$, $\text{Area}(\Phi_v)\leq\frac{1}{C}\|v\|$, and $\|\partial\pi\|\leq2\|v\|\leq2\|w\|$ for every positive cell $\pi$ in $\Phi_v$.  

Hence, pasting together $\Psi_u^C$ and $\Phi_v$ (and using $0$-refinement) yields a circular diagram $\Phi_w$ satisfying the statement.



\end{proof}

Similar to the construction of \Cref{Theorem 1.1 proof}, the alphabet $\pazocal{A}$ is taken to be in bijection with the generating set $Y_C$, with $\zeta:Y_C\to\pazocal{A}$ a fixed bijection.  Then, extending $\zeta$ in the natural way to a bijection $\tilde{\zeta}:(Y_C\cup Y_C^{-1})^*\to(\pazocal{A}\cup\pazocal{A}^{-1})^*$, the language $\pazocal{L}$ is taken to be $\tilde{\zeta}(\pazocal{S}_C)$.  Again, since $\pazocal{S}$ is a recursive subset of $Y^*$, $\pazocal{L}$ is similarly a recursive subset of $\pazocal{A}^*$.

Let $\Lambda^\pazocal{A}$ be the set of all non-trivial cyclically reduced words over $\pazocal{A}\cup\pazocal{A}^{-1}$ whose copy over $Y_C\cup Y_C^{-1}$ is a word which represents the identity in $R_C$, i.e
$$\Lambda^\pazocal{A}=\{w\in(\pazocal{A}\cup\pazocal{A}^{-1})^*\setminus\{1\}: w\text{ is cyclically reduced, } \tilde{\zeta}^{-1}(w)=_{R_C}1\}$$
Then, as in \Cref{Theorem 1.1 proof}, \Cref{SQ lengths} implies $\Lambda^\pazocal{A}$ satisfies conditions (L1)-(L5).

Hence, exact analogues of Lemmas \ref{a-relations}-\ref{embedding} imply that $\zeta$ induces an embedding $\phi:R_C\to G(\textbf{M}^\pazocal{L})$, while the rest of the arguments may be repeated to show that $\psi=\phi\circ\varphi:R\to G(\textbf{M}^\pazocal{L})$ is a malnormal CEP-embedding such that the restriction of $|\cdot|_{G(\textbf{M}^\pazocal{L})}$ to $R$ is equivalent to $|\cdot|_R$.

Thus, it suffices to show that in this setting, the Word Problem for the finitely presented group $G(\textbf{M}^\pazocal{L})$ is decidable.  We do this through the concept of the \textit{Dehn function}, specifically appealing to the analysis of \Cref{sec-upper-bound}.

\subsection{Dehn functions} \

Letting $\pazocal{P}$ be the canonical (finite) presentation of $G(\textbf{M}^\pazocal{L})$, recall the following definitions:

\begin{itemize}

\item Given a word $W$ over $\pazocal{X}\cup\pazocal{X}^{-1}$ which represents the trivial element of $G(\textbf{M}^\pazocal{L})$, the \textit{area} of $W$ with respect to $\pazocal{P}$, denoted $\text{Area}_\pazocal{P}(W)$, is the minimal area of a circular diagram $\Delta$ over $\pazocal{P}$ which satisfies $\lab(\partial\Delta)\equiv W$.

\medskip

\item The \textit{Dehn function} of $\pazocal{P}$ is the function $\delta_\pazocal{P}:\N\to\N$ given by $$\delta_\pazocal{P}(n)=\max\{\text{Area}_\pazocal{P}(W):\|W\|\leq n\}$$

\end{itemize}

The Dehn function of a finite presentation was first introduced by Madlener and Otto in \cite{MadlenerOtto} as a useful invariant for studying the group.  Indeed, the Dehn function of two finite presentations of quasi-isometric groups are equivalent with respect to the asymptotic equivalence on functions $\N\to\N$ induced by the preorder $\preccurlyeq$ given by $f\preccurlyeq g$ if and only if there exists $C>0$ such that $f(n)\leq Cg(Cn)+Cn+C$ for all $n\in\N$.  As such, with respect to this equivalence, the Dehn function of a finitely presented group is invariant of the choice of finite presentation.

Among its numerous uses, the Dehn function encodes the decidability of the group's Word problem: A finitely presented group has decidable Word problem if and only if the Dehn function with respect to one of (equivalently, any of) its finite presentations of it is bounded above by (and so equivalent to) a computable function (see Theorem 2.1 of \cite{Gersten}).  

Thus, to show that $G(\textbf{M}^\pazocal{L})$ has decidable Word Problem, it suffices to find a computable function $f:\N\to\N$ such that $\delta_\pazocal{P}\leq f$.

For this, we begin by justifying the assignments of weights in \Cref{sec-weights}.

\begin{lemma} \label{disk weights}

For any disk relator $W$ for $G_\Omega(\textbf{M}^\pazocal{L})$, there exists a circular diagram $\Gamma_W$ over $\pazocal{P}$ such that $\lab(\partial\Gamma_W)\equiv W$ and $\text{Area}(\Gamma_W)\leq f_\pazocal{L}(\|W(2)\|)$.

\end{lemma}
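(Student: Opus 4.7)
The plan is to track through the construction of $\Gamma_W$ given by Lemma~\ref{disk relations} and estimate its area cell by cell, using the complexity bounds established for $\textbf{M}^\pazocal{L}$ in Section~\ref{sec-M-standard} together with the trapezium-area bound of Lemma~\ref{computations are trapezia}. Since $W$ is accepted, Lemma~\ref{ell(W)} gives $\ell(W)\leq 2$, so I can choose a reduced accepting computation $\pazocal{C}:W\equiv W_0\to\dots\to W_t\equiv W_{ac}$ of controlled length. When $\ell(W)\leq 1$, Lemma~\ref{M time-space} applied with $n=|W(2)|_a$ yields $t\leq c_0\TM_\pazocal{L}(c_0n)^3+nc_0^n+c_0n+2c_0$, which is at most $h_\pazocal{L}(n)$ by the definition of $h_\pazocal{L}$.

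For the $\ell(W)=2$ case, which I expect will be the main obstacle, I would use the decomposition $H\equiv H_1H_2$ provided by Lemma~\ref{ell(W)}: $H_1$ drives $W$ to an intermediate $J(w)$ for some $w\in\pazocal{L}$ via the first machine, and $H_2$ accepts $J(w)$ via the second machine. For $H_2$, Lemma~\ref{M time-space} applied to $J(w)$ gives $\|H_2\|\leq h_\pazocal{L}(|w|)$. For $H_1$, since first-machine rules act in parallel on all components, its restriction to $Q_0^\pazocal{L}(1)Q_1^\pazocal{L}(1)$ is essentially a shift in $\textbf{M}_1^\pazocal{A}$, so Lemma~\ref{M_1 shift} bounds $\|H_1\|$ by $\|W(1)\|+\|W(1)\|c_0^{\|W(1)\|}$ (plus the transition rule $\theta(s)_1$); meanwhile, the projection argument underlying Lemma~\ref{projection argument} together with the parallel component structure forces $|w|\leq|W(2)|_a$. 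Combining these estimates, $t=\|H_1\|+\|H_2\|\leq h_\pazocal{L}(n)$ after absorbing constants into the definition of $h_\pazocal{L}$, where $n=\|W(2)\|$.

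With $t$ bounded, Lemma~\ref{M main difference} gives $|W_i|_a\leq 4c_0^tLN$ for every $i$, and since $|W_i|_q$ is a constant depending only on $L$ and $N$, we obtain $\max_i\|W_i\|\leq c_0^{t+O(1)}$. Lemma~\ref{computations are trapezia} then produces a trapezium $\Delta$ with $\lab(\textbf{tbot}(\Delta))\equiv W$, $\lab(\textbf{ttop}(\Delta))\equiv W_{ac}$, history $H$, and area at most $t\cdot\max_i\|W_i\|\leq h_\pazocal{L}(n)\cdot c_0^{h_\pazocal{L}(n)+O(1)}$, which after folding constants into the parameter $c_1$ is at most $c_1\chi(h_\pazocal{L}(n))=f_\pazocal{L}(n)$. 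Finally, following the proof of Lemma~\ref{disk relations}, I glue the two sides of $\Delta$ (labelled by identical copies of $H$) along the locked $(R_0^\pazocal{L}(L))^{-1}\{t(1)\}$-sector to form an annulus and paste a single hub in its center, obtaining a reduced circular diagram $\Gamma_W$ over $\pazocal{P}$ with $\lab(\partial\Gamma_W)\equiv W$.

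The total area of $\Gamma_W$ consists of the $(\theta,q)$- and $(\theta,a)$-cells of $\Delta$ plus one hub, which by the estimate above is at most $f_\pazocal{L}(n)+1$; a parameter choice on $c_1$ absorbs the additive $+1$. The main technical obstacle is the $\ell(W)=2$ case: I must be careful that the intermediate configuration $J(w)$ has $|w|$ controlled by $\|W(2)\|$ rather than by $\|W(1)\|$, since $\|W(2)\|$ is what appears in $f_\pazocal{L}$. Reconciling the shift-style bound on $\|H_1\|$ (which is naturally indexed by the special input sector) with the statement's bound (indexed by the second component) is the step that requires the most care, and will rely essentially on the parallel structure of the first machine together with the $\pazocal{A}$-projection invariance from Lemma~\ref{projection argument}.
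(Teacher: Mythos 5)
Your handling of the case $\ell(W)\leq 1$ is essentially the paper's proof: take the accepting computation from \Cref{M time-space}, bound $t\leq h_\pazocal{L}(n)$ via the parameter choice $L>>c_0$, bound $\max_i\|W_i\|$ via \Cref{M main difference}, build the trapezium of \Cref{computations are trapezia}, glue its identically labelled sides and paste one hub, and absorb the constants into $c_1$ so the area is at most $c_1\chi(h_\pazocal{L}(n))=f_\pazocal{L}(n)$. (Two small points: the degenerate case $W\equiv W_{ac}$ has an empty accepting computation, so \Cref{computations are trapezia} does not apply and you should just use a single hub, as the paper does; and the paper's bound is stated with $n=\|W(2)\|$ while \Cref{M time-space} uses $|W(2)|_a$, which is harmless since the bounding functions are nondecreasing.)

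The ``main obstacle'' you identify, however, does not exist: by the definition in the section on disks, the disk relations are exactly the relations $W=1$ for accepted configurations with $\ell(W)\leq 1$, so the case $\ell(W)=2$ is excluded by fiat and \Cref{ell(W)} is not needed here. Your instinct that the $\ell(W)=2$ case is problematic is in fact correct and is presumably the reason for this restriction in the definition: if $\ell(W)=2$, then $W\equiv J(w)\cdot H_1^{-1}$ for a first-machine history $H_1$, and running the first machine backwards from $J(w)$ can grow the `special' input sector without any bound in terms of $\|W(2)\|$ (the rules of $\textbf{M}_1^\pazocal{A}$ insert letters at the state letter, so an empty sector does not stay empty, and \Cref{M component size} is only available for $\ell(W)\leq1$). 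Consequently $\|H_1\|$, and hence the area of your trapezium, cannot be controlled by $f_\pazocal{L}(\|W(2)\|)$ in that case, and the reconciliation step you leave open is not salvageable; the lemma is true only because such $W$ are not disk relators. Once you drop that case, your argument coincides with the paper's.
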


\begin{proof}

Let $n=\|W(2)\|$.

If $W=W_{ac}$, then a single hub produces a diagram $\Gamma$ satisfying $\lab(\partial\Gamma)\equiv W$ and $\text{Area}(\Gamma)=1$.  As $n=2N+1$ in this case, $f_\pazocal{L}(n)\geq1$, so that the statement is satisfied for $\Gamma_W=\Gamma$.

Otherwise, \Cref{M time-space} yields a non-empty reduced computation $\pazocal{C}:W\equiv W_0\to\dots\to W_t\equiv W_{ac}$ of $\textbf{M}^\pazocal{L}$ accepting $W$ and satisfying $t\leq c_0\TM_\pazocal{L}(c_0n)^3+nc_0^n+c_0n+2c_0$.  So, the parameter choice $L>>c_0$ implies $t\leq h_\pazocal{L}(n)$.

Then, as in the proof of \Cref{disk relations}, \Cref{computations are trapezia} produces a trapezium $\Gamma_W'$ over the canonical presentation of $M(\textbf{M}^\pazocal{L})$ such that:

\begin{itemize}

\item $\lab(\textbf{bot}(\Gamma_W'))\equiv W$ 

\item $\lab(\textbf{top}(\Gamma_W'))\equiv W_{ac}$

\item the sides of $\Gamma_W'$ are labelled by identical copies of the history of $\pazocal{C}$.

\item $\text{Area}(\Gamma_W')\leq t\max(\|W_0\|,\dots,\|W_t\|)$

\end{itemize}

Now, identical to the construction in the proof of \Cref{disk relations}, gluing the sides of $\Gamma_W'$ together and pasting a single hub in the `center' produces a circular diagram $\Gamma_W$ over $\pazocal{P}$ with $\lab(\partial\Gamma_W)\equiv W$ and $\text{Area}(\Gamma_W)=\text{Area}(\Gamma_W')+1$.

Cref{M main difference} implies $|W_i|_a\leq 4c_0^tLN$ for all $i$, so that a parameter choice for $c_0$ yields:
$$\|W_i\|\leq 4c_0^tLN+|W_i|_q\leq4c_0^tLN+(2N+1)L\leq 7c_0^tLN$$ 
Hence, the parameter choices $c_1>>L>>N$ imply:
\begin{align*}
\text{Area}(\Gamma_W)&\leq t\max(\|W_0\|,\dots,\|W_t\|)+1\leq 7tc_0^t LN+1\leq8LN\chi(t)\leq f_\pazocal{L}(n)
\end{align*}

\end{proof}

\begin{lemma} \label{L weights}

For any $w\in\pazocal{L}$, there exists a circular diagram $\Gamma_w$ over $\pazocal{P}$ such that $\lab(\partial\Sigma_w)\equiv w$ and $\text{Area}(\Sigma_w)\leq 2f_\pazocal{L}(3\|w\|)$.

\end{lemma}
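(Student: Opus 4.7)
The plan is to exploit the fact that both $I(w)$ and $J(w)$ are accepted configurations when $w \in \pazocal{L}$ (by Lemma \ref{one-machine language}), so that they yield disk relations whose corresponding circular diagrams over $\pazocal{P}$ can be bounded via Lemma \ref{disk weights}. Since $I(w)$ and $J(w)$ differ only by the presence of the copy of $w$ in the `special' input sector of the first component, pasting a diagram for $I(w)$ together with a mirror of a diagram for $J(w)$ will cancel out everything except $w$ itself.

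More concretely, I would first factor the configurations as $I(w) \equiv X w Y$ and $J(w) \equiv X Y$, where $X \equiv t(1) q_{0,s}(1)$ and $Y$ is the common suffix starting with $q_{1,s}(1)$ and containing every other component untouched. By Lemma \ref{disk weights}, there exist circular diagrams $\Gamma_{I(w)}$ and $\Gamma_{J(w)}$ over $\pazocal{P}$ with contour labels $I(w)$ and $J(w)$ and with areas at most $f_\pazocal{L}(\|I(w)(2)\|)$ and $f_\pazocal{L}(\|J(w)(2)\|)$ respectively. Take the mirror $\bar\Gamma_{J(w)}$ of $\Gamma_{J(w)}$, whose contour label is $Y^{-1} X^{-1}$, and then glue $\Gamma_{I(w)}$ and $\bar\Gamma_{J(w)}$ along the $X$ and $X^{-1}$ arcs and along the $Y$ and $Y^{-1}$ arcs (using $0$-refinement to introduce the necessary $0$-cells of type (2) identifying these matched subpaths). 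The resulting circular diagram $\Sigma_w$ has contour label $w$, and its positive cells are exactly those of $\Gamma_{I(w)}$ and $\Gamma_{J(w)}$, so
\[
\text{Area}(\Sigma_w) \leq f_\pazocal{L}(\|I(w)(2)\|) + f_\pazocal{L}(\|J(w)(2)\|).
\]

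The final step is to check that $\|I(w)(2)\|, \|J(w)(2)\| \leq 3\|w\|$, which together with the monotonicity of $f_\pazocal{L}$ gives the desired bound. Since the second component $I(w)(2)$ has $w$ written in its $Q_0^\pazocal{L}(2)Q_1^\pazocal{L}(2)$-sector and $w^{-1}$ in its $(R_1^\pazocal{L}(2))^{-1}(R_0^\pazocal{L}(2))^{-1}$-sector, and the base $\{t(2)\}B_4^\pazocal{L}(2)$ contributes $2N+3$ $q$-letters, we have $\|I(w)(2)\| = 2\|w\| + 2N+3$, and likewise for $J(w)(2)$ (the two configurations agree outside the first component). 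Condition (L1) together with $\pazocal{L} \subseteq \Lambda^\pazocal{A}$ forces $\|w\| \geq C$, and the parameter hierarchy $C \gg N$ then guarantees $2N+3 \leq \|w\|$, which yields $\|I(w)(2)\| = \|J(w)(2)\| \leq 3\|w\|$ as required.

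The only delicate point is the gluing step, which must be carried out so that the result is genuinely a circular diagram (rather than, say, a bouquet). This is routine but should be justified by first gluing $\Gamma_{I(w)}$ and $\bar\Gamma_{J(w)}$ at a single vertex to form a figure-eight-type diagram with contour label freely equal to $X w Y Y^{-1} X^{-1}$, and then iteratively inserting pairs of $0$-cells of type (2) to identify the matched positive edges along $Y$-$Y^{-1}$ and along $X$-$X^{-1}$; the standard $0$-refinement machinery of Section \ref{sec-Diagrams} then certifies that the resulting object is a circular diagram whose contour label is exactly $w$ and whose positive-cell count is unchanged. The rest of the argument is just bookkeeping.
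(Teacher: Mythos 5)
Your proposal is correct and follows essentially the same route as the paper: invoke Lemma \ref{one-machine language} to see that $I(w)$ and $J(w)$ are disk relators, apply Lemma \ref{disk weights} to each, note $\|I(w,2)\|=\|J(w,2)\|\leq 3\|w\|$ via $\|w\|\geq C\gg N$, and glue the two diagrams along the common part of their contours (the paper phrases this as gluing $\Gamma_2$ to $\Gamma_1$ along its contour, which is exactly your mirror-and-paste construction). Your extra care with the $0$-refinement in the gluing and the slightly different count of $q$-letters in the second component are harmless refinements of the same argument.
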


\begin{proof}

Lemmas \ref{one-machine language} and \ref{disk weights} produce two circular diagrams $\Gamma_1$ and $\Gamma_2$ over $\pazocal{P}$ such that:

\begin{itemize}

\item $\lab(\partial\Gamma_1)\equiv I(w)$ and $\text{Area}(\Gamma_1)\leq f_\pazocal{L}(\|I(w,2)\|)$

\item $\lab(\partial\Gamma_2)\equiv J(w)$ and $\text{Area}(\Gamma_2)\leq f_\pazocal{L}(\|J(w,2)\|)$

\end{itemize}

Note that $\|I(w,2)\|=\|J(w,2)\|=2\|w\|+(2N+1)$.  As $w\in\pazocal{L}$ implies $\|w\|\geq C$, we may thus assume $\|I(w,2)\|=\|J(w,2)\|\leq3\|w\|$ by the parameter choice $C>>N$.

Hence, as $I(w)$ and $J(w)$ differ only by the word $w$ (identified with its copy in the tape alphabet of the `special' input sector), gluing $\Gamma_2$ to $\Gamma_1$ along its contour produces a circular diagram $\Sigma_w$ with $\lab(\partial\Sigma_w)\equiv w$ and 
$$\text{Area}(\Sigma_w)=\text{Area}(\Gamma_1)+\text{Area}(\Gamma_2)\leq2f_\pazocal{L}(3\|w\|)$$

\end{proof}

\begin{lemma} \label{Lambda weights}

For any $w\in\Lambda^\pazocal{A}$, there exists a circular diagram $\Gamma_w$ over $\pazocal{P}$ such that $\lab(\partial\Gamma_w)\equiv w$ and $\text{Area}(\Gamma_w)\leq\|w\|f_\pazocal{L}(c_0\|w\|)$.

\end{lemma}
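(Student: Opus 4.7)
\medskip

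\textbf{Plan.} The plan is to translate a diagram over the presentation $\gen{Y_C\mid\pazocal{S}_C}$ of $R_C$ supplied by \Cref{caps in R_C 1} into a diagram over $\pazocal{P}$ by replacing each of its positive cells with a cheap $\pazocal{P}$-diagram supplied by \Cref{L weights}. Concretely, given $w\in\Lambda^\pazocal{A}$, the word $u=\tilde\zeta^{-1}(w)$ represents the identity in $R_C$ by the very definition of $\Lambda^\pazocal{A}$, so \Cref{caps in R_C 1} produces a circular diagram $\Phi_u$ over $\gen{Y_C\mid\pazocal{S}_C}$ with $\lab(\partial\Phi_u)\equiv u$, $\text{Area}(\Phi_u)\leq\tfrac{1}{C}\|w\|$, and $\|\partial\pi\|\leq 2\|w\|$ for every positive cell $\pi$.

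Relabelling every edge of $\Phi_u$ via the bijection $\tilde\zeta$ produces a circular diagram $\Phi_w$ over the auxiliary presentation $\gen{\pazocal{A}\mid\pazocal{L}}$ with $\lab(\partial\Phi_w)\equiv w$, the same area, and the same bound on cell perimeters. Every positive cell $\pi$ of $\Phi_w$ has $\lab(\partial\pi)$ equal to a cyclic permutation of $v^{\pm1}$ for some $v\in\pazocal{L}$ with $\|v\|\leq 2\|w\|$. For each such cell I will substitute the diagram $\Sigma_v$ of \Cref{L weights}, reading its contour from the appropriate basepoint to realise the desired cyclic permutation, and passing to its mirror if the sign is negative. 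Each substituted subdiagram contributes area at most $2f_\pazocal{L}(3\|v\|)\leq 2f_\pazocal{L}(6\|w\|)$.

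Performing this substitution at every positive cell of $\Phi_w$ (and absorbing the $0$-cells of $\Phi_w$ by $0$-refinement) yields a circular diagram $\Gamma_w$ over $\pazocal{P}$ with $\lab(\partial\Gamma_w)\equiv w$ and
\[
\text{Area}(\Gamma_w)\leq \text{Area}(\Phi_w)\cdot 2f_\pazocal{L}(6\|w\|) \leq \tfrac{2\|w\|}{C}\,f_\pazocal{L}(6\|w\|).
\]
Since $f_\pazocal{L}$ is nondecreasing and the parameter choices give $c_0\geq 6$ and $C\geq 2$, this is bounded by $\|w\|f_\pazocal{L}(c_0\|w\|)$ as required. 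I do not anticipate any serious obstacle: the bookkeeping is entirely formal once \Cref{caps in R_C 1} and \Cref{L weights} are in hand, and the only mild point to check is that cyclic permutations and inverses of boundary labels of the cells of $\Phi_w$ can indeed be realised by the same $\pazocal{P}$-diagrams (up to change of basepoint and mirroring), which is immediate.
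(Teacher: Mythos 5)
Your proposal is correct and follows essentially the same route as the paper's proof: invoke \Cref{caps in R_C 1} for $\tilde\zeta^{-1}(w)$, relabel via $\tilde\zeta$, replace each positive cell by the diagram from \Cref{L weights} (up to cyclic permutation and inversion), and conclude with the bound $\frac{2}{C}\|w\|f_\pazocal{L}(6\|w\|)\leq\|w\|f_\pazocal{L}(c_0\|w\|)$ using $C\geq2$, $c_0\geq6$. No gaps.
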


\begin{proof}

By the definition of $\Lambda^\pazocal{A}$, $v\equiv\tilde{\zeta}^{-1}(w)$ is a word over $Y_C\cup Y_C^{-1}$ which represents the identity in $R_C$.  So, \Cref{caps in R_C 1} produces a circular diagram $\Phi_v$ over $\gen{Y_C\mid\pazocal{S}_C}$ such that $\lab(\partial\Phi_v)\equiv v$, $\text{Area}(\Phi_v)\leq\frac{1}{C}\|v\|=\frac{1}{C}\|w\|$, and $\|\partial\pi\|\leq2\|v\|=2\|w\|$ for every positive cell $\pi$.

Let $\pi$ be a positive cell of $\Phi_v$.  Then, $u\equiv\lab(\partial\pi)\in\pazocal{S}_C^{\pm1}$, so that $\tilde{\zeta}(u)\in\pazocal{L}^{\pm1}$.  As a result, \Cref{L weights} produces a circular diagram $\Sigma_\pi$ over the presentation $\pazocal{P}$ such that $\lab(\partial\Sigma_\pi)\equiv\tilde{\zeta}(u)$ and $\text{Area}(\Sigma_\pi)\leq2f_\pazocal{L}(3\|\partial\pi\|)$.  So, noting that $f_\pazocal{L}$ is non-decreasing, then $\text{Area}(\Sigma_\pi)\leq2f_\pazocal{L}(6\|w\|)$.

Now, consider the diagram $\Gamma_w$ obtained from $\Phi_v$ by applying $\tilde{\zeta}$ to the label of each edge and replacing any positive cell $\pi$ with the circular diagram $\Sigma_\pi$.  Then, $\Gamma_w$ is a circular diagram over $\pazocal{P}$ with $\lab(\partial\Gamma)_w\equiv w$ and
$$\text{Area}(\Gamma_w)=\sum_\pi\text{Area}(\Sigma_\pi)\leq\sum_\pi2f_\pazocal{L}(6\|w\|)\leq\frac{2}{C}\|w\|f_\pazocal{L}(6\|w\|)$$
Thus, the statement follows from the parameter choices $C\geq2$ and $c_0\geq6$.

\end{proof}

\begin{lemma} \label{Omega weights}

For any $w\in\Omega$, there exists a circular diagram $\Gamma_w$ over $\pazocal{P}$ such that $\lab(\partial\Gamma_w)\equiv w$ and $\text{Area}(\Gamma_w)\leq g_\pazocal{L}(\|w\|)$.

\end{lemma}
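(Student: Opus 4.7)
The plan is to leverage Lemma \ref{M Lambda semi-computations} together with the semi-trapezium construction of Lemma \ref{semi-computations are semi-trapezia} and the cap provided by Lemma \ref{Lambda weights}. First I would reduce to the case $w \in \pazocal{E}(\Lambda^\pazocal{A})$ itself: if $w' \in \pazocal{E}(\Lambda^\pazocal{A})$ is freely conjugate to $w$ via some word $v$, I construct $\Gamma_{w'}$ first and surround it with a collar of $0$-cells realizing the conjugation by $v$. Since $0$-cells contribute nothing to area, it suffices to bound $\text{Area}(\Gamma_{w'})$, and since $w$ is cyclically reduced we have $\|w'\| \geq \|w\|$ only if we chose $v$ well — but a short argument lets us take $w'$ to be a cyclic permutation of $w$, so $\|w'\| = \|w\|$.

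By Lemma \ref{M Lambda semi-computations}(1), either $w' \in \Lambda^\pazocal{A}$ or $w' \in \pazocal{E}_1(\Lambda_1^\pazocal{A})$. The first case is immediate from Lemma \ref{Lambda weights}, which gives area at most $\|w'\| f_\pazocal{L}(c_0 \|w'\|) \leq g_\pazocal{L}(\|w'\|)$. For the substantive case $w' \in \pazocal{E}_1(\Lambda_1^\pazocal{A})$, Lemma \ref{M Lambda semi-computations}(2) gives the unique reduced semi-computation $\pazocal{S}(w') : w' \equiv w_0 \to \dots \to w_t$ whose history factors as $H\theta(s)_1^{-1}$ and which $\Lambda^\pazocal{A}$-accepts $w'$. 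Lemma \ref{semi-computations are semi-trapezia} then yields a semi-trapezium $\Delta$ in the special input sector with $\lab(\textbf{bot}(\Delta)) \equiv w_0$, $\lab(\textbf{top}(\Delta)) \equiv w_t \in \Lambda^\pazocal{A}$, and $\text{Area}(\Delta) = \sum_{j=1}^{t} l_{\theta_j}(w_{j-1})$. Because every rule acting in the special input sector has $X_1(\theta) \subseteq \pazocal{A} \cup \pazocal{A}_1 \cup \pazocal{B}$ consisting of single tape letters, each summand equals $\|w_{j-1}\|$. The two sides of $\Delta$ are labelled by identical copies of the history, so I glue them to form an annular diagram with outer contour labelled $w'$ and inner contour labelled $w_t^{-1}$; by condition (L2), $w_t^{-1} \in \Lambda^\pazocal{A}$, so Lemma \ref{Lambda weights} caps the inner hole with a diagram of area at most $\|w_t\| f_\pazocal{L}(c_0 \|w_t\|)$.

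The key estimate is to bound $\text{Area}(\Delta)$. Writing $w_0 \equiv u_0 x_1^{\delta_1} u_1 \dots x_k^{\delta_k} u_k$ with $x_i \in \pazocal{A}_1$, the consecutive-pair inequality $\|u_{i-1}\| + \|u_i\| \geq \tfrac{1}{2} D_\pazocal{A} \|H\|$ of Lemma \ref{M Lambda semi-computations}(3) yields $\|H\| \leq \tfrac{2}{D_\pazocal{A}} \|w'\|$, so $t \leq \|H\| + 1 = O(\|w'\|/D_\pazocal{A})$. By uniqueness in Lemma \ref{M Lambda semi-computations}(2), the restriction of $\pazocal{S}(w_0)$ beginning at $w_j$ coincides with $\pazocal{S}(w_j)$, so its history has length $\|H\| - j$. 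Applying (3) to each $w_j$ bounds every $u_i^{(j)}$ by $3 D_\pazocal{A} \|H\|$, whence $\|w_j\| \leq k + (k+1)\cdot 3 D_\pazocal{A}\|H\| = O(D_\pazocal{A} \|w'\| \cdot \|H\|)$. Summing, $\text{Area}(\Delta) \leq t \cdot \max_j \|w_j\| = O(\|w'\|^3)$, where the powers of $D_\pazocal{A}$ cancel against $\|H\| = O(\|w'\|/D_\pazocal{A})$. A parameter choice absorbing the implied constants into $c_0$ gives $\text{Area}(\Delta) \leq c_0 \|w'\|^3$. Combining with the cap,
\[
\text{Area}(\Gamma_w) = \text{Area}(\Gamma_{w'}) \leq c_0 \|w'\|^3 + \|w'\| f_\pazocal{L}(c_0 \|w'\|) = g_\pazocal{L}(\|w'\|) \leq g_\pazocal{L}(\|w\|),
\]
using that $g_\pazocal{L}$ is non-decreasing.

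The main obstacle is the area bound for $\Delta$: the individual words $w_j$ can be quadratic in $\|w'\|$ and there are linearly many of them, so a naive estimate produces $O(\|w'\|^3)$ with a constant depending on $D_\pazocal{A}$, which is itself a function of $C$. The resolution is that the cubic bound for the semi-trapezium exploits the interplay $\|H\| \cdot D_\pazocal{A} = O(\|w'\|)$ from Lemma \ref{M Lambda semi-computations}(3); this cancels the $D_\pazocal{A}$ that otherwise appears through the repeated application of the rules, yielding a bound independent of $D_\pazocal{A}$ (hence of $C$) and matching the $c_0\|w\|^3$ term in $g_\pazocal{L}(\|w\|)$ after the parameter choice.
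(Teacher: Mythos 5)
Your overall construction---the unique $\Lambda^\pazocal{A}$-accepting semi-computation, the semi-trapezium glued into an annulus along its identically labelled sides, the cap from Lemma \ref{Lambda weights}, and the cancellation of $D_\pazocal{A}$ between the length $t$ of the history and the growth of the $b$-syllables---is essentially the paper's argument. The genuine gap is your reduction step: you cannot take $w'$ to be a cyclic permutation of $w$. The word $w'\in\pazocal{E}(\Lambda^\pazocal{A})$ freely conjugate to $w$ is essentially forced (by Lemma \ref{M Lambda semi-computations} its $F(\pazocal{B})$-syllables are determined by the unique accepting semi-computation), and it need not be cyclically reduced: as in the proof of Lemma \ref{Lambda not cyclically reduced}, $w$ is a cyclic permutation of $pw'p^{-1}$ for a possibly non-trivial $p\in F(\pazocal{B})$, so $\|w'\|=\|w\|+2\|p\|\geq\|w\|$, and a cyclic permutation of $w$ itself is in general not an element of $\pazocal{E}(\Lambda^\pazocal{A})$ (applying the accepting history to it produces a word conjugated by a leftover $b$-word, which is never $\theta(s)_1^{-1}$-applicable). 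Consequently your closing inequality $g_\pazocal{L}(\|w'\|)\leq g_\pazocal{L}(\|w\|)$ points in the wrong direction: since $g_\pazocal{L}$ is non-decreasing and $\|w'\|$ can exceed $\|w\|$, estimates phrased in terms of $\|w'\|$ only give $g_\pazocal{L}$ of a constant multiple of $\|w\|$, not the exact bound $g_\pazocal{L}(\|w\|)$ that the weight assignment of an $a$-cell (and hence Lemma \ref{Dehn}) requires.

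The repair is to express every estimate in terms of $\|w\|$ itself, which is what the paper does. Since $w_t\in\Lambda^\pazocal{A}$ is cyclically reduced, only the two extremal syllables $u_{0,0}$ and $u_{k,0}$ of $w'\equiv u_{0,0}x_1^{\delta_1}u_{1,0}\dots x_k^{\delta_k}u_{k,0}$ are affected by the cyclic reduction, so $x_1^{\delta_1}u_{1,0}\dots u_{k-1,0}x_k^{\delta_k}$ is a subword of $w$; hence $k=|w|_\pazocal{A}$, which yields $\|w_t\|f_\pazocal{L}(c_0\|w_t\|)\leq\|w\|f_\pazocal{L}(c_0\|w\|)$, and for $t\geq2$ some interior pair satisfies $\|u_{\ell-1,0}\|+\|u_{\ell,0}\|\geq\tfrac{1}{2}D_\pazocal{A}(t-1)$, giving $\|w\|\geq k+t$. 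Then $\sum_{i=0}^{t-1}\|w_i\|\leq 3D_\pazocal{A}kt^2\leq c_0(k+t)^3\leq c_0\|w\|^3$, and adding the cap gives exactly $g_\pazocal{L}(\|w\|)$. Your $D_\pazocal{A}$-cancellation observation is correct and is implicitly the same mechanism, but it must be anchored to $\|w\|$ through the subword argument rather than to $\|w'\|$.
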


\begin{proof}

Per the definition of $\Omega$, $w$ is a cyclically reduced word over $(\pazocal{A}\cup\pazocal{A}_1\cup\pazocal{B})^{\pm1}$ which is freely conjugate to a word $w'\in\pazocal{E}(\Lambda^\pazocal{A})$.  By \Cref{M Lambda semi-computations}, there then exists a unique semi-computation $\pazocal{S}(w'):w'\equiv w_0\to\dots\to w_t$ of $\textbf{M}^\pazocal{L}$ in the `special' input sector which $\Lambda^\pazocal{A}$-accepts $w'$.

Suppose $w'\in\Lambda^\pazocal{A}$.  Then, as $\Lambda^\pazocal{A}$ consists of cyclically reduced words, $w$ is a cyclic permutation of $w'$.  So, the statement follows from \Cref{Lambda weights} and the definition of $g_\pazocal{L}$.  

Hence, by \Cref{M Lambda semi-computations}, it suffices to assume that $w'\in\pazocal{E}_1(\Lambda^\pazocal{A}_1)$.  In particular, this implies $\pazocal{S}(w')$ is a non-empty semi-computation.  

\Cref{semi-computations are semi-trapezia} then provides a semi-trapezium $\Delta_w'$ over $M(\textbf{M}^\pazocal{L})$ in the `special' input sector such that $\lab(\textbf{bot}(\Delta_w'))\equiv w'$, $\lab(\textbf{top}(\Delta_w'))\equiv w_t$, and $\text{Area}(\Delta_w')\leq\sum\limits_{i=0}^{t-1}\|w_i\|$.

As the sides of any semi-trapezium are labelled by identical copies of the history of the corresponding semi-computation, we may then paste the sides of $\Delta_w'$ together to form an annular diagram $\Delta_w$ over the canonical presentation of $M(\textbf{M}^\pazocal{L})$ with outer contour label $w'$, inner contour label $w_t^{-1}$, and $\text{Area}(\Delta_w)=\text{Area}(\Delta_w')$.  

Since $w_t\in\Lambda^\pazocal{A}$, we may then paste the diagram $\Gamma_{w_t}$ arising from \Cref{Lambda weights} into the center of $\Delta_w$, producing a circular diagram $\Gamma_w$ over $\pazocal{P}$ with $\lab(\partial\Gamma_w)\equiv w$ and 
$$\text{Area}(\Gamma_w)=\text{Area}(\Gamma_{w_t})+\text{Area}(\Delta_w)\leq \|w_t\|f_\pazocal{L}(c_0\|w_t\|)+\sum\limits_{i=0}^{t-1}\|w_i\|$$
Let $k=\|w_t\|$.  By \Cref{semi-computation deltas}, there exist $x_1,\dots,x_k\in\pazocal{A}_1$, $\delta_1,\dots,\delta_k$, and $u_{0,i},u_{1,i},\dots,u_{k,i}\in F(\pazocal{B})$ such that $w_i\equiv u_{0,i}x_1^{\delta_1}u_{1,i}x_2^{\delta_2}\dots u_{k-1,0}x_k^{\delta_k}u_{k,i}$ for all $0\leq i\leq t-1$.  

As $w_t$ is cyclically reduced, $x_1^{\delta_1}u_{1,0}x_2^{\delta_2}\dots u_{k-1,0}x_k^{\delta_k}$ is a subword of $w$, i.e $|w|_\pazocal{A}=k$.  In particular, since $f_\pazocal{L}$ is non-decreasing, $\|w_t\|f_\pazocal{L}(c_0\|w_t\|)\leq\|w\|f_\pazocal{L}(c_0\|w\|)$.

For any $i$, \Cref{M Lambda semi-computations} implies:

\begin{enumerate}

\item $\frac{1}{2}D_\pazocal{A}(t-i-1)\leq\|u_{j-1,i}\|+\|u_{j,i}\|\leq3D_\pazocal{A}(t-i-1)$ for any $j\in\{2,\dots,k-1\}$


\item $\|u_{0,i}\|,\|u_{k,i}\|\leq D_\pazocal{A}(t-i-1)$

\end{enumerate}

If $t=1$, then this implies $\|u_{j,0}\|=0$ for all $j$, so that $\|w_0\|=|w_0|_\pazocal{A}=k=\|w\|$.  In particular, $\text{Area}(\Gamma_w)\leq\|w\|f_\pazocal{L}(c_0\|w\|)+\|w\|\leq g_\pazocal{L}(\|w\|)$.

Otherwise, $\sum\limits_{i=0}^{t-1}\|w_i\|=\sum\limits_{i=0}^{t-1}\left(k+\sum\limits_{j=0}^k\|u_{j,i}\|\right)\leq\sum\limits_{i=0}^{t-1}(k+3D_\pazocal{A}ki)\leq3D_\pazocal{A}kt^2$.  

As $k\geq C$ by the definition of $\Lambda^\pazocal{A}$, a parameter choice for $C$ implies there exists $\ell\in\{2,\dots,k-1\}$ such that $x_{\ell-1}^{\delta_{\ell-1}}u_{\ell-1,0}x_\ell^{\delta_\ell}u_{\ell,0}x_{\ell+1}^{\delta_{\ell+1}}$ is a subword of $w$.  So, as $D_\pazocal{A}$ is dependent on $C$, a parameter choice for $C$ implies: 
$$\|w\|\geq k+\|u_{\ell-1,0}\|+\|u_{\ell,0}\|\geq k+\frac{1}{2}D_\pazocal{A}(t-1)\geq k+\frac{1}{4}D_\pazocal{A}t\geq k+t$$
Hence, the parameter choice $c_0>>C$ then yields:
$$\sum_{i=0}^{t-1}\|w_i\|\leq3D_\pazocal{A}kt^2\leq 3c_0kt^2\leq c_0(k+t)^3\leq c_0\|w\|^3$$
Thus, $\text{Area}(\Gamma_w)\leq\|w\|f_\pazocal{L}(c_0\|w\|)+c_0\|w\|^3=g_\pazocal{L}(\|w\|)$.

\end{proof}

\begin{lemma} \label{Dehn}

For every $n\in\N$, $\delta_\pazocal{P}(n)\leq n\left(Kn^{12}+g_\pazocal{L}(Kn^9)+f_\pazocal{L}(Kn^3)\right)$

\end{lemma}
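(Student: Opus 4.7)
The plan is to fix an arbitrary word $W$ over $\pazocal{X}\cup\pazocal{X}^{-1}$ with $\|W\|\leq n$ representing the identity in $G(\textbf{M}^\pazocal{L})$ and produce a circular diagram over $\pazocal{P}$ with contour label $W$ whose area meets the claimed bound. By the analogue of \Cref{isomorphism} established in the current setting, $W$ also represents the identity in $G_\Omega(\textbf{M}^\pazocal{L})$. So by the analogue of \Cref{generic minimal diagram} one obtains a reduced minimal diagram $\Delta$ over the disk presentation of $G_\Omega(\textbf{M}^\pazocal{L})$ with $\lab(\partial\Delta)\equiv W$. Since $\|\partial\Delta\|=\|W\|\leq n$ and $f_\pazocal{L},g_\pazocal{L}$ are non-decreasing, \Cref{main upper bound} immediately yields
\[
\text{wt}(\Delta)\;\leq\;\|W\|\!\left(K\|W\|^{12}+g_\pazocal{L}(K\|W\|^{9})+f_\pazocal{L}(K\|W\|^{3})\right)\;\leq\;n\!\left(Kn^{12}+g_\pazocal{L}(Kn^{9})+f_\pazocal{L}(Kn^{3})\right).
\]

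The main step is then to convert $\Delta$ into a circular diagram $\Delta'$ over the canonical presentation $\pazocal{P}$ while controlling its area by the weight of $\Delta$. Each $(\theta,q)$-cell and each $(\theta,a)$-cell of $\Delta$ already corresponds to a defining relation of $\pazocal{P}$ and contributes $1$ to both its area and its weight, so these cells are left untouched. For every disk $\Pi$ of $\Delta$, let $W_\Pi$ be the accepted configuration with $\lab(\partial\Pi)\equiv W_\Pi^{\pm 1}$; \Cref{disk weights} produces a circular diagram $\Gamma_{W_\Pi}$ over $\pazocal{P}$ with $\lab(\partial\Gamma_{W_\Pi})\equiv W_\Pi$ and $\text{Area}(\Gamma_{W_\Pi})\leq f_\pazocal{L}(\|W_\Pi(2)\|)=\text{wt}(\Pi)$ (using the mirror of $\Gamma_{W_\Pi}$ when necessary). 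Similarly, for every $a$-cell $\pi$ of $\Delta$ with boundary label $w_\pi\in\Omega$, \Cref{Omega weights} produces a circular diagram $\Gamma_{w_\pi}$ over $\pazocal{P}$ with $\lab(\partial\Gamma_{w_\pi})\equiv w_\pi$ and $\text{Area}(\Gamma_{w_\pi})\leq g_\pazocal{L}(\|\partial\pi\|)=\text{wt}(\pi)$.

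Excising each disk $\Pi$ and each $a$-cell $\pi$ of $\Delta$ and pasting $\Gamma_{W_\Pi}$ and $\Gamma_{w_\pi}$ in their places (with $0$-refinement to handle any boundary identifications) produces a circular diagram $\Delta'$ over $\pazocal{P}$ satisfying $\lab(\partial\Delta')\equiv W$. Summing areas cell-by-cell over the disks, $a$-cells, $(\theta,q)$-cells, and $(\theta,a)$-cells of $\Delta$, each contribution is bounded by the weight of the corresponding cell of $\Delta$, so $\text{Area}(\Delta')\leq\text{wt}(\Delta)$. Combining this with the weight bound above gives $\text{Area}_\pazocal{P}(W)\leq n(Kn^{12}+g_\pazocal{L}(Kn^{9})+f_\pazocal{L}(Kn^{3}))$, and taking the maximum over all such $W$ yields the stated bound on $\delta_\pazocal{P}(n)$.

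The routine steps are the cell-by-cell surgery and the area accounting, which are essentially matching the weight function to the constructions of Lemmas \ref{disk weights} and \ref{Omega weights} -- indeed, the weight function was defined in \Cref{sec-weights} with precisely this application in mind. The only conceptual obstacle is ensuring that the minimal diagram $\Delta$ over the disk presentation exists with the requested contour label, which is guaranteed by the isomorphism $G(\textbf{M}^\pazocal{L})\cong G_\Omega(\textbf{M}^\pazocal{L})$ and the fact that lexicographic minimality among all diagrams with a fixed contour label is always attainable.
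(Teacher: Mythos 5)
Your proposal is correct and follows essentially the same route as the paper: pass to $G_\Omega(\textbf{M}^\pazocal{L})$ via the isomorphism, take a reduced minimal diagram, bound its weight by \Cref{main upper bound}, and then replace each disk and $a$-cell by the diagrams of Lemmas \ref{disk weights} and \ref{Omega weights}, whose areas are bounded by the corresponding weights. The only additions (monotonicity of $f_\pazocal{L},g_\pazocal{L}$ and taking mirrors for negatively oriented disks) are routine details the paper leaves implicit.
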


\begin{proof}

Let $W_0$ be a word over $\pazocal{X}\cup\pazocal{X}^{-1}$ which represents the identity in $G(\textbf{M}^\pazocal{L})$ and satisfies $\|W_0\|\leq n$.  

By the analogue of \Cref{isomorphism} in this setting, $W_0$ represents the identity in $G_\Omega(\textbf{M}^\pazocal{L})$.  As such, there exists a reduced minimal diagram $\Delta$ over the disk presentation of $G_\Omega(\textbf{M}^\pazocal{L})$ satisfying $\lab(\partial\Delta)\equiv W_0$.  So, \Cref{main upper bound} implies $\text{wt}(\Delta)\leq n\left(Kn^{12}+g_\pazocal{L}(Kn^9)+f_\pazocal{L}(Kn^3)\right)$.

Now, consider the diagram $\tilde{\Delta}$ constructed as follows:

\begin{itemize}

\item Let $\Pi$ be a disk in $\Delta$.  Then letting $\lab(\partial\Pi)\equiv W$, replace $\Pi$ with the circular diagram $\Gamma_{W}$ constructed in \Cref{disk weights}.  Note that $\text{Area}(\Gamma_W)\leq\text{wt}(\Pi)$.

\item Let $\pi$ be an $a$-cell in $\Delta$.  Then, letting $\lab(\partial\pi)\equiv w$, replace $\pi$ with the circular diagram $\Gamma_{w}$ constructed in \Cref{Omega weights}.  Note that $\text{Area}(\Gamma_w)\leq\text{wt}(\pi)$.

\end{itemize}

Then, $\tilde{\Delta}$ is a circular diagram over $\pazocal{P}$ with $\lab(\partial\Delta)\equiv W_0$ and $\text{Area}(\tilde{\Delta})\leq\text{wt}(\Delta)$.  

Hence, $\text{Area}_\pazocal{P}(W_0)\leq n\left(Kn^{12}+g_\pazocal{L}(Kn^9)+f_\pazocal{L}(Kn^3)\right)$, implying the statement.

\end{proof}

Since $f_\pazocal{L}$ and $g_\pazocal{L}$ are computable functions, \Cref{Dehn} implies $\delta_\pazocal{P}$ is bounded above by the computable function $f:\N\to\N$ given by $f(n)=n\left(Kn^{12}+g_\pazocal{L}(Kn^9)+f_\pazocal{L}(Kn^3)\right)$.  Thus, $G(\textbf{M}^\pazocal{L})$ has decidable Word problem, completing the proof of Theorem A.

\bigskip

\bibliographystyle{plain}
\bibliography{biblio}

\end{document}